\setheadfoot{\onelineskip}{2\onelineskip} % {headheight}{footskip}
\newcommand{\titlefont}{\normalfont\Huge\bfseries}
  \setlist{nosep}
  \gdef\tfn@fnt{0}%
\colorlet{theoremcolor}{white!92!blue}
\colorlet{definitioncolor}{white!92!purple}
\colorlet{examplecolor}{white!93!green}
\newtheoremstyle{plain}
  {-\topsep}		%space above
  {}			%space below
  {\normalfont}		%body font
  {}			%indent amount
  {\bfseries}		%theorem head font
  {.}			%punctuation after theorem head
  {.5em}		%space after theorem head
  {}			%theorem head spec
  \theoremstyle{plain}
  \newmdtheoremenv[style=theoremframe]{theorem}[equation]{Theorem}
  \newmdtheoremenv[style=theoremframe]{proposition}[equation]{Proposition}
  \newmdtheoremenv[style=theoremframe]{corollary}[equation]{Corollary}
  \newmdtheoremenv[style=theoremframe]{lemma}[equation]{Lemma}
  \theoremstyle{plain}
  \newmdtheoremenv[style=definitionframe]{definition}[equation]{Definition}
  \newmdtheoremenv[style=definitionframe]{roughDef}[equation]{Rough Definition}
  \crefname{roughDef}{Definition}{Definitions}
  \newtheorem{construction}[equation]{Construction}
  \newtheorem*{axiom*}{Axiom}
  \theoremstyle{remark}
  \newtheorem{remark}[equation]{Remark}
\newcommand{\finishSolutionChapter}{
%\vfill\hrulefill\\\noindent
%\arabic{solcounterlocal} exercises in Chapter \arabic{section}, and \arabic{solcounterglobal} total exercises up to this point.
%
\clearpage
}
\newcommand{\nolisttopbreak}{\nobreak\@afterheading}
\newcounter{solcounterlocal}[section]
\newcounter{solcounterglobal}
\newcommand{\sol}[4][noprint]{

\stepcounter{solcounterlocal}\stepcounter{solcounterglobal}

\noindent\ignorespacesafterend\emph{Solution to} \cref{#2}.%
\nopagebreak%
\ifthenelse{\equal{#1}{print}}{
\nopagebreak%
\begin{mdframed}[backgroundcolor=examplecolor,linewidth=0pt]%
#3%
\end{mdframed}%
\nopagebreak
}{}%
\nolisttopbreak
\begin{description}[leftmargin=2.5ex,itemindent=0pt,topsep=0ex,nosep]
\item\nopagebreak
#4
\end{description}
\bigskip
}
\newenvironment{altikz}{
\begin{aligned}
\begin{tikzpicture}
}
{
\end{tikzpicture}
\end{aligned}
}
\newmdtheoremenv[style=exampleframe]{example}[equation]{Example}
\newtheorem{exc-inner}[equation]{Exercise}
\newenvironment{exercise}[1][]{
  % Set the QED symbol. 
  \pushQED{\qed}
  \begin{exc-inner}[#1]~
}{
  \popQED
  \end{exc-inner}
}
  \crefname{exercise}{Exercise}{Exercises}
\newcommand{\adj}[5][30pt]{%[size] Cat L, Left, Right, Cat R.
\begin{tikzcd}[ampersand replacement=\&, column sep=#1]
  #2\ar[r, bend left=15, shift left=2pt, "#3"]
  \ar[r, Rightarrow, shorten <=8pt, shorten >=8pt]\&
  #5\ar[l, bend left=15, shift left=2pt, "#4"]
\end{tikzcd}
}
\DeclareSymbolFont{stmry}{U}{stmry}{m}{n}
\DeclareMathSymbol\fatsemi\mathop{stmry}{"23}
\DeclareFontFamily{U}{mathx}{\hyphenchar\font45}
\DeclareFontShape{U}{mathx}{m}{n}{
      <5> <6> <7> <8> <9> <10>
      <10.95> <12> <14.4> <17.28> <20.74> <24.88>
      mathx10
      }{}
\DeclareSymbolFont{mathx}{U}{mathx}{m}{n}
\DeclareMathAccent{\widecheck}{0}{mathx}{"71}
\renewcommand{\ss}{\subseteq}
\DeclarePairedDelimiter{\pair}{\langle}{\rangle}
\DeclarePairedDelimiter{\copair}{[}{]}
\DeclarePairedDelimiter{\floor}{\lfloor}{\rfloor}
\DeclarePairedDelimiter{\ceil}{\lceil}{\rceil}
\DeclarePairedDelimiter{\corners}{\ulcorner}{\urcorner}
\DeclareMathOperator{\Hom}{Hom}
\DeclareMathOperator{\Mor}{Mor}
\DeclareMathOperator{\dom}{dom}
\DeclareMathOperator{\cod}{cod}
\DeclareMathOperator*{\colim}{colim}
\DeclareMathOperator{\im}{im}
\DeclareMathOperator{\Ob}{Ob}
\DeclareMathOperator{\dju}{\sqcup}
\newcommand{\const}[1]{\mathtt{#1}}%a constant, or named element of a set
\newcommand{\Set}[1]{\mathrm{#1}}%a named set
\newcommand{\cat}[1]{\mathcal{#1}}%a generic category
\newcommand{\Cat}[1]{\mathbf{#1}}%a named category
\newcommand{\fun}[1]{\textit{#1}}%function
\newcommand{\Fun}[1]{\mathsf{#1}}%functor
\newcommand{\id}{\mathrm{id}}
\newcommand{\cocolon}{:\!}
\newcommand{\too}{\longrightarrow}
\newcommand{\tto}{\rightrightarrows}
\newcommand{\To}[1]{\xrightarrow{#1}}
\newcommand{\Tto}[3][13pt]{\begin{tikzcd}[sep=#1, cramped, ampersand replacement=\&, text height=1ex, text depth=.3ex]\ar[r, shift left=2pt, "#2"]\ar[r, shift right=2pt, "#3"']\&{}\end{tikzcd}}
\newcommand{\Too}[1]{\xrightarrow{\;\;#1\;\;}}
\newcommand{\from}{\leftarrow}
\newcommand{\From}[1]{\xleftarrow{#1}}
\newcommand{\Fromm}[1]{\xleftarrow{\;\;#1\;\;}}
\newcommand{\surj}{\twoheadrightarrow}
\newcommand{\inj}{\rightarrowtail}
\newcommand{\tickar}{\begin{tikzcd}[baseline=-0.5ex,cramped,sep=small,ampersand 
replacement=\&]{}\ar[r,tick]\&{}\end{tikzcd}}
\newcommand{\imp}{\Rightarrow}
\renewcommand{\th}{\ensuremath{^\tn{th}}\ }
\newcommand{\op}{^\tn{op}}
\newcommand{\tn}[1]{\textnormal{#1}}
\newcommand{\ol}[1]{\overline{#1}}
\newcommand{\ul}[1]{\underline{#1}}
\newcommand{\LMO}[2][over]{\ifthenelse{\equal{#1}{over}}{\overset{#2}{\bullet}}{\underset{#2}{\bullet}}}
\newcommand{\LTO}[2][\bullet]{\overset{\tn{#2}}{#1}}
\newcommand{\NN}{\mathbb{N}}
\newcommand{\bb}{\mathbb{B}}
\newcommand{\BB}{\mathbb{B}}
\newcommand{\nn}{\NN}
\newcommand{\ZZ}{\mathbb{Z}}
\newcommand{\zz}{\mathbb{Z}}
\newcommand{\RR}{\mathbb{R}}
\newcommand{\rr}{\mathbb{R}}
\newcommand{\IR}{\mathbb{I}\hspace{.6pt}\mathbb{R}}
\newcommand{\singleton}{\{1\}}
\newcommand{\powset}{\Fun{P}}
\newcommand{\upset}{\Fun{U}}
\newcommand{\beh}{\Fun{B}}
\newcommand{\prt}[1]{\Fun{Prt}(#1)}
\newcommand{\upclose}{\mathop{\uparrow}}
\newcommand{\foo}{\const{foo}}
\newcommand{\inv}{^{-1}}
\newcommand{\smset}{\Cat{Set}}
\newcommand{\smcat}{\Cat{Cat}}
\newcommand{\Op}{\Cat{Op}}
\newcommand{\Shv}{\Cat{Shv}}
\newcommand{\true}{\const{true}}
\newcommand{\false}{\const{false}}
\newcommand{\Bool}{\Cat{Bool}}
\newcommand{\Cost}{\Cat{Cost}}
\newcommand{\inst}{\tn{-}\Cat{Inst}}
\newcommand{\mat}{\Cat{Mat}}
\newcommand{\corel}[1]{\Cat{Corel}_{#1}}
\newcommand{\rel}{\Cat{Rel}}
\newcommand{\cospan}[1]{\Cat{Cospan}_{#1}}
\newcommand{\finset}{\Cat{FinSet}}
  \newcommand{\Prof}{\Cat{Prof}}
  \newcommand{\Feas}{\Cat{Feas}}
  \newcommand{\Unit}[1]{\mathrm{U}_{#1}}
  \newcommand{\comp}[1]{\widehat{#1}}
  \newcommand{\conj}[1]{\widecheck{#1}}
\newcommand{\cp}{\mathbin{\fatsemi}}
\newcommand{\pgin}{\fun{in}}
\newcommand{\pgout}{\fun{out}}
\newcommand{\ord}[1]{\underline{{#1}}}
\newcommand{\free}{\Cat{Free}}
\newcommand{\expr}{\mathrm{Expr}}
\newcommand{\sfg}{\mathbf{SFG}}
\newcommand\addgen{\lower8pt\hbox{$\includegraphics[height=0.7cm]{pics/add.pdf}$}}
\newcommand\zerogen{\lower5pt\hbox{$\includegraphics[height=0.5cm]{pics/zero.pdf}$}}
\newcommand\delaygen{\lower6pt\hbox{$\includegraphics[height=0.6cm]{pics/delay.pdf}$}}
\newcommand\scalargen{\lower6pt\hbox{$\includegraphics[height=0.6cm]{pics/scalar.pdf}$}}
\newcommand\copyopgen{\lower8pt\hbox{$\includegraphics[height=0.7cm]{pics/copyop.pdf}$}}
\newcommand\discardopgen{\lower5pt\hbox{$\includegraphics[height=0.5cm]{pics/discardop.pdf}$}}
\newcommand\twist{\lower6pt\hbox{$\includegraphics[height=0.6cm]{pics/twist.pdf}$}}
\tikzstyle{none}=[inner sep=0pt]
\tikzstyle{circ}=[circle,fill=black,draw,inner sep=3pt]
\tikzstyle{circw}=[circle,fill=white,draw,inner sep=3pt,thick]
\newcommand{\oprdset}{\mathbf{Set}}
\newcommand{\oprdcospan}{\mathbf{Cospan}}
\newcommand{\light}{\texttt{light}}
\newcommand{\switch}{\texttt{switch}}
\newcommand{\battery}{\texttt{battery}}
\newcommand{\elec}{\Fun{Circ}}
\newcommand{\restrict}[2]{#1\big|_{#2}}
\newcommand{\Prop}{\const{Prop}}
\newcommand{\boxCD}[2][black]{\fcolorbox{#1}{white}{\begin{varwidth}{\textwidth}\centering #2\end{varwidth}}}
\newcommand{\?}{{\color{gray}{?}}}
\newcommand{\DNE}{{\color{gray}{\boxtimes}}}
\newcommand{\erase}[2][]{{\color{red}#1}}
\newcommand{\showhide}[2]{#1}
\begin{document}

\frontmatter

\title{\titlefont Seven Sketches in Compositionality:\\\LARGE Real-world models of category theory}
\title{\titlefont Seven Sketches in Compositionality:\\\LARGE Category theory in the real world}
\title{\titlefont Seven Sketches in Compositionality:\\\LARGE Category theoretic foundations of real-world phenomena}
\title{\titlefont Seven Sketches in Compositionality:\\\LARGE Toward a category-theoretic foundation for science and engineering}
\title{\titlefont Seven Sketches in Compositionality:\\\LARGE Real-World Applications of Category Theory}
\title{\titlefont Seven Sketches in Compositionality:\\\medskip\huge An Invitation to Categorical Modeling}
\title{\titlefont Seven Sketches in Compositionality:\\\medskip\huge An Invitation to Applied Category Theory}

\author{\LARGE Brendan Fong \and \LARGE David I. Spivak}

\posttitle{
  \vspace{.8in}
  \[
  \begin{tikzpicture}[oriented WD, bb min width =1cm, bbx=1cm, bb port sep =1, bb port length=2pt, bby=1ex]
  	\node[coordinate] at (0,0) (ul) {};
  	\node[coordinate] at (8,-12) (lr) {};
    \node[bb={0}{0}, rounded corners=5pt, drop shadow, top color=blue!5, fit = (ul) (lr)] (Z) {};
  	\node[bb={2}{2}, green!25!black, drop shadow, fill=green!10, below right=2 and 0 of ul] (X11) {};
  	\node[bb={3}{3}, green!25!black, drop shadow, fill=green!5, below right=of X11] (X12) {};
  	\node[bb={2}{1}, green!25!black, drop shadow, fill=yellow!15, above right=of X12] (X13) {};
  	\node[bb={2}{2}, green!25!black, drop shadow, fill=orange!15, below right = -1 and 1.5 of X12] (X21) {};
  	\node[bb={1}{2}, red!75!black, drop shadow, fill=red!10, above right=-1 and 1 of X21] (X22) {?};
  	\draw (X21_out1) to (X22_in1);
  	\draw[ar] let \p1=(X22.north east), \p2=(X21.north west), \n1={\y1+\bby}, \n2=\bbportlen in
            (X22_out1) to[in=0] (\x1+\n2,\n1) -- (\x2-\n2,\n1) to[out=180] (X21_in1);
  	\draw (X11_out1) to (X13_in1);
  	\draw (X11_out2) to (X12_in1);
  	\draw (X12_out1) to (X13_in2);
  	\draw (Z.west|-X11_in2) to (X11_in2);	
  	\draw (Z.west|-X12_in2) to (X12_in2);
  	\draw (X12_out2) to (X21_in2);
  	\draw (X21_out2) to (Z.east|-X21_out2);
  	\draw[ar] let \p1=(X12.south east), \p2=(X12.south west), \n1={\y1-\bby}, \n2=\bbportlen in
  	  (X12_out3) to[in=0] (\x1+\n2,\n1) -- (\x2-\n2,\n1) to[out=180] (X12_in3);
  	\draw[ar] let \p1=(X22.north east), \p2=(X11.north west), \n1={\y2+\bby}, \n2=\bbportlen in
            (X22_out2) to[in=0] (\x1+\n2,\n1) -- (\x2-\n2,\n1) to[out=180] (X11_in1);
  	\draw[ar] (X13_out1) to (Z.east|-X13_out1);
  \end{tikzpicture}
  \]
  \vspace{.5in}
  \endgroup
}

\date{\vfill (Last updated: \today)}

\maketitle
\thispagestyle{empty}

%------------ Chapter ------------%
\chapter*{Preface}

Category theory is becoming a central hub for all of pure mathematics.\index{category theory!as central hub of mathematics} It is unmatched in its ability to organize and layer abstractions, to find commonalities between structures of all sorts, and to facilitate communication between different mathematical communities.

But it has also been branching out into science, informatics, and industry. We believe that it has the potential to be a major cohesive force in the world, building rigorous bridges between disparate worlds, both theoretical and practical. The motto at MIT is \emph{mens et manus}, Latin for mind and hand. We believe that category theory---and pure math in general---has stayed in the realm of mind for too long; it is ripe to be brought to hand.

%-------- Section --------%
\section*{Purpose and audience}

The purpose of this book is to offer a self-contained tour of applied category theory. It is an invitation to discover advanced topics in category theory through concrete real-world examples. Rather than try to give a comprehensive treatment of these topics---which include adjoint functors, enriched categories, proarrow equipments, toposes, and much more---we merely provide a taste of each. We want to give readers some insight into how it feels to work with these structures as well as some ideas about how they might show up in practice.

The audience for this book is quite diverse: anyone who finds the above description intriguing. This could include a motivated high school student who hasn't seen calculus yet but has loved reading a weird book on mathematical logic they found at the library. Or a machine-learning researcher who wants to understand what vector spaces, design theory, and dynamical systems could possibly have in common. Or a pure mathematician who wants to imagine what sorts of applications their work might have. Or a recently-retired programmer who's always had an eerie feeling that category theory is what they've been looking for to tie it all together, but who's found the usual books on the subject impenetrable.

For example, we find it something of a travesty that in 2018 there is almost no introductory material available on monoidal categories. Even beautiful modern introductions to category theory, e.g.\ by Riehl or Leinster, do not include anything on this rather central topic. The only exceptions we can think of are \cite[Chapter 3]{Coecke.Kissinger:2017a} and \cite{coecke2010categories}, each of which has a very user-friendly introduction to monoidal categories; however, readers who are not drawn to physics may not think to look there.\index{category theory!books on}

The basic idea of monoidal categories is certainly not too abstract; modern human intuition seems to include a pre-theoretical understanding of monoidal categories that is just waiting to be formalized. Is there anyone who wouldn't correctly understand the basic idea being communicated in the following diagram?\index{monoidal category}\index{wiring diagram}\index{cooking}
\[
\begin{tikzpicture}[oriented WD, align=center, bbx=1.2cm, bby=2ex]
	\node[bb={4}{1}, bb min width=.9in] (filling) {make\\lemon\\filling};
	\node[bb={2}{1}, bb min width=.9in, below=of filling] (meringue) {make\\meringue};
	\node at ($(filling.west)!.5!(meringue.west)$) (helper) {};
	\node[bb={1}{2}, left = of helper] (separate) {separate\\egg};
	\node[bb={2}{1}, above right = -2 and 1 of filling] (fill) {fill crust};
	\node[bb={2}{1}, below right = of fill] (finish) {add\\meringue};
	\node[bb={0}{0}, bb name=prepare lemon meringue pie, fit={(separate) (meringue) ($(fill.north)+(0,2)$) (finish)}] (pie) {};
\begin{scope}[font=\tiny]
	\draw (pie.west|-fill_in1) to node[pos=.25, above] {prepared crust} (fill_in1);
	\draw (pie.west|-filling_in1) to node[above] {lemon} (filling_in1);
	\draw (pie.west|-filling_in2) to node[above] {butter} (filling_in2);
	\draw (pie.west|-filling_in3) to node[above] {sugar} (filling_in3);
	\draw (pie.west|-separate_in1) to node[above] {egg} (separate_in1);
	\draw (pie.west|-meringue_in2) to node[above] {sugar} (meringue_in2);
	\draw (separate_out1) to node[above] {yolk} (filling_in4);
	\draw (separate_out2) to node[fill=white, inner sep=0.8pt] {white} (meringue_in1);
	\draw (filling_out1) to node[fill=white, inner sep=0.8pt] {lemon\\filling} (fill_in2);
	\draw (fill_out1) to node[fill=white, inner sep=0.8pt] {unbaked\\lemon pie} (finish_in1);
	\draw let \p1=(fill.east|-meringue_out1), \n1=\bbportlen in
		(meringue_out1) to node[above] {meringue} (\x1+\n1,\y1) to (finish_in2);
	\draw (finish_out1) to node[above] {unbaked\\pie} (finish_out1-|pie.east);
\end{scope}
\end{tikzpicture}
\]
Many applied category theory topics seem to take monoidal categories as their jumping-off point. So one aim of this book is to provide a reference---even if unconventional---for this important topic.

We hope this book inspires both new visions and new questions. We intend it
to be self-contained in the sense that it is approachable with minimal
prerequisites, but not in the sense that the complete story is told here. On the
contrary, we hope that readers use this as an invitation to further reading, to
orient themselves in what is becoming a large literature, and to discover new applications for themselves.

This book is, unashamedly, our take on the subject. While the abstract
structures we explore are important to any category theorist, the specific
topics have simply been chosen to our personal taste. Our examples are ones that
we find simple but powerful, concrete but representative, entertaining but in a
way that feels important and expansive at the same time. We hope our readers
will enjoy themselves and learn a lot in the process.

%-------- Section --------%
\section*{How to read this book}

The basic idea of category theory---which threads through every chapter---is that if one pays careful attention to structures and coherence, the resulting systems will be extremely reliable and interoperable. For example, a category involves several structures: a collection of objects, a collection of morphisms relating objects, and a formula for combining any chain of morphisms into a morphism. But these structures need to \emph{cohere} or work together in a simple commonsense way: a chain of chains is itself a long chain, so combining a chain of chains should be the same as combining the long chain. That's it!\index{morphism}

We will see structures and coherence come up in pretty much every definition we
give: ``here are some things and here are how they fit together.'' We ask the
reader to be on the lookout for structures and coherence as they read the book,
and to realize that as we layer abstraction upon abstraction, it is the coherence
that makes all the parts work together harmoniously in concert.\index{coherence}

Each chapter in this book is motivated by a real-world topic, such as electrical circuits, control theory, cascade failures, information integration, and hybrid systems. These motivations lead us into and through various sorts of category-theoretic concepts. We generally have one motivating idea and one category-theoretic purpose per chapter, and this forms the title of the chapter, e.g.\ Chapter 4 is ``Collaborative design: profunctors, categorification, and monoidal categories.''

In many math books, the difficulty is roughly a monotonically-increasing function of the page number. In this book, this occurs in each chapter, but not so much in the book as a whole. The chapters start out fairly easy and progress in difficulty.
\[
\begin{tikzpicture}
  \draw[->] (0,0) -- (3.5,0) node[below left] (p) {Page number};
  \draw[->] (0,0) -- (0,3.5) node[above left, rotate=90] (d) {Difficulty};
  \draw[color=red, domain=0:3.5, ->] plot (\x,{.16*\x^2)});
  \node[red] at (p|-d) {Most math books};
  \draw (5.8,0) -- (13,0) node[below left] (p) {};
  \draw[->] (5.8,0) -- (5.8,3.5) node[above left, rotate=90] (d) {Difficulty};
  \foreach \i in {1,...,7} {
  	\draw (\i+5,.1) to (\i+5,-.1) node[below, font=\scriptsize] {Ch.\ \i};
    \draw[color=blue, domain=\i+5:\i+6] plot (\x,{1.1*(\x-\i-5)^3+((\i-1)/6.5)^2});
	}
 	\draw (13,.1) to (13,-.1) node[below, font=\scriptsize] {End};
  \node[blue] at (p|-d) {This book};
\end{tikzpicture}
\]
The upshot is that if you find the end of a chapter very difficult, hope is certainly not lost: you can start on the next one and make good progress. This format lends itself to giving you a first taste now, but also leaving open the opportunity for you to come back to the book at a later date and get more deeply into it. But by all means, if you have the gumption to work through each chapter to its end, we very much encourage that!

We include about 240 exercises throughout the text, with solutions in \cref{chap.solutions}. Usually these exercises are fairly straightforward; the only thing they demand is that the reader changes their mental state from passive to active, rereads the previous paragraphs with intent, and puts the pieces together. A reader becomes a \emph{student} when they work the exercises; until then they are more of a tourist, riding on a bus and listening off and on to the tour guide. Hey, there's nothing wrong with that, but we do encourage you to get off the bus and make direct contact with the native population and local architecture as often as you can.

%-------- Section --------%
\section*{Acknowledgments}

Thanks to Jared Briskman, James Brock, Ronnie Brown, Thrina Burana, David Chudzicki, Jonathan
Castello, Margo Crawford, Fred Eisele, David Ellerman, Cam Fulton, Bruno
Gavranovi\'c, Sebastian Galkin, John Garvin, Peter Gates, Juan Manuel Gimeno,
Alfredo G\'omez, Leo Gorodinski, Jason Grossman, Jason Hooper, Yuxi Liu, Jes\'us
L\'opez, MTM, Nicol\`o Martini, Martin MacKerel, Pete Morcos, Nelson Niu, James
Nolan, Dan Oneata, Paolo Perrone, Thomas Read, Rif A.  Saurous, Dan Schmidt,
Samantha Seaman, Marcello Seri, Robert Smart, Valter Sorana, Adam
Theriault-Shay, Emmy Trewartha, Sergey Tselovalnikov, Andrew Turner, Joan
Vazquez, Daniel Wang, Jerry Wedekind for helpful comments and conversations.

We also thank our sponsors at the AFOSR; this work was supported by grants
FA9550--14--1--0031 and FA9550--17--1--0058.

Finally, we extend a very special thanks to John Baez for running an
\href{https://forum.azimuthproject.org/categories/applied-category-theory-course}{online
course} on this material and generating tons of great feedback.

%-------- Section --------%
\section*{Personal note}

Our motivations to apply category theory outside of math are, perhaps naively,
grounded in the hope it can help bring humanity together to solve our big
problems. But category theory is a tool for thinking, and like any tool it can
be used for purposes we align with and those we don't. 

In this personal note, we ask that readers try to use what they learn in this
book to do something they would call ``good,'' in terms of contributing to the
society they'd want to live in. For example, if you're planning to study this
material with others, consider specifically inviting someone from an
under-represented minority---a group that is more highly represented in society
than in upper-level math classes---to your study group. As another example,
perhaps you can use the material in this book to design software that helps
people relate to and align with each other. What's the mathematics of a
well-functioning society?

The way we use our tools affects all our lives. Our society has seen the
results---both the wonders and the waste---resulting from rampant selfishness.
We would be honored if readers found ways to use category theory as part of an
effort to connect people, to create common ground, to explore the cross-cutting
categories in which life, society, and environment can be represented, and to end the
ignorance entailed by limiting ourselves to a singular ontological perspective
on anything.

If you do something of the sort, please let us and the community know about it.
\vspace{1cm}

\flushright
Brendan Fong and David I.\ Spivak

Cambridge MA, October 2018

%\begin{adjustwidth}{.6\textwidth}{0pt}
%\noindent \textit{Brendan Fong}\qquad \textit{David I.\ Spivak}
%
%\noindent Cambridge, MA.\quad  2018
%\end{adjustwidth}

\clearpage
\tableofcontents*

\mainmatter

\index{applied category theory|(}

\setcounter{chapter}{0}%Just finished 0.

%------------ Chapter ------------%
\chapter[Generative effects: Orders and adjunctions]{Generative effects:\\Orders and Galois connections} %
\label{chap.preorders}

In this book, we explore a wide variety of situations---in the world of science, engineering, and commerce---where we see something we might call \emph{compositionality}. These are cases in which systems or relationships can be combined to form new systems or relationships. In each case we find category-theoretic constructs---developed for their use in pure math---which beautifully describe the compositionality of the situation.%
\index{compositionality}

This chapter, being the first of the book, must serve this goal in two capacities. First, it must provide motivating examples of compositionality, as well as the relevant categorical formulations. Second, it must provide the mathematical foundation for the rest of the book. Since we are starting with minimal assumptions about the reader's background, we must begin slowly and build up throughout the book. As a result, examples in the early chapters are necessarily simplified. However, we hope the reader will already begin to see the sort of structural approach to modeling that category theory brings to the fore.

%-------- Section --------%
\section{More than the sum of their parts}%
\label{sec.motivate_1}

We motivate this first chapter by noticing that while many real-world structures
are compositional, the results of observing them are often not. The reason is that observation is inherently ``lossy'': in order to extract information from something, one must drop the details. For example, one stores a real number by rounding it to some precision. But if the details are actually relevant in a given system operation, then the observed result of that operation will not be as expected. This is clear in the case of roundoff error, but it also shows up in non-numerical domains: observing a complex system is rarely enough to predict its behavior because the observation is lossy.

A central theme in category theory is the study of structures and
structure-preserving maps.%
\index{map!structure preserving} A map $f\colon X\to Y$ is a kind of observation of object $X$ via a specified relationship it has with another object, $Y$. For example, think of $X$ as the subject of an experiment and $Y$ as a meter connected to $X$, which allows us to extract certain features of $X$ by looking at the reaction of $Y$.

Asking which aspects of $X$ one wants
to preserve under the observation $f$ becomes the question ``what category are you working
in?.'' As an example, there are many functions $f$ from $\RR$ to $\RR$, and we can think of them as observations: rather than view $x$ ``directly'', we only observe $f(x)$. Out of all the functions $f\colon\rr\to\rr$, only
some of them preserve the order of numbers, only some of them preserve the distance between numbers, only
some of them preserve the sum of numbers, etc. Let's check in with an exercise; a solution can be found in \cref{chap.preorders}.

\begin{exercise}%
\label{exc.function_pres} %
\index{function}%
\index{function!structure preserving}
Some terminology: a function $f\colon \RR\to\RR$ is said to be
\begin{enumerate}[label=(\alph*)]
	\item \emph{order-preserving} if $x\leq y$ implies $f(x)\leq f(y)$, for all $x,y\in\RR$;%
	\footnote{We are often taught to view functions $f\colon\rr\to\rr$ as plots on an $(x,y)$-axis, where $x$ is the domain (independent) variable and $y$ is the codomain (dependent) variable. In this book, we do not adhere to that naming convention; e.g.\ in \cref{exc.function_pres}, both $x$ and $y$ are being ``plugged in'' as input to $f$. As an example consider the function $f(x)=x^2$. Then $f$ being order-preserving would say that for any $x,y\in\RR$, if $x\leq y$ then $x^2\leq y^2$; is that true?}
	\item \emph{metric-preserving} if $|x-y|=|f(x)-f(y)|$;
	\item \emph{addition-preserving} if $f(x+y)=f(x)+f(y)$.
\end{enumerate}
For each of the three properties defined above---call it \emph{foo}---find an $f$ that is \emph{foo}-preserving and
an example of an $f$ that is not \emph{foo}-preserving.
\end{exercise}

In category theory we want to keep control over which aspects of our systems are being preserved under various observations. As we said above, the less structure is preserved by our observation of a system, the more ``surprises'' occur when we observe its operations. One might call these surprises \emph{generative effects}.
\index{generative effect}

In using category theory
to explore generative effects, we follow the basic ideas from work by Adam \cite{Adam:2017a}. He goes much more deeply into the issue than we can here; see \cref{ch1.further_reading}. But as mentioned above, we must also use this chapter to give an order-theoretic warm-up for the full-fledged category theory to come.

%---- Subsection ----%
\subsection{A first look at generative effects}%
\label{subsec.first_look_gen}

To explore the notion of a generative effect we need a sort of system, a sort of observation, and a system-level operation that is not preserved by the observation. Let's start with a simple example.

\paragraph{A simple system.}

Consider three points; we'll call them $\bullet$, $\circ$ and $\ast$. In this example, a \emph{system} will simply be a way of connecting these points
together. We might think of our points as sites on a power grid, with a system
describing connection by power lines, or as people susceptible to some disease,
with a system describing interactions that can lead to contagion. As an abstract example of a system,
there is a system where $\bullet$ and $\circ$ are connected, but neither are
connected to $\ast$. We shall draw this like so:%
\index{connectedness}
\[
% [inline block 0: 4 envs, 4083 chars -> data_tex | \begin{tikzpicture} \node (a) {$\bullet$};...]

\end{aligned}
\end{equation}

The last piece of setup is to give a sort of operation that Alice wants to perform on the systems themselves. It's a very common operation---one that will come up many times throughout the book---called \emph{join}. If the reader has been following the story arc, the expectation here is that Alice's connectivity observation will not be compositional with respect to the operation of system joining; that is, there will be generative effects. Let's see what this means.

\paragraph{Joining our simple systems.}%
\index{join|(}

Joining two systems $A$ and $B$ is performed simply by combining their
connections.  That is, we shall say the \emph{join} of systems $A$ and $B$,
denote it $A \vee B$, has a connection between points $x$ and $y$ if there are
some points $z_1, \dots, z_n$ such that each of the following are true in at
least one of $A$ or $B$: $x$ is connected to $z_1$, $z_i$ is connected to
$z_{i+1}$, and $z_n$ is connected to $y$. In a three-point system, the above
definition is overkill, but we want to say something that works for systems with
any number of elements. The high-level way to say it is ``take the transitive
closure of the union of the connections in $A$ and $B$.'' In our three-element
system, it means for example that%
\index{union}
\[
\begin{aligned}
% [inline block 1: 12 envs, 7428 chars -> data_tex | \begin{tikzpicture} \node (a) {$\bullet$};...]

}) 
= \true$: in the joined system, $\bullet$ \emph{is} connected to $\ast$.%
\label{page.generativity}
%There is no operation on the booleans $\true,\false$ that will always follow suit with the joining of systems: 
The question that Alice is interested in, that of $\Phi$, is inherently lossy with respect to join, and there is no way to fix it without a more detailed observation, one that includes not only $\ast$ and $\bullet$ but also $\circ$. 

While this was a simple example, it should be noted that whether the potential for such effects exist---i.e.\ determining whether an observation is operation-preserving---can be incredibly important information to know. For example, Alice could be in charge of putting together the views of two
local authorities regarding possible contagion between an infected person $\bullet$ and
a vulnerable person $\ast$. Alice has noticed that if they separately extract information from their raw data and combine the results, it gives a different answer than if they combine their raw data and extract information from it.

\index{join|)}
%---- Subsection ----%
\subsection{Ordering systems}

Category theory is all about organizing and layering structures. In this section we will explain how the operation of joining systems can be derived from a more basic structure: order. We will see that while joining is not preserved by Alice's connectivity observation $\Phi$, order is.%
\index{joins!preservation of}%
\index{order!preservation of}%
\index{order|seealso {preorder}}

To begin, we note that
the systems themselves are ordered in a hierarchy. Given systems $A$ and $B$, we
say that $A \le B$ if, whenever $x$ is connected to $y$ in $A$, then $x$ is
connected to $y$ in $B$. For example,
\[
\begin{aligned}
% [inline block 2: 7 envs, 3163 chars -> data_tex | \begin{tikzpicture} \node (a) {$\bullet$};...]

    };
    \draw[->] (none) -- (ab);
    \draw[->] (none) -- (ac);
    \draw[->] (none) -- (bc);
    \draw[->] (ab) -- (all);
    \draw[->] (ac) -- (all);
    \draw[->] (bc) -- (all);
\end{tikzpicture}
\end{equation}
where an arrow from system $A$ to system $B$ means $A \le B$. Such diagrams are known as \emph{Hasse diagrams}.%
\index{Hasse diagram!for preorders}
\index{Hasse diagram}

As we were saying above, the notion of join is derived from this order. Indeed for any two systems $A$ and $B$ in the Hasse diagram \eqref{eqn.parts_of_3}, the joined system $A \vee B$ is the smallest system that is bigger than both $A$ and $B$. That is, $A\leq (A\vee B)$ and $B\leq (A\vee B)$, and for any $C$, if  $A \le C$ and $B \le C$ then $(A\vee B)\leq C$. Let's walk through this with an exercise.

\begin{exercise}%
\label{exc.partitions_practice}
\begin{enumerate}
	\item Write down all the partitions of a two element set $\{\bullet,\ast\}$, order them as above, and draw the Hasse diagram.
	\item Now do the same thing for a four element-set, say $\{1,2,3,4\}$. There should be 15 partitions.
\end{enumerate}
Choose any two systems in your 15-element Hasse diagram, call them $A$ and $B$. 
\begin{enumerate}[resume]
	\item What is $A\vee B$, using the definition given in the paragraph above \cref{eqn.generative}?
	\item Is it true that $A\leq (A\vee B)$ and $B\leq (A\vee B)$?
	\item What are all the systems $C$ for which both $A\leq C$ and $B\leq C$.
	\item Is it true that in each case, $(A\vee B)\leq C$?
	\qedhere
\qedhere
\end{enumerate}
\end{exercise}

The set $\BB=\{\true, \false\}$ of booleans also has an order, $\false \le \true$:
\[
\begin{tikzpicture}
\node (t) {$\true$};
\node at ($(t)+(0,-1)$) (f) {$\false$};
\draw[->] (f) -- (t);
\end{tikzpicture}
\]
Thus $\false\leq\false$, $\false\leq\true$, and $\true\leq\true$, but
$\true\not\leq\false$. In other words, $A\leq B$ if $A$ implies $B$.%
\footnote{In mathematical logic, $\false$ implies $\true$ but $\true$ does not imply $\false$. That is ``$P$ implies $Q$'' means, ``if $P$ is true, then $Q$ is true too, but if $P$ is not true, I'm making no claims.''%
\index{logic!implication in}}
\index{booleans!as set}

For any $A,B$ in $\BB$, we can again write $A \vee B$ to mean the least element that is greater than both $A$ and $B$.
\begin{exercise}%
\label{exc.boolean_vee_practice}
Using the order $\false\leq\true$ on $\BB=\{\true,\false\}$, what is:
\begin{enumerate}
	\item $\true \vee \false$?
	\item $\false \vee \true$?
	\item $\true \vee \true$?
	\item $\false \vee \false$?
	\qedhere
\qedhere
\end{enumerate}
\end{exercise}

Let's return to our systems with $\bullet$, $\circ$, and $\ast$, and Alice's
``$\bullet$ is connected to $\ast$'' function, which we called $\Phi$. It takes
any such system and returns either $\true$ or $\false$. Note that the map $\Phi$
preserves the $\leq$ order: if $A\leq B$ and there is a connection between
$\bullet$ and $\ast$ in $A$, then there is such a connection in $B$ too. The
possibility of a generative effect is captured in the inequality
\begin{equation}%
\label{eqn.generative_def}
\Phi(A) \vee \Phi(B) \leq \Phi(A\vee B).
\end{equation}
We saw on page~\pageref{page.generativity} that this can be a strict inequality: we showed two systems $A$ and $B$
with $\Phi(A)=\Phi(B)=\false$, so $\Phi(A)\vee\Phi(B)=\false$, but where
$\Phi(A\vee B)=\true$. In this case, a generative effect exists.%
\index{generative effect}

These ideas capture the most basic ideas in category theory. Most directly, we have seen that the map $\Phi$ preserves some structure but not others: it preserves order but not join. In fact, we have seen here hints of more complex notions from category theory, without making them explicit; these include the notions of category, functor, colimit, and adjunction. In this
chapter we will explore these ideas in the elementary setting of ordered sets.

%Key point of chapter: introduce ideas of relationship structure (here order
%structure, more generally categorical structure). Talk about how maps must
%preserve this (monotone maps/functors). Show it is useful for defining upper
%bounds, colimits. Give example to show connection with powerful machinery like
%cohomology.

%-------- Section --------%
\section{What is order?} %
\label{sec.preorders}

Above we informally spoke of two different ordered sets: the order on system
connectivity and the order on booleans $\false\leq\true$. Then we related these
two ordered sets by means of Alice's observation $\Phi$. Before continuing, we need
to make such ideas more precise. We begin in \cref{sec.sets_and_rels} with a
review of sets and relations. In \cref{subsec.def_preorder} we will give the
definition of a preorder---short for preordered set---and a good number
of examples.

%---- Subsection ----%
\subsection{Review of sets, relations, and functions}%
\label{sec.sets_and_rels}%
\index{set|(}

We will not give a definition of \emph{set} here, but informally we will think of a set as
a collection of things, known as elements. These things could be all the leaves
on a certain tree, or the names of your favorite fruits, or simply some symbols $a$,
$b$, $c$. For example, we write $A=\{h,1\}$ to denote the set, called $A$, that
contains exactly two elements, one called $h$ and one called $1$. The set $\{h,h,1,h,1\}$ is exactly the same as $A$ because they both contain the same elements, $h$ and $1$, and repeating an element more than once in the notation doesn't change the set.%
\footnote{If you want a notion where ``$h,1$'' is different than ``$h,h,1,h,1$'', you can use something called \emph{bags}, where the number of times an element is listed matters, or \emph{lists}, where order also matters. All of these are important concepts in applied category theory, but sets will come up the most for us.}
For an arbitrary set $X$, we write $x \in X$ if $x$ is
element of $X$; so we have $h\in A$ and $1\in A$, but $0\not\in A$.

\begin{example}
Here are some important sets from mathematics---and the notation we will use---that will appear again in this book.%
\index{notation!for common sets}
\begin{itemize}
	\item $\varnothing$ denotes the empty set; it has no elements.%
\index{set!empty}
	\item $\{1\}$ denotes a set with one element; it has one element, $1$.%
\index{set!one element}
	\item $\bb$ denotes the set of \emph{booleans}; it has two elements, $\true$ and $\false$.%
\index{set!booleans as}%
\index{booleans!as set}
	\item $\nn$ denotes the set of \emph{natural numbers}; it has elements $0,1,2,3,\ldots,90^{717},\ldots$.%
\index{set!natural numbers as}
\index{natural numbers as!as set}
	\item $\ord{n}$, for any $n\in\nn$, denotes the \emph{$n^{\text{th}}$ ordinal}; it has $n$ elements $1,2,\ldots,n$. For example, $\ord{0}=\varnothing$, $\ord{1}=\{1\}$, and $\ord{5}=\{1,2,3,4,5\}$.%
\index{set!$n\th$ ordinal as}
	\item $\zz$, the set of \emph{integers}; it has elements $\ldots,-2,-1,0,1,2,\ldots,90^{717},\ldots$.%
\index{set!integers as}
	\item $\rr$, the set of \emph{real numbers}; it has elements like $\pi, 3.14, 5*\sqrt{2}, e, e^2, -1457, 90^{717}$, etc.%
\index{set!real numbers as}%
\index{real numbers!as set}
\end{itemize}
\end{example}

Given sets $X$ and $Y$, we say that $X$ is a \emph{subset} of $Y$, and write
$X\ss Y$, if every element in $X$ is also in $Y$. For example $\{h\}\ss A$. Note
that the empty set $\varnothing\coloneqq\{\}$ is a subset of every other set.%
	\footnote{When we write $Z\coloneqq \foo$, it means ``assign the meaning
	$\foo$ to variable $Z$'', whereas $Z=\foo$ means simply that $Z$ is
	equal to $\foo$, perhaps as discovered via some calculation. In
	particular, $Z\coloneqq \foo$ implies $Z=\foo$ but not vice versa;
	indeed it \emph{would not} be proper to write $3+2\coloneqq 5$ or
	$\{\}\coloneqq \varnothing$.
}
 Given a set $Y$ and a property $P$ that is either true or false for each element of $Y$, we write $\{y\in Y\mid P(y)\}$ to mean the subset of those $y$'s that satisfy $P$.%
\index{subset}%
\index{subset|seealso {power set}}%
\index{notation!set builder}

\begin{exercise}%
\label{exc.comprehension_comprehension}
\begin{enumerate}
	\item Is it true that $\nn=\{n\in\zz\mid n\geq 0\}$?
	\item Is it true that $\nn=\{n\in\zz\mid n\geq 1\}$?
	\item Is it true that $\varnothing=\{n\in\zz\mid 1<n<2\}$?
\qedhere
\end{enumerate}
\end{exercise}

If both $X_1$ and $X_2$ are subsets of $Y$, their \emph{union}, denoted $X_1\cup X_2$, is also a subset of $Y$, namely the one containing the elements in $X_1$ and the elements in $X_2$ but no more. For example if $Y=\{1,2,3,4\}$ and $X_1=\{1,2\}$ and $X_2=\{2,4\}$, then $X_1\cup X_2=\{1,2,4\}$. Note that $\varnothing\cup X=X$ for any $X\ss Y$.%
\index{union}

Similarly, if both $X_1$ and $X_2$ are subsets of $Y$, then their \emph{intersection}, denoted $X_1\cap X_2$, is also a subset of $Y$, namely the one containing all the elements of $Y$ that are both in $X_1$ and in $X_2$, and no others. So $\{1,2,3\}\cap\{2,5\}=\{2\}$.%
\index{intersection}

What if we need to union or intersect a lot of subsets? For example, consider the sets $X_0=\varnothing$, $X_1=\{1\}$, $X_2=\{1,2\}$, etc. as subsets of $\nn$, and we want to know what the union of all of them is. This union is written $\bigcup_{n\in\nn}X_n$, and it is the subset of $\nn$ that contains every element of every $X_n$, but no others. Namely, $\bigcup_{n\in\nn}X_n=\{n\in \nn\mid n\geq 1\}$. Similarly one can write $\bigcap_{n\in\nn}X_n$ for the intersection of all of them, which will be empty in the above case.

Given two sets $X$ and $Y$, the \emph{product} $X \times Y$ of
$X$ and $Y$ is the set of pairs $(x,y)$, where $x \in X$ and $y \in Y$.%
\index{product!of sets}

Finally, we may want to take a \emph{disjoint} union of two sets, even if they
have elements in common. Given two sets $X$ and $Y$, their \emph{disjoint union}
$X \dju Y$ is the set of pairs of the form $(x,1)$ or $(y,2)$, where $x \in X$
and $y \in Y$.
\index{union!disjoint}

\begin{exercise}%
\label{exc.subsets_products}
	Let $A\coloneqq\{h,1\}$ and $B\coloneqq\{1,2,3\}$.
	\begin{enumerate}
		\item There are eight subsets of $B$; write them out.
		\item Take any two nonempty subsets of $B$ and write out their union.
		\item There are six elements in $A\times B$; write them out.
	\qedhere
		\item There are five elements of $A \dju B$; write them out.
		\item If we consider $A$ and $B$ as subsets of the set
	$\{h,1,2,3\}$, there are four elements of $A \cup B$; write them out.
	\end{enumerate}
\end{exercise}
\index{set|)}

Relationships between different sets---for example between the set of trees in your neighborhood and the set of your favorite fruits---are captured using subsets and product sets.

\begin{definition}%
\index{relation}
Let $X$ and $Y$ be sets. A \emph{relation between $X$ and $Y$} is a subset $R\subseteq X
\times Y$. A \emph{binary relation on $X$} is a relation between $X$ and $X$, i.e.\ a subset $R\ss X\times X$.%
\index{relation!binary}%
\index{binary relation|see {relation, binary}}
\end{definition}

It is convenient to use something called \emph{infix notation} for binary relations $R\ss A\times A$. This means one picks a symbol, say $\star$, and writes $a\star b$ to mean $(a,b)\in R$.
\index{infix notation}%
\index{notation!infix|see {infix}}

\begin{example}%
\index{relation!binary}
There is a binary relation on $\RR$ with infix notation $\leq$. Rather than writing $(5,6)\in R$, we write $5\leq 6$.

Other examples of infix notation for relations are $=$, $\approx$, $<$, $>$. In number theory, they are interested in whether one number divides without remainder into another number; this relation is denoted with infix notation $|$, so $5|10$.
\end{example}%
\index{divides relation}%
\index{infix notation}

\paragraph{Partitions and equivalence relations.}%
\index{equivalence relation!and partition|(}

We can now define partitions more formally.

\begin{definition}%
\label{def.partition}%
\index{partition}
If $A$ is a set, a \emph{partition} of $A$ consists of a set $P$ and, for each $p\in P$, a nonempty subset $A_p\ss A$, such that 
\begin{equation}%
\label{eqn.condition_part}
  A=\bigcup_{p\in P}A_p
  \qquad\text{and}\qquad
  \text{if }p\neq q\text{ then }A_p\cap A_q=\varnothing.
\end{equation}
We may denote the partition by $\{A_p\}_{p\in P}$. We refer to $P$ as the set of \emph{part labels} and if $p\in P$ is a part label, we refer to $A_p$ as the \emph{$p^{\tn{th}}$ part}. The condition \eqref{eqn.condition_part} says that each element $a\in A$ is in exactly one part.

We consider two different partitions $\{A_p\}_{p\in P}$ and $\{A'_{p'}\}_{p'\in P'}$ of $A$ to be the same if for each $p\in P$ there exists a $p'\in P'$ with $A_p=A'_{p'}$. In other words, if two ways to divide $A$ into parts are exactly the same---the only change is in the labels---then we don't make a distinction between them.%
\index{partition!label irrelevance of}
\end{definition}

\begin{exercise}%
\label{exc.proof_re_parts}
Suppose that $A$ is a set and $\{A_p\}_{p\in P}$ and $\{A'_{p'}\}_{p'\in P'}$ are two partitions of $A$ such that for each $p\in P$ there exists a $p'\in P'$ with $A_p=A'_{p'}$.
\begin{enumerate}
	\item Show that for each $p\in P$ there is at most one $p'\in P'$ such that $A_p=A'_{p'}$
	\item Show that for each $p'\in P'$ there is a $p\in P$ such that $A_p=A'_{p'}$.
	\qedhere
\qedhere
\end{enumerate}
\end{exercise}

\begin{exercise}%
\label{exc.equiv_rel_practice}
Consider the partition shown below:
\[
\begin{tikzpicture}[x=1cm]
	\node (a11) {$\LMO{11}$};
	\node[right=.5 of a11] (a12) {$\LMO{12}$};
	\node[right=.5 of a12] (a13) {$\LMO{13}$};
	\node[below=.5 of a11] (a21) {$\LMO{21}$};
	\node[below=.5 of a12] (a22) {$\LMO{22}$};
	\node[below=.5 of a13] (a23) {$\LMO{23}$};
	\draw [rounded corners=9pt] 
     ($(a11)+(135:.45)$) --
     ($(a11)+(-135:.45)$) --
     ($(a12)+(-45:.45)$) --
     ($(a12)+(45:.45)$) --     
     cycle;
	\draw [rounded corners=9pt] 
     ($(a22)+(135:.45)$) --
     ($(a22)+(-135:.45)$) --
     ($(a23)+(-45:.45)$) --
     ($(a23)+(45:.45)$) --     
     cycle;
	\draw [rounded corners=9pt] 
     ($(a13)+(135:.45)$) --
     ($(a13)+(-135:.45)$) --
     ($(a13)+(-45:.45)$) --
     ($(a13)+(45:.45)$) --     
     cycle;
	\draw [rounded corners=9pt] 
     ($(a21)+(135:.45)$) --
     ($(a21)+(-135:.45)$) --
     ($(a21)+(-45:.45)$) --
     ($(a21)+(45:.45)$) --     
     cycle;
	\node[inner sep=10pt, draw, fit=(a11) (a23)] (a) {};
\end{tikzpicture}
\]
For any two elements $a,b\in\{11,12,13,21,22,23\}$, let's allow ourselves to write a twiddle symbol $a\sim b$ between them if $a$ and $b$ are both in the same part. Write down every pair of elements $(a,b)$ that are in the same part. There should be 10.%
\footnote{Hint: whenever someone speaks of ``two elements $a,b$ in a set $A$'', the two elements may be the same!}%
\index{partition!part of}
\end{exercise}

We will see in \cref{prop.equivalence_partition} that there is a strong relationship between partitions and something called equivalence relations, which we define next.

\begin{definition}%
\label{def.equivalence_relation}%
\index{equivalence relation}%
\index{relation!equivalence|see {equivalence relation}}
Let $A$ be a set. An \emph{equivalence relation} on $A$ is a binary relation, let's give it infix notation $\sim$,%
\index{equivalence relation!as binary relation}
 satisfying the following three properties:
\begin{enumerate}[label=(\alph*)]
	\item $a\sim a$, for all $a\in A$,
	\item $a\sim b$ iff\footnote{`Iff' is short for `if and only if'.} $b\sim a$, for all $a,b\in A$, and
	\item if $a\sim b$ and $b\sim c$ then $a\sim c$, for all $a,b,c\in A$.
\end{enumerate}
These properties are called \emph{reflexivity}, \emph{symmetry}, and \emph{transitivity}, respectively.
\index{reflexivity}%
\index{symmetry}%
\index{transitivity}%
\index{infix notation}
\end{definition}

\begin{proposition}%
\label{prop.equivalence_partition}%
\index{partition!associated equivalence relation of}
Let $A$ be a set. There is a one-to-one correspondence between the ways to partition $A$ and the equivalence relations on $A$.
\end{proposition}
\begin{proof}
We first show that every partition gives rise to an equivalence relation, and then that every equivalence relation gives rise to a partition. Our two constructions will be mutually inverse, proving the proposition.

Suppose we are given a partition $\{A_p\}_{p\in P}$; we define a relation $\sim$ and show it is an equivalence relation. Define $a\sim b$ to mean that $a$ and $b$ are in the same part: there is some $p\in P$ such that $a\in A_p$ and $b\in A_p$. It is obvious that $a$ is in the same part as itself. Similarly, it is obvious that if $a$ is in the same part as $b$ then $b$ is in the same part as $a$, and that if further $b$ is in the same part as $c$ then $a$ is in the same part as $c$. Thus $\sim$ is an equivalence relation as defined in \cref{def.equivalence_relation}.

Suppose given an equivalence relation $\sim$; we will form a partition on $A$ by saying what the parts are. Say that a subset $X\ss A$ is $(\sim)$-closed if, for every $x\in X$ and $x'\sim x$, we have $x'\in X$. Say that a subset $X\ss A$ is $(\sim)$-connected if it is nonempty and $x\sim y$ for every $x,y\in X$. Then the parts corresponding to $\sim$ are exactly the $(\sim)$-closed, $(\sim)$-connected subsets. It is not hard to check that these indeed form a partition.%
\index{connected}
\end{proof}

\begin{exercise}%
\label{exc.equiv_part_proof}
Let's complete the ``it's not hard to check'' part in the proof of \cref{prop.equivalence_partition}. Suppose that $\sim$ is
an equivalence relation on a set $A$, and let $P$ be the set of $(\sim)$-closed and
$(\sim)$-connected subsets $\{A_p\}_{p\in P}$.
\begin{enumerate}
	\item Show that each part $A_p$ is nonempty.
	\item Show that if $p\neq q$, i.e.\ if $A_p$ and $A_q$ are not exactly the same set, then $A_p\cap A_q=\varnothing$.
	\item Show that $A=\bigcup_{p\in P}A_p$.
	\qedhere
\qedhere
\end{enumerate}
\end{exercise}

\begin{definition}
\label{def.quotient}
\index{quotient}
Given a set $A$ and an equivalence relation $\sim$ on $A$, we say that the
\emph{quotient} $A/\sim$ of $A$ under $\sim$ is the set of parts of the
corresponding partition.
\end{definition}

\index{equivalence relation!and partition|)}

\paragraph{Functions.} The most frequently used sort of relation between sets is
that of functions.%
\index{relation!function as}

\begin{definition}%
\label{def.function}%
\index{function}
Let $S$ and $T$ be sets. A \emph{function from $S$ to $T$} is a subset $F\ss
S\times T$ such that for all $s\in S$ there exists a unique $t\in T$ with
$(s,t)\in F$.%
\index{function!as relation}

The function $F$ is often denoted $F\colon S\to T$. From now on, we write $F(s)=t$, or sometimes $s\mapsto t$, to mean $(s,t)\in F$. For any $t\in T$, the \emph{preimage of $t$ along $F$} is the subset $\{s\in S\mid F(s)=t\}$.%
\index{preimage}

A function is called \emph{surjective}%
\index{function!surjective}, or a
\emph{surjection}, if
%it satisfies the converse of existence:
for all $t\in T$
there exists $s\in S$ with $F(s)=t$. A function is called
\emph{injective}%
\index{function!injective}, or an \emph{injection}, if
% it satisfies the converse of uniqueness: 
for all $t\in T$ and $s_1,s_2\in S$ with
$F(s_1)=t$ and $F(s_2)=t$, we have $s_1=s_2$. A function is called
\emph{bijective}%
\index{function!bijective} if it is both surjective and
injective.%
\index{function!injective}%
\index{function!bijective}%
\index{function!surjective}
\end{definition}

We use various decorations on arrows, $\to, \surj, \inj, \To{\cong}$ to denote these special sorts of functions. Here is a table with the name, arrow decoration, and an example of each sort of function:
\[
% [inline block 3: 2 envs, 2761 chars -> data_tex | \begin{tabular}{>{\centering\arraybackslash}m{1.3in}>{\centering\arraybackslash}m{1.3in}>{\centering\arraybackslash}m{1....]

\qedhere
\]
\end{example}

For notational consistency with \cref{def.function}, the arrows in
\cref{ex.identity} might be drawn as $\mapsto$ rather than
$\color{blue}\dashrightarrow$. The $\color{blue}\dashrightarrow$-style arrows
were drawn because we thought it was prettier, i.e.\ easier on the eye. Beauty
is important too; an imbalanced preference for strict correctness over beauty
becomes \emph{pedantry}. But outside of pictures, we will be careful.

\begin{exercise}%
\label{exc.inj_surj_fun}
In the following, do not use any examples already drawn above.
\begin{enumerate}
	\item Find two sets $A$ and $B$ and a function $f\colon A\to B$ that is injective but not surjective.
	\item Find two sets $A$ and $B$ and a function $f\colon A\to B$ that is surjective but not injective.
\end{enumerate}
Now consider the four relations shown here:
\[
\begin{tikzpicture}[short=-2pt]
	\foreach \f in {0,...,3} {
		\foreach \x in {0,...,2} 
			{\draw (0+3*\f,.4-.4*\x) node (X\f\x) {$\bullet$};}
		\node[draw, ellipse, inner sep=0pt, fit=(X\f0) (X\f2)] (X\f) {};
		\foreach \x in {0,...,2} 
			{\draw (1+3*\f,.4-.4*\x) node (Y\f\x) {$\bullet$};}
		\node[draw, ellipse, inner sep=0pt, fit=(Y\f0) (Y\f2)] (Y\f) {};
	}
		\draw[mapsto] (X00) to (Y00);
		\draw[mapsto] (X01) to (Y00);
		\draw[mapsto] (X02) to (Y01);
		\draw[mapsto] (X10) to (Y10);
		\draw[mapsto] (X10) to (Y11);
		\draw[mapsto] (X11) to (Y12);
		\draw[mapsto] (X21) to (Y21);
		\draw[mapsto] (X22) to (Y20);
%		%
		\draw[mapsto] (X30) to (Y32);
		\draw[mapsto] (X31) to (Y30);
		\draw[mapsto] (X32) to (Y31);
\end{tikzpicture}
\]
For each relation, answer the following two questions.
\begin{enumerate}[resume]
	\item Is it a function?
	\item If not, why not? If so, is it injective, surjective, both (i.e.\ bijective), or neither?
\qedhere
\end{enumerate}
\end{exercise}

\begin{exercise}%
\label{exc.map_to_empty}
Suppose that $A$ is a set and $f\colon A\to\varnothing$ is a function to the empty set. Show that $A$ is empty.%
\index{set!empty}
\end{exercise}

\begin{example}%
\label{ex.partition_and_surjections}%
\index{partition!as
  surjection}
A partition on a set $A$ can also be understood in terms of surjective functions
out of $A$. Given a surjective function $f\colon A\surj P$, where $P$ is any
other set, the preimages $f\inv(p)\ss A$, one for each element $p\in
P$, form a partition of $A$. Here is an example.%
\index{preimage}

Consider the partition of $S\coloneqq\{11, 12, 13, 21, 22, 23\}$ shown below:
\[
\begin{tikzpicture}[x=1cm]
	\node (a11) {$\LMO{11}$};
	\node[right=.5 of a11] (a12) {$\LMO{12}$};
	\node[right=.5 of a12] (a13) {$\LMO{13}$};
	\node[below=.5 of a11] (a21) {$\LMO{21}$};
	\node[below=.5 of a12] (a22) {$\LMO{22}$};
	\node[below=.5 of a13] (a23) {$\LMO{23}$};
	\draw [rounded corners=9pt] 
     ($(a11)+(135:.45)$) --
     ($(a11)+(-135:.45)$) --
     ($(a12)+(-45:.45)$) --
     ($(a12)+(45:.45)$) --     
     cycle;
	\draw [rounded corners=9pt] 
     ($(a22)+(135:.45)$) --
     ($(a22)+(-135:.45)$) --
     ($(a23)+(-45:.45)$) --
     ($(a23)+(45:.45)$) --     
     cycle;
	\draw [rounded corners=9pt] 
     ($(a13)+(135:.45)$) --
     ($(a13)+(-135:.45)$) --
     ($(a13)+(-45:.45)$) --
     ($(a13)+(45:.45)$) --     
     cycle;
	\draw [rounded corners=9pt] 
     ($(a21)+(135:.45)$) --
     ($(a21)+(-135:.45)$) --
     ($(a21)+(-45:.45)$) --
     ($(a21)+(45:.45)$) --     
     cycle;
	\node[inner sep=10pt, draw, fit=(a11) (a23)] (a) {};
	\node[left=0pt of a] {$S\coloneqq$};
\end{tikzpicture}
\]
It has been partitioned into four parts, so let $P=\{a,b,c,d\}$ and let $f\colon S\surj P$ be given by
\[
  f(11)=a, \quad f(12)=a,\quad f(13)=b,\quad f(21)=c,\quad f(22)=d, \quad f(23)=d
\]
\end{example}

\begin{exercise}%
\label{exc.part_surj}
Write down a surjection corresponding to each of the five partitions in \cref{eqn.parts_of_3}.
\end{exercise}

\begin{definition}%
\label{def.composite_fn}%
\index{function!composite}
If $F\colon X\to Y$ is a function and $G\colon Y\to Z$ is a function, their
\emph{composite} is the function $X\to Z$ defined to be $G(F(x))$ for any $x\in
X$. It is often denoted $G\circ F$, but we prefer to denote it $F\cp G$. It takes any element $x\in X$, evaluates $F$ to get an element $F(x)\in Y$ and then evaluates $G$ to get an element $G(F(x))$.
\end{definition}

\begin{example}
If $X$ is any set and $x\in X$ is any element, we can think of $x$ as a function $\singleton\to X$, namely the function sending $1$ to $x$. For example, the three functions $\singleton\to\{1,2,3\}$ shown below correspond to the three elements of $\{1,2,3\}$:
\[
\begin{tikzpicture}[short=-2pt]
	\foreach \f in {1,...,3} {
		\draw (0+3*\f,0) node (X\f0) {$\bullet$};
		\node[draw, ellipse, inner sep=0pt, fit=(X\f0)] (X\f) {};
		\foreach \x in {1,...,3} {
			\draw (1+3*\f,.8-.4*\x) node[label={[below right=0]:\tiny\x}] (Y\f\x) {$\bullet$};
		}
		\draw[mapsto] (X\f0) to (Y\f\f);
		\node[draw, ellipse, inner sep=0pt, fit=(Y\f1) (Y\f3)] (Y\f) {};
	}
\end{tikzpicture}
\]
Suppose given a function $F\colon X\to Y$ and an element of $X$, thought of as a function $x\colon\singleton\to X$. Then evaluating $F$ at $x$ is given by the composite, $F(x)=x\cp F$.
\end{example}

%---- Subsection ----%
\subsection{Preorders}%
\label{subsec.def_preorder}%
\index{preorder|(}

In \cref{sec.motivate_1}, we several times used the symbol $\leq$ to denote a sort of order. Here is a formal definition of what it means for a set to have an order.

\begin{definition}%
\label{def.preorder}%
\index{preorder}%
\index{relation!preorder|see {preorder}}
A \emph{preorder relation} on a set $X$ is a binary relation on $X$, here denoted with infix notation $\leq$, such that %
\index{preorder relation!as binary relation}
\begin{enumerate}[label=(\alph*)]
\item $x \le x$; and 
\item if $x \le y$ and $y \le z$, then $x \le z$.
\end{enumerate}
The first condition is called \emph{reflexivity}%
\index{reflexivity} and the second is called \emph{transitivity}%
\index{transitivity}. If $x\leq y$ and $y\leq x$, we write $x\cong y$ and say $x$ and $y$ are \emph{equivalent}. We call a pair $(X,\le)$ consisting of a set equipped with a preorder relation a \emph{preorder}.%
\end{definition}%
\index{equivalence relation!generated by a preorder}%
\index{infix notation}

\begin{remark}
  Observe that reflexivity and transitivity are familiar from \cref{def.equivalence_relation}: preorders are just equivalence relations without the symmetry condition.
\end{remark}%
\index{equivalence relation!as symmetric preorder}%
\index{symmetry}

\begin{example}[Discrete preorders]%
\label{ex.disc_preorder}%
\index{preorder!discrete}
Every set $X$ can be considered as a discrete preorder $(X,=)$. This means that the only order relations on $X$ are of the form $x \le x$; if $x \ne y$ then neither $x \le y$ or $y \le x$ hold.

We depict discrete preorders as simply a collection of points:
\[
\begin{tikzpicture}
  	\node (0) at (0,0) {$\bullet$};
  	\node (1) at (2,0) {$\bullet$};
  	\node (2) at (4,0) {$\bullet$};
  	\node[draw, fit=(0) (1) (2)] (A) {};
\end{tikzpicture}
\qedhere
\]
\end{example}

\begin{example}[Codiscrete preorders]%
\label{ex.codisc_preorder}%
\index{preorder!codiscrete}
From every set we may also construct its codiscrete preorder $(X,\le)$ by
equipping it with the total binary relation $X\times X \subseteq X \times X$.
This is a very trivial structure: it means that for \emph{all} $x$ and $y$ in
$X$ we have $x \le y$ (and hence also $y \le x$).
\end{example}

\begin{example}[Booleans]%
\label{ex.boolean_order}%
\index{booleans!as preorder}
  The booleans $\bb =\{\false,\true\}$ form a preorder with $\false \le \true$.
\[
\begin{tikzpicture}
  	\node (0) at (0,0) {$\false$};
  	\node (1) at (0,1) {$\true$};
  	\node[draw, fit=(0) (1)] (A) {};
	\draw[->] (0) to (1);
\end{tikzpicture}
\qedhere
\]
\end{example}

\begin{remark}[Partial orders are skeletal preorders] %
\label{rem.partial_orders}
\label{rem.skeletal_preorder}%
\index{preorder!skeletal}%
\index{preorder!partial order as}
A preorder is a \emph{partial order} if we additionally have that
\begin{enumerate}
  \item[(c)] $x \cong y$ implies $x =y$.  
\end{enumerate}

In category theory terminology, the requirement that $x \cong y$ implies $x =y$
is known as \emph{skeletality}, so partial orders are \emph{skeletal
preorders}.%
\index{skeleton}
For short, we also use the term \emph{poset}, a contraction of partially ordered
set.%
\index{poset|see {partial order}}%
\index{partial order}%
\index{partial order|seealso {preorder, skeletal}}    

The difference between preorders and partial orders is rather minor. A partial order already is a preorder, and every preorder can be made into a partial order by equating any two elements $x,y$ for which $x\cong y$, i.e.\ for which $x\leq y$ and $y\leq x$.

For example, any discrete preorder is already a partial order, while any
codiscrete preorder simply becomes the unique partial order on a one element
set.
\end{remark}

We have already introduced a few examples of preorders using Hasse diagrams. It
will be convenient to continue to do this, so let us be a bit more formal about
what we mean. First, we need to define a graph.

\begin{definition} %
\label{def.graph}%
\index{graph}%
\index{graph!vertex}%
\index{graph!arrow}
A \emph{graph} $G=(V,A,s,t)$ consists of a set $V$ whose elements are called
\emph{vertices}, a set $A$ whose elements are called \emph{arrows}, and two
functions $s,t\colon A \to V$ known as the \emph{source} and \emph{target}
functions respectively. Given $a \in A$ with $s(a)=v$ and $t(a)=w$, we say that
$a$ is an arrow from $v$ to $w$.

By a \emph{path} in $G$ we mean any sequence of
arrows such that the target of one arrow is the source of the next. This
includes sequences of length 1, which are just arrows $a\in A$ in $G$, and
sequences of length 0, which just start and end at the same vertex $v$, without
traversing any arrows. %
\index{graph!paths in}
\end{definition}

\begin{example}%
\label{ex.my_first_graph}
Here is a picture of a graph:
\[
G=\fbox{
\begin{tikzcd}[ampersand replacement=\&]
	\LMO{1}\ar[r, "a"]\ar[dr, shift left, "b"]\ar[dr, shift right, "c"']\&
	\LMO{2}\ar[d, "e"]\ar[loop right, "d"]\\
	\&\LMO{3}\&\LMO{4}
\end{tikzcd}
}
\]
It has $V=\{1,2,3,4\}$ and $A=\{a,b,c,d,e\}$. The source and target functions, $s,t\colon A\to V$ are given by the following partially-filled-in tables (see \cref{exc.understanding_graphs}):
\[
\begin{array}{l || l | l}
	\textbf{arrow }a&\textbf{source }s(a)\in V&\textbf{target }t(a)\in V\\\hline
	a&1&\?\\
	b&1&3\\
	c&\?&\?\\
	d&\?&\?\\
	e&\?&\?
\end{array}
\]

There is one path from 2 to 3, namely the arrow $e$ is a path of length 1. There are no paths from $4$ to 3, but there is one path from $4$ to $4$, namely the path of length 0. There are infinitely many paths $1\to 2$ because one can loop and loop and loop through $d$ as many times as one pleases.
\end{example}

\begin{exercise}%
\label{exc.understanding_graphs}
Fill in the table from \cref{ex.my_first_graph}.
\end{exercise}

\begin{remark}%
\label{rem.Hasse}%
\index{Hasse diagram}%
\index{preorder!presentation of}%
\index{presentation!of preorder}
From every graph we can get a preorder. Indeed, a Hasse diagram is a graph
$G=(V,A,s,t)$ that gives a \emph{presentation} of a preorder $(P,\leq)$. The
elements of $P$ are the vertices $V$ in $G$, and the order $\leq$ is given by $v\leq
w$ iff\footnote{\index{iff}The word `iff' is a common mathematical shorthand
for the phrase ``if and only if'', and we use it to connect two statements that
each imply the other, and hence are logically equivalent.} there is a path $v\to w$. For any vertex $v$, there is always a path $v\to v$,
and this translates into the reflexivity law from \cref{def.preorder}. The fact
that paths $u\to v$ and $v\to w$ can be concatenated to a path $u\to w$
translates into the transitivity law.
\end{remark}

\begin{exercise}%
\label{exc.graph_to_hasse}
	What preorder relation $(P,\leq)$ is depicted by the graph $G$ in \cref{ex.my_first_graph}? That is, what are the elements of $P$ and write down every pair $(p_1,p_2)$ for which $p_1\leq p_2$.
\end{exercise}

\begin{exercise}%
\label{exc.points_as_hasse}
Does a collection of points, like the one in \cref{ex.disc_preorder}, count as a Hasse diagram? 
\end{exercise}

\begin{exercise}%
\label{exc.order_parts_practice}
Let $X$ be the set of partitions of $\{\bullet,\circ,\ast\}$; it has five elements and an order by coarseness, as shown in the Hasse diagram \cref{eqn.parts_of_3}. Write down every pair $(x,y)$ of elements in $X$ such that $x\leq y$. There should be 12.
\end{exercise}

\begin{remark}
In \cref{rem.partial_orders} we discussed partial orders---preorders with the property that whenever two elements are equivalent, they are the same---and then said that this property is fairly inconsequential: any preorder can be converted to a partial order that's ``equivalent'' category-theoretically. A partial order is like a preorder with a fancy haircut: some mathematicians might not even notice it.

However, there are other types of preorders that are more special and noticeable. For example, a \emph{total order} has the following additional property:
\begin{enumerate}
\item[(d)] for all $x,y$, either $x\leq y$ or $y\leq x$.  
\end{enumerate}
\index{total order}
We say two elements $x,y$ of a preorder are \emph{comparable} if either $x\leq y$ or $y\leq x$, so a total order is a preorder where \emph{every} two elements are comparable.%
\index{comparable}
\end{remark}

\begin{exercise}%
\label{exc.discrete_preorder_comparable}
Is it correct to say that a discrete preorder is one where \emph{no} two elements are comparable?
\end{exercise}

\begin{example}[Natural numbers]%
\label{ex.orders_on_N}%
\index{natural numbers}
The natural numbers $\nn\coloneqq\{0,1,2,3,\ldots\}$ are a preorder with the order given by the usual size
ordering, e.g.\ $0\leq 1$ and $5\leq 100$. This is a total order: either $m\leq n$ or $n\leq m$ for all $m,n$. One can see that its Hasse diagram looks like a line:
\[
\begin{tikzcd}
	\LMO{0}\ar[r]&\LMO{1}\ar[r]&\LMO{2}\ar[r]&\LMO{3}\ar[r]&\cdots
\end{tikzcd}
\]
What made \cref{eqn.parts_of_3} not look like a line is that there are non-comparable elements $a$ and $b$---namely all those in the middle row---which satisfy neither $a\leq b$ nor $b\leq a$.

Note that for any set $S$, there are many different ways of assigning an order to $S$.
Indeed, for the set $\nn$, we could also use the discrete ordering: only write $n\leq m$ if $n=m$. Another ordering is the reverse ordering, like $5\leq 3$ and $3\leq 2$, like how golf is scored (5 is worse than 3).

Yet another ordering on $\nn$ is given by division: we say that $n \le m$ if $n$ divides into $m$ without remainder.
In this ordering $2 \le 4$, for example, but $2\not\leq 3$, since there is a remainder when $2$
is divided into $3$.
\end{example}%
\index{divides relation!as preorder}%
\index{relation!divides|see {divides relation}}

\begin{exercise}%
\label{exc.total_order1}
Write down the numbers $1,2,\ldots,10$ and draw an arrow $a\to b$ if $a$ divides perfectly into $b$. Is it a total order?
\end{exercise}

\begin{example}[Real numbers]%
\index{real numbers}
  The real numbers $\rr$ also form a preorder with the ``usual ordering'', e.g.\ $-500\leq -499\leq 0\leq \sqrt{2}\leq 100/3$.
\end{example}

\begin{exercise}%
\label{exc.total_order2}%
\index{order!total}
Is the usual $\leq$ ordering on the set $\rr$ of real numbers a total order?
\end{exercise}

\begin{example}[Partition from preorder]%
\label{ex:part_from_preorder}
\index{partition!from preorder}%
\index{equivalence relation!and partition}
Given a preorder, i.e.\ a pre-ordered set $(P,\leq)$, we defined the notion of equivalence of elements, denoted $x\cong y$, to mean $x\leq y$ and $y\leq x$. This is an equivalence relation, so it induces a partition on $P$. (The phrase ``A induces B'' means that we have an automatic way to turn an A into a B. In this case, we're saying that we have an automatic way to turn equivalence relations into partitions, which we do; see \cref{prop.equivalence_partition}.)

For example, the preorder whose Hasse diagram is drawn on the left corresponds to the partition drawn on the right.%
\index{Hasse diagram}
\[
% [inline block 4: 2 envs, 2560 chars -> data_tex | \begin{tikzpicture}[baseline=(C.south)] 	\foreach \i in {1,2,3,4} {...]

\]
See the cube? The Hasse diagram for the power set of a finite set, say
$\powset\{1,2,\ldots,n\}$,\footnote{Note that we omit the parentheses here,
  writing $\powset X$ instead of $\powset(X)$; throughout this book we will
omit parentheses if we judge the presentation is cleaner and it is unlikely to cause confusion.} 
 always looks like a cube of dimension $n$.
\end{example}

\begin{exercise}%
\label{exc.powerset_Hasse}%
\index{Hasse diagram}
Draw the Hasse diagrams for $\powset(\varnothing)$, $\powset\{1\}$, and $\powset\{1,2\}$.
\end{exercise}

\begin{example}[Partitions]%
\label{ex.partitions}%
\index{preorder!of partitions}
We talked about getting a partition from a preorder; now let's think about how we might order the set $\prt{A}$ of \emph{all partitions} of $A$, for some set $A$. In fact, we have done this before in \cref{eqn.parts_of_3}.
%As we discussed in \cref{prop.equivalence_partition}, partitions of $A$ can be identified with equivalence relations on $A$.
Namely, we order on partitions by fineness: a partition $P$ is \emph{finer} than a
partition $Q$ if, for every part $p\in P$ there is a part $q\in Q$ such that $A_p\ss A_q$. We could also say that $Q$ is \emph{coarser} than $P$.

Recall from \cref{ex.partition_and_surjections} that partitions on $A$ can be thought of as surjective functions out of $A$. Then $f\colon A\surj P$ is finer than $g\colon A\surj Q$ if there is a function $h\colon P\to Q$ such that $f\cp h=g$.
\end{example}

\begin{exercise}%
\label{exc.partitions_and_functions}
For any set $S$ there is a coarsest partition, having just one part. What surjective function does it correspond to?

There is also a finest partition, where everything is in its own partition. What surjective function does it correspond to?
\end{exercise}

\begin{example}[Upper sets]%
\index{upper set}%
\label{ex.upper_set}
  Given a preorder $(P,\le)$, an \emph{upper set} in $P$ is a subset $U$ of $P$ satisfying the condition that
  if $p \in U$ and $p \le q$, then $q \in U$. ``If $p$ is an element then so is anything bigger.''
  Write $\upset(P)$ for the set of
  upper sets in $P$. We can give the set $\upset$ an order by letting $U \le V$ if $U$ is
  contained in $V$.

  For example, if $(\bb,\leq)$ is the booleans (\cref{ex.boolean_order}), then its preorder of uppersets $\upset(\bb)$ is%
\index{Hasse diagram}
\[
\begin{tikzpicture}
  \node (0) at (0,0) {$\varnothing$};
	\node (1) at (0,1) {$\{\true\}$};
	\node (2) at (0,2) {$\{\true,\false\}$};
  \node[draw, fit=(0) (1) (2)] (A) {};
	\draw[->] (0) to (1);
	\draw[->] (1) to (2);
\end{tikzpicture}
\]
The subset $\{\false\}\ss\bb$ is not an upper set, because $\false\leq\true$ and $\true \notin \{\false\}$.
\end{example}

\begin{exercise}%
\label{exc.uppersets_on_discrete}
  Prove that the preorder of upper sets on a discrete preorder (see \cref{ex.disc_preorder}) on a set $X$ is simply the power
  set $\powset(X)$.%
\index{power set}
\end{exercise}

\begin{example}[Product preorder]%
\index{product!of preorders}%
\label{ex.product_preorder}
  Given preorders $(P,\le)$ and $(Q,\le)$, we may define a preorder structure on
  the product set $P\times Q$ by setting $(p,q) \le (p',q')$ if and only if $p
  \le p'$ and $q \le q'$. We call this the \emph{product preorder}. This is a basic
  example of a more general construction known as the product of categories.
\end{example}

\begin{exercise}%
\label{exc.product_preorder}
Draw the Hasse diagram for the product of the two preorders drawn below:%
\index{Hasse diagram}
\[
\boxCD{
\begin{tikzcd}[column sep=0, ampersand replacement=\&]
	\LMO{c}\&\&\LMO{b}\\
	\&\LMO[under]{a}\ar[ul]\ar[ur]
\end{tikzcd}
}
\hspace{.5in}
\boxCD{
\begin{tikzcd}
	\LMO{2}\\
	\LMO[under]{1}\ar[u]
\end{tikzcd}
}
\]
For bonus points, compute the upper set preorder on the result.
\end{exercise}

\begin{example}[Opposite preorder]%
\label{ex.opposite}%
\index{opposite!preorder}
  Given a preorder $(P,\le)$, we may define the opposite preorder $(P,\leq\op)$ to have the same set
  of elements, but with $p\leq\op q$ if and only if $q \le p$.
\end{example}

\index{preorder|)}
%-------- Section --------%
\subsection{Monotone maps}%
\index{map!monotone|(}%
\index{map!order-preserving|see {map, monotone}}

We have said that the categorical perspective emphasizes relationships between
things. For example, a preorder is a setting---or world---in which we have one sort of
relationship, $\leq$, and any two objects may be, or may not be, so-related.
Jumping up a level, the categorical perspective emphasizes that preorders
themselves---each a miniature world composed of many relationships---can be related to one another.%
\index{level shift}

The most important sort of relationship between
preorders is called a \emph{monotone map}. These are functions that preserve preorder
relations---in some sense mappings that respect $\leq$---and are hence considered the right notion of \emph{structure-preserving map} for preorders.%
\index{map!structure preserving}

\begin{definition}%
\index{map!monotone}%
\index{monotone map|seealso {map, monotone}}%
\index{map!monotone|seealso {monotone map}}
A \emph{monotone map} between
preorders $(A,\leq_A)$ and $(B,\leq_B)$ is a function $f\colon A \to B$ such that, for all elements $x,y\in A$, if $x \leq_A
y$ then $f(x) \leq_B f(y)$.
\end{definition}

A monotone map $A\to B$ between two preorders associates to each element of preorder $A$ an element of the preorder $B$. We depict this by drawing a dotted arrow from each
element $x\in A$ to its image $f(x)\in B$. Note that the order must be preserved in order to count as a valid monotone map, so if element $x$ is above
element $y$ in the lefthand preorder $A$, then the image $f(x)$ will be
above the image $f(y)$ in the righthand preorder.
\[
\begin{tikzpicture}[x=.6in, y=.4in, inner sep=5pt]
  	\node (a) at (0,4) {$\bullet$};
  	\node (b) at (0,3) {$\bullet$};
  	\node (c) at (-1,2) {$\bullet$};
  	\node (d) at (1,2) {$\bullet$};
  	\node (e) at (0,1) {$\bullet$};
  	\node[draw, fit=(a) (b) (c) (d) (e)] (A) {};
  	\draw[->] (b) to (a);
  	\draw[->] (c) to (b);
  	\draw[->] (d) to (b);
  	\draw[->] (e) to (c);
  	\draw[->] (e) to (d);
  	\node (B0) at (4,4) {$\bullet$};
  	\node (B1) at (4,2.5) {$\bullet$};
  	\node (B2) at (4,1) {$\bullet$};
  	\draw[->] (B2) to (B1);
  	\draw[->] (B1) to (B0);
  	\node[draw, fit=(B0) (B1) (B2)] (B) {};
	\begin{scope}[mapsto]
    \draw (a) -- (B0);
  	\draw (b) to[out=0,in=170] (B1);
  	\draw (c) to[out=10,in=180] (B1);
  	\draw (d) to[out=0,in=196] (B1);
  	\draw (e) -- (B2);
	\end{scope}
\end{tikzpicture}
\]

\begin{example}
Let $\bb$ and $\nn$ be the preorders of booleans from \cref{ex.boolean_order} and $\nn$ the preorder of natural numbers from \cref{ex.orders_on_N}. The map $\bb \to \nn$ sending $\false$ to $17$ and $\true$ to $24$ is a monotone map, because it preserves order.
\[
\begin{tikzcd}[column sep=small]
	&&&&
	\false\ar[r]\ar[dl, dashed, blue]&
	\true\ar[drr, dashed, blue]\\
  0\ar[r]&
  1\ar[r]&
  \cdots\ar[r]&
  17\ar[r]&
  18\ar[r]&
  \cdots\ar[r]&
  23\ar[r]&
  24\ar[r]&
  \cdots
\end{tikzcd}
\]
\end{example}

\begin{example}[The tree of life]%
\index{tree of life}
Consider the set of all animal classifications, for example `tiger', `mammal',
`sapiens', `carnivore', etc.. These are ordered by specificity: since `tiger' is a type
of `mammal', we write tiger $\le$ mammal. The result is a preorder, which in fact forms a tree, often called the tree of life. At the top of the following diagram we see a small part of it:
\[
% [inline block 5: 1 envs, 2338 chars -> data_tex | \begin{tikzpicture}[y=3ex, text height=1ex, text depth=1ex, font=\small] 	\node[label={[above=-7pt]:sapiens}] (sapiens) ...]

\]
At the bottom we see the hierarchical structure as a preorder. The dashed arrows show a monotone map, call it $F$, from the classifications to the hierarchy. It is monotone because it preserves order: whenever there is a path $x\to y$ upstairs, there is a path $F(x)\to F(y)$ downstairs.
\end{example}

\begin{example}%
\label{ex.card_as_monotone}
  Given a finite set $X$, recall the power set $\powset(X)$ and its natural
  order relation from \cref{ex.powerset}. The map $\lvert\cdot\rvert\colon \powset(X) \to \nn$ sending
  each subset $S$ to its number of elements $\lvert S\rvert$, also called its
  \emph{cardinality}%
\index{cardinality}, is a monotone map.%
\index{power set}
\end{example}

\begin{exercise}%
\label{exc.card_as_monotone}
Let $X=\{0,1,2\}$.
\begin{enumerate}
	\item Draw the Hasse diagram for $\powset(X)$.
	\item Draw the Hasse diagram for the preorder $0\leq 1\leq 2\leq 3$.
	\item Draw the cardinality map $\lvert \cdot \rvert$ from \cref{ex.card_as_monotone} as dashed lines between them.
\qedhere
\end{enumerate}
\end{exercise}

\begin{example}%
\label{ex.upset_monotone_powset}
  Recall the notion of upper set from \cref{ex.upper_set}. Given a preorder
  $(P,\le)$, the map $\upset(P) \to \powset(P)$ sending each upper set of
  $(P,\le)$ to itself---considered as a subset of $P$---is a monotone map.
\end{example}

\begin{exercise}%
\label{exc.upset_monotone_powset}
Consider the preorder $\bb$. The Hasse diagram for $\upset(\bb)$ was drawn in \cref{ex.upper_set}, and you drew the Hasse diagram for $\powset(\bb)$ in \cref{exc.powerset_Hasse}. Now draw the monotone map between them, as described in \cref{ex.upset_monotone_powset}.
\end{exercise}

\begin{exercise}%
\label{exc.upper_set}
  Let $(P,\leq)$ be a preorder, and recall the notion of opposite preorder from \cref{ex.opposite}.
\begin{enumerate}
	\item Show that the set $\uparrow p\coloneqq\{p'\in P\mid p\leq p'\}$ is an upper set, for any $p\in P$.
	\item Show that this construction defines a monotone map $\uparrow\colon P\op\to\upset(P)$.
	\item Show that if $p \le p'$ in $P$ if and only if $\upclose(p')
	\subseteq \upclose(p)$. 
	\item Draw a picture of the map $\uparrow$ in the case where $P$ is the preorder $(b\geq a\leq c)$ from \cref{ex.product_preorder}.
\end{enumerate}
This is known as the \emph{Yoneda lemma} for preorders.%
\index{Yoneda lemma!for preorders} The if and only if
condition proved in part 3 implies that, up to equivalence, to know an element
is the same as knowing its upper set---that is, knowing its web of
relationships with the other elements of the preorder. The general Yoneda lemma
is a powerful tool in category theory, and a fascinating philosophical idea
besides.
\end{exercise}

\begin{exercise}%
\label{exc.monotone_from_discrete}
As you yourself well know, a monotone map $f\colon\mbox{$(P,\leq_P)$}\to\mbox{$(Q,\leq_Q)$}$ consists of a function $f\colon P\to Q$ that satisfies a ``monotonicity'' property. Show that when $(P,\leq_P)$ is a discrete preorder, then \emph{every} function $P\to Q$ satisfies the monotonicity property, regardless of the order $\leq_Q$.%
\index{preorder!discrete}
\end{exercise}

\begin{example}%
\label{ex.ptnmonotone}%
\index{partition}
Recall from \cref{ex.partitions} that given a set $X$ we define $\prt X$ to be
the set of partitions on $X$, and that a partition may be defined using a
surjective function $s\colon X \surj P$ for some set $P$.

Any surjective function $f\colon X \surj Y$ induces a monotone map
$f^\ast\colon \prt Y \to \prt X$, going ``backwards.'' It is defined by sending
a partition $s\colon Y \surj P$ to the composite $f\cp s\colon X \surj P$.%
\tablefootnote{We will later see that any function $f\colon X\to Y$, not necessarily surjective, induces a monotone map $f^*\colon\prt Y\to \prt X$, but it involves an extra step. See \cref{subsec.back_to_parts}.}
\end{example}

\begin{exercise}%
\label{exc.f^*_partitions}
Choose two sets $X$ and $Y$ with at least three elements each and choose a
surjective, non-identity function $f\colon X\surj Y$ between them. Write down two
different partitions $P$ and $Q$ of $Y$, and then find $f^*(P)$ and $f^*(Q)$.
\end{exercise}

The following proposition, \cref{prop.id_and_comp_monotones}, is straightforward to check. Recall the definition of the identity function from \cref{ex.identity} and the definition of composition from \cref{def.composite_fn}.%
\index{identity!function|seealso {function, identity}}

\begin{proposition}%
\label{prop.id_and_comp_monotones}%
\index{category!of preorders}
For any preorder $(P,\leq_P)$, the identity function is monotone.

If $(Q,\leq_Q)$ and $(R,\leq_R)$ are preorders and $f\colon P\to Q$ and $g\colon Q\to R$ are monotone, then $(f\cp g)\colon P\to R$ is also monotone.
\end{proposition}

\begin{exercise}%
\label{exc.check_id_comp_monotone}
Check the two claims made in \cref{prop.id_and_comp_monotones}.
\end{exercise}

\begin{example}%
\label{ex.dagger_preorder}%
\index{preorder!dagger|see {dagger}}%
\index{dagger}
Recall again the definition of opposite preorder from \cref{ex.opposite}. The
identity function $\id_P\colon P \to P$ is a monotone map $(P,\le) \to (P,
\leq\op)$ if and only if for all $p,q \in P$ we have $q \le p$ whenever $p\le
q$. For historical reasons connected to linear algebra, when this is true, we
call $(P,\le)$ a \emph{dagger preorder}. 

But in fact, we have seen dagger preorders before in another guise. Indeed, if $(P,\leq)$ is a dagger preorder, then the relation $\leq$ is symmetric: $p \le q$ if and only if
$q \le p$, and it is also reflexive and transitive by definition of preorder. So in fact $\leq$ is an equivalence relation (\cref{def.equivalence_relation}).
\end{example}%
\index{symmetry!and dagger}

\begin{exercise}%
\label{exc.skeletal_dagger_preorder}%
\index{preorder!skeletal}
Recall the notion of skeletal preorders (\cref{rem.skeletal_preorder}) and discrete preorders
(\cref{ex.disc_preorder}). Show that a skeletal dagger preorder is just a discrete preorder, and hence can be identified with a set.%
\end{exercise}
\begin{remark}
  We say that an $A$ ``can be identified with'' a $B$ when any $A$ gives us a unique $B$ and any $B$ gives us a unique $A$, and both round-trips---from an $A$ to a $B$ and back to an $A$, or from a $B$ to an $A$ and back to a $B$---return us where we started. For example, any discrete preorder $(P,\leq)$ has an underlying set $P$, and any set $P$ can be made into a discrete preorder ($p_1\leq p_2$ iff $p_1=p_2$), and the round-trips return us where we started. So what's the difference? It's like the notion of \emph{object-permanence} from child development jargon: we can recognize ``the same chair, just moved from one room to another.'' A chair in the room of sets can be moved to a chair in the room of preorders. The lighting is different but the chair is the same.

Eventually, we will be able to understand this notion in terms of \emph{equivalence of categories}, which are related to isomorphisms, which we will explore next in \cref{def.preorder_isomorphism}.
\end{remark}

\begin{definition}%
\label{def.preorder_isomorphism}%
\index{isomorphism!of preorders}
Let $(P,\leq_P)$ and $(Q,\leq_Q)$ be preorders. A monotone function $f\colon P\to Q$ is called an \emph{isomorphism} if there exists a monotone function $g\colon Q\to P$ such that $f\cp g=\id_P$ and $g\cp f=\id_Q$. This means that for any $p\in P$ and $q\in Q$, we have
\[
	p= g(f(p))
	\qquad\text{and}\qquad
	q=f(g(q)).
\]
We refer to $g$ as the \emph{inverse} of $f$, and vice versa: $f$ is the inverse of $g$.

If there is an isomorphism $P\to Q$, we say that $P$ and $Q$ are \emph{isomorphic}.
\end{definition}

An isomorphism between preorders is basically just a relabeling of the elements.

\begin{example}
Here are the Hasse diagrams for three preorders $P$, $Q$, and $R$, all of which are isomorphic:
\[
P\coloneqq\boxCD{
\begin{tikzcd}[ampersand replacement=\&, row sep=0, column sep=small]
	\&\LMO{e}\\[15pt]
	\&\LMO{d}\ar[u]\\
	\LMO{c}\ar[ur]\&\&\LMO{b}\ar[ul]\\
	\&\LMO{a}\ar[ur]\ar[ul]
\end{tikzcd}
}
\hspace{.7in}
Q\coloneqq\boxCD{
\begin{tikzcd}[ampersand replacement=\&]
	\&\LMO{z}\\
	\LMO{x}\ar[r]\&\LMO{y}\ar[u]\\
	\LMO{v}\ar[u]\ar[r]\&\LMO{w}\ar[u]
\end{tikzcd}
}
\hspace{.7in}
R\coloneqq\boxCD{
\begin{tikzcd}[ampersand replacement=\&]
	\&\LMO{z}\\
	\LMO{x}\ar[ur]\ar[r]\&\LMO{y}\ar[u]\\
	\LMO{v}\ar[u]\ar[r]\&\LMO{w}\ar[u]
\end{tikzcd}
}
\]
The map $f\colon P\to Q$ given by $f(a)=v$, $f(b)=w$, $f(c)=x$, $f(d)=y$, and $f(e)=z$ has an inverse.

In fact $Q$ and $R$ are the same preorder. One may be confused by the fact that there is an arrow $x\to z$ in the Hasse diagram for $R$ and not one in $Q$, but in fact this arrow is superfluous. By the transitivity property of preorders (\cref{def.preorder}), since $x\leq y$ and $y\leq z$, we must have $x\leq z$, whether it is drawn or not. Similarly, we could have drawn an arrow $v\to y$ in either $Q$ or $R$ and it would not have changed the preorder.
\end{example}

Recall the preorder $\bb=\{\false,\true\}$, where $\false\leq\true$. As simple as this preorder is, it is also one of the most important.%
\index{booleans!as preorder}

\begin{exercise}%
\label{exc.weird_Phi}
  Show that the map $\Phi$ from \cref{subsec.first_look_gen}, which was roughly
  given by `Is $\bullet$ connected to $\ast$?'  is a monotone map
  $\prt{\{\ast,\bullet,\circ\}} \to\bb$; see also \cref{eqn.parts_of_3}.
\end{exercise}

\begin{proposition}%
\label{prop.upperset_presheaf}%
\index{upper set}
  Let $P$ be a preorder. Monotone maps $P \to \bb$ are in one-to-one
  correspondence with upper sets of $P$.
\end{proposition}
\begin{proof}
  Let $f\colon P \to \bb$ be a monotone map. We will show that the subset $f\inv(\true)\ss P$ is an upper set. Suppose
$p \in f\inv(\true)$ and $p \le q$; then $\true = f(p) \le f(q)$. But in $\BB$, if $\true\leq f(q)$ then $\true=f(q)$. This implies $q \in f\inv(\true)$ and thus shows that
$f\inv(\true)$ is an upper set.

Conversely, if $U$ is an upper set in $P$, define $f_U\colon P \to \bb$ such
that $f_U(p) =\true$ when $p \in U$, and $f_U(p) = \false$ when $p\not\in U$. This is a monotone
map, because if $p \le q$, then either $p \in U$, so $q \in U$ and $f(p) = \true= f(q)$,
or $p \notin U$, so $f(p) =\false \le f(q)$.

These two constructions are mutually inverse, and hence prove the proposition.
\end{proof}

\begin{exercise}[Pullback map]%
\index{pullback!along a map}%
\label{exc.pullback_upset}
  Let $P$ and $Q$ be preorders, and $f\colon P \to Q$ be a monotone map. Then we
  can define a monotone map $f^\ast\colon\upset(Q) \to \upset(P)$ sending an
  upper set $U \subseteq Q$ to the upper set $f\inv (U) \subseteq P$. We call this the
  \emph{pullback along $f$}.

  Viewing upper sets as a monotone maps to $\bb$ as in \cref{prop.upperset_presheaf}, the pullback can be understood in terms of composition. Indeed, show
  that the $f^\ast$ is defined by taking $u\colon Q \to \bb$ to $(f\cp u)
  \colon P \to \bb$. 
\end{exercise}

\index{map!monotone|)}

%-------- Section --------%
\section{Meets and joins}%
\label{sec.meets_joins}%
\index{join|(}%
\index{meet|(}

As we have said, a preorder is a set $P$ endowed with an order $\leq$ relating the
elements. With respect to this order, certain elements of $P$ may have distinctive characterizations, either absolutely or in relation to other elements. We have discussed joins before, but we discuss them again
now that we have built up some formalism.

%---- Subsection ----%
\subsection{Definition and basic examples}%
\index{real numbers!as preorder}
Consider the preorder $(\RR,\leq)$ of real numbers ordered in the usual way. The
subset $\NN\ss\RR$ has many lower bounds, namely $-1.5$ is a lower bound: every element of $\nn$ is bigger than $-1.5$. But within all lower bounds for $\nn\ss\rr$, one is distinctive: a \emph{greatest lower bound}---also called a \emph{meet}---namely 0. It is a lower bound, and there is no lower bound for $\nn$ that is above it. However, the set $\nn\ss\rr$ has no upper bound, and certainly no least upper bound---which would be called a \emph{join}. On the other hand, the set
\[
  \left\{\frac{1}{n+1}\;\;\middle|\;\; n\in\NN\right\}
  =
  \left\{\,1,\,\frac12,\,\frac13,\,\frac14,\,\ldots\,\right\}\ss\rr
\]
has both a greatest lower bound (meet), namely 0, and a least upper bound (join), namely 1.

These notions will have correlates in category theory, called limits and
colimits, which we will discuss in \cref{chap.databases}. More generally, we
say these distinctive characterizations are \emph{universal properties}, since,
for example, a greatest lower bound is greatest among \emph{all} lower bounds.
For now, however, we simply want to make the definition of greatest lower
bounds and least upper bounds, called meets and joins, precise.%
\index{universal property}%
\index{least upper bound}%
\index{greatest lower bound}

\begin{exercise}%
\label{exc.0_glb}
\begin{enumerate}
	\item Why is $0$ a lower bound for $\{\frac{1}{n+1}\mid n\in\NN\}\ss\rr$?
	\item Why is $0$ a \emph{greatest} lower bound (meet)?
\qedhere
\end{enumerate}
\end{exercise}

\begin{definition}%
\label{def.meets_joins}%
\index{meet}%
\index{join}
  Let $(P,\le)$ be a preorder, and let $A\ss P$ be a subset. We say that an
  element $p \in P$ is a \emph{meet} of $A$ if 
  \begin{enumerate}[label=(\alph*)]
    \item for all $a \in A$, we have $p \le a$, and 
    \item for all $q$ such that $q \le a$ for all $a \in A$, we have that $q \le p$. 
  \end{enumerate} 
  We write $p = \bigwedge A$, $p=\bigwedge_{a\in A}a$, or, if the dummy variable $a$ is clear from context, just $p=\bigwedge_A a$. If $A$ just consists
  of two elements, say $A=\{a, b\}$, we can denote $\bigwedge A$ simply by $a
  \wedge b$.%
%  \footnote{This is an abuse of notation; see \cref{rem.meet_abuse}.}

  Similarly, we say that $p$ is a \emph{join} of $A$ if
  \begin{enumerate}[label=(\alph*)]
  	\item for all $a \in A$ we have $a \le p$, and
		\item for all $q$ such that $a \le q$ for all $a
\in A$, we have that $p \le q$.
	\end{enumerate}
	We write $p = \bigvee A$ or $p=\bigvee_{a\in A}a$, or when $A =\{a,b\}$ we may simply write $p=a\vee b$.
\end{definition}

\begin{remark}%
\label{rem.meet_abuse}
In \cref{def.meets_joins}, we committed a seemingly egregious abuse of notation. We will see next in \cref{ex.two_meets} that there could be two different meets of $A\ss P$, say $p=\bigwedge A$ and $q=\bigwedge A$ with $p\neq q$, which does not make sense if $p\neq q$!

But in fact, as we use the symbol $\bigwedge A$, this abuse won't matter
because any two meets $p,q$ are automatically isomorphic: the very definition
of meet forces both $p\leq q$ and $q\leq p$, and thus we have $p\cong q$. So
for any $x\in P$, we have $p\leq x$ iff $q\leq x$ and $x\leq p$ iff $x\leq q$.
Thus as long as we are only interested in elements of $P$ based on their
relationships to other elements (and in category theory, this is the case: we
should only care about things based on how they interact with other things,
rather than on some sort of ``internal essence''), the distinction between $p$
and $q$ will never matter.
    
This foreshadows a major theme of---as well as standard abuse of notation
in---category theory, where any two things defined by the same universal
property are automatically equivalent in a way known as `unique up to unique
isomorphism'%
\index{unique up to unique isomorphism}; this means that we
generally do not run into trouble if we pretend they are equal. We'll pick up
this theme of `the' vs `a' again in \cref{rem.the_vs_a}.
\end{remark}

\begin{example}[Meets or joins may not exist]
Note that, in an arbitrary preorder $(P,\leq)$, a subset $A$ need not have a
meet or a join. Consider the three element set $P=\{p,q,r\}$ with the discrete ordering. The set $A=\{p,q\}$ does not have a join in $P$ because if $x$ was a join, we would need $p\leq x$ and $q\leq x$, and there is no such element $x$.
\end{example}

\begin{example}[Multiple meets or joins may exist]%
\label{ex.two_meets}
It may also be the case that a subset $A$ has more than one meet or
join. Here is an example.
\[
\boxCD{
\begin{tikzcd}[ampersand replacement=\&, row sep=0, column sep=30pt]
  \LMO{a}\&\LMO{b}\\[20pt]
  \LMO{c} \ar[u]\ar[ur] \ar[bend right=15, r]\&\LMO{d}\ar[ul]\ar[u] \ar[bend
	right=15, l]
\end{tikzcd}
}
\]
Let $A$ be the subset $\{a,b\}$ in the preorder specified by this Hasse
diagram. Then both $c$ and $d$ are meets of $A$: any element less than both
$a$ and $b$ is also less than $c$, and also less than $d$. Note that, as in \cref{rem.meet_abuse}, $c\leq d$ and $d\leq c$, so $c \cong d$. Such will always the case when there is more than one meet: any two meets of the same subset will be isomorphic.
\end{example}

\begin{exercise}%
\label{exc.wedge_element}
Let $(P,\leq)$ be a preorder and $p\in P$ an element. Consider the set $A=\{p\}$ with one element.
\begin{enumerate}
	\item Show that $\bigwedge A\cong p$.
	\item Show that if $P$ is in fact a partial order, then $\bigwedge A=p$.
	\item Are the analogous facts true when $\bigwedge$ is replaced by $\bigvee$?
\qedhere
\end{enumerate}
\end{exercise}

\begin{example}
In any partial order $P$, we have $p \vee p =p \wedge p = p$. The reason is that
our notation says $p\vee p$ means $\bigvee\{p,p\}$. But $\{p,p\}=\{p\}$ (see
\cref{sec.sets_and_rels}), so $p\vee p=p$ by \cref{exc.wedge_element}.
\end{example}

\begin{example}%
\index{intersection!as meet}%
\index{union!as join}
  In a power set $\powset(X)$, the meet of a collection of subsets, say $A,B\ss X$ is their intersection $A\wedge B=A\cap B$,
  while the join is their union, $A\vee B=A\cup B$.
  \[
  \begin{tikzpicture}[decoration={brace, amplitude=5pt}]
  	\node[circle, draw, inner sep=13pt] (a) {A};
  	\node[circle, draw, inner sep=13pt, right=10pt of a.center] (b) {B};
		\coordinate (helper) at ($(a.north)+(0,2pt)$);
		\draw[decorate, thick] (a.west|-helper) to node[above=5pt] {$A\vee B$} (b.east|-helper);
		\node[rotate=90, font=\tiny] at ($(a)!.5!(b)$) {$A\wedge B$};
  \end{tikzpicture}
  \]
  Perhaps this justifies the terminology: the joining of two sets is their union, the meeting of two sets is their intersection.
\end{example}

\begin{example} %
\label{ex.bool_meet_join}%
\index{booleans!meets and joins in}
  In the booleans $\bb =\{\false,\true\}$ (\cref{ex.boolean_order}), the meet of any two elements is given by
  AND and the join of any two elements is given by OR (recall \cref{exc.boolean_vee_practice}).
\end{example}

\begin{example} %
\label{ex.join_is_supremum}%
\index{supremum}%
\index{infimum}
In a total order, the meet of a set is its infimum, while the join of a set is
its supremum. Note that $\bb$ is a total order, and this generalizes
\cref{ex.bool_meet_join}.
\end{example}

\begin{exercise}%
\label{exc.division_meet}%
\index{divides relation}%
\index{least common multiple}%
\index{greatest common divisor}
Recall the division ordering on $\nn$ from \cref{ex.orders_on_N}: we write $n
\vert m$ if $n$ divides perfectly into $m$. The meet of any two numbers in this
preorder has a common name, that you may have learned when you were around 10
years old; what is it? Similarly the join of any two numbers has a common name;
what is it?
\end{exercise}

\begin{proposition}%
\label{prop.containment_meet_join}
Suppose $(P,\leq)$ is a preorder and $A\ss B\ss P$ are subsets that have meets. Then $\bigwedge B\leq \bigwedge A$.

Similarly, if $A$ and $B$ have joins, then $\bigvee A\leq \bigvee B$.
\end{proposition}
\begin{proof}
Let $m=\bigwedge A$ and $n=\bigwedge B$. Then for any $a\in A$ we also have $a\in B$, so $n\leq a$ because $n$ is a lower bound for $B$. Thus $n$ is also a lower bound for $A$ and hence $n\leq m$, because $m$ is $A$'s greatest lower bound. The second claim is proved similarly.
\end{proof}

\index{meet|)}%
\index{join|)}

%---- Subsection ----%
\subsection{Back to observations and generative effects}

In the thesis \cite{Adam:2017a}, Adam thinks of monotone maps as observations. A monotone map $\Phi\colon P\to Q$ is a phenomenon (we might say ``feature'') of $P$ as observed by $Q$. He defines the \emph{generative effect} of such a map $\Phi$ to be its failure to preserve joins (or more generally, for categories, its failure to preserve colimits).%
\index{generative effect}

\begin{definition}%
\index{meets!preservation of}
  We say that a monotone map $f\colon P \to Q$ \emph{preserves meets} if
$f(a \wedge b) \cong f(a) \wedge f(b)$ for all $a,b\in P$. We similarly say $f$
\emph{preserves joins} if $f(a \vee b) \cong f(a) \vee f(b)$ for all $a,b\in P$.
\end{definition}

\begin{definition}%
\index{generative effect}%
\label{def.gen_effect}
  We say that a monotone map $f\colon P \to Q$ \emph{has a generative
  effect} if there exist elements $a,b\in P$ such that
  \[
    f(a) \vee f(b) \ncong f(a \vee b).
  \]
\end{definition}

In \cref{def.gen_effect}, if we think of $\Phi$ as a observation or measurement of the systems $a$ and $b$, then
the left hand side $f(a) \vee f(b)$ may be interpreted as the combination of the observation of $a$ with
the observation of $b$. On the other hand, the right hand side $f(a \vee b)$ is the observation of the
combined system $a \vee b$. The inequality implies that we see something when we observe the combined system that we could not expect by merely combining our observations of the pieces. That is, that there are generative effects from the
interconnection of the two systems.

\begin{exercise}%
\label{exc.more_stuff}
In \cref{def.gen_effect}, we defined generativity of $f$ as the inequality $f(a \vee b)\neq f(a)\vee f(b)$, but in the subsequent text we seemed to imply there would be not just a difference, but \emph{more stuff} in $f(a \vee b)$ than in $f(a)\vee f(b)$.

Prove that for any monotone map $f\colon P\to Q$, if $a,b\in P$ have a join and $f(a),f(b)\in Q$ have a join, then indeed $f(a)\vee f(b)\leq f(a \vee b)$.
\end{exercise}

In his work on generative effects, Adam restricts his attention to generative maps that
preserve meets (but do not preserve joins). The preservation of meets
implies that the map $\Phi$ behaves well when restricting to subsystems, even
though it can throw up surprises when joining systems.

This discussion naturally leads into Galois connections, which are pairs of monotone maps between preorders, one of which preserves all joins and the other of which preserves all meets.

%-------- Section --------%
\section{Galois connections} %
\label{sec.galois_connections}%
\index{Galois connection|(}%
\index{adjunction!of preorders|(}

The preservation of meets and joins, and in particular issues concerning generative effects, is tightly related to the theory of \emph{Galois connections}, which is a special case of a more general theory we will discuss later, namely that of \emph{adjunctions}. We will use some adjunction terminology when describing Galois connections.

%---- Subsection ----%
\subsection{Definition and examples of Galois connections}%
\label{subsec.def_ex_Galois}
Galois connections between preorders were first considered by \'Evariste
Galois---who didn't call them by that name---in the context of a connection he
found between ``field extensions'' and ``automorphism groups.'' We will not
discuss this further, but the idea is that given two preorders $P$ and $Q$, a
Galois connection is a pair of maps back and forth---from $P$ to $Q$ and from
$Q$ to $P$---with certain properties, which make it like a relaxed version of
isomorphisms. To be a bit more precise, preorder isomorphisms are examples of
Galois connections, but Galois connections need not be preorder isomorphisms.%
\index{isomorphism!adjunction as relaxed version of}

\begin{definition}%
\label{def.galois}%
\index{Galois connection}%
\index{adjunction!Galois connection as}
A \emph{Galois connection} between preorders $P$ and $Q$ is a pair of monotone maps
$f\colon P \to Q$ and $g\colon Q \to P$ such that 
\begin{equation}%
\label{eqn.galois_connection}
  f(p) \le q \quad\text{ if and only if }\quad p \le g(q).
\end{equation}
We say that $f$ is the \emph{left adjoint} and $g$ is the \emph{right adjoint}
of the Galois connection.
\end{definition}

\begin{example}%
\label{ex.adjoint_to_3times}%
\index{ceiling function}%
\index{floor function}%
\index{adjunction!examples of}
Consider the map $(3\times-)\colon \zz \to \rr$ which sends $x \in \zz$ to $3x$, which we can consider as a real number $3x\in\zz\ss\rr$. Let's find a left adjoint for the map $(3\times-)$.

Write $\ceil{z}$ for the smallest natural number above $z \in
\rr$, and write $\floor{z}$ for the largest integer below $z \in
\rr$, e.g.\ $\ceil{3.14}=4$ and $\floor{3.14}=3$.%
\footnote{By ``above'' and ``below,'' we mean \emph{greater than or equal to} or \emph{less than or equal to}; the latter being a mouthful. Anyway, $\floor{3}=3=\ceil{3}$.}
As the left adjoint $\rr\to\zz$, let's see if $\ceil{-/3}$ works.

It is easily checked that  
\[
  \ceil{x/3} \le y \mbox{ if and only if } x \le 3y.
\]
Success! Thus we have a Galois connection between $\ceil{-/3}$ and
$(3\times-)$. 
\end{example}

\begin{exercise}%
\label{exc.right_adj_3times}
In \cref{ex.adjoint_to_3times} we found a left adjoint for the monotone map $(3\times-)\colon \zz \to \rr$. Now find a right adjoint for the same map, and show it is correct.
\end{exercise}

\begin{exercise}%
\label{exc.galois_linear_ord}
Consider the preorder $P=Q=\ord{3}$.
\begin{enumerate}
	\item Let $f,g$ be the monotone maps shown below:
\[
\begin{tikzcd}
	P\ar[d, blue, "f"']&\LMO{1}\ar[r]\ar[d, ->, dashed, bend right, blue]&\LMO{2}\ar[r]\ar[dl, ->, dashed, bend right, blue]&\LMO{3}\ar[d, ->, dashed, bend right, blue]&P\\
	Q&\LMO[under]{1}\ar[r]\ar[ur, ->, dashed, bend right, red]&\LMO[under]{2}\ar[r]\ar[u, ->, dashed, bend right, red]&\LMO[under]{3}\ar[u, ->, dashed, bend right, red]&Q\ar[u, red, "g"']
\end{tikzcd}
\]
Is it the case that $f$ is left adjoint to $g$? Check that for each $1\leq p,q\leq 3$, one has $f(p)\leq q$ iff $p\leq g(q)$.
	\item Let $f,g$ be the monotone maps shown below:
\[
\begin{tikzcd}
	P\ar[d, blue, "f"']&\LMO{1}\ar[r]\ar[d, ->, dashed, bend right, blue]&\LMO{2}\ar[r]\ar[d, ->, dashed, bend right, blue]&\LMO{3}\ar[d, ->, dashed, bend right, blue]&P\\
	Q&\LMO[under]{1}\ar[r]\ar[ur, ->, dashed, bend right, red]&\LMO[under]{2}\ar[r]\ar[u, ->, dashed, bend right, red]&\LMO[under]{3}\ar[u, ->, dashed, bend right, red]&Q\ar[u, red, "g"']
\end{tikzcd}
\]
Is it the case that $f$ is left adjoint to $g$?
\qedhere
\end{enumerate}
\end{exercise}

\begin{remark}%
\index{total order}
The pictures in \cref{exc.galois_linear_ord} suggest the following idea. If $P$
and $Q$ are total orders and $f\colon P\to Q$ and $g\colon Q\to P$ are drawn
with arrows bending counterclockwise, then $f$ is left adjoint to $g$ iff the
arrows do not cross. With a little bit of thought, this can be formalised. We
think this is a pretty neat way of visualizing Galois connections between total
orders!
\end{remark}

%
%\begin{example}
%Here is an attempt to create a biological example; it is meant to be correct when taken as a vague idea, not to stand up to serious scrutiny.
%
%Let $(P,\ss)$ denote the preorder of possible animal populations: an element $p$ is a set of possible animals, and we write $p\ss q$ if every animal in $p$ is also in $q$. Let $(G,\leq)$ denote the set of gene pools: an element $g$ is again a set of animals, but we write $g\leq h$ if the gene pool $h$ can generate the gene pool $g$, i.e.\ if every animal in $g$ could be generated by mating animals in $h$ (genes recombine, no mutations).
%
%There is a monotone map $i\colon P\to G$ sending each population to itself as a gene pool, $i(p)=p$, because if $p\ss q$ then clearly $q$ can generate $p$. There is also a monotone map $\mathrm{cl}\colon G\to P$ sending each gene pool to its closure: the set of all possible animals that could be generated from $g$ under recombination of genes.
%
%These are adjoint: given $p\in P$ and $g\in G$, then $p\ss\mathrm{cl}(g)$ means that every animal in $p$ can be obtained by recombination of genetic material in $g$, which is exactly what $p\leq g$ means. Since $p=i(p)$, we have the desired \cref{eqn.galois_connection}: $i(p)\leq g$ iff $p\leq\mathrm{cl}(g)$.
%\end{example}
%

\begin{exercise}%
\label{exc.extra_right_adj_ceil}
\begin{enumerate}
	\item Does $\ceil{-/3}$ have a left adjoint $L\colon \zz\to\rr$?
	\item If not, why? If so, does its left adjoint have a left adjoint?
	\qedhere
\end{enumerate}
\end{exercise}

\subsection{Back to partitions}%
\label{subsec.back_to_parts}%
\index{partition|(}
Recall from \cref{ex.partitions} that we can understand the set $\prt{S}$ of partitions on a set $S$
in terms of surjective functions out of $S$.

Suppose we are given any function $g\colon S\to T$. We will show that this
function $g$ induces a Galois connection 
%$\begin{tikzcd} \prt{S} \ar[r, "g_!",shift left] & \prt{T} \ar[l, "g^*", shift left] \end{tikzcd}$
$g_!\colon\prt{S}\leftrightarrows\prt{T}\cocolon g^*$,
between preorder of $S$-partitions and the preorder of $T$-partitions. The way
you might explain it to a seasoned category theorist is:
\begin{quote}%
\index{adjunction!examples of}
The left adjoint is given by taking any surjection out of $S$ and pushing out along $g$ to get a surjection out of $T$. The right adjoint is given by taking any surjection out of $T$, composing with $g$ to get a function out of $S$, and then taking the epi-mono factorization to get a surjection out of $S$.
\[
\begin{tikzcd}[row sep=large]
	S\ar[r, "g"]\ar[d, two heads, "c"']\ar[dr, phantom, very near end, gray,  "\ulcorner"]&T\ar[d, two heads, gray]\\
	P\ar[r, gray]&{\color{gray}P\sqcup_ST}
\end{tikzcd}
\hspace{1in}
\begin{tikzcd}[row sep=large]
	S\ar[r, "g"]\ar[d, gray, two heads]\ar[dr, gray, "g\cp c"]&T\ar[d, two heads, "c"]\\
	{\color{gray}\mathrm{im}(g\cp c)}\ar[r, gray]&P
\end{tikzcd}
\]
\end{quote}
By the end of this book, the reader will understand pushouts and epi-mono factorizations, so he or she will be able to make sense of the above statement. But for now we will explain the process in more down-to-earth terms.%
\index{pushout}%
\index{epi-mono factorization}

Start with $g\colon S\to T$; we first want to understand $g_!\colon\prt S\to \prt T$. So start with a partition $\sim_S$ of $S$. To begin the process of obtaining a partition $\sim_T$ on $T$, say that two elements $t_1,t_2\in T$ are in the same part, $t_1\sim_T t_2$, if there exist $s_1,s_2\in S$ with such that $s_1\sim_Ss_2$ and $g(s_1)=t_1$ and $g(s_2)=t_2$. However, the result of doing so will not necessarily be transitive---you may get $t_1\sim_T t_2$ and $t_2\sim_T t_3$ without $t_1\sim_T^? t_3$---and partitions must be transitive. So complete the process by just adding in the missing pieces (take the transitive closure).%
\index{transitive closure} The result is $g_!(\sim_S)\coloneqq\sim_T$.

Again starting with $g$, we want to get the right adjoint $g^*\colon\prt T\to\prt S$. So start with a partition $\sim_T$ of $T$. Get a partition $\sim_S$ on $S$ by saying that $s_1\sim_Ss_2$ iff $g(s_1)\sim_T g(s_2)$. The result is $g^*(\sim_T)\coloneqq \sim_S$.%
\index{partition!pullback of}

\begin{example}%
\label{ex.pushforward_part}%
\index{partition!pushforward of}
Let $S=\{1,2,3,4\}$, $T=\{12,3,4\}$, and $g\colon S\to T$ by $g(1)\coloneqq g(2)\coloneqq 12$, $g(3)\coloneqq 3$, and $g(4)\coloneqq4$. The partition shown left below is translated by $g_!$ to the partition shown on the right.
\[
% [inline block 6: 2 envs, 4689 chars -> data_tex | \begin{tikzpicture}[x=1cm] 	\node (a1) {$\LMO{1}$};...]

\]
\end{example}

\begin{exercise}%
\label{exc.pullback_parts}
There are five partitions possible on a set with three elements, say $T=\{12,3,4\}$. Using the same $S$ and $g\colon S\to T$ as in \cref{ex.pushforward_part}, determine the partition $g^*(c)$ on $S$ for each of the five partitions $c\colon T\surj P$.
\end{exercise}

To check that for any function $g\colon S\to T$, the monotone map
$g_!\colon\prt S\to \prt T$ really is left adjoint to $g^*\colon\prt T\to \prt S$ would take too much time for this sketch. But the following exercise gives some evidence.

\begin{exercise}%
\label{exc.parts_adjunction}
Let $S$, $T$, and $g\colon S\to T$ be as in \cref{ex.pushforward_part}.
\begin{enumerate}
	\item Choose a nontrivial partition $c\colon S\surj P$ and let $g_!(c)$ be its push forward partition on $T$.
	\item Choose any coarser partition $d\colon T\surj P'$, i.e.\ where $g_!(c)\leq d$.
	\item Choose any non-coarser partition $e\colon T\surj Q$, i.e.\ where $g_!(c)\not\leq e$. (If you can't do this, revise your answer for \#1.)
	\item Find $g^*(d)$ and $g^*(e)$.
	\item The adjunction formula \cref{eqn.galois_connection} in this case says that since $g_!(c)\leq d$ and $g_!(c)\not\leq e$, we should have $c\leq g^*(d)$ and $c\not\leq g^*(e)$. Show that this is true.
\qedhere
\end{enumerate}
\end{exercise}
\index{partition|)}

%---- Subsection ----%
\subsection{Basic theory of Galois connections}

\begin{proposition}%
\label{prop.galois_monad_comonad}%
\index{Galois connection}
Suppose that $f\colon P\to Q$ and $g\colon Q\to P$ are monotone maps. The following are equivalent
\begin{enumerate}[label=(\alph*)]
	\item $f$ and $g$ form a Galois connection where $f$ is left adjoint to $g$,
	\item for every $p\in P$ and $q\in Q$ we have
  \begin{equation}%
\label{eqn.preorder_monad_comonad}
  	p\leq g(f(p))
  	\qquad\text{and}\qquad
  	f(g(q))\leq q.
  \end{equation}
\end{enumerate}
\end{proposition}
\begin{proof}
Suppose $f$ is left adjoint to $g$. Take any $p\in P$, and let $q\coloneqq
f(p)$. By reflexivity, we have $f(p)\leq q$, so by the \cref{def.galois} of
Galois connection we have $p\leq g(q)$, but this means $p\leq g(f(p))$. The
proof that $f(g(q))\leq q$ is similar.

Now suppose that \cref{eqn.preorder_monad_comonad} holds for all $p\in P$ and $q\in Q$. We want show that $f(p)\leq q$ iff $p\leq g(q)$. Suppose $f(p)\leq q$; then since $g$ is monotonic, $g(f(p))\leq g(q)$, but $p\leq g(f(p))$ so $p\leq g(q)$. The proof that $p\leq g(q)$ implies $f(p)\leq q$ is similar.
\end{proof}

\begin{exercise}%
\label{exc.proof_galois_monad}
Complete the proof of \cref{prop.galois_monad_comonad} by showing that
\begin{enumerate}
	\item if $f$ is left adjoint to $g$ then for any $q\in Q$, we have $f(g(q))\leq q$, and 
	\item if \cref{eqn.preorder_monad_comonad} holds, then holds $p\leq g(q)$ iff $f(p)\leq q$ holds, for all $p\in P$ and $q\in Q$.
\qedhere
\end{enumerate}
\end{exercise}

If we replace $\leq$ with $=$ in \cref{eqn.preorder_monad_comonad}, we get back the definition of isomorphism (\cref{def.preorder_isomorphism}); this is why we said at the beginning of \cref{subsec.def_ex_Galois} that Galois connections are a kind of relaxed version of isomorphisms.

\begin{exercise}%
\label{exc.uniqueness_of_adjoints}
 	\begin{enumerate}
		\item Show that if $f\colon P\to Q$ has a right adjoint $g$, then it is unique up to isomorphism. That means, for any other right adjoint $g'$, we have $g(q)\cong g'(q)$ for all $q\in Q$.
  	\item Is the same true for left adjoints? That is, if $h\colon P\to Q$ has a left adjoint, is it necessarily unique up to isomorphism?
	\qedhere
	\end{enumerate}
\end{exercise}

\begin{proposition}[Right adjoints preserve meets]%
\label{prop.right_adj_meets}%
\index{meets!preservation of}%
\index{joins!preservation of}%
\index{adjunction!preservation of meets and joins}
  Let $f\colon P\to Q$ be left adjoint to $g\colon Q\to P$. Suppose $A\ss Q$ any subset, and let $g(A)\coloneqq\{g(a)\mid a\in A\}$ be its image. Then if $A$ has a meet $\bigwedge A\in Q$ then $g(A)$ has a meet $\bigwedge g(A)$ in $P$, and we have
  \[g\left(\bigwedge A\right)\cong\bigwedge g(A).\]
That is, right adjoints preserve meets. Similarly, left adjoints preserve joins: if $A\ss P$ is any subset that has a join $\bigvee A\in P$, then $f(A)$ has a join $\bigvee f(A)$ in $Q$, and we have
  \[f\left(\bigvee A\right)\cong\bigvee f(A).\]  
\end{proposition}
\begin{proof}
  Let $f\colon P \to Q$ and $g\colon Q \to P$ be adjoint monotone maps, with $g$
  right adjoint to $f$. Let $A\ss Q$ be any subset and let $m\coloneqq\bigwedge A$ be
  its meet. Then since $g$ is monotone $g(m) \le g(a)$ for all $a \in A$, so $g(m)$ is a
  lower bound for the set $g(A)$. We will be done if we can show $g(m)$ is a greatest 
  lower bound. 

  So take any other lower bound $b$ for $g(A)$; that is suppose that for all $a\in A$, we have $b\leq g(a)$ and we want to show $b\leq g(m)$. Then by definition of $g$ being a right adjoint (\cref{def.galois}), we also have $f(b)\leq a$. This means that $f(b)$ is a
  lower bound for $A$ in $Q$. Since the meet $m$ is the greatest lower bound, we
  have $f(b)\leq m$. Once again using the Galois connection, $b\leq g(m)$, proving that $g(m)$ is indeed the greatest lower bound for $g(A)$, as desired. 
  
  The second claim is proved similarly; see \cref{exc.mimic_proof_joins}.
\end{proof}

\begin{exercise}%
\label{exc.mimic_proof_joins}
  Complete the proof of \cref{prop.right_adj_meets} by showing that left adjoints preserve joins.
\end{exercise}

Since left adjoints preserve joins, we know that they cannot have generative
effects. In fact, we will see in \cref{prop.adjoint_functor_thm} that a monotone map does not have generative effects---i.e.\ it preserves joins---if and only if it is a left adjoint to some other monotone.

\begin{example}%
\label{ex.right_adj_not_joins}
Right adjoints need not preserve joins. Here is an example:
\[
\begin{tikzpicture}[y=1.2cm]
	\node (b1) {$\LMO{1}$};
	\node[right= of b1] (b2) {$\LMO{2}$};
	\node at ($(b1)!.5!(b2)+(0,1)$) (b3) {$\LMO{3.9}$};
	\node at ($(b3)+(0,1)$) (b4) {$\LMO{4}$};
	\draw[->] (b1) -- (b3);
	\draw[->] (b2) -- (b3);
	\draw[->] (b3) -- (b4);
	\node[fit=(b1) (b2) (b4), draw] (b) {};
	\node[left=1pt of b] {$P\coloneqq$};
	\node[right=3 of b2] (a1) {$\LMO{1}$};
	\node[right= of a1] (a2) {$\LMO{2}$};
	\node at ($(a1)!.5!(a2)+(0,2)$) (a4) {$\LMO{4}$};
	\draw[->] (a1) -- (a4);
	\draw[->] (a2) -- (a4);
	\node[fit=(a1) (a2) (a4), draw] (a) {};
	\node[right=1pt of a] {$=:Q$};
	\draw[functor] ($(a.west)+(0,5pt)$) to node[above, font=\footnotesize] {$g$} ($(b.east)+(0,5pt)$);
	\draw[functor] ($(b.east)+(0,-5pt)$) to node[below, font=\footnotesize] {$f$} ($(a.west)+(0,-5pt)$);
\end{tikzpicture}
\]
Let $g$ be the map that preserves labels, and let $f$ be the map that preserves labels as far as possible but with $f(3.9)\coloneqq4$. Both are $f$ and $g$ monotonic, and one can check that $g$ is right adjoint to $f$ (see \cref{exc.g_really_is_right_adj}). But $g$ does not preserve joins because $1\vee 2=4$ holds in $Q$, whereas $g(1)\vee g(2)=1\vee 2=3.9\neq 4=g(4)$ in $P$.
\end{example}

\begin{exercise}%
\label{exc.g_really_is_right_adj}
To be sure that $g$ really is right adjoint to $f$ in \cref{ex.right_adj_not_joins}, there are twelve tiny things to check; do so. That is, for every $p\in P$ and $q\in Q$, check that $f(p)\leq q$ iff $p\leq g(q)$.
\end{exercise}

%Thus Galois connections provide a way of constructing maps that sustain
%generative effects: maps that preserve meets but do not necessarily preserve
%joins. In fact, all maps that preserve meets come from Galois connections as we now show.

\begin{theorem}[Adjoint functor theorem for
  preorders]%
\label{prop.adjoint_functor_thm}%
\index{adjoint functor theorem}
Suppose $Q$ is a preorder that has all meets and let $P$ be any preorder. A monotone map $g\colon Q \to P$
preserves meets if and only if it is a right adjoint. 

Similarly, if $P$ has all joins and $Q$ is any preorder, a monotone map $f\colon P \to Q$ preserves joins if
and only if it is a left adjoint.
\end{theorem}
\begin{proof}
We will only prove the claim about meets; the claim about joins follows similarly.

We proved one direction in \cref{prop.right_adj_meets}, namely that right adjoints preserve meets. For the other, suppose that $g$ is a
monotone map that preserves meets; we shall construct a left adjoint $f$. We define our candidate $f\colon P \to Q$ on any $p\in P$ by
\begin{equation}%
\label{eqn.def_f_random596}
  f(p)\coloneqq\bigwedge \{q \in Q \mid p\leq g(q)\};
\end{equation}
this meet is well defined because $Q$ has all meets, but for $f$ to really be a candidate, we need to show it is monotone. So suppose that $p\leq p'$. Then $\{q' \in Q \mid p'\leq g(q')\}\ss\{q \in Q \mid p\leq g(q)\}$. By \cref{prop.containment_meet_join}, this implies $f(p)\leq f(p')$. Thus $f$ is monotone.

By \cref{prop.right_adj_meets}, it suffices to show that $p_0\leq g(f(p_0))$ and that $f(g(q_0))\leq q_0$ for all $p_0\in P$ and $q_0\in Q$. For the first, we have
\[
  p_0\leq
  \bigwedge\{g(q)\in P\mid p_0\leq g(q)\}\cong
  g\left(\bigwedge\{q\in Q\mid p_0\leq g(q)\}\right)=
  g(f(p_0)),
\]
where the first inequality follows from the fact that if $p_0$ is below every element of a set, then it is below their meet, and the isomorphism is by definition of $g$ preserving meets. For the second, we have
\[
	f(g(q_0))=\bigwedge\{q\in Q\mid g(q_0)\leq g(q)\}\leq\bigwedge\{q_0\}=q_0,
\]
where the first inequality follows from \cref{prop.containment_meet_join} since $\{q_0\}\ss\{q\in Q\mid g(q_0)\leq g(q)\}$, and the fact that $\bigwedge\{q_0\}=q_0$.
\end{proof}

\begin{example}%
\index{pullback!along a map}
  Let $f\colon A \to B$ be a function between sets. We can imagine $A$ as a set
  of apples, $B$ as a set of buckets, and $f$ as putting each apple in a bucket.
  
  Then we have the monotone map $f^\ast\colon \powset(Y) \to \powset(X)$ that category theorists call
  ``pullback along $f$.'' This map takes a subset $B' \subseteq B$ to its preimage
  $f\inv (B') \subseteq A$: that is, it takes a collection $B'$ of buckets, and tells
  you all the apples that they contain in total. This operation is monotonic (more buckets means more apples) and it has both a left and a right adjoint. %
\index{preimage}
  
  The left adjoint $f_!(A)$ is given by the direct image: it maps a subset $A' \subseteq A$ to
  \[
  f_!(A') \coloneqq \{b \in B\mid \mbox{there exists }a \in A' \mbox{ such that }
  f(a) = b\}
  \]
  This map takes a set $A'$ of apples, and tells you all the buckets that
  contain at least one of those apples.

  The right adjoint $f_*$ maps a subset $A' \subseteq A$ to 
  \[
  f_*(A')\coloneqq\{b \in B\mid \mbox{for all }a \mbox{ such that } f(a)=b,\mbox{ we have } a \in A'\}
  \]
  This map takes a set $A'$ of apples, and tells you all the buckets $b$ that are all-$A'$: all the apples in $b$ are from the chosen subset $A'$. Note that if a bucket doesn't contain any apples at
  all, then vacuously all its apples are from $A'$, so empty buckets count as far as $f_*$ is concerned.
 
  Notice that all three of these operations turn out to be interesting: start with a set $B'$ of buckets and return all the apples in them, or start with a set $A'$ of apples and either find the buckets that contain at least one apple from $A'$, or the buckets whose only apples are from $A'$. But we did not invent these mappings $f^*$, $f_!$, and $f_*$: they were \emph{induced} by the function $f$. They were automatic. It is one of the pleasures of category theory that adjoints so often turn out to have interesting semantic interpretations.
\end{example}

\begin{exercise}%
\label{exc.subsets_!*}
Choose sets $X$ and $Y$ with between two and four elements each, and choose a function $f\colon X\to Y$.
\begin{enumerate}
	\item Choose two different subsets $B_1,B_2\ss Y$ and find $f^*(B_1)$ and $f^*(B_2)$.
	\item Choose two different subsets $A_1,A_2\ss X$ and find $f_!(A_1)$ and $f_!(A_2)$.
	\item With the same $A_1,A_2\ss X$, find $f_*(A_1)$ and $f_*(A_2)$.\qedhere
\qedhere
\end{enumerate}
\end{exercise}

%---- Subsection ----%
\subsection{Closure operators}%
\label{subsec.closure_operator}%
\index{closure operator|(}

Given a Galois connection with $f\colon P\to Q$ left adjoint to $g\colon Q\to P$, we may compose $f$ and
$g$ to arrive at a monotone map $f\cp g\colon P \to P$ from preorder $P$ to itself.
This monotone map has the property that $p \le (f\cp g)(p)$, and that $(f\cp g\cp f \cp g)(p) \cong (f\cp g)(p)$ for any $p\in P$.
This is an example of a \emph{closure operator}.%
\footnote{The other composite $g\cp f$ satisfies the dual properties: $(g\cp f)(q)\leq q$ and $(g\cp f\cp g\cp f)(q)\cong(g \cp f)(q)$ for all $q\in Q$. This is called an \emph{interior operator}, though we will not discuss this concept further.%
\index{dual|seealso {properties, dual}}%
\index{properties!dual}}

\begin{exercise}%
\label{exc.closure}
Suppose that $f$ is left adjoint to $g$. Use \cref{prop.galois_monad_comonad} to show the following.
\begin{enumerate}
	\item $p \le (f\cp g)(p)$.
	\item $(f\cp g\cp f \cp g)(p) \cong (f\cp g)(p)$. To prove this, show inequalities in both directions, $\leq$ and $\geq$.
\qedhere
\end{enumerate}
\end{exercise}

\begin{definition}%
\index{closure operator}
A \emph{closure operator} $j\colon P \to P$ on a preorder $P$ is a monotone map such that for all $p \in
P$ we have
\begin{enumerate}[label=(\alph*)]
\item $p \le j(p)$;
\item $j(j(p)) \cong j(p)$.
\end{enumerate}
\end{definition}

\begin{example}%
\label{ex.rewrite_systems}%
\index{program semantics}
Here is an example of closure operators from computation, very roughly presented. Imagine computation as a
process of rewriting input expressions to output expressions. For example, a computer
can rewrite the expression \texttt{7+2+3} as the expression \texttt{12}. The set of arithmetic expressions
has a partial order according to whether one expression can be rewritten as another.

We might think of a computer program, then, as a method of taking an expression and reducing it to
another expression. So it is a map $j\colon \texttt{exp} \to \texttt{exp}$.
It furthermore is desirable to require that this computer program is a closure
operator. Monotonicity means that if an expression $x$ can be rewritten into expression
$y$, then the reduction $j(x)$ can be rewritten into $j(y)$. Moreover, the requirement $x \le j(x)$ implies that $j$
can only turn one expression into another if doing so is a permissible rewrite. The requirement
$j(j(x))=j(x)$ implies if you try to reduce an expression that has already been reduced,
the computer program leaves it as is. These properties provide useful structure
in the analysis of program semantics.
\end{example}

\begin{example}[Adjunctions from closure operators]%
\index{adjunction!from
  closure operator}
Just as every adjunction gives rise to a closure operator, from every closure
operator we may construct an adjunction.

Let $P$ be a preorder and let
$j\colon P\to P$ be a closure operator. We can define a preorder $\Set{fix}_j$ to have
elements the fixed points of $j$; that is,
\[
\Set{fix}_j\coloneqq\{p \in P\mid j(p)\cong p\}.
\]
This is a subset of $P$, and inherits an order as a result; hence $\Set{fix}_j$ is a sub-preorder of $P$. Note that $j(p)$ is a fixed point for all $p\in P$, since $j(j(p))\cong j(p)$.

We define an adjunction with left adjoint $j\colon P \to
\Set{fix}_j$ sending $p$ to $j(p)$, and right adjoint $g\colon \Set{fix}_j
\to P$ simply the inclusion of the sub-preorder. To see it's really an adjunction, we need to see that for any $p \in P$ and $q \in \Set{fix}_j$, we have $j(p) \le q$ if and only if $p \le q$. Let's check it. Since $p \le j(p)$, we have that $j(p) \le q$ implies $p \le q$ by transitivity.
Conversely, since $q$ is a fixed point, $p \le q$ implies $j(p) \le j(q)\cong q$.
\end{example}

\begin{example}%
\label{ex.modal_operator}%
\index{logic}
Another example of closure operators comes from logic. This will be discussed in the final chapter of the book, in particular \cref{subsec.modalities}, but we will give a quick overview here. In essence, logic is the study of when one formal statement---or proposition---implies another. For example, if $n$ is prime then $n$ is not a multiple of $6$, or if it is raining then the ground is getting wetter. Here ``$n$ is prime'', ``$n$ is not a multiple of $6$'', ``it is raining'', and ``the ground is getting wetter'' are propositions, and we gave two implications.

Take the set of all propositions, and order them by $p\leq q$ iff $p$ implies $q$, denoted $p\imp q$. Since $p\imp p$ and since whenever $p\imp q$ and $q\imp r$, we also have $p\imp r$, this is indeed a preorder.

A closure operator on it is often called a \emph{modal operator}. It is a function $j$
from propositions to propositions, for which $p\imp j(p)$ and $j(j(p))= j(p)$. An
example of a $j$ is ``assuming Bob is in San Diego....'' Think of this as a proposition $B$; so ``assuming Bob is in San Diego, $p$'' means $B\imp p$. Let's see why $B\imp -$ is a closure operator.

If `$p$' is true then
``assuming Bob is in San Diego, $p$'' is still true. Suppose that ``assuming Bob
is in San Diego it is the case that, assuming Bob is in San Diego, $p$' is true.''
It follows that ``assuming Bob is in San Diego, $p$'' is true. So we have seen, at least informally, that ``assuming Bob is in San Diego...'' is a closure operator.
\end{example}
\index{closure operator|)}

%---- Subsection ----%
\subsection{Level shifting} %
\label{ssec.level_shift}%
\index{level shift}\index{level shift|seealso {primordial ooze}}

The last thing we want to discuss in this chapter is a phenomenon that happens often in category theory, something we might informally call ``level-shifting.'' It is easier to give an example of this than to explain it directly.

Given any set $S$, there is a set $\Cat{Rel}(S)$ of binary relations on $S$. An element $R\in\Cat{Rel}(S)$ is formally a subset $R\ss S\times S$. The set $\Cat{Rel}(S)$ can be given an order via the subset relation, $R\ss R'$, i.e.\ if whenever $R(s_1,s_2)$ holds then so does $R'(s_1,s_2)$.%
\index{relation!binary}

For example, the Hasse diagram for $\Cat{Rel}(\{1\})$ is:
\[
\begin{tikzcd}
	\LMO{\varnothing}\ar[r]& 	\LMO{\{(1,1)\}}
\end{tikzcd}
\]
\index{Hasse diagram}

\begin{exercise}%
\label{exc.Hasse_binary12}
Draw the Hasse diagram for the preorder $\Cat{Rel}(\{1,2\})$ of all binary relations on the set $\{1,2\}$.
\end{exercise}

For any set $S$, there is also a set $\Cat{Pos}(S)$,
consisting of all the preorder relations on $S$. In fact there is a preorder
structure $\sqsubseteq$%
\label{page.order_preorder} on $\Cat{Pos}(S)$, again given by inclusion: $\leq$
is below $\leq'$ (we'll write $\leq\sqsubseteq\leq')$ if $a\leq b$ implies $a\leq' b$ for every $a,b\in S$. A preorder of preorder structures? That's what we mean by a level shift.

Every preorder relation is---in particular---a relation, so we have an inclusion
$\Cat{Pos}(S)\to\Cat{Rel}(S)$. This is the right adjoint of a Galois connection.%
\index{adjunction!examples of}
Its left adjoint is a monotone map
$\mathrm{Cl}\colon\Cat{Rel}(S)\to\Cat{Pos}(S)$ given by taking any relation $R$, writing it in infix notation using $\leq$, and
taking the reflexive and transitive closure, i.e.\ adding $s\leq s$ for every
$s$ and adding $s\leq u$ whenever $s\leq t$ and $t\leq u$.%
\index{relation!free preorder on}

\begin{exercise}%
\label{exc.understand_adj}%
\index{adjunction}
	Let $S=\{1,2,3\}$. Let's try to understand the adjunction discussed above.
	\begin{enumerate}
		\item Come up with any preorder relation $\leq$ on $S$, and define $U(\leq)$ to be the subset $U(\leq)\coloneqq\{(s_1,s_2)\mid s_1\leq s_2\}\ss S\times S$, i.e.\ $U(\leq)$ is the image of $\leq$ under the inclusion $\Cat{Pos}(S)\to\Cat{Rel}(S)$, the relation `underlying' the preorder.
		\item Come up with any two binary relations $Q\ss S\times S$ and $Q'\ss S\times S$ such that $Q\ss U(\leq)$ but $Q'\not\ss U(\leq)$. Note that your choice of $Q,Q'$ do not have to come from preorders.
	\end{enumerate}
	We now want to check that in this case, the closure operation $\Fun{Cl}$ is really left adjoint to the `underlying relation' map $U$.
	\begin{enumerate}[resume]
		\item Concretely (without using the assertion that there is some sort of adjunction), show that $\Fun{Cl}(Q)\sqsubseteq\;\leq$, where $\sqsubseteq$ is the order on $\Cat{Pos}(S)$, defined immediately above this exercise.
		\item Concretely show that $\Fun{Cl}(Q')\not\sqsubseteq\;\leq$.
	\qedhere
\end{enumerate}
\end{exercise}

\index{Galois connection|)}%
\index{adjunction!of preorders|)}
%-------- Section --------%
\section{Summary and further reading}%
\label{ch1.further_reading}

In this first chapter, we set the stage for category theory by introducing
one of the simplest interesting sorts of example: preorders. From this seemingly
simple structure, a bunch of further structure emerges:
monotone maps, meets, joins, and more. In terms of modeling real world phenomena, we
thought of preorders as the states of a system, and monotone maps as describing a
way to use one system to observe another. From this point of view, generative
effects occur when observations of the whole cannot be deduced by combining observations of the parts.

In the final section we introduced Galois connections. A Galois connection, or adjunction, is a pair of maps that are like inverses, but allowed to be more ``relaxed'' by getting the orders involved. Perhaps
surprisingly, it turns out adjunctions are closely related to joins and meets:
if a preorder $P$ has all joins, then a monotone map out of $P$ is a left adjoint if and only if it preserves joins; similarly for meets and right adjoints.

The next two chapters build significantly on this material, but in two different
directions. \cref{chap.resource_theory} adds a new operation on the underlying
set: it introduces the idea of a monoidal structure on preorders. This allows us to
construct an element $a \otimes b$ of a preorder $P$ from any elements $a, b \in
P$, in a way that respects the order. On the other hand,
\cref{chap.databases} adds new structure on the order itself: it introduces the
idea of a morphism, which describes not only whether $a \le b$, but gives a name
$f$ for how $a$ relates to $b$. This structure is known as a category.
These generalizations are both fundamental to the story of compositionality, and
in \cref{chap.codesign} we'll see them meet in the concept of a monoidal
category.  The lessons we have learned in this chapter will illuminate the
more highly-structured generalizations in the chapters to come.  Indeed, it is a
useful principle in studying category theory to try to understand concepts first
in the setting of preorders---where often much of the complexity is stripped away
and one can develop some intuition---before considering the general case.

But perhaps you might be interested in exploring some ideas in this chapter in
other directions. While we won't return to them in this book, we learned about
generative effects from Elie Adam's thesis \cite{Adam:2017a}, and a much richer
treatment of generative effect can be found there. In particular, he discusses abelian categories and cohomology, providing a way to
detect generative effects in quite a general setting.%
\index{generative effect}

Another important application of preorders, monotone maps, and Galois connections
is to the analysis of programming languages. In this setting, preorders describe
the possible states of a computer, and monotone maps describe the action of
programs, or relationships between different ways of modeling computation
states. Galois connections are useful for showing how different models may be closely
related, and for transporting program analysis from one framework to another.
For more detail on this, see Chapter 4 of the textbook
\cite{Nielson:1999:PPA:555142}.%
\index{program semantics}

\setcounter{chapter}{1}%Just finished 1.
%------------ Chapter ------------%
\chapter[Resources: monoidal preorders and enrichment]{Resource theories:\\Monoidal preorders and enrichment} %
\label{chap.resource_theory}

%\settocdepth{subsubsection}
%\clearpage
%\tableofcontents*

%-------- Section --------%
\section{Getting from $a$ to $b$}%
\index{resource!theory|(}

You can't make an omelette without breaking an egg. To obtain the things we want requires resources, and the process of transforming what we have into what we want is often an intricate one. In this chapter, we will discuss how monoidal preorders can help us think about this matter.

Consider the following three questions you might ask yourself:
\begin{itemize}
	\item Given what I have, is it \emph{possible} to get what I want?
	\item Given what I have, what is the \emph{minimum cost} to get what I want?
	\item Given what I have, what is the \emph{set of ways} to get what I want?
\end{itemize}
These questions are about resources---those you have and those you want---but perhaps more importantly, they are about moving from have to want: possibility of, cost of, and ways to.

Such questions come up not only in our lives, but also in science and industry. In chemistry, one asks whether a certain set of compounds can be transformed into another set, how much energy such a reaction will require, or what methods exist for making it happen. In manufacturing, one asks similar questions.
\index{chemistry}
\index{manufacturing}

From an external point of view, both a chemist and an industrial firm might be regarded as store-houses of information on the above subjects. The chemist knows which compounds she can make given other ones, and how to do so; the firm has stored knowledge of the same sort. The research work of the chemist and the firm is to use what they know in order to derive---or discover---new knowledge.

This is roughly the first goal of this chapter: to discuss a formalism for expressing recipes---methods for transforming one set of resources into another---and for deriving new recipes from old. The idea here is not complicated, neither in life nor in our mathematical formalism. The value added then is to simply see how it works, so we can build on it within the book, and so others can build on it in their own work.%
\index{recipes}

We briefly discuss the categorical approach to this idea---namely that of
\emph{monoidal preorders}---for building new recipes from old. The following
\emph{wiring diagram} shows, assuming one knows how to implement each of the
interior boxes, how to implement the preparation of a lemon meringue
pie:%
\index{pie!lemon meringue}%
\index{wiring diagram}
\begin{equation}%
\label{eqn.how_to_bake_pie}
% [inline block 7: 2 envs, 2579 chars -> data_tex | \begin{tikzpicture}[oriented WD, align=center, bbx=1.2cm, bby=2ex] 	\node[bb={4}{1}, bb min width=.9in] (filling) {make\...]

\]
In the above example we see that resources are not always consumed when they are used. For example, we use an oven to convert---or catalyze the transformation of---an unbaked pie into a baked pie, and we get the oven back after we are done. It's a nice feature of ovens! To use economic terms, the oven is a ``means of production'' for pies.

String diagrams are important mathematical objects that will come up repeatedly in this book. They were invented in the mathematical context---more specifically in the context of monoidal categories---by Joyal and Street \cite{Joyal.Street:1993a}, but they have been used less formally by engineers and scientists in various contexts for a long time.

As we said above, our first goal in this chapter is to use monoidal preorders, and
the corresponding wiring diagrams, as a formal language for recipes from
old. Our second goal is to discuss something called $\mathcal{V}$-categories for
various monoidal preorders $\cat{V}$.

A $\cat{V}$-category is a set of objects,
which one may think of as points on a map, where $\cat{V}$ somehow ``structures the question'' of getting from point $a$ to point $b$. The examples of monoidal preorders $\cat{V}$ that we will be most interested in
are called $\Bool$ and $\Cost$. Roughly speaking, a $\Bool$-category is a set of
points where the question of getting from point $a$ to point $b$ has a $\true$ /
$\false$ answer. A $\Cost$-category is a set of points where the question of
getting from $a$ to $b$ has an answer $d\in[0,\infty]$, a cost.

This story works in more generality than monoidal preorders. Indeed, in
\cref{chap.codesign} we will discuss something called a monoidal category, a notion which
generalizes monoidal preorders, and we will generalize the definition of $\cat{V}$-category
accordingly. In this more general setting, $\cat{V}$-categories can also
address our third question above, describing \emph{methods} of getting between points. For example a $\smset$-category is a set of points where the question of getting from point $a$ to point $b$ has a set of answers (elements of which might be called methods).

We will begin in \cref{sec.sym_mon_preorders} by defining symmetric monoidal
preorders, giving a few preliminary examples, and discussing wiring diagrams. We
then give many more examples of symmetric monoidal preorders, including both some
real-world examples, in the form of resource theories, and some mathematical
examples that will come up again throughout the book. In
\cref{sec.enrichment} we discuss enrichment and $\cat{V}$-categories---how a
monoidal preorder $\cat{V}$ can ``structure the question'' of getting from $a$ to $b$---and then give some important constructions on $\cat{V}$-categories
(\cref{sec.vcat_constructions}), and analyze them using a sort of matrix multiplication technique (\cref{sec.quantales}).

%-------- Section --------%
\section{Symmetric monoidal preorders}%
\label{sec.sym_mon_preorders}%
\index{monoidal preorder|(}%
\index{preorder!monoidal|see {monoidal preorder}}

In \cref{subsec.def_preorder} we introduced preorders. The notation for a preorder, namely $(X,\leq)$, refers to two pieces of structure: a set called $X$ and a relation called $\leq$ that is reflexive and transitive.

We want to add to the concept of preorders a way of combining elements in $X$, an operation
taking two elements and adding or multiplying them together. However, the
operation does not have to literally be addition or multiplication; it only
needs to satisfy some of the properties one expects from them.

%---- Subsection ----%
\subsection{Definition and first examples}

We begin with a formal definition of symmetric monoidal preorders.

\begin{definition}%
\label{def.symm_mon_structure}%
\index{preorder!symmetric monoidal|see {monoidal preorder}}%
\index{monoidal structure}
A  \emph{symmetric monoidal structure} on a preorder $(X,\leq)$ consists of two constituents:
\begin{enumerate}[label=(\roman*)]
	\item an element $I\in X$, called the \emph{monoidal unit}, and%
\index{monoidal unit}
	\item a function $\otimes \colon X\times X\to X$, called the \emph{monoidal product}.%
\index{monoidal product}%
	%\footnote{The monoidal product is denoted using infix notation: for any $x_1,x_2\in X$ we write $x_1\otimes x_2$ rather than $\otimes(x_1,x_2)$.}
\end{enumerate}
These constituents must satisfy the following properties, where we write $\otimes(x_1,x_2) = x_1 \otimes x_2$:%
\index{unit!monoidal}
\begin{enumerate}[label=(\alph*)]
	\item for all $x_1,x_2,y_1,y_2\in X$, if $x_1\leq y_1$ and $x_2\leq y_2$, then $x_1\otimes x_2\leq y_1\otimes y_2$,
	\item for all $x\in X$, the equations $I\otimes x= x$ and $x\otimes I= x$ hold,
	\item for all $x,y,z\in X$, the equation $(x\otimes y)\otimes z= x\otimes (y\otimes z)$ holds, and
	\item for all $x,y\in X$, the equation $x\otimes y = y\otimes x$ holds.%
\end{enumerate}
We call these conditions \emph{monotonicity}, \emph{unitality},
\emph{associativity}, and \emph{symmetry} respectively.
A preorder equipped with a symmetric monoidal structure, $(X,\leq,I,\otimes)$, is called a \emph{symmetric monoidal preorder}.%
\index{associativity!of monoidal product}%
\index{symmetry}%
\index{unitality!of monoidal product}%
\index{symmetry!of monoidal product}
\end{definition}

Anyone can propose a set $X$, an order $\leq$ on $X$, an element $I$ in $X$, and a binary operation $\otimes$ on $X$ and ask whether $(X,\leq,I,\otimes)$ is a symmetric monoidal preorder. And it will indeed be one, as long as it satisfies rules a, b, c, and d of \cref{def.symm_mon_structure}.

\begin{remark}
It is often useful to replace $=$ with $\cong$ throughout
\cref{def.symm_mon_structure}. The result is a perfectly good notion, called a
\emph{weak monoidal structure}. The reason we chose equality is that it makes
equations look simpler, which we hope aids first-time readers.%
\index{monoidal structure!weak}
\end{remark}

The notation for the monoidal unit and the monoidal product may vary: monoidal
units we have seen include $I$ (as in the definition), $0$, $1$, $\true$,
$\false$, $\{*\}$, and more. Monoidal products we have seen include $\otimes$
(as in the definition), $+$, $*$, $\wedge$, $\vee$, and $\times$. The
\emph{preferred notation} in a given setting is whatever best helps our brains
remember what we're trying to do; the names $I$ and $\otimes$ are just defaults.%
\index{notation!for monoidal structures}

\begin{example}%
\label{ex.real_nums_preorder}%
\index{natural numbers}
There is a well-known preorder structure, denoted $\leq$, on the set $\RR$ of real numbers; e.g.\ $-5\leq \sqrt{2}$. We propose $0$ as a monoidal unit and $+\colon\RR\times\RR\to\RR$ as a monoidal product. Does $(\RR,\leq,0,+)$ satisfy the conditions of \cref{def.symm_mon_structure}?

If $x_1\leq y_1$ and $x_2\leq y_2$, it is true that $x_1+x_2\leq y_1+y_2$. It is also true that $0+x=x$ and $x+0=x$, that $(x+y)+z=x+(y+z)$, and that $x+y=y+x$. Thus $(\RR,\leq,0,+)$ satisfies the conditions of being a symmetric monoidal preorder.
\end{example}

\begin{exercise}%
\label{exc.monoidal_reals} %
\index{real numbers}
Consider again the preorder $(\RR,\leq)$ from \cref{ex.real_nums_preorder}. Someone proposes $1$ as a monoidal unit and $*$ (usual multiplication) as a monoidal product. But an expert walks by and says ``that won't work.'' Figure out why, or prove the expert wrong!
\end{exercise}

\begin{example}%
\index{monoid}%
\label{ex.monoid}
A \emph{monoid} consists of a set $M$, a function $*\colon M\times M\to M$ called the \emph{monoid multiplication}, and an element $e\in M$ called the \emph{monoid unit}, such that, when you write $*(m,n)$ as $m*n$, i.e.\ using infix notation%
\index{infix notation}, the equations
\begin{equation} %
\label{eqn.monoid}
	m*e=m,\qquad e*m=m,\qquad (m*n)*p=m*(n*p)
\end{equation}
hold for all $m,n,p\in M$. It is called \emph{commutative} if also $m*n=n*m$.

Every set $S$ determines a discrete preorder $\Cat{Disc}_S$ (where $m\leq n$ iff
$m=n$; see \cref{ex.disc_preorder}), and it is easy to check that if $(M,e,*)$
is a commutative monoid then $(\Cat{Disc}_M,=,e,*)$ is a symmetric monoidal preorder.
\end{example}%
\index{infix notation}

\begin{exercise} %
\label{exc.disc_mon_preorder}
  We said it was easy to check that if $(M,*,e)$ is a commutative monoid then
  $(\Cat{Disc}_M,=,*, e)$ is a symmetric monoidal preorder. Are we telling the truth?
\end{exercise}

\begin{example}%
\label{ex.nonexample_poker}%
\index{poker}
Here is a non-example for people who know the game ``standard poker.'' Let $H$
be the set of all poker hands, where a hand means a choice of five cards from
the standard 52-card deck. As an order, put $h\leq h'$ if $h'$ beats or
equals $h$ in poker.

One could propose a monoidal product $\otimes\colon H\times H\to H$ by assigning
$h_1\otimes h_2$ to be ``the best hand one can form out of the ten cards in
$h_1$ and $h_2$.'' If some cards are in both $h_1$ and $h_2$, just throw the
duplicates away. So for example $\{2\heartsuit,\; 3\heartsuit,\; 4\heartsuit,\;
6\spadesuit,\; 7\spadesuit\}\otimes\{2\heartsuit,\; 5\heartsuit,\;
6\heartsuit,\; 6\spadesuit,\; 7\spadesuit\}=\{2\heartsuit,\; 3\heartsuit,\;
4\heartsuit,\; 5\heartsuit,\; 6\heartsuit\}$, because the latter is the best
hand you can make with the former two.

This proposal for a monoidal structure will fail the condition (a) of \cref{def.symm_mon_structure}: it could be the case that $h_1\leq i_1$ and $h_2\leq i_2$, and yet \emph{not} be the case that $h_1\otimes h_2\leq i_1\otimes i_2$. For example, consider this case:
\begin{align*}
  h_1&\coloneqq\{2\heartsuit,\; 3\heartsuit,\; 10\spadesuit,\; \mathrm{J}\spadesuit,\; \mathrm{Q}\spadesuit\}
  &
	i_1&\coloneqq\{4\clubsuit,\; 4\spadesuit,\; 6\heartsuit,\; 6\diamondsuit,\; 10\diamondsuit\}
	\\
	h_2&\coloneqq\{2\diamondsuit,\; 3\diamondsuit,\; 4\diamondsuit,\; \mathrm{K}\spadesuit,\; \mathrm{A}\spadesuit\}
	&	
	i_2&\coloneqq\{5\spadesuit,\; 5\heartsuit,\; 7\heartsuit,\; J\diamondsuit,\; Q\diamondsuit\}.
\end{align*}
Here, $h_1\leq i_1$ and $h_2\leq i_2$, but $h_1\otimes h_2=\{10\spadesuit,\; \mathrm{J}\spadesuit,\; \mathrm{Q}\spadesuit,\; \mathrm{K}\spadesuit,\; \mathrm{A}\spadesuit\}$ is the best possible hand and beats $i_1\otimes i_2=\{5\spadesuit,\; 5\heartsuit,\; 6\heartsuit,\; 6\diamondsuit,\; Q\diamondsuit\}$.
\end{example}

Subsections \ref{subsec.SMPs_science} and \ref{subsec.SMPs_pure_math}
are dedicated to examples of symmetric monoidal preorders. Some are aligned
with the notion of resource theories, others come from pure math. When
discussing the former, we will use wiring diagrams, so here is a quick primer.

%---- Subsection ----%
\subsection{Introducing wiring diagrams}%
\label{ssec.wirdia2}%
\index{wiring diagram|(}

Wiring diagrams are visual representations for building new relationships from old. In a preorder without a monoidal structure, the only sort of relationship between objects is $\leq$, and the only way you build a new $\leq$ relationship from old ones is by chaining them together. We denote the relationship $x\leq y$ by \begin{equation}%
\label{eqn.string_x_leq_y}
\begin{aligned}
% [inline block 8: 3 envs, 2707 chars -> data_tex | \begin{tikzpicture}[oriented WD, bb medium, bb port length=0] 	\node[bb={1}{1}] at (0,0) (x) {$\le$};...]

\end{equation}

\paragraph{Different styles of wiring diagrams}%
\index{wiring diagram!styles of}%
\index{wiring diagram!icon of}
In fact, we will see later that there are many styles of wiring diagrams. When we are dealing with preorders, the sort of wiring diagram we can draw is that with single-input, single-output boxes connected in series. When we are dealing with symmetric monoidal preorders, we can have more complex boxes and more complex wiring diagrams, including parallel composition. Later we will see that for other sorts of categorical structures, there are other styles of wiring diagrams:
\begin{equation}%
\label{eqn.styles_of_WD}
% [inline block 9: 1 envs, 2765 chars -> data_tex | \begin{tikzpicture} \begin{scope}[font=\footnotesize, text height=1.5ex, text depth=.5ex]...]

\end{equation}

\paragraph{Wiring diagrams for symmetric monoidal preorders}%
\index{wiring
diagram!for monoidal preorders}

The style of wiring diagram that makes sense in any symmetric monoidal preorder is that shown in \cref{eqn.random334_string_diag}: boxes can have multiple inputs and outputs, and they may be arranged in series and parallel. Symmetric monoidal preorders and their wiring diagrams are tightly coupled with each other. How so?

The answer is that a monoidal preorder $(X,\leq,I,\otimes)$ has some notion of element ($x\in X$), relationship ($\leq$), and combination (using transitivity and $\otimes$), and so do wiring diagrams: the wires represent elements, the boxes represent relationships, and the wiring diagrams themselves show how relationships can be combined. We call boxes and wires \emph{icons}; we will encounter several more icons in this chapter, and throughout the book.%
\index{icon}

To get a bit more rigorous about the connection, let's start with a monoidal
preorder $(X,\leq,I,\otimes)$ as in \cref{def.symm_mon_structure}. Wiring
diagrams have wires on the left and the right. Each element $x\in X$ can be made
the label of a wire. Note that given two objects $x,y$, we can either draw two
wires in parallel---one labeled $x$ and one labeled $y$---or we can draw one
wire labeled $x\otimes y$. 
\[
\begin{tikzpicture}[y=1ex,decoration=brace]
	\draw (4,1) to node[above] {$x$} +(1,0);
	\draw (4,-1) to node[below] {$y$} +(1,0);
	\draw (7, 0) to node[below] {$x\otimes y$} +(1,0);
\end{tikzpicture}
\]
We consider wires in parallel to represent the monoidal product of their labels,
so we consider both cases above to represent the element $x \otimes y$. Note
also that a wire labeled $I$ or an absence of wires:
\[
\begin{tikzpicture}
	\draw (0,0) to node[below] {$I$} +(1,0);
	\node[gray] at (4,0) {\emph{nothing}};
\end{tikzpicture}
\]
both represent the monoidal unit $I$; another way of thinking of this is that
the unit is the empty monoidal product.%
\index{monoidal unit!drawn as nothing}%
\index{unit!monoidal|seealso {monoidal unit}}

A wiring diagram runs between a set of parallel wires on the left and a set of
parallel wires on the right. We say that a wiring diagram is \emph{valid} if the
monoidal product of the elements on the left is less than the monoidal product
of those on the right. For example, if we have the inequality $x\leq y$, the the
diagram that is a box with a wire labeled $x$ on the left and a wire labeled $y$
on the right is valid; see the first box below: 
\[
% [inline block 10: 16 envs, 10652 chars -> data_tex | \begin{tikzpicture}[oriented WD, bby=1ex] 	\node[bb={1}{1}] (f) {$\leq$};...]

\end{aligned}
\end{equation}
The inner boxes in \cref{eqn.random5739_string_diag} translate into the assertions:
\begin{equation}%
\label{eqn.some_random334_assertions}
	t\leq v+w\qquad w+u\leq x+z\qquad v+x\leq y
\end{equation}
and the outer box translates into the assertion:
\begin{equation}%
\label{eqn.random334_conclusion}
	t+u\leq y+z.
\end{equation}
The whole wiring diagram \ref{eqn.random5739_string_diag} says ``if you know
that the assertions in \ref{eqn.some_random334_assertions} are true, then I am a
proof that the assertion in \ref{eqn.random334_conclusion} is also true.'' What
exactly is the proof that diagram \ref{eqn.random5739_string_diag} represents? It is
the proof
\begin{equation}%
\label{eqn.almost_proof}
	t+u\;\leq\; v+w+u\;\leq\;v+x+z\;\leq\; y+z.
\end{equation}
Indeed, each inequality here is a vertical slice of the diagram
\ref{eqn.random5739_string_diag}, and the transitivity of these inequalities is
expressed by connecting these vertical slices together.

\begin{example}%
\index{pie!lemon meringue}
Recall the lemon meringue pie wiring diagram from \cref{eqn.how_to_bake_pie}. It has five interior boxes, such as ``separate egg'' and ``fill crust,'' and it has one exterior box called ``prepare lemon meringue pie.'' Each box is the assertion that, given the collection of resources on the left, say an egg, you can transform it into the collection of resources on the right, say an egg white and an egg yolk. The whole string diagram is a proof that if each of the interior assertions is true---i.e.\ you really do know how to separate eggs, make lemon filling, make meringue, fill crust, and add meringue---then the exterior assertion is true: you can prepare a lemon meringue pie. 
\end{example}

\begin{exercise} %
\label{exc.proof_by_wiring}
The string of inequalities in \cref{eqn.almost_proof} is not quite a proof, because technically there is no such thing as $v+w+u$, for example. Instead, there is $(v+w)+u$ and $v+(w+u)$, and so on.
\begin{enumerate}
	\item Formally prove, using only the rules of symmetric monoidal preorders (\cref{def.symm_mon_structure}), that given the assertions in \cref{eqn.some_random334_assertions}, the conclusion in \cref{eqn.random334_conclusion} follows.
	\item Reflexivity and transitivity should show up in your proof. Make sure you are explicit about where they do.
	\item How can you look at the wiring diagram
	\cref{eqn.random334_string_diag} and know that the symmetry axiom
	(\cref{def.symm_mon_structure}(d)) does not need to be invoked?
\qedhere
\end{enumerate}
\end{exercise}

\index{wiring diagram|)}

We next discuss some examples of symmetric monoidal preorders. We begin in
\cref{subsec.SMPs_science} with some more concrete examples, from science, commerce, and informatics. Then in \cref{subsec.SMPs_pure_math} we discuss some examples arising from pure math, some of which will get a good deal of use later on, e.g.\ in \cref{chap.codesign}.

%---- Subsection ----%
\subsection{Applied examples}%
\label{subsec.SMPs_science}

Resource theories are studies of how resources are exchanged in a given arena. For example, in social resource theory one studies a marketplace where combinations of goods can be traded for---as well as converted into---other combinations of goods.

Whereas marketplaces are very dynamic, and an apple might be tradable for an orange on Sunday but not on Monday, what we mean by resource theory in this chapter is a static notion: deciding ``what buys what,'' once and for all.% 
\footnote{Using some sort of temporal theory, e.g.\ the one presented in \cref{chap.temporal_topos}, one could take the notion here and have it change in time.}
This sort of static notion of conversion might occur in chemistry: the chemical reactions that are possible one day will quite likely be possible on a different day as well. Manufacturing may be somewhere in between: the set of production techniques---whereby a company can convert one set of resources into another---do not change much from day to day.

We learned about resource theories from
\cite{Coecke.Fritz.Spekkens:2016a,Fritz:2017a}, who go much further than we
will; see \cref{sec.resource_theory_further_reading} for more information.%
\index{resource!theory} In
this section we will focus only on the main idea. While there are many beautiful
mathematical examples of symmetric monoidal preorders, as we will see in
\cref{subsec.SMPs_pure_math}, there are also ad hoc examples coming from life
experience. In the next chapter, on databases, we will see the same theme: while
there are some beautiful mathematical categories out there, database schemas are
\emph{ad hoc} organizational patterns of information. Describing something as
``ad hoc'' is often considered derogatory, but it just means ``formed, arranged,
or done for a particular purpose only.'' There is nothing wrong with doing
things for a particular purpose; it's common outside of pure math and pure art. Let's get
to it.

% Subsubsection %
\paragraph{Chemistry}%
\label{subsubsec.chemistry}%
\index{chemistry!monoidal preorder of}

In high school chemistry, we work with chemical equations, where material collections such as
\[\mathrm{H_2O},\quad \mathrm{NaCl},\quad \mathrm{2NaOH},\quad \mathrm{CH_4+3O_2}\]
are put together in the form of reaction equations, such as
\[\mathrm{2H_2O+2Na\to 2NaOH+H_2}.\]
The collection on the left, $\mathrm{2H_2O+2Na}$ is called the \emph{reactant}, and the collection on the right, $\mathrm{2NaOH+H_2}$ is called the \emph{product}.

We can consider reaction equations such as the one above as taking place inside a single symmetric monoidal preorder $(\Set{Mat},\to,0,+)$. Here $\Set{Mat}$ is the set of all collections of atoms and molecules, sometimes called \emph{materials}. So we have $\mathrm{NaCl}\in\Set{Mat}$ and $\mathrm{4H_2O+6Ne}\in\Set{Mat}$.

The set $\Set{Mat}$ has a preorder structure denoted by the $\to$ symbol, which is the preferred symbol in the setting of chemistry. To be clear, $\to$ is taking the place of the order relation $\leq$ from \cref{def.symm_mon_structure}. The $+$ symbol is the preferred notation for the monoidal product in the chemistry setting, taking the place of $\otimes$. While it does not come up in practice, we use $0$ to denote the monoidal unit.

\begin{exercise} %
\label{exc.reaction_equations}
Here is an exercise for people familiar with reaction equations: check that
conditions (a), (b), (c), and (d) of \cref{def.symm_mon_structure} hold.
\end{exercise}

\index{chemistry!catalysis}
An important notion in chemistry is that of catalysis: one compound \emph{catalyzes} a certain reaction. For example, one might have the following set of reactions:
\begin{equation}%
\label{eqn.three_reactions}
  y+k\to y'+k'\qquad x+y'\to z'\qquad z'+k'\to z+k
\end{equation}
Using the laws of monoidal preorders, we obtain the composed reaction
\begin{equation}%
\label{eqn.result_reaction}
	x+y+k\to x+y'+k'\to z'+k'\to z+k.
\end{equation}
Here $k$ is the catalyst because it is found both in the reactant and the product of the reaction. It is said to catalyze the reaction $x+y\to z$. The idea is that the reaction $x+y\to z$ cannot take place given the reactions in \cref{eqn.three_reactions}. But if $k$ is present, meaning if we add $k$ to both sides, the resulting reaction can take place.

The wiring diagram for the reaction in \cref{eqn.result_reaction} is shown in \cref{eqn.catalyst}. The three interior boxes correspond to the three reactions given in \cref{eqn.three_reactions}, and the exterior box corresponds to the composite reaction $x+y+k\to z+k$.
\begin{equation}%
\label{eqn.catalyst}
\begin{tikzpicture}[oriented WD, bbx=1cm, font=\tiny, baseline=(a)]
	\node[bb={2}{2}] (a1) {$\to$};
	\node[bb={2}{1}, above right=-1.5 and 1 of a1] (a2) {$\to$};
	\node[bb={2}{2}, below right=-1.5 and 1 of a2] (a3) {$\to$};
	\node[bb={0}{0}, fit=(a1) (a2) (a3)] (a) {};
	\draw (a.west|-a1_in1) to node[above] {$y$} (a1_in1);
	\draw (a.west|-a1_in2) to node[below] {$k$} (a1_in2);
	\draw (a.west|-a2_in1) to node[above] {$x$} (a2_in1);
	\draw (a1_out1) to node[above] {$y'$} (a2_in2);
	\draw (a1_out2) to node[below] {$k'$} (a3_in2);
	\draw (a2_out1) to node[above=3pt] {$z'$} (a3_in1);
	\draw (a3_out1) to node[above] {$z$} (a3_out1-|a.east);
	\draw (a3_out2) to node[below] {$k$} (a3_out2-|a.east);
\end{tikzpicture}
\end{equation}

\paragraph{Manufacturing}%
\label{subsubsec.manufacturing}
\index{manufacturing!monoidal preorder of}

Whether we are talking about baking pies, building smart phones, or following pharmaceutical recipes, manufacturing firms need to store basic recipes, and build new recipes by combining simpler recipes in schemes like the one shown in \cref{eqn.how_to_bake_pie} or \cref{eqn.catalyst}.

The basic idea in manufacturing is exactly the same as that for chemistry, except there is an important assumption we can make in manufacturing that does not hold for chemical reactions:
\begin{center}
\emph{You can trash anything you want, and it disappears from view.}
\end{center}
This simple assumption has caused the world some significant problems, but it is
still in effect. In our meringue pie example, we can ask: ``what happened to the
egg shell, or the paper wrapping the stick of butter''? The answer is they were
trashed, i.e.\ thrown in the garbage bin. It would certainly clutter our diagram
and our thinking if we had to carry these resources through the diagram:
\[
% [inline block 11: 3 envs, 3219 chars -> data_tex | \begin{tikzpicture}[oriented WD, align=center, bbx=1.2cm, bby=2ex] 	\node[bb={4}{3}, bb min width=.9in] (filling) {make\...]

\end{equation}

To model this concept of waste using monoidal categories, one just adds an
additional axiom to (a), (b), (c), and (d) from \cref{def.symm_mon_structure}:
\bigskip
\begin{enumerate}[label=(e)]
	\item $x\leq I$ for all $x\in X$.\hfill{(discard axiom)}%
\label{page.discard_axiom}
\end{enumerate}
\bigskip
It says that every $x$ can be converted into the monoidal unit $I$. In the notation of the chemistry section, we would write instead $x\to 0$: any $x$ yields nothing. But this is certainly not accepted in the chemistry setting. For example,
\[\mathrm{H_2O+NaCl\to^?H_2O}\]
is certainly not a legal chemical equation. It is easy to throw things away in
manufacturing, because we assume that we have access to---the ability to grab onto and directly manipulate---each item produced. In
chemistry, when you have $10^{23}$ of substance $A$ dissolved in something else,
you cannot just simply discard $A$. So axiom (e) is valid in manufacturing but
not in chemistry. 

Recall that in \cref{ssec.wirdia2} we said that there were many different styles
of wiring diagrams. Now we're saying that adding the discard axiom changes the
wiring diagram style, in that it adds this new discard icon that allows wires to terminate, as shown in
\cref{eqn.discard}. In informatics, we will change the wiring diagram style yet
again.%
\index{wiring diagram!icon of}

\paragraph{Informatics}%
\label{subsubsec.informatics}
\index{informatics!monoidal preorder of}
A major difference between information and a physical object is that information
can be copied. Whereas one cup of butter never becomes two, it is easy for a
single email to be sent to two different people. It is much easier to copy a
music file than it is to copy a CD. Here we do not mean ``copy the information
from one compact disc onto another''---of course that's easy---instead, we mean
that it's quite difficult to copy the physical disc, thereby forming a second
physical disc! In diagrams, the distinction is between the relation
\[
% [inline block 12: 5 envs, 3364 chars -> data_tex | \begin{tikzpicture}[oriented WD, font=\footnotesize, bb port length=0pt] 	\node[bb={2}{2}] (copycd) {copy cd};...]

\end{equation}

Information can also be discarded, at least in the conventional way of thinking,
so in addition to axioms (a) to (d) from \cref{def.symm_mon_structure}, we can
keep axiom (e) from page~\pageref{page.discard_axiom} and add a new \emph{copy axiom}:
\bigskip
\begin{enumerate}[label=(f)]
	\item $x\leq x+x$ for all $x\in X$.\hfill{(copy axiom)}%
\label{page.copy_axiom}
\end{enumerate}
\bigskip
allowing us to make mathematical sense of diagrams like \cref{eqn.copy_pic}.%
\index{informatics!duplication in}%
\index{informatics!discarding in}

Now that we have examples of monoidal preorders under our belts, let's discuss some
nice mathematical examples.

\index{resource!theory|)}
%---- Subsection ----%
\subsection{Abstract examples}%
\label{subsec.SMPs_pure_math}

In this section we discuss several mathematical examples of symmetric monoidal structures on preorders.

\paragraph{The Booleans}

The simplest nontrivial preorder is the booleans: $\BB=\{\true,\false\}$ with $\false\leq\true$. There are two different symmetric monoidal structures on it.

\begin{example}[Booleans with AND]%
\label{ex.Bool}%
\index{booleans!usual monoidal structure}
We can define a monoidal structure on $\BB$ by letting the monoidal unit be $\true$ and the monoidal product be $\wedge$ (AND). If one thinks of $\false=0$ and $\true=1$, then $\wedge$ corresponds to the usual multiplication operation $*$. That is, with this correspondence, the two tables below match up:
\begin{equation}%
\label{eqn.wedge_mult_table}
\begin{array}{c | c c}
	\wedge&\false&\true\\\hline
	\false&\false&\false\\
	\true&\false&\true
\end{array}
\hspace{1in}
\begin{array}{c | c c}
	*&0&1\\\hline
	0&0&0\\
	1&0&1
\end{array}
\end{equation}

One can check that all the properties in \cref{def.symm_mon_structure} hold, so we have a monoidal preorder which we denote $\Bool\coloneqq(\BB,\leq,\true,\wedge)$.
\end{example}

$\Bool$ will be important when we get to the notion of enrichment. Enriching in a monoidal preorder $\cat{V}=(V,\leq,I,\otimes)$ means ``letting $\cat{V}$ structure the question of getting from $a$ to $b$.'' All of the structures of a monoidal preorder---i.e.\ the set $V$, the ordering relation $\leq$, the monoidal unit $I$, and the monoidal product $\otimes$---play a role in how enrichment works.

For example, let's look at the case of $\Bool=(\BB,\leq,\true,\wedge)$. The fact that its underlying set is $\BB=\{\false,\true\}$ will translate into saying that ``getting from $a$ to $b$ is a $\true/\false$ question.'' The fact that $\true$ is the monoidal unit will translate into saying ``you can always get from $a$ to $a$.'' The fact that $\wedge$ is the monoidal product will translate into saying ``if you can get from $a$ to $b$ AND you can get from $b$ to $c$ then you can get from $a$ to $c$.'' Finally, the ``if-then'' form of the previous sentence is coming from the order relation $\leq$. We will make this more precise in \cref{sec.enrichment}.

We will be able to play the same game with other monoidal preorders, as we will see after we define a monoidal preorder called $\Cost$ in \cref{ex.Lawveres_base}.

\paragraph{Some other monoidal preorders}

It is a bit imprecise to call $\Bool$ ``the'' boolean monoidal preorder, because
there is another monoidal structure on $(\BB,\leq)$, which we describe in
\cref{exc.boolean_mon_pos_II}. The first structure, however, seems to be more useful in practice than the second.

\begin{exercise}%
\label{exc.boolean_mon_pos_II}%
\index{booleans!alternative
  monoidal structure}
Let $(\BB,\leq)$ be as above, but now consider the monoidal product to be $\vee$ (OR). 
\[
\begin{array}{c | c c}
	\vee&\false&\true\\\hline
	\false&\false&\true\\
	\true&\true&\true
\end{array}
\hspace{1in}
\begin{array}{c | c c}
	\max&0&1\\\hline
	0&0&1\\
	1&1&1
\end{array}
\]
What must the monoidal unit be in order to satisfy the conditions of \cref{def.symm_mon_structure}? 

Does it satisfy the rest of the conditions?
\end{exercise}

In \cref{ex.nats_with_plus,exc.monoidal_naturals} we give two different monoidal structures on the preorder $(\NN,\leq)$ of natural numbers, where $\leq$ is the usual ordering ($0\leq 1$ and $5\leq 16$). 

\begin{example}[Natural numbers with
  addition]%
\label{ex.nats_with_plus}%
\index{natural numbers}
There is a monoidal structure on $(\NN,\leq)$ where the monoidal unit is $0$ and the monoidal product is $+$, i.e.\ $6+4=10$. It is easy to check that $x_1\leq y_1$ and $x_2\leq y_2$ implies $x_1+x_2\leq y_1+y_2$, as well as all the other conditions of \cref{def.symm_mon_structure}.
\end{example}

\begin{exercise} %
\label{exc.monoidal_naturals}
Show there is a monoidal structure on $(\NN,\leq)$ where the monoidal product is $*$, i.e.\ $6*4=24$. What should the monoidal unit be?
\end{exercise}

\begin{example}[Divisibility and multiplication]
Recall from \cref{ex.orders_on_N} that there is a ``divisibility'' order on $\NN$: we write $m|n$ to mean that $m$ divides into $n$ without remainder. So $1\vert m$ for all $m$ and $4\vert 12$.

There is a monoidal structure on $(\NN,\vert\;)$, where the monoidal unit is $1$ and the monoidal product is $*$, i.e.\ $6*4=24$. Then if $x_1\vert y_1$ and $x_2\vert y_2$, then $(x_1*x_2)\vert (y_1*y_2)$. Indeed, if there is some $p_1,p_2\in\NN$ such that $x_1*p_1=y_1$ and $x_2*p_2=y_2$, then $(p_1*p_2)*(x_1*x_2)=y_1*y_2$.
\end{example}

\begin{exercise} %
\label{exc.monoidal_naturals2}
Again taking the divisibility order $(\NN,\vert\;)$. Someone proposes $0$ as the monoidal unit and $+$ as the monoidal product. Does that proposal satisfy the conditions of \cref{def.symm_mon_structure}? Why or why not?
\end{exercise}

\begin{exercise}%
\label{exc.no_maybe_yes}%
\index{Hasse diagram}
Consider the preorder $(P,\leq)$ with Hasse diagram \fbox{$\const{no}\to\const{maybe}\to\const{yes}$}. We propose a monoidal structure with $\const{yes}$ as the monoidal unit and ``min'' as the monoidal product.
\begin{enumerate}
	\item Make sense of ``$\min$'' by filling in the multiplication table with elements of $P$.
\[
\begin{array}{c|ccc}
	\min&\const{no}&\const{maybe}&\const{yes}\\\hline
	\const{no}&\?&\?&\?\\
	\const{maybe}&\?&\?&\?\\
	\const{yes}&\?&\?&\?
\end{array}
\]
	\item Check the axioms of \cref{def.symm_mon_structure} hold for $\Cat{NMY}\coloneqq(P,\leq,\const{yes},\min)$, given your definition of $\min$. If not, change your definition of $\min$.
\qedhere
\end{enumerate}
\end{exercise}

\begin{exercise}%
\label{exc.powerset_symm_mon_pos}%
\index{power set}%
\index{power set|seealso {preorder, of subobjects}}
\index{relation!subset}
Let $S$ be a set and let $\powset(S)$ be its power set, the set of all subsets of
$S$, including the empty subset, $\varnothing\ss S$, and the ``everything''
subset, $S\ss S$. We can give $\powset(S)$ an order: $A\leq B$ is given by the
subset relation $A\ss B$, as discussed in \cref{ex.powerset}. We propose a
symmetric monoidal structure on $\powset(S)$ with monoidal unit $S$ and monoidal product given by intersection $A\cap B$.%
\index{intersection}

Does it satisfy the conditions of \cref{def.symm_mon_structure}?
\end{exercise}

\begin{exercise}%
\label{exc.propositions_preorder}%
\index{proposition}%
\index{proposition|seealso {logic}}
Let $\Prop^{\NN}$ denote the set of all mathematical statements one can make
about a natural number, where we consider two statements to be the same if one
is true if and only if the other is true. For example ``$n$ is prime'' is an
element of $\Prop^\NN$, and so are ``$n=2$'' and ``$n\geq 11$.'' The statements
``$n+2=5$'' and ``$n$ is the least odd prime'' are considered the same. Given
$P,Q\in\Prop^\NN$, we say $P\leq Q$ if for all $n\in\NN$, whenever $P(n)$ is
true, so is $Q(n)$.

Define a monoidal unit and a monoidal product on $\Prop^\NN$ that satisfy the
conditions of \cref{def.symm_mon_structure}.
\end{exercise}

\paragraph{The monoidal preorder $\Cost$}

As we said above, when we enrich in monoidal preorders we see them as different ways to structure the question of ``getting from here to there.'' We will explain this in more detail in \cref{sec.enrichment}. The following monoidal preorder will eventually structure a notion of distance or cost for getting from here to there.

\begin{example}[Lawvere's monoidal
  preorder, $\Cost$]%
\label{ex.Lawveres_base}%
\index{Cost@$\Cost$}
Let $[0,\infty]$ denote the set of nonnegative real numbers---such as $0$, $1$, $15.33\ol{3}$, and $2\pi$---together with $\infty$. Consider the preorder $([0,\infty],\geq)$, with the usual notion of $\geq$, where of course $\infty\geq x$ for all $x\in[0,\infty]$.

There is a monoidal structure on this preorder, where the monoidal unit is $0$ and the monoidal product is $+$. In particular, $x+\infty=\infty$ for any $x\in[0,\infty]$. Let's call this monoidal preorder
\[
\Cost\coloneqq([0,\infty],\geq,0,+),
\]
because we can think of the elements of $[0,\infty]$ as costs. In terms of structuring
``getting from here to there,'' $\Cost$ seems to say ``getting
from $a$ to $b$ is a question of cost.'' The monoidal unit being $0$ will
translate into saying that you can always get from $a$ to $a$ at no cost. The
monoidal product being $+$ will translate into saying that the cost of getting
from $a$ to $c$ is at most the cost of getting from $a$ to $b$ \emph{plus} the cost of getting from $b$ to $c$. Finally, the ``at most'' in the previous sentence is coming from the $\geq$.
\end{example}

\paragraph{The opposite of a monoidal preorder}

One can take the opposite of any preorder, just flip the order: $(X,\leq)\op\coloneqq(X,\geq)$; see \cref{ex.opposite}. \cref{prop.opposite_monoidal_preorder} says that if the preorder had a symmetric monoidal structure, so does its opposite.

\begin{proposition}%
\label{prop.opposite_monoidal_preorder}%
\index{monoidal preorder!opposite of}
  Suppose $\cat{X}=(X,\leq)$ is a preorder and $\cat{X}\op=(X,\geq)$ is its opposite. If $(X,\leq,I,\otimes)$ is a symmetric monoidal preorder then so is its opposite, $(X,\geq,I,\otimes)$.
\end{proposition}
\begin{proof}
  Let's first check monotonicity. Suppose $x_1\geq y_1$ and $x_2\geq y_2$ in
  $\cat{X}\op$; we need to show that $x_1\otimes x_2\geq y_1\otimes y_2$. But by
  definition of opposite order, we have $y_1\leq x_1$ and $y_2\leq x_2$ in
  $\cat{X}$, and thus $y_1\otimes y_2\leq x_1\otimes x_2$ in $\cat{X}$. Thus
  indeed $x_1\otimes x_2\geq y_1\otimes y_2$ in $\cat{X}\op$. The other three
  conditions are even easier; see \cref{exc.complete_op_proof}.
\end{proof}

\begin{exercise}%
\label{exc.complete_op_proof}
  Complete the proof of \cref{prop.opposite_monoidal_preorder} by proving that the three remaining conditions of \cref{def.symm_mon_structure} are satisfied.
\end{exercise}

\begin{exercise} %
\label{exc.costop}
  Since $\Cost$ is a symmetric monoidal preorder, \cref{prop.opposite_monoidal_preorder} says that $\Cost\op$ is too.
  \begin{enumerate}
    \item What is $\Cost\op$ as a preorder?
    \item What is its monoidal unit?
    \item What is its monoidal product?
  \qedhere
\end{enumerate}
\end{exercise}

%==== Subsection ====%
\subsection{Monoidal monotone maps}%
\index{monoidal monotone|(}
Recall from \cref{ex:part_from_preorder} that for any preorder $(X,\leq)$, there is
an induced equivalence relation $\cong$ on $X$, where $x\cong x'$ iff both
$x\leq x'$ and $x'\leq x$.%
\index{equivalence relation!generated by a preorder}

\begin{definition}%
\label{def.monoidal_functor}
Let $\cat{P}=(P,\leq_P, I_P, \otimes_P)$ and $\cat{Q}=(Q,\leq_Q, I_Q,\otimes_Q)$
be monoidal preorders. A \emph{monoidal monotone} from $\cat{P}$ to $\cat{Q}$ is a
monotone map $f\colon(P,\leq_P)\to (Q,\leq_Q)$, satisfying two conditions:
\begin{enumerate}[label=(\alph*)]
	\item $I_Q\leq_Q f(I_P)$, and
	\item $f(p_1)\otimes_Qf(p_2)\leq_Q f(p_1\otimes_Pp_2)$
\end{enumerate}
for all $p_1,p_2\in P$.

There are strengthenings of these conditions that are also important. If $f$
satisfies the following conditions, it is called a \emph{strong monoidal
monotone}:
\begin{enumerate}[label=(\alph*')]
	\item $I_Q\cong f(I_P)$, and
	\item $f(p_1)\otimes_Qf(p_2)\cong f(p_1\otimes_Pp_2)$;
\end{enumerate}
and if it satisfies the following conditions it is called a \emph{strict
monoidal monotone}:
\begin{enumerate}[label=(\alph*'')]
	\item $I_Q= f(I_P)$, and
	\item $f(p_1)\otimes_Qf(p_2)= f(p_1\otimes_Pp_2)$.
\end{enumerate}
\end{definition}

Monoidal monotones are examples of \emph{monoidal functors}%
\index{monoidal functor!monoidal monotone as}, which we will see various incarnations of throughout the book; see \cref{roughdef.monoidal_functor}. What we call monoidal monotones could also be called \emph{lax monoidal monotones}, and there is a dual notion of \emph{oplax monoidal monotones}, where the inequalities in (a) and (b) are reversed; we will not use oplaxity in this book.%
\index{dual!of lax monoidal monotone}

%\begin{definition}%
\label{def.equiv_iso_mon_preorders}
%A monoidal monotone $f\colon\cat{P}\to\cat{Q}$ is called an \emph{equivalence}, if there exists $g\colon\cat{Q}\to\cat{P}$ such that $g(f(p))\cong p$ and $f(g(q))\cong q$ for every $p\in P$ and $q\in Q$.
%
%The monotone $f$ is called an \emph{isomorphism} if the two $\cong$'s above are replaced by $=$'s.
%\end{definition}

\begin{example}
There is a monoidal monotone $i\colon(\NN,\leq,0,+)\to(\RR,\leq,0,+)$, where $i(n)=n$ for all $n\in\NN$. It is clearly monotonic, $m\leq n$ implies $i(m)\leq i(n)$. It is even strict monoidal because $i(0)=0$ and $i(m+n)=i(m)+i(n)$.

There is also a monoidal monotone $f\colon(\RR,\leq,0,+)\to(\NN,\leq,0,+)$ going
the other way. Here $f(x)\coloneqq\floor{x}$ is the floor function, e.g.
$f(3.14)=3$. It is monotonic because $x\leq y$ implies $f(x)\leq f(y)$. Also
$f(0)=0$ and $f(x)+f(y)\leq f(x+y)$, so it is a monoidal monotone. But it is not
strict or even strong because $f(0.5)+f(0.5)\neq f(0.5+0.5)$.
\end{example}

Recall $\Bool=(\BB,\leq,\true,\wedge)$ from \cref{ex.Bool} and
$\Cost=([0,\infty],\ge,0,+)$ from \cref{ex.Lawveres_base}. There is a monoidal
monotone $g\colon\Bool\to\Cost$, given by $g(\false)\coloneqq \infty$ and
$g(\true)\coloneqq 0$.
\begin{exercise}%
\label{exc.bool_to_cost}~
\begin{enumerate}
	\item Check that the map $g\colon(\BB,\leq,\true,\wedge)\to([0,\infty],\geq,0,+)$ presented above indeed
  \begin{itemize}
  	\item is monotonic,
  	\item satisfies condition (a) of \cref{def.monoidal_functor}, and
  	\item satisfies condition (b) of \cref{def.monoidal_functor}.
  \end{itemize}
  \item Is $g$ strict?
  \qedhere
\qedhere
\end{enumerate}
\end{exercise}

\begin{exercise}%
\label{exc.bool_to_cost_inverses}
Let $\Bool$ and $\Cost$ be as above, and consider the following quasi-inverse functions $d,u\colon[0,\infty]\to\BB$ defined as follows:
\[
  d(x)\coloneqq
  \begin{cases}
  	\false&\tn{ if }x>0\\
		\true&\tn{ if } x=0
	\end{cases}
\hspace{1in}
  u(x)\coloneqq
  \begin{cases}
  	\false&\tn{ if }x=\infty\\
		\true&\tn{ if } x<\infty
	\end{cases}
\]
\begin{enumerate}
	\item Is $d$ monotonic?
	\item Does $d$ satisfy conditions (a) and (b) of \cref{def.monoidal_functor}?
	\item Is $d$ strict?
	\item Is $u$ monotonic?
	\item Does $u$ satisfy conditions (a) and (b) of \cref{def.monoidal_functor}?
	\item Is $u$ strict?
	\qedhere
\qedhere
\end{enumerate}
\end{exercise}

\begin{exercise}%
\label{exc.natural_monotone}
\begin{enumerate}
	\item Is $(\NN,\leq,1,*)$ a monoidal preorder, where $*$ is the usual multiplication of natural numbers?
	\item If not, why not? If so, does there exist a monoidal monotone $(\NN,\leq,0,+)\to(\NN,\leq,1,*)$? If not; why not? If so, find it.
	\item Is $(\zz,\leq,*,1)$ a monoidal preorder?
\qedhere
\end{enumerate}
\end{exercise}

\index{monoidal preorder|)}%
\index{monoidal monotone|)}

%-------- Section --------%
\section{Enrichment}%
\label{sec.enrichment}%
\index{enrichment|(}

In this section we will introduce $\cat{V}$-categories, where $\cat{V}$ is a
symmetric monoidal preorder. We will see that $\Bool$-categories are preorders, and
that $\Cost$-categories are a nice generalization of the notion of metric space.

%---- Subsection ----%
\subsection{$\cat{V}$-categories}%
\label{subsec.V_cats}%
\index{V-category@$\cat{V}$-category|see {enrichment}}

While $\cat{V}$-categories can be defined even when $\cat{V}$ is not symmetric,
i.e.\ just obeys conditions (a)--(c) of \cref{def.symm_mon_structure}, certain things don't work quite right. For example, we will see later in \cref{exc.check_enriched_prod} that the symmetry condition is necessary in order for products of $\cat{V}$-categories to exist. Anyway, here's the definition.

\begin{definition}%
\label{def.cat_enriched_mpos}%
\index{enrichment}
Let $\cat{V}=(V,\leq,I,\otimes)$ be a symmetric monoidal preorder. A \emph{$\cat{V}$-category} $\cat{X}$ consists of two constituents, satisfying two properties. To specify $\cat{X}$, 
\begin{enumerate}[label=(\roman*)]
	\item one specifies a set $\Ob(\cat{X})$, elements of which are called \emph{objects};
	\item for every two objects $x,y$, one specifies an element $\cat{X}(x,y)\in V$, called the \emph{hom-object}.%
	\tablefootnote{The word ``hom'' is short for \emph{homomorphism} and reflects the origins of this subject. A more descriptive name for $\cat{X}(x,y)$ might be \emph{mapping object}, but we use ``hom'' mainly because it is an important jargon word to know in the field.}
\end{enumerate}
\index{hom object}
\index{mapping object!see {hom object}}
\index{closed category!hom object in|see {hom object}}
The above constituents are required to satisfy two properties:
\begin{enumerate}[label=(\alph*)]
	\item for every object $x\in\Ob(\cat{X})$ we have $I\leq\cat{X}(x,x)$, and
	\item for every three objects $x,y,z\in\Ob(\cat{X})$, we have $\cat{X}(x,y)\otimes\cat{X}(y,z)\leq\cat{X}(x,z)$.
\end{enumerate}
We call $\cat{V}$ the \emph{base of the enrichment} for $\cat{X}$ or say that
$\cat{X}$ is \emph{enriched} in $\cat{V}$.%
\index{enrichment!base of}
\end{definition}

\begin{example} %
\label{ex.preorder_as_boolcat}%
\index{preorder!as $\Bool$-category}%
\index{enriched category!preorders as}
As we shall see in the next subsection, from every preorder we can construct
a $\Bool$-category, and vice versa. So, to get a feel for
$\cat{V}$-categories, let us consider the preorder generated by the Hasse diagram:
\begin{equation}%
\label{eqn.random_hasse91}
\begin{tikzpicture}[x=.7in, y=.3in, inner sep=5pt, baseline=(A.center)]
  	\node (a) at (0,4.5) {$t$};
  	\node (b) at (0,3) {$s$};
  	\node (c) at (-1,2) {$q$};
  	\node (d) at (1,2) {$r$};
  	\node (e) at (0,1) {$p$};
  	\node[draw, fit=(a) (b) (c) (d) (e)] (A) {};
  	\draw[->] (b) to (a);
  	\draw[->] (c) to (b);
  	\draw[->] (d) to (b);
  	\draw[->] (e) to (c);
  	\draw[->] (e) to (d);
\end{tikzpicture}
\end{equation}

How does this correspond to a $\Bool$-category $\cat{X}$? Well, the objects of
$\cat{X}$ are simply the elements of the preorder, i.e.\ $\Ob(\cat{X})=\{p,q,r,s,t\}$. Next, for
every pair of objects $(x,y)$ we need an element of $\BB=\{\false,\true\}$:
simply take $\true$ if $x \le y$, and $\false$ if otherwise. So for example,
since $s\leq t$ and $t\not\leq s$, we have $\cat{X}(s,t)=\true$ and
$\cat{X}(t,s)=\false$. Recalling from \cref{ex.Bool} that the
monoidal unit $I$ of $\Bool$ is $\true$, it's straightforward to check that
this obeys both (a) and (b), so we have a $\Bool$-category.

In general, it's sometimes convenient to represent a $\cat{V}$-category $\cat{X}$ with a
square matrix. The rows and columns of the matrix correspond to the objects of
$\cat{X}$, and the $(x,y)$th entry is simply the hom-object $\cat{X}(x,y)$. So,
for example, the above preorder in \cref{eqn.random_hasse91} can be
represented by the matrix%
\index{hom object}
\[
\begin{array}{c|ccccc}
  \rotatebox{45}{$\scriptstyle\cdot\leq\cdot$}&p&q&r&s&t\\\hline
  p&\true&\true&\true&\true&\true\\
  q&\false&\true&\false&\true&\true\\
  r&\false&\false&\true&\true&\true\\
  s&\false&\false&\false&\true&\true\\
  t&\false&\false&\false&\false&\true
\end{array}
\qedhere
\]
\end{example}

%---- Subsection ----%
\subsection{Preorders as $\Bool$-categories}%
\label{subsec.preorders_Bool_enriched}

Our colleague Peter Gates has called category theory ``a primordial ooze,'' because so much of it can be defined in terms of other parts of it. There is nowhere to rightly call the beginning, because that beginning can be defined in terms of something else. So be it; this is part of the fun.
\index{primordial ooze|seealso {ooze, primordial}}%
\index{ooze!primordial|see {primordial ooze}}

\begin{theorem}%
\label{thm.preorder_is_bool_cat}%
\index{preorder!as $\Bool$-category}%
\index{booleans!as base of enrichment for preorders}
There is a one-to-one correspondence between preorders and $\Bool$-categories.
\end{theorem}

Here we find ourselves in the ooze, because we are saying that preorders are the same as $\Bool$-categories, whereas $\Bool$ is itself a preorder. ``So then $\Bool$ is like... enriched in itself?'' Yes, every preorder, including $\Bool$, is enriched in $\Bool$, as we will now see.

\begin{proof}[Proof of \cref{thm.preorder_is_bool_cat}]
Let's check that we can construct a preorder from any $\Bool$-category. Since $\BB=\{\false,\true\}$, \cref{def.cat_enriched_mpos} says a
$\Bool$-category consists of two things:
\begin{enumerate}[label=(\roman*)]
	\item a set $\Ob(\cat{X})$, and
	\item for every $x,y\in \Ob(\cat{X})$ an element $\cat{X}(x,y)\in\BB$, i.e.\ either $\cat{X}(x,y)=\true$ or $\cat{X}(x,y)=\false$. 
\end{enumerate}
We will use these two things to begin forming a preorder whose elements are the objects of $\cat{X}$. So let's call the preorder $(X,\leq)$, and let $X\coloneqq\Ob(\cat{X})$. For the $\leq$ relation, let's declare $x\leq y$ iff $\cat{X}(x,y)=\true$. We have the makings of a preorder, but for it to work, the $\leq$ relation must be reflexive and transitive. Let's see if we get these from the properties guaranteed by \cref{def.cat_enriched_mpos}:
\begin{enumerate}[label=(\alph*)]
	\item for every element $x\in X$ we have $\true\leq\cat{X}(x,x)$,
	\item for every three elements $x,y,z\in X$ we have $\cat{X}(x,y)\wedge\cat{X}(y,z)\leq\cat{X}(x,z)$.
\end{enumerate}
For $b\in\Bool$, if $\true\leq b$ then $b=\true$, so the first statement says
$\cat{X}(x,x)=\true$, which means $x\leq x$. For the second statement, one can
consult \cref{eqn.wedge_mult_table}. Since $\false\leq b$ for all $b\in\BB$, the
only way statement (b) has any force is if $\cat{X}(x,y)=\true$ and $\cat{X}(y,z)=\true$, in which case it forces $\cat{X}(x,z)=\true$. This condition exactly translates as saying that $x\leq y$ and $y\leq z$ implies $x\leq z$. Thus we have obtained reflexivity and transitivity from the two axioms of $\Bool$-categories. 

In \cref{ex.preorder_as_boolcat}, we constructed a $\Bool$-category from a preorder.
We leave it to the reader to generalize this example and show that the two
constructions are inverses; see \cref{ex.preorders_as_boolcats}.
\end{proof}

\begin{exercise}%
\label{ex.preorders_as_boolcats}
  \begin{enumerate}
    \item Start with a preorder $(P,\le)$, and use it to define a $\Bool$-category as we did
      in \cref{ex.preorder_as_boolcat}. In the proof of \cref{thm.preorder_is_bool_cat} we
      showed how to turn that $\Bool$-category back into a preorder. Show that doing so,
      you get the preorder you started with. 

    \item Similarly, show that if you turn a $\Bool$-category into a preorder using the
      above proof, and then turn the preorder back into a $\Bool$-category using your
      method, you get the $\Bool$-category you started with.
      \qedhere
  \end{enumerate}
\end{exercise}

We now discuss a beautiful application of the notion of enriched categories: metric spaces.

%---- Subsection ----%
\subsection{Lawvere metric spaces} %
\label{subsec.Lawv_metric_spaces}%
\index{Lawvere metric space|(}%
\index{Lawvere metric space!as $\Cost$-category}

Metric spaces offer a precise way to describe spaces of points, each pair of
which is separated by some distance. Here is the usual definition:

\begin{definition}%
\label{def.ord_metric_space}%
\index{metric space!ordinary}
\index{metric space!extended}
A \emph{metric space} $(X,d)$ consists of:
\begin{enumerate}[label=(\roman*)]
	\item a set $X$, elements of which are called \emph{points}, and
	\item a function $d\colon X\times X\to\RR_{\ge 0}$, where $d(x,y)$ is called the \emph{distance between $x$ and $y$}.
\end{enumerate}	
These constituents must satisfy four properties:
\begin{enumerate}[label=(\alph*)]
	\item for every $x\in X$, we have $d(x,x)=0$,
	\item for every $x,y\in X$, if $d(x,y)=0$ then $x=y$,
	\item for every $x,y\in X$, we have $d(x,y)=d(y,x)$, and
	\item for every $x,y,z\in X$, we have $d(x,y)+d(y,z)\geq d(x,z)$.
\end{enumerate}
The fourth property is called the \emph{triangle inequality}.

If we ask instead in (ii) for a function $d\colon X \times X \to [0,\infty]= \RR_{\ge 0}
\cup \{\infty\}$, we call $(X,d)$ an \emph{extended} metric space.
\end{definition}
\index{triangle inequality}

The triangle inequality says that when plotting a route from $x$ to $z$, the distance is always at most what you get by choosing an intermediate point $y$ and going $x\to y\to z$.
\[
\begin{tikzpicture}[font=\small, x=.5cm, y=.5cm]
	\node (A) {$\bullet$};
	\node at ($(A.center)+(3,0)$) (B) {$\bullet$};
	\node at ($(B.center)+(3,4)$) (C) {$\bullet$};
	\node[left=0 of A] {$x$};	
	\node[right=0 of B] {$y$};	
	\node[right=0 of C] {$z$};
	\draw (A.center) to node[below] {$3$} (B.center);
	\draw (B.center) to node[below right] {$5$} (C.center);
	\draw (A.center) to node[above left] {$7.2$} (C.center);
\end{tikzpicture}
\]
It can be invoked three different ways in the above picture: $3+5\geq 7.2$, but also $5+7.2\geq 3$ and $3+7.2\geq 5$. Oh yeah, and $5+3\geq 7.2$, $7.2+5\geq 3$ and $7.2+3\geq 5$.

The triangle inequality wonderfully captures something about distance, as does
the fact that $d(x,x)=0$ for any $x$. However, the other two conditions are not
quite as general as we would like. Indeed, there are many examples of things
that ``should'' be metric spaces, but which do not satisfy conditions (b) or
(c) of \cref{def.ord_metric_space}.

For example, what if we take $X$ to be places in your neighborhood, but instead of measuring distance, you want $d(x,y)$ to measure \emph{effort} to get from $x$ to $y$. Then if there are any hills, the symmetry axiom, $d(x,y)=^? d(y,x)$, fails: it's easier to get from $x$ downhill to $y$ then to go from $y$ uphill to $x$.%
\index{effort!as metric}%
\index{symmetry!lack of for effort}

Another way to find a model that breaks the symmetry axiom is to imagine that the elements of $X$ are not points, but whole regions such as the US, Spain, and Boston. Say that the distance from region $A$ to region $B$ is understood using the setup ``I will put you in an arbitrary part of $A$ and you just have to get anywhere in $B$; what is the distance in the worst-case scenario?'' So $d(\mathrm{US}, \mathrm{Spain})$ is the distance from somewhere in the western US to the western tip of Spain: you just have to get into Spain, but you start in the worst possible part of the US for doing so.%
\index{Lawvere metric space!of regions}

\begin{exercise}%
\label{ex.regions_of_world}
Which distance is bigger under the above description, $d(\mathrm{Spain}, \mathrm{US})$ or $d(\mathrm{US}, \mathrm{Spain})$?
\end{exercise}

This notion of distance, which is strongly related to something called \emph{Hausdorff distance},%
\footnote{
The Hausdorff distance gives a metric on the set of all subsets $U\ss X$ of a given metric space $(X,d)$. One first defines
\[d_L(U,V)\coloneqq \sup_{u\in U}\underset{v \in V}{\inf\vphantom{p}}d(u,v),\]
and this is exactly the formula we intend above; the result will be a Lawvere
metric space. However, if one wants the Hausdorff distance to define a
(symmetric) metric, as in \cref{def.ord_metric_space}, one must take the above
formula and symmetrize it: $d(U,V)\coloneqq\max(d_L(U,V),d_L(V,U))$. We happen
to see the unsymmetrized notion as more interesting.
}%
\index{Hausdorff distance}
will again satisfy the triangle inequality, but it violates the symmetry condition. It also violates another condition, because $d(\mathrm{Boston},\mathrm{US})=0$. No matter where you are in Boston, the distance to the nearest point of the US is 0. On the other hand, $d(\mathrm{US},\mathrm{Boston})\neq 0$.

Finally, one can imagine a use for distances that are not finite. In terms of my effort, the distance from here to Pluto is $\infty$, and it would not be any better if Pluto was still a planet. Similarly, in terms of Hausdorff distance, discussed above, the distance between two regions is often infinite, e.g.\ the distance between $\{r\in\RR\mid r<0\}$ and $\{0\}$ as subsets of $(\RR,d)$ is infinite.

When we drop conditions (b) and (c) and allow for infinite distances, we get the following relaxed notion of metric space, first proposed by Lawvere. Recall the symmetric monoidal preorder $\Cost=([0,\infty],\geq,0,+)$ from \cref{ex.Lawveres_base}.

\begin{definition} %
\label{def.Lawvere_metric_space}%
\index{metric space!as $\Cost$-category}%
\index{enriched category!metric space as}
A \emph{Lawvere metric space} is a $\Cost$-category.
\end{definition}

This is a very compact definition, but it packs a punch. Let's work out what it means, by relating it to the usual definition of metric space. By \cref{def.cat_enriched_mpos}, a $\Cost$-category $\cat{X}$  consists of:
\begin{enumerate}[label=(\roman*)]
	\item a set $\Ob(\cat{X})$,
	\item for every $x,y\in \Ob(\cat{X})$ an element $\cat{X}(x,y)\in[0,\infty]$. 
\end{enumerate}
Here the set $\Ob(\cat{X})$ is playing the role of the set of points, and $\cat{X}(x,y)\in[0,\infty]$ is playing the role of distance, so let's write a little translator:
\[X\coloneqq\Ob(\cat{X})\qquad d(x,y)\coloneqq\cat{X}(x,y).\]
The properties of a category enriched in $\Cost$ are:
\begin{enumerate}[label=(\alph*)]
	\item $0\geq d(x,x)$ for all $x\in X$, and
	\item $d(x,y)+d(y,z)\geq d(x,z)$ for all $x,y,z\in X$.
\end{enumerate}
Since $d(x,x)\in[0,\infty]$, if $0\geq d(x,x)$ then $d(x,x)=0$. So the first
condition is equivalent to the first condition from \cref{def.ord_metric_space},
namely $d(x,x)=0$. The second condition is the triangle inequality.

\begin{example}%
\label{ex.reals_as_metric_space}%
\index{real numbers!as metric
  space}
The set $\RR$ of real numbers can be given a metric space structure, and hence a Lawvere metric space structure. Namely $d(x,y)\coloneqq|y-x|$, the absolute value of the difference. So $d(3,7)=4$.
\end{example}

\begin{exercise}%
\label{exc.finite_Lawvere}
Consider the symmetric monoidal preorder $(\RR_{\geq0},\geq,0,+)$, which is almost the same as $\Cost$, except it does not include $\infty$. How would you characterize the difference between a Lawvere metric space and a $(\RR_{\geq0},\geq,0,+)$-category in the sense of \cref{def.cat_enriched_mpos}?
\end{exercise}

\paragraph{Presenting metric spaces with weighted graphs}%
\index{metric space!presentation of}%
\index{presentation!of metric space}%
\index{Hasse diagram!weighted graph as}%
\index{Hasse diagram!for metric spaces}
Just as one can convert a Hasse diagram into a preorder, one can convert any
weighted graph---a graph whose edges are labeled with numbers $w\geq 0$---into a
Lawvere metric space. In fact, we shall consider these as graphs labelled with
elements of $[0,\infty]$, and more precisely call them $\Cost$-weighted
graphs.\footnote{This generalizes Hasse diagrams, which we could
call $\Bool$-weighted graphs---the edges of a Hasse diagram are thought of as
weighted with $\true$; we simply ignore any edges that are weighted with
$\false$, and neglect to even draw them!}%
\index{graph!weighted} 

One might think of a $\Cost$-weighted graph as describing a city with some
one-way roads (a two-way road is modeled as two one-way roads), each having some
effort-to-traverse, which for simplicity we just call length.  For example,
consider the following weighted graphs:
\begin{equation}%
\label{eqn.cities_distances}
% [inline block 13: 4 envs, 2450 chars -> data_tex | \begin{tikzpicture}[font=\scriptsize, x=1cm, baseline=(Y)] 	\node (a) {$\LMO{A}$};...]

\end{equation}
As soon as you see how we did this, you'll understand that it takes no thinking
to turn a weighted graph $G$ into a matrix $M_G$ in this way. We will see later
in \cref{subsubsec.nav_matrix_mult} that the more difficult ``distance
matrices'' $d_Y$, such as \cref{eqn.distances_Y}, can be obtained from the easy
graph matrices $M_Y$, such as \cref{eqn.matrix_Y}, by repeating a certain sort of ``matrix multiplication.''

\begin{exercise}%
\label{exc.adjacency_matrix_X}%
\index{matrix}
Fill out the matrix $M_X$ associated to the graph $X$ in \cref{eqn.cities_distances}:
\[
M_X=
\begin{array}{c|cccc}
  \nearrow&A&B&C&D\\\hline
  A&0&\?&\?&\?\\
  B&2&0&\infty&\?\\
  C&\?&\?&\?&\?\\
  D&\?&\?&\?&\?
\end{array}
\qedhere
\]
\end{exercise}

%---- Subsection ----%
\subsection{$\cat{V}$-variations on preorders and metric spaces}%
\label{subsec.variations_quantale}

We have told the story of $\Bool$ and $\Cost$. But in \cref{subsec.SMPs_pure_math} we gave examples of many other monoidal preorders, and each one serves as the base of enrichment for a kind of enriched category. Which of them are useful? Something only becomes useful when someone finds a use for it. We will find uses for some and not others, though we encourage readers to think about what it would mean to enrich in the various monoidal categories discussed above; maybe they can find a use we have not explored.

\begin{exercise}%
\label{exc.NMY_cats}
Recall the monoidal preorder $\Cat{NMY}\coloneqq(P,\leq,\const{yes},\min)$ from
\cref{exc.no_maybe_yes}. Interpret what a $\Cat{NMY}$-category is.
\end{exercise}

In the next two exercises, we use $\cat{V}$-weighted graphs to construct
$\cat{V}$-categories. This is possible because we will use preorders that, like
$\Bool$ and $\Cost$, have joins.

\begin{exercise}%
\label{ex.modes_of_transport}%
\index{modes of transport}
Let $M$ be a set and let $\cat{M}\coloneqq(\powset(M),\ss,M,\cap)$ be the monoidal preorder whose elements are subsets of $M$.

Someone gives the following interpretation, ``for any set $M$, imagine it as the set of modes of transportation (e.g.\ car, boat, foot). Then an $\cat{M}$-category $\cat{X}$ tells you all the modes that will get you from $a$ all the way to $b$, for any two points $a,b\in\Ob(\cat{X})$.''
\begin{enumerate}
	\item Draw a graph with four vertices and four or five edges, each labeled with a subset of $M=\{\mathrm{car}, \mathrm{boat}, \mathrm{foot}\}$.	
	\item From this graph is it possible to construct an $\cat{M}$-category,
	where the hom-object from $x$ to $y$ is computed as follows: for each
	path $p$ from $x$ to $y$, take the intersection of the sets
	labelling the edges in $p$. Then, take the union of the these sets over
	all paths $p$ from $x$ to $y$. Write out the corresponding four-by-four
	matrix of hom-objects, and convince yourself that this is indeed an
	$\cat{M}$-category.%
\index{hom object}%
\index{union}
	\item Does the person's interpretation look right, or is it subtly mistaken somehow?
	\qedhere
\end{enumerate}
\end{exercise}

\begin{exercise}%
\label{exc.weight_limit}
Consider the monoidal preorder $\cat{W}\coloneqq(\NN\cup\{\infty\}, \leq, \infty,
\min)$.
\begin{enumerate}
	\item Draw a small graph labeled by elements of $\NN\cup\{\infty\}$. 
	\item Write out the matrix whose rows and columns are indexed by the
	nodes in the graph, and whose $(x,y)$th entry is given by the
	\emph{maximum} over all paths $p$ from $x$ to $y$ of the \emph{minimum}
	edge label in $p$.  
	\item Prove that this matrix is the matrix of hom-objects for a
	$\cat{W}$-category. This will give you a feel for how $\cat{W}$ works.%
\index{hom object!matrix of}
	\item Make up an interpretation, like that in
	\cref{ex.modes_of_transport}, for how to imagine enrichment in
	$\cat{W}$.
	\qedhere
\end{enumerate}
\erase{Weight-limit, e.g.\ for trucking.}
\end{exercise}

\index{Lawvere metric space|)}

%-------- Section --------%
\section{Constructions on $\cat{V}$-categories} %
\label{sec.vcat_constructions}

Now that we have a good intuition for what $\cat{V}$-categories are, we give
three examples of what can be done with $\cat{V}$-categories. The first
(\cref{subsec.change_base_mon_fun}) is known as change of base. This allows us
to use a monoidal monotone $f\colon \cat{V} \to \cat{W}$ to construct
$\cat{W}$-categories from $\cat{V}$-categories. The second construction
(\cref{subsec.enriched_functors}), that of $\cat{V}$-functors, allows us to
complete the analogy: a preorder is to a $\Bool$-category as a monotone map is to
what? The third construction (\cref{subsec.enriched_functors}) is known as a
$\cat{V}$-product, and gives us a way of combining two $\cat{V}$-categories.

%---- Subsection ----%
\subsection{Changing the base of enrichment}%
\label{subsec.change_base_mon_fun}%
\index{enrichment!change of base}
Any monoidal monotone $\cat{V}\to\cat{W}$ between symmetric monoidal preorders lets us convert $\cat{V}$-categories into $\cat{W}$-categories.

\begin{construction}%
\label{const.mon_fun_base_change}%
\index{change of base|see {enrichment, change of base}}%
\index{enrichment!change of base}
Let $f\colon\cat{V}\to\cat{W}$ be a monoidal monotone. Given a $\cat{V}$-category $\cat{C}$, one forms the associated $\cat{W}$-category, say $\cat{C}_f$ as follows.
\begin{enumerate}[label=(\roman*)]
	\item We take the same objects: $\Ob(\cat{C}_f)\coloneqq\Ob(\cat{C})$.
	\item For any $c,d\in\Ob(\cat{C})$, put $\cat{C}_f(c,d)\coloneqq f(\cat{C}(c,d))$.
\end{enumerate}
\end{construction}

This construction $\cat{C}_f$ does indeed obey the definition of a
$\cat{W}$-category, as can be seen by applying
\cref{def.monoidal_functor} (of monoidal monotone) and \cref{def.cat_enriched_mpos}
(of $\cat{V}$-category):
\begin{enumerate}[label=(\alph*)]
	\item for every $c\in\cat{C}$, we have
	  \begin{align*}
	    I_W &\leq f(I_V) \tag{$f$ is monoidal monotone}\\
	    &\leq f(\cat{C}(c,c)) \tag{$\cat{C}$ is $\cat{V}$-category}\\
	    &=\cat{C}_f(c,c) \tag{definition of $\cat{C}_f$}
	  \end{align*}
	\item for every $c,d,e\in\Ob(\cat{C})$ we have
	\begin{align*}
		\cat{C}_f(c,d)\otimes_W\cat{C}_f(d,e)&=f(\cat{C}(c,d))\otimes_Wf(\cat{C}(d,e))
		\tag{definition of $\cat{C}_f$}\\
		&\leq f\big(\cat{C}(c,d)\otimes_V\cat{C}(d,e)\big)
		\tag{$f$ is monoidal monotone}\\
		&\leq f(\cat{C}(c,e))
		\tag{$\cat{C}$ is $\cat{V}$-category}\\
		&=\cat{C}_f(c,e)
		\tag{definition of $\cat{C}_f$}
	\end{align*}
\end{enumerate}

\begin{example}
As an example, consider the function $f\colon[0,\infty]\to\{\true,\false\}$ given by
\begin{equation}%
\label{eqn.Lawv_to_Bool1}
	f(x)\coloneqq
	\begin{cases}
		\true&\tn{ if }x=0\\
		\false&\tn{ if }x>0
	\end{cases}
\end{equation}
It is easy to check that $f$ is monotonic and that $f$ preserves the monoidal
product and monoidal unit; that is, it's easy to show that $f$ is a monoidal
monotone. (Recall \cref{exc.bool_to_cost_inverses}.)

Thus $f$ lets us convert Lawvere metric spaces into preorders.%
\index{Lawvere metric space}
\end{example}

\begin{exercise} %
\label{exc.regions_preorder}
Recall the ``regions of the world'' Lawvere metric space from
\cref{ex.regions_of_world} and the text above it. We just learned that, using
the monoidal monotone $f$ in \cref{eqn.Lawv_to_Bool1},  we can convert it to a
preorder. Draw the Hasse diagram for the preorder corresponding to the regions:
US, Spain, and Boston. How could you interpret this preorder relation?
\end{exercise}

\begin{exercise} %
\label{exc.metric_to_poset}
\begin{enumerate}
	\item Find another monoidal monotone $g\colon\Cost\to\Bool$ different from the one defined in \cref{eqn.Lawv_to_Bool1}.
	\item Using \cref{const.mon_fun_base_change}, both your monoidal
	monotone $g$ and the monoidal monotone $f$ in \cref{eqn.Lawv_to_Bool1}
	can be used to convert a Lawvere metric space into a preorder. Find a
	Lawvere metric space $\cat{X}$ on which they give different answers,
	$\cat{X}_f\neq\cat{X}_g$.
	\qedhere
\end{enumerate}
\end{exercise}

%---- Subsection ----%
\subsection{Enriched functors} %
\label{subsec.enriched_functors}%
\index{functor!enriched|see {enriched functor}}%
\index{enriched functor}
The notion of functor provides the most important type of relationship between categories.
\begin{definition}%
\label{def.enriched_functor}
Let $\cat{X}$ and $\cat{Y}$ be $\cat{V}$-categories. A \emph{$\cat{V}$-functor from $\cat{X}$ to $\cat{Y}$}, denoted $F\colon\cat{X}\to\cat{Y}$, consists of one constituent:
\begin{enumerate}[label=(\roman*)]
	\item a function $F\colon\Ob(\cat{X})\to\Ob(\cat{Y})$
\end{enumerate}
subject to one constraint
\begin{enumerate}[label=(\alph*)]
	\item for all $x_1,x_2\in\Ob(\cat{X})$, one has $\cat{X}(x_1,x_2)\leq\cat{Y}(F(x_1),F(x_2))$.
\end{enumerate}
\end{definition}

\begin{example}%
\label{ex.bool_functors_monotone}%
\index{monotone map!as
  $\Bool$-functor}
For example, we have said several times---e.g.\ in \cref{thm.preorder_is_bool_cat}---that preorders are $\Bool$-categories, where $\cat{X}(x_1,x_2)=\true$ is denoted $x_1\leq x_2$. One would hope that monotone maps between preorders would correspond exactly to $\Bool$-functors, and that's true. A monotone map $(X,\leq_X)\to(Y,\leq_Y)$ is a function $F\colon X\to Y$ such that for every $x_1,x_2\in X$, if $x_1\leq_X x_2$ then $F(x_1)\leq_Y F(x_2)$. In other words, we have
\[\cat{X}(x_1,x_2)\leq\cat{Y}(F(x_1),F(x_2)),\]
where the above $\leq$ takes place in the enriching category $\cat{V}=\Bool$; this is exactly the condition from \cref{def.enriched_functor}.
\end{example}

\begin{remark}%
\label{rem.preorder_boolcats}%
\index{equivalence of categories}
In fact, we have what is called an \emph{equivalence} of categories between the
category of preorders and the category of $\Bool$-categories. In the next chapter
we will develop the ideas necessary to state what this means precisely
(\cref{rem.preorder_boolcats2}).
\end{remark}

\begin{example}%
\index{metric space}
Lawvere metric spaces are $\Cost$-categories. The definition of $\Cost$-functor
should hopefully return a nice notion---a ``friend''---from the theory of metric
spaces, and it does: it recovers the notion of Lipschitz function. A Lipschitz
(or more precisely, 1-Lipschitz) function is one under which the distance between any pair of points does not increase. That is, given Lawvere metric spaces $(X,d_X)$ and $(Y,d_Y)$, a $\Cost$-functor between them is a function $F\colon X\to Y$ such that for every $x_1,x_2\in X$ we have $d_X(x_1,x_2)\geq d_Y(F(x_1),F(x_2))$.
\end{example}

\begin{exercise}%
\label{def.enriched_op}%
\index{opposite!$\cat{V}$-category}
The concepts of opposite, dagger, and skeleton (see \cref{ex.opposite,ex.dagger_preorder,rem.skeletal_preorder}) extend from preorders to
$\cat{V}$-categories. The \emph{opposite} of a $\cat{V}$-category $\cat{X}$ is denoted $\cat{X}\op$
and is defined by
\begin{enumerate}[label=(\roman*)]
	\item $\Ob(\cat{X}\op)\coloneqq\Ob(\cat{X})$, and
	\item for all $x,y\in \cat{X}$, we have $\cat{X}\op(x,y)\coloneqq\cat{X}(y,x)$.
\end{enumerate}
A $\cat{V}$-category $\cat{X}$ is a \emph{dagger}%
\index{dagger}
$\cat{V}$-category if the identity function is a $\cat{V}$-functor
$\dagger\colon\cat{X}\to\cat{X}\op$.  And a  \emph{skeletal}%
\index{skeleton}
$\cat{V}$-category is one in which if $I \le \cat{X}(x,y)$ and $I \le
\cat{X}(y,x)$, then $x=y$.

Recall that an extended metric space $(X,d)$ is a Lawvere metric space with two
extra properties; see properties (b) and (c) in \cref{def.ord_metric_space}.
\begin{enumerate}
	\item Show that a skeletal dagger $\Cost$-category is an extended metric space.
	\item Use \cref{exc.skeletal_dagger_preorder} to make sense of the following analogy: ``preorders are to sets as
	Lawvere metric spaces are to extended metric spaces.''
	\qedhere
\end{enumerate}
%
%Draw the opposite of the following Lawvere-metric space.
%\[
%\begin{tikzpicture}[font=\scriptsize, x=1cm, baseline=(Y)]
%	\node (a) {$\LMO{A}$};
%	\node[right=1 of a] (b) {$\LMO{B}$};
%	\node[below=1 of a] (c) {$\LMO[under]{C}$};
%	\node[right=1 of c] (d) {$\LMO[under]{D}$};
%	\draw[->] (c) to node[above left=-1pt and -1pt] {3} (b);
%	\draw[bend right,->] (a) to node[left] {3} (c);
%	\draw[bend left,->] (d) to node[below] {6} (c);
%	\draw[bend right,->] (b) to node[above] {2} (a);
%	\draw[bend left,->] (b) to node[right] {5} (d);
%	\node[draw, inner sep=15pt, fit=(a) (b) (c) (d)] (X) {};
%	\node[left=0 of X, font=\normalsize] {$X\coloneqq$};
%\end{tikzpicture}
%\]
%Is this Lawvere metric space symmetric?
\end{exercise}

%---- Subsection ----%
\subsection{Product $\cat{V}$-categories}%
\label{subsec.enriched_products}
If $\cat{V}=(V,\leq,I,\otimes)$ is a symmetric monoidal preorder and $\cat{X}$ and $\cat{Y}$ are $\cat{V}$-categories, then we can define their $\cat{V}$-product, which is a new $\cat{V}$-category.

\begin{definition}%
\label{def.enriched_prod}%
\index{product!of $\cat{V}$-categories}
Let $\cat{X}$ and $\cat{Y}$ be $\cat{V}$-categories. Define their \emph{$\cat{V}$-product}, or simply \emph{product}, to be the $\cat{V}$-category $\cat{X}\times\cat{Y}$ with
\begin{enumerate}[label=(\roman*)]
	\item $\Ob(\cat{X}\times\cat{Y})\coloneqq\Ob(\cat{X})\times\Ob(\cat{Y})$,
	\item $(\cat{X}\times\cat{Y})\big((x,y),(x',y')\big)\coloneqq\cat{X}(x,x')\otimes\cat{Y}(y,y')$,
\end{enumerate}
for two objects $(x,y)$ and $(x',y')$ in $\Ob(\cat{X}\times\cat{Y})$. 
\end{definition}
Product $\cat{V}$-categories are indeed $\cat{V}$-categories
(\cref{def.cat_enriched_mpos}); see \cref{exc.check_enriched_prod}.

\begin{exercise}%
\label{exc.check_enriched_prod}
Let $\cat{X}\times\cat{Y}$ be the $\cat{V}$-product of $\cat{V}$-categories as in \cref{def.enriched_prod}.
\begin{enumerate}
	\item Check that for every object $(x,y)\in\Ob(\cat{X}\times\cat{Y})$ we
	have $I\leq(\cat{X}\times\cat{Y})\big((x,y),(x,y)\big)$.
	\item Check that for every three objects $(x_1,y_1)$, $(x_2,y_2),$ and $(x_3,y_3)$, we have
	\[
		(\cat{X}\times\cat{Y})\big((x_1,y_1),(x_2,y_2)\big)
		\otimes(\cat{X}\times\cat{Y})\big((x_2,y_2),(x_3,y_3)\big)
		\leq
		(\cat{X}\times\cat{Y})\big((x_1,y_1),(x_3,y_3)\big).
	\]
	\item We said at the start of \cref{subsec.V_cats} that the symmetry of $\cat{V}$ (condition (d) of \cref{def.symm_mon_structure}) would be required here. Point out exactly where that condition is used.%
\index{symmetry!as required for enriched products}
	\qedhere
\end{enumerate}
\end{exercise}

When taking the product of two preorders $(P,\leq_P)\times(Q,\leq_Q)$, as first described in \cref{ex.product_preorder}, we say that $(p_1,q_1)\leq (p_2,q_2)$ iff both $p_1\leq p_2$ AND $q_1\leq q_2$; the AND is the monoidal product $\otimes$ from of $\Bool$. Thus the product of preorders is an example of a $\Bool$-product.

\begin{example}%
\index{weighted graph|see {graph, weighted}}
Let $\cat{X}$ and $\cat{Y}$ be the Lawvere metric spaces (i.e.\ $\Cost$-categories) defined by the following weighted graphs:
\begin{equation}%
\label{eqn.weighted_graphs_XY}
% [inline block 14: 2 envs, 2537 chars -> data_tex | \begin{tikzpicture}[font=\scriptsize, x=1cm, y=1cm, every label/.style={font=\tiny}] 	\node[label={[above=-5pt]:$A$}] (A...]

\qedhere
\]
\end{example}

\begin{exercise}%
\label{exc.not_sqrt40}
Consider $\RR$ as a Lawvere metric space, i.e.\ as a $\Cost$-category (see \cref{ex.reals_as_metric_space}). Form the $\Cost$-product $\RR\times\RR$. What is the distance from $(5,6)$ to $(-1,4)$? Hint: apply \cref{def.enriched_prod}; the answer is not $\sqrt{40}$.
\end{exercise}

In terms of matrices, $\cat{V}$-products are also quite straightforward. They generalize what is known as the Kronecker product of matrices. The matrices for $\cat{X}$ and $\cat{Y}$ in \cref{eqn.weighted_graphs_XY} are shown below
\[
\begin{array}{c|ccc}
  \cat{X}&A&B&C\\\hline
  A & 0 & 2 & 5\\
  B & \infty  & 0 & 3\\
  C & \infty & \infty & 0
\end{array}
\hspace{1in}
\begin{array}{c|cc}
  \cat{Y}&p&q\\\hline
  p & 0 & 5\\
  q & 8 & 0
\end{array}
\]
and their product is as follows:
\[
\begin{array}{c||ccc| ccc}
	\cat{X}\times\cat{Y}&(A,p)&(B,p)&(C,p)&(A,q)&(B,q)&(C,q)\\\hline\hline
	(A,p)& 0 & 2 & 5 & 5&7&10\\
	(B,p)& \infty & 0 & 3 & \infty & 5 & 8\\
	(C,p)& \infty &\infty & 0& \infty&\infty &5\\\hline
	(A,q)& 8 & 10 & 13 & 0 & 2 & 5\\
	(B,q)& \infty & 8 & 11 & \infty & 0 & 3\\
	(C,q)& \infty & \infty & 8 & \infty &\infty & 0
\end{array}
\]
We have drawn the product matrix as a block matrix, where there is one block---shaped like $\cat{X}$---for every entry of $\cat{Y}$. Make sure you can see each block as the $\cat{X}$-matrix shifted by an entry in $\cat{Y}$. This comes directly from the formula from \cref{def.enriched_prod} and the fact that the monoidal product in $\Cost$ is $+$.

\section[Computing presented $\cat{V}$-categories with matrix mult.]{Computing presented $\cat{V}$-categories with matrix multiplication}
\label{sec.quantales}%
\index{quantale}
In \cref{subsec.Lawv_metric_spaces} we promised a straightforward way to
construct the matrix representation of a $\Cost$-category from a
$\Cost$-weighted graph.  To do this, we use a generalized matrix multiplication.
We shall show that this works, not just for $\Cost$, but also for $\Bool$, and
many other monoidal preorders. The property required of the preorder is that
of being a unital, commutative quantale. These are preorders with all joins,
plus one additional ingredient, being \emph{monoidal closed}, which we define
next, in \cref{subsec.mon_closed_preorder}. The definition of a quantale will be
given in \cref{subsec.comm_quantales}.

\index{enrichment|)}
%---- Subsection ----%
\subsection{Monoidal closed preorders}%
\label{subsec.mon_closed_preorder}

The definition of $\cat{V}$-category makes sense for any symmetric monoidal
preorder $\cat{V}$. But that does not mean that any base of enrichment $\cat{V}$
is as useful as any other. In this section we define closed monoidal categories,
which in particular enrich themselves! ``Before you can really enrich others,
you should really enrich yourself.''

\begin{definition}%
\label{def.monoidal_closed}%
\index{closed category!monoidal|see {monoidal closed category}}%
\index{monoidal closed category}
A symmetric monoidal preorder $\cat{V}=(V,\leq,I,\otimes)$ is called \emph{symmetric monoidal closed} (or just \emph{closed}) if, for every two elements $v,w\in V$, there is an element $v\multimap w$ in $\cat{V}$, called the \emph{hom-element}, with the property
\begin{equation}%
\label{eqn.monoidal_closed_adj}
	(a\otimes v)\leq w\quad\text{iff}\quad a\leq (v\multimap w).
\end{equation}
for all $a,v,w\in V$.
\end{definition}

\begin{remark}%
\label{rem.language_one_time}
The term `closed' refers to the fact that a hom-element can be constructed for any two elements, so the preorder can be seen as closed under the operation of ``taking
homs.'' In later chapters we'll meet the closely-related concepts of compact closed
categories (\cref{def.compact_closed}) and cartesian closed categories
(\cref{subsec.set_like}) that make this idea more precise. See especially \cref{ex.ccposet_quantale}.

One can consider the hom-element $v\multimap w$ as a kind of ``single-use
$v$-to-$w$ converter.'' So \cref{eqn.monoidal_closed_adj} says that $a$ and $v$
are enough to get $w$ if and only if $a$ is enough to get a single-use $v$-to-$w$ converter.
\end{remark}

\begin{exercise} %
\label{exc.closure_is_adj} %
\index{adjunction}
Condition \cref{eqn.monoidal_closed_adj} says precisely that there is a Galois
connection in the sense of \cref{def.galois}. Let's prove this fact. In
particular, we'll prove that a monoidal preorder is monoidal closed iff, given
any $v\in V$, the map $(-\otimes v)\colon V \to V$ given by multiplying with $v$
has a right adjoint. We write this right adjoint $(v \multimap -)\colon V \to V$.
\begin{enumerate}
\item Using \cref{def.symm_mon_structure}, show that $(-\otimes v)$ is monotone.
\item Supposing that $\cat{V}$ is closed, show that for all $v,w \in V$ we have $\big((v \multimap w) \otimes v\big) \le w$.
\item Using 2., show that $(v \multimap -)$ is monotone.
\item Conclude that a symmetric monoidal preorder is closed if and only if the
monotone map $(-\otimes v)$ has a right adjoint. \qedhere
\end{enumerate}
\end{exercise}

\begin{example}%
\label{ex.Lawv_closed}%
\index{Cost@$\Cost$}%
\index{monoidal closed category!Cost as@$\Cost$|see {Cost}}
The monoidal preorder $\Cost=([0,\infty],\geq,0,+)$ is monoidal closed. Indeed, for any $x,y\in[0,\infty]$, define $x\multimap y\coloneqq\max(0,y-x)$. Then, for any $a,x,y\in[0,\infty]$, we have
\[
	a+x\geq y
	\quad\text{iff}\quad
	a\geq y-x
	\quad\text{iff}\quad
	\max(0,a)\geq\max(0,y-x)
	\quad\text{iff}\quad
	a\geq (x\multimap y)	
\]
so $\multimap$ satisfies the condition of \cref{eqn.monoidal_closed_adj}. 

Note that we have not considered subtraction in $\Cost$ before; we can in fact
use monoidal closure to \emph{define} subtraction in terms of the order and
monoidal structure!
\end{example}

\begin{exercise}%
\label{exc.Bool_monoidal_closed}%
\index{booleans!as monoidal closed}%
\index{monoidal closed category!booleans as}
Show that $\Bool=(\BB,\leq,\true,\wedge)$ is monoidal closed.
\end{exercise}

\begin{example}
A non-example is $(\BB,\leq,\false,\vee)$. Indeed, suppose we had a $\multimap$
operator as in \cref{def.monoidal_closed}. Note that $\false\leq p\multimap q$,
for any $p,q$ no matter what $\multimap$ is, because $\false$ is less than
everything. But using $a=\false$, $p=\true$, and $q=\false$, we then get
a contradiction: $(a\vee p)\not\leq q$ and yet $a\leq(p\multimap q)$.
\end{example}

\begin{example}%
\index{chemistry}
We started this chapter talking about resource theories. What does the closed structure look like from that perspective? For example, in chemistry it would say that for every two material collections $c,d$ one can form a material collection $c\multimap d$ with the property that for any $a$, one has
\[a+c\to d\quad\text{if and only if}\quad a\to(c\multimap d).\]
Or more down to earth, since we have the reaction $\mathrm{2H_2O+2Na\to2NaOH+H_2}$, we must also have
\[\mathrm{2H_2O\to(2Na\multimap (2NaOH+H_2))}\]
So from just two molecules of water, you can form a certain substance, and not
many substances fit the bill---our preorder $\Set{Mat}$ of chemical materials is not closed. 

But it is not so far-fetched: this hypothetical new substance $\mathrm{(2Na\multimap (2NaOH+H_2))}$ is not really a substance, but a potential reaction: namely that of converting a sodium to sodium-hydroxide-plus-hydrogen. Two molecules of water unlock that potential.
%
%We leave it to the reader to think about what a category enriched in one of these resource theories is. We do not claim that this notion has much utility, but we also don't claim that it doesn't; perhaps the innovative reader might find some use for it!
\end{example}

\begin{proposition}%
\label{prop.properties_closed_mon_preorders}
Suppose $\cat{V}=(V,\leq,I,\otimes,\multimap)$ is a symmetric monoidal preorder that is closed. Then
\begin{enumerate}[label=(\alph*)]
	\item For every $v\in V$, the monotone map $-\otimes v\colon (V,\leq)\to(V,\leq)$ is left adjoint to $v\multimap-\colon (V,\leq)\to(V,\leq)$. 
	\item For any element $v\in V$ and set of elements $A\ss V$, if the join $\bigvee_{a\in A}a$ exists then so does $\bigvee_{a\in A}v\otimes a$ and we have 
	\begin{equation}%
\label{eqn.monprod_distributes_joins}
		\left(v\otimes\bigvee_{a\in A}a\right)\cong\bigvee_{a\in A}(v\otimes a).
	\end{equation}
	\item For any $v,w\in V$, we have $v\otimes(v\multimap w)\leq w$.
	\item For any $v\in V$, we have $v\cong(I\multimap v)$.
	\item For any $u,v,w\in V$, we have $(u\multimap v)\otimes(v\multimap w)\leq(u\multimap w)$.
\end{enumerate}
\end{proposition}
\begin{proof}
We go through the claims in order.
\begin{enumerate}[label=(\alph*)]
	\item The definition of $(-\otimes v)$ being left adjoint to
	$(v\multimap -)$ is exactly the condition
	\cref{eqn.monoidal_closed_adj}; see \cref{def.galois} and
	\cref{exc.closure_is_adj}.
	\item This follows from (a), using the fact that left adjoints preserve
	  joins (\cref{prop.right_adj_meets}).
	\item This follows from (a), using the equivalent characterisation of
	  Galois connection in \cref{prop.galois_monad_comonad}. More
	  concretely, from reflexivity $(v\multimap w)\leq(v\multimap w)$, we
	  obtain $(v\multimap w)\otimes v\leq w$ \cref{eqn.monoidal_closed_adj},
	  and we are done by symmetry, which says $v\otimes(v\multimap
	  w)=(v\multimap w)\otimes v$. 
	\item Since $v\otimes I=v\leq v$, \cref{eqn.monoidal_closed_adj} says
	  $v\leq (I\multimap v)$. For the other direction, we have $(I\multimap
	  v)=I\otimes(I\multimap v)\leq v$ by (c).
	\item To obtain this inequality, we just need $u\otimes(u\multimap
	  v)\otimes(v\multimap w)\leq w$. But this follows by two applications
	  of (c).
	  \qedhere
\end{enumerate}
\end{proof}

One might read (c) as saying ``if I have a $v$ and a single-use $v$-to-$w$
converter, I can have a $w$.'' One might read (d) as saying ``having a $v$ is
the same as having a single-use nothing-to-$v$ converter.'' And one might read
(e) as saying ``if I have a single-use $u$-to-$v$ converter and a single-use
$v$-to-$w$ converter, I can get a single-use $u$-to-$w$ converter.

\begin{remark}%
\label{rem.quantale_enriches_itself}%
\index{quantales!as self-enriched}
We can consider $\cat{V}$ to be enriched in itself. That is, for every
$v,w\in\Ob(\cat{V})$, we can define $\cat{V}(v,w)\coloneqq (v\multimap
w)\in\cat{V}$. For this to really be an enrichment, we just need to check the
two conditions of \cref{def.cat_enriched_mpos}. The first condition
$I\leq\cat{X}(x,x)=(x\multimap x)$ is satisfied because $I\otimes x\leq x$. The
second condition is satisfied by \cref{prop.properties_closed_mon_preorders}(e).
\end{remark}

\erase{
\begin{exercise}
The symmetric monoidal preorder $\cat{N}\coloneqq(\NN,\vert, 1,*)$ appears to provide a counterexample to \cref{cor.closed_distrib_joins}. Find the flaw in the logic below:
\begin{itemize}
	\item \cref{cor.closed_distrib_joins} must be wrong.
	\begin{itemize}[label={$\circ$}]
		\item $\cat{N}$ is not monoidal closed (see \cref{def.monoidal_closed}) because there is no $\multimap$ that can satisfy \cref{eqn.monoidal_closed_adj}.
		\begin{itemize}[label={$\blacktriangleright$}]
			\item Note that $1\vert n$ for all $n\in\NN$.
			\item Take $a=1$, $v=2$, and $w=3$. Then $1*2\vert 3$ is false, but $1\vert(2\multimap 3)$ is true. So \cref{eqn.monoidal_closed_adj} does not hold.
		\end{itemize}	
		\item $\cat{N}$ has all joins and $*$ distributes over them, i.e.\ \cref{eqn.monprod_distributes_joins} holds for all $A\ss\NN$ and $n\in \NN$.
		\begin{itemize}[label={$\blacktriangleright$}]
			\item $\cat{N}$ has all joins.
				\begin{itemize}[label={$\triangleright$}]
					\item For pairs, $a\vee b$ is given by least common multiple, e.g.\ $4\vee 6=\mathrm{lcm}(4,6)=12$.
					\item In the case $A=\varnothing$, we have $\bigvee\varnothing=1$,
					\item $\cat{N}$ has all finite joins by repeated applications of lcm.
					\item $\bigvee A=0$ whenever $A$ is infinite: $a\vert 0$ for every $a\in A$, and there is no other such $b\in\NN$.
				\end{itemize}
			\item \cref{eqn.monprod_distributes_joins}, which says $\bigvee_{a\in A}(n*a)=n*\bigvee_{a\in A}a$ for any $A\ss\NN$ and $n\in\NN$, holds. For example, $n*\mathrm{lcm}(a,b)=\mathrm{lcm}(n*a,n*b)$.
		\end{itemize}	
	\end{itemize}
\end{itemize}
\end{exercise}
}

%---- Subsection ----%
\subsection{Quantales}%
\label{subsec.comm_quantales}%
\index{quantale|(}
To perform matrix multiplication over a monoidal preorder, we need one more
thing: joins. These were first defined in \cref{def.meets_joins}.%
\index{joins!required in a quantale}

\begin{definition}%
\label{def.quantale}%
\index{quantale!commutative}
A \emph{unital commutative quantale} is a symmetric monoidal closed preorder $\cat{V}=(V,\leq,I,\otimes,\multimap)$ that has all joins: $\bigvee A$ exists for every $A\ss V$. In particular, we often denote the empty join by $0\coloneqq\bigvee\varnothing$.
\end{definition}

Whenever we speak of quantales in this book, we mean unital commutative
quantales. We will try to remind the reader of that. There are also very interesting
applications of noncommutative quantales; see
\cref{sec.resource_theory_further_reading}.

\begin{example}%
\index{preorder!Cost@$\Cost$|see {Cost}}%
\index{quantale!Cost@$\Cost$ as|see {Cost}}%
\index{Cost@$\Cost$}
  In \cref{ex.Lawv_closed}, we saw that $\Cost$ is monoidal closed. To check
  whether $\Cost$ is a quantale, we take an arbitrary set of elements
  $A\ss[0,\infty]$ and ask if it has a join $\bigvee A$. To be a join, it needs
  to satisfy two properties:
\begin{enumerate}[label=\alph*.]
	\item $a\geq\bigvee A$ for all $a\in A$, and
	\item if $b\in[0,\infty]$ is any element such that $a\geq b$ for all $a\in A$, then $\bigvee A\geq b$.
\end{enumerate}
In fact we can define such a join: it is typically called the \emph{infimum}, or greatest lower bound, of $A$.%
\tablefootnote{
Here, by the infimum of a subset $A\ss[0,\infty]$, we mean infimum in the usual order on $[0,\infty]$: the largest number that is $\leq$ everything in $A$. For example, the infimum of $\{3.1, 3.01, 3.001, \ldots\}$ is $3$. But note that this is the \emph{supremum} in the reversed, $\geq$, order of $\Cost$.
}
For example, if $A=\{2,3\}$ then $\bigvee A=2$. We have joins for infinite sets too: if $B=\{2.5, 2.05, 2.005, \ldots\}$, its infimum is $2$. Finally, in order to say that $([0,\infty],\geq)$ has all joins, we need a join to exist for the empty set $A=\varnothing$ too. The first condition becomes vacuous---there are no $a$'s in $A$---but the second condition says that for any $b\in [0,\infty]$ we have $\bigvee\varnothing\geq b$; this means $\bigvee\varnothing=\infty$.

Thus indeed $([0,\infty],\geq)$ has all joins, so $\Cost$ is a quantale.
\end{example}

\begin{exercise}%
\label{exc.matrix_mult1}
\begin{enumerate}
	\item What is $\bigvee\varnothing$, which we generally denote $0$, in the case 
  \begin{enumerate}[label=\alph*.]
  	\item $\cat{V}=\Bool=(\BB,\leq,\true,\wedge)$?
  	\item $\cat{V}=\Cost=([0,\infty],\geq,0,+)$?
  \end{enumerate}
  \item What is the join $x\vee y$ in the case
  \begin{enumerate}[label=\alph*.]
  	\item $\cat{V}=\Bool$, and $x,y\in\BB$ are booleans?
  	\item $\cat{V}=\Cost$, and $x,y\in[0,\infty]$ are distances?
	  \qedhere
  \end{enumerate}  
\qedhere
\end{enumerate}
\end{exercise}

\begin{exercise}%
\label{exc.Bool_quantale}%
\index{booleans!as quantale}
Show that $\Bool=(\BB,\leq,\true,\wedge)$ is a quantale.
\end{exercise}

\begin{exercise} %
\label{exc.powerset_quantale}
Let $S$ be a set and recall the power set monoidal preorder $(\powset(S),\ss,S,\cap)$ from \cref{exc.powerset_symm_mon_pos}. Is it a quantale?
\end{exercise}

\begin{remark}%
\label{rem.personify_navigator}%
\index{navigator}
One can personify the notion of unital, commutative quantale as a kind of
navigator. A navigator is someone who understands ``getting from one place to
another.'' Different navigators may care about or understand different
aspects---whether one can get from $A$ to $B$, how much time it will take, what
modes of travel will work, etc.---but they certainly have some commonalities.
Most importantly, a navigator needs to be able to read a map: given routes $A$
to $B$ and $B$ to $C$, they understand how to get a route $A$ to $C$. And they
know how to search over the space of way-points to get from $A$ to $C$. These
will correspond to the monoidal product and the join operations, respectively.
\end{remark}

\begin{proposition}
Let $\cat{P}=(P,\leq)$ be a preorder. It has all joins iff it has all meets.
\end{proposition}%
\index{dual!as opposite}
\begin{proof}
The joins (resp.\ meets) in $\cat{P}$ are the meets (resp.\ joins) in $\cat{P}\op$, so the two claims are dual: it suffices to show that if $\cat{P}$ has all joins then it has all meets.

Suppose $\cat{P}$ has all joins and suppose that $A\ss\cat{P}$ is a subset for which we want the meet. Consider the set $M_A\coloneqq\{p\in P\mid p\leq a\text{ for all }a\in A\}$ of elements below everything in $A$. Let $m_A\coloneqq\bigvee_{p\in M_A}p$ be their join. We claim that $m_A$ is a meet for $A$.

We first need to know that for any $a\in A$ we have $m_A\leq a$, but this is by definition of join: since all $p\in M_A$ satisfy $p\leq a$, so does their join $m_A\leq a$. We second need to know that for any $m'\in P$ with $m'\leq a$ for all $a\in A$, we have $m'\leq m$. But every such $m'$ is actually an element of $M_A$ and $m$ is their join, so $m'\leq m$. This completes the proof.
\end{proof}

In particular, a quantale has all meets and all joins, even though we only define it to have all joins.

\begin{remark}
%Suppose $\cat{V}$ is a quantale that also has all meets.%
%\tablefootnote{We
%might call this a \emph{cosmological quantale}, in keeping with the notion of
%\emph{cosmos} from the category theory literature.} 
%In this case, for any two
%$\cat{V}$-categories $\cat{X}$ and $\cat{Y}$, there is a $\cat{V}$-category of
%$\cat{V}$-functors and certain constructions known as $\cat{V}$-natural
%transformations.
%
\index{Hausdorff distance}
The notion of Hausdorff distance can be generalized, allowing the role of $\Cost$ to be taken by
any quantale $\cat{V}$. If $\cat{X}$ is a $\cat{V}$-category with objects $X$, and $U\ss X$
and $V\ss X$, we can generalize the usual Hausdorff distance, on the left
below, to the formula on the right:
\[
  d(U,V)\coloneqq\sup_{u\in U}\underset{v \in V}{\inf\vphantom{p}}d(u,v)
  \qquad\qquad
  \cat{X}(U,V)\coloneqq\bigwedge_{u\in U}\bigvee_{v\in V}\cat{X}(u,v).
\]
For example, if $\cat{V}=\Bool$, the Hausdorff distance between sub-preorders
$U$ and $V$ answers the question ``can I get into $V$ from every $u\in U$,''
i.e.\ $\forall_{u\in U}\ldotp\exists_{v\in V}\ldotp u\leq v$. Or for another example,
use $\cat{V}=\powset(M)$ with its interpretation as modes of transportation, as
in \cref{ex.modes_of_transport}. Then the Hausdorff distance
$d(U,V)\in\powset(M)$ tells us those modes of transportation that will get us
into $V$ from every point in $U$.
\end{remark}

\begin{proposition}%
\label{cor.closed_distrib_joins}%
\index{join}%
\index{adjoint functor theorem}
Suppose $\cat{V}=(V,\leq,I,\otimes)$ is any symmetric monoidal preorder that has all joins. Then $\cat{V}$ is closed---i.e.\ it has a $\multimap$ operation and hence is a quantale---if and only if $\otimes$ distributes over joins; i.e.\ if \cref{eqn.monprod_distributes_joins} holds for all $v\in V$ and $A\ss V$.
\end{proposition}
\begin{proof}
We showed one direction in \cref{prop.properties_closed_mon_preorders}(b): if
$\cat{V}$ is monoidal closed then \cref{eqn.monprod_distributes_joins} holds. We
need to show that \cref{eqn.monprod_distributes_joins} holds then $-\otimes
v\colon V\to V$ has a right adjoint $v\multimap-$. This is just the adjoint
functor theorem, \cref{prop.adjoint_functor_thm}. It says we can define
$v\multimap w$ to be
\[
	v\multimap w\coloneqq\bigvee_{\{a\in V\mid a\otimes v\leq w\}}a.
	\qedhere
\]
\end{proof}

% Subsection %
\subsection{Matrix multiplication in a quantale}%
\label{subsubsec.nav_matrix_mult}
\index{quantale!matrix multiplication in}
A quantale $\cat{V}=(V,\leq,I,\otimes,\multimap)$, as defined in \cref{def.monoidal_closed}, provides what is necessary to perform matrix multiplication.%
\footnote{This works for noncommutative quantales as well.}
The usual formula for matrix multiplication is:
\begin{equation}%
\label{eqn.standard_matrix_mult}
(M*N)(i,k)=\sum_{j}M(i,j)*N(j,k).
\end{equation}
We will get a formula where joins stand in for the sum operation $\sum$, and $\otimes$ stands in for the product operation $*$. Recall our convention of writing $0\coloneqq\bigvee \varnothing$.

\begin{definition}%
\index{matrix}
Let $\cat{V}=(V,\leq,\otimes,I)$ be a quantale. Given sets $X$ and $Y$, a \emph{matrix with entries in $\cat{V}$}, or simply a \emph{$\cat{V}$-matrix}, is a function $M\colon X\times Y\to V$. For any $x\in X$ and $y\in Y$, we call $M(x,y)$ the \emph{$(x,y)$-entry}.
\end{definition}

\index{matrices!multiplication of}
Here is how you multiply $\cat{V}$-matrices $M\colon X\times Y\to V$ and $N\colon Y\times Z\to V$. Their product is defined to be the matrix $(M*N)\colon X\times Z\to V$, whose entries are given by the formula
\begin{equation}%
\label{eqn.quantale_matrix_mult}
	(M*N)(x,z)\coloneqq\bigvee_{y\in Y}M(x,y)\otimes N(y,z).
\end{equation}
Note how similar this is to \cref{eqn.standard_matrix_mult}.

\begin{example}
Let $\cat{V}=\Bool$. Here is an example of matrix multiplication $M*N$. Here $X=\{1,2,3\}$, $Y=\{1,2\}$, and $Z=\{1,2,3\}$, matrices $M\colon X\times Y\to\BB$ and $N\colon Y\times Z\to\BB$ are shown to the left below, and their product is shown to the right:
\[
\left(
\begin{array}{cc}
	\false&\false\\
	\false&\true\\
	\true&\true
\end{array}
\right)
*
\left(
\begin{array}{ccc}
	\true&\true&\false\\
	\true&\false&\true
\end{array}
\right)
=
\left(
\begin{array}{ccc}
	\false&\false&\false\\
	\true&\false&\true\\
	\true&\true&\true
\end{array}
\right)
\qedhere
\]
\end{example}

The identity $V$-matrix on a set $X$ is $I_X\colon X\times X\to V$ given by
\[
	I_X(x,y)\coloneqq
	\begin{cases}
		I&\text{ if }x=y\\
		0&\text{ if }x\neq y.
	\end{cases}
\]%
\index{matrix!identity}%
\index{identity!matrix}

\begin{exercise} %
\label{exc.identity_matrices}
Write down the $2\times 2$-identity matrix for each of the quantales
$(\nn,\leq,1,*)$, $\Bool=(\bb,\leq,\true,\wedge)$, and
$\Cost=([0,\infty],\geq,0,+)$.
\end{exercise}

\begin{exercise} %
\label{exc.matrix_mult2}
Let $\cat{V}=(V,\leq,I,\otimes,\multimap)$ be a quantale. Use \cref{eqn.quantale_matrix_mult} and \cref{prop.properties_closed_mon_preorders} to prove the following.
\begin{enumerate}
  \item Prove the \emph{identity law}: for any sets $X$ and $Y$ and $V$-matrix $M\colon X\times Y\to V$, one has $I_X*M=M$.
  \item Prove the \emph{associative law}: for any matrices $M\colon W\times X\to V$, $N\colon X\times Y\to V$, and $P\colon Y\times Z\to V$, one has $(M*N)*P=M*(N*P)$.%
\index{associativity!of quantale matrix multiplication}
\qedhere
\end{enumerate}
\end{exercise}

Recall the weighted graph $Y$ from \cref{eqn.cities_distances}. One can read off the associated matrix $M_Y$, and one can calculate the associated metric $d_Y$:
\[
\begin{tikzpicture}[font=\scriptsize, x=1cm, baseline=(Y)]
	\node (x) {$\LMO{x}$};
	\node[below=1 of x] (y) {$\LMO[under]{y}$};
	\node at ($(x)!.5!(y)+(1.5cm,0)$) (z) {$\LMO{z}$};
	\draw[bend left,->] (y) to node[left] {3} (x);
	\draw[bend left,->] (x) to node[right] {4} (y);	
	\draw[bend left,->] (x) to node[above] {3} (z);
	\draw[bend left,->] (z) to node[below right=-1pt and -1pt] {4} (y);
	\node[draw, inner sep=15pt, fit=(x) (y) (z.west)] (Y) {};
	\node[left=0 of Y, font=\normalsize] {$Y\coloneqq$};
\end{tikzpicture}
\hspace{.5in}
\begin{array}{c|ccc}
M_Y&x&y&z\\\hline
x & 0 & 4 & 3\\
y & 3 & 0 & \infty\\
z & \infty & 4 & 0
\end{array}
\hspace{.5in}
\begin{array}{c|ccc}
d_Y&x&y&z\\\hline
x&0&4&3\\
y&3&0&6\\
z&7&4&0
\end{array}
\]
Here we fully explain how to compute $d_Y$ using only $M_Y$.

The matrix $M_Y$ can be thought of as recording the length of paths that
traverse either $0$ or $1$ edges: the diagonals being $0$ mean we can get from
$x$ to $x$ without traversing any edges. When we can get from $x$ to $y$ in one edge
we record its length in $M_Y$, otherwise we use $\infty$.

When we multiply $M_Y$ by itself using the formula
\cref{eqn.quantale_matrix_mult}, the result $M_Y^2$ tells us the length of the
shortest path traversing 2 edges or fewer. Similarly $M_Y^3$ tells us about the
shortest path traversing 3 edges or fewer:
\[
M_Y^2=
\begin{array}{c|ccc}
\nearrow&x&y&z\\\hline
x & 0 & 4 & 3\\
y & 3 & 0 & 6\\
z & 7 & 4 & 0
\end{array}
\hspace{1in}
M_Y^3=
\begin{array}{c|ccc}
\nearrow&x&y&z\\\hline
x & 0 & 4 & 3\\
y & 3 & 0 & 6\\
z & 7 & 4 & 0
\end{array}
\]
One sees that the powers stabilize: $M_Y^2=M_Y^3$; as soon as that happens one
has the matrix of distances, $d_Y$. Indeed $M_Y^n$ records the lengths of the
shortest path traverse $n$ edges or fewer, and the powers will always stabilize
if the set of vertices is finite, since the shortest path from one vertex to
another will never visit a given vertex more than once.\footnote{The method
works even in the infinite case: one takes the infimum of all powers $M_Y^n$.
The result always defines a Lawvere metric space.}

\begin{exercise} %
\label{exc.computing_distances}
Recall from \cref{exc.adjacency_matrix_X} the matrix $M_X$, for $X$ as in
\cref{eqn.cities_distances}. Calculate $M_X^2$, $M_X^3$, and $M_X^4$.  Check
that $M_X^4$ is what you got for the distance matrix in
\cref{exc.distance_matrix_X}.
\end{exercise}

This procedure gives an algorithm for computing the $\cat{V}$-category presented
by any $\cat{V}$-weighted graph using matrix multiplication.

\index{quantale|)}
%-------- Section --------%
\section{Summary and further reading}%
\label{sec.resource_theory_further_reading}

In this chapter we thought of elements of preorders as describing resources, with the
order detailing whether one resource could be obtained from another. This
naturally led to the question of how to describe what could be built from a pair
of resources, which led us to consider monoid structures on preorders. More
abstractly, these monoidal preorders were seen to be examples of enriched
categories, or $\cat{V}$-categories, over the symmetric monoidal preorder $\Bool$.
Changing $\Bool$ to the symmetric monoidal preorder $\Cost$, we arrived upon
Lawvere metric spaces, a slight generalization of the usual notion of metric
space. In terms of resources, $\Cost$-categories tell us the cost of obtaining
one resource from another.

At this point, we sought to get a better feel for $\cat{V}$-categories in two
ways. First, we introduced various important constructions: base change, functors,
products. Second, we looked at how to present $\cat{V}$-categories using
labelled graphs; here, perhaps surprisingly, we saw that matrix multiplication
gives an algorithm to compute the hom-objects from a labelled graph.

Resource theories are discussed in much more detail in
\cite{Coecke.Fritz.Spekkens:2016a,Fritz:2017a}. The authors provide many more examples
of resource theories in science, including in thermodynamics, Shannon's theory
of communication channels, and quantum entanglement. They also discuss more of
the numerical theory than we did, including calculating the asymptotic rate of
conversion from one resource into another.

Enrichment is a fundamental notion in category theory, and we will we return to
it in \cref{chap.codesign}, generalizing the definition so that categories,
rather than mere preorders, can serve as bases of enrichment. In this more general
setting we can still perform the constructions we introduced in
\cref{sec.vcat_constructions}---base change, functors, products---and many
others; the authoratitive, but by no means easy, reference on this is the book
by Kelly \cite{Kelly:1982a}.  

While preorders were familiar before category theory came along, Lawvere metric spaces are a
beautiful generalization of the previous notion of (symmetric) metric space, that is due to, well, Lawvere. A deeper exploration than the taste
we gave here can be found in his classic paper \cite{Lawvere:1973a}, where he also discusses ideas like Cauchy completeness in category-theoretic terms, and which hence generalize to other categorical settings.

We observed that while any symmetric monoidal preorder can serve as a base for
enrichment, certain preorders---quantales---are better than others. Quantales are
well known for links to other parts of mathematics too. The word quantale is in
fact a portmanteau of `quantum locale', where quantum refers to quantum physics,
and locale is a fundamental structure in topology. For a book-length
introduction of quantales and their applications, one might check
\cite{Rosenthal:1990a}. %
\index{quantale} The notion of cartesian closed categories, later generalized to monoidal closed categories, is due to Ronnie Brown \cite{brown1961some}. %
\index{closed category!monoidal}%
\index{closed category!cartesian}

Note that while we have only considered commutative quantales, the
noncommutative variety also arise naturally. For example, the power set of any
monoid forms a quantale that is commutative iff the monoid is. Another example
is the set of all binary relations on a set $X$, where multiplication is
relational composition; this is non-commutative. Such noncommutative quantales
have application to concurrency theory, and in particular process semantics and
automata; see \cite{Abramsky.Vickers:1993a} for details.

\setcounter{chapter}{2}%Just finished 2.
%------------ Chapter ------------%
\chapter[Databases: Categories, functors, and (co)limits]{Databases:\\Categories, functors, and universal constructions}%
\label{chap.databases}

%\settocdepth{subsubsection}
%\tableofcontents*

%-------- Section --------%
\section{What is a database?}%
\label{sec.C2_motivation}
\index{database|(}

Integrating data from disparate sources is a major problem in industry today. A
study in 2008 \cite{bernstein.Hass:2008a} showed that data integration accounts for 40\% of IT (information technology) budgets, and that the market for
data integration software was \$2.5 billion in 2007 and increasing at a rate of
more than 8\% per year. In other words, it is a major problem; but what is it?

%---- Subsection ----%
%\subsection{What is a database?}

\paragraph{A database is a system of interlocking tables.}%
\index{database!as interlocking tables}

Data becomes information when it is stored \emph{in} a given \emph{formation}. That is, the numbers and letters don't mean anything until they are organized, often into a system of interlocking tables. An organized system of interlocking tables is called a database. Here is a favorite example:

\begin{equation}%
\label{eqn.fav_ex_db}
\begin{tabular}{ c | c  c  c}
  \textbf{Employee}&\textbf{FName}&\textbf{WorksIn}&\textbf{Mngr}\\\hline
  1&Alan&101&2\\
  2&Ruth&101&2\\
  3&Kris&102&3
\end{tabular}
\hspace{.6in}
\begin{tabular}{ c | c  c}
  \textbf{Department}&\textbf{DName}&\textbf{Secr}\\\hline
  101&Sales&1\\
  102&IT&3\\~
\end{tabular}
\end{equation}

These two tables interlock by use of a special left-hand column, demarcated by a vertical line; it is called the ID column. The ID column of the first table is called `Employee,' and the ID column of the second table is called `Department.' The entries in the ID column---e.g.\ 1, 2, 3 or 101, 102---are like row labels; they indicate a whole row of the table they're in. Thus each row label must be unique (no two rows in a table can have the same label), so that it can unambiguously specify its row.%
\index{database!ID columns of}

Each table's ID column, and the set of unique identifiers found therein, is what allows for the interlocking mentioned above. Indeed, other entries in various tables can reference rows in a given table by use of its ID column. For example, each entry in the WorksIn column references a department for each employee; each entry in the Mngr (manager) column references an employee for each employee, and each entry in the Secr (secretary) column references an employee for each department. Managing all this cross-referencing is the purpose of databases.

Looking back at \cref{eqn.fav_ex_db}, one might notice that every non-ID column, found in either table, is a reference to a label of some sort. Some of these, namely WorksIn, Mngr, and Secr, are \emph{internal references}, often called \emph{foreign keys}; they refer to rows (keys) in the ID column of some (foreign) table.%
\index{foreign key|see {database, foreign key}}
Others, namely FName and DName, are \emph{external references}; they refer to strings or integers, which can also be thought of as labels, whose meaning is known more broadly. Internal reference labels can be changed as long as the change is consistent---1 could be replaced by 1001 everywhere without changing the meaning---whereas external reference labels certainly cannot! Changing Ruth to Bruce everywhere would change how people understood the data.

The reference structure for a given database---i.e.\ how tables interlock via foreign keys---tells us something about what information was intended to be stored in it. One may visualize the reference structure for \cref{eqn.fav_ex_db} graphically as follows:
\begin{equation}%
\label{eqn.free_schema}
\text{easySchema}\coloneqq
\boxCD{
\begin{tikzcd}[row sep=large, ampersand replacement=\&]
  	\LTO{Employee}\ar[rr, shift left, "\text{WorksIn}"]\ar[dr, bend right, "\text{FName}"']\ar[loop left, "\text{Mngr}"]\&\&
  	\LTO{Department}\ar[ll, shift left, "\text{Secr}"]\ar[dl, bend left, "\text{DName}"]\\
  	\&\LTO[\circ]{string}
\end{tikzcd}
}
\end{equation}
This is a kind of ``Hasse diagram for a database,'' much like the Hasse diagrams for preorders in \cref{rem.Hasse}. How should you read it?
\index{database schema}

The two tables from \cref{eqn.fav_ex_db} are represented in the graph \eqref{eqn.free_schema} by the
two black nodes, which are given the same name as the ID columns: Employee and Department. There is
another node---drawn white rather than black---which represents the external reference type of strings, like ``Alan,'' ``Alpha,'' and ``Sales". The arrows in the diagram represent non-ID columns of the tables; they point in the direction of reference: WorksIn refers an employee to a department.

\begin{exercise}%
\label{exc.fks_arrows}%
\index{database!foreign key}
	Count the number of non-ID columns in \cref{eqn.fav_ex_db}. Count the number of arrows (foreign keys) in \cref{eqn.free_schema}. They should be the same number in this case; is this a coincidence?
\end{exercise}

\index{database schema!free}%
\index{free!schema}%
\index{Hasse diagram!database schema as}
A Hasse-style diagram like the one in \cref{eqn.free_schema} can be called a \emph{database schema}; it represents how the information is being organized, the formation in which the data is kept. One may add rules, sometimes called `business rules' to the schema, in order to ensure the integrity of the data. If these rules are violated, one knows that data being entered does not conform to  the way the database designers intended. For example, the designers may enforce rules saying 
\begin{itemize}
	\item every department's secretary must work in that department;
	\item every employee's manager must work in the same department as the employee.
\end{itemize}
Doing so changes the schema, say from `easySchema' \eqref{eqn.free_schema} to `mySchema' below.
\begin{equation}%
\label{eqn.mySchema}
\text{mySchema}\coloneqq
\boxCD{
\begin{tikzcd}[row sep=large, ampersand replacement = \&]
 	\LTO{Employee}\ar[rr, shift left, "\text{WorksIn}"]\ar[dr, bend right, "\text{FName}"']\ar[loop left, "\text{Mngr}"]\&\&
  \LTO{Department}\ar[ll, shift left, "\text{Secr}"]\ar[dl, bend left, "\text{DName}"]\\
  \&\LTO[\circ]{string}
\end{tikzcd}
\\~\\\footnotesize
  Department.Secr.WorksIn = Department\\
	Employee.Mngr.WorksIn = Employee.WorksIn  
}
\end{equation}
In other words, the difference is that $\mathrm{easySchema}$ plus constraints equals $\mathrm{mySchema}$.%
\index{database!constraints}%
\index{database schema!as category presentation}

We will soon see that database schemas are categories $\cat{C}$, that the data itself is given by a `set-valued' functor $\cat{C}\to\smset$, and that databases can be mapped to each other via functors $\cat{C}\to\cat{D}$. In other words, there is a relatively large overlap between database theory and category theory. This has been worked out in a number of papers; see \cref{sec.ch2_further_reading}. It has also been implemented in working software, called FQL, which stands for \emph{functorial query language}. Here is example FQL code for the schema shown above:

\footnotesize
\begin{verbatim}
   schema mySchema = { 
      nodes
         Employee, Department;
      attributes
         DName : Department -> string,
         FName : Employee   -> string;
      arrows
         Mngr    : Employee   -> Employee,
         WorksIn : Employee   -> Department,
         Secr    : Department -> Employee;
      equations  
         Department.Secr.WorksIn = Department,
         Employee.Mngr.WorksIn   = Employee.WorksIn;
   }
\end{verbatim}
\normalsize
\index{functorial query language, FQL}

%---- Subsection ----%
%\subsection{Communication between databases}

\paragraph{Communication between databases.}%
\index{database!communication between}

We have said that databases are designed to store information about something. But different people or organizations might view the same sort of thing in different ways. For example, one bank stores its financial records according to European standards and another does so according to Japanese standards. If these two banks merge into one, they will need to be able to share their data despite differences in the shape of their database schemas.

Such problems are huge and intricate in general, because databases often comprise hundreds or thousands of interlocking tables. Moreover, these problems occur more frequently than just when companies want to merge. It is quite common that a given company moves data between databases on a daily basis. The reason is that different ways of organizing information are convenient for different purposes. Just like we pack our clothes in a suitcase when traveling but use a closet at home, there is generally not one best way to organize anything.

Category theory provides a mathematical approach for translating between these
different organizational forms. That is, it formalizes a sort of automated
reorganizing process called \emph{data migration}, which takes data that fits
snugly in one schema and moves it into another.%
\index{database!data migration}

Here is a simple case. Imagine an airline company has two different databases, perhaps created at different times, that hold roughly the same data. 
\begin{equation}%
\label{eqn.airline_schemas}
\begin{tikzpicture}[commutative diagrams/every diagram, inner sep=10pt, baseline=(A)]
	\matrix[matrix of math nodes, name=A, row sep=25pt, column sep=15pt, commutative diagrams/every cell] {
		&
			|(AD)|\LTO[\circ]{\$}
		\\
			|(AE)|\LTO{Economy}
		&&
			|(AF)|\LTO{First Class}
		\\
		&
			|(AS)|\LTO[\circ]{string}
		\\
	};
	\path[commutative diagrams/.cd, every arrow, every label, font=\scriptsize]
		(AE) edge["Price"]     (AD)
				 edge["Position"'] (AS)
		(AF) edge["Price"']    (AD)
				 edge["Position"]  (AS);
	\node[draw, fit=(AE) (AD) (AF) (AS)] (A box) {};
	\node[left=0 of A box] {$A\coloneqq$};
	\matrix[matrix of math nodes, name=B, row sep=25pt, commutative diagrams/every cell, matrix anchor=south, right=3 of A.south east] {
			|(BD)|\LTO[\circ]{\$}
		\\
			|(BAS)|\LTO{Airline Seat}
		\\
			|(BS)|\LTO[\circ]{string}
		\\
	};
	\path[commutative diagrams/.cd, every arrow, every label, font=\scriptsize]
		(BAS) edge["Price"']    (BD)
				 edge["Position"] (BS);
	\node[draw, fit=(BAS) (BD) (BS)] (B box) {};				
	\node[right=0 of B box] {$=:B$};
%
%	\draw[functor] (A box) to node[above] {$F$} (B box);
\end{tikzpicture}
\end{equation}%
\index{database schema!mapping between}
Schema $A$ has more detail than schema $B$---an airline seat may be in first class or economy---but they are roughly the same. We will see that they can be connected by a functor, and that data conforming to $A$ can be migrated through this functor to schema $B$ and vice versa.

The statistics at the beginning of this section show that this sort of problem---when occurring at enterprise scale---continues to prove difficult and expensive. If one attempts to move data from a source schema to a target schema, the migrated data could fail to fit into the target schema or fail to satisfy some of its constraints. This happens surprisingly often in the world of business: a night may be spent moving data, and the next morning it is found to have arrived broken and unsuitable for further use. In fact, it is believed that over half of database migration projects fail.

In this chapter, we will discuss a category-theoretic method for migrating data. Using categories and functors, one can prove up front that a given data migration will not fail, i.e.\ that the result is guaranteed to fit into the target schema and satisfy all its constraints.

The material in this chapter gets to the heart of category theory: in particular, we discuss categories, functors, natural transformations, adjunctions, limits, and colimits. In fact, many of these ideas have been present in the discussion above:
\begin{itemize}
	\item The schema pictures, e.g.\ \cref{eqn.mySchema} depict categories $\cat{C}$.
	\item The instances, e.g.\ \cref{eqn.fav_ex_db} are functors from $\cat{C}$ to a certain category called $\smset$.
	\item The implicit mapping in \cref{eqn.airline_schemas}, which takes economy and first class seats in $A$ to airline seats in $B$, constitutes a functor $A\to B$.
	\item The notion of data migration for moving data between schemas is formalized by  adjoint functors.
\end{itemize}

We begin in \cref{sec.categories} with the definition of categories and a bunch of different sorts of examples. In \cref{sec.cat_fun_nt_db} we bring back databases, in particular their instances and the maps between them, by discussing functors and natural transformations. In \cref{sec.adjunctions_mig} we discuss data migration by way of adjunctions, which generalize the Galois connections we introduced in \cref{sec.galois_connections}. Finally in \cref{sec.bonus_lims_colims} we give a bonus section on limits and colimits.%
\footnote{By ``bonus,'' we mean that although not strictly essential
to the understanding of this particular chapter, limits and colimits will show up throughout the book and throughout
one's interaction with category theory, and we think the reader will especially
benefit from this material in the long run.}

\index{database|)}

%-------- Section --------%
\section{Categories}%
\label{sec.categories}%
\index{category|(}

A category $\cat{C}$ consists of four pieces of data---objects, morphisms,
identities, and a composition rule---satisfying two properties.

\begin{definition}%
\label{def.category}%
\index{category}
To specify a \emph{category} $\cat{C}$:
\begin{enumerate}[label=(\roman*)]
	\item one specifies a collection%
	\tablefootnote{Here, a \emph{collection} can be thought of as a bunch of things, just like a set,
	but that may be too large to formally be a set. An example is the
	collection of all sets, which would run afoul of Russell's paradox if it
	were itself a set.}
	$\Ob(\cat{C})$, elements of which are called
	\emph{objects}.%
\index{collection}%
\index{category!object in}
	\item for every two objects $c,d$, one specifies a set $\cat{C}(c,d)$,%
	\tablefootnote{This set $\cat{C}(c,d)$ is often denoted
	$\Hom_{\cat{C}}(c,d)$, and called the ``hom-set from $c$ to $d$.'' The
	word ``hom'' stands for homomorphism, of which the word ``morphism'' is
	a shortened version.%
\index{hom-set}} 
	elements of which are called \emph{morphisms} from
	$c$ to $d$.%
\index{category!morphism in}
	\item for every object $c\in \Ob(\cat{C})$, one specifies a morphism $\id_c\in
	\cat{C}(c,c)$, called the \emph{identity morphism} on
	$c$.%
\index{identity!morphism}%
\index{morphism!identity}
	\item for every three objects $c,d,e\in\Ob(\cat{C})$ and morphisms $f\in
	\cat{C}(c,d)$ and $g\in \cat{C}(d,e)$, one specifies a morphism $f\cp g\in \cat{C}(c,e)$, called \emph{the composite of $f$ and $g$}.%
\index{category!composition in}%
\index{category!identity in}%
\index{composition!in a category|see {category, composition in}}
\end{enumerate}
We will sometimes write an object $c \in \cat{C}$, instead of $c \in
\Ob(\cat{C})$. It will also be convenient to denote elements $f\in\cat{C}(c,d)$
as $f\colon c\to d$. Here, $c$ is called the \emph{domain} of $f$, and $d$ is
called the \emph{codomain} of $f$.%
\index{domain|see {morphism, domain}}%
\index{codomain|see {morphism, codomain}}%
\index{morphism!domain}%
\index{morphism!codomain}

These constituents are required to satisfy two conditions:
\begin{enumerate}[label=(\alph*)]
	\item \emph{unitality}: for any morphism $f\colon c\to d$, composing with the identities at $c$ or $d$ does nothing: $\id_c\cp f=f$ and $f\cp \id_d=f$.
	\item \emph{associativity}: for any three morphisms $f\colon c_0\to
	c_1$, $g\colon c_1\to c_2$, and $h\colon c_2\to c_3$, the following are
	equal: $(f\cp g)\cp h=f\cp (g\cp h)$. We write this composite simply as $f\cp g\cp h$.
\end{enumerate}
\end{definition}%
\index{unitality}%
\index{associativity!of morphism composition}

Our next goal is to give lots of examples of categories. Our first source of
examples is that of free and finitely-presented categories, which generalize the
notion of Hasse diagram from \cref{rem.Hasse}.

% Subsubsection %
\subsection{Free categories}%
\label{subsubsec.path_cats}%
Recall from \cref{def.graph} that a graph consists of two types of thing: vertices and arrows. From there one can define
paths, which are just head-to-tail sequences of arrows. Every path $p$ has a start
vertex and an end vertex; if $p$ goes from $v$ to $w$, we write $p\colon v\to w$. To every vertex $v$, there is a trivial path, containing no arrows,%
\index{trivial path}
starting and ending at $v$; we often denote it by $\id_v$ or simply by $v$. We may also concatenate paths: given $p\colon v\to w$ and $q\colon w\to x$, their concatenation is denoted $p\cp q$, and it goes $v\to x$.

In \cref{chap.preorders}, we used graphs to depict preorders $(V,\leq)$: the vertices form the elements of the preorder, and we say that $v\leq w$ if there is a path from $v$ to $w$ in $G$.
We will now use graphs in a very similar way to depict certain categories, known
as \emph{free categories}. Then we will explain a strong relationship between
preorders and categories in \cref{subsubsec.pos_free_spectrum}.

\begin{definition}%
\label{def.free_category}%
\index{category!free}%
\index{free!category}%
\index{graph!free category on}
For any graph $G = (V,A,s,t)$, we can define a category $\free(G)$,
called the \emph{free category on $G$}, whose objects are the vertices $V$ and
whose morphisms from $c$ to $d$ are the paths from $c$ to $d$. The identity
morphism on an object $c$ is simply the trivial path at $c$. Composition is given by concatenation of paths.%
\index{identity!morphism}%
\index{morphism!in free category}
\end{definition}

For example, we define $\Cat{2}$ to be the free category generated by the graph shown below:
\begin{equation}%
\label{eqn.graphs_1_2}
\Cat{2}\coloneqq\free\left(\;\raisebox{-.05in}{\fbox{
\begin{tikzcd}[ampersand replacement=\&]
	\LMO{v_1}\ar[r, "f_1"]\&\LMO{v_2}
\end{tikzcd}
}}
\;\right)
\end{equation}
It has two objects $v_1$ and $v_2$,
and three morphisms: $\id_{v_1}\colon v_1 \to v_1$, $f_1\colon v_1 \to v_2$,
and $\id_{v_2}\colon v_2 \to v_2$. Here $\id_{v_1}$ is the path of length 0
starting and ending at $v_1$, $f_1$ is the path of length 1 consisting
of just the arrow $f_1$, and $\id_{v_2}$ is the length 0 path at $v_2$. As our
notation suggests, $\id_{v_1}$ is the identity morphism for the object $v_1$, and
similarly $\id_{v_2}$ for $v_2$. As composition is given by concatenation, we
have, for example $\id_{v_1}\cp f_1 =f_1$, $\id_{v_2}\cp \id_{v_2}=\id_{v_2}$, and so
on.

From now on, we may elide the difference between a graph and the corresponding free category $\free(G)$, at least when the one we mean is clear enough from context.

\begin{exercise}%
\label{exc.free_cat}
For $\free(G)$ to really be a category, we must check that this data we
specified obeys the unitality and associativity properties. Check that these
are obeyed for any graph $G$.%
\index{associativity}
\end{exercise}

\begin{exercise}%
\label{exc.free_cat2}
The free category on the graph shown here:%
\footnote{As mentioned above, we elide the difference between the graph and the corresponding free category.}
\begin{equation}%
\label{eqn.graphs_rand9851}
\Cat{3}\coloneqq{\color{black!20!white}\mathbf{Free}\bigg(}\;\raisebox{-.05in}{\fbox{
\begin{tikzcd}[ampersand replacement=\&]
	\LMO{v_1}\ar[r, "f_1"]\&\LMO{v_2}\ar[r, "f_2"]\&\LMO{v_3}
\end{tikzcd}
}}
{\color{black!20!white}\;\bigg)}
\end{equation}
has three objects and six morphisms: the three vertices and six paths in the graph.

Create six names, one for each of the six morphisms in $\Cat{3}$. Write down a six-by-six table, label the rows and columns by the six names you chose.
\begin{enumerate}
	\item Fill out the table by writing the name of the composite in each cell, when there is a composite.
	\item Where are the identities?
\qedhere
\end{enumerate}
\end{exercise}

\begin{exercise}%
\label{exc.Cat_n}%
\index{ordinals!as categories}
Let's make some definitions, based on the pattern above:
\begin{enumerate}
	\item What is the category $\Cat{1}$? That is, what are its objects and morphisms?
	\item What is the category $\Cat{0}$?
	\item What is the formula for the number of morphisms in $\Cat{n}$ for arbitrary $n\in\NN$?
\qedhere
\end{enumerate}
\end{exercise}

\begin{example}[Natural numbers as a free category]
\label{ex.monoid_nats}%
\index{natural numbers!as free category}
Consider the following graph:
\begin{equation}%
\label{eqn.loop_graph}
\fbox{
\begin{tikzcd}[ampersand replacement=\&]
	\LMO[under]{z}\ar[loop above, "s"]
\end{tikzcd}
}
\end{equation}
It has only one vertex and one arrow, but it has infinitely many paths. Indeed, it
has a unique path of length $n$ for every natural number $n\in\NN$. That is,
$\Set{Path}=\{z, s, (s\cp s), (s\cp s\cp s), \ldots\}$, where we write $z$ for the
length 0 path on $z$; it represents the morphism $\id_z$. There is a one-to-one correspondence between $\Set{Path}$
and the natural numbers, $\NN=\{0,1,2,3,\ldots\}$.

This is an example of a category with one object. A category with one object is called a
\emph{monoid}%
\index{monoid}, a notion we first discussed in \cref{ex.monoid}. There we said that a monoid is a tuple $(M,*,e)$ where $*\colon M\times M\to M$ is a function and $e\in M$ is an element, and $m*1=m=1*m$ and $(m*n)*p=m*(n*p)$.%
\index{monoid!as one-object category}

The two notions may superficially look different, but it is easy to describe the connection. Given a category $\cat{C}$ with one object, say $\bullet$, let $M\coloneqq\cat{C}(\bullet,\bullet)$, let $e=\id_{\bullet}$, and let $*\colon\cat{C}(\bullet,\bullet)\times\cat{C}(\bullet,\bullet)\to\cat{C}(\bullet,\bullet)$ be the composition operation $*=\cp$. The associativity and unitality requirements for the monoid will be satisfied because $\cat{C}$ is a category.%
\index{associativity}%
\index{unitality}
\end{example}

\begin{exercise}%
\label{exc.nat_comp}
In \cref{ex.monoid_nats} we identified the paths of the loop graph \eqref{eqn.loop_graph} with numbers $n\in\NN$. Paths can be concatenated. Given numbers $m,n\in\NN$, what number corresponds to the concatenation of their associated paths?
\end{exercise}

% Subsubsection %
\subsection{Presenting categories via path equations}
\label{subsec.presenting_cats}
\index{category!presentation of}

So for any graph $G$, there is a free category on $G$. But we don't have to stop
there: we can add equations between paths in the graph, and still get a
category. We are only allowed to equate two paths $p$ and $q$ when they are \emph{parallel}, meaning they have the same source vertex and the same target vertex.

A finite graph with path equations is called a \emph{finite presentation} for a
category, and the category that results is known as a \emph{finitely-presented category}.
Here are two examples:
\[
\mathrm{Free\_square}\coloneqq\boxCD{
\begin{tikzcd}[ampersand replacement=\&]
	\LMO{A}\ar[r, "f"]\ar[d, "g"']\&\LMO{B}\ar[d, "h"]\\
	\LMO[under]{C}\ar[r, "i"']\&\LMO[under]{D}
\end{tikzcd}
  \\~\\\footnotesize
  \textit{no equations}
}
\hspace{.8in}
\mathrm{Comm\_square}\coloneqq
\boxCD{
\begin{tikzcd}[ampersand replacement=\&]
	\LMO{A}\ar[r, "f"]\ar[d, "g"']\&\LMO{B}\ar[d, "h"]\\
	\LMO[under]{C}\ar[r, "i"']\&\LMO[under]{D}
\end{tikzcd}
  \\~\\\footnotesize
  $f\cp h=g\cp i$
}
\]
Both of these are presentations of categories: in the left-hand one, there are no equations so it presents a free category, as discussed in \cref{subsubsec.path_cats}. The free square category has ten morphisms, because every path is a unique morphism.

\begin{exercise}%
\label{exc.label_free_square}~
\begin{enumerate}
	\item Write down the ten paths in the free square category above.
	\item Name two different paths that are parallel.
	\item Name two different paths that are not parallel.
	\qedhere
\end{enumerate}
\end{exercise}

On the other hand, the category presented on the right has only nine morphisms,
because $f\cp h$ and $g\cp i$ are made equal. This category is called the
``commutative square.''%
\index{commutative square} Its morphisms are
\[
\{A, B, C, D, f, g, h, i, f\cp h\}
\]
One might say ``the missing one is $g\cp i$,'' but that is not quite right: $g\cp i$ is there too, because it is equal to $f\cp h$. As usual, $A$ denotes $\id_A$, etc.

\begin{exercise}%
\label{exc.cat_gens_rels}
Write down all the morphisms in the category presented by the following diagram:
\[
\boxCD{
\begin{tikzcd}[ampersand replacement=\&]
	\LMO{A}\ar[r, "f"]\ar[d, "g"']\ar[dr, "j" description]\&\LMO{B}\ar[d, "h"]\\
	\LMO[under]{C}\ar[r, "i"']\&\LMO[under]{D}
\end{tikzcd}
\\~\\\footnotesize
  $f\cp h=j=g\cp i$
}
\]
\end{exercise}

\begin{example} %
\label{ex.group_of_order_2}%
\index{group}
We should also be aware that enforcing an equation between two morphisms often
implies additional equations. Here are two more examples of presentations, in
which this phenomenon occurs:
\[
\cat{C}\coloneqq\boxCD{\begin{tikzcd}[ampersand replacement=\&]
	\LMO[under]{z}\ar[loop above, "s"]
\end{tikzcd}
\\~\\\footnotesize
$s\cp s=z$
}
\hspace{1in}
\cat{D}\coloneqq\boxCD{\begin{tikzcd}[ampersand replacement=\&]
	\LMO[under]{z}\ar[loop above, "s"]
\end{tikzcd}
\\~\\\footnotesize
$s\cp s\cp s\cp s=s\cp s$
}
\]
In $\cat{C}$ we have the equation $s\cp s = z$. But this implies $s \cp s \cp s
= z \cp s = s$! And similarly we have $s \cp s \cp s \cp s = z \cp z = z$. The
set of morphisms in $\cat{C}$ is in fact merely $\{z,s\}$, with composition
described by $s\cp s= z \cp z =z$, and $z \cp s = s \cp z =s$. In group theory, one would speak of a group called $\zz/2\zz$.%
\index{presentation!of monoid}
\end{example}

\begin{exercise}%
\label{exc.group2}
Write down all the morphisms in the category $\cat{D}$ from \cref{ex.group_of_order_2}.
\end{exercise}

\begin{remark}%
\label{rem.db_schemas_are_cats}%
\index{database schema!as category presentation}
We can now see that the schemas in \cref{sec.C2_motivation}, e.g.\ \cref{eqn.free_schema,eqn.mySchema} are finite presentations of categories. We will come back to this idea in \cref{sec.cat_fun_nt_db}.
\end{remark}

% Subsubsection %
\subsection{Preorders and free categories: two ends of a spectrum}%
\label{subsubsec.pos_free_spectrum}

Now that we have used graphs to depict preorders in \cref{chap.preorders} and categories above, one may want
to know the relationship between these two uses. The main idea we want to explain now is that
\begin{quote}
``A preorder is a category where every two parallel arrows are the same.''
\end{quote}
Thus any preorder can be regarded as a category, and any category can be somehow ``crushed down'' into a preorder. Let's discuss these ideas.

\paragraph{Preorders as categories.}%
\index{preorder!as category}

Suppose $(P,\leq)$ is a preorder. It specifies a category $\cat{P}$ as follows. The
objects of $\cat{P}$ are precisely the elements of $P$; that is,
$\Ob(\cat{P})=P$. As for morphisms, $\cat{P}$ has exactly one morphism $p\to q$ if
$p\leq q$ and no morphisms $p\to q$ if $p\not\leq q$. The fact that $\leq$ is reflexive ensures that every object has an identity, and the fact that $\leq$ is transitive ensures that morphisms can be composed. We call $\cat{P}$ the \emph{category corresponding to the preorder $(P,\leq)$}.%
\index{reflexivity!as identity in a preorder}%
\index{transitivity!as composition in a preorder}

In fact, a Hasse diagram for a preorder can be thought of a presentation of
a category where, for all vertices $p$ and $q$, every two paths from $p\to q$ are
declared equal. For example, in \cref{eqn.parts_of_3} we saw a Hasse diagram
that was like the graph on the left:
\[
\boxCD{
\begin{tikzcd}[row sep=15pt, ampersand replacement=\&]
	\&\bullet\\
	\bullet\ar[ur]\&\bullet\ar[u]\&\bullet\ar[ul]\\
	\&\bullet\ar[lu]\ar[u]\ar[ru]
\end{tikzcd}
  \\~\\\footnotesize
}
\hspace{.6in}
\boxCD[red]{
\begin{tikzcd}[row sep=15pt, ampersand replacement=\&]
	\&\bullet\\
	\bullet\ar[ur,"d"]\&\bullet\ar[u,"e"]\&\bullet\ar[ul,"f"']\\
	\&\bullet\ar[lu, "a"]\ar[u,"b"]\ar[ru,"c"']
\end{tikzcd}
  \\~\\\footnotesize
  \textit{\color{red}no equations?}
}
\hspace{.6in}
\boxCD{
\begin{tikzcd}[row sep=15pt, ampersand replacement=\&]
	\&\bullet\\
	\bullet\ar[ur,"d"]\&\bullet\ar[u,"e"]\&\bullet\ar[ul,"f"']\\
	\&\bullet\ar[lu, "a"]\ar[u,"b"]\ar[ru,"c"']
\end{tikzcd}
\\~\\\footnotesize
  $a\cp d = b\cp e = c\cp f$
}
\]
The Hasse diagram (left) might look the most like the free category presentation
(middle) which has no equations, but that is not correct. The free category has
three morphisms (paths) from bottom object to top object, whereas preorders are
categories with \emph{at most one} morphism between two given objects. Instead,
the diagram on the right, with these paths from bottom to top made equal, is the
correct presentation for the preorder on the left.

\begin{exercise}%
\label{exc.graph_to_preorder}
What equations would you need to add to the graphs below in order to present the associated preorders?
\[
G_1=\boxCD{
\begin{tikzcd}[ampersand replacement=\&, column sep=20pt]
	\bullet\ar[r, shift left, "f"]\ar[r, shift right, "g"']\&\bullet
\end{tikzcd}
}
\hspace{.4in}
G_2=\boxCD{
\begin{tikzcd}[ampersand replacement=\&]
	\bullet\ar[loop above, "f"]
\end{tikzcd}
}
\hspace{.4in}
G_3=\boxCD{
\begin{tikzcd}[ampersand replacement=\&, column sep=20pt]
	\bullet\ar[r, "f"]\ar[d, "g"']\&\bullet\ar[d, "h"]\\
	\bullet\ar[r, "i"']\&\bullet
\end{tikzcd}
}
\hspace{.4in}
G_4=\boxCD{
\begin{tikzcd}[ampersand replacement=\&, column sep=20pt]
	\bullet\ar[r, "f"]\ar[d, "g"']\&\bullet\ar[d, "h"]\\
	\bullet\&\bullet
\end{tikzcd}
}
\qedhere
\]
\end{exercise}

\paragraph{The preorder reflection of a category.}%
\index{category!preorder reflection of}

Given any category $\cat{C}$, one can obtain a preorder $(C,\leq)$ from it by destroying the distinction between any two parallel morphisms. That is, let $C\coloneqq\Ob(\cat{C})$, and put $c_1\leq c_2$ iff $\cat{C}(c_1,c_2)\neq\varnothing$. If there is one, or two, or fifty, or infinitely many morphisms $c_1\to c_2$ in $\cat{C}$, the preorder reflection does not see the difference. But it does see the difference between some morphisms and no morphisms.%
\index{morphism!inequality as mere existence of}

\begin{exercise}%
\label{exc.preorder_refl_N}
What is the preorder reflection of the category $\NN$ from \cref{ex.monoid_nats}?
\end{exercise}

We have only discussed adjoint functors between preorders, but soon we will discuss adjoints in general. Here is a statement you might not understand exactly, but it's true; you can ask a category theory expert about it and they should be able to explain it to you:
\begin{quote}
Considering a preorder as a category is right adjoint to turning a category into a preorder by preorder reflection.
\end{quote}%
\index{adjunction!examples of}

\begin{remark}[Ends of a spectrum]
The main point of this subsection is that both preorders and free categories are specified by a graph without path equations, but they denote opposite ends of a spectrum. In both cases, the vertices of the graph become the objects of a category and the paths become morphisms. But in the case of free categories, there are no equations so each path becomes a different morphism. In the case of preorders, all parallel paths become the same morphism. Every category presentation, i.e.\ graph with some equations, lies somewhere in between the free category (no equations) and its preorder reflection (all possible equations).
\end{remark}

% Subsubsection %
\subsection{Important categories in mathematics} %
\label{subsec.important_cats}
We have been talking about category presentations, but there are categories that are best understood directly, not by way of presentations. Recall the definition of category from \cref{def.category}. The most important category in mathematics is the category of sets.

\begin{definition}%
\label{def.category_of_sets}%
\index{sets!category of}%
\index{category!of sets}
The \emph{category of sets}, denoted $\smset$, is defined as follows.
\begin{enumerate}[label=(\roman*)]
	\item $\Ob(\smset)$ is the collection of all sets.
	\item If $S$ and $T$ are sets, then $\smset(S,T)=\{f\colon S\to T\mid f\text{ is a function}\}$.
	\item For each set $S$, the identity morphism is the function $\id_S\colon S\to S$ given by $\id_S(s)\coloneqq s$ for each $s\in S$.
	\item Given $f\colon S\to T$ and $g\colon T\to U$, their composite is the function $f\cp g\colon S\to U$ given by $(f\cp g)(s)\coloneqq g(f(s))$.
\end{enumerate}
These definitions satisfy the unitality and associativity conditions, so $\smset$ is indeed a category.%
\index{associativity!of function composition}%
\index{unitality!of identity functions}
\end{definition}

Closely related is the category $\finset$. This is the category whose
objects are finite sets and whose morphisms are functions between them.
\index{category!of finite sets}

\begin{exercise}%
\label{exc.exponential_practice}
	Let $\ord{2}=\{1,2\}$ and $\ord{3}=\{1,2,3\}$. These are objects in the
	category $\smset$ discussed in \cref{def.category_of_sets}. Write down all the elements of the set $\smset(\ord{2},\ord{3})$; there should be nine.
\end{exercise}

\begin{remark} %
\label{rem.cats_and_vcats1}%
\index{enriched category!vs category}
You may have wondered what categories have to do with
$\cat{V}$-categories (\cref{def.cat_enriched_mpos}); perhaps you think the
definitions hardly look alike. Despite the term `enriched category',
$\cat{V}$-categories are not categories with extra structure. While some sorts
of $\cat{V}$-categories, such as $\Bool$-categories, i.e.\ preorders, can naturally be
seen as categories, other sorts, such as $\Cost$-categories, cannot.

The reason for the importance of $\smset$ is that, if we generalize the
definition of enriched category (\cref{def.cat_enriched_mpos}), we find that
categories in the sense of \cref{def.category} are exactly $\smset$-categories---so categories are
$\cat{V}$-categories for a very special choice of $\cat{V}$. We'll come back to
this in \cref{subsec.SMC_enrichment}.  For now, we simply remark that just like
a deep understanding of the category $\Cost$---for example, knowing that it is a
quantale---yields insight into Lawvere metric spaces, so the study of $\smset$
yields insights into categories. 
\end{remark}

There are many other categories that mathematicians care about:
\begin{itemize}%
\index{category!examples of}
	\item $\Cat{Top}$: the category of topological spaces (neighborhood)
	\item $\Cat{Grph}$: the category of graphs (connection)
	\item $\Cat{Meas}$: the category of measure spaces (amount)
	\item $\Cat{Mon}$: the category of monoids (action)
	\item $\Cat{Grp}$: the category of groups (reversible action, symmetry)
	\item $\Cat{Cat}$: the category of categories (action in context, structure)
\end{itemize}
But in fact, this does not at all do justice to the diversity of categories mathematicians
think about. They work with whatever category they find fits their purpose at
the time, like `the category of connected Riemannian manifolds of dimension at most
4'.

Here is one more source of examples: take any category you already have and reverse all its morphisms; the result is again a category.
\begin{example}%
\label{def.opposite_cat}%
\index{category!opposite of}%
\index{opposite!category|see {category, opposite}}
Let $\cat{C}$ be a category. Its \emph{opposite}, denoted $\cat{C}\op$, is the category with the same objects, $\Ob(\cat{C}\op)\coloneqq\Ob(\cat{C})$, and for any two objects $c,d\in\Ob(\cat{C})$, one has $\cat{C}\op(c,d)\coloneqq\cat{C}(d,c)$. Identities and composition are as in $\cat{C}$.
\end{example}

% Subsubsection %
\subsection{Isomorphisms in a category}%
\index{isomorphism|(}

The previous sections have all been about examples of categories: free categories, presented categories, and important categories in math. In this section, we briefly switch gears and talk about an important concept in category theory, namely the concept of isomorphism.

In a category, there is often the idea that two objects are interchangeable. For
example, in the category $\smset$, one can exchange the set $\{\blacksquare,\square\}$ for the set $\{0,1\}$
and everything will be the same, other than the names for the elements. Similarly, if one has a preorder with elements $a,b$, such that $a \le b$ and $b \le a$, i.e.\ $a\cong b$, then $a$ and $b$ are essentially the same. How so? Well they act the same, in that for any other object $c$, we know that $c \le a$ iff $c
\le b$, and $c\ge a$ iff $c\geq b$. The notion of isomorphism formalizes this notion of interchangeability.

\begin{definition}%
\index{isomorphism}
An \emph{isomorphism} is a morphism $f\colon A \to B$ such that there exists a
morphism $g\colon B \to A$ satisfying $f\cp g=\id_A$ and $g\cp f=\id_B$. In this case
we call $f$ and $g$ \emph{inverses}, and we often write $g=f\inv$, or
equivalently $f=g\inv$. We also say that $A$ and $B$ are \emph{isomorphic}
objects.
\end{definition}%
\index{morphism!invertible}

\begin{example}%
\label{ex.simple_iso}
The set $A\coloneqq\{a,b,c\}$ and the set $\ord{3}=\{1,2,3\}$ are isomorphic; that is, there exists an isomorphism $f\colon A\to \ord{3}$ given by $f(a)=2$, $f(b)=1$, $f(c)=3$. The isomorphisms in the category $\smset$ are the bijections.%
\index{isomorphism!bijection as}
\end{example}

Recall that the cardinality of a finite set is the number of elements in it. This can be understood in terms of isomorphisms in $\finset$. Namely, for any finite set $A\in\finset$, its cardinality is the number $n\in\nn$ such that there exists an isomorphism $A\cong\ord{n}$.%
\index{cardinality!and isomorphisms} Georg Cantor defined the cardinality of any set $X$ to be its isomorphism class, meaning the equivalence class consisting of all sets that are isomorphic to $X$. 

\begin{exercise}%
\label{exc.iso_practice}
\begin{enumerate}
	\item What is the inverse $f\inv\colon\ord{3}\to A$ of the function $f$ given in \cref{ex.simple_iso}?
	\item How many distinct isomorphisms are there $A\to\ord{3}$?
	\qedhere
\end{enumerate}
\end{exercise}

\begin{exercise}%
\label{exc.id_iso}
Show that in any given category $\cat{C}$, for any given object $c\in\cat{C}$, the identity $\id_c$ is an isomorphism.
\end{exercise}

\begin{exercise}%
\label{exc.monoid_group}%
\index{group}%
\index{monoid!group as}
Recall Examples \ref{ex.monoid_nats} and \ref{ex.group_of_order_2}. A monoid in
which every morphism is an isomorphism is known as a \emph{group}. 
\begin{enumerate}
  \item Is the monoid in \cref{ex.monoid_nats} a group?
  \item What about the monoid $\cat{C}$ in \cref{ex.group_of_order_2}?
  \qedhere
\end{enumerate}
\end{exercise}

\begin{exercise}%
\index{free!category}%
\label{exc.iso_free_cat}
Let $G$ be a graph, and let $\free(G)$ be the corresponding free category. Somebody tells you that the only isomorphisms in $\free(G)$ are the identity morphisms. Is that person correct? Why or why not?
\end{exercise}

\begin{example}
In this example, we will see that it is possible for $g$ and $f$ to be almost---but not quite---inverses, in a certain sense.

Consider the functions $f\colon\ord{2}\to\ord{3}$ and $g\colon\ord{3}\to\ord{2}$ drawn below:
\[
\begin{tikzpicture}[y=.35cm, short=-2pt]
	\node[label={[above=-5pt, font=\tiny]:$1$}] (A1) {$\bullet$};
	\node[below=1 of A1, label={[above=-5pt, font=\tiny]:$2$}] (A2) {$\bullet$};
	\node[ellipse, draw, inner sep=0pt, fit=(A1) (A2)] (A) {};
	\node[above right=0 and 1 of A1, label={[above=-5pt, font=\tiny]:$1$}] (B1) {$\bullet$};
	\node[below=1 of B1, label={[above=-5pt, font=\tiny]:$2$}] (B2) {$\bullet$};
	\node[below=1 of B2, label={[above=-5pt, font=\tiny]:$3$}] (B3) {$\bullet$};
	\node[ellipse, draw, inner sep=0pt, fit=(B1) (B3)] (B) {};
	\node[right=3 of B1, label={[above=-5pt, font=\tiny]:$1$}] (C1) {$\bullet$};
	\node[below=1 of C1, label={[above=-5pt, font=\tiny]:$2$}] (C2) {$\bullet$};
	\node[below=1 of C2, label={[above=-5pt, font=\tiny]:$3$}] (C3) {$\bullet$};
	\node[ellipse, draw, inner sep=0pt, fit=(C1) (C3)] (C) {};
	\node[right=6 of A1, label={[above=-5pt, font=\tiny]:$1$}] (D1) {$\bullet$};
	\node[below=1 of D1, label={[above=-5pt, font=\tiny]:$2$}] (D2) {$\bullet$};
	\node[ellipse, draw, inner sep=0pt, fit=(D1) (D2)] (D) {};
	\draw[mapsto] (A1) to (B1);
	\draw[mapsto] (A2) to (B3);
	\draw[mapsto] (C1) to (D1);
	\draw[mapsto] (C2) to (D1);
	\draw[mapsto] (C3) to (D2);
\end{tikzpicture}
\]
Then the reader should be able to instantly check that $f\cp g=\id_{\ord{2}}$ but
$g\cp f\neq\id_{\ord{3}}$. Thus $f$ and $g$ are not inverses and hence not
isomorphisms. We won't need this terminology, but category theorists would say that $f$ and $g$ form a \emph{retraction}.%
\index{retraction}
\end{example}

\index{isomorphism|)}%
\index{category|)}

%-------- Section --------%
\section{Functors, natural transformations, and databases}%
\label{sec.cat_fun_nt_db}

In \cref{sec.C2_motivation} we showed some database schemas: graphs with path equations. Then in \cref{subsec.presenting_cats} we said that graphs with path equations correspond to finitely-presented categories. Now we want to explain what the data in a database is, as a way to introduce functors. To do so, we begin by noticing that sets and functions---the objects and morphisms in the category $\smset$---can be captured by particularly simple databases.

%---- Subsection ----%
\subsection{Sets and functions as databases}
\index{database!schema|see {database schema}}%
\index{category!as database schema}%
\index{schema|see database schema}

The first observation is that any set can be understood as a table with only one column: the ID column.
\[
\begin{tabular}{ c |}
  \textbf{Planet of Sol}\\\hline
  Mercury\\
	Venus\\
	Earth\\
	Mars\\
	Jupiter\\
	Saturn\\
	Uranus\\
	Neptune
\end{tabular}
\hspace{.7in}
\begin{tabular}{ c |}
  \textbf{Prime number}\\\hline
  2\\
	3\\
	5\\
	7\\
	11\\
	13\\
	17\\
	$\vdots$
\end{tabular}
\hspace{.7in}
\begin{tabular}{ c |}
  \textbf{Flying pig}\\\hline
  ~\\
	~\\
	~\\
	~\\
	~\\
	~\\
	~\\
	~
\end{tabular}
\]
Rather than put the elements of the set between braces, e.g.\ $\{2,3,5,7,11,\ldots\}$, we write them down as rows in a table.

In databases, single-column tables are often called controlled vocabularies, or master data. Now to be honest, we can only write out every single entry in a table when its set of rows is finite. A database practitioner might find the idea of our prime number table a bit unrealistic. But we're mathematicians, so since the idea makes perfect sense abstractly, we will continue to think of sets as one-column tables.

The above databases have schemas consisting of just one vertex:
\[
\fbox{
\begin{tikzcd}[ampersand replacement=\&, column sep=50pt]
	\LTO{Planet of Sol}
\end{tikzcd}
}
\hspace{1in}
\fbox{
\begin{tikzcd}[ampersand replacement=\&, column sep=50pt]
	\LTO{Prime number}
\end{tikzcd}
}
\hspace{1in}
\fbox{
\begin{tikzcd}[ampersand replacement=\&, column sep=50pt]
	\LTO{Flying pig}
\end{tikzcd}
}
\]
Obviously, there's really not much difference between these schemas, other than the label of the unique vertex. So we could say ``sets are databases whose schema consists of a single vertex.'' Let's move on to functions.

A function $f\colon A\to B$ can almost be depicted as a two-column table
\[
\begin{tabular}{ c | c}
  \textbf{Beatle}&\textbf{Played}\\\hline
  George&Lead guitar\\
  John&Rhythm guitar\\
  Paul&Bass guitar\\
  Ringo&Drums
\end{tabular}
\]
except it is unclear whether the elements of the right-hand column exhaust all of $B$. What if there are rock-and-roll instruments out there that none of the Beatles played? So a function $f\colon A\to B$ requires two tables, one for $A$ and its $f$ column, and one for $B$:
\[
\begin{tabular}{ c | c}
  \textbf{Beatle}&\textbf{Played}\\\hline
  George&Lead guitar\\
  John&Rhythm guitar\\
  Paul&Bass guitar\\
  Ringo&Drums\\
  ~
\end{tabular}
\hspace{1in}
\begin{tabular}{ c |}
  \textbf{Rock-and-roll instrument}\\\hline
  Bass guitar\\
  Drums\\
  Keyboard\\
  Lead guitar\\
  Rhythm guitar
 \end{tabular}
\]
Thus the database schema for any function is just a labeled version of $\Cat{2}$:%
\index{function!as database instance}
\[
\fbox{
\begin{tikzcd}[ampersand replacement=\&, column sep=50pt]
	\LTO{Beatle}\ar[r, "\text{Played}"]\&\LTO{\parbox{.7in}{\centering Rock-and-roll\\\vspace{-.1in}instrument}}
\end{tikzcd}
}
\]
The lesson is that an instance of a database takes a presentation of a category,
and turns every vertex into a set, and every arrow into a function. As such, it
describes a map from the presented category to the category $\smset$. In
\cref{subsec.enriched_functors} we saw that maps of $\cat{V}$-categories are
known as $\cat{V}$-functors. Similarly, we call maps of plain old categories,
functors.

%---- Subsection ----%
\subsection{Functors}%
\label{sec.functors}%
\index{functor|(}

A functor is a mapping between categories. It sends objects to objects and morphisms to morphisms, all while preserving identities and composition. Here is the formal definition.

\begin{definition}%
\index{functor}
Let $\cat{C}$ and $\cat{D}$ be categories. To specify a \emph{functor from $\cat{C}$ to $\cat{D}$}, denoted $F\colon\cat{C}\to\cat{D}$, 
\begin{enumerate}[label=(\roman*)]
	\item for every object $c\in\Ob(\cat{C})$, one specifies an object $F(c)\in\Ob(\cat{D})$;
	\item for every morphism $f\colon c_1\to c_2$ in $\cat{C}$, one specifies a morphism $F(f)\colon F(c_1)\to F(c_2)$ in $\cat{D}$.
\end{enumerate}
The above constituents must satisfy two properties:
\begin{enumerate}[label=(\alph*)]
	\item for every object $c\in\Ob(\cat{C})$, we have $F(\id_c)=\id_{F(c)}$.
	\item for every three objects $c_1,c_2,c_3\in\Ob(\cat{C})$ and two morphisms $f\in\cat{C}(c_1,c_2)$, $g\in\cat{C}(c_2,c_3)$, the equation $F(f\cp g)=F(f)\cp F(g)$ holds in $\cat{D}$.
\end{enumerate}
\end{definition}

\begin{example}
For example, here we draw three functors $F\colon\Cat{2}\to\Cat{3}$:
\[
\begin{tikzpicture}[x=.7in, y=.25in, inner sep=5pt, short=0pt]
	\foreach \i in {0,1,2}{
  	\node (A\i-n0) at (3*\i,-1) {$\LMO{m_0}$};
  	\node (A\i-n1) at (3*\i,-3) {$\LMO[under]{m_1}$};
  	\draw[->] (A\i-n0) to node[left=-2pt, font=\scriptsize] {$f_1$} (A\i-n1);
  	\node[draw, inner ysep=1pt, fit=(A\i-n0) (A\i-n1)] (A\i) {};
  	\node (B\i-n0) at (3*\i+1,0) {$\LMO{n_0}$};
  	\node (B\i-n1) at (3*\i+1,-2) {$\LMO{n_1}$};
  	\node (B\i-n2) at (3*\i+1,-4) {$\LMO{n_2}$};
  	\draw[->] (B\i-n0) to node[right=-2pt, font=\scriptsize] {$g_1$} (B\i-n1);
  	\draw[->] (B\i-n1) to node[right=-2pt, font=\scriptsize] {$g_2$} (B\i-n2);
  	\node[draw, inner ysep=1pt, fit=(B\i-n0) (B\i-n2)] (B\i) {};
	}
	\begin{scope}[mapsto]
  	\draw (A0-n0) -- (B0-n0);
  	\draw (A0-n1) -- (B0-n0);
		\draw (A1-n0) -- (B1-n0);
  	\draw (A1-n1) -- (B1-n1);
		\draw (A2-n0) -- (B2-n0);
  	\draw (A2-n1) -- (B2-n2);
	\end{scope}
\end{tikzpicture}
\]
In each case, the dotted arrows show what the functor $F$ does to the vertices in $\Cat{2}$; once that information is specified, it turns out---in this special case---that what $F$ does to the three paths in $\Cat{2}$ is completely determined. In the left-hand diagram, $F$ sends every path to the trivial path, i.e.\ the identity on $n_0$. In the middle diagram $F(m_0)=n_0$, $F(f_1)=g_1$, and $F(m_1)=n_1$. In the right-hand diagram, $F(m_0)=n_0$, $F(m_1)=n_2$, and $F(f_1)=g_1\cp g_2$.%
\index{trivial path}
\end{example}

\begin{exercise}%
\label{exc.all_functors}
Above we wrote down three functors $\Cat{2}\to\Cat{3}$. Find and write down all
the remaining functors $\Cat{2}\to\Cat{3}$.
\end{exercise}

\begin{example}%
\index{commutative square}%
\label{ex.free_comm_sq}
Recall the categories presented by $\mathrm{Free\_square}$ and
$\mathrm{Comm\_square}$ in \cref{subsec.presenting_cats}. Here they are again,
with $'$ added to the labels in $\mathrm{Free\_square}$ to help distinguish
them:
\[
\mathrm{Free\_square}\coloneqq\boxCD{
\begin{tikzcd}[ampersand replacement=\&]
	\LMO{A'}\ar[r, "f'"]\ar[d, "g'"']\&\LMO{B'}\ar[d, "h'"]\\
	\LMO[under]{C'}\ar[r, "i'"']\&\LMO[under]{D'}
\end{tikzcd}
  \\~\\\footnotesize
  \textit{no equations}
}
\hspace{.4in}
\mathrm{Comm\_square}\coloneqq\boxCD{
\begin{tikzcd}[ampersand replacement=\&]
	\LMO{A}\ar[r, "f"]\ar[d, "g"']\&\LMO{B}\ar[d, "h"]\\
	\LMO[under]{C}\ar[r, "i"']\&\LMO[under]{D}
\end{tikzcd}
  \\~\\\footnotesize
  $f\cp h=g\cp i$
}
\]
There are lots of functors from the free square category (let's call it
$\cat{F}$) to the commutative square category (let's call it $\cat{C}$). 

However, there is exactly one functor $F\colon\cat{F}\to\cat{C}$ that
sends $A'$ to $A$, $B'$ to $B$, $C'$ to $C$, and $D'$ to $D$. That is, once we
have made this decision about how $F$ acts on objects, each of the ten paths in $\cat{F}$ is forced to go to a certain path in $\cat{C}$: the one with the right source and target.
\end{example}

\begin{exercise}%
\label{exc.functor_on_morphisms}
Say where each of the ten morphisms in $\cat{F}$ is sent under the functor $F$ from \cref{ex.free_comm_sq}.
\end{exercise}

All of our example functors so far have been completely determined by what they do on objects, but this is usually not the case.

\begin{exercise}%
\label{exc.functors_morphisms_practice}
Consider the free categories $\cat{C}=\fbox{$\bullet\to\bullet$}$ and $\cat{D}=\fbox{$\bullet\tto\bullet$}$. Give two functors $F,G\colon\cat{C}\to\cat{D}$ that act the same on objects but differently on morphisms.
\end{exercise}

\begin{example}
There are also lots of functors from the commutative square category $\cat{C}$ to the free square category $\cat{F}$, but \emph{none} that sends $A$ to $A'$, $B$ to $B'$, $C$ to $C'$, and $D$ to $D'$. The reason is that if $F$ were such a functor, then since $f\cp h=g\cp i$ in $\cat{C}$, we would have $F(f\cp h)=F(g\cp i)$, but then the rules of functors would let us reason as follows:
\[f'\cp h'=F(f)\cp F(h)=F(f\cp h)=F(g\cp i)=F(g)\cp F(i)=g'\cp i'\]
The resulting equation, $f'\cp h'=g'\cp i'$ does not hold in $\cat{F}$ because it is a free category (there are ``no equations''): every two paths are considered different morphisms. Thus our proposed $F$ is not a functor.
\end{example}

\begin{example}[Functors between preorders are monotone maps]
\label{ex.preorder_functor}%
\index{monotone map!as functor}
Recall from \cref{subsubsec.pos_free_spectrum} that preorders are categories with
at most one morphism between any two objects. A functor between preorders is
exactly a monotone map.

For example, consider the preorder $(\NN,\leq)$ considered as a category $\cat{N}$ with objects $\Ob(\cat{N})=\NN$ and a unique morphism $m\to n$ iff $m\leq n$. A functor $F\colon\cat{N}\to\cat{N}$ sends each object $n\in\NN$ to an object $F(n)\in\NN$. It must send morphisms in $\cat{N}$ to morphisms in $\NN$. This means if there is a morphism $m\to n$ then there had better be a morphism $F(m)\to F(n)$. In other words, if $m\leq n$, then we had better have $F(m)\leq F(n)$. But as long as $m\leq n$ implies $F(m)\leq F(n)$, we have a functor.

Thus a functor $F\colon\cat{N}\to\cat{N}$ and a monotone map $\NN\to\NN$ are the same thing.
\end{example}

\begin{exercise}[The category of categories]%
\label{exc.cat_of_cats}%
\index{primordial ooze}%
\index{identity!functor}%
\index{category of categories|see {category, of categories}}%
\index{category!of categories}
Back in the primordial ooze, there is a category $\Cat{Cat}$ in which \emph{the
objects are themselves categories}. Your task here is to construct this
category.
\begin{enumerate}
  \item Given any category $\cat{C}$, show that there exists a functor $\id_{\cat{C}}\colon
  \cat{C} \to \cat{C}$, known as the \emph{identity functor on $\cat{C}$}, that
  maps each object to itself and each morphism to itself.
\end{enumerate}
Note that a functor $\cat{C} \to \cat{D}$ consists of a function from $\Ob(\cat{C})$ to $\Ob(\cat{D})$ and for each pair of objects $c_1,c_2 \in \cat{C}$ a function from $\cat{C}(c_1,c_2)$ to $\cat{D}(F(c_1),F(c_2))$. 
\begin{enumerate}[resume]
  \item Show that given
  $F\colon \cat{C} \to \cat{D}$ and $G\colon \cat{D} \to \cat{E}$, we can define a
  new functor $(F\cp G)\colon \cat{C} \to \cat{E}$ just by composing functions. 
  \item Show that there is a category, call it $\Cat{Cat}$, where the objects are categories, morphisms
  are functors, and identities and composition are given as above.
  \qedhere
\end{enumerate}
\end{exercise}

\index{functor|)}

%---- Subsection ----%
\subsection{Database instances as $\smset$-valued functors}%
\index{database!instance|(}

Let $\cat{C}$ be a category, and recall the category $\smset$ from \cref{def.category_of_sets}. A functor $F\colon\cat{C}\to\smset$ is known as a \emph{set-valued functor} on $\cat{C}$. Much of database theory (not how to make them fast, but what they are and what you do with them) can be cast in this light.

Indeed, we already saw in \cref{rem.db_schemas_are_cats} that any database schema can be regarded as (presenting) a 
category $\cat{C}$. The next thing to notice is that the data itself---any
instance of the database---is given by a set-valued functor
$I\colon\cat{C}\to\smset$. The only additional detail is that for any white
node, such as $c=\LTO[\circ]{string}$, we want to force $I$ to map to the set of
strings. We suppress this detail in the following definition.

\begin{definition}%
\label{rdef.instance}%
\index{category!finitely presented|see {presentation of}}%
\index{database schema}%
\index{category!presentation of}
Let $\cat{C}$ be a schema, i.e.\ a finitely-presented category. A
\emph{$\cat{C}$-instance} is a functor
$I\colon\cat{C}\to\smset$.%
\tablefootnote{Warning: a $\cat{C}$-instance is a state of the database ``at an instant in time.'' The term ``instance'' should not be confused with its usage in object oriented programming, which would correspond more to what we call a row $r\in I(c)$.}
\end{definition}

\begin{exercise}%
\label{ex.set_1}
Let $\Cat{1}$ denote the category with one object, called 1, one identity morphism $\id_1$, and no other morphisms. For any functor $F\colon\Cat{1}\to\smset$ one can extract a set $F(1)$. Show that for any set $S$, there is a functor $F_S\colon\Cat{1}\to\smset$ such that $F_S(1)=S$.
\end{exercise}

The above exercise reaffirms that the set of planets, the set of prime numbers, and the set of flying pigs are all set-valued functors---instances---on the schema $\Cat{1}$. Similarly, set-valued functors on the category $\Cat{2}$ are functions. All our examples so far are for the situation where the schema is a free category (no equations). Let's try an example of a category that is not free.

\begin{example}
Consider the following category:
\begin{equation}%
\label{eqn.idempotent}
\cat{C}\coloneqq\boxCD{\begin{tikzcd}[ampersand replacement=\&]
	\LMO[under]{z}\ar[loop above, "s"]
\end{tikzcd}
\\~\\\footnotesize
$s\cp s=s$
}
\end{equation}
What is a set-valued functor $F\colon\cat{C}\to\smset$? It will consist of a set $Z\coloneqq F(z)$ and a function $S\coloneqq F(s)\colon Z\to Z$, subject to the requirement that $S\cp S=S$. Here are some examples
\begin{itemize}
	\item $Z$ is the set of US citizens, and $S$ sends each citizen to her
	or his president. The president's president is her- or him-self.
	\item $Z=\NN$ is the set of natural numbers and $S$ sends each number to $0$. In particular, 0 goes to itself.
	\item $Z$ is the set of all well-formed arithmetic expressions, such as $13+(2*4)$ or $-5$, that one can write using integers and the symbols $+,-,*,(,)$. The function $S$ evaluates the expression to return an integer, which is itself a well-formed expression. The evaluation of an integer is itself.
	\item $Z=\NN_{\geq 2}$, and $S$ sends $n$ to its smallest prime factor. The smallest prime factor of a prime is itself.
\end{itemize}
\[
\begin{array}{ c | c}
  \NN_{\geq2}&\textbf{\small smallest prime factor}\\\hline
	2&2\\
	3&3\\
	4&2\\
  \vdots&\vdots\\
  49&7\\
  50&2\\
  51&3\\
  \vdots&\vdots
\end{array}
\qedhere
\]
\end{example}

\begin{exercise}%
\label{exc.schema_sense}
Above, we thought of the sort of data that would make sense for the schema \eqref{eqn.idempotent}. Give an example of the sort of data that would make sense for the following schemas:\qquad
\begin{enumerate*}[itemjoin=\hspace{1in}]
\item \boxCD{\begin{tikzcd}[ampersand replacement=\&]
	\LMO[under]{z}\ar[loop above, "s"]
\end{tikzcd}
\\~\\\footnotesize
$s\cp s=z$
}
\item
\boxCD{\begin{tikzcd}[ampersand replacement=\&]
	\LMO{a}\ar[r, "f"]\&\LMO{b}\ar[r, shift left, "g"]\ar[r, shift right, "h"']\&\LMO{c}
\end{tikzcd}
\\~\\\footnotesize
$f\cp g=f\cp h$
}
\qedhere
\end{enumerate*}
\end{exercise}

The main idea is this: a database schema is a category, and an instance on that schema---the data itself---is a set-valued functor. All the constraints, or business rules, are ensured by the rules of functors, namely that functors preserve composition.%
\footnote{One can put more complex constraints, called \emph{embedded dependencies}, on a database; these correspond category theoretically to what are called ``lifting problems'' in category theory.%
\index{database!constraints}%
\index{lifting problems} See \cite{Spivak:2014c} for more on this.}

\index{database!instance|)}

%---- Subsection ----%
\subsection{Natural transformations}%
\index{natural transformation|(}

If $\cat{C}$ is a schema---i.e.\ a finitely-presented category---then there are many database instances on it, which we can organize into a category. But this is part of a larger story, namely that of natural transformations. An abstract picture to have in mind is this:
\[
\begin{tikzcd}[column sep=large]
	\cat{C} 
	\ar[r, bend left, "F"{name=up}]
	\ar[r, bend right, "G"'{name=down}]
	&\cat{D}.
        \ar[from=up, to=down, Rightarrow, shorten <=5pt,
	shorten >=5pt,"\scriptstyle\alpha"]
\end{tikzcd}
\]

\begin{definition}%
\label{def.natural_transformation}
Let $\cat{C}$ and $\cat{D}$ be categories, and let $F,G\colon\cat{C}\to\cat{D}$
be functors. To specify a \emph{natural transformation} $\alpha\colon F\Rightarrow G$,
\begin{enumerate}[label=(\roman*)]
	\item for each object $c\in\cat{C}$, one specifies a morphism $\alpha_c\colon F(c)\to G(c)$ in $\cat{D}$, called the \emph{$c$-component of $\alpha$}.%
\index{natural transformation!component of}
\end{enumerate}
These components must satisfy the following, called the \emph{naturality condition}:
\begin{enumerate}[label=(\alph*)]
	\item for every morphism $f\colon c\to d$ in $\cat{C}$, the following equation must hold:
\[F(f)\cp\alpha_d=\alpha_c\cp G(f).\]	
\end{enumerate}

A natural transformation $\alpha\colon F\to G$ is called a \emph{natural isomorphism} if each component $\alpha_c$ is an isomorphism in $\cat{D}$.
\end{definition}
\index{natural transformation!naturality condition}%
\index{commutative diagram}%
\index{commutative
square}%
\index{diagram!commutative}

The naturality condition can also be written as a so-called \emph{commutative diagram}. A
diagram in a category is drawn as a graph whose vertices and arrows are labeled by objects and morphisms in the category. For example, here is a diagram that's relevant to the naturality condition in \cref{def.natural_transformation}:
\begin{equation}%
\label{eqn.naturality_condition}
\begin{tikzcd}
	F(c)\ar[r, "\alpha_c"]\ar[d, "F(f)"']&G(c)\ar[d, "G(f)"]\\
	F(d)\ar[r, "\alpha_d"']&G(d)
\end{tikzcd}
\end{equation}

\begin{definition}%
\label{def.diagram_commutes}%
\index{diagram!as functor}%
\index{functor!diagram as}%
\index{category!indexing}
A \emph{diagram} $D$ in $\cat{C}$ is a functor
$D\colon\cat{J} \to \cat{C}$ from any category $\cat{J}$, called the \emph{indexing category} of the diagram $D$. We say that $D$ \emph{commutes} if $D(f)=D(f')$ holds for every parallel pair of morphisms $f,f'\colon a \to b$ in $\cat{J}$.%
\tablefootnote{We could package this formally by saying that $D$ commutes iff it factors through the preorder reflection of $\cat{J}$.}
\end{definition}

In terms of \cref{eqn.naturality_condition}, the only case of two parallel morphisms is that of $F(c)\tto G(d)$, so to say that the diagram commutes is to say that $F(f)\cp\alpha_d=\alpha_c\cp G(f)$. This is exactly the naturality
condition from \cref{def.natural_transformation}.

\begin{example}
A representative picture is as follows:
\[
\begin{tikzpicture}[x=.7in, y=.3in, inner sep=5pt,short=0pt]
	\node (1) at (0,0) {$\LMO{1}$};
	\node (2) at (1,0) {$\LMO{2}$};
	\draw[->] (1) to node[above=-2pt, font=\scriptsize] {$f$} (2);
  	\node[draw, fit=(1) (2)] (box) {};
		\node[left=0 of box] {$\cat{C}\coloneqq$};
  	\node (a) at (2.8,0) {$\LMO{u}$};
  	\node[blue] (b) at (3.5,1) {$\LMO{v}$};
  	\node[blue] (c) at (4.5,1) {$\LMO{w}$};
  	\node[red] (d) at (3.5,-1) {$\LMO{x}$};
  	\node[red] (e) at (4.5,-1) {$\LMO{y}$};
  	\node (f) at (5.2,0) {$\LMO{z}$};
  	\draw[->] (a) to node[above, font=\scriptsize] {$a$} (b);
  	\draw[->] (a) to node[below, font=\scriptsize] {$b$} (d);
  	\draw[->,blue] (b) to node[below=-2pt, font=\scriptsize] {$d$} (c);
  	\draw[->,green!50!black] (b) to node[right=-2pt, font=\scriptsize] {$c$} (d);
  	\draw[->,red] (d) to node[above=-2pt, font=\scriptsize] {$e$} (e);
  	\draw[->,green!50!black] (c) to node[left=-2pt, font=\scriptsize] {$g$} (e);
  	\draw[->] (c) to node[above, font=\scriptsize] {$h$} (f);
  	\draw[->] (e) to node[below, font=\scriptsize] {$k$} (f);	
  	\node[draw, fit=(a) (b) (c) (d) (e) (f)] (box) {};
		\node[right=0 of box] {$=:\cat{D}$};
	\begin{scope}[mapsto, thick, bend left=25]
  	\draw (1) to node[below=-1pt] {$F$} (b);
  	\draw (2) to (c);
	\end{scope}
	\begin{scope}[densely dotted, thick, ->, red, bend right=25]
  	\draw (1) to node[above=-2pt] {$G$} (d);
  	\draw (2) to (e);
	\end{scope}
\end{tikzpicture}
\]
We have depicted, in blue and red respectively, two functors $F,G
\colon \cat{C} \to \cat{D}$. A natural transformation $\alpha\colon F
\Rightarrow G$ is given by choosing components $\alpha_1\colon v\to x$ and
$\alpha_2\colon w\to y$. We have highlighted the only choice for each in green;
namely, $\alpha_1=c$ and $\alpha_2=g$.

The key point is that the functors $F$ and $G$ are ways of viewing the category
$\cat{C}$ as lying inside the category $\cat{D}$. The natural transformation
$\alpha$, then, is a way of relating these two views using the morphisms in
$\cat{D}$. Does this help you to see and appreciate the notation
$
\begin{tikzcd}[column sep=large]
	\cat{C}\ar[r, bend left=18pt, "F"]\ar[r, bend right=18pt, "G"']\ar[r, phantom, "\scriptstyle\alpha\!\Downarrow\;"]&
	\cat{D}?
\end{tikzcd}
$
\end{example}%
\index{natural transformation!component of}

\begin{example}%
\label{ex.1_inst}
We said in \cref{ex.set_1} that a functor $\Cat{1}\to\smset$ can be identified with a set. So suppose $A$ and $B$ are sets considered as functors $A,B\colon\Cat{1}\to\smset$. A natural transformation between these functors is just a function between the sets.
\end{example}

\begin{definition}%
\label{def.functor_cat}%
\index{category!of functors}
Let $\cat{C}$ and $\cat{D}$ be categories. We denote by $\cat{D}^{\cat{C}}$ the category whose objects are functors $F\colon\cat{C}\to\cat{D}$ and whose morphisms $\cat{D}^{\cat{C}}(F,G)$ are the natural transformations $\alpha\colon F\to G$. This category $\cat{D}^\cat{C}$ is called the \emph{functor category}, or the \emph{category of functors from $\cat{C}$ to $\cat{D}$}.
\end{definition}

\begin{exercise}%
\label{exc.exponential_cat}
Let's look more deeply at how $\cat{D}^{\cat{C}}$ is a category.
\begin{enumerate}
	\item Figure out how to compose natural transformations. (Hint: an expert tells you ``for each object $c\in\cat{C}$, compose the $c$-components.'')
	\item Propose an identity natural transformation on any object
	$F\in\cat{D}^\cat{C}$, and check that it is unital (i.e. that it obeys
	condition (a) of \cref{def.category}).%
\index{identity!natural transformation}%
\index{natural transformation!identity}
	\qedhere
\end{enumerate}
\end{exercise}

\begin{example}
In our new language, \cref{ex.1_inst} says that $\smset^{\Cat{1}}$ is equivalent to $\smset$.
\end{example}

\begin{example}
Let $\cat{N}$ denote the category associated to the preorder $(\NN,\leq)$, and recall from \cref{ex.preorder_functor} that we can identify a functor $F\colon\cat{N}\to\cat{N}$ with a non-decreasing sequence $(F_0,F_1,F_2,\ldots)$ of natural numbers, i.e.\ $F_0\leq F_1\leq F_2\leq\cdots$. If $G$ is another functor, considered as a non-decreasing sequence, then what is a natural transformation $\alpha\colon F\to G$?

Since there is at most one morphism between two objects in a preorder, each
component $\alpha_n\colon F_n\to G_n$ has no data, it just tells us a fact: that
$F_n\leq G_n$. And the naturality condition is vacuous: every square in a
preorder commutes. So a natural transformation between $F$ and $G$ exists iff
$F_n\leq G_n$ for each $n$, and any two natural transformations $F\Rightarrow G$
are the same. In other words, the category $\cat{N}^\cat{N}$ is itself a preorder; namely the preorder of monotone maps $\nn\to\nn$.
\end{example}%
\index{natural transformation!between monotone maps}

\begin{exercise}%
\label{exc.true_false_preorder_nt}
Let $\cat{C}$ be an arbitrary category and let $\cat{P}$ be a preorder, thought of as a category. Consider the following statements:
\begin{enumerate}
	\item For any two functors $F,G\colon\cat{C}\to\cat{P}$, there is at most one natural transformation $F\to G$.
	\item For any two functors $F,G\colon\cat{P}\to\cat{C}$, there is at most one natural transformation $F\to G$.
\end{enumerate}
For each, if it is true, say why; if it is false, give a counterexample.
\end{exercise}

\begin{remark} %
\label{rem.preorder_boolcats2}%
\index{equivalence of categories}
Recall that in \cref{rem.preorder_boolcats} we said the category of preorders is
equivalent to the category of $\Bool$-categories. We can now state the precise meaning
of this sentence. First, there exists a category $\Cat{PrO}$ in which the
objects are preorders and the morphisms are monotone maps. Second, there exists a
category $\Bool\textrm{-}\Cat{Cat}$ in which the objects are $\Bool$-categories and the
morphisms are $\Bool$-functors. We call these two categories equivalent because
there exist functors $F\colon \Cat{PrO} \to \Bool\textrm{-}\Cat{Cat}$ and $G\colon
\Bool\textrm{-}\Cat{Cat} \to \Cat{PrO}$ such that there exist natural isomorphisms $F\cp G
\cong \id_{\Cat{PrO}}$ and $G\cp F \cong \id_{\Bool\textrm{-}\Cat{Cat}}$ in the sense of \cref{def.natural_transformation}.
\end{remark}

\index{natural transformation|)}

%---- Subsection ----%
\subsection{The category of instances on a schema} %
\label{subsec.instances_cat}%
\index{database!instance}%
\index{category!of instances on a database schema}

\begin{definition}%
\label{def.instance}%
\index{database!instance homomorphism}
Suppose that $\cat{C}$ is a database schema and $I,J\colon\cat{C}\to\smset$ are database instances. An \emph{instance homomorphism} between them is a natural transformation $\alpha\colon I\to J$. Write $\cat{C}\inst\coloneqq\smset^\cat{C}$ to denote the functor category as defined in \cref{def.functor_cat}.
\end{definition}

We saw in \cref{ex.1_inst} that $\Cat{1}\inst$ is equivalent to the category
$\smset$. In this subsection, we will show that there is a schema whose instances are graphs and whose instance homomorphisms are graph homomorphisms.%
\index{graphs!homomorphism of}

\paragraph{Extended example: the category of graphs as a functor category.}
\index{graph!as $\smset$-valued functor} %
\index{primordial ooze}%
\index{graphs!database schema for|(}

You may find yourself back in the primordial ooze (first discussed in
\cref{subsec.preorders_Bool_enriched}), because while previously we have been
using graphs to present categories, now we obtain graphs themselves as database
instances on a specific schema (which is itself a graph):%
\index{category!of graphs}
\[
\Cat{Gr}\coloneqq\boxCD{
\begin{tikzcd}[ampersand replacement=\&, column sep=50pt]
	\LTO{Arrow}\ar[r, shift left, "\text{source}"]\ar[r, shift right, "\text{target}"']\&\LTO{Vertex}
\end{tikzcd}
  \\~\\\footnotesize
  \textit{no equations}
}
\]
Here's an example $\Cat{Gr}$-instance, i.e.\ set-valued functor $I\colon\Cat{Gr}\to\smset$, in table form:%
\index{functor!set@$\smset$-valued}
\begin{equation}%
\label{eqn.sample_Gr_instance}
\begin{tabular}{ c | c c}
  \textbf{Arrow}&\textbf{source}&\textbf{target}\\\hline
	$a$&1&2\\
	$b$&1&3\\
	$c$&1&3\\
	$d$&2&2\\
	$e$&2&3
\end{tabular}
\hspace{1in}
\begin{tabular}{ c |}
	\textbf{Vertex}\\\hline
	1\\
	2\\
	3\\
	4\\
	~
\end{tabular}
\end{equation}
Here $I(\mathrm{Arrow})=\{a,b,c,d,e\}$, and $I(\mathrm{Vertex})=\{1,2,3,4\}$. One can draw the instance $I$ as a graph:
\[
I=\fbox{
\begin{tikzcd}[ampersand replacement=\&]
	\LMO{1}\ar[r, "a"]\ar[dr, shift left, "b"]\ar[dr, shift right, "c"']\&
	\LMO{2}\ar[d, "e"]\ar[loop right, "d"]\\
	\&\LMO{3}\&\LMO{4}
\end{tikzcd}
}
\]
Every row in the Vertex table is drawn as a vertex, and every row in the Arrow table is drawn as an arrow, connecting its specified source and target. Every possible graph can be written as a database instance on the schema $\Cat{Gr}$, and every possible $\Cat{Gr}$-instance can be represented as a graph.

\begin{exercise}%
\label{exc.graph_instance}
In \cref{eqn.free_schema}, a graph is shown (forget the distinction between white and black nodes). Write down the corresponding $\Cat{Gr}$-instance, as in \cref{eqn.sample_Gr_instance}. (Do not be concerned that you are in the primordial ooze.)
\end{exercise}

Thus the objects in the category $\Cat{Gr}\inst$ are graphs. The morphisms
in $\Cat{Gr}\inst$ are called \emph{graph homomorphisms}.\index{graph!homomorphism} Let's unwind this. Suppose that $G,H\colon\Cat{Gr}\to\smset$ are functors (i.e.\ $\Cat{Gr}$-instances); that is, they are objects $G,H\in\Cat{Gr}\inst$. A morphism $G\to H$ is a natural transformation $\alpha\colon G\to H$ between them; what does that entail?%
\index{natural transformation!graph homomorphism as}

By \cref{def.natural_transformation}, since $\Cat{Gr}$ has two objects, $\alpha$ consists of two components,
\[
  \alpha_{\Set{Vertex}}\colon G(\Set{Vertex})\to H(\Set{Vertex})
  \qquad\text{ and }\qquad
  \alpha_{\Set{Arrow}}\colon G(\Set{Arrow})\to H(\Set{Arrow}),
\]
both of which are morphisms in $\smset$. In other words, $\alpha$ consists of a function from vertices of $G$ to vertices of $H$ and a function from arrows of $G$ to arrows of $H$. For these functions to constitute a graph homomorphism, they must ``respect source and target'' in the precise sense that the naturality condition, \cref{eqn.naturality_condition} holds. That is, for every morphism in $\Cat{Gr}$, namely $\text{source}$ and $\text{target}$, the following diagrams must commute:
\[
\begin{tikzcd}[column sep=large]
	G(\text{Arrow})\ar[r, "\alpha_{\Set{Arrow}}"]\ar[d, "G(\text{source})"']&
	H(\text{Arrow})\ar[d, "H(\text{source})"]\\
	G(\text{Vertex})\ar[r, "\alpha_{\Set{Vertex}}"']&
	H(\text{Vertex})
\end{tikzcd}
\hspace{.8in}
\begin{tikzcd}[column sep=large]
	G(\text{Arrow})\ar[r, "\alpha_{\Set{Arrow}}"]\ar[d, "G(\text{target})"']&
	H(\text{Arrow})\ar[d, "H(\text{target})"]\\
	G(\text{Vertex})\ar[r, "\alpha_{\Set{Vertex}}"']&
	H(\text{Vertex})
\end{tikzcd}
\]
These may look complicated, but they say exactly what we want. We want the functions $\alpha_{\Set{Vertex}}$ and $\alpha_{\Set{Arrow}}$ to respect source and targets in $G$ and $H$. The left diagram says ``start with an arrow in $G$. You can either apply $\alpha$ to the arrow and then take its source in $H$, or you can take its source in $G$ and then apply $\alpha$ to that vertex; either way you get the same answer.'' The right-hand diagram says the same thing about targets.

\begin{example}%
\label{ex.two_graphs_as_instances}
Consider the graphs $G$ and $H$ shown below
\[
G\coloneqq\boxCD{
\begin{tikzcd}[ampersand replacement=\&]
	\LMO{1}\ar[r, "a"]\& \LMO{2}\ar[r, "b"]\& \LMO{3}
\end{tikzcd}
}
\hspace{.7in}
H\coloneqq\boxCD{
\begin{tikzcd}[ampersand replacement=\&]
	\LMO{4}\ar[r, shift right, "c"']\ar[r, shift left, "d"]\& \LMO{5}\ar[loop right, "e"]
\end{tikzcd}
}
\]

Here they are, written as database instances---i.e.\ set-valued functors---on $\Cat{Gr}$:%
\index{functor!set@$\smset$-valued}
\[
\begin{array}{c c c c c}
	G\coloneqq&&
	\begin{array}{c | c c}
		\textbf{Arrow}&\textbf{source}&\textbf{target}\\
		a&1&2\\
		b&2&3\\
		~
	\end{array}
	&&
	\begin{array}{c |}
		\textbf{Vertex}\\
		1\\
		2\\
		3\\
	\end{array}
	\\
	H\coloneqq&
	\begin{array}{c | c c}
		\color{gray}{\textbf{Arrow}}&\color{gray}{\textbf{source}}&\color{gray}{\textbf{target}}\\
		c&4&5\\
		d&4&5\\
		e&5&5
	\end{array}
	&&
	\begin{array}{c |}
		\color{gray}{\textbf{Vertex}}\\
		4\\
		5\\
		~
	\end{array}
\end{array}
\]
The top row is $G$ and the bottom row is $H$. They are offset so you can more easily complete the following exercise.
\end{example}

\begin{exercise}%
\index{graphs !homomorphism of}%
\label{exc.unique_alpha}
We claim that---with $G,H$ as in \cref{ex.two_graphs_as_instances}---there is exactly one graph homomorphism $\alpha\colon G\to H$ such that $\alpha_{\Set{Arrow}}(a)=d$.
\begin{enumerate}
	\item What is the other value of $\alpha_{\Set{Arrow}}$, and what are the three values of $\alpha_{\Set{Vertex}}$?
	\item In your own copy of the tables of \cref{ex.two_graphs_as_instances}, draw $\alpha_{\Set{Arrow}}$ as two lines connecting the cells in the ID column of $G(\Set{Arrow})$ to those in the ID column of $H(\Set{Arrow})$. Similarly, draw $\alpha_{\Set{Vertex}}$ as connecting lines.
	\item Check the source column and target column and make sure that the matches are natural, i.e.\ that ``alpha-then-source equals source-then-alpha'' and similarly for ``target.''
\qedhere
\end{enumerate}
\end{exercise}

%-------- Section --------%
\section{Adjunctions and data migration}%
\label{sec.adjunctions_mig}%
\index{adjunction|(}%
\index{data migration}%
\index{functor!set@$\smset$-valued}

We have talked about how set-valued functors on a schema can be understood as filling that schema with data. But there are also functors between schemas. When the two sorts of functors are composed, data is migrated. This is the simplest form of data migration; more complex ways to migrate data come from using adjoints. All of the above is the subject of this section.

%---- Subsection ----%
\subsection{Pulling back data along a functor}%
\label{subsec.pullback_data}
\index{data migration!pullback}

To begin, we will migrate data between the graph-indexing schema $\Cat{Gr}$ and the loop schema, which we call $\Cat{DDS}$, shown below
\[
\Cat{Gr}\coloneqq\boxCD{
\begin{tikzcd}[ampersand replacement=\&, column sep=50pt]
	\LTO{Arrow}\ar[r, shift left, "\text{source}"]\ar[r, shift right, "\text{target}"']\&\LTO{Vertex}
\end{tikzcd}
  \\~\\\footnotesize
  \textit{no equations}
}
\hspace{1in}
\Cat{DDS}\coloneqq\boxCD{
\begin{tikzcd}[ampersand replacement=\&]
	\LTO{State}\ar[loop below, "\text{next}"]
\end{tikzcd}
\\~\\\footnotesize
\textit{no equations}
}
\]
We begin by writing down a sample instance $I\colon\Cat{DDS}\to\smset$ on this schema:
\begin{equation}%
\label{eqn.state_table}
\begin{array}{c | c}
	\textbf{State}&\textbf{next}\\\hline
	1 & 4\\
	2 & 4\\
	3 & 5\\
	4 & 5\\
	5 & 5\\
	6 & 7\\
	7 & 6
\end{array}
\end{equation}
We call the schema $\Cat{DDS}$ to stand for discrete dynamical system.%
\index{discrete dynamical system} Indeed, we may think of the data in the
$\Cat{DDS}$-instance of \cref{eqn.state_table} as listing the states and
movements of a deterministic machine: at every point in time the machine is in
one of the listed states, and given the machine in one of the states, in the
next instant it moves to a uniquely determined next state.

Our goal is to migrate the data in \cref{eqn.state_table} to data on $\Cat{Gr}$;
this will give us the data of a graph and so allow us to visualise our machine.

We will use a functor connecting these schemas in order to move data between them. The reader can create any functor she likes, but we will use a specific functor $F\colon\Cat{Gr}\to\Cat{DDS}$ to migrate data in a way that makes sense to us, the authors. Here we draw $F$, using colors to hopefully aid understanding:
\[
\begin{tikzpicture}[color=blue]
	\node (Arrow) {$\LTO{Arrow}$};
	\node[below=of Arrow] (Vertex) {$\LTO{Vertex}$};
	\draw[->] 
		($(Arrow.south)+(-3pt,0)$) to node[left, font=\scriptsize] (source) {source} 
		($(Vertex.north)+(-3pt,0)$);
	\draw[->, color=red]
		($(Arrow.south)+(3pt,0)$) to node[right, font=\scriptsize] (target) {target} 
		($(Vertex.north)+(3pt,0)$);
	\node[draw, color=black, fit=(Arrow) (Vertex) (source) (target)] (Gr) {};
	\node[below=0 of Gr, black] {$\Cat{Gr}$};
	\node[right=2 of target] (State) {$\LTO{State}$} edge [loop below, red] node[font=\scriptsize] (next) {next} ();
	\node[draw, color=black, fit=(State) (next)] (DDS) {};
	\node[below=0 of DDS, black] {$\Cat{DDS}$};
	\draw[functor, black] (Gr.east|-target) to node[above, font=\scriptsize] {$F$} (DDS.west|-State);
\end{tikzpicture}
\]
The functor $F$ sends both objects of $\Cat{Gr}$ to the `State' object of
$\Cat{DDS}$ (as it must). On morphisms, it sends the `source' morphism to the identity morphism on `State', and the `target' morphism to the morphism `next'.

A sample database instance on $\Cat{DDS}$ was given in \cref{eqn.state_table};
recall this is a functor $I\colon\Cat{DDS}\to\smset$. So now we have two functors as follows:
\begin{equation}%
\label{eqn.Gr_dds_pb}
\begin{tikzcd}
	\Cat{Gr}\ar[r, "F"]&\Cat{DDS}\ar[r, "I"]&\smset.
\end{tikzcd}
\end{equation}
Objects in $\Cat{Gr}$ are sent by $F$ to objects in $\Cat{DDS}$, which are sent by $I$ to objects in $\smset$, which are sets. Morphisms in $\Cat{Gr}$ are sent by $F$ to morphisms in $\Cat{DDS}$, which are sent by $I$ to morphisms in $\smset$, which are functions. This defines a composite functor $F\cp I\colon\Cat{Gr}\to\smset$. Both $F$ and $I$ respect identities and composition, so $F\cp I$ does too. Thus we have obtained an instance on $\Cat{Gr}$, i.e.\ we have converted our discrete dynamical system from \cref{eqn.state_table} into a graph! What graph is it? %
\index{dynamical system!discrete}

For an instance on $\Cat{Gr}$, we need to fill an Arrow table and a Vertex table. Both of these are sent by $F$ to State, so let's fill both with the rows of State in \cref{eqn.state_table}. Similarly, since $F$ sends `source' to the identity and sends `target' to `next', we obtain the following tables:
\[
\begin{tabular}{ c | c c}
  \textbf{Arrow}&\textbf{source}&\textbf{target}\\\hline
	1&1&4\\
	2&2&4\\
	3&3&5\\
	4&4&5\\
	5&5&5\\
	6&6&7\\
	7&7&6
\end{tabular}
\hspace{1in}
\begin{tabular}{ c |}
	\textbf{Vertex}\\\hline
	1\\
	2\\
	3\\
	4\\
	5\\
	6\\
	7
\end{tabular}
\]
Now that we have a graph, we can draw it.
\[
\boxCD{
\begin{tikzcd}[column sep=20pt, row sep=5, pos=.1, ampersand replacement=\&]
	\&\LMO{1}\ar[dr, "1"']\&\&\LMO{2}\ar[dl,"2"]\\
	\LMO{3}\ar[dr, "3"']\&\&\LMO{4}\ar[dl, "4"]\&\&\LMO{6}\ar[r, bend left, pos=.5, "6"]\&[20pt]\LMO{7}\ar[l, bend left, pos=.5, "7"]\\
	\&\LMO[under]{5}\ar[loop below, looseness=5, pos=.5, "5"]
\end{tikzcd}
}
\]
Each arrow is labeled by its source vertex, as if to say, ``What I do next is determined by what I am now.''

\begin{exercise}%
\label{exc.dds_graph}
Consider the functor $G\colon\Cat{Gr}\to\Cat{DDS}$ given by sending `source' to `next' and sending `target' to the identity on `State'. Migrate the same data, called $I$ in \cref{eqn.state_table}, using the functor $G$. Write down the tables and draw the corresponding graph.
\end{exercise}

We refer to the above procedure---basically just composing functors as in \cref{eqn.Gr_dds_pb}---as ``pulling back data along a functor.'' We just now pulled
back data $I$ along functor $F$.%
\index{pullback!along a map}

\begin{definition}%
\label{def.pullback_along}
Let $\cat{C}$ and $\cat{D}$ be categories and let $F\colon\cat{C}\to\cat{D}$ be
a functor. For any set-valued functor $I\colon\cat{D}\to\smset$, we refer to the
composite functor $F\cp I\colon\cat{C}\to\smset$ as the \emph{pullback of $I$ along $F$}.%
\index{functor!set@$\smset$-valued}%
\index{data migration!pullback}%
\index{functor!data migration|see {data migration}}

Given a natural transformation $\alpha\colon I\Rightarrow J$, there is a natural
transformation $\alpha_F\colon F\cp I\Rightarrow F\cp J$, whose component $(F\cp I)(c)\to (F\cp J)(c)$ for any $c\in\Ob(\cat{C})$ is given by $(\alpha_F)_c\coloneqq\alpha_{Fc}$.
\[
\begin{tikzcd}[column sep=large]
	\cat{C}
	\ar[r, "F"]
	&\cat{D} 
	\ar[r, bend left, "I"{name=up}]
	\ar[r, bend right, "J"'{name=down}]
	&\smset
        \ar[from=up, to=down, Rightarrow, shorten <=5pt,
	shorten >=5pt,"\scriptstyle\alpha"]
\end{tikzcd}
\hspace{1cm}
\leadsto
\hspace{1cm}
\begin{tikzcd}[column sep=large]
	\cat{C}
	\ar[r, bend left, "F\cp I"{name=up, pos=.55}]
	\ar[r, bend right, "F\cp J"'{name=down, pos=.55}]
	&\smset
        \ar[from=up, to=down, Rightarrow, shorten <=5pt,
	shorten >=5pt,"\scriptstyle\alpha_F"]
\end{tikzcd}
\]

This uses the data of $F$ to define a functor
$\Delta_F\colon\cat{D}\inst\to\cat{C}\inst.$
\end{definition}

Note that the term pullback is also used for a certain sort of limit, for more
details see \cref{rem.pullbackpullback}.

\index{graphs!database schema for|)}

%---- Subsection ----%
\subsection{Adjunctions}%
\label{subsec.adjoints_lims_colims}

In \cref{sec.galois_connections} we discussed Galois connections, which are
adjunctions between preorders.  Now that we've defined categories and functors, we
can discuss adjunctions in general. The relevance to databases is that the
data migration functor $\Delta$ from \cref{def.pullback_along} always has two
adjoints of its own: a left adjoint which we denote $\Sigma$ and a right adjoint
which we denote $\Pi$. %
\index{data migration!adjoints}%
\index{adjunction!examples of}

Recall that an adjunction between preorders $P$ and $Q$ is a pair of monotone maps
$f\colon P \to Q$ and $g\colon Q \to P$ that are \emph{almost} inverses: we have 
\begin{equation}%
\label{eqn.preorder_adjunction_revisit}
f(p) \le q \mbox{ if and only if } p \le g(q).
\end{equation}
Recall from \cref{subsubsec.pos_free_spectrum} that in a preorder $P$, a hom-set
$P(a,b)$ has one element when $a \le b$, and no elements otherwise. We can thus rephrase \cref{eqn.preorder_adjunction_revisit} as an isomorphism of sets $Q(f(p),q) \cong P(p,g(q))$: either both are one-element sets or both are 0-element sets. 
This suggests how to define adjunctions in the general case.

\begin{definition}%
\label{def.adjoints}%
\index{adjunction}%
\index{adjoint|see {adjunction}}
Let $\cat C$ and $\cat D$ be categories, and $L\colon \cat C \to \cat D$ and $R
\colon \cat D \to \cat C$ be
functors. We say that \emph{$L$ is left adjoint to $R$} (and that \emph{$R$ is
right adjoint to $L$}) if, for any $c\in\cat{C}$ and $d\in\cat{D}$, there is an isomorphism of hom-sets
\[\alpha_{c,d}\colon\cat{C}(c,R(d))\To{\cong}\cat{D}(L(c),d)\]
that is natural in $c$ and $d$.%
\tablefootnote{This naturality is between functors
  $\cat{C}\op\times\cat{D}\to\smset$. It says that for any morphisms $f\colon
  c'\to c$ in $\cat{C}$ and $g\colon d\to d'$ in $\cat{D}$, the following
  diagram commutes:
\[
\begin{tikzcd}[sep=large, ampersand replacement=\&]
	\cat{C}(c,Rd) \ar[d, "{\cat C(f,Rg)}"'] \ar[r, "\alpha_{c,d}"] \&
	\cat{D}(Lc,d) \ar[d, "{\cat D(Lf,g)}"] \\
	\cat{C}(c',Rd') \ar[r, "\alpha_{c',d'}"'] \&
	\cat{D}(Lc',d')
\end{tikzcd}
\]
}

Given a morphism $f\colon c\to R(d)$ in $\cat{C}$, its image $g\coloneqq\alpha_{c,d}(f)$ is called its \emph{mate}. Similarly, the mate of $g\colon L(c)\to d$ is $f$.

To denote an adjunction we write $L \dashv R$, or in diagrams,
\[
\adj{\cat{C}}{L}{R}{\cat{D}}
\] 
with the $\Rightarrow$ in the direction of the left adjoint. 
\end{definition}

\begin{example}
Recall that every preorder $\cat{P}$ can be regarded as a category. Galois connections between preorders and adjunctions between the corresponding categories are exactly the same thing.
\end{example}

\begin{example}%
\label{ex.currying}%
\index{currying}
Let $B\in\Ob(\smset)$ be any set. There is an adjunction called `currying
$B$,' after the logician Haskell Curry:
\[
\adj{\smset}{-\times B}{(-)^B}{\smset}
\hspace{1in}
\smset(A\times B,C)\cong\smset(A,C^B)
\]
Abstractly we write it as on the left, but what this means is that for any sets
$A,C$, there is a natural isomorphism as on the right.

To explain this, we need to talk about exponential objects in $\smset$. Suppose
that $B$ and $C$ are sets. Then the set of functions $B\to C$ is also a set;
let's denote it $C^B$. It's written this way because if $C$ has $10$ elements and $B$ has $3$ elements then $C^B$ has $10^3$ elements, and more generally for any two finite sets $|C^B|=|C|^{|B|}$.

The idea of currying is that given sets $A$, $B$, and $C$, there is a one-to-one
correspondence between functions $(A\times B)\to C$ and functions $A\to C^B$.
Intuitively, if I have a function $f$ of two variables $a,b$, I can ``put off''
entering the second variable: if you give me just $a$, I'll return a function
$B\to C$ that's waiting for the $B$ input. This is the curried version of $f$. As one might guess, there is a formal connection between exponential objects and what we called hom-elements $b\multimap c$ in \cref{def.monoidal_closed}.
\end{example}

\begin{exercise}%
\label{exc.currying_practice}
In \cref{ex.currying}, we discussed an adjunction between functors $-\times B$
and $(-)^B$. But we only said how these functors worked on objects: for an arbitrary set
$X$, they return sets $X\times B$ and $X^B$ respectively.
\begin{enumerate}
	\item Given a morphism $f\colon X\to Y$, what morphism should $-\times B\colon X\times B\to Y\times B$ return?
	\item Given a morphism $f\colon X\to Y$, what morphism should $(-)^B\colon X^B\to Y^B$ return?
	\item Consider the function $+\colon\NN\times\NN\to\NN$, which sends
	$(a,b)\mapsto a+b$. Currying $+$, we get a certain function
	$p\colon\NN\to\NN^\NN$. What is $p(3)$?
	\qedhere
\end{enumerate}
\end{exercise}

\begin{example}%
\label{ex.adjunctions}%
\index{adjunction!examples of}
If you know some abstract algebra or topology, here are some other examples of adjunctions.
\begin{enumerate}
	\item Free constructions: given any set you get a free group, free monoid, free ring, free vector space, etc.; each of these is a left adjoint. The corresponding right adjoint takes a group, a monoid, a ring, a vector space etc.\ and forgets the algebraic structure to return the underlying set. %
\index{group!free}
\index{monoid!free}
\index{ring!free}
\index{vector space!free}
	
	\item Similarly, given a graph you get a free preorder or a free
	category, as we discussed in \cref{subsubsec.pos_free_spectrum}; each is
	a left adjoint. The corresponding right adjoint is the underlying graph
	of a preorder or of a category.
	\index{preorder!free}
	\index{category!free}

	\item Discrete things: given any set you get a discrete preorder,
	discrete graph, discrete metric space, discrete category, discrete
	topological space; each of
	these is a left adjoint. The corresponding right adjoint is again
	underlying set. 
	\index{preorder!discrete}
	\index{graph!discrete}
	\index{metric space!discrete}
	\index{category!discrete}
	\index{topology!discrete}

	\item Codiscrete things: given any set you get a codiscrete preorder,
	complete graph, codiscrete category, codiscrete topological space; each
	of these is a right adjoint. The corresponding left adjoint is the
	underlying set.
	\index{preorder!codiscrete}
	\index{graph!complete}
	\index{category!codiscrete}
	\index{topology!codiscrete}

	\item Given a group, you can quotient by its commutator subgroup to get an abelian group; this is a left adjoint. The right adjoint is the inclusion of abelian groups into groups.
	\index{group!commutator subgroup}
	\qedhere%
\index{quotient}
\end{enumerate}
\end{example}

% Subsubsection %
\subsection{Left and right pushforward functors, $\Sigma$ and $\Pi$}%
\index{data migration|(}
Given $F\colon\cat{C}\to\cat{D}$, the data migration functor $\Delta_F$ turns
$\cat{D}$-instances into $\cat{C}$-instances. This functor has both a left and a
right adjoint:%
\index{data migration!left pushforward}%
\index{data migration!right pushforward}
\[
\begin{tikzcd}[column sep=70pt]
	\cat{C}\inst\ar[r, shift left=8pt, bend left=8pt, "\Sigma_F"]\ar[r, shift right=8pt, bend right=8pt, "\Pi_F"']
	\ar[r, shift left=7.5pt, Rightarrow, shorten <=22pt,
	shorten >=22pt]
	&
	\cat{D}\inst\ar[l, "\Delta_F" description]
	\ar[l, shift left=7.5pt, Rightarrow, shorten <=22pt,
	shorten >=22pt]
\end{tikzcd}
\]
Using the names $\Sigma$ and $\Pi$ in this context is fairly standard in category theory. In the case of
databases, they have the following helpful mnemonic:
\[
\begin{tabular}{c | p{.8in} p{1.1in} p{1.5in}}
	\textbf{Migration Functor}&\textbf{Pronounced}&\textbf{Reminiscent of}&\textbf{Database idea}\\\hline
	$\Delta$ & Delta & Duplicate\qquad~ or destroy&Duplicate or destroy tables or columns\\
	$\Sigma$ & Sigma & Sum & Union (sum up) data\\\\
	$\Pi$ & Pi & Product & Pair%
\footnote{This is more commonly called ``join''
	by database programmers.} and query data 
\end{tabular}%
\index{union!and data migration}
\]

Just like we used $\Delta_F$ to pull back any discrete dynamical system along $F\colon\Cat{Gr}\to\Cat{DDS}$ and get a graph,
the migration functors $\Sigma_F$ and $\Pi_F$ can be used to turn any graph into a
discrete dynamical system. That is, given an instance
$J\colon\Cat{Gr}\to\smset$, we can get instances $\Sigma_F(J)$ and $\Pi_F(J)$ on $\Cat{DDS}$. This, however, is quite
technical, and we leave it to the adventurous reader to compute an example, with
help perhaps from \cite{Spivak:2014a}, which explores the definitions of
$\Sigma$ and $\Pi$ in detail. A less technical shortcut is simply to code up the
computation in the open-source FQL software.%
\index{discrete dynamical system!induced graph of}%
\index{dynamical system!discrete|see {discrete dynamical system}}

To get the basic idea across without getting mired in technical details, here we
shall instead discuss a very simple example. Recall the schemas from
\cref{eqn.airline_schemas}. We can set up a functor between them, the one sending black dots to black dots and white dots to white dots:
\[
\begin{tikzpicture}[commutative diagrams/every diagram, inner sep=10pt]
	\matrix[matrix of math nodes, name=A, row sep=20pt, commutative diagrams/every cell] {
		&
			|(AD)|\LTO[\circ]{\$}
		\\
			|(AE)|\LTO{Economy}
		&&
			|(AF)|\LTO{First Class}
		\\
		&
			|(AS)|\LTO[\circ]{string}
		\\
	};
	\path[commutative diagrams/.cd, every arrow, every label, font=\scriptsize]
		(AE) edge["Price"]     (AD)
				 edge["Position"'] (AS)
		(AF) edge["Price"']    (AD)
				 edge["Position"]  (AS);
	\node[draw, inner ysep=0pt, fit=(AE) (AD) (AF) (AS)] (A box) {};
	\node[left=0 of A box] {$A\coloneqq$};
	\matrix[matrix of math nodes, name=B, row sep=20pt, commutative diagrams/every cell, matrix anchor=south, right=3 of A.south east] {
			|(BD)|\LTO[\circ]{\$}
		\\
			|(BAS)|\LTO{Airline Seat}
		\\
			|(BS)|\LTO[\circ]{string}
		\\
	};
	\path[commutative diagrams/.cd, every arrow, every label, font=\scriptsize]
		(BAS) edge["Price"']    (BD)
				 edge["Position"] (BS);
	\node[draw, inner ysep=0pt, fit=(BAS) (BD) (BS)] (B box) {};				
	\node[right=0 of B box] {$=:B$};
	\draw[functor] (A box.east) to node[above, font=\scriptsize] {$F$} (B box.west);
\end{tikzpicture}
\]
With this functor $F$ in hand, we can transform any $B$-instance into an
$A$-instance using $\Delta_F$. Whereas $\Delta$ was interesting in the case of
turning discrete dynamical systems into graphs in \cref{subsec.pullback_data},
it is not very interesting in this case. Indeed, it will just copy---$\Delta$
for duplicate---the rows in Airline seat into both Economy and First Class. 

$\Delta_F$ has two adjoints, $\Sigma_F$ and $\Pi_F$, both of which transform
any $A$-instance $I$ into a $B$-instance. The functor $\Sigma_F$ does what one
would most expect from reading the names on each object: it will put into Airline Seat the union of Economy and First Class:
\[\Sigma_F(I)(\mathrm{Airline\ Seat})=I(\mathrm{Economy})\sqcup I(\mathrm{First\ Class}).\]

The functor $\Pi_F$ puts into Airline Seat the set of those pairs $(e,f)$ where
$e$ is an Economy seat, $f$ is a First Class seat, and $e$ and $f$ have the same
price and position. In this particular example, one imagines that there should
be no such seats in a valid instance $I$, in which case
$\Pi_F(I)(\mathrm{Airline\ Seat})$ would be empty. But in other uses of these
same schemas, $\Pi_F$ can be a useful operation. For example, in the schema $A$
replace the label `Economy' by `Rewards Program', and in $B$ replace `Airline
Seat' by `First Class Seats'. Then the operation $\Pi_F$ finds those first class
seats that are also rewards program seats. This operation is a kind of database
query; querying is the operation that databases are built
for.%
\index{database!query}

The moral is that complex data migrations can be specified by constructing
functors $F$ between schemas and using the ``induced'' functors $\Delta_F$, $\Sigma_F$, and $\Pi_F$.
Indeed, in practice essentially all useful migrations can be built up from these. Hence the language of categories provides a framework for specifying
and reasoning about data migrations.

% Subsubsection %
\subsection{Single set summaries of databases}%
\index{summaries!limits and colimits as}

To give a stronger idea of the flavor of $\Sigma$ and $\Pi$, we consider another
special case, namely where the target category $\cat{D}$ is equal to $\Cat{1}$; see \cref{exc.Cat_n}.
In this case, there is exactly one functor $\cat{C}\to\Cat{1}$ for any
$\cat{C}$; let's denote it
\begin{equation}%
\label{eqn.terminal_functor}
	!\colon\cat{C}\to\Cat{1}.
\end{equation}

\begin{exercise} %
\label{ex.terminal_cat}
Describe the functor $!\colon \cat{C} \to \Cat{1}$ from \cref{eqn.terminal_functor}. Where does it send each
object? What about each morphism?
\end{exercise}

We want to consider the data migration functors
$\Sigma_!\colon\cat{C}\inst\to\Cat{1}\inst$ and
$\Pi_!\colon\cat{C}\inst\to\Cat{1}\inst$. In \cref{ex.1_inst}, we saw that an
instance on $\Cat{1}$ is the same thing as a set. So let's identify
$\Cat{1}\inst$ with $\smset$, and hence discuss
\[
	\Sigma_!\colon\cat{C}\inst\to\smset
	\qquad\text{and}\qquad
	\Pi_!\colon\cat{C}\inst\to\smset.
\]
Given any schema $\cat{C}$ and instance $I\colon\cat{C}\to\smset$, we will get
sets $\Sigma_!(I)$ and $\Pi_!(I)$. Thinking of these sets as database instances,
each corresponds to a single one-column table---a controlled
vocabulary---summarizing an entire database instance on the schema $\cat{C}$.

Consider the following schema
\begin{equation}%
\label{eqn.graph_email}
\cat{G}\coloneqq\boxCD{
\begin{tikzcd}[ampersand replacement=\&, column sep=60pt]
	\LTO{Email}\ar[r, shift left, "\text{sent\_by}"]\ar[r, shift right, "\text{received\_by}"']\&\LTO{Address}
\end{tikzcd}
  \\~\\\footnotesize
  \textit{no equations}
}
\end{equation}
Here's a sample instance $I\colon\cat{G}\to\smset$:
\[
\begin{tabular}{ c | c c}
  \textbf{Email}&\textbf{sent\_by}&\textbf{received\_by}\\\hline
	Em\_1 & Bob & Grace\\
	Em\_2 & Grace & Pat\\
	Em\_3 & Bob & Emmy\\
	Em\_4 & Sue & Doug\\
	Em\_5 & Doug & Sue\\
	Em\_6 & Bob & Bob
\end{tabular}
\hspace{1in}
\begin{tabular}{ c |}
	\textbf{Address}\\\hline
	Bob\\
	Doug\\
	Emmy\\
	Grace\\
	Pat\\
	Sue
\end{tabular}
\] 

\begin{exercise}%
\label{exc.draw_graph}
Note that $\cat{G}$ from \cref{eqn.graph_email} is isomorphic to the schema $\Cat{Gr}$. In
\cref{subsec.instances_cat} we saw that instances on $\Cat{Gr}$ are graphs. Draw
the above instance $I$ as a graph.
\end{exercise}

%Of course, $\cat{G}$ is isomorphic to the schema $\Cat{Gr}$, so we can draw this instance as a graph:
%\[
%\boxCD{
%\begin{tikzcd}[ampersand replacement=\&]
%	\LTO{Bob}\ar[loop above, "6"]\ar[r, "1"]\ar[d, "3"']\&\LTO{Grace}\ar[r, "2"]\&\LTO{Pat}\\
%	\LTO{Emmy}\&\LTO{Sue}\ar[r, shift left, "4"]\&\LTO{Doug}\ar[l, shift left, "5"]
%\end{tikzcd}
%}
%\]

Now we have a unique functor $!\colon\cat{G}\to\Cat{1}$, and we want to say what $\Sigma_!(I)$ and $\Pi_!(I)$ give us as single-set summaries. First, $\Sigma_!(I)$ tells us all the emailing groups---the ``connected components''---in $I$:
\[
\begin{tabular}{ c |}
	\textbf{1}\\\hline
	Bob-Grace-Pat-Emmy\\
	Sue-Doug
\end{tabular}
\]
This form of summary, involving identifying entries into common groups, or
quotients, is typical of $\Sigma$-operations. \index{quotient!as data migration}

The functor $\Pi_!(I)$ lists the emails from $I$ which were sent from a person to
her- or him-self.
\[
\begin{tabular}{ c |}
	\textbf{1}\\\hline
	Em\_6
\end{tabular}
\]
This is again a sort of query, selecting the entries that fit the criterion of
self-to-self emails. Again, this is typical of $\Pi$-operations.

Where do these facts---that $\Pi_!$ and $\Sigma_!$ act the way we said---come
from? Everything follows from the definition of adjoint functors
(\ref{def.adjoints}): indeed we hope this, together with the examples given in \cref{ex.adjunctions}, give the reader some idea of how general and useful adjunctions are, both in mathematics and in database theory.

One more point: while we will not spell out the details, we note that these
operations are also examples of constructions known as colimits and limits in
$\smset$. We end this chapter with bonus material, exploring these key category theoretic
constructions. The reader should keep in mind that, in general and not just for
functors to $\Cat{1}$, $\Sigma$-operations are built from colimits in $\smset$,
and $\Pi$-operations are built from limits in $\smset$.

\index{adjunction|)}

%-------- Section --------%
\section{Bonus: An introduction to limits and colimits}%
\label{sec.bonus_lims_colims}%
\index{limit|(}

What do products of sets, the results of $\Pi_!$-operations on database instances, and meets in a preorder all have in
common? The answer, as we shall see, is that they are all examples of limits. Similarly, disjoint unions of sets, the results of $\Sigma_!$-operations on database instances, and joins in a preorder are all colimits. Let's begin with limits.%
\index{union!disjoint}

Recall that $\Pi_!$ takes a database instance $I\colon \cat{C} \to \smset$ and turns it
into a set $\Pi_!(I)$. More generally, a limit turns a functor $F\colon \cat{C} \to
\cat{D}$ into an object of $\cat{D}$. 

%---- Subsection ----%
\subsection{Terminal objects and products} %
\label{subsec.terminals_products}

Terminal objects and products are each a sort of limit. Let's discuss them in turn.

\paragraph{Terminal objects.}%
\index{limit!terminal object as}%
\index{terminal object!as limit}
The most basic limit is a terminal object.
\begin{definition}%
\index{terminal object}
Let $\cat{C}$ be a category. Then an object $Z$ in $\cat{C}$ is a \emph{terminal
object} if, for each object $C$ of $\cat{C}$, there exists a unique morphism
$!\colon C \to Z$.
\end{definition}

Since this \emph{unique} morphism exists \emph{for all} objects in $\cat{C}$, we
say that terminal objects have a \emph{universal property}.%
\index{universal property}%
\index{terminal object!universal property of}

\begin{example}
In $\smset$, any set with exactly one element is a terminal object. Why?
Consider some such set $\{\bullet\}$. Then for any other set $C$ we need to
check that there is exactly one function $!\colon C \to \{\bullet\}$. This
unique function is the one that does the only thing that can be done: it maps
each element $c \in C$ to the element $\bullet \in \{\bullet\}$.
\end{example}

\begin{exercise}%
\label{exc.terminal_in_preorder}
Let $(P,\le)$ be a preorder, let $z\in P$ be an element, and let $\cat{P}$ be the corresponding category (see \cref{subsubsec.pos_free_spectrum}). Show that $z$ is a terminal object in $\cat{P}$ if and
only if it is a \emph{top element} in $P$: that is, if and only if for all $c \in P$ we have $c \le
z$.
\end{exercise}%
\index{top element|see {terminal object}}

\begin{exercise}%
\label{exc.terminal_cat}
Name a terminal object in the category $\Cat{Cat}$. (Hint: recall
\cref{ex.terminal_cat}.)
\end{exercise}

\begin{exercise}%
\label{exc.cat_wo_terminal}
Not every category has a terminal object. Find one that doesn't.
\end{exercise}

\begin{proposition}
All terminal objects in a category $\cat{C}$ are isomorphic.
\end{proposition}
\begin{proof}
This is a simple, but powerful standard argument. Suppose $Z$ and $Z'$ are both
terminal objects in some category $\cat{C}$. Then there exist (unique) maps
$a\colon Z \to Z'$ and $b\colon Z' \to Z$. Composing these, we get a map $a\cp
b\colon Z \to Z$. Now since $Z$ is terminal, this map $Z \to Z$ must be unique.
But $\id_Z$ is also such a map. So we must have $a\cp b = \id_Z$.  Similarly, we
find that $b \cp a= \id_{Z'}$. Thus $a$ is an isomorphism, with inverse $b$.
\end{proof}

\begin{remark}[``The limit'' vs.\ ``a
  limit'']%
\label{rem.the_vs_a}%
\index{universal property}%
\index{unique up to unique isomorphism}
Not only are all terminal objects isomorphic, there is a unique isomorphism between any two. We hence say ``terminal objects are unique up to unique isomorphism.'' To a category theorist, this is
very nearly the same thing as saying ``all terminal objects are equal.'' Thus we often abuse terminology and talk of `the' terminal object, rather than ``a'' terminal
object. We will do the same for any sort of limit or colimit, e.g.\ speak of ``the product'' of two sets, rather than ``a product.'' We saw a similar phenomenon in \cref{def.meets_joins}.
\end{remark}

\paragraph{Products.} Products are slightly more complicated to formalize than terminal objects, but they are familiar in practice.

\begin{definition}%
\index{product!in a category}%
\index{limit!product as}
Let $\cat{C}$ be a category, and let $X,Y$ be objects in $\cat{C}$. A
\emph{product} of $X$ and $Y$ is an object, denoted $X\times Y$, together with
morphisms $p_X\colon X\times Y \to X$ and $p_Y\colon X \times Y \to Y$ such that
for all objects $C$ together with morphisms $f\colon C \to X$ and $g\colon C \to
Y$, there exists a unique morphism $C \to X \times Y$, denoted $\pair{f,g}$, for which the following diagram commutes:
\[
\begin{tikzcd}[row sep=small]
& C \ar[dl,"f",swap] \ar[dr, "g"] \ar[dd,"\pair{f,g}",dotted]\\
X && Y \\
& X\times Y \ar[ul, "p_X"] \ar[ur,"p_Y",swap]
\end{tikzcd}
\]
\end{definition}

We will try to bring this down to earth in \cref{ex.product_in_set}. Before we do, note that $X\times Y$ is an object equipped with morphisms to $X$ and $Y$. Roughly speaking, it is like ``the best object-equipped-with-morphisms-to-$X$-and-$Y$'' in all of $\cat{C}$, in the sense that any other object-equipped-with-morphisms-to-$X$-and-$Y$ maps to it uniquely. This is called a \emph{universal property}. It's customary to use a dotted line
to indicate the unique morphism that exists because of some universal property.

\begin{example}%
\label{ex.product_in_set}%
\index{product!of sets}
In $\smset$, a product of two sets $X$ and $Y$ is their usual cartesian product
\[
X \times Y\coloneqq \{(x,y) \mid x \in X, y \in Y\},
\]
which comes with two projections $p_X\colon X\times Y\to X$ and $p_Y\colon X\times Y\to Y$, given by $p_X(x,y)\coloneqq x$ and $p_Y(x,y)\coloneqq y$.

Given any set $C$ with functions $f\colon C \to X$ and $g\colon C \to Y$, the
unique map from $C$ to $X\times Y$ such that the required diagram commutes is
given by $\langle f,g\rangle(c)\coloneqq (f(c),g(c))$.

Here is a picture of the product $\ord{6} \times \ord{4}$ of sets $\ord{6}$ and
$\ord{4}$.
\[
\begin{tikzpicture}[y=.9cm, rounded corners, shorten <=2pt, shorten >=2pt, baseline=(allf)]
	\foreach \i in {1,...,6}{
		\foreach \j in {1,...,4} {
			\node (ij\i\j) at (\i,\j) {$\LMO{(\i,\j)}$};
			\ifthenelse{\i=1}{\node (j\j) at (-1,\j) {$\LMO{\j}$};}{};
		}
		\node (i\i) at (\i,-1) {$\LMO{\i}$};
	}
	\node[draw, inner ysep=0pt, fit=(i1) (i6)] (i) {};
	\node[draw, inner ysep=0pt, fit=(j1) (j4)] (j) {};
	\node[draw, inner ysep=0pt, fit=(ij11) (ij64)] (ij) {};
	\draw[->, thick] (ij) -- (i);
	\draw[->, thick] (ij) -- (j);
	\begin{scope}[gray]
  	\node (X) at (8,6) {$C$};
  	\draw[->, bend left=10pt] (X) to node[right=5pt] (allf) {$\forall f$} (i.north east);
  	\draw[->, bend right=10pt] (X) to node[above=5pt] {$\forall g$} (j.north east);
  	\draw[->] (X) to node[fill=white] {$\exists!$}(ij.north east);
	\end{scope}
\end{tikzpicture}
\qedhere
\]
\end{example}

\begin{exercise}%
\index{meet}%
\label{exc.meet_product}%
\index{product!meet as}
Let $(P,\le)$ be a preorder, let $x,y \in P$ be elements, and let $\cat{P}$ be the corresponding category. Show that the product $x\times y$ in $\cat{P}$ agrees with their meet $x\wedge y$ in $P$.
\end{exercise}

\begin{example}%
\index{product!of categories}
Given two categories $\cat{C}$ and $\cat{D}$, their product $\cat{C} \times
\cat{D}$ may be given as follows. The objects of this category are pairs
$(c,d)$, where $c$ is an object of $\cat{C}$ and $d$ is an object of $\cat{D}$.
Similarly, morphisms $(c,d) \to (c',d')$ are pairs $(f,g)$ where $f\colon c \to
c'$ is a morphism in $\cat{C}$ and $g\colon d \to d'$ is a morphism in
$\cat{D}$. Composition of morphisms is simply given by composing each entry in
the pair separately, so $(f,g)\cp (f',g')=(f\cp f',g\cp g')$.
\end{example}

\begin{exercise}%
\label{exc.product_cats}
\begin{enumerate}
  \item What are the identity morphisms in a product category $\cat{C}\times\cat{D}$?
  \item Why is composition in a product category associative?%
\index{associativity!in product category}
  \item What is the product category $\Cat{1} \times \Cat{2}$?
  \item What is the product category $\cat{P} \times \cat{Q}$ when $P$ and $Q$ are preorders and $\cat{P}$ and $\cat{Q}$ are the corresponding categories?
\qedhere
\end{enumerate}
\end{exercise}

These two constructions, terminal objects and products, are subsumed by the
notion of limit.

%---- Subsection ----%
\subsection{Limits}

We'll get a little abstract. Consider the definition of product. This says that
given any pair of maps $X \xleftarrow{f} C \xrightarrow{g} Y$, there exists a
unique map $C \to X\times Y$ such that certain diagrams commute. This has the
flavor of being terminal---there is a unique map to $X\times Y$---but it seems a bit more
complicated. How are the two ideas related?

It turns out that products \emph{are} terminal objects, but of a different
category, which we'll call $\Cat{Cone}(X,Y)$, \emph{the category of cones
over $X$ and $Y$ in $\cat{C}$}. We will see in \cref{exc.prod_as_term_cone} that $X \xleftarrow{p_X} X\times Y\xrightarrow{p_Y} Y$ is a
terminal object in $\Cat{Cone}(X,Y)$.%
\index{category!of cones}

An object of $\Cat{Cone}(X,Y)$ is simply a pair of maps $X \xleftarrow{f} C
\xrightarrow{g} Y$. A morphism from $X \xleftarrow{f} C
\xrightarrow{g} Y$ to $X \xleftarrow{f'} C' \xrightarrow{g'} Y$ in $\Cat{Cone}(X,Y)$
is a morphism $a\colon C \to C'$ in $\cat{C}$ such that the following diagram commutes:
\[
\begin{tikzcd}[row sep=small]
& C \ar[dl,"f",swap] \ar[dr, "g"] \ar[dd,"a"]\\
X && Y \\
& C' \ar[ul, "f'"] \ar[ur,"g'",swap]
\end{tikzcd}
\]

\begin{exercise}%
\label{exc.prod_as_term_cone}
Check that a product $X \xleftarrow{p_X} X\times Y\xrightarrow{p_Y} Y$ is
exactly the same as a terminal object in $\Cat{Cone}(X,Y)$.
\end{exercise}

We're now ready for the abstract definition. Don't worry if the details are
unclear; the main point is that it is possible to unify terminal objects, maximal
elements, and meets, products of sets, preorders, and categories, and many other familiar
friends under the scope of a single definition. In fact, they're all just terminal objects in different categories.

Recall from \cref{def.diagram_commutes} that formally speaking, a diagram in $\cat{C}$ is just a functor $D\colon\cat{J}\to\cat{C}$. Here $\cat{J}$ is called the \emph{indexing category} of the diagram $D$.%
\index{category!indexing}

\begin{definition}%
\label{def.cones_limits}%
\index{limit}%
\index{terminal object!limit as}
Let $D\colon \cat{J}\to \cat{C}$ be a diagram. A \emph{cone} $(C,c_\ast)$ over $D$ consists of 
\begin{enumerate}[label=(\roman*)]
\item an object $C \in \cat{C}$;
\item for each object $j \in \cat{J}$, a morphism $c_j\colon C \to D(j)$.
\end{enumerate}
To be a cone, these must satisfy the following property:
\begin{enumerate}[label=(\alph*)]
\item for each $f\colon j \to k$ in $\cat{J}$, we have $c_k=c_j\cp D(f)$.
\end{enumerate}
\index{cone}

A \emph{morphism of cones} $(C,c_\ast) \to (C',c'_\ast)$ is a morphism $a\colon C \to
C'$ in $\cat{C}$ such that for all $j \in \cat{J}$ we have $c_j=a\cp c'_j$.
Cones over $D$, and their morphisms, form a category $\Cat{Cone}(D)$.

The \emph{limit} of $D$, denoted $\lim(D)$, is the terminal object in the category
$\Cat{Cone}(D)$. Say it is the cone $\lim(D)=(C,c_*)$; we refer to $C$ as the \emph{limit object} and the map $c_j$ for any $j\in\cat{J}$ as the \emph{$j^{\tn{th}}$ projection map}.
\end{definition}

For visualization purposes, if $\cat{J}$ is the free category on the graph
\[
\boxCD{
  \begin{tikzcd}[sep=small, ampersand replacement=\&]
    1\ar[dr]\&\&3\ar[ld]\ar[dr, bend left]\ar[dr, bend right]\\
    \&2\&\&4\ar[r]\&5
  \end{tikzcd}
}
\]
with five objects and five non-identity morphisms, then we may draw a diagram
$D\colon\cat{J}\to\cat{C}$ inside $\cat{C}$ as on the left below, and a cone on
it as on the right:
\[
\boxCD{
  \begin{tikzcd}[sep=small, ampersand replacement=\&]
  	{\color{white}C}\\[10pt]
    D_1\ar[dr]\&\&D_3\ar[ld]\ar[dr, bend left]\ar[dr, bend right]\\
    \&D_2\&\&D_4\ar[r]\&D_5
  \end{tikzcd}
}
\hspace{.8in}
\boxCD{
  \begin{tikzcd}[sep=small, ampersand replacement=\&]
		\&\&C\ar[dll, bend right, "c_1"']\ar[ddl, bend right, "c_2"']\ar[d, "c_3"]\ar[ddr, bend left, "c_4"]\ar[ddrr, bend left, "c_5"]\\[10pt]
    D_1\ar[dr]\&\&D_3\ar[ld]\ar[dr, bend left]\ar[dr, bend right]\\
    \&D_2\&\&D_4\ar[r]\&D_5
  \end{tikzcd}
}
\]
Here, any two parallel paths that start at $C$ are considered the same. Note
that both these diagrams depict a collection of objects and morphisms inside the category
$\cat{C}$.

\begin{example}
Terminal objects are limits where the indexing category is empty, $\cat{J}=\varnothing$.
\end{example}

\begin{example}%
\label{ex.prods_as_lims}%
\index{product!as limit}
  Products are limits where the indexing category consists of two objects $v, w$
  and no arrows, $\cat{J}=\fbox{$\LMO{v}\quad\LMO{w}$}$.
\end{example}

%---- Subsection ----%
\subsection{Finite limits in $\smset$}

Recall that this discussion was inspired by wanting to understand
$\Pi$-operations, and in particular $\Pi_!$. We can now see that a database
instance $I \colon \cat{C} \to \smset$ is a diagram in $\smset$. The functor
$\Pi_!$ takes the limit of this diagram. In this subsection we give a formula
describing the result. This captures \emph{all finite limits in $\smset$}.

In database theory, we work with categories $\cat{C}$ that are presented by a
finite graph plus equations. We won't explain the details, but it's in fact
enough just to work with the graph part: as far as limits are concerned, the
equations in $\cat{C}$ don't matter. For consistency with the rest of this
section, let's denote the database schema by $\cat{J}$ instead of $\cat{C}$.

\begin{theorem} %
\label{thm.set_limits}%
\index{limit!formula for finite limits in
  $\smset$}%
\index{functor!set@$\smset$-valued}
Let $\cat{J}$ be a category presented by the finite graph $(V,A,s,t)$ together with
some equations, and let $D\colon \cat{J} \to \smset$ be a set-valued functor. Write $V=\{v_1,\ldots,v_n\}$. The set
\begin{multline*}
  \lim_\cat{J} D \coloneqq \big\{(d_1,\ldots,d_n)\mid d_i\in D(v_i)\text{ for all }1\leq i\leq n\text{ and }\\
  \text{for all }a\colon v_i\to v_j\in A, \text{ we have } D(a)(d_i)=d_j\big\}.
\end{multline*}
together with the projection maps $p_i\colon(\lim_\cat{J} D)\to D(v_i)$ given by $p_i(d_1,\ldots,d_n)\coloneqq d_i$, is a limit of $D$.
\end{theorem}

\begin{example}
   If $J$ is the empty graph \fbox{\color{white}$\LMO{}$}, then $n=0$: there are no vertices. There is
   exactly one empty tuple $(\ )$, which vacuously satisfies the properties, so
   we've constructed the limit as the singleton set $\{ (\ ) \}$ consisting of
   just the empty tuple. Thus the limit of the empty diagram, i.e.\ the terminal object in $\smset$ is the singleton set. See \cref{rem.the_vs_a}.
\end{example}
  
\begin{exercise}%
\label{exc.limit_formula_products}
Show that the limit formula in \cref{thm.set_limits} works for products. See \cref{ex.prods_as_lims}.
\end{exercise}

\begin{exercise}  
  If $D\colon\Cat{1}\to\smset$ is a functor, what is the limit of $D$? Compute it using \cref{thm.set_limits}, and check your answer against \cref{def.cones_limits}.
\end{exercise}

\paragraph{Pullbacks.}%
\index{limit!pullback as}
\index{pullback!as limit}

In particular, the condition that the limit of $D\colon\cat{J}\to\smset$ selects tuples $(d_1,\dots,d_n)$
such that $D(a)(d_i)=d_j$ for each morphism $a\colon i\to j$ in $\cat{J}$ allows us to use limits to select data that satisfies
certain equations or constraints. This is what allows us to express queries in
terms of limits. Here is an example.%
\index{database!query}

\begin{example}%
\label{ex.pullback}
  If $J$ is presented by the \emph{cospan} graph
  \fbox{$\LMO{x}\Too{f}\LMO{a}\Fromm{g}\LMO{y}$}, then its limit is
  known as a \emph{pullback}. Given the diagram $X \xrightarrow{f} A
  \xleftarrow{g} Y$, the pullback is the cone shown on the left below:
  \[
  \begin{tikzcd}
  	C\ar[d, "c_x"']\ar[r, "c_y"]\ar[dr, description, "c_a"]&Y\ar[d, "g"]\\
		X\ar[r, "f"']&A
  \end{tikzcd}
\hspace{1in}
  \begin{tikzcd}
  	X\times_AY\ar[d, "c_x"']\ar[r, "c_y"]&Y\ar[d, "g"]\\
		X\ar[r, "f"']&A\ar[ul, phantom, very near end, "\lrcorner"]
  \end{tikzcd}
  \]
  The fact that the diagram commutes means that the diagonal arrow $c_a$ is in some sense superfluous, so one generally denotes pullbacks by dropping the diagonal arrow, naming the cone point $X\times_AY$, and adding the $\lrcorner$ symbol, as shown to the right above.
  
  Here is a picture to help us unpack the definition in $\smset$. We take $X
  =\ord{6}$, $Y=\ord{4}$, and $A$ to be the set of colors
  $\{\textrm{red},\textrm{blue},\textrm{black}\}$.
\[
\begin{tikzpicture}[rounded corners, y=5ex, shorten <=2pt, shorten >=2pt]
	\foreach \i in {1,...,6}{
		\tikzmath{
			if or(or(\i==1, \i==3), \i==4) then {let \coli=red; let \ci=1;} else {
				if or(\i==2, \i==6) then {let \coli=blue; let \ci=2;} else {let \coli=black; let \ci=3;};
			};
		}
		\foreach \j in {1,...,4} {
			\tikzmath{
				if or(\j==2, \j==4) then {let \colj=red; let \cj=1;} else {
					if \j==3 then {let \colj=blue; let \cj=2;} else {let \colj=black; let \cj=3;};
				};
			}
			\ifthenelse{\ci=\cj}
				{\node[\coli] (ij\i\j) at (\i,\j) {$\LMO{(\i,\j)}$};}
				{\node[white] (ij\i\j) at (\i,\j) {$\LMO{(\i,\j)}$};};
			\ifthenelse{\i=1}{\node[\colj] (j\j) at (-1,\j) {$\LMO{\j}$};}{};
		}
		\node[\coli] (i\i) at (\i,-1) {$\LMO{\i}$};
	}
	\node[draw, inner ysep=0, fit=(i1) (i6)] (i) {};
	\node[draw, inner ysep=0, fit=(j1) (j4)] (j) {};
	\node[draw, inner ysep=0, fit=(ij11) (ij64)] (ij) {};
	\draw[->, thick] (ij) -- (i);
	\draw[->, thick] (ij) -- (j);
\end{tikzpicture}
\]
The functions $f\colon \ord{6} \to A$ and $g\colon \ord{4} \to A$ are expressed in
the coloring of the dots: for example, $g(2)=g(4)=\textrm{red}$, while
$f(5)=\textrm{black}$. The pullback selects pairs $(i,j) \in \ord{6} \times
\ord{4}$ such that $f(i)$ and $g(j)$ have the same color.
\end{example}

\begin{remark} \label{rem.pullbackpullback}
  As mentioned following \cref{def.pullback_along}, this definition of pullback
  is not to be confused with the pullback of a set-valued functor along a
  functor; they are for now best thought of as different concepts which
  accidentally have the same name. Due to the power of the primordial ooze,
  however, the pullback along a functor is a special case of pullback as the
  limit of a cospan: it can be understood as the pullback of a certain cospan in
  $\smcat$.  To unpack this, however, requires the notions of category of
  elements and discrete opfibration; ask your friendly neighborhood category
  theorist.
\end{remark}
\index{limit|)}

%---- Subsection ----%
\subsection{A brief note on colimits}
\label{subsec.brief_colimits}%
\index{colimit}
Just like upper bounds have a dual concept---namely that of lower bounds---so
limits have a dual concept: colimits. To expose the reader to this concept, we
provide a succinct definition of these using opposite categories and opposite
functors. The point, however, is just exposure; we will return to explore
colimits in detail in \cref{chap.hypergraph_cats}.%
\index{dual notions!colimits and limits as}

\begin{exercise}%
\label{exc.opposite_functor}
Recall from \cref{def.opposite_cat} that every category $\cat{C}$ has an
opposite $\cat{C}\op$. Let $F\colon \cat{C} \to \cat{D}$ be a functor. How
should we define its opposite, $F\op\colon \cat{C}\op \to \cat{D}\op$? That is,
how should $F\op$ act on objects, and how should it act on morphisms?
\end{exercise}

\begin{definition}%
\label{def.colimit}%
\index{cocone}
Given a category $\cat{C}$ we say that a \emph{cocone} in $\cat{C}$ is a cone
in $\cat{C}\op$.

Given a diagram $D \colon \cat{J} \to \cat{C}$, we may take the limit of the
functor $D\op \colon \cat{J}\op \to \cat{C}\op$. This is a cone in $\cat{C}\op$,
and so by definition a cocone in $\cat{C}$. The \emph{colimit} of $D$ is this
cocone.
\end{definition}

\cref{def.colimit} is like a compressed file: useful for transmitting quickly, but completely useless for working with, unless you can successfully unpack it. We will unpack it later in \cref{chap.hypergraph_cats} when we discuss electric circuits. 

%---- Section ----%
\section{Summary and further reading}%
\label{sec.ch2_further_reading}

Congratulations on making it through one of the longest chapters in the book! We
apologize for the length, but this chapter had a lot of work to do. Namely it
introduced the ``big three'' of category theory---categories, functors, and
natural transformations---as well as discussed adjunctions, limits, and very briefly colimits.

That's really quite a bit of material. For more on all these subjects, one can
consult any standard book on category theory, of which there are many. The bible
(old, important, seminal, and requires a priest to explain it) is
\cite{MacLane:1998a}; another thorough introduction is \cite{Borceux:1994a}; a
logical perspective is given in \cite{Awodey:2010a}; a computer science
perspective is given in \cite{Barr.Wells:1990a} and \cite{Pierce:1991} and
\cite{Walters:1992}; math students should probably read \cite{Leinster:2014a} or
\cite{Riehl:2017a} or \cite{Grandis:2018}; a general audience might start with \cite{Spivak:2014a}.

We presented categories from a database perspective, because data is pretty ubiquitous in our world. A database schema---i.e.\ a system of interlocking tables---can be captured by a category $\cat{C}$, and filling it with data corresponds to a functor $\cat{C}\to\smset$. Here $\smset$ is the category of sets, perhaps the most important category to mathematicians.

The perspective of using category theory to model databases has been rediscovered several times. It seems to have first been discussed by various authors around the mid-90's \cite{Islam.Phoa:1994a,Cadish.Diskin:1995a,Piessens.Steegmans:1995a,Tuijn.Gyssens:1996a}. Bob Rosebrugh and collaborators took it much further in a series of papers including \cite{Fleming.Gunther.Rosebrugh:2003a,Johnson.Rosebrugh:2002a,Rosebrugh.Wood:1992a}. Most of these authors tend to focus on sketches, which are more expressive categories. Spivak rediscovered the idea again quite a bit later, but focused on categories rather than sketches, so as to have all three data migration functors $\Delta,\Sigma,\Pi$; see \cite{spivakfunctorial2012,Spivak.Wisnesky:2015a}. The version of this story presented in the chapter, including the white and black nodes in schemas, is part of a larger theory of algebraic databases, where a programming language such as Java or Haskell is attached to a database. The technical details are worked out in \cite{Schultz.Spivak.Vasilakopoulou.Wisnesky:2017a}, and its use in database integration projects can be found in \cite{Schultz.Wisnesky:2015a,Wisnesky.Spivak.Schultz.Subrahmanian:2015a,}.

Before we leave this chapter, we want to emphasize two things: coherence conditions and universal constructions.

\paragraph{Coherence conditions.}%
\index{coherence}
In the definitions of category, functor, and natural transformations, we have
data (indexed by (i)) that is required to satisfy certain properties (indexed by
(a)). Indeed, for categories it was about associativity and unitality of
composition, for functors it was about respecting composition and identities,
and for natural transformations it was the naturality condition. These
conditions are often called \emph{coherence conditions}: we want the various
structures to cohere, to work well together, rather than to flop around
unattached.%
\index{associativity!as coherence condition}%
\index{unitality!as coherence condition}

Understanding why these particular structures and coherence conditions are ``the right ones'' is more science than mathematics: we empirically observe that certain combinations result in ideas that are both widely applicable and also strongly compositional. That is, we become satisfied with coherence conditions when they result in beautiful mathematics down the road.

\paragraph{Universal constructions.}%
\index{universal property}

Universal constructions are one of the most important themes of category theory. Roughly speaking, one gives some specified shape in a category and says ``find me the best solution!'' And category theory comes back and says ``do you want me to approximate from the left or the right (colimit or limit)?'' You respond, and either there is a best solution or there is not. If there is, it's called the (co)limit; if there's not we say ``the (co)limit does not exist.'' 

Even data migration fits this form. We say ``find me the closest thing in $\cat{D}$ that matches my $\cat{C}$-instance using my functor $F\colon\cat{C}\to\cat{D}$.'' In fact this approach---known as Kan extensions---subsumes the others. One of the two founders of category theory, Saunders Mac Lane, has a section in his book \cite{MacLane:1998a} called ``All concepts are Kan extensions,'' a big statement, no?
\index{Kan extension}

\setcounter{chapter}{3}%Just finished 3.
%------------ Chapter ------------%
\chapter[Co-design: profunctors and monoidal categories]{Collaborative design:\\Profunctors, categorification, and
monoidal categories}%
\label{chap.codesign}%
\index{co-design|(}%
\index{co-design|seealso {feasibility}}

%-------- Section --------%
\section{Can we build it?}

When designing a large-scale system, many different fields of expertise are
joined to work on a single project. Thus the whole project team is divided into
multiple sub-teams, each of which is working on a sub-project. And we recurse
downward: the sub-project is again factored into sub-sub-projects, each with
their own team. One could refer to this sort of hierarchical design process as \emph{collaborative design}, or co-design. In this
chapter, we discuss a mathematical theory of co-design, due to Andrea Censi \cite{Censi:2015a}.%
\index{design}

Consider just one level of this hierarchy: a project and a set of teams working
on it. Each team is supposed to \emph{provide} resources---sometimes called
``functionalities''---to the project, but the team also \emph{requires} resources in
order to do so. Different design teams must be allowed to plan and work
independently from one another in order for progress to be made. Yet the design
decisions made by one group affect the design decisions others can make: if $A$
wants more space in order to provide a better radio speaker, then $B$ must use less space. So
these teams---though ostensibly working independently---are dependent on each other
after all.%
\index{resource}

The combination of dependence and independence is crucial for progress to be
made, and yet it can cause major problems. When a team requires more resources
than it originally expected to require, or if it cannot provide the resources
that it originally claimed it could provide, the usual response is for the team
to issue a design-change notice. But these affect neighboring teams: if team $A$
now requires more than originally claimed, team $B$ may have to change their design, which can in turn affect team $C$. Thus these design-change notices can ripple through the system through feedback loops and can cause whole projects to fail \cite{Subrahmanian.Lee.Granger:2015a}.

As an example, consider the design problem of creating a robot to carry some load at some velocity. The top-level planner breaks the problem into three design teams: chassis team, motor team, and battery team. Each of these teams could break up into multiple parts and the process repeated, but let's remain at the top level and consider the resources produced and the resources required by each of our three teams.

The chassis in some sense provides all the functionality---it carries the load at the velocity---but it requires some things in order to do so. It requires money to build, of course, but more to the point it requires a source of torque and speed. These are supplied by the motor, which in turn needs voltage and current from the battery. Both the motor and the battery cost money, but more importantly they need to be carried by the chassis: they become part of the load. A feedback loop is created: the chassis must carry all the weight, even that of the parts that power the chassis. A heavier battery might provide more energy to power the chassis, but is the extra power worth the heavier load?

In the following picture, each part---chassis, motor, battery, and robot---is shown as a box with ports on the left and right. The functionalities, or resources produced by the part are shown as ports on the left of the box, and the resources required by the part are shown as ports on its right.
\begin{equation}%
\label{eqn.chassis}
% [inline block 15: 1 envs, 3079 chars -> data_tex | \begin{tikzpicture} [oriented WD, bb min width =.5cm, bby=2ex, bbx=.6cm, bb port length=3pt, ...]

\end{equation}
The boxes marked $\Sigma$ correspond to summing inputs. These boxes are not to
be designed, but we will see later that they fit easily into the same conceptual
framework. Note also the $\leq$'s on each wire; they indicate that if box $A$
requires a resource that box $B$ produces, then $A$'s requirement must be
less-than-or-equal-to $B$'s production. The chassis requires torque, and the motor must produce at least that much torque.

To formalize this a bit more, let's call diagrams like the one above
\emph{co-design diagrams}. Each of the wires in a co-design diagram represents a
preorder of resources. For example, in \cref{eqn.chassis} every wire corresponds to
a resource type---weights, velocities, torques, speeds, costs, voltages, and
currents---where resources of each type can be ordered from less useful to more
useful. In general, these preorders do not have to be linear orders, though in the
above cases each will likely correspond to a linear order: $\$10\leq \$20$,
$5\textrm{W}\leq6\textrm{W}$, and so on.%
\index{co-design!diagram}

Each of the boxes in a co-design diagram corresponds to what we call a
\emph{feasibility relation}. A feasibility relation matches resource production
with requirements. For every pair $(p,r)\in P\times R$, where $P$ is the preorder
of resources to be produced and $R$ is the preorder of resources to be required,
the box says ``true'' or ``false''---feasible or infeasible---for that pair. In
other words, ``yes I can provide $p$ given $r$'' or ``no, I cannot provide $p$
given $r$.''%
\index{feasibility relation}

Feasibility relations hence define a function $\Phi\colon P \times R \to \Bool$.
For a function $\Phi\colon P\times R\to\Bool$ to make sense as a feasibility
relation, however, there are two conditions:
\begin{enumerate}[label=(\alph*)]
	\item If $\Phi(p,r)=\true$ and $p' \le p$, then $\Phi(p',r)=\true$.
	\item If $\Phi(p,r)=\true$ and $r \le r'$ then $\Phi(p,r')=\true$.
\end{enumerate}
These conditions, which we will see again in
\cref{def.feasibility_relationship}, say that if you can produce $p$ given
resources $r$, you can (a) also produce less $p'\leq p$ with the same resources
$r$, and (b) also produce $p$ given more resources $r'\geq r$. We will see that
these two conditions are formalized by requiring $\Phi$ to be a monotone map
$P\op\times R\to\Bool$.%
\index{booleans!and feasibility}

A \emph{co-design problem}%
\index{co-design!problem}, represented by a co-design
diagram, asks us to find the composite of some feasibility relations. It asks,
for example, given these capabilities of the chassis, motor, and battery teams,
can we build a robot together? Indeed, a co-design diagram factors a
problem---for example, that of designing a robot---into interconnected
subproblems, as in \cref{eqn.chassis}.  Once the feasibility relation is worked
out for each of the subproblems, i.e.\ the inner boxes in the diagram, the
mathematics provides an algorithm producing the feasibility relation of the
whole outer box. This process can be recursed downward, from the largest problem
to tiny subproblems. 

In this chapter, we will understand co-design problems in terms of enriched
profunctors, in particular $\Bool$-profunctors. A $\Bool$-profunctor is like a
bridge connecting one preorder to another. We will show how the co-design framework
gives rise to a structure known as a compact closed category, and that any
compact closed category can interpret the sorts of wiring diagrams we see in
\cref{eqn.chassis}. 

%-------- Section --------%
\section{Enriched profunctors}%
\index{profunctor|(}

In this section we will understand how co-design problems form a category. Along
the way we will develop some abstract machinery that will allow us to replace
preorder design spaces with other enriched categories.

%---- Subsection ----%
\subsection{Feasibility relationships as $\Bool$-profunctors}%
\index{feasibility relation|(}%
\index{profunctor!Bool@$\Bool$}

The theory of co-design is based on preorders: each resource---e.g.\ velocity,
torque, or \$---is structured as a preorder. The order $x\leq y$ represents the
\emph{availability of $x$ given $y$}, i.e.\ that whenever you have $y$, you also
have $x$. For example, in our preorder of wattage, if 5W $\leq$ 10W, it means that
whenever we are provided 10W, we implicitly also have 5W. Above we referred to this as an order from less useful to more useful: if $x$ is always available given $y$, then $x$ is less useful than $y$.%
%\tablefootnote{Suppose someone says ``usually having a sharp scissors is better than having dull ones, but not for children,'' we answer that children's scissors must then be considered a different resource than adult scissors. The usefulness calculation for children's scissors is different than that for adult scissors, so they are different resource preorders.}

We know from \cref{subsec.preorders_Bool_enriched} that a preorder $\cat{X}$ can be
conceived of as a $\Bool$-category. Given $x,y\in X$, we have
$\cat{X}(x,y)\in\BB$; this value responds to the assertion ``$x$ is available
given $y$,'' marking it either $\true$ or $\false$.

Our goal is to see feasibility relations as $\Bool$-profunctors, which are a
special case of something called enriched profunctors. Indeed, we hope that this
chapter will give you some intuition for profunctors, arising from the table
\[
\begin{tabular}{r|l}
 $\Bool$-category & preorder \\
 $\Bool$-functor & monotone map \\
 $\Bool$-profunctor & feasibility relation
\end{tabular}
\]
Because enriched profunctors are a bit abstract, we first concretely discuss
$\Bool$-profunctors as feasibility relations. Recall that if $\cat{X}=(X,\leq)$
is a preorder, then its opposite $\cat{X}\op=(X,\geq)$ has $x\geq y$ iff $y\leq x$.

\begin{definition}%
\label{def.feasibility_relationship}
Let $\cat{X}=(X,\leq_X)$ and $\cat{Y}=(Y,\leq_Y)$ be preorders. A \emph{feasibility relation} for $\cat{X}$ given $\cat{Y}$ is a monotone map
\begin{equation}%
\label{eqn.feasibility}
	\Phi\colon \cat{X}\op\times \cat{Y}\to\Bool.
\end{equation}
We denote this by $\Phi\colon\cat{X}\tickar\cat{Y}$.

Given $x\in X$ and $y\in Y$, if $\Phi(x,y)=\true$ we say \emph{$x$ can be obtained given $y$}.
\end{definition}

As mentioned in the introduction, the requirement that $\Phi$ is monotone says
that if $x'\leq_X x$ and $y\leq_Y y'$ then $\Phi(x,y)\leq_\Bool \Phi(x',y')$. In
other words, if $x$ can be obtained given $y$, and if $x'$ is available given
$x$, then $x'$ can be obtained given $y$. And if furthermore $y$ is available
given $y'$, then $x'$ can also be obtained given $y'$.

\begin{exercise} %
\label{exc.a_profunctor}
Suppose we have the preorders
\[
\begin{tikzpicture}
	\node (cat) {category};
	\node[below right=1 of cat] (pos) {preorder};
	\node[below left=1 of cat] (mon) {monoid};
	\draw[->] (pos) -- (cat);
	\draw[->] (mon) -- (cat);
	\node[draw, fit=(cat) (pos) (mon)] (X) {};
	\node[left=0 of X] {$\cat{X}\coloneqq$};
	\node[right=2 of pos] (0) {nothing};
	\node at (0|-cat) (1) {this book};
	\draw[->] (0) -- (1);
	\node[draw, fit=(0) (1)] (Y) {};
	\node[left=0 of Y] {$\cat{Y}\coloneqq$};
\end{tikzpicture}
\]
\begin{enumerate}
  \item Draw the Hasse diagram for the preorder $\cat{X}\op\times\cat{Y}$. 
  \item Write down a profunctor $\Lambda\colon\cat{X} \tickar \cat{Y}$ and, reading
  $\Lambda(x,y)=\true$ as ``my aunt can explain an $x$ given $y$,'' give an
  interpretation of the fact that the preimage of $\true$ forms an upper set in
  $\cat{X}\op\times\cat{Y}$. %
\index{preimage}
  \qedhere
\end{enumerate}
\end{exercise}

To generalize the notion of feasibility relation, we must notice that the
symmetric monoidal preorder $\Bool$ has more structure than just that of a
symmetric monoidal preorder: as mentioned in \cref{exc.Bool_quantale},
$\Bool$ is a quantale. That means it has all joins $\vee$, and a closure
operation, which we'll write $\imp\colon\BB\times\BB\to\BB$. By definition, this
operation satisfies the property that for all $b,c,d\in\BB$ one has
\begin{equation}%
\label{eqn.implies_internal_hom}
	b\wedge c\leq d\quad\text{iff}\quad b\leq (c\imp d).
\end{equation}
The operation $\imp$ is given by the following table:
\begin{equation}%
\label{eqn.implies}
\begin{array}{cc|c}
c&d&c\imp d\\\hline
\true&\true&\true\\
\true&\false&\false\\
\false&\true&\true\\
\false&\false&\true
\end{array}
\end{equation}

\begin{exercise} %
\label{exc.implies_is_hom}
Show that $\imp$ as defined in \cref{eqn.implies} indeed satisfies \cref{eqn.implies_internal_hom}.
\end{exercise}

\index{quantale}
On an abstract level, it is the fact that $\Bool$ is a quantale which makes
everything in this chapter work; any other (unital commutative) quantale also
defines a way to interpret co-design diagrams. For example, we could use the
quantale $\Cost$, which would describe not \emph{whether} $x$ is available given
$y$ but the \emph{cost} of obtaining $x$ given $y$; see
\cref{ex.Lawveres_base,def.cat_enriched_mpos}.

\subsection{$\cat{V}$-profunctors}%
\index{profunctor!enriched}
We are now ready to recast \cref{eqn.feasibility} in abstract terms. Recall the
notions of enriched product (\cref{def.enriched_prod}), enriched functor
(\cref{def.enriched_functor}), and quantale (\cref{def.monoidal_closed}).

\begin{definition}%
\label{def.enriched_profunctor}%
\index{profunctor}
Let $\cat{V}=(V,\leq,I,\otimes)$ be a (unital commutative) quantale,%
\tablefootnote{From here on, as in \cref{chap.resource_theory}, whenever we speak of
quantales we mean unital commutative quantales.
}
 and let $\cat{X}$ and $\cat{Y}$ be $\cat{V}$-categories. A
 \emph{$\cat{V}$-profunctor} from $\cat{X}$ to $\cat{Y}$, denoted
 $\Phi\colon\cat{X}\tickar\cat{Y}$, is a $\cat{V}$-functor
\[
\Phi\colon\cat{X}\op\times\cat{Y}\to\cat{V}.
\]
\end{definition}

Note that a $\cat{V}$-functor must have $\cat{V}$-categories for domain and
codomain, so here we are considering $\cat{V}$ as enriched in itself; see
\cref{rem.quantale_enriches_itself}.%
\index{V-profunctor@$\cat{V}$-profunctor|see {profunctor, enriched}}

\begin{exercise} %
\label{exc.profunctor_def_alt}
Show that a $\cat{V}$-profunctor (\cref{def.enriched_profunctor}) is
the same as a function $\Phi\colon\Ob(\cat{X})\times\Ob(\cat{Y})\to V$ such that
for any $x,x'\in\cat{X}$ and $y,y'\in\cat{Y}$ the following inequality holds in
$\cat{V}$: 
\[
  \cat{X}(x',x)\otimes\Phi(x,y)\otimes\cat{Y}(y,y')\leq\Phi(x',y').
  \qedhere
\]
\end{exercise}

\begin{exercise}%
\label{exc.Bool_enriched_is_feas}%
\index{feasibility relation!as $\Bool$-profunctor}
Is it true that a $\Bool$-profunctor, as in \cref{def.enriched_profunctor}, is exactly the same as a feasibility relation, as in \cref{def.feasibility_relationship}, once you peel back all the jargon? Or is there some subtle difference?
\end{exercise}

We know that \cref{def.enriched_profunctor} is quite abstract. But have no fear, we will take you through it in pictures.

\begin{example}[$\Bool$-profunctors and their interpretation as
  bridges]%
\label{ex.bool_profunctor_bridges}%
\index{profunctor!as bridges}
Let's discuss \cref{def.enriched_profunctor} in the case $\cat{V}=\Bool$. One
way to imagine a $\Bool$-profunctor $\Phi\colon X\tickar Y$ is in terms
of building bridges between two cities.  Recall that a preorder (a $\Bool$-category) 
can be drawn using a Hasse
diagram. We'll think of the preorder as a city, and each vertex in it as some point of interest. An arrow $A\to B$ in the Hasse diagram means that there exists a way to get from point $A$ to
point $B$ in the city. So what's a profunctor?

A profunctor is just a bunch of bridges connecting points in one city to points
in another. Let's see a specific example. Here is a picture of a
$\Bool$-profunctor $\Phi\colon X \tickar Y$:
\[
\begin{tikzpicture}
	\node (mid) {};
	\node[above=of mid] (XN) {$\LMO{N}$};
	\node[left=of mid] (XW) {$\LMO{W}$};
	\node[right=of mid] (XE) {$\LMO{E}$};
	\node[below=of mid] (XS) {$\LMO[under]{S}$};
	\node[draw, fit=(XN) (XW) (XS) (XE)] (X) {};
	\draw[->]
		(XS) edge (XW) edge (XE) 
		(XW) edge (XN) 
		(XE) edge (XN)
	;
	\node[left=0 of X, font=\normalsize] {$X\coloneqq$};
	\node[right=5 of XS] (Ya) {$\LMO[under]{a}$};
	\node[above left=.4 and .5 of Ya] (Yb) {$\LMO{b}$};
	\node[above right=.4 and .5 of Ya] (Yc) {$\LMO{c}$};
	\node[above=.4 of Yb] (Yd) {$\LMO{d}$};
	\node[right=5 of XN] (Ye) {$\LMO{e}$};
	\node[draw, fit=(Ye) (Yd) (Yc) (Ya)] (Y) {};
	\draw[->]
		(Yb) edge (Ya) edge (Yd)
		(Yc) edge (Ya)
	;
	\node[right=0 of Y, font=\normalsize] {$=:Y$};
	\draw[mapsto, bend right] (XS) to (Ya);
	\draw[mapsto, bend right] (XE) to (Yb);
	\draw[mapsto, bend left] (XN) to (Yc);
	\draw[mapsto, bend left] (XN) to (Ye);
\end{tikzpicture}
\]
Both $X$ and $Y$ are preorders, e.g.\ with $W\leq N$ and $b\leq a$. With bridges
coming from the profunctor in blue, one can now use both paths within the cities
and the bridges to get from points in city $X$ to points in city $Y$. For example,
since there is a path from $N$ to $e$ and $E$ to $a$, we have $\Phi(N,e)=\true$
and $\Phi(E,a)=\true$. On the other hand, since there is no path from $W$ to
$d$, we have $\Phi(W,d)=\false.$

In fact, one could put a box around this entire picture and see a new preorder with
$W\leq N\leq c\leq a$, etc. This is called the \emph{collage} of $\Phi$; we'll
explore this in more detail later; see \cref{def.collage_prof}.%
\index{collage}
\end{example}

\begin{exercise} %
\label{exc.feas_matrix}
We can express $\Phi$ as a matrix where the $(m,n)$th entry is the value of
$\Phi(m,n)\in \BB$. Fill out the $\Bool$-matrix:
\[
\begin{array}{c|ccccc}
	\Phi&a&b&c&d&e\\\hline
  N&\?&\?&\?&\?&\true\\
  E&\true&\?&\?&\?&\?\\
  W&\?&\?&\?&\false&\?\\
  S&\?&\?&\?&\?&\?
\end{array}
\qedhere
\]
We'll call this the \emph{feasibility matrix} of
$\Phi$.%
\index{matrix!feasibility}
\end{exercise}

\begin{example}[$\Cost$-profunctors and their interpretation as
  bridges]%
\index{profunctor!$\Cost$}%
\index{profunctor!$\Cost$|seealso {Cost}}
Let's now consider $\Cost$-profunctors. Again we can view these as bridges, but
this time our bridges are labelled by their length. Recall from \cref{def.Lawvere_metric_space,eqn.cities_distances} that
$\Cost$-categories are Lawvere metric spaces, and can be depicted using weighted
graphs. We'll think of such a weighted graph as a chart of distances between points in a city, and generate a $\Cost$-profunctor by building a few bridges between
the cities.

Here is a depiction of a $\Cost$-profunctor $\Phi\colon X \tickar Y$:
\begin{equation}%
\label{eqn.bridges_Lawv}
% [inline block 16: 5 envs, 2625 chars -> data_tex | \begin{tikzpicture}[font=\scriptsize, x=1cm, every label/.style={font=\tiny}, baseline=(Y)] 	\node[label={[above=-5pt]:$...]

\]
Recall from \cref{subsubsec.nav_matrix_mult} that the matrix of distances $d_Y$ for
$\Cost$-category $X$ can be obtained by taking the matrix power of $M_X$ with
smallest entries, and similarly for $Y$. The matrix of distances for the profunctor
$\Phi$ will be equal to $d_X*M_\Phi*d_Y$. In fact, since $X$ has four elements
and $Y$ has three, we also know that $\Phi= M_X^3*M_\Phi*M_Y^2$.
\end{remark}

\begin{exercise} %
\label{exc.cost_matrix_mult}%
\index{quantale}
Calculate $M_X^3*M_\Phi*M_Y^2$, remembering to do matrix multiplication according to the $(\min,+)$-formula for matrix multiplication in the quantale $\Cost$; see \cref{eqn.quantale_matrix_mult}.

Your answer should agree with what you got in \cref{exc.distance_matrix_Phi}; does it?
\end{exercise}

% Subsubsection %
\subsection{Back to co-design diagrams}%
\index{co-design!diagram}

Each box in a co-design diagram has a left-hand and a right-hand side, which in turn consist of a collection of ports, which in turn are labeled by preorders. For example, consider the chassis box below:
\[
\begin{tikzpicture}[oriented WD]
  \node[bb port sep=2,bb min width=4.3em, bb={2}{3}, below right=-3 and 1 of Sigma1.south east] (C) {Chassis};
  \draw[label, font=\footnotesize]
  	node[left=1pt of C_in1] {load}
  	node[left=1pt of C_in2] {velocity}
  	node[right=1pt of C_out1] {torque}
  	node[right=1pt of C_out2] {speed}
  	node[right=1pt of C_out3] {\$}
  ;
\end{tikzpicture}
\]
Its left side consists of two ports---one for load and one for velocity---and these are the functionality that the chassis produces. Its right side consists of three ports---one for torque, one for speed, and one for \$---and these are the resources that the chassis
requires. Each of these resources is to be taken as a preorder. For example, load might be
the preorder $([0,\infty],\leq)$, where an element $x\in[0,\infty]$ represents the
idea ``I can handle any load up to $x$.,'' while \$ might be the two-element preorder
$\{\texttt{up\_to\_\$100}, \;\texttt{more\_than\_\$100}\}$, where the first
element of this set is less than the second.

We then multiply---i.e.\ we take the product preorder---of all preorders on the left,
and similarly for those on the right. The box then represents a feasibility
relation between the results. For example, the chassis box above represents a
feasibility relation
\[
\textrm{Chassis}\colon \big(\textrm{load}\times\textrm{velocity}\big) \tickar
\big(\textrm{torque} \times \textrm{speed} \times \$\big)\\
\]

%\[
%\begin{tikzpicture}[oriented WD, bb port sep=2, bb min width=2cm]
%	\node[bb={2}{3}] (C) {$\Phi$};
%  \draw[label, font=\footnotesize]
%  	node[left=1pt of C_in1] {$\cat{P}_1$}
%  	node[left=1pt of C_in2] {$\cat{P}_2$}
%  	node[right=1pt of C_out1] {$\cat{Q}_1$}
%  	node[right=1pt of C_out2] {$\cat{Q}_2$}
%  	node[right=1pt of C_out3] {$\cat{Q}_3$}
%  ;
%\end{tikzpicture}
%\]
%In the picture above, the indicated feasibility problem is of the form:
%\[\Phi\colon\cat{P}\tickar\cat{Q}\]
%where $\cat{P}\coloneqq\cat{P}_1\times\cat{P}_2$ and $\cat{Q}\coloneqq\cat{Q}_1\times\cat{Q}_2\times\cat{Q}_3$.

Let's walk through this a bit more concretely. Consider the design problem of
filming a movie, where you must pit the tone and entertainment value against the
cost. A feasibility relation describing this situation details what tone and
entertainment value can be obtained at each cost; as such, it is described by a
feasibility relation $\Phi\colon (T\times E)\tickar\$$. We represent this by
the box
\[
% [inline block 17: 3 envs, 2143 chars -> data_tex | \begin{tikzpicture}[oriented WD, bb port sep=2, bb min width=2cm] 	\node[bb={2}{1}] (C) {$\Phi$};...]

\]
This says, for example, that a good-natured but boring movie costs \$500K to
produce (of course, the producers would also be happy to get \$1M).

To elaborate, each arrow in the above diagram is to be interpreted as saying,
``I can provide the source given the target''. For example, there are arrows
witnessing each of ``I can provide \$500K given \$1M'', ``I can provide a
good-natured but boring movie given \$500K'', and ``I can provide a mean and
boring movie given a good-natured but boring movie''.  Moreover, this
relationship is transitive, so the path from (mean, boring) to \$1M indicates
also that ``I can provide a mean and boring movie given \$1M''.  

Note the similarity and difference with the bridge interpretation of profunctors
in \cref{ex.bool_profunctor_bridges}: the arrows still indicate the possibility
of moving between source and target, but in this co-design driven interpretation
we understand them as indicating that it is possible to get \emph{to} the source
\emph{from} the target.

\begin{exercise} %
\label{exc.bad_humour}
In the above diagram, the node (g/n, funny) has no dashed blue arrow emerging
from it. Is this valid? If so, what does it mean?
\end{exercise}

\index{feasibility relation|)}

%---- Subsection ----%
\section{Categories of profunctors}%
\index{profunctors!category of|(}

There is a category $\Feas$ whose objects are preorders and whose morphisms are
feasibility relations. In order to describe it, we must give the composition
formula and the identities, and prove that they satisfy the properties of being
a category: unitality and associativity.%
\index{associativity}%
\index{unitality}

\subsection{Composing profunctors}%
\index{profunctors!composition of}
If feasibility relations are to be morphisms, we need to give a formula for
composing two of them in series. Imagine you have cities $\cat{P}$, $\cat{Q}$,
and $\cat{R}$ and you have bridges---and hence feasibility matrices---connecting
these cities, say $\Phi\colon\cat{P}\tickar\cat{Q}$ and
$\Psi\colon\cat{Q}\tickar\cat{R}$. 
\begin{equation}%
\label{eqn.comp_profunctors}
\begin{tikzpicture}[baseline=(Y)]
	\node (mid) {};
	\node[above=of mid] (XN) {$\LMO{N}$};
	\node[left=of mid] (XW) {$\LMO{W}$};
	\node[right=of mid] (XE) {$\LMO{E}$};
	\node[below=of mid] (XS) {$\LMO[under]{S}$};
	\node[draw, fit=(XN) (XW) (XS) (XE)] (X) {};
	\draw[->]
		(XS) edge (XW) edge (XE) 
		(XW) edge (XN) 
		(XE) edge (XN)
	;
	\node[above=0 of X, font=\normalsize] {$\cat{P}$};
	\node[right=5 of XS] (Ya) {$\LMO[under]{a}$};
	\node[above left=.4 and .5 of Ya] (Yb) {$\LMO{b}$};
	\node[above right=.4 and .5 of Ya] (Yc) {$\LMO{c}$};
	\node[above=.4 of Yb] (Yd) {$\LMO{d}$};
	\node[right=5 of XN] (Ye) {$\LMO{e}$};
	\node[draw, fit=(Ye) (Yd) (Yc) (Ya)] (Y) {};
	\draw[->]
		(Yb) edge (Ya) edge (Yd)
		(Yc) edge (Ya)
	;
	\node[above=0 of Y, font=\normalsize] {$\cat{Q}$};
	\node[right=4 of Ye] (Zx) {$\LMO{x}$};
	\node[below=2 of Zx] (Zy) {$\LMO[under]{y}$};
	\node[draw, fit=(Zx) (Zy)] (Z) {};
	\draw[->] (Zx) -- (Zy);
	\node[above=0 of Z, font=\normalsize] {$\cat{R}$};
	\draw[mapsto, bend right] (XW) to (Yb);
	\draw[mapsto, bend left] (XN) to (Yc);
	\draw[mapsto, bend left] (XE) to (Ye);
	\draw[mapsto, red] (Ya) to (Zy);
	\draw[mapsto, red] (Yd) to (Zx);
\end{tikzpicture}
\end{equation}
The feasibility matrices for $\Phi$ (in blue) and $\Psi$ (in red) are:
\[
\begin{array}{c|ccccc}
	\Phi&a&b&c&d&e\\\hline
	N & \true& \false& \true& \false& \false\\ 
	E & \true& \false& \true& \false& \true\\
	W & \true& \true & \true& \true & \false\\ 
	S & \true& \true & \true& \true & \true
\end{array}
\hspace{1in}
\begin{array}{c|cc}
	\Psi&x&y\\\hline
	a & \false& \true\\
	b & \true & \true\\
	c & \false& \true\\
	d & \true & \true\\
	e & \false&\false
\end{array}
\]
As in \cref{rem.personify_navigator}, we personify a quantale as a
navigator.%
\index{navigator} So
imagine a navigator is trying to give a feasibility matrix $\Phi\cp\Psi$ for
getting from $\cat{P}$ to $\cat{R}$. How should this be done? Basically, for
every pair $p\in\cat{P}$ and $r\in\cat{R}$, the navigator searches through
$\cat{Q}$ for a way-point $q$, somewhere both to which we can get from $p$ AND
from which we can get to $r$. It is true that we can navigate from $p$ to $r$ iff there is a way-point $q$ through which to travel; this is a big OR over all possible $q$. The
composition formula is thus:
\begin{equation}%
\label{eqn.comp_V_prof}
  (\Phi\cp\Psi)(p,r)\coloneqq\bigvee_{q\in\cat{Q}}\Phi(p,q)\wedge\Psi(q,r).
\end{equation}
But as we said in \cref{eqn.quantale_matrix_mult}, this can be thought of as matrix multiplication. In our
example, the result is
\[
\begin{array}{c|cc}
	\Phi\cp\Psi&x&y\\\hline
	N & \false& \true\\
	E & \false& \true\\
	W & \true & \true\\
	S & \true & \true
\end{array}
\]
and one can check that this answers the question, ``can you get from here to there'' in \cref{eqn.comp_profunctors}: you can't get from $N$ to $x$ but you can get from $N$ to $y$.

The formula \eqref{eqn.comp_V_prof} is written in terms of the quantale $\Bool$,
but it works for arbitrary (unital commutative) quantales. We give the following
definition.

\begin{definition}%
\label{def.composite_profunctor}%
\index{profunctors!composition of}
Let $\cat{V}$ be a quantale, let $\cat{X}$, $\cat{Y}$, and
$\cat{Z}$ be $\cat{V}$-categories, and let $\Phi\colon\cat{X}\tickar\cat{Y}$ and
$\Psi\colon\cat{Y}\tickar\cat{Z}$ be $\cat{V}$-profunctors. We define their
\emph{composite}, denoted $\Phi\cp\Psi\colon\cat{X}\tickar\cat{Z}$ by the formula
\[(\Phi\cp\Psi)(p,r)=\bigvee_{q\in Q}\big(\Phi(p,q)\otimes\Psi(q,r)\big).\]
\end{definition}

\begin{exercise}%
\label{exc.compose_Lawv_profs}
Consider the $\Cost$-profunctors $\Phi\colon\cat{X}\tickar\cat{Y}$ and $\Psi\colon\cat{Y}\tickar\cat{Z}$ shown below:
\[
% [inline block 18: 2 envs, 2290 chars -> data_tex | \begin{tikzpicture}[font=\scriptsize, x=1cm, every label/.style={font=\tiny}] 	\node[label={[above=-5pt]:$A$}] (a) {$\bu...]

\qedhere
\]
\end{exercise}

% Subsubsection %
\subsection{The categories $\cat{V}\textrm{-}\Cat{Prof}$ and $\Feas$}

A composition rule suggests a category, and there is indeed a category where
the objects are $\Bool$-categories and the morphisms are $\Bool$-profunctors. To
make this work more generally, however, we need to add one technical condition.

Recall from \cref{rem.partial_orders} that a preorder is a skeletal preorder if
whenever $x \le y$ and $y \le x$, we have $x=y$. Skeletal preorders are also
known as posets. We say a quantale is skeletal if its underlying preorder is
skeletal; $\Bool$ and $\Cost$ are skeletal quantales.%
\index{partial order}

\begin{theorem}%
\label{thm.quantale_prof}%
\index{profunctors!category of}
For any skeletal quantale $\cat{V}$, there is a category $\Prof_{\cat{V}}$ whose
objects are $\cat{V}$-categories $\cat{X}$, whose morphisms are
$\cat{V}$-profunctors $\cat{X}\tickar\cat{Y}$, and with composition defined as in \cref{def.composite_profunctor}.
\end{theorem}

\begin{definition}%
\index{feasibility relation!as $\Bool$-profunctor}
We define $\Feas\coloneqq\Prof_{\Bool}$.
\end{definition}

At this point perhaps you have two questions in mind. What are the identity
morphisms? And why did we need to specialize to skeletal quantales? It turns out
these two questions are closely related. 

Define the \emph{unit profunctor}%
\index{profunctor!unit}%
\index{unit!profunctor} $\Unit{\cat{X}}\colon
\cat{X} \tickar \cat{X}$ on a $\cat{V}$-category $\cat{X}$ by the formula 
\begin{equation}%
\label{eqn.unit_profunctor}
	\Unit{\cat{X}}(x,y)\coloneqq\cat{X}(x,y).
\end{equation}%
\index{identity!profunctor|seealso {unit}}
How do we interpret this? Recall that, by \cref{def.cat_enriched_mpos},
$\cat{X}$ already assigns to each pair of elements $x,y\in\cat{X}$ an hom-object
$\cat{X}(x,y)\in\cat{V}$. The unit profunctor $\Unit{\cat{X}}$ just assigns each
pair $(x,y)$ that same object.%
\index{hom object}

In the $\Bool$ case the unit profunctor on some preorder $\cat{X}$ can be drawn like this:
\[
\begin{tikzpicture}[x=.7in, y=.25in, inner sep=5pt]
	\node (a) at (0,5) {$\LMO{a}$};
	\node (b) at (0,3) {$\LMO{b}$};
	\node (c) at (-1,2) {$\LMO{c}$};
	\node (d) at (1,2) {$\LMO{d}$};
	\node (e) at (0,1) {$\LMO{e}$};
	\node[draw, fit=(a) (b) (c) (d) (e)] (A) {};
	\node[left=0 of A, font=\normalsize] {$\cat{X}\coloneqq$};
	\draw[->] (b) to (a);
	\draw[->] (c) to (b);
	\draw[->] (d) to (b);
	\draw[->] (e) to (c);
	\draw[->] (e) to (d);
	\node (a') at (0+4,5) {$\LMO{a}$};
	\node (b') at (0+4,3) {$\LMO{b}$};
	\node (c') at (-1+4,2) {$\LMO{c}$};
	\node (d') at (1+4,2) {$\LMO{d}$};
	\node (e') at (0+4,1) {$\LMO{e}$};
	\node[draw, fit=(a') (b') (c') (d') (e')] (A') {};
	\draw[->] (b') to (a');
	\draw[->] (c') to (b');
	\draw[->] (d') to (b');
	\draw[->] (e') to (c');
	\draw[->] (e') to (d');
	\node[right=0 of A', font=\normalsize] {$=:\cat{X}$};

\begin{scope}[mapsto]
	\draw (a) to (a');
	\draw[bend left=8pt] (b) to (b');
	\draw[bend left=8pt] (c) to (c');
	\draw[bend left=8pt] (d) to (d');
	\draw (e) to (e');
\end{scope}
\end{tikzpicture}
\]
Obviously, composing a feasibility relation with with the unit leaves it
unchanged; this is the content of \cref{lemma:unital_serial}.

\begin{exercise} %
\label{exc.draw_a_bridge}
Choose a not-too-simple $\Cost$-category $\cat{X}$. Give a bridge-style diagram for the unit profunctor $U_{\cat{X}}\colon\cat{X}\tickar\cat{X}$.
\end{exercise}

\begin{lemma}%
\label{lemma:unital_serial}
Composing any profunctor $\Phi\colon\cat{P}\to\cat{Q}$ with either unit profunctor, $\Unit{\cat{P}}$ or $\Unit{\cat{Q}}$, returns $\Phi$:
\[\Unit{\cat{P}}\cp\Phi=\Phi=\Phi\cp\Unit{\cat{Q}}\]
\end{lemma}
\begin{proof}
We show that $\Unit{\cat{P}}\cp\Phi=\Phi$ holds; proving
$\Phi=\Phi\cp\Unit{\cat{Q}}$ is similar. Fix $p\in P$ and $q\in Q$. Since
$\cat{V}$ is skeletal, to prove the equality it's enough to show
$\Phi\leq\Unit{\cat{P}}\cp\Phi$ and $\Unit{\cat{P}}\cp\Phi\leq\Phi$. We have one
direction:
\begin{equation}%
\label{eqn.direction_rand597}
	\Phi(p,q)= I\otimes\Phi(p,q)\leq\cat{P}(p,p)\otimes\Phi(p,q)\leq\bigvee_{p_1\in P}\big(\cat{P}(p,p_1)\otimes\Phi(p_1,q)\big)=(\Unit{\cat{P}}\cp\Phi)(p,q).
\end{equation}
For the other direction, we must show $\bigvee_{p_1\in P}\big(\cat{P}(p,p_1)\otimes\Phi(p_1,q)\big)\leq\Phi(p,q)$. But by definition of join, this holds iff $\cat{P}(p,p_1)\otimes\Phi(p_1,q)\leq\Phi(p,q)$ is true for each $p_1\in\cat{P}$. This follows from \cref{def.cat_enriched_mpos,def.enriched_profunctor}:
\begin{equation}%
\label{eqn.rand_16749}
  \cat{P}(p,p_1)\otimes\Phi(p_1,q)=\cat{P}(p,p_1)\otimes\Phi(p_1,q)\otimes I\leq\cat{P}(p,p_1)\otimes\Phi(p_1,q)\otimes\cat{Q}(q,q)\leq\Phi(p,q).
\end{equation}
\end{proof}

\begin{exercise} %
\label{exc.prof_unitality}
\begin{enumerate}
  \item Justify each of the four steps $(=, \leq, \leq, =)$ in
  \cref{eqn.direction_rand597}.
  \item In the case $\cat{V}=\Bool$, we can directly show each of the four steps
  in \cref{eqn.direction_rand597} is actually an equality. How? 
  \item Justify each of the three steps $(=,\leq,\leq)$ in \cref{eqn.rand_16749}.
\qedhere
\end{enumerate}
\end{exercise}

Composition of profunctors is also associative; we leave the proof to you. 

\begin{lemma}%
\label{lemma:assoc_serial}%
\index{associativity!of profunctor composition}
Serial composition of profunctors is associative. That is, given profunctors $\Phi\colon\cat{P}\to\cat{Q}$, $\Psi\colon\cat{Q}\to\cat{R}$, and $\Upsilon\colon\cat{R}\to\cat{S}$, we have
\[(\Phi\cp\Psi)\cp\Upsilon=\Phi\cp(\Psi\cp\Upsilon).\]
\end{lemma}

\begin{exercise} %
\label{exc.prof_associativity}
Prove \cref{lemma:assoc_serial}. (Hint: remember to use the fact that $\cat{V}$
is skeletal.)
\end{exercise} 

So, feasibility relations form a category. Since this is the case, we
can describe feasibility relations using wiring diagrams for categories (see also \cref{subsec.reflection_wds}), which are very simple. Indeed, each box can only have
one input and one output, and they're connected in a line:
\[
% [inline block 19: 2 envs, 3779 chars -> data_tex | \begin{tikzpicture}[oriented WD, align=center, bbx=1cm, bby=1.5ex] 	\node[bb={1}{1}] (f) {$f$};...]

\]
This hints that the category $\Feas$ has more structure. We've seen wiring
diagrams where boxes can have multiple inputs and outputs before, in
\cref{chap.resource_theory}; there they depicted morphisms in a monoidal
preorder. On other hand the boxes in the wiring diagrams of
\cref{chap.resource_theory} could not have distinct labels, like the boxes in a
co-design problem: all boxes in a wiring diagram for monoidal preorders indicate
the order $\le$, while above we see boxes labelled by ``Chassis'', ``Motor'',
and so on. Similarly, we know that $\Feas$ is a proper category, not just a
preorder. To understand these diagrams then, we must introduce a new structure,
called a \emph{monoidal category}. A monoidal category is a \emph{categorified}
monoidal preorder.

\begin{remark}%
\index{categorification}
While we have chosen to define $\Prof_\cat{V}$ only for skeletal quantales in
\cref{thm.quantale_prof}, it is not too hard to work with non-skeletal ones.
There are two straightforward ways to do this. First, we might let the morphisms
of $\Prof_\cat{V}$ be isomorphism classes of $\cat{V}$-profunctors. This is
analogous to the trick we will use when defining the category $\cospan{\cat{C}}$
in \cref{def.cospan_sym_mon_cat}.  Second, we might relax what we mean by
category, only requiring composition to be unital and associative `up to
isomorphism'. This is also a type of categorification, known as bicategory theory.%
\index{bicategory}%
\index{associativity!weak}%
\index{unitality!weak}
\end{remark}

In the next section we'll discuss categorification and introduce monoidal
categories. First though, we finish this section by discussing why profunctors
are called profunctors, and by formally introducing something called the \emph{collage} of a profunctor.

\index{profunctors!category of|)}

%---- Subsection ----%
\subsection{Fun profunctor facts: companions, conjoints, collages}

\paragraph{Companions and conjoints.}
Recall that a preorder is a $\Bool$-category and a monotone map is a $\Bool$-functor. We said above that a profunctor is a generalization of a functor; how so?

In fact, every $\cat{V}$-functor gives rise to two $\cat{V}$-profunctors, called the companion and the conjoint.

\begin{definition}%
\label{def.companion_conjoint}%
\index{companion profunctor}%
\index{conjoint profunctor}
Let $F\colon\cat{P}\to\cat{Q}$ be a $\cat{V}$-functor. The \emph{companion of
$F$}, denoted $\comp{F}\colon\cat{P}\tickar\cat{Q}$ and the \emph{conjoint of
$F$}, denoted $\conj{F}\colon\cat{Q}\tickar\cat{P}$ are defined to be the
following $\cat{V}$-profunctors:
\[
  \comp{F}(p,q)\coloneqq\cat{Q}(F(p),q)
  \quad\text{and}\quad
  \conj{F}(q,p)\coloneqq\cat{Q}(q,F(p))
\]
\end{definition}

Let's consider the $\Bool$ case again. One can think of a monotone map
$F\colon\cat{P}\to\cat{Q}$ as a bunch of arrows, one coming out of each vertex
$p\in P$ and landing at some vertex $F(p)\in Q$.
\[
\begin{tikzpicture}[x=.7in, y=.4in, inner sep=5pt]
  	\node (a) at (0,4) {$\bullet$};
  	\node (b) at (0,3) {$\bullet$};
  	\node (c) at (-1,2) {$\bullet$};
  	\node (d) at (1,2) {$\bullet$};
  	\node (e) at (0,1) {$\bullet$};
  	\node[draw, fit=(a) (b) (c) (d) (e)] (A) {};
  	\node[left=0 of A, font=\normalsize] {$\cat{P}\coloneqq$};
  	\draw[->] (b) to (a);
  	\draw[->] (c) to (b);
  	\draw[->] (d) to (b);
  	\draw[->] (e) to (c);
  	\draw[->] (e) to (d);
  	\node (B0) at (4,4) {$\bullet$};
  	\node (B1) at (4,2.5) {$\bullet$};
  	\node (B2) at (4,1) {$\bullet$};
  	\draw[->] (B2) to (B1);
  	\draw[->] (B1) to (B0);
  	\node[draw, fit=(B0) (B1) (B2)] (B) {};
  	\node[right=0 of B, font=\normalsize] {$=:\cat{Q}$};
	\begin{scope}[mapsto]
    \draw (a) -- (B0);
  	\draw (b) to[out=0,in=170] (B1);
  	\draw (c) to[out=10,in=180] (B1);
  	\draw (d) to[out=0,in=196] (B1);
  	\draw (e) -- (B2);
	\end{scope}
\end{tikzpicture}
\]
This looks like the pictures of bridges connecting cities, and if one regards
the above picture in that light, they are seeing the companion $\comp{F}$. But
now mentally reverse every dotted arrow, and the result would be bridges
$\cat{Q}$ to $\cat{P}$. This is a profunctor $\cat{Q} \tickar \cat{P}$! We call
it $\conj{F}$.

\begin{example}%
\label{ex.unit_profunctor}%
\index{profunctor!unit}
For any preorder $\cat{P}$, there is an identity functor
$\id\colon\cat{P}\to\cat{P}$. Its companion and conjoint agree
$\comp{\id}=\conj{\id}\colon\cat{P}\tickar\cat{P}$. The resulting profunctor is
in fact the unit profunctor, $\Unit{\cat{P}}$, as defined in \cref{eqn.unit_profunctor}.
\end{example}

\begin{exercise} %
\label{exc.unit_companion}
Check that the companion $\comp{\id}$ of $\id\colon\cat{P}\to\cat{P}$ really has
the unit profunctor formula given in \cref{eqn.unit_profunctor}.
\end{exercise}

%\begin{exercise}%
\label{exc.comp_conj_unit}
%In \cref{def.companion_conjoint}, we gave the definition of companion and conjoint of a monotone map between preorders, i.e.\ for the $\Bool$-enriched case. We also defined the unit profunctor in \cref{ex.unit_profunctor}.
%
%Now let $\cat{V}$ be an arbitrary quantale (see \cref{def.monoidal_closed}), and suppose $\cat{X}$ and $\cat{Y}$ are $\cat{V}$-categories.
%\begin{enumerate}
%	\item Let $f\colon\cat{X}\to\cat{Y}$ is a $\cat{V}$-functor. Give a definition for its companion and conjoint, $\comp{f}\colon\cat{X}\tickar\cat{Y}$ and $\conj{f}\colon\cat{Y}\tickar\cat{X}$ in the $\cat{V}$-enriched case.
%	\item Give a definition for the unit $\Unit{\cat{X}}\colon\cat{X}\tickar\cat{X}$ in the $\cat{V}$-enriched case.
%	\qedhere
%\end{enumerate}
%\end{exercise}

\begin{example}%
\label{ex.plus_3}
Consider the function $+\colon\RR\times\RR\times\RR\to\RR$, sending a triple $(a,b,c)$ of real numbers to $a+b+c\in\RR$. This function is monotonic, because if $(a,b,c)\leq(a',b',c')$---i.e.\ if $a\leq a'$ and $b\leq b'$, and $c\leq c'$---then obviously $a+b+c\leq a'+b'+c'$. Thus it has a companion and a conjoint.

Its companion $\comp{+}\colon(\RR\times\RR\times\RR)\tickar\RR$ is the function that sends $(a,b,c,d)$ to $\true$ if $a+b+c\leq d$ and to $\false$ otherwise.
\end{example}

\begin{exercise} %
\label{exc.plus_conjoint}
	Let $+\colon\RR\times\RR\times\RR\to\RR$ be as in \cref{ex.plus_3}. What is its conjoint $\conj{+}$?
\end{exercise}

\begin{remark}[$\cat{V}$-Adjoints]%
\index{adjunction!relationship to companions
  and conjoints}
Recall from \cref{def.galois} the definition of Galois connection between preorders $\cat{P}$ and $\cat{Q}$. The definition of adjoint can be extended from the $\Bool$-enriched setting (of preorders and monotone maps) to the $\cat{V}$-enriched setting for arbitrary monoidal preorders $\cat{V}$. In that case, the definition of a $\cat{V}$-adjunction is a pair of $\cat{V}$-functors $F\colon\cat{P}\to\cat{Q}$ and $G\colon\cat{Q}\to\cat{P}$ such that the following holds for all $p\in P$ and $q\in Q$.
\begin{equation}%
\label{eqn.adjoint_V_funs}
	\cat{P}(p,G(q))\cong\cat{Q}(F(p),q)
\end{equation}
\end{remark}

\begin{exercise} %
\label{exc.adjoint_comp_conj}
Let $\cat{V}$ be a skeletal quantale, let $\cat{P}$ and $\cat{Q}$ be $\cat{V}$-categories, and let $F\colon\cat{P}\to\cat{Q}$ and $G\colon\cat{Q}\to\cat{P}$ be $\cat{V}$-functors.
\begin{enumerate}
	\item Show that $F$ and $G$ are $\cat{V}$-adjoints (as in
	\cref{eqn.adjoint_V_funs}) if and only if the companion of the former
	equals the conjoint of the latter: $\comp{F}=\conj{G}$.
	\item Use this to prove that $\comp{\id}=\conj{\id}$, as was stated in \cref{ex.unit_profunctor}.
	\qedhere
\qedhere
\end{enumerate}
\end{exercise}

\paragraph{Collage of a profunctor.}%
\index{collage|(}%
\index{profunctor!collage of|see {collage}}

We have been drawing profunctors as bridges connecting cities. One may get an inkling that given a $\cat{V}$-profunctor $\Phi\colon X\tickar Y$ between $\cat{V}$-categories $\cat{X}$ and $\cat{Y}$, we have turned $\Phi$ into a some sort of new $\cat{V}$-category that has $\cat{X}$ on the left and $\cat{Y}$ on the right. This works for any $\cat{V}$ and profunctor $\Phi$, and is called the collage construction.
\index{profunctor!as bridges}

\begin{definition}%
\label{def.collage_prof}%
\index{collage}
	Let $\cat{V}$ be a quantale, let $\cat{X}$ and $\cat{Y}$ be $\cat{V}$-categories, and let $\Phi\colon\cat{X}\tickar\cat{Y}$ be a $\cat{V}$-profunctor. The \emph{collage of $\Phi$}, denoted $\Cat{Col}(\Phi)$ is the $\cat{V}$-category defined as follows:
	\begin{enumerate}[label=(\roman*)]
		\item $\Ob(\Cat{Col}(\Phi))\coloneqq\Ob(\cat{X})\sqcup\Ob(\cat{Y})$;
		\item For any $a,b\in\Ob(\Cat{Col}(\Phi))$, define $\Cat{Col}(\Phi)(a,b)\in\cat{V}$ to be
		\[
			\Cat{Col}(\Phi)(a,b)\coloneqq
			\begin{cases}
				\cat{X}(a,b)&\tn{ if }a,b\in\cat{X}\\
				\Phi(a,b)&\tn{ if }a\in\cat{X},b\in\cat{Y}\\
				\varnothing&\tn{ if }a\in\cat{Y},b\in\cat{X}\\
				\cat{Y}(a,b)&\tn{ if }a,b\in\cat{Y}
			\end{cases}
		\]
	\end{enumerate}
There are obvious functors $i_\cat{X}\colon\cat{X}\to\Cat{Col}(\Phi)$ and $i_\cat{Y}\colon\cat{Y}\to\Cat{Col}(\Phi)$, sending each object and morphism to ``itself,'' called  \emph{collage inclusions}.
\end{definition}

Some pictures will help clarify this.

\begin{example}%
\index{Hasse diagram!for profunctors}
Consider the following picture of a $\Cost$-profunctor $\Phi\colon\cat{X}\tickar\cat{Y}$:
\[
% [inline block 20: 7 envs, 3438 chars -> data_tex | \begin{tikzpicture}[font=\scriptsize, x=1cm, y=5ex] 	\node (A) {$\LMO{A}$};...]

\qedhere
\]
\end{exercise}

\index{profunctor|)}%
\index{collage|)}

%-------- Section --------%
\section{Categorification}%
\label{sec.categorification}%
\index{categorification|(}

Here we switch gears, to discuss a general concept called \emph{categorification}. We will begin again with the basics, categorifying several of the notions we've encountered already. The goal is to define compact closed categories and their feedback-style wiring diagrams. At that point we will return to the story of co-design, and $\cat{V}$-profunctors in general, and show that they do in fact form a compact closed category, and thus interpret the diagrams we've been drawing since \cref{eqn.chassis}. 

%---- Subsection ----%
\subsection{The basic idea of categorification}

The general idea of categorification is that we take a thing we know and
add structure to it, so that what were formerly \emph{properties} become
\emph{structures}. We do this in such a way that we can recover the thing we
categorified by forgetting this new structure. This is rather vague; let's give
an example.%
\index{categorification}

Basic arithmetic concerns properties of the natural numbers $\nn$, such as the
fact that $5+3=8$. One way to categorify $\nn$ is to use the category
$\finset$ of finite sets and functions. To obtain a categorification, we replace
the brute $5$, $3$, and $8$ with sets of that many elements, say
$\ol{5}=\{\textrm{apple}, \textrm{banana}, \textrm{cherry},
\textrm{dragonfruit}, \textrm{elephant}\}$, $\ol{3}=\{\textrm{apple},
\textrm{tomato}, \textrm{cantaloupe}\}$, and $\ol{8}=\{\textrm{Ali},
\textrm{Bob}, \textrm{Carl}, \textrm{Deb}, \textrm{Eli}, \textrm{Fritz},
\textrm{Gem}, \textrm{Helen}\}$ respectively. We also replace $+$ with disjoint
union of sets $\sqcup$, and the brute property of equality with the structure of
an isomorphism. What makes this a good categorification is that, having made
these replacements, the analogue of $5+3=8$ is still true: $\ol{5}\sqcup \ol{3}
\cong \ol{8}$.%
\index{union!disjoint}
\[
\begin{tikzpicture}[y=.3ex, rounded corners]
	\node (a1) {$\LTO{apple}$};
	\node [below=1 of a1] (a2) {$\LTO{banana}$};
	\node [below=1 of a2] (a3) {$\LTO{cherry}$};
	\node [below=1 of a3] (a4) {$\LTO{dragonfruit}$};
	\node [below=1 of a4] (a5) {$\LTO{elephant}$};
	\node [draw, fit=(a1) (a5)] (a) {};
	\node [right=2 of a2] (b1) {$\LTO{apple}$};
	\node [below=1 of b1] (b2) {$\LTO{tomato}$};
	\node [below=1 of b2] (b3) {$\LTO{cantaloupe}$};
	\node [draw, fit=(b1) (b3)] (b) {};	
	\node at ($(a)!.5!(b)$) {$\sqcup$};
%
%	\node [right=5 of a1] (x11) {$\LTO{apple1}$};
%	\node [below=1 of x11] (x12) {$\LTO{banana1}$};
%	\node [below=1 of x12] (x13) {$\LTO{cherry1}$};
%	\node [below=1 of x13] (x14) {$\LTO{dragonfruit1}$};
%	\node [below=1 of x14] (x15) {$\LTO{elephant1}$};
%	\node [right=1 of x12] (x21) {$\LTO{apple2}$};
%	\node [below=1 of x21] (x22) {$\LTO{tomato2}$};
%	\node [below=1 of x22] (x23) {$\LTO{cantaloupe2}$};
%	\node [draw, fit=(x11) (x21) (x15) (x23)] (x) {};	
%%
%	\node at ($(b.east)!.5!(x.west)$) {$=$};
%
	\node at ($(a1.south)!.5!(a2.north)$) (helper) {};
	\node [right=6 of helper] (y1) {$\LTO{Ali}$};
	\node [below=1 of y1] (y2) {$\LTO{Bob}$};
	\node [below=1 of y2] (y3) {$\LTO{Carl}$};
	\node [below=1 of y3] (y4) {$\LTO{Deb}$};
	\node [right=1 of y1] (y5) {$\LTO{Eli}$};
	\node [below=1 of y5] (y6) {$\LTO{Fritz}$};
	\node [below=1 of y6] (y7) {$\LTO{Gem}$};
	\node [below=1 of y7] (y8) {$\LTO{Helen}$};
	\node [draw, fit= (y1) (y8)] (y) {};
	\node at ($(b.east)!.5!(y.west)$) {$\cong$};
	\begin{scope}[densely dashed, blue, <->, bend left=40]
		\draw (a1) to (y5);
		\draw (a2) to (y6);
	\end{scope}
	\begin{scope}[densely dashed, blue, <->, bend right=34]
		\draw (a3) to (y4);
	\end{scope}
	\begin{scope}[densely dashed, blue, <->, bend right=40]
		\draw (a4) to (y7);
		\draw (a5) to (y8);
	\end{scope}
	\begin{scope}[densely dashed, blue, <->]
		\draw (b1) to (y1);
		\draw (b2) to (y2);
		\draw (b3) to (y3);
	\end{scope}
\end{tikzpicture}
\]
In this categorified world, we have more structure available to talk about the
relationships between objects, so we can be more precise about how they relate
to each other. Thus it's not the case that $\ol{5} \sqcup \ol{3}$ is
\emph{equal} to our chosen eight-element set $\ol{8}$, but more precisely that
there exists an invertible function comparing the two, showing that they are
isomorphic in the \emph{category} $\finset$.

Note that in the above construction we made a number of choices; here we must
beware. Choosing a good categorification---like designing a good algebraic
structure such as that of preorders or quantales---is part of the \emph{art} of
mathematics.  There is no prescribed way to categorify, and the success of a
chosen categorification is often empirical: its richer structure should allow us more
insights into the subject we want to model.

As another example, an empirically pleasing way to categorify preorders is to
categorify them as, well, categories.  In this case, rather than the brute
property ``there exists a morphism $a\to b$,'' denoted $a\leq b$ or
$\cat{P}(a,b)=\true$, we instead say ``here is a set of morphisms $a\to b$.'' We
get a hom-set rather than a hom-Boolean. In fact---to state this in a way
straight out of the primordial ooze---just as preorders are $\Bool$-categories,
ordinary categories are actually $\Cat{Set}$-categories.
\index{primordial ooze}
%For example, we have seen that preorders are categories in which there
%is only one morphism $a\to b$: so if I can name two morphisms, we immediately
%know they are equal.  One way to categorify this is to say that rather than
%saying they \emph{are} equal, a brute property, we say ``I can supply an
%equivalence between any two morphisms,'' the equivalence being a structure.
%This categorification of the concept of preorder, however, does not happen to be a useful
%one, and this is exactly the problem with categorification: there is no unique
%way to categorify. The notion of categorification is not a mathematical one. It
%seems one must use empirical evidence to say which categorifications are
%``pleasing.''

%
\index{categorification|)}

%---- Subsection ----%
\subsection{A reflection on wiring diagrams}%
\label{subsec.reflection_wds}%
\index{wiring diagram|(}

Suppose we have a preorder. We introduced a very simple sort of wiring diagram in
\cref{ssec.wirdia2}. These allowed us to draw a box 
\[
% [inline block 21: 4 envs, 2425 chars -> data_tex | \begin{tikzpicture}[oriented WD, bb medium, bb port length=0] 	\node[bb={1}{1}] at (0,0) (a) {$\le$};...]

\]
Ideally this would also be a morphism in our category: after all, we have said
that we can represent morphisms with boxes with one input and one output. But
wait, you say! We don't know which morphism it is. Is it $f\cp (g \cp h)$? Or
$(f \cp g) \cp h$? It's good that you are so careful. Luckily, we are saved by
the properties that a category must have. Associativity says $f\cp (g \cp h)=(f
\cp g) \cp h$, so it doesn't matter which way we chose to try to decode the box.

Similarly, the identity morphism on an object $x$ is drawn as on the left below,
but we will see that it is not harmful to draw $\id_x$ in any of the following
three ways:%
\index{identity!in wiring diagrams}
\[
% [inline block 22: 7 envs, 5247 chars -> data_tex | \begin{tikzpicture}[oriented WD, bb medium, bb port length=0] 	\node[bb={1}{1}] (empty) {{\color{white}$\leq$}};...]

\]
Finally, this structure must obey coherence laws, analogous to associativity and
unitality in categories, that allow such diagrams to be unambiguously
interpreted. In the next section we will be a bit more formal, but it is useful
to keep in mind that, when we say our data must be ``well behaved,'' this is all we
mean.
\index{wiring diagram!for monoidal categories}
\index{wiring diagram|)}%
\index{coherence!conditions}

%---- Subsection ----%
\subsection{Monoidal categories}%
\index{monoidal category|(}

We defined $\cat{V}$-categories, for a symmetric monoidal preorder $\cat{V}$ in \cref{def.cat_enriched_mpos}. Just like preorders turned out to be special kinds of categories (see \cref{subsubsec.pos_free_spectrum}), monoidal preorders are special kinds of monoidal categories. And just like we can consider $\cat{V}$-categories for a monoidal preorder, we can also consider $\cat{V}$-categories when $\cat{V}$ is a monoidal category. This is another sort of categorification.

We will soon meet the monoidal category $(\smset,\{1\},\times)$. The monoidal
product will take two sets, $S$ and $T$, and return the set $S\times
T=\{(s,t)\mid s\in S, t\in T\}$. But whereas for monoidal preorders we had the
brute associative property $(p\otimes q)\otimes r=p\otimes (q\otimes r)$, the
corresponding idea in $\smset$ is not quite true:%
\index{associativity!property vs.\ structure}
\begin{align*}
& S \times (T \times U) :=\big\{\big(s,(t,u)\big)\,\big|\, s\in S, t\in T, u\in U\big\}
	\\
	\quad =^?\quad &
(S \times T) \times U:=\big\{\big((s,t),u\big)\,\big|\, s\in S, t\in T, u\in U\big\}.
\end{align*}
They are slightly different sets: the first contains pairs consisting of an
elements in $S$ and an element in $T\times U$, while the second contains pairs
consisting of an element in $S \times T$ and an element in $U$. The sets are not
equal, but they are clearly isomorphic, i.e.\ the difference between them is
``just a matter of bookkeeping.'' We thus need a structure---a bookkeeping
isomorphism---to keep track of the associativity:
\[
	\alpha_{s,t,u}\colon\{(s,(t,u))\mid s\in S, t\in T, u\in U\}
	\To{\cong}
	\{((s,t),u)\mid s\in S, t\in T, u\in U\}.
\]
\index{coherence!as bookkeeping}

There are a couple things to mention before we dive into these ideas. First,
just because you replace brute things and properties with structures, it does
not mean that you no longer have brute things and properties: new ones emerge!
Not only that, but second, the new brute stuff tends to be more complex than
what you started with. For example, above we replaced the associativity equation
with an isomorphism $\alpha_{s,t,u}$, but now we need a more complex property to
ensure that all these $\alpha$'s behave reasonably! The only way out of this morass is to
add infinitely much structure, which leads one to ``$\infty$-categories,'' but
we will not discuss that here.

Instead, we will continue with our categorification of monoidal preorders, starting
with a rough definition of symmetric monoidal categories. It's rough in the
sense that we suppress the technical bookkeeping, hiding it under the name
``well behaved.''

\begin{roughDef}%
\label{rdef.sym_mon_cat}%
\index{category!monoidal structure on}%
\index{product!monoidal}
Let $\cat{C}$ be a category. A \emph{symmetric monoidal structure} on $\cat{C}$ consists of the following constituents:
\begin{enumerate}[label=(\roman*)]
	\item an object $I\in\Ob(\cat{C})$ called the \emph{monoidal unit}, and
	\item a functor $\otimes\colon\cat{C}\times\cat{C}\to\cat{C}$, called the \emph{monoidal product}
\end{enumerate}
subject to well-behaved, natural isomorphisms
\begin{enumerate}[label=(\alph*)]
	\item $\lambda_c\colon I\otimes c\cong c$ for every $c\in\Ob(\cat{C})$,
	\item $\rho_c\colon c\otimes I\cong c$ for every $c\in\Ob(\cat{C})$,
	\item $\alpha_{c,d,e}\colon (c\otimes d)\otimes e\cong c\otimes(d\otimes e)$ for every $c,d,e\in\Ob(\cat{C})$, and
	\item $\sigma_{c,d}\colon c\otimes	d\cong d \otimes c$ for every $c,d\in\Ob(\cat{C})$, called the \emph{swap map}, such that $\sigma\circ\sigma=\id$.
\end{enumerate}
A category equipped with a symmetric monoidal structure is called a
\emph{symmetric monoidal category}.
\end{roughDef}

\begin{remark} %
\label{rem.strict_mon_cat}%
\index{coherence!Mac Lane's theorem}
If the isomorphisms in (a), (b), and (c)---but 
\emph{not} (d)---are replaced by equalities, then we say that the monoidal
structure is \emph{strict}, and this is a complete (non-rough) definition of
\emph{symmetric strict monoidal category}. In fact, symmetric strict monoidal
categories are almost the same thing as symmetric monoidal categories, via a
result known as Mac Lane's coherence theorem. An upshot of this theorem is that
we can, when useful to us, pretend that our monoidal categories are strict: for
example, we implicitly do this when we draw wiring diagrams. Ask your friendly
neighborhood category theorist to explain how! 
\end{remark}

\begin{remark}
For those yet to find a friendly expert category theorist, we make the following
remark. A complete (non-rough) definition of symmetric monoidal category is that
a symmetric monoidal category is a category equipped with an equivalence to
(the underlying category of) a symmetric strict monoidal category. This can be
unpacked, using \cref{rem.strict_mon_cat} and our comment about equivalence of
categories in \cref{rem.preorder_boolcats2}, but we don't expect you to do
so. Instead, we hope this gives you more incentive to ask a friendly expert
category theorist!
\end{remark}

\begin{exercise} %
\label{exc.mon_preorder_is_cat}%
\index{preorder!symmetric monoidal}
Check that monoidal categories indeed generalize monoidal preorders: a monoidal preorder is a
monoidal category $(\cat{P},I,\otimes)$ where, for every $p,q\in\cat{P}$, the
set $\cat{P}(p,q)$ has at most one element. 
\end{exercise}

\begin{example}%
\label{ex.set_as_mon_cat}
As we said above, there is a monoidal structure on $\Cat{Set}$ where the monoidal unit is some choice of singleton set, say $I\coloneqq\{1\}$, and the monoidal product is $\otimes\coloneqq\times$. What it means that $\times$ is a functor is that:
\begin{itemize}
	\item For any pair of objects, i.e.\ sets, $(S,T)\in\Ob(\Cat{Set}\times\Cat{Set})$, one obtains a set $(S\times T)\in\Ob(\Cat{Set})$. We know what it is: the set of pairs $\{(s,t)\mid s\in S, t\in T\}$.
	\item For any pair of morphisms, i.e.\ functions, $f\colon S\to S'$ and $g\colon T\to T'$, one obtains a function $(f\times g)\colon(S\times T)\to (S'\times T')$. It works pointwise: $(f\times g)(s,t)\coloneqq(f(s),g(t))$.
	\item These should preserve identities: $\id_S\times\id_T=\id_{S\times T}$ for any sets $S,T$.
	\item These should preserve composition: for any functions $S\To{f}S'\To{f'}S''$ and $T\To{g}T'\To{g'}T''$, one has
	\[
	(f\times g)\cp(f'\times g')=(f\cp g)\times(f'\cp g').
      \]
\end{itemize}

The four conditions, (a), (b), (c), and (d) give isomorphisms $\{1\}\times S\cong S$, etc. These maps are obvious in the case of $\Cat{Set}$, e.g. the function $\{(1,s)\mid s\in S\}\to S$ sending $(1,s)$ to $s$. We have been calling such things bookkeeping.
\end{example}

%We now combine the notion of monoidal monotone maps between monoidal preorders and functors between categories, and thus define a notion of monoidal functor between monoidal categories. 

\begin{exercise} %
\label{exc.read_string_diag}
Consider the monoidal category $(\Cat{Set},1,\times)$, together with the diagram
\[
\begin{tikzpicture}[oriented WD, string decoration={}]
	\node[bb={1}{2}] (X11) {$f$};
	\node[bb={2}{2}, below right=of X11] (X12) {$g$};
	\node[bb={2}{1}, above right=of X12] (X13) {$h$};
	\node[bb={0}{0}, fit={($(X11.north west)+(.3,1.5)$) (X12)  ($(X13.east)+(-.3,0)$)}] (Y1) {};
	\begin{scope}[font=\small]
  	\draw[ar] (Y1.west|-X11_in1) to node[above] {$A$} (X11_in1);	
  	\draw[ar] (Y1.west|-X12_in2) to node[above] {$B$} (X12_in2);
  	\draw[ar] (X11_out1) to node[above] {$C$} (X13_in1);
  	\draw[ar] (X11_out2) to node[above=5pt] {$D$} (X12_in1);
  	\draw[ar] (X12_out1) to node[above=5pt] {$E$} (X13_in2);
  	\draw[ar] (X12_out2) to node[above] {$F$} (X12_out2-|Y1.east);
  	\draw[ar] (X13_out1) to node[above] {$G$} (X13_out1-|Y1.east);
	\end{scope}
\end{tikzpicture}
\]
Suppose that $A=B=C=D=F=G=\ZZ$ and $E=\BB=\{\true,\false\}$, 
and suppose that $f_C(a)=|a|$, $f_D(a)=a*5$, $g_E(d,b)= ``d\leq b$,'' $g_F(d,b)=d-b$, and $h(c,e)=\tn{if }e\tn{ then }c\tn{ else }1-c$.
\begin{enumerate}
	\item What are $g_E(5,3)$ and $g_F(5,3)$?
	\item What are $g_E(3,5)$ and $g_F(3,5)$?
	\item What is $h(5,\true)$?
	\item What is $h(-5,\true)$?
	\item What is $h(-5,\false)$?
\end{enumerate}
The whole diagram now defines a function $A\times B\to G\times F$; call it $q$.
\begin{enumerate}[resume]
	\item What are $q_G(-2,3)$ and $q_F(-2,3)$?
	\item What are $q_G(2,3)$ and $q_F(2,3)$?
	\qedhere
\qedhere
\end{enumerate}
\end{exercise}

We will see more monoidal categories throughout the remainder of this book. %
\index{monoidal category|)}

%---- Subsection ----%
\subsection{Categories enriched in a symmetric monoidal category}
\label{subsec.SMC_enrichment}
We will not need this again, but we once promised to explain why
$\cat{V}$-categories, where $\cat{V}$ is a symmetric monoidal preorder, deserve to
be seen as types of categories. The reason, as we have hinted, is that
categories should really be called $\smset$-categories. But wait, $\smset$ is not a
preorder! We'll have to generalize---categorify---$\cat{V}$-categories.

We now give a rough definition of categories enriched in a symmetric monoidal category $\cat{V}$. As in \cref{rdef.sym_mon_cat}, we suppress some technical parts in this sketch, hiding them under the name ``usual associative and unital laws.''

\begin{roughDef}%
\label{def.enriched_in_mon_cat}%
\index{enriched category!general
  definition}
\showhide
{%Show
Let $\cat{V}$ be a symmetric monoidal category, as in \cref{rdef.sym_mon_cat}. To
specify a \emph{category enriched in $\cat{V}$}, or a \emph{$\cat{V}$-category},
denoted $\cat{X}$,\begin{enumerate}[label=(\roman*)]
	\item one specifies a collection $\Ob(\cat{X})$, elements of which are called \emph{objects};
	\item for every pair $x,y\in\Ob(\cat{X})$, one specifies an object $\cat{X}(x,y)\in\cat{V}$, called the \emph{hom-object} for $x,y$;%
\index{hom object}
	\item for every $x\in\Ob(\cat{X})$, one specifies a morphism $\id_x\colon I\to\cat{X}(x,x)$ in $\cat{V}$, called the \emph{identity element};%
\index{identity!in enriched categories|see {enriched category, identity in}}%
\index{enriched category!identity in}
	\item for each $x,y,z\in\Ob(\cat{X})$, one specifies a morphism $\cp\colon\cat{X}(x,y)\otimes\cat{X}(y,z)\to\cat{X}(x,z)$, called the \emph{composition morphism}.%
\index{composition!in enriched categories|see {enriched categories, composition in}}%
\index{enriched category!composition in}
\end{enumerate}
These constituents are required to satisfy the usual associative and unital laws.%
\index{associativity!in enriched categories}%
\index{unitality!in enriched categories}
}
{%Hide
Let $\cat{V}$ be a symmetric monoidal category, as in \cref{rdef.sym_mon_cat}. A \emph{category enriched in $\cat{V}$}, or a \emph{$\cat{V}$-category}, denoted $\cat{X}$, consists of the following constituents:
\begin{enumerate}[%
\label=(\roman*)]
	\item a set $\Ob(\cat{X})$, elements of which are called \emph{objects};
	\item for every pair $x,y\in\Ob(\cat{X})$, an object $\cat{X}(x,y)\in\cat{V}$, called the \emph{hom-object} for $x,y$;
	\item for every $x\in\Ob(\cat{X})$, a morphism $\id_x\colon I\to\cat{X}(x,x)$ in $\cat{V}$, called the \emph{identity element};
	\item for each $x,y,z\in\Ob(\cat{X})$, a morphism $\cp\colon\cat{X}(x,y)\otimes\cat{X}(y,z)\to\cat{X}(x,z)$, called the \emph{composition morphism}.
\end{enumerate}
These constituents are required to make the following diagrams commute:
\[
\begin{tikzcd}
	\cat{X}(x,y)\otimes I\ar[d,"{\cat{X}(x,y)\otimes\id_y}"']\ar[r,"\cong"]&\cat{X}(x,y)\\
	\cat{X}(x,y)\otimes\cat{X}(y,y)\ar[r, "\cp"']&\cat{X}(x,y)\ar[u, equal]
\end{tikzcd}
\hspace{1in}
\begin{tikzcd}
	I\otimes\cat{X}(x,y)\ar[d,"{\id_x\otimes\cat{X}(x,y)}"']\ar[r,"\cong"]&\cat{X}(x,y)\\
	\cat{X}(x,x)\otimes\cat{X}(x,y)\ar[r, "\cp"']&\cat{X}(x,y)\ar[u, equal]
\end{tikzcd}
\]
\[
\begin{tikzcd}[column sep=-17pt]
	\cat{X}(w,x)\otimes(\cat{X}(x,y)\otimes\cat{X}(y,z))\ar[rr,"\cong"]\ar[d,"{\cat{X}(w,x)\otimes\cp}"']
	&&
	(\cat{X}(w,x)\otimes\cat{X}(x,y))\otimes\cat{X}(y,z)\ar[d,"{\cp\otimes\cat{X}(y,z)}"]
	\\
	\cat{X}(w,x)\otimes\cat{X}(x,z)\ar[dr,"\cp"']
	&&
	\cat{X}(w,y)\otimes\cat{X}(y,z)\ar[dl,"\cp"]
	\\
	&\cat{X}(w,z)
\end{tikzcd}
\]
}
\end{roughDef}

The precise, non-rough, definition can be found in other sources, e.g.\ \cite{Nlab:symmeric-monoidal-category}, \cite{wiki:Symmetric-monoidal-category}, \cite{Kelly:1982a}.

\begin{exercise} %
\label{exc.cat_is_set_enriched}
Recall from \cref{ex.set_as_mon_cat} that $\cat{V}=(\smset,\{1\},\times)$ is a
symmetric monoidal category. This means we can apply
\cref{def.enriched_in_mon_cat}. Does the (rough) definition roughly agree with
the definition of category given in \cref{def.category}? Or is there a subtle
difference?
\end{exercise}

\begin{remark} %
\label{rem.cats_and_vcats2}%
\index{enriched categories}
We first defined $\cat{V}$-categories in \cref{def.cat_enriched_mpos}, where
$\cat{V}$ was required to be a monoidal preorder. To check we're not abusing our
terms, it's a good idea to make sure that $\cat{V}$-categories as per
\cref{def.cat_enriched_mpos} are still $\cat{V}$-categories as per
\cref{def.enriched_in_mon_cat}. 

The first thing to observe is that every symmetric monoidal preorder is a symmetric
monoidal category (\cref{exc.mon_preorder_is_cat}). So given a symmetric monoidal
preorder $\cat{V}$, we can apply \cref{def.enriched_in_mon_cat}.
The required data (i) and (ii) then get us off to a good start: both definitions of
$\cat{V}$-category require objects and hom-objects, and they are specified in
the same way. On the other hand, \cref{def.enriched_in_mon_cat} requires two
additional pieces of data: (iii) identity elements and (iv) composition
morphisms. Where do these come from?%
\index{hom object}

In the case of preorders, there is at most one morphism between any two objects, so
we do not need to choose an identity element and a composition morphism.
Instead, we just need to make sure that an identity element and a composition
morphism exist. This is exactly what properties (a) and (b) of
\cref{def.cat_enriched_mpos} say. 

For example, the requirement (iii) that a $\cat{V}$-category $\cat{X}$ has a
chosen identity element $\id_x\colon I \to \cat{X}(x,x)$ for the object $x$
simply becomes the requirement (a) that $I \le \cat{X}(x,x)$ is true in
$\cat{V}$. This is typical of the story of categorification: what were mere
properties in \cref{def.cat_enriched_mpos} have become structures in
\cref{def.enriched_in_mon_cat}.
\end{remark}

\begin{exercise} %
\label{exc.metric_space_identities}
What are identity elements in Lawvere metric spaces (that is,
$\Cost$-categories)? How do we interpret this in terms of distances?
\end{exercise}

%-------- Section --------%
\section{Profunctors form a compact closed category}%
\index{profunctors!compact closed category of}%
\index{compact closed category|(}

In this section we will define compact closed categories and show that $\Feas$, and more generally $\cat{V}$-profunctors, form such a thing. Compact-closed categories are monoidal categories whose wiring diagrams allow feedback. The wiring diagrams look like this:
\begin{equation}%
\label{eqn.generic_compact_closed_diag}
\begin{tikzpicture}[oriented WD, bbx=1cm, bb port sep=1, bb port length=0, baseline=(Z)]
	\node[bb={2}{2}] (X11) {$f_1$};
	\node[bb={3}{3}, below right=of X11] (X12) {$f_2$};
	\node[bb={2}{1}, above right=of X12] (X13) {$f_3$};
	\node[bb={2}{2}, below right = -1 and 1.5 of X12] (X21) {$f_4$};
	\node[bb={1}{2}, above right=-1 and 1 of X21] (X22) {$f_5$};
  \node[bb={2}{2}, fit = {($(X11.north east)+(-1,2)$) (X12) (X13) ($(X21.south)$) ($(X22.east)+(.5,0)$)}] (Z) {};
	\draw[ar] (X21_out1) to (X22_in1);
	\draw[ar] let \p1=(X22.north east), \p2=(X21.north west), \n1={\y1+\bby}, \n2=\bbportlen in
          (X22_out1) to[in=0] (\x1+\n2,\n1) -- (\x2-\n2,\n1) to[out=180] (X21_in1);
	\draw[ar] (X11_out1) to (X13_in1);
	\draw[ar] (X11_out2) to (X12_in1);
	\draw[ar] (X12_out1) to (X13_in2);
	\draw[ar] (Z_in1'|-X11_in2) to (X11_in2);	
	\draw[ar] (Z_in2'|-X12_in2) to (X12_in2);
	\draw[ar] (X12_out2) to (X21_in2);
	\draw[ar] (X21_out2) to (Z_out2'|-X21_out2);
	\draw[ar] let \p1=(X12.south east), \p2=(X12.south west), \n1={\y1-\bby}, \n2=\bbportlen in
	  (X12_out3) to[in=0] (\x1+\n2,\n1) -- (\x2-\n2,\n1) to[out=180] (X12_in3);
	\draw[ar] let \p1=(X22.north east), \p2=(X11.north west), \n1={\y2+\bby}, \n2=\bbportlen in
          (X22_out2) to[in=0] (\x1+\n2,\n1) -- (\x2-\n2,\n1) to[out=180] (X11_in1);
	\draw[ar] (X13_out1) to (Z_out1'|-X13_out1);
\end{tikzpicture}
\end{equation}
It's been a while since we thought about co-design, but these were the kinds of wiring diagrams we drew, e.g.\ connecting the chassis, the motor, and the battery in \cref{eqn.chassis}. Compact closed categories are symmetric monoidal categories, with a bit more structure that allow us to formally interpret the sorts of feedback that occur in co-design problems. This same structure shows up in many other fields, including quantum mechanics and dynamical systems.%
\index{wiring diagram}

In \cref{eqn.styles_of_WD,subsec.SMPs_science} we discussed various flavors of
wiring diagrams, including those with icons for splitting and terminating wires.%
\index{icon}
For compact-closed categories, our additional icons allow us to bend outputs
into inputs, and vice versa. To keep track of this, however, we draw arrows on
our wire, which can either point forwards or backwards. For example, we can draw this
\begin{equation}%
\label{eqn.sound_fury}%
\index{icon}
\begin{tikzpicture}[oriented WD, baseline=(pair)]
	\node[bb={1}{2}] (P1) {Person 1};
	\node[bb={2}{1}, right=1 of P1] (P2) {Person 2};
	\node[draw, fit=(P1) (P2), inner xsep=50pt] (pair) {};
	\begin{scope}[font=\footnotesize]
  	\draw[ar] (pair.west|-P1_in1) to node[below] {pain} (P1_in1);
  	\draw[ar] (P2_in1) to node[above] {sound} (P1_out1);
  	\draw[ar] (P1_out2) to node[below] {fury} (P2_in2);
		\draw[ar] (P2_out1) to node[below] {complaint} (P2_out1-|pair.east);
	\end{scope}
\end{tikzpicture}
\end{equation}
We then add icons---called a cap and a cup---allowing any wire to reverse
direction from forwards to backwards and from backwards to forwards.%
\index{icon!cup and cap}
\begin{equation}%
\label{eqn.cap_cup}
\begin{tikzpicture}[decoration={markings, mark=at position 0.5 with {\arrow{Stealth};}}, x=1cm, font=\footnotesize, baseline=(A)]
	\draw[postaction={decorate}] (0,0) to node[below] {sound} (1,0);
	\draw (1,0) arc (-90:90:.5cm);
	\draw[postaction={decorate}] (1,1) to node[above] {sound} (0,1);
	\draw[postaction={decorate}] (3,0) to node[below] {sound} (4,0);
	\draw (3,1) arc (90:270:.5cm);
	\draw[postaction={decorate}] (4,1) to node[above] {sound} (3,1);
	\node (A) at (0,.5) {};
\end{tikzpicture}
\end{equation}
Thus we can draw the following
\[
\begin{tikzpicture}[oriented WD]
	\node[bb={2}{1}] (P1) {Person 1};
	\node[bb={1}{2}, right=1 of P1] (P2) {Person 2};
	\node[draw, fit={($(P1.north west)+(0,2)$) (P2)}, inner xsep=50pt] (pair) {};
	\begin{scope}[font=\footnotesize]
  	\draw[ar] (pair.west|-P1_in2) to node[below] {pain} (P1_in2);
  	\draw[ar] (P1_out1) to node[below] {fury} (P2_in1);
  	\draw[ar] let \p1=(P2.north east), \p2=(P1.north west), \n1=\bbportlen, \n2=\bby in
			(P2_out1) to[in=0] (\x1+\n1, \y1+\n2) to[in=0,out=180] node[above] {sound} (\x2-\n1,\y2+\n2) to[out=180] (P1_in1);
		\draw[ar] (P2_out2) to node[below] {complaint} (P2_out2-|pair.east);
	\end{scope}
\end{tikzpicture}
\]
and its meaning is equivalent to that of \cref{eqn.sound_fury}.

We will begin by giving the axioms for a compact closed category. Then we will look again at feasibility relations in co-design---and more generally at enriched profunctors---and show that they indeed form a compact closed category.%
\index{profunctor!enriched|see {profunctor}}

%---- Subsection ----%
\subsection{Compact closed categories}

As we said, compact closed categories are symmetric monoidal
categories (see \cref{rdef.sym_mon_cat}) with extra structure.

\begin{definition}%
\label{def.compact_closed}%
\index{category!compact closed|see {compact closed category}}%
\index{closed category!compact|see {compact closed category}}%
\index{compact closed category}
	Let $(\cat{C},I,\otimes)$ be a symmetric monoidal category, and $c\in\Ob(\cat{C})$ an object. A \emph{dual for $c$} consists of three constituents
	\begin{enumerate}[label=(\roman*)]
		\item an object $c^*\in\Ob(\cat{C})$, called the \emph{dual of $c$},%
\index{dual!object}
		\item a morphism $\eta_c\colon I\to c^*\otimes c$, called the \emph{unit for $c$},%
\index{unit}
		\item a morphism $\epsilon_c\colon c\otimes c^*\to I$, called the \emph{counit for $c$}.%
\index{counit}%
\index{compact closed category!duals in}
	\end{enumerate}
These are required to satisfy two equations for every $c\in\Ob(\cat{C})$, which we draw as commutative diagrams:
\begin{equation}%
\label{eqn.yanking}
\begin{tikzcd}
	c\ar[d, "\cong"']\ar[r, equals]&c\\
	c\otimes I\ar[d,"c\otimes\eta_c"']&	I\otimes c\ar[u, "\cong"']\\
	c\otimes(c^*\otimes c)\ar[r,"\cong"']&(c\otimes c^*)\otimes c\ar[u,"\epsilon_c\otimes c"']\\
\end{tikzcd}
\hspace{.8in}
\begin{tikzcd}
	c^*\ar[d, "\cong"']\ar[r, equals]&c^*\\
	I\otimes c^*\ar[d,"\eta_c\otimes c^*"']&	c^*\otimes I\ar[u, "\cong"']\\
	(c^*\otimes c)\otimes c^*\ar[r,"\cong"']&c^*\otimes (c\otimes c^*)\ar[u,"c^*\otimes \epsilon_c"']\\
\end{tikzcd}
\end{equation}
These equations are sometimes called the \emph{snake equations}.%
\index{snake
equations}

If for every object $c\in\Ob(\cat{C})$ there exists a dual $c^*$ for $c$, then we say that $(\cat{C},I,\otimes)$ is \emph{compact closed}.
\end{definition}

In a compact closed category, each wire is equipped with a direction. For any object $c$, a forward-pointing wire labeled $c$ is considered equivalent to a backward-pointing wire labeled $c^*$, i.e. $\To{c}$ is the same as $\From{c^*}$. The cup and cap discussed above are in fact the unit and counit morphisms; they are drawn as follows.
\[
\begin{tikzpicture}[decoration={markings, mark=at position 0.5 with {\arrow{Stealth};}}, x=1cm]
	\draw[postaction={decorate}] (1,0) to node[below] {$c$} (0,0);
	\draw (0,0) arc (270:90:.5cm);
	\node[left] at (-.5,.5) {$\eta_c$};
	\draw[postaction={decorate}] (0,1) to node[above] {$c$} (1,1);
	\draw[postaction={decorate}] (4,0) to node[below] {$c$} (5,0);
	\draw (5,0) arc (-90:90:.5cm);
	\node[right] at (5.5,.5) {$\epsilon_c$};
	\draw[postaction={decorate}] (5,1) to node[above] {$c$} (4,1);
\end{tikzpicture}
\]
In wiring diagrams, the snake equations \eqref{eqn.yanking} are then drawn as follows:
\[
\begin{tikzpicture}[decoration={markings, mark=at position 0.5 with {\arrow{Stealth};}}, x=1cm]
	\draw (-2,-1) to node[above, pos=.2, font=\tiny] {$c$} (0,-1);
	\draw[postaction={decorate}] (0,-1) to (1,-1);
	\draw (1,-1) arc (-90:90:.5cm);
	\draw[postaction={decorate}] (1,0) to (0,0);
	\draw (0,0) arc (270:90:.5cm);
	\draw[postaction={decorate}] (0,1) to (1,1);
	\draw (1,1) to node[above, pos=.8, font=\tiny] {$c$} (3,1);
	\draw[dotted] (-1,-1.6) -- (-1,1.6);
	\draw[dotted] (.5,-1.6) -- (.5,1.6);
	\draw[dotted] (2,-1.6) -- (2,1.6);
	\node at (-.25,-1.6) {$c\otimes\eta_c$};
	\node at (1.25,-1.6) {$\epsilon_c\otimes c$};
\end{tikzpicture}
\hspace{1in}
\begin{tikzpicture}[decoration={markings, mark=at position 0.5 with {\arrow{Stealth};}}, x=1cm]
	\draw (3,-1) to node[above, pos=.2, font=\tiny] {$c$} (1,-1);
	\draw[postaction={decorate}] (1,-1) to (0,-1);
	\draw (0,-1) arc (270:90:.5cm);
	\draw[postaction={decorate}] (0,0) to (1,0);
	\draw (1,0) arc (-90:90:.5cm);
	\draw[postaction={decorate}] (1,1) to (0,1);
	\draw (0,1) to node[above, pos=.8, font=\tiny] {$c$} (-2,1);
	\draw[dotted] (-1,-1.6) -- (-1,1.6);
	\draw[dotted] (.5,-1.6) -- (.5,1.6);
	\draw[dotted] (2,-1.6) -- (2,1.6);
	\node at (-.25,-1.6) {$\eta_c\otimes c^*$};
	\node at (1.25,-1.6) {$c^*\otimes\epsilon_c$};	
\end{tikzpicture}
\]
Note that the pictures in \cref{eqn.cap_cup} correspond to $\epsilon_{\text{sound}}$ and $\eta_{\text{sound}^*}$\space. 

Recall the notion of monoidal closed preorder; a monoidal category can also be monoidal
closed. This means that for every pair of objects $c,d\in\Ob(\cat{C})$ there is
an object $c\multimap d$ and an isomorphism $\cat{C}(b\otimes
c,d)\cong\cat{C}(b,c\multimap d)$, natural in $b$. While we will not provide a
full proof here, compact closed categories are so-named because they are a
special type of monoidal closed category.

\begin{proposition}%
\label{prop.double_dual}%
\index{closed category!compact implies monoidal}
If $\cat{C}$ is a compact closed category, then
\begin{enumerate}
	\item $\cat{C}$ is monoidal closed;
\end{enumerate}
and for any object $c\in\Ob(\cat{C})$,
\begin{enumerate}[resume]
	\item if $c^*$ and $c'$ are both duals to $c$ then there is an
	isomorphism $c^*\cong c'$; and
	\item there is an isomorphism between $c$ and its double-dual, $c\cong c^{**}$.%
\index{dual!double}
\end{enumerate}
\end{proposition}
To prove 1., the key idea is that for any $c$ and $d$, the object $c \multimap
d$ is given by $c^{*} \otimes d$, and the natural isomorphism $\cat{C}(b\otimes
c,d)\cong\cat{C}(b,c\multimap d)$ is given by precomposing with $\id_b\otimes
\eta_c$.

Before returning to co-design, we give another example of a compact closed
category, called $\Cat{Corel}$, which we'll see again in the chapters to come.

\begin{example} %
\label{ex.corel}%
\index{corelation}%
\index{equivalence relation!as
  corelation}
  Recall, from \cref{def.equivalence_relation}, that an equivalence relation on
  a set $A$ is a reflexive, symmetric, and transitive binary relation on $A$.
  Given two finite sets, $A$ and $B$, a \emph{corelation} $A \to B$ is an
  equivalence relation on $A\dju B$.

  So, for example, here is a corelation from a set $A$ having five elements to a set
  $B$ having six elements; two elements are equivalent if they are encircled by the
  same dashed line.
  \[
  \begin{tikzpicture}
	\begin{pgfonlayer}{nodelayer}
		\node [contact, outer sep=5pt] (0) at (-2, 1) {};
		\node [contact, outer sep=5pt] (1) at (-2, 0.5) {};
		\node [contact, outer sep=5pt] (2) at (-2, -0) {};
		\node [contact, outer sep=5pt] (3) at (-2, -0.5) {};
		\node [contact, outer sep=5pt] (4) at (-2, -1) {};
		\node [contact, outer sep=5pt] (5) at (1, 1.25) {};
		\node [contact, outer sep=5pt] (6) at (1, 0.75) {};
		\node [contact, outer sep=5pt] (7) at (1, 0.25) {};
		\node [contact, outer sep=5pt] (8) at (1, -0.25) {};
		\node [contact, outer sep=5pt] (9) at (1, -0.75) {};
		\node [contact, outer sep=5pt] (10) at (1, -1.25) {};
		\node [style=none] (11) at (-2.75, -0) {$A$};
		\node [style=none] (12) at (1.75, -0) {$B$};
	\end{pgfonlayer}
	\begin{pgfonlayer}{edgelayer}
		\draw [rounded corners=5pt, dashed] 
   (node cs:name=0, anchor=north west) --
   (node cs:name=1, anchor=south west) --
   (node cs:name=6, anchor=south east) --
   (node cs:name=5, anchor=north east) --
   cycle;
		\draw [rounded corners=5pt, dashed] 
   (node cs:name=2, anchor=north west) --
   (node cs:name=3, anchor=south west) --
   (node cs:name=3, anchor=south east) --
   (node cs:name=2, anchor=north east) --
   cycle;
		\draw [rounded corners=5pt, dashed] 
   (node cs:name=4, anchor=north west) --
   (node cs:name=4, anchor=south west) --
   (node cs:name=10, anchor=south east) --
   (node cs:name=9, anchor=north east) --
   cycle;
   		\draw [rounded corners=5pt, dashed] 
   (node cs:name=7, anchor=north west) --
   (node cs:name=7, anchor=south west) --
   (node cs:name=7, anchor=south east) --
   (node cs:name=7, anchor=north east) --
   cycle;
   		\draw [rounded corners=5pt, dashed] 
   (node cs:name=8, anchor=north west) --
   (node cs:name=8, anchor=south west) --
   (node cs:name=8, anchor=south east) --
   (node cs:name=8, anchor=north east) --
   cycle;
	\end{pgfonlayer}
\end{tikzpicture}
\]

  There exists a category, denoted $\corel{}$, where the objects are finite
  sets, and where a morphism from $A \to B$ is a corelation $A \to B$. 
  The composition rule is simpler to look at than to write down formally.%
  \tablefootnote{
  To compose corelations $\alpha\colon A \to B$
  and $\beta\colon B \to C$, we need to construct an equivalence relation
  $\alpha\cp\beta$ on $A\dju C$. To do so requires three steps: (i) consider $\alpha$ and $\beta$ as
  relations on $A\dju B \dju C$, (ii) take the transitive closure of their
  union, and then (iii) restrict to an equivalence relation on $A\dju C$. Here is the formal description. Note that as binary relations, we have $\alpha \subseteq (A\dju B)
  \times (A\dju B)$, and $\beta\subseteq (B\dju C) \times (B\dju C)$. We also
  have three inclusions: $\iota_{A\dju B}\colon A \dju B \to A \dju B \dju C$,
  $\iota_{B\dju C}\colon B \dju C \to A \dju B \dju C$, and $\iota_{A\dju
  C}\colon A \dju C \to A \dju B \dju C$. Recalling our notation from \cref{sec.galois_connections}, we define
  \[
    \alpha\cp\beta \coloneq \iota^\ast_{A\dju C}((\iota_{A\dju B})_!(\alpha)\vee
    (\iota_{B\dju C})_!(\beta)).
  \]
  }
  If in addition to the corelation $\alpha\colon A \to B$ above we have
another corelation $\beta\colon B \to C$
\[
% [inline block 23: 3 envs, 6791 chars -> data_tex | \begin{tikzpicture} 	\begin{pgfonlayer}{nodelayer}...]

\end{aligned}
\]
That is, two elements are equivalent in the composite corelation if we may
travel from one to the other staying within equivalence classes of either
$\alpha$ or $\beta$.

The category $\corel{}$ may be equipped with the symmetric monoidal structure
$(\varnothing, \dju)$.  This monoidal category is compact closed, with every
finite set its own dual.%
\index{dual!self}  Indeed, note that for any finite set $A$ there is an
equivalence relation on $A \dju A\coloneq\{(a,1), (a,2) \,|\, a \in A\}$ where
each part simply consists of the two elements $(a,1)$ and $(a,2)$ for each $a
\in A$. The unit on a finite set $A$ is the corelation $\eta_A\colon \varnothing
\to A \dju A$ specified by this equivalence relation; similarly the counit on
$A$ is the corelation $\epsilon_A\colon A \dju A \to \varnothing$ specifed by
this same equivalence relation.
\end{example}

\begin{exercise} %
\label{exc.corelations}
Consider the set $\ul{3}=\{1,2,3\}$.
\begin{enumerate}
	\item Draw a picture of the unit corelation $\varnothing\to\ul{3}\sqcup\ul{3}$.
	\item Draw a picture of the counit corelation $\ul{3}\sqcup\ul{3}\to\varnothing$.
	\item Check that the snake equations \eqref{eqn.yanking} hold. (Since every object is its own dual, you only need to check one of them.)
\qedhere
\end{enumerate}
\end{exercise}

%---- Subsection ----%
\subsection{$\Feas$ as a compact closed category}
\index{co-design}%
\index{feasibility relation!compact closed category of|(}
We close the chapter by returning to co-design and showing that $\Feas$ has a compact closed structure. This is what allows us to draw the kinds of wiring diagrams we saw in \cref{eqn.chassis,eqn.generic_compact_closed_diag,eqn.sound_fury}: it is what puts actual mathematics behind these pictures.

Instead of just detailing this compact closed structure for $\Feas =
\Prof_\Bool$, it's no extra work to prove that for any skeletal (unital,
commutative) quantale $(\cat{V},I,\otimes)$ the profunctor category
$\Prof_\cat{V}$ of \cref{thm.quantale_prof} is compact closed, so we'll discuss
this general fact.

\begin{theorem}
Let $\cat{V}$ be a skeletal quantale. The category $\Prof_\cat{V}$ can be given
the structure of a compact closed category, with monoidal product given by the
product of $\cat{V}$-categories.
\end{theorem}

Indeed, all we need to do is construct the monoidal structure and duals for
objects. Let's sketch how this goes.

\paragraph{Monoidal products in $\Prof_\cat{V}$ are just product categories.}
\index{product!of categories}

In terms of wiring diagrams, the monoidal structure looks like stacking wires or boxes on top of one another, with no new interaction.%
\index{stacking|see {monoidal product}}
\[
\begin{tikzpicture}[oriented WD, bbx=1cm, bb port sep=1]
	\node[bb={2}{1}] (A) {$\Phi$};
	\node[bb={2}{2}, below=1 of A] (B) {$\Psi$};
	\node[bb={0}{0}, fit=(A) (B), inner ysep=20pt, bb name=$\Phi\otimes\Psi$] (outer) {};
\begin{scope}[shorten <=-3pt, ar]
	\draw[ar] (outer.west|-A_in1) -- (A_in1);
	\draw[ar] (outer.west|-A_in2) -- (A_in2);
	\draw[ar] (outer.west|-B_in1) -- (B_in1);
	\draw[ar] (outer.west|-B_in2) -- (B_in2);
\end{scope}
\begin{scope}[shorten >=-3pt]
	\draw[ar] (A_out1) to (A_out1-|outer.east);
	\draw[ar] (B_out1-|outer.east) to (B_out1);
	\draw[ar] (B_out2) to (B_out2-|outer.east);
\end{scope}
\end{tikzpicture}
\]
We take our monoidal product on $\Prof_\cat{V}$ to be that given by the product of $\cat{V}$-categories; the definition was given in \cref{def.enriched_prod}, and we worked out several examples there. To recall, the formula for the hom-sets in $\cat{X}\times\cat{Y}$ is given by
\[(\cat{X}\times\cat{Y})((x,y),(x',y'))\coloneqq\cat{X}(x,x')\otimes\cat{Y}(y,y').\]
But monoidal products need to be given on morphisms also, and the morphisms in $\Prof_\cat{V}$ are $\cat{V}$-profunctors. So given $\cat{V}$-profunctors $\Phi\colon\cat{X}_1\tickar\cat{X}_2$ and $\Psi\colon\cat{Y}_1\tickar\cat{Y}_2$, one defines a $\cat{V}$-profunctor $(\Phi\times\Psi)\colon\cat{X}_1\times\cat{Y}_1\tickar\cat{X}_2\times\cat{Y}_2$ by
\[(\Phi\times\Psi)((x_1,y_1),(x_2,y_2))\coloneqq\Phi(x_1,x_2)\otimes\Psi(y_1,y_2).\]

\begin{exercise}%
\label{exc.explain_monoidal_prod_feas}
Interpret the monoidal products in $\Prof_{\Bool}$ in terms of feasibility. That
is, preorders represent resources ordered by availability ($x\leq x'$ means that
$x$ is available given $x'$) and a profunctor is a feasibility relation. Explain
why $\cat{X}\times\cat{Y}$ makes sense as the monoidal product of resource
preorders $\cat{X}$ and $\cat{Y}$ and why $\Phi\times\Psi$ makes sense as the monoidal product of feasibility relations $\Phi$ and $\Psi$.
\end{exercise}

\paragraph{The monoidal unit in $\Prof_\cat{V}$ is $\Cat{1}$.}

To define a monoidal structure on $\Prof_\cat{V}$, we need not only a monoidal product---as defined above---but also a monoidal unit. Recall the $\cat{V}$-category $\Cat{1}$; it has one object, say $1$, and $\cat{1}(1,1)=I$ is the monoidal unit of $\cat{V}$. We take $\Cat{1}$ to be the monoidal unit of $\Prof_\cat{V}$.

\begin{exercise} %
\label{exc.prof_monoidal_unit}
In order for $\Cat{1}$ to be a monoidal unit, there are supposed to be
isomorphisms $\cat{X}\times\Cat{1}\tickar\cat{X}$ and
$\Cat{1}\times\cat{X}\tickar\cat{X}$ in $\Prof_\cat{V}$, for any $\cat{V}$-category $\cat{X}$. What
are they?
\end{exercise}

\paragraph{Duals in $\Prof_\cat{V}$ are just opposite categories.}%
\index{opposite category!as dual}

In order to regard $\Prof_\cat{V}$ as a compact closed category
(\cref{def.compact_closed}), it remains to specify duals and the corresponding
cup and cap.%
\index{dual!category as opposite, in $\Prof$}

Duals are easy: for every $\cat{V}$-category $\cat{X}$, its dual is its opposite
category $\cat{X}\op$ (see \cref{def.enriched_op}).  The unit and counit then
look like identities. To elaborate, the unit is a $\cat{V}$-profunctor
$\eta_{\cat{X}}\colon\Cat{1}\tickar\cat{X}\op\times\cat{X}$. By definition, this
is a $\cat{V}$-functor
\[
\eta_{\cat{X}}\colon\Cat{1}\times\cat{X}\op\times\cat{X}\to\cat{V};
\]
we define it by $\eta_{\cat{X}}(1,x,x')\coloneqq\cat{X}(x,x')$. Similarly, the
counit is the profunctor
$\epsilon_{\cat{X}}\colon(\cat{X}\times\cat{X}\op)\tickar\Cat{1}$, defined by
$\epsilon_{\cat{X}}(x,x',1)\coloneqq\cat{X}(x,x')$.

\begin{exercise} %
\label{exc.prof_duals}
Check these proposed units and counits do indeed obey the snake equations
\cref{eqn.yanking}.
\end{exercise}

\index{feasibility relation!compact closed category of|)}
\index{compact closed category|)}

%-------- Section --------%

\section{Summary and further reading}

This chapter introduced three important ideas in category theory: profunctors,
categorification, and monoidal categories. Let's talk about them in turn.

Profunctors generalize binary relations. In particular, we saw that the idea of
profunctor over a monoidal preorder gave us the additional power necessary to
formalize the idea of a feasibility relation between resource preorders. The idea
of a feasibility relation is due to Andrea Censi; he called them \emph{monotone
codesign problems}. The basic idea is explained in \cite{Censi:2015a}, where he
also gives a programming language to specify and solve codesign problems. In
\cite{censi:2017a}, Censi further discusses how to use estimation to make
solving codesign problems computationally efficient.

We also saw profunctors over the preorder $\Cost$, and how to think of these as
bridges between Lawvere metric space. We referred earlier to Lawvere's paper
\cite{Lawvere:1973a}; plenty more on $\Cost$-profunctors can be found there.

Profunctors, however are vastly more general than the two examples we have
discussed; $\cat{V}$-profunctors can be defined not only when $\cat{V}$ is a
preorder, but for any symmetric monoidal category. A delightful, detailed
exposition of profunctors and related concepts such as equipments, companions
and conjoints, symmetric monoidal bicategories can be found in
\cite{Shulman:2008a,Shulman:2010a}.

We have not defined symmetric monoidal bicategories, but you would be correct if
you guessed this is a sort of categorification of symmetric monoidal
categories. Baez and Dolan tell the subtle story of categorifying categories to
get ever \emph{higher} categories in \cite{Baez.Dolan:1998}. Crane and Yetter
give a number of examples of categorification in \cite{Crane.Yetter:1996a}.

Finally, we talked about monoidal categories and compact closed categories.
Monoidal categories are a classic, central topic in category theory, and a quick
introduction can be found in \cite{MacLane:1998a}. Wiring diagrams play a huge
role in this book and in applied category theory in general; while informally
used for years, these were first formalized in the case of monoidal categories.
You can find the details here
\cite{Joyal.Street:1993a,Joyal.Street.Verity:1996a}.

Compact closed categories are a special type of structured monoidal category;
there are many others. For a broad introduction to the different flavors of
monoidal category, detailed through their various styles of wiring diagram, see
\cite{selinger2010survey}.%
\index{wiring diagram!styles of}

\index{co-design|)}

%Petri nets as free monoidal categories on a signature (so morphisms in the free prop on some generating set $G$ are executions in the Petri net with one place and transitions given by $G$).

\setcounter{chapter}{4}%Just finished 4.

%------------ Chapter ------------%
\chapter[Signal flow graphs: Props, presentations, \& proofs]{Signal flow graphs:\\Props, presentations, and proofs}%
\label{chap.SFGs}

%-------- Section --------%
\section{Comparing systems as interacting signal processors}
\index{cyber-physical system}
Cyber-physical systems are systems that involve tightly interacting physical and
computational parts. An example is an autonomous car: sensors inform a decision
system that controls a steering unit that drives a car, whose movement changes the sensory input. While such systems
involve complex interactions of many different subsystems---both physical ones, such
as the driving of a wheel by a motor, or a voltage placed across a wire, and
computational ones, such as a program that takes a measured velocity and returns
a desired acceleration---it is often useful to model the system behavior as
simply the passing around and processing of signals. For this illustrative sketch, we
will just think of signals as things which we can add and multiply, such as real
numbers. 

Interaction in cyber-physical systems can often be understood as variable
sharing; i.e.\ when two systems are linked, certain variables become shared. For
example, when we connect two train carriages by a physical coupling, the train
carriages must have the same velocity, and their positions differ by a constant.
Similarly, when we connect two electrical ports, the electric potentials at
these two ports now must be the same, and the current flowing into one must
equal the current flowing out of the other. %
\index{interconnection!as variable
sharing} Of course, the way the shared variable is actually used may be very different for the different subsystems using it, but sharing the variable serves to couple those systems nonetheless.

Note that both the above examples involve the physical joining of two systems; more
figuratively, we might express the interconnection by drawing a line connecting the
boxes that represent the systems. In its simplest form, this is captured by the
formalism of signal flow graphs, due to Claude Shannon in the 1940s. Here is an example
of a signal flow graph:%
\index{signal flow graph|(}
\begin{equation}%
\label{eq.examplesfg}
  \begin{aligned}
\begin{tikzpicture}[scale=.7]
	\begin{pgfonlayer}{nodelayer}
		\node [style=none] (0) at (-6, -0) {};
		\node [style=none] (5) at (-6, 1.5) {};
		\node [style=wamp] (2) at (-4, 0) {$\scriptstyle 7$};
		\node [style=bdot] (6) at (-2.5, 1.5) {};
		\node [style=none] (7) at (-1.5, 1) {};
		\node [style=none] (18) at (-1.5, -0) {};
		\node [style=none] (9) at (-1.5, 2) {};
		\node [style=wamp] (10) at (-0.75, 2) {$\scriptstyle 5$};
		\node [style=wdot] (8) at (-0.5, 0.5) {};
		\node [style=bdot] (13) at (1.25, 0.5) {};
		\node [style=wamp] (16) at (1.25, 2) {$\scriptstyle 3$};
		\node [style=none] (12) at (2.25, 1) {};
		\node [style=wamp] (14) at (2.25, -0) {$\scriptstyle 2$};
		\node [style=wdot] (21) at (3.25, 0.5) {};
		\node [style=none] (17) at (5, 2) {};
		\node [style=none] (20) at (5, 0.5) {};
	\end{pgfonlayer}
	\begin{pgfonlayer}{edgelayer}
	\begin{scope}[font=\footnotesize]
		\draw (0.center) to (2);
		\draw (2) to (18.center);
		\draw [bend right, looseness=1] (18.center) to (8);
		\draw [bend right, looseness=1] (8) to (7.center);
		\draw [bend left, looseness=1.00] (7.center) to (6);
		\draw (6) to (5);
		\draw [bend left, looseness=1.00] (6) to (9.center);
		\draw (9.center) to (10.center);
		\draw (10.center) to (16.center);
		\draw (16.center) to (17.center);
		\draw [bend right, looseness=1.00] (12.center) to  (13);
		\draw (8) to (13);
		\draw [bend right, looseness=1.00] (13) to (14.center);
		\draw [bend right, looseness=1.00] (14.center) to (21);
		\draw [bend left, looseness=1.00] (12.center) to (21);
		\draw (21) to (20.center);
	\end{scope}
	\end{pgfonlayer}
\end{tikzpicture}
\end{aligned}
\end{equation}
We consider the dangling wires on the left as inputs, and those on the right as
outputs. In \cref{eq.examplesfg} we see three types of signal processing units, which we interpret as follows:
\begin{itemize}
	\item Each unit labelled by a number $a$ takes an input and multiplies it by $a$.
  \item Each black dot takes an input and produces two copies of it.
  \item Each white dot takes two inputs and produces their sum.
\end{itemize}
Thus the above signal flow graph takes in two input signals, say $x$ (on the upper left wire) and $y$ (on the
lower left wire), and---going from left to right as described above---produces two output signals: $u=15x$ (upper right) and $v=3x+21y$ (lower
right). Let's show some steps from this computation (leaving others off to avoid clutter):
\[
\begin{tikzpicture}[scale=.7]
	\begin{pgfonlayer}{nodelayer}
		\node [style=none] (0) at (-6, -0) {};
		\node [style=none] (5) at (-6, 1.5) {};
		\node [style=wamp] (2) at (-4, 0) {$\scriptstyle 7$};
		\node [style=bdot] (6) at (-2.5, 1.5) {};
		\node [style=none] (7) at (-1.5, 1) {};
		\node [style=none] (18) at (-1.5, -0) {};
		\node [style=none] (9) at (-1.5, 2) {};
		\node [style=wamp] (10) at (-0.75, 2) {$\scriptstyle 5$};
		\node [style=wdot] (8) at (-0.5, 0.5) {};
		\node [style=bdot] (13) at (1.25, 0.5) {};
		\node [style=wamp] (16) at (1.25, 2) {$\scriptstyle 3$};
		\node [style=none] (12) at (2.25, 1) {};
		\node [style=wamp] (14) at (2.25, -0) {$\scriptstyle 2$};
		\node [style=wdot] (21) at (3.25, 0.5) {};
		\node [style=none] (17) at (5, 2) {};
		\node [style=none] (20) at (5, 0.5) {};
	\end{pgfonlayer}
	\begin{pgfonlayer}{edgelayer}
	\begin{scope}[font=\footnotesize]
		\draw (0.center) to node[above] {$y$} (2);
		\draw (2) to node[above] {$7y$} (18.center);
		\draw [bend right, looseness=1] (18.center) to (8);
		\draw [bend right, looseness=1] (8) to (7.center);
		\draw [bend left, looseness=1.00] (7.center) to (6);
		\draw (6) to node[above, near end] {$x$} (5);
		\draw [bend left, looseness=1.00] (6) to node[above] {$x$} (9.center);
		\draw (9.center) to (10.center);
		\draw (10.center) to (16.center);
		\draw (16.center) to node[above] {$15x$} (17.center);
		\draw [bend right, looseness=1.00] (12.center) to (13);
		\draw (8) to node[above] {$x+7y$} (13);
		\draw [bend right, looseness=1.00] (13) to (14.center);
		\draw [bend right, looseness=1.00] (14.center) to node[below right=0
		and -.1] {$2x+14y$}(21);
		\draw [bend left, looseness=1.00] (12.center) to (21);
		\draw (21) to node[above] {$3x+21y$}(20.center);
	\end{scope}
	\end{pgfonlayer}
\end{tikzpicture}
\]
In words, the signal flow graph first multiplies $y$ by $7$, then splits $x$
into two copies, adds the second copy of $x$ to the lower signal to get $x+7y$, and so
on.

A signal flow graph might describe an existing system, or it might specify a
system to be built. In either case, it is important to be able to analyze these
diagrams to understand how the composite system converts inputs to outputs.
This is reminiscent of a co-design problem from \cref{chap.codesign}, which asks
how to evaluate the composite feasibility relation from a diagram of simpler feasibility
relations. We can use this process of evaluation to determine whether two
different signal flow graphs in fact specify the same composite system, and
hence to validate that a system meets a given specification. 

In this chapter, however, we introduce categorical tools---props and
their presentations---for reasoning more directly with the diagrams. Recall from
\cref{chap.resource_theory} that symmetric monoidal preorders are a type of
symmetric monoidal category where the \emph{morphisms} are constrained to be
very simple: there can be at most one morphism between any two objects. Here
shall see that signal flow graphs represent morphisms in a different,
complementary simplification of the symmetric monoidal category concept, known as a \emph{prop}.%
\footnote{
Historically, the word `prop' was written in all caps, `PROP,' standing for `products and permutations category.' However, we find `PROP' a bit loud, so like many modern authors we opt for writing it as `prop.'
}
A prop is a symmetric monoidal category where the \emph{objects} are constrained
to be very simple: they are generated, using the monoidal product, by just a
single object. Just as the wiring diagrams for symmetric monoidal preorders did
not require labels on the boxes, this means that wiring diagrams for props do
not require labels on the wires. This makes props particularly suited for
describing diagrammatic formalisms such as signal flow graphs, which only have
wires of a single type.

Finally, many systems behave in what is called a \emph{linear} way, and linear systems form a foundational part of control theory, a branch of engineering that works on cyber-physical systems. Similarly, linear
algebra is a foundational part of modern mathematics, both pure and applied,
which includes not only control theory, but also the practice of computing,
physics, statistics, and many others. As we analyze signal flow graphs, we shall see that they are in fact a
way of recasting linear algebra---more specifically, matrix operations---in graphical terms. More formally, we shall say
that signal flow graphs have \emph{functorial semantics} as matrices. %
\index{control theory}

\section{Props and presentations}%
\label{sec.props_and_presentations}
\index{prop|(}

Signal flow graphs as in \cref{eq.examplesfg} are easily seen to be wiring
diagrams of some sort. However they have the property that, unlike for monoidal
preorders and monoidal categories, there is no need to label the wires. This
corresponds to a form of symmetric monoidal category, known as a prop, which has
a very particular set of objects.

%---- Subsection ----%
\subsection{Props: definition and first examples}

Recall the definition of symmetric strict monoidal category from
\cref{rdef.sym_mon_cat,rem.strict_mon_cat}.

\begin{definition}%
\label{def.prop}%
\index{prop}
A \emph{prop} is a symmetric strict monoidal category $(\cat{C},0,+)$ for which $\Ob(\cat{C})=\NN$, the monoidal unit is $0\in\NN$, and the monoidal product on objects is given by addition.
\end{definition}

Note that each object $n$ is the $n$-fold monoidal product of the object $1$; we
call $1$ the \emph{generating object}. Since the objects of a prop are always
the natural numbers, to specify a prop $P$ it is enough to specify five things:
\begin{enumerate}[label=(\roman*)]
	\item a set $\cat{C}(m,n)$ of morphisms $m\to n$, for $m,n\in\NN$.
	\item for all $n \in \NN$, an identity map $\id_n\colon n \to n$.
	\item for all $m, n \in \NN$, a symmetry map $\sigma_{m,n}\colon m+n \to
	n+m$.%
\index{symmetry}
	\item a composition rule: given $f\colon m \to n$ and $g\colon n \to p$,
	a map $(f\cp g)\colon m \to p$.
	\item a monoidal product on morphisms: given $f\colon m\to m'$ and $g\colon n\to n'$, a map $(f+g)\colon m+n\to m'+n'$.
\end{enumerate}
Once one specifies the above data, he should check that his specifications
satisfy the rules of symmetric monoidal categories (see
\cref{rdef.sym_mon_cat}).%
\footnote{We use `his' terminology because this definition is for boys only. The rest of the book is for girls only.}

\begin{example}%
\label{ex.FinSet_Prop}%
\index{prop!$\finset$}
  There is a prop $\Cat{FinSet}$ where the morphisms $f\colon m \to n$ are
  functions from $\ord{m}=\{1,\dots m\}$ to $\ord{n}=\{1,\dots,n \}$. (The
  identities, symmetries, and composition rule are obvious.) The monoidal
  product on functions is given by the disjoint union of functions: that is,
  given $f\colon m \to m'$ and $g\colon n \to n'$, we
  define $f+ g\colon m+n \longrightarrow m'+n'$ by
  \begin{equation}%
\label{eqn.mon_prod_finset}
    i \longmapsto 
    \begin{cases}
      f(i) &\mbox{ if }1\leq i\leq m ;\\
      m'+g(i) &\mbox{ if }m+1\leq i \leq m+n.
    \end{cases}
    \qedhere
  \end{equation}
\end{example}

\begin{exercise}%
\label{exc.finset_as_prop}%
\index{obvious!conventional mathematical meaning of}
In \cref{ex.FinSet_Prop} we said that the identities, symmetries, and
composition rule in $\Cat{FinSet}$ ``are obvious.'' In math lingo, this just
means ``we trust that the reader can figure them out, if she spends the time tracking
down the definitions and fitting them together.''
\begin{enumerate}
	\item Draw a morphism $f\colon 3\to 2$ and a morphism $g\colon 2\to 4$ in $\finset$.
	\item Draw $f+g$.
	\item What is the composition rule for morphisms $f\colon m\to n$ and $g\colon n\to p$ in $\Cat{FinSet}$?
	\item What are the identities in $\Cat{FinSet}$? Draw some.
	\item Choose $m,n \in \NN$, and draw the symmetry map $\sigma_{m,n}$
	in $\Cat{FinSet}$?
\qedhere
\end{enumerate}
\end{exercise}

\begin{example}%
\label{ex.function_bijection}%
\index{function!bijective}
  Recall from \cref{def.function} that a bijection is a function that is both
  surjective and injective.  There is a prop $\Cat{Bij}$ where the morphisms
  $f\colon m \to n$ are bijections $\ord{m} \to \ord{n}$. Note that in this case
  morphisms $m \to n$ only exist when $m = n$; when $m\neq n$ the homset
  $\Cat{Bij}(m,n)$ is empty. Since $\Cat{Bij}$ is a subcategory of
  $\Cat{FinSet}$, we can define the monoidal product to be as in
  \cref{eqn.mon_prod_finset}.
\end{example}%
\index{category!of finite sets}%
\index{category!of bijections}

\begin{example}%
\label{ex.corelation}%
\index{corelation}%
\index{compact closed category}
  The compact closed category $\Cat{Corel}$, in which the morphisms $f\colon
  m \to n$ are partitions on $\ord{m}\sqcup \ord{n}$ (see \cref{ex.corel}), is a prop.
\end{example}

\begin{example}%
\label{ex.relation}%
\index{binary relation}
There is a prop $\Cat{Rel}$ for which morphisms $m\to n$ are relations, $R\ss\ord{m}\times\ord{n}$. The composition of $R$ with $S\ss\ord{n}\times\ord{p}$ is
\[R\cp S\coloneqq\{(i,k)\in\ord{m}\times\ord{p}\mid\exists(j\in\ord{n})\ldotp (i,j)\in R\text{ and }(j,k)\in S\}.\]
The monoidal product is relatively easy to formalize using universal properties,%
\tablefootnote{The monoidal product $R_1+R_2$ of relations $R_1\ss \ord{m_1}\times \ord{n_1}$ and $R_2\ss \ord{m_2}\times \ord{n_2}$ is given by $R_1\sqcup R_2\ss(\ord{m_1}\times \ord{n_1})\sqcup(\ord{m_2}\times \ord{n_2})\ss(\ord{m_1}\sqcup \ord{m_2})\times(\ord{n_1}\sqcup \ord{n_2})$.}
but one might get better intuition from pictures:
\[
\begin{tikzpicture}[x=1ex, inner sep=2pt]
	\node (a1) {$\bullet$};
	\node[right=1 of a1] (a2) {$\bullet$};
	\node[draw, ellipse, inner sep=0, fit=(a1) (a2)] (a) {};
	\node[below=1 of a1] (b1) {$\bullet$};
	\node[below=1 of a2] (b2) {$\bullet$};
	\node[draw, ellipse, inner sep=0, fit=(b1) (b2)] (b) {};
	\draw (a1) -- (b1);
	\draw (a1) -- (b2);
	\node [right=5 of a2] (c1) {$\bullet$};
	\node [right=1 of c1] (c2) {$\bullet$};
	\node [right=1 of c2] (c3) {$\bullet$};
	\node[draw, ellipse, inner sep=0, fit=(c1) (c3)] (c) {};
	\node[below=1 of c2] (d1) {$\bullet$};
	\node[draw, ellipse, inner sep=0, fit=(d1)] (d) {};
	\draw (c1) -- (d1);
	\draw (c3) -- (d1);
%%%
	\node[right=10 of c3] (x1) {$\bullet$};
	\node[right=1 of x1] (x2) {$\bullet$};
	\node[right=1 of x2] (x3) {$\bullet$};
	\node[right=1 of x3] (x4) {$\bullet$};
	\node[right=1 of x4] (x5) {$\bullet$};
	\node[draw, ellipse, inner sep=0, fit=(x1) (x5)] (x) {};
	\node[below=1 of x2] (y1) {$\bullet$};
	\node[right=1 of y1] (y2) {$\bullet$};
	\node[right=1 of y2] (y3) {$\bullet$};
	\node[draw, ellipse, inner sep=0, fit=(y1) (y3)] (y) {};
	\draw (x1) -- (y1);
	\draw (x1) -- (y2);
	\draw (x3) -- (y3);
	\draw (x5) -- (y3);
	\node at ($(a2)!.5!(c1)$)  (midx) {};
	\node at ($(a2)!.5!(b2)$) (midy) {};
	\node at (midx |- midy) (plus) {+};
	\node at ($(c3)!.5!(x1)$) (midxx) {};
	\node at (plus-|midxx) {=};	

\end{tikzpicture}
\qedhere
\]
\end{example}

\begin{exercise}%
\index{prop!posetal}%
\label{exc.posetal_prop}
  A posetal prop is a prop that is also a poset. That is, a posetal prop is a
  symmetric monoidal preorder of the form $(\NN,\preceq)$, for some poset relation
  $\preceq$ on $\NN$, where the monoidal product on objects is addition. We've
  spent a lot of time discussing order structures on the natural numbers. Give
  three examples of a posetal prop.
\end{exercise}

%Since the natural numbers can be generated by starting with 0 and adding 1 as
%many times as you need, the objects of a prop can be obtained as monoidal
%products of the object 1. In wiring diagrams for props, each wire represents the
%object 1.
%
%Since $2=1+1$, we may draw a map $\mult{1em}\colon 2 \to 1$ using the wiring diagram.
%

\begin{exercise}%
\label{exc.prop_practice}
Choose one of \cref{ex.function_bijection,ex.corelation,ex.relation} and explicitly provide the five aspects of props discussed below \cref{def.prop}.
\end{exercise}

\begin{definition}%
\label{def.prop_functor}%
\index{functor!prop}
  Let $\cat{C}$ and $\cat{D}$ be props. A \emph{functor} $F\colon\cat{C}\to\cat{D}$ is called a \emph{prop functor} if
  \begin{enumerate}[label=(\alph*)]
  	\item $F$ is identity-on-objects, i.e.\ $F(n)=n$ for all $n\in\Ob(\cat{C}) = \Ob(\cat{D}) = \nn$, and
		\item for all $f\colon m_1\to m_2$ and $g\colon n_1\to n_2$ in $\cat{C}$, we have $F(f)+F(g)=F(f+g)$ in $\cat{D}$.
	\end{enumerate}
\end{definition}

\begin{example}
The inclusion $i\colon\Cat{Bij}\to\Cat{FinSet}$ is a prop functor. Perhaps more interestingly, there is a prop functor $F\colon\Cat{FinSet}\to\Cat{Rel}_\Cat{Fin}$. It sends a function $f\colon \ord{m}\to\ord{n}$ to the relation $F(f)\coloneqq\{(i,j)\mid f(i)=j\}\ss\ord{m}\times\ord{n}$.%
\index{relation}
\end{example}

%---- Subsection ----%
\subsection{The prop of port graphs}%
\label{subsec.prop_PGs}%
\index{port graph|(}
An important example of a prop is the one in which morphisms are open, directed, acyclic port graphs, as we next define. We will just call them port graphs.

\begin{definition}%
\label{def.port_graph}%
\index{port graph}
For $m,n\in\nn$, an \emph{$(m,n)$-port graph} $(V,\pgin,\pgout, \iota)$ is
specified by
\begin{enumerate}[label=(\roman*)]
\item a set $V$, elements of which are called \emph{vertices},
\item functions $\pgin,\pgout \colon V \to \nn$, where $\pgin(v)$ and
$\pgout(v)$ are called the \emph{in degree} and \emph{out degree} of each $v \in V$, and
\item a bijection $\iota\colon \ord{m} \sqcup O \stackrel{\cong}{\to} I \sqcup
\ord{n}$, where $I=\{(v,i)\mid v \in V, \, 1 \le i \le \pgin(v)\}$ is the set of
\emph{vertex inputs}, and $O=\{(v,i)\mid v \in V,\, 1 \le i \le \pgout(v)\}$ is
the set of \emph{vertex outputs}.
\end{enumerate}
This data must obey the following acyclicity condition. First, use the bijection
$\iota$ to construct the graph with vertices $V$ and with
an arrow $e^{u,i}_{v,j}\colon u \to v$ for every $i,j \in \nn$ such that
$\iota(u,i)=(v,j)$; call it the \emph{internal flow graph}. If the internal flow graph is acyclic---that is, if the only
path from any vertex $v$ to itself is the trivial path---then we say that
$(V,\pgin,\pgout, \iota)$ is a port graph.%
\index{port graph!acyclicity condition}%
\index{trivial path}
\end{definition}

This seems quite a technical construction, but it's quite intuitive once you
unpack it a bit. Let's do this.

\begin{example}%
\label{ex.a_port_graph}
Here is an example of a $(2,3)$-port graph, i.e.\ with $m=2$ and $n=3$:
\begin{equation}%
\label{eqn.port_graph}
% [inline block 24: 2 envs, 3878 chars -> data_tex | \begin{tikzpicture}[oriented WD, bb port length=6pt, bb port sep=1.5, baseline=(X2.north)] 	\node[bb={1}{3}] (X1) {$a$};...]

\]
The internal flow graph---which one can see is acyclic---is shown below:
\[
\begin{tikzcd}[column sep = 50pt, row sep=35pt]
	\LMO{a}\ar[rr, "\displaystyle e^{a,1}_{c,1}"]\ar[rd, bend left=10,
	"\displaystyle e^{a,2}_{b,2}"]\ar[rd, bend right=10, "\displaystyle e^{a,3}_{b,1}"']&&\LMO{c}\\
	&\LMO{b}\ar[ur,"\displaystyle e^{b,1}_{c,2}"']
\end{tikzcd}
\]
\end{example}

As you might guess from \eqref{eqn.port_graph}, port graphs are closely related to
wiring diagrams for monoidal categories, and even more closely related to wiring
diagrams for props.

\paragraph{A category $\Cat{PG}$ whose morphisms are port graphs.}
\label{page.port_graphs_prop}%
\index{port graph!as morphism}

Given an $(m,n)$-port graph $(V,\pgin,\pgout,\iota)$ and an $(n,p)$-port graph
$(V',\pgin',\pgout',\iota')$, we may compose them to produce an $(m,p)$-port
graph $(V\sqcup V',\copair{\pgin,\pgin'},\copair{\pgout,\pgout'},\iota'')$. Here
$\copair{\pgin,\pgin'}$ denotes the function $V\sqcup V' \to \nn$ which maps elements
of $V$ according to $\pgin$, and elements of $V'$ according to $\pgin'$, and
similarly for $\copair{\pgout,\pgout'}$. The
bijection $\iota''\colon\ord{m}\sqcup O\sqcup O'\to I\sqcup I'\sqcup\ord{p}$ is defined as follows:
\[
\iota''(x) = \begin{cases}
\iota(x) & \mbox{ if }\iota(x) \in I \\
\iota'(\iota(x)) & \mbox{ if }\iota(x) \in \ord{n} \\
\iota'(x) & \mbox{ if }x \in O.'
\end{cases}
\]

\begin{exercise}%
\label{exc.port_graph_comp}
Describe how port graph composition looks, with respect to the visual
representation of \cref{ex.a_port_graph}, and give a nontrivial example.
\end{exercise}

We thus have a category $\Cat{PG}$, whose objects are natural numbers $\Ob(\Cat{PG})=\NN$, whose morphisms are port graphs $\Cat{PG}(m,n)=\{(V,\pgin,\pgout,\iota)\mid\text{as in \cref{def.port_graph}}\}$.
Composition of port graphs is as above, and the identity port graph on $n$ is the $(n,n)$-port graph
$(\varnothing, !,!,\id_{\ord{n}})$, where
$!\colon \varnothing \to \nn$ is the unique function. The identity on an object,
say $3$, is depicted as follows:%
\index{identity!port graph}
\[
\begin{tikzpicture}[oriented WD, bb port length=0]
	\node[bb={3}{3}, minimum width = 1.5cm] (Y) {};
	\draw (Y.west|-Y_in1) to (Y.east|-Y_out1);
	\draw (Y.west|-Y_in2) to  (Y.east|-Y_out2);
	\draw (Y.west|-Y_in3) to  (Y.east|-Y_out3);
	\draw[label]
		node[left=3pt of Y_in1] {$1$}
		node[left=3pt of Y_in2] {$2$}
		node[left=3pt of Y_in3] {$3$}
		node[right=3pt of Y_out1] {$1$}
		node[right=3pt of Y_out2] {$2$}
		node[right=3pt of Y_out3] {$3$}
	;	
\end{tikzpicture}
\]

\paragraph{The monoidal structure structure on $\Cat{PG}$.}

This category $\Cat{PG}$ is in fact a prop. The monoidal
product of two port graphs $G\coloneqq (V,\pgin,\pgout,\iota)$ and
$G'\coloneqq (V',\pgin',\pgout',\iota')$ is given by taking the disjoint union
of $\iota$ and $\iota'$:
\begin{equation}%
\label{eqn.PG_prop}
G+G'\coloneqq\big((V\sqcup V'),[\pgin,\pgin'],[\pgout,\pgout'],(\iota\sqcup\iota')\big).
\end{equation}
The monoidal unit is $(\varnothing,!,!,!)$.

\begin{exercise}%
\label{exc.mon_prod_of_morphisms}
Draw the monoidal product of the morphism shown in \cref{eqn.port_graph} with itself. It will be a $(4,6)$-port graph, i.e.\ a morphism $4\to 6$ in $\Cat{PG}$.
\end{exercise}

\index{port graph|)}

%---- Subsection ----%
\subsection{Free constructions and universal properties}%
\label{sec.free_constructions}
\index{universal property|(}

Given some sort of categorical structure, such as a preorder, a category, or a prop,
it is useful to be able to construct one according to your own specification.
(This should not be surprising.) The minimally-constrained structure that
contains all the data you specify is called the \emph{free structure} on your
specification: it's free from unneccessary constraints! We have already seen
some examples of free structures; let's recall and explore them.

\begin{example}[The free preorder on a relation]%
\index{preorder!free on a relation}%
\index{free!preorder}%
\index{relation!free preorder on}
For preorders, we saw the construction of taking the reflexive, transitive closure
of a relation. That is, given a relation $R\ss P\times P$, the reflexive,
transitive closure of $R$ is the called the free preorder on $R$. Rather than
specify all the inequalities in the preorder $(P,\leq)$, we can specify just a few
inequalities $p \le q$, and let our ``closure machine'' add in the minimum
number of other inequalities necessary to make $P$ a preorder. To obtain a preorder
out of a graph, or Hasse diagram, we consider a
graph $(V,A,s,t)$ as defining a relation $\{(s(a),t(a)) \mid a \in A\} \ss V
\times V$, and apply this closure machine.

But in what sense is the reflexive, transitive closure of a relation $R\ss P\times P$ really the \emph{minimally-constrained} preorder containing $R$? One way of understanding this is that the extra equalities impose
no further constraints when defining a monotone map \emph{out} of $P$. We are
claiming that freeness has something to do with maps \emph{out}! As strange as
an asymmetry might seem here (one might ask, ``why not maps in?''), the reader
will have an opportunity to explore it for herself in Exercises~\ref{exc.free_preorder_check_1}~and~\ref{exc.free_preorder_check_2}.%

A higher-level justification understands freeness as a left adjoint (see
\cref{ex.adjunctions}), but we will not discuss that here.
\end{example}

\begin{exercise}%
\label{exc.free_preorder_check_1}
Let $P$ be a set, let $R\ss P \times P$ a relation, let $(P,\leq_P)$ be the preorder
obtained by taking the reflexive, transitive closure of $R$, and let $(Q,\leq_Q)$ be an arbitrary preorder. Finally, let $f\colon P\to Q$ be a function, not assumed monotone.
\begin{enumerate}
	\item Suppose that for every $x,y\in P$, if $R(x,y)$ then $f(x)\leq f(y)$. Show that $f$ defines a monotone map $f\colon (P,\leq_P)\to (Q,\leq_Q)$.
	\item Suppose that $f$ defines a monotone map $f\colon (P,\leq_P)\to (Q,\leq_Q)$. Show that for every $x,y\in P$, if $R(x,y)$ then $f(x)\leq_Q f(y)$.
\end{enumerate}
We call this the \emph{universal property} of the free preorder $(P,\leq_P)$.%
\index{universal
property}
\end{exercise}

\begin{exercise}%
\label{exc.free_preorder_check_2}
Let $P$, $Q$, $R$, etc.\ be as in \cref{exc.free_preorder_check_1}. We want to see that the universal property is really about maps out of---and not maps in to---the reflexive, transitive closure $(P,\leq)$. So let $g\colon Q\to P$ be a function.
\begin{enumerate}
	\item Suppose that for every $a,b\in Q$, if $a\leq b$ then $(g(a),g(b))\in R$. Is it automatically true that $g$ defines a monotone map $g\colon(Q,\leq_Q)\to(P,\leq_P)$?
	\item Suppose that $g$ defines a monotone map $g\colon(Q,\leq_Q)\to(P,\leq_P)$. Is it automatically true that for every $a,b\in Q$, if $a\leq b$ then $(g(a),g(b))\in R$?
\end{enumerate}
The lesson is that maps between structured objects are defined to preserve
constraints. This means the domain of a map must be somehow more constrained than the
codomain. Thus having the fewest additional constraints coincides with having
the most maps out---every function that respects our generating constraints
should define a map.%
\end{exercise}

\begin{example}[The free category on a graph]%
\index{category!free}%
\index{free!category}
There is a similar story for categories. Indeed, we saw in
\cref{def.free_category} the construction of the free category $\free(G)$ on a
graph $G$. The objects of $\free(G)$ and the vertices of $G$ are the
same---nothing new here---but the morphisms of $\free(G)$ are not just the
arrows of $G$ because morphisms in a category have stricter requirements: they
must compose and there must be an identity. Thus morphisms in $\free(G)$ are the
\emph{closure} of the set of arrows in $G$ under these operations. Luckily
(although this happens often in category theory), the result turns out to
already be a relevant graph concept: the morphisms in $\free(G)$ are exactly the
paths in $G$. So $\free(G)$ is a category that in a sense contains $G$ and obeys
no equations other than those that categories are forced to obey.
\end{example}

\begin{exercise}%
\label{exc.free_cat_is_free}
Let $G=(V,A,s,t)$ be a graph, and let $\cat{G}$ be the free category on $G$. Let $\cat{C}$ be another category whose set of morphisms is denoted $\Mor(\cat{C})$. 
\begin{enumerate}
	\item Someone tells you that there are ``domain and codomain'' functions $\dom,\cod\colon\Mor(\cat{C})\to\Ob(\cat{C})$; interpret this statement.
	\item Show that the set of functors $\cat{G} \to
\cat{C}$ are in one-to-one correspondence with the set of pairs of functions
$(f,g)$, where $f\colon V \to \Ob(\cat{C})$ and $g\colon A\to\Mor(\cat{C})$ for which $\dom(g(a))=f(s(a))$ and $\cod(g(a))=f(t(a))$ for all $a$.
	\item Is $(\Mor(\cat{C}),\Ob(\cat{C}),\dom,\cod)$ a graph? If so, see if you can use the word ``adjunction'' in a sentence that describes the statement in part 2. If not, explain why not.
\qedhere
\end{enumerate}
\end{exercise}

\begin{exercise}[The free monoid on a set]%
\index{monoid!free}%
\index{free!monoid}%
\label{exc.free_monoid}
Recall from \cref{ex.monoid_nats} that monoids are one-object categories. For any set $A$, there is a graph $\Cat{Loop}(A)$ with one vertex and with one arrow from the vertex to itself for each $a\in A$. So if $A=\{a,b\}$ then $\Cat{Loop}(A)$ looks like this:
\[
\fbox{
\begin{tikzcd}[ampersand replacement=\&]
	\bullet\ar[loop left, "a"]\ar[loop right, "b"]
\end{tikzcd}
}
\]
The free category on this graph is a one-object category, and hence a monoid; it's called the free monoid on $A$.
\begin{enumerate}
	\item What are the elements of the free monoid on the set $A=\{a\}$?
	\item Can you find a well-known monoid that is isomorphic to the free monoid on $\{a\}$?%
\index{natural numbers!as free monoid}
	\item What are the elements of the free monoid on the set $A=\{a,b\}$?
\qedhere
\end{enumerate}
\end{exercise}

%---- Subsection ----%
\subsection{The free prop on a signature}

We have been discussing free constructions, in particular for preorders and
categories. A similar construction exists for props. Since we already know what
the objects of the prop will be---the natural numbers---all we need to specify
is a set $G$ of \emph{generating morphisms}, together with the arities,%
\footnote{The arity of a prop morphism is a pair $(m,n)\in\NN\times\NN$, where
$m$ is the number of inputs and $n$ is the number of outputs.} that we want to
be in our prop. This information will be called a \emph{signature}. Just as we
can generate the free category from a graph, so too can we generate the free
prop from a signature. 

We now give an explicit construction of the free prop in terms of port graphs (see
\cref{def.port_graph}). 

\begin{definition}%
\label{def.free_prop}%
\index{free!prop}%
\index{prop!free}%
\index{port graph}
  A \emph{prop signature} is a tuple $(G,s,t)$, where $G$ is a set and $s,t\colon G \to \nn$ are functions; each element $g\in G$ is called a \emph{generator} and $s(g),t(g)\in\nn$ are called its \emph{in-arity and out-arity}. We often denote $(G,s,t)$ simply by $G$, taking $s,t$ to be implicit.%
\index{prop!signature of}
  
  A \emph{$G$-labeling} of a port graph $\Gamma=(V,\pgin,\pgout,\iota)$ is a
  function $\ell\colon V\to G$ such that the arities agree:
  $s(\ell(v))=\pgin(v)$ and $t(\ell(v))=\pgout(v)$ for each $v\in V$.
  
  Define the \emph{free prop on $G$}, denoted $\free(G)$, to have as morphisms
  $m\to n$ all $G$-labeled $(m,n)$-port graphs. The composition and monoidal
  structure are just those for port graphs $\Cat{PG}$ (see \cref{eqn.PG_prop});
  the labelings (the $\ell$'s) are just carried along. 
\end{definition}

The morphisms in $\free(G)$ are port graphs $(V,\fun{in},\fun{out},\iota)$ as in
\cref{def.port_graph}, that are equipped with a $G$-labeling. To draw a port
graph, just as in \cref{ex.a_port_graph}, we draw each vertex $v\in V$ as a box
with $\fun{in}(v)$-many ports on the left and $\fun{out}(v)$-many ports on the
right. In wiring diagrams, we depict the labeling function $\ell\colon V\to G$
by using $\ell$ to add labels (in the usual sense) to our boxes. Note that
multiple boxes can be labelled with the same generator. For example, if
$G=\{f\colon 1 \to 1, g\colon 2 \to 2, h\colon 2 \to 1\}$, then the following is
a morphism $3\to 2$ in $\free(G)$:
\begin{equation}%
\label{eqn.labeled_pg}
\begin{aligned}
\begin{tikzpicture}[oriented WD]
	\node[bb={2}{2}] (X11) {$g$};
	\node[bb={2}{2}, below right=-.5 and 1 of X11] (X12) {$g$};
	\node[bb={2}{1}, above right=-.5 and 1 of X12] (X13) {$h$};
	\node[bb={0}{0}, fit={($(X11.north west)+(.3,1.5)$) (X12)  ($(X13.east)+(-.3,0)$)}] (Y1) {};
	\draw (Y1.west|-X11_in1) to (X11_in1);	
	\draw (Y1.west|-X11_in2) to (X11_in2);	
	\draw (Y1.west|-X12_in1) to (X12_in1);
	\draw (X11_out1) to (X13_in1);
	\draw (X11_out2) to (X12_in2);
	\draw (X12_out1) to (X13_in2);
	\draw (X12_out2) to (X12_out2-|Y1.east);
	\draw (X13_out1) to (X13_out1-|Y1.east);
\end{tikzpicture}
\end{aligned}
\end{equation}
Note that the generator $g$ is used twice, while the generator
$f$ is not used at all in \cref{eqn.labeled_pg}. This is perfectly fine.

\begin{example}%
\index{function!bijective}
The free prop on the empty set $\varnothing$ is $\Cat{Bij}$. This is because
each morphism must have a labelling function of the form $V \to \varnothing$,
and hence we must have $V=\varnothing$; see \cref{exc.map_to_empty}. Thus the only morphisms $(n,m)$ are
those given by port graphs $(\varnothing, !,!,\sigma)$, where $\sigma\colon n
\to m$ is a bijection.
\end{example}

\begin{exercise}%
\index{port graph!as morphism}%
\label{exc.free_prop_port_graph}
Consider the following prop signature:
\[
  G\coloneqq\{\rho_{m,n} \mid m, n \in \nn\},\qquad s(\rho_{m,n})\coloneqq m,\quad t(\rho_{m,n})\coloneqq n,
\]
i.e.\ having one generating morphism for each $(m,n)\in\nn^2$. Show that
$\free(G)$ is the prop $\Cat{PG}$ of port graphs from \cref{subsec.prop_PGs}.
\end{exercise}

Just like free preorders and free categories, the free prop is characterized by
a universal property in terms of maps out. The following can be proved in a
manner similar to \cref{exc.free_cat_is_free}.

\begin{proposition}
The free prop $\free(G)$ on a signature $(G,s,t)$ has the property that, for any
prop $\cat{C}$, the prop functors $\free(G) \to \cat{C}$ are in one-to-one
correspondence with functions $G \to \cat{C}$ that send each $g \in G$ to a
morphism $s(g) \to t(g)$ in $\cat{C}$. 
\end{proposition}

\paragraph{An alternate way to describe morphisms in $\free(G)$.}

Port graphs provide a convenient formalism of thinking about morphisms in the free prop on a signature $G$, but there is another approach which is also useful. It is syntactic, in the sense that we start with a small stock of basic morphisms, including elements of $G$, and then we inductively build new morphisms from them using the basic operations of props: namely composition and monoidal product. Sometimes the conditions of monoidal categories---e.g.\ associativity, unitality, functoriality, see \cref{rdef.sym_mon_cat}---force two such morphisms to be equal, and so we dutifully equate them. When we are done, the result is again the free prop $\free(G)$. Let's make this more formal.

First, we need the notion of a prop expression. Just as prop signatures are the
analogue of the graphs used to present categories, prop expressions are the
analogue of paths in these graphs.

\begin{definition}%
\label{def.prop_expressions}%
\index{prop!expression in}
  Suppose we have a set $G$ and functions $s,t\colon G \to \nn$. We define a
  \emph{$G$-generated prop expression}, or simply \emph{expression} $e\colon m\to n$, where $m,n\in\nn$, 
  inductively as follows:
\begin{itemize}
\item The empty morphism $\id_0 \colon 0 \to 0$, the identity morphism $\id_1\colon 1 \to
1$, and the symmetry $\sigma\colon 2 \to 2$ are expressions.%
\tablefootnote{One can think of $\sigma$ as the ``swap'' icon $\swap{1em}\colon 2 \to
  2$}
\item the generators $g \in G$ are expressions $g\colon s(g) \to t(g)$.
\item if $\alpha\colon m \to n$ and $\beta\colon p \to q$ are expressions, then
$\alpha+\beta\colon m+p \to n+q$ is an expression.
\item if $\alpha\colon m \to n$ and $\beta\colon n \to p$ are expressions, then
$\alpha\cp\beta\colon m \to p$ is an expression.
\end{itemize}
We write $\expr(G)$ for the set of expressions in $G$. If $e\colon m\to n$ is an expression, we refer to $(m,n)$ as its \emph{arity}.
\end{definition}%
\index{icon}

\begin{example}
Let $G=\{f\colon 1 \to 1, g\colon 2 \to 2, h\colon 2 \to 1\}$. Then 
\begin{itemize}
\item $\id_1\colon 1 \to 1$, 
\item $f\colon 1 \to 1$,
\item $f\cp\id_1\colon 1\to 1$,
\item $h+\id_1\colon 3 \to 2$, and 
\item $(h+\id_1) \cp \sigma \cp g\cp \sigma \colon 3 \to 2$
\end{itemize}
are all $G$-generated prop expressions.
\end{example}

Both $G$-labeled port graphs and $G$-generated prop expressions are ways to
describe morphisms in the free prop $\free(G)$. Note, however, that unlike for
$G$-labeled port graphs, there may be two $G$-generated prop expressions that
represent the same morphism. For example, we want to consider $f\cp \id_1$ and
$f$ to be the same morphism, since the unitality axiom for categories says $f\cp
\id_1 =f$. Nonetheless, we only consider two $G$-generated prop expressions
equal when some axiom from the definition of prop requires that they be so;
again, the free prop is the \emph{minimally-constrained} way to take $G$ and
obtain a prop.
\index{quotient}

Since both port graphs and prop expressions describe morphisms in $\free(G)$,
you might be wondering how to translate between them. Here's how to turn a port
graph into a prop expression: imagine a vertical line moving through the port
graph from left to right. Whenever you see ``action''---either a box or wires
crossing---write down the sum (using $+$) of all the boxes $g$, all the
symmetries $\sigma$, and all the wires $\id_1$ in that column. Finally, compose
all of those action columns. For example, in the picture below we see four
action columns:
\[
\begin{tikzpicture}[oriented WD]
	\node[bb={2}{2}] (X11) {$g$};
	\node[bb={2}{2}, below right=-.5 and 1 of X11] (X12) {$g$};
	\node[bb={2}{1}, above right=-.5 and 1 of X12] (X13) {$h$};
	\node[bb={0}{0}, fit={($(X11.north west)+(.3,1.5)$) (X12)  ($(X13.east)+(-.3,0)$)}] (Y1) {};
	\draw (Y1.west|-X11_in1) to (X11_in1);	
	\draw (Y1.west|-X11_in2) to (X11_in2);	
	\path[name path=Q, draw] (Y1.west|-X12_in1) to (X12_in1);
	\draw (X11_out1) to (X13_in1);
	\path[name path=R, draw] (X11_out2) to (X12_in2);
	\draw (X12_out1) to (X13_in2);
	\draw (X12_out2) to (X12_out2-|Y1.east);
	\draw (X13_out1) to (X13_out1-|Y1.east);
	\coordinate (col1) at ($(Y1.west)!.29!(Y1.east)$);
	\coordinate (col2) at ($(Y1.west)!.41!(Y1.east)$);
	\coordinate (col3) at ($(Y1.west)!.65!(Y1.east)$);
	\begin{scope}[blue, dashed, very thick]
	\draw (col1|-Y1.north) -- (col1|-Y1.south);
	\draw (col2|-Y1.north) -- (col2|-Y1.south);
	\draw (col3|-Y1.north) -- (col3|-Y1.south);
	\end{scope}
\end{tikzpicture}
\]
Here the result is $(g+\id_1)\cp(\id_1+\sigma)\cp(\id_1+g)\cp(h+\id_1)$.%

\begin{exercise}%
\label{exc.free_prop_pic}
Consider again the free prop on generators $G=\{f\colon 1 \to 1, g\colon 2 \to
2, h\colon 2 \to 1\}$. Draw a picture of
$(f+\id_1+\id_1)\cp(\sigma+\id_1)\cp(\id_1+h)\cp \sigma\cp g$, where
$\sigma\colon 2\to 2$ is the symmetry map.
\end{exercise}

Another way of describing when we should consider two prop expressions equal is
to say that they are equal if and only if they represent the same port graph. In
either case, these notions induce an equivalence relation on the set of prop
expressions. To say that we consider these certain prop expressions equal is to
say that the morphisms of the free prop on $G$ are the $G$-generated prop
expressions \emph{quotiented} by this equivalence relation (see
\cref{def.quotient}).

%---- Subsection ----%
\subsection{Props via presentations}
\label{sec.prop_presentations}

In \cref{subsec.presenting_cats} we saw that a presentation for a category, or
database schema, consists of a graph together with imposed equations between
paths. Similarly here, sometimes we want to construct a prop whose morphisms
obey specific equations. But rather than mere paths, the things we want to
equate are prop expressions as in \cref{def.prop_expressions}.%
\index{database!schema}
\index{category!presentation of}

\begin{roughDef}%
\label{rdef.presentation_prop}%
\index{prop!presentation of}%
\index{presentation!of prop}
A \emph{presentation} $(G,s,t,E)$ for a prop is a set $G$, functions $s,t\colon G\to\nn$, and a set $E\subseteq \expr(G) \times \expr(G)$ of pairs of $G$-generated prop expressions, such that $e_1$ and $e_2$ have the same arity for each $(e_1,e_2)\in E$. We refer to $G$ as the set of generators and to $E$ as the set of \emph{equations} in the presentation.%
\tablefootnote{Elements of $E$, which we call equations, are traditionally called ``relations.'' We think of $(e_1,e_2)\in E$ as standing for the equation $e_1=e_2$, as this will be forced soon.}%
\index{generators and relations|see {presentation}}

The prop $\cat{G}$ \emph{presented} by the presentation $(G,s,t,E)$ is the prop whose
morphisms are elements in $\expr(G)$, quotiented by both the equations
$e_1=e_2$ where $(e_1,e_2) \in E$, and by the axioms of symmetric strict monoidal categories.
\end{roughDef}

\begin{remark}%
\label{rem.free_prop_universal_property}%
\index{universal property}
Given a presentation $(G,s,t,E)$, it can be shown that the prop $\cat{G}$ has a
universal property in terms of ``maps out.'' Namely prop functors from $\cat{G}$
to any other prop $\cat{C}$ are in one-to-one correspondence with functions $f$
from $G$ to the set of morphisms in $\cat{C}$ such that 
\begin{itemize}
\item for all $g \in G$, $f(g)$ is a morphism $s(g) \to t(g)$, and 
\item for all $(e_1,e_2) \in E$, we have that $f(e_1)=f(e_2)$ in
$\cat{C}$, where $f(e)$ denotes the morphism in $\cat{C}$ obtained by
applying $f$ to each generators in the expression $e$, and then composing
the result in $\cat{C}$. 
\end{itemize}
\end{remark}

\begin{exercise}%
\label{exc.same_free_prop}
Is it the case that the free prop on generators $(G,s,t)$, defined in \cref{def.free_prop}, is the same thing as the prop presented by $(G,s,t,\varnothing)$, having no relations, as defined in \cref{rdef.presentation_prop}? Or is there a subtle difference somehow?
\end{exercise}

\index{universal property|)}

%-------- Section --------%
\section{Simplified signal flow graphs}

We now return to signal flow graphs, expressing them in terms of props. We will
discuss a simplified form without feedback (the only sort we have discussed so
far), and then extend to the usual form of signal flow graphs in
\cref{subsec.full_SFGs}. But before we can do that, we must say what we mean by
signals; this gets us into the algebraic structure of ``rigs.'' We will get to
signal flow graphs in \cref{subsec.icons_sfgs}.

%---- Subsection ----%
\subsection{Rigs}%
\index{rig|(}
Signals can be amplified, and they can be added. Adding and amplification
interact via a distributive law, as follows: if we add two signals, and then amplify them by some amount $a$, it should be the same as amplifying the two signals
separately by $a$, then adding the results. 

We can think of all the possible amplifications as forming a structure called a rig,%
\footnote{Rigs are also known as \emph{semi-rings}.}
defined as follows.

\begin{definition}%
\label{def.rig}%
\index{rig}
A \emph{rig} is a tuple $(R,0,+,1,*)$, where $R$ is a set, $0,1\in R$ are elements, and $+,*\colon R
\times R \to R$ are functions, such that 
\begin{enumerate}[label=(\alph*)]
	\item $(R,+,0)$ is a commutative monoid,
	\item $(R,*,1)$ is a monoid,%
	\tablefootnote{
	Note that we did not demand that $(R,*,1)$ be commutative; we will see a naturally-arising example where it is not commutative in \cref{ex.mat_rig}.
	}
	and	
	\item $a*(b+c)=a* b +a* c$ and $(a+b)*c=a*c+b*c$ for all $a,b,c \in R$.
	\item $a*0=0=0*a$ for all $a\in R$.
\end{enumerate}
\end{definition}

We have already encountered many examples of rigs.
\begin{example}%
\label{ex.rig_nat}%
\index{natural numbers!as rig}
The natural numbers form a rig $(\nn,0,+,1,*)$.
\end{example}

\begin{example}%
\index{booleans!as rig}
The Booleans form a rig $(\bb,\false,\vee,\true,\wedge)$.
\end{example}

\begin{example}%
\label{ex.quantale_as_rig}%
\index{quantale}
Any quantale $\cat{V}=(V,\leq,I,\otimes)$ determines a rig $(V,0,\vee,I,\otimes)$, where $0=\bigvee\varnothing$ is the empty join. See \cref{def.monoidal_closed}.
\end{example}

\begin{example}%
\label{ex.mat_rig}%
\index{matrices!rig of}%
\index{rig!matrices as}
If $R$ is a rig and $n\in\nn$ is any natural number, then the set
$\Set{Mat}_n(R)$ of $(n\times n)$-matrices in $R$ forms a rig. A matrix
$M\in\Set{Mat}_n(R)$ is a function $M\colon \ord{n}\times\ord{n} \to R$.
Addition $M+N$ of matrices is given by $(M+N)(i,j)\coloneqq M(i,j)+N(i,j)$ and multiplication $M*N$ is given by $(M*N)(i,j)\coloneqq\sum_{k\in\ord{n}}M(i,k)*N(k,j)$. The $0$-matrix is $0(i,j)\coloneqq 0$ for all $i,j\in\ord{n}$. Note that $\Set{Mat}_n(R)$ is generally not commutative.
\end{example}

\begin{exercise}%
\label{exc.rigs_mats}
\begin{enumerate}
	\item We said in \cref{ex.mat_rig} that for any rig $R$, the set $\Set{Mat}_n(R)$ forms a rig. What is its multiplicative identity $1\in\Set{Mat}_n(R)$?
	\item We also said that $\Set{Mat}_n(R)$ is generally not commutative. Pick an $n$ and show that that $\Set{Mat}_n(\NN)$ is not commutative, where $\NN$ is as in \cref{ex.rig_nat}.
	\qedhere
\end{enumerate}
\end{exercise}

The following is an example for readers who are familiar with the algebraic structure known as ``rings.''

\begin{example}%
\index{rig!vs.\ ring}
Any ring forms a rig. In particular, the real numbers $(\rr,0,+,1,*)$ are a
rig. The difference between a ring and rig is that a ring, in addition to
all the properties of a rig, must also have additive inverses, or
\emph{negatives}. A common mnemonic is that a rig is a ri\textbf{n}g without
\textbf{n}egatives.
\end{example}

\index{rig|)}

%---- Subsection ----%
\subsection{The iconography of signal flow graphs}%
\label{subsec.icons_sfgs}
\index{signal flow graph!simplified}%
\index{icon|(}

A signal flow graph is supposed to keep track of the amplification, by elements
of a rig $R$, to which signals are subjected. While not strictly necessary,%
\footnote{The necessary requirement for the material below to make sense is that
  the signals take values in an \emph{$R$-module} $M$. We will not discuss this
here, keeping to the simpler requirement that $M=R$.} we will assume the signals
themselves are elements of the same rig $R$. We refer to elements of $R$ as
\emph{signals} for the time being.

Amplification of a signal by some value $a \in R$ is simply depicted like
so:
\[\tag{scalar mult.}
\begin{aligned}
% [inline block 25: 7 envs, 4362 chars -> data_tex | \begin{tikzpicture} 	\begin{pgfonlayer}{nodelayer}...]

\end{aligned}
\]
As we explained, this takes in two input signals $x$ and $y$, and returns two
output signals $a=15x$ and $b=3x+21y$.

In addition to tracing the processing of the values as they move forward through
the graph, we can also calculate these values by summing over paths. More
explicitly, to get the contribution of a given input wire to a given output
wire, we take the sum, over all paths $p$ joining the wires, of the total amplification along that
path.

So, for example, there is one path from the top input to the top output.
On this path, the signal is first copied, which does not affect its value, then
amplified by 5, and finally amplified by 3. Thus, if $x$ is the first input
signal, then this contributes $15x$ to the first output. Since there is no path
from the bottom input to the top output (one is not allowed to traverse paths
backwards), the signal at the first output is exactly $15x$. Both inputs contribute to the bottom output. In fact, each input contributes in
two ways, as there are two paths to it from each input. The top input thus
contributes $3x=x+2x$, whereas the bottom input, passing through an additional
$\ast 7$ amplification, contributes $21y$.

\begin{exercise}%
\label{exc.a_signal_flow_graph}
  The following flow graph   takes in two natural numbers $x$ and $y$
\[
\begin{tikzpicture}[scale=.7]
	\begin{pgfonlayer}{nodelayer}
		\node [style=none] (0) at (-6, -0) {};
		\node [style=bdot] (1) at (-5, -0) {};
		\node [style=wamp] (2) at (-4, 0.5) {$\scriptstyle 3$};
		\node [style=none] (3) at (-4, -0.5) {};
		\node [style=wdot] (4) at (-3, -0) {};
		\node [style=none] (5) at (-6, 1.5) {};
		\node [style=bdot] (6) at (-2.5, 1.5) {};
		\node [style=none] (7) at (-1.5, 1) {};
		\node [style=wdot] (8) at (-0.5, 0.5) {};
		\node [style=none] (9) at (-1.5, 2) {};
		\node [style=wamp] (10) at (-0.75, 2) {$\scriptstyle 5$};
		\node [style=wdot] (11) at (2.25, 1.5) {};
		\node [style=none] (12) at (1.25, 1) {};
		\node [style=bdot] (13) at (0.25, 0.5) {};
		\node [style=none] (14) at (1.25, -0) {};
		\node [style=none] (15) at (3, -0) {};
		\node [style=wamp] (16) at (0.5, 2) {$\scriptstyle {3}$};
		\node [style=none] (17) at (1.25, 2) {};
		\node [style=none] (18) at (-1.5, -0) {};
		\node [style=none] (19) at (3, 1.5) {};
	\end{pgfonlayer}
	\begin{pgfonlayer}{edgelayer}
		\draw (0.center) to (1);
		\draw [bend left, looseness=1.00] (1) to (2.center);
		\draw [bend right, looseness=1.00] (1) to (3.center);
		\draw [bend left, looseness=1.00] (2.center) to (4);
		\draw [bend right, looseness=1.00] (3.center) to (4);
		\draw (4) to (18.center);
		\draw [bend right, looseness=1.00] (18.center) to (8);
		\draw [bend right, looseness=1.00] (8) to (7.center);
		\draw [bend left, looseness=1.00] (7.center) to (6);
		\draw (6) to (5);
		\draw [bend left, looseness=1.00] (6) to (9.center);
		\draw (9.center) to (10.center);
		\draw (10.center) to (16.center);
		\draw (16.center) to (17.center);
		\draw [bend left, looseness=1.00] (17.center) to (11);
		\draw [bend left, looseness=1.00] (11) to (12.center);
		\draw (11) to (19);
		\draw [bend right, looseness=1.00] (12.center) to (13);
		\draw (8) to (13);
		\draw [bend right, looseness=1.00] (13) to (14.center);
		\draw (14.center) to (15.center);
	\end{pgfonlayer}
\end{tikzpicture}
\]
  and produces two output signals. What are they?
\end{exercise}

\begin{example}%
\index{differential equation}
This example is for those who have some familiarity with differential equations. A linear system of differential equations provides a simple way to specify the movement of a particle. For example, consider a particle whose position $(x,y,z)$ in $3$-dimensional space is determined by the following equations:
\begin{align*}
	\dot{x}+3\ddot{y}-2z=0\\
	\ddot{y}+5\dot{z}=0
\end{align*}
Using what is known as the Laplace transform, one can convert this into a linear system involving a formal variable $D$, which stands for ``differentiate.'' Then the system becomes
\begin{align*}
	Dx+3D^2y-2z=0\\
	D^2y+5Dz=0
\end{align*}
which can be represented by the signal flow graph
\[
  % [inline block 26: 1 envs, 2418 chars -> data_tex | \begin{tikzpicture}[scale=.7] 	\begin{pgfonlayer}{nodelayer}...]

\qedhere
\]
\end{example}

\paragraph{Signal flow graphs as morphisms in a free prop.}%
\index{signal flow graph!as morphism}

We can formally define simplified signal flow graphs using props.

\begin{definition}%
\label{def.sig_flow_graph_gens}
Let $R$ be a rig (see \cref{def.rig}). Consider the set 
\[
G_R := \left\{\add{.05\textwidth}, \zero{.05\textwidth},\comult{.05\textwidth},
\counit{.05\textwidth}\right\} \cup \left\{\scalar{.07\textwidth} \mid a \in R\right\},
\]
and let $s,t\colon G_R \to \nn$ be given by the number of dangling wires on the
left and right of the generator icon respectively.  A \emph{simplified signal flow graph}
is a morphism in the free prop $\free(G_R)$ on this set $G_R$ of generators. We
define $\sfg_R\coloneq \free(G_R)$.
\end{definition}

For now we'll drop the term `simplified', since these are the only sort of
signal flow graph we know. We'll return to signal flow graphs in their full
glory---i.e.\ including feedback---in \cref{subsec.full_SFGs}.

\begin{example}
To be more in line with our representations of both wiring diagrams and port
graphs, morphisms in $\free(G_R)$ should be drawn slightly differently. For example, technically the signal flow graph from
\cref{exc.a_signal_flow_graph} should be drawn as follows:
\[
\begin{tikzpicture}[oriented WD, spider diagram, bbx=3ex, bby=2ex]
	\node[spider={1}{2}, fill=black] (black1) {};
	\node[bb={0}{0}, fit=(black1)] (black1outer) {};
	\node[spider={2}{1}, fill=white, right=6 of black1] (white1) {};
	\node[bb={0}{0}, fit=(white1)] (white1outer) {};
	\node at ($(black1_out1)!.5!(white1_in1)$) (helper1) {};
	\node[spider={1}{1}, wamp] at (helper1) (scalar1) {$3$};
	\node[bby=.75ex, bbx=2ex, bb={0}{0}, fit=(scalar1)] (scalar1outer) {};
	\node[spider={1}{2}, fill=black, above=4.5 of white1] (black2) {}; 
	\node[bb={0}{0}, fit=(black2)] (black2outer) {};
	\node[spider={2}{1}, fill=white, right=6 of scalar1] (white2) {};
	\node[bb={0}{0}, fit=(white2)] (white2outer) {};
	\node at (black2_out1-|white2) (helper2) {};
	\node[spider={1}{1}, wamp] at (helper2) (scalar2) {$5$};
	\node[bby=.75ex, bbx=2ex, bb={0}{0}, fit=(scalar2)] (scalar2outer) {};
	\node[spider={1}{1}, wamp, right=2 of scalar2] (scalar3) {$3$};
	\node[bby=.75ex, bbx=2ex, bb={0}{0}, fit=(scalar3)] (scalar3outer) {};
	\node[spider={1}{2}, fill=black, right=2.5 of white2] (black3) {};
	\node[bb={0}{0}, fit=(black3)] (black3outer) {};
	\node[spider={2}{1}, fill=white, below right=.3 and 3 of scalar3] (white3) {};
	\node[bb={0}{0}, fit=(white3)] (white3outer) {};
	\node[bb={0}{0}, fit=(black1outer) (scalar3outer) (white3outer)] (outer) {};
	\draw (outer.west|-black1_in1) to (black1_in1);
	\draw (outer.west|-black2_in1) to (black2_in1);
	\draw (black1_out1) to (scalar1_in1);
	\draw (black1_out2) to (white1_in2);
	\draw (scalar1_out1) to (white1_in1);
	\draw (black2_out1) to (scalar2_in1);
	\draw (black2_out2) to (white2_in1);
	\draw (white1_out1) to (white2_in2);
	\draw (scalar2_out1) to (scalar3_in1);
	\draw (scalar3_out1) to (white3_in1);
	\draw (white2_out1) to (black3_in1);
	\draw (black3_out1) to (white3_in2);
	\draw (black3_out2) to (black3_out2-|outer.east);
	\draw (white3_out1) to (white3_out1-|outer.east);
\end{tikzpicture}
\]
because we said we would label boxes with the elements of $G$. But it is easier on the eye to draw remove the boxes and just look at the icons inside as in \cref{exc.a_signal_flow_graph},
and so we'll draw our diagrams in that fashion.
\end{example}
\index{icon|)}

More importantly, props provide language to understand the semantics of
signal flow graphs. Although the signal flow graphs themselves are free props,
their semantics---their meaning in our model of signals flowing---will arise
when we add equations to our props, as in \cref{rdef.presentation_prop}. These
equations will tell us when two signal flow graphs act the same way on signals.%
\index{signal flow graph!semantics of}
For example,
  \begin{equation}%
\label{eqn.equivalence.rand89}
\begin{aligned}
\begin{tikzpicture}[spider diagram]
	\node[spider={1}{2}, fill=black] (a) {};
	\node[special spider={1}{2}{0}{\leglen}, fill=black, right=.1 of a_out1] (b) {};
	\draw (a_out1) to (b_in1);
	\draw (a_out2) to (b_out1|-a_out2);
\end{tikzpicture}
\end{aligned}
    \qquad
    \mbox{and}\qquad
\begin{aligned}
\begin{tikzpicture}[spider diagram]
	\node[spider={1}{2}, fill=black] (a) {};
	\node[special spider={1}{2}{0}{\leglen}, fill=black, right=.1 of a_out2] (b) {};
	\draw (a_out2) to (b_in1);
	\draw (a_out1) to (b_out1|-a_out1);
\end{tikzpicture}
\end{aligned}
  \end{equation}
both express the same behavior: a single input signal is copied twice so that
three identical copies of the input signal are output.

If two signal flow graphs $S,T$ are almost the same, with the one exception
being that somewhere we replace the left-hand side of
\cref{eqn.equivalence.rand89} with the right-hand side, then $S$ and $T$ have
the same behavior. But there are other replacements we could make to a signal
flow graph that do not change its behavior. Our next goal is to find a complete
description of these replacements.

%---- Subsection ----%
\subsection{The prop of matrices over a rig}%
\index{rig!matrices over}
Signal flow graphs are closely related to matrices. In previous chapters we showed how a
matrix with values in a quantale $\cat{V}$---a closed monoidal preorder with all
joins---represents a system of interrelated points and connections between
them, such as a profunctor.  The quantale gave us the structure and axioms we
needed in order for matrix multiplication to work properly. But we know from
\cref{ex.quantale_as_rig} that quantales are examples of rigs, and in fact
matrix multiplication makes sense in any rig $R$. In \cref{ex.mat_rig}, we
explained that the set $\Set{Mat}_n(R)$ of $(n\times n)$-matrices in $R$ can
naturally be assembled into a rig, for any fixed choice of $n\in\nn$. But what
if we want to do better, and assemble \emph{all} matrices into a single
algebraic structure? The result is a prop!

An \emph{$(m\times n)$-matrix $M$ with values in $R$} is a function
$M\colon(\ord{m}\times\ord{n}) \to R$.  Given an $(m\times n)$-matrix $M$ and an $(n
\times p)$-matrix $N$, their \emph{composite} is the $(m\times p)$-matrix $M\cp N$
defined as follows for any $a \in \ord{m}$ and $c \in \ord{p}$:
\begin{equation}%
\label{eqn.matrix_mult_again}
  M\cp N(a,c) \coloneqq \sum_{b \in \ord{n}} M(a,b)\times N(b,c),
\end{equation}
Here the $\sum_{b\in \ord{n}}$ just
means repeated addition (using the rig $R$'s $+$ operation), as usual.

\begin{remark}
Conventionally, one generally considers a matrix $A$ acting on a vector $v$ by multiplication in the order $Av$, where $v$ is a column vector. In keeping with our composition convention, we use the opposite order, $v\cp A$, where $v$ is a row vector. See for example \cref{eqn.gens_as_matrices} for when this is implicitly used.
\end{remark}%
\index{vector}

\begin{definition}%
\label{def.prop_matrices}%
\index{prop!of matrices}
  Let $R$ be a rig. We define the \emph{prop of $R$-matrices}, denoted $\mat(R)$,
  to be the prop whose morphisms $m\to n$ are the $(m\times n)$-matrices
  with values in $R$. Composition of morphisms is given
  by matrix multiplication as in \cref{eqn.matrix_mult_again}. The monoidal product is given by the direct sum of
  matrices: given matrices $A\colon m \to n$ and $b\colon p \to q$, we define
  $A+B\colon m+p \to n+q$ to be the block matrix
  \[
  % [inline block 27: 8 envs, 4559 chars -> data_tex | \begin{pmatrix}   A & 0 \\...]

\end{equation}
Note that both zero and discard are represented by empty matrices, but of
differing dimensions. In linear algebra it is unusual to consider matrices of
the form $0\times n$ or $n\times 0$ for various $n$ to be different, but they can be kept distinct for bookkeeping purposes: you can multiply a $0\times 3$ matrix by a $3\times n$ matrix for any
$n$, but you can not multiply it by a $2\times n$ matrix.

Since signal flow graphs are morphisms in a free prop, the table in
\eqref{eqn.gens_as_matrices} is enough to show that we can
interpret any signal flow diagram as a matrix.

\begin{theorem}%
\label{thm.sfg_to_mat}
There is a prop functor $S\colon \sfg_R \to \mat(R)$ that sends the generators $g\in G$
icons to the matrices as described in Table \ref{eqn.gens_as_matrices}.
\end{theorem}
\begin{proof}
This follows immediately from the universal property of free props, \cref{rem.free_prop_universal_property}.
\end{proof}

We have now constructed a matrix $S(g)$ from any signal flow graph $g$. But how can we produce this matrix explicitly? Both for the example signal flow graph in \cref{eq.examplesfg}
and for the generators in \cref{def.sig_flow_graph_gens}, the associated matrix
has dimension $m\times n$, where $m$ is the number of inputs and $n$ the number of outputs,
with $(i,j)$th entry describing the amplification of the $i$th input that
contributes to the $j$th output. This is how one would hope or expect the
functor $S$ to work in general; but does it? We have used a big hammer---the
universal property of free constructions---to obtain our functor $S$. Our next
goal is to check that it works in the expected way. Doing so is a matter of
using induction over the set of prop expressions, as we now see.%
\footnote{Mathematical induction is a formal proof technique that can be thought of like a domino rally: if you knock over all the starting dominoes, and you're sure that each domino will be knocked down if its predecessors are, then you're sure every domino will eventually fall. If you want more rigor, or you want to understand the proof of \cref{prop.Simages} as a genuine case of induction, ask a friendly neighborhood mathematician!}%
\index{induction}

\begin{proposition} %
\label{prop.Simages}
  Let $g$ be a signal flow graph with $m$ inputs and $n$ outputs. The matrix
  $S(g)$ is the $(m\times n)$-matrix whose $(i,j)$-entry describes the
  amplification of the $i$th input that contributes to the $j$th output.
\end{proposition}
\begin{proof}
  Recall from \cref{def.prop_expressions} that an arbitrary $G_R$-generated prop expression is built from the morphisms
  $\id_0\colon 0 \to 0$, $\id_1\colon 1 \to 1$, $\sigma\colon 2 \to
  2$, and the generators in $G_R$, using the following two rules:
  \begin{itemize}
    \item if $\alpha\colon m \to n$ and $\beta\colon p \to q$ are expressions, then
      $(\alpha+\beta)\colon (m+p) \to (n+q)$ is an expression.
    \item if $\alpha\colon m \to n$ and $\beta\colon n \to p$ are expressions, then
      $\alpha\cp\beta\colon m \to p$ is an expression.
  \end{itemize}
  $S$ is a prop functor by \cref{thm.sfg_to_mat}, which by \cref{def.prop_functor}
  must preserve identities, compositions, monoidal products, and symmetries.
  We first show that the proposition is true when $g$ is equal
  to $\id_0$, $\id_1$, and $\sigma$.
  
  The empty
  signal flow graph $\id_0\colon 0 \to 0$ must be sent to the unique (empty) matrix
  $()\colon 0 \to 0$. The morphisms $\id_1$, $\sigma$, and $a\in R$ map to the identity matrix, the swap matrix, and the scalar matrix $(a)$ respectively:%
\index{identity!matrix}
  \[
    \idone{.08\textwidth}\;\mapsto\; \begin{pmatrix}1\end{pmatrix}
    \qquad\text{and}\qquad
    \swap{.06\textwidth}\;\mapsto\; \begin{pmatrix}0 & 1 \\ 1 & 0\end{pmatrix}
    \qquad\text{and}\qquad
    \scalar{3em}  \;\mapsto\; \begin{pmatrix}a\end{pmatrix}   
  \]
 In each case, the $(i,j)$-entry gives the amplification of the $i$th input to the $j$th output.
 
 It remains to show that if the proposition holds for
  $\alpha\colon m \to n$ and $\beta\colon p \to q$, then it holds for
  (i) $\alpha\cp\beta$ (when $n=p$) and for (ii) $\alpha+\beta$ (in general).

  To prove (i), consider the following picture of $\alpha\cp\beta$:
\[
  \begin{tikzpicture}[oriented WD, bb port length=0pt, bb port sep=1]
	\node[bb={5}{4}, minimum width = 2cm] (X) {$\alpha$};
	\node[bb={4}{5}, right= 1 of X, minimum width = 2cm] (Y) {$\beta$};
	\draw ($(X_in1)-(.2,0)$) to (X_in1);
	\draw ($(X_in2)-(.2,0)$) to (X_in2);
	\draw ($(X_in5)-(.2,0)$) to (X_in5);
	\draw (X_out1) to (Y_in1);
	\draw (X_out2) to (Y_in2);
	\draw (X_out4) to (Y_in4);
	\draw (Y_out1) to ($(Y_out1)+(.2,0)$);
	\draw (Y_out2) to ($(Y_out2)+(.2,0)$);
	\draw (Y_out5) to ($(Y_out5)+(.2,0)$);
	\draw[label]
		node at ($.5*(X_out3)+.5*(Y_in3)+(0,3pt)$) {$\vdots$}
		node[left=3pt of X_in3] {$\vdots$}
		node[left=12pt of X_in3] {\scriptsize $m$ inputs}
		node[right=3pt of Y_out3] {$\vdots$}
		node[right=12pt of Y_out3] {\scriptsize $q$ outputs}
	;	
\end{tikzpicture}
\]
Here $\alpha\colon m \to n$ and $\beta\colon n \to q$ are signal flow graphs,
assumed to obey the proposition. Consider the $i$th input and $k$th output of
$\alpha\cp\beta$; we'll just call these $i$ and $k$.  We want to show that the amplification that $i$
contributes to $k$ is the sum---over all paths from $i$ to
$k$---of the amplification along that path. So let's also fix some $j \in \ord{n}$, and consider paths from $i$ to $k$
that run through $j$. By distributivity of the rig $R$, the total amplification from $i$ to $k$ through $j$
is the total amplification over all paths from $i$ to $j$ times the total
amplication over all paths from $j$ to $k$. Since all paths from $i$ to $k$ must
run through some $j$th output of $\alpha$/input of $\beta$, the amplification
that $i$ contributes to $k$ is
\[
  \sum_{j \in \ord{n}} \alpha(i,j)\ast\beta(j,k).
\]
This is exactly the formula for matrix multiplication, which is composition $S(\alpha)\cp S(\beta)$ in the prop $\mat(R)$; see \cref{def.prop_matrices}. So $\alpha\cp\beta$ obeys
the proposition when $\alpha$ and $\beta$ do.

  Proving (ii) is more straightforward. The monoidal product $\alpha+\beta$ of signal flow graphs looks
  like this:
\[
  \begin{tikzpicture}[oriented WD, bb port length=0pt, bb port sep=1]
	\node[bb={5}{5}, minimum width = 2cm] (X) {$\alpha$};
	\node[bb={5}{5}, below= 1 of X, minimum width = 2cm] (Y) {$\beta$};
	\draw ($(X_in1)-(.2,0)$) to (X_in1);
	\draw ($(X_in2)-(.2,0)$) to (X_in2);
	\draw ($(X_in5)-(.2,0)$) to (X_in5);
	\draw (X_out1) to ($(X_out1)+(.2,0)$);
	\draw (X_out2) to ($(X_out2)+(.2,0)$);
	\draw (X_out5) to ($(X_out5)+(.2,0)$);
	\draw ($(Y_in1)-(.2,0)$) to (Y_in1);
	\draw ($(Y_in2)-(.2,0)$) to (Y_in2);
	\draw ($(Y_in5)-(.2,0)$) to (Y_in5);
	\draw (Y_out1) to ($(Y_out1)+(.2,0)$);
	\draw (Y_out2) to ($(Y_out2)+(.2,0)$);
	\draw (Y_out5) to ($(Y_out5)+(.2,0)$);
	\draw[label]
		node[left=3pt of X_in3] {$\vdots$}
		node[left=12pt of X_in3] {\scriptsize $m$ inputs}
		node[right=3pt of X_out3] {$\vdots$}
		node[right=12pt of X_out3] {\scriptsize $n$ outputs}
		node[left=3pt of Y_in3] {$\vdots$}
		node[left=12pt of Y_in3] {\scriptsize $p$ inputs}
		node[right=3pt of Y_out3] {$\vdots$}
		node[right=12pt of Y_out3] {\scriptsize $q$ outputs}
	;	
\end{tikzpicture}
\]
No new paths are created; the only change is to reindex the inputs and outputs.
In particular, the $i$th input of $\alpha$ is the $i$th input of $\alpha+\beta$,
the $j$th output of $\alpha$ is the $j$th output of $\alpha+\beta$, the $i$th
input of $\beta$ is the $(m+i)$th output of $\alpha+\beta$, and the $j$th output
of $\beta$ is the $(n+j)$th output of $\alpha+\beta$. This means that the matrix
with $(i,j)$th entry describing the amplification of the $i$th input that
contributes to the $j$th output is $S(\alpha)+S(\beta)=S(\alpha+\beta)$, as in \cref{def.prop_matrices}. This proves the proposition.
\end{proof}

\begin{exercise}%
\label{exc.sig_flow_mats}
\begin{enumerate}
	\item  What matrix does the signal flow graph
  \[
\begin{aligned}
\begin{tikzpicture}[spider diagram]
	\node[spider={1}{2}, fill=black] (a) {};
	\node[special spider={1}{2}{0}{\leglen}, fill=black, right=.1 of a_out1] (b) {};
	\draw (a_out1) to (b_in1);
	\draw (a_out2) to (b_out1|-a_out2);
\end{tikzpicture}
\end{aligned}
  \]
  represent?
  \item What about the signal flow graph
  \[
\begin{aligned}
\begin{tikzpicture}[spider diagram]
	\node[spider={1}{2}, fill=black] (a) {};
	\node[special spider={1}{2}{0}{\leglen}, fill=black, right=.1 of a_out2] (b) {};
	\draw (a_out2) to (b_in1);
	\draw (a_out1) to (b_out1|-a_out1);
\end{tikzpicture}
\end{aligned}
  \]
  \item Are they equal?
\qedhere
\end{enumerate}
\end{exercise}
\index{matrix!associated to a signal flow graph|)}

\subsection{The idea of functorial semantics} %
\index{functorial semantics}

Let's pause for a moment to reflect on what we have just learned. First, signal
flow diagrams are the morphisms in a prop. This means we have two special
operations we can do to form new signal flow diagrams from old, namely
composition (combining in series) and monoidal product (combining in parallel). We might think of this as
specifying a `grammar' or `syntax' for signal flow diagrams. 

As a language, signal flow graphs have not only syntax but also semantics: each signal flow diagram can be interpreted as a matrix. Moreover, matrices have the same grammatical structure:
they form a prop, and we can construct new matrices from old using composition
and monoidal product. In \cref{thm.sfg_to_mat} we completed this
picture by showing that semantic interpretation is a prop functor between the
prop of signal flow graphs and the prop of matrices. Thus we say that matrices
give \emph{functorial semantics} for signal flow diagrams.

Functorial semantics is a key manifestation of compositionality. It says that
the matrix meaning $S(g)$ for a big signal flow graph $g$ can be computed by:
\begin{enumerate}
	\item splitting $g$ up into little pieces,
	\item computing the very simple matrices for each piece, and
	\item using matrix multiplication and direct sum to put the pieces back together to obtain the desired meaning, $S(g)$.
\end{enumerate}
This functoriality is useful in practice, for example in
speeding up computation of the semantics of signal flow graphs: for large signal
flow graphs, composing matrices is much faster than tracing paths.

%-------- Section --------%
\section{Graphical linear algebra}

In this section we will begin to develop something called graphical linear
algebra, which extends the ideas above. This formalism is actually quite
powerful. For example, with it we can easily and \emph{graphically} prove
certain conjectures from control theory that, although they were eventually
solved, required fairly elaborate matrix algebra arguments
\cite{Fong.Sobocinski.Rapisardo:2016a}.

%---- Subsection ----%
\subsection{A presentation of $\mat(R)$}%
\index{presentation!of linear algebra}

Let $R$ be a rig, as defined in \cref{def.rig}. The main theorem of the previous section, \cref{thm.sfg_to_mat}, provided a functor $S\colon \sfg_R \to \mat(R)$ that converts any signal flow graph into a matrix. Next we show that $S$ is ``full'': that any matrix can be represented by a signal flow graph. 

\begin{proposition} %
\label{prop.Sfull}
Given any matrix $M \in \mat(R)$, there exists a signal flow graph $g \in \sfg_R$
such that such that $S(g)=M$.
\end{proposition}
\begin{proof}[Proof sketch]
Let $M \in \mat(R)$ be an $(m \times n)$-matrix. We want a signal flow
graph $g$ such that $S(g)=M$.  In particular, to compute
$S(g)(i,j)$, we know that we can simply compute the amplification that the $i$th
input contributes to the $j$th output. The key idea then is to construct $g$ so that there is exactly one path from
$i$th input to the $j$th output, and that this path has
exactly one scalar multiplication icon, namely $M(i,j)$.

The general construction is a little technical (see
\cref{exc.general_case_S_full}), but the idea is clear from just considering the
case of $2 \times 2$-matrices. Suppose $M$ is the $2\times2$-matrix
$(% [inline block 28: 5 envs, 2859 chars -> data_tex | \begin{smallmatrix} a & b \\ c & d \end{smallmatrix})$. Then we define $g$ to be the signal flow graph...]
$
  \end{enumerate*}
\end{exercise}

\begin{exercise}%
\label{exc.general_case_S_full}
Write down a detailed proof of \cref{prop.Sfull}. Suppose $M$ is an $m \times
n$-matrix. Follow the idea of the $(2\times 2)$-case in \cref{eqn.two_by_two},
and construct the signal flow graph $g$---having $m$ inputs and $n$ outputs---as
the composite of four layers, respectively comprising (i) copy and discard maps, (ii)
scalars, (iii) swaps and identities, (iv) addition and zero maps.
\erase{%Hide
\begin{itemize}
  \item For the first layer $g_1$, take
    \[
      g_1\coloneq c_n +\dots+ c_n\colon m \to (m \times n),
    \]
    where we take the monoidal product of $m$ copies of $c_n$, and 
    \[
      c_n\coloneq
      \comult{1em}.(1+\comult{1em}).(1+1+\comult{1em}).\dots.(1+\dots+1+\comult{1em})\colon
      1 \to n
    \]
    is the signal flow diagram that makes $n$ copies of a single input. 
  \item Next, define
    \begin{align*}
      g_2\coloneq &\quad s_{M(1,1)}+\dots+s_{M(1,n)} \\
      & +s_{M(2,1)}+\dots+s_{M(2,n)} \\
      &+\dots \\
      &+s_{M(m,1)}+\dots+s_{M(m,n)}\colon (m\times n) \to (m\times n),
    \end{align*}
    where $s_a\colon 1 \to 1$ is the scalar multiplication by $a$ signal flow
    graph generator. This layer amplifies each copy of the input signal by the
    relevant rig element.
  \item The third layer rearranges wires. We will not write this down
    explicitly, but simply say it is the signal flow graph $g_3\colon m \times
    n \to m \times n$, that is the
    composite and monoidal product of swap and identity maps, such that the
    $(i-1)m+j$th input maps to the $(j-1)n+i$th output, where $1 \le i \le n$
    and $1 \le j \le m$.
  \item Finally, the fourth layer is similar to the first, but instead adds the
    amplified input signals. We define 
    \[
      g_4\coloneq a_m+\dots+a_m \colon (m \times n)\to n,
    \]
    where 
    \[
      a_m\coloneq
      (1+\dots+1+\add{1em}).\dots.(1+1+\add{1em}).(1+\add{1em}).\add{1em}\colon
      m \to 1
    \]
    is the signal flom graph that adds $m$ inputs to produce a single output
\end{itemize}
Using \cref{prop.Simages}, it is easily checked that $g=g_1.g_2.g_3.g_4\colon m
\to n$ has the property that $S(g)=M$.
}
\end{exercise}

We can also use \cref{prop.Sfull} and its proof to give a presentation of
$\mat(R)$, which was defined in \cref{def.prop_matrices}.

\begin{theorem}%
\label{thm.presentation_mat}%
\index{prop!of matrices}
  The prop $\mat(R)$ is isomorphic to the prop with the following presentation.
  The set of generators is the set
\[
G_R := \left\{\add{.05\textwidth}, \zero{.05\textwidth},\comult{.05\textwidth},
\counit{.05\textwidth}\right\} \cup \left\{
\begin{aligned}
  \resizebox{.05\textwidth}{!}{
  % [inline block 29: 3 envs, 9883 chars -> data_tex | \begin{tikzpicture}[spider diagram] 	\node[spider={1}{1}, wamp] (a) {$\scriptstyle \mathsf a$};...]

\]
\end{theorem}
\begin{proof}
  The key idea is that these equations are sufficient to rewrite any
  $G_R$-generated prop expression into a normal form---the one used in the proof of
  \cref{prop.Sfull}---with all the black nodes to the left, all the white nodes to the right, and all the scalars in the middle. This is enough to show the equality of any two
  expressions that represent the same matrix.
  Details can be found in
  \cite{Baez.Erbele:2015a} or \cite{Bonchi.Sobocinski.Zanasi:2017a}.
\end{proof}

\paragraph{Sound and complete presentation of matrices.}%
\index{soundness!of proof system}%
\index{completeness!of proof system}

Once you get used to it, \cref{thm.presentation_mat} provides an intuitive,
visual way to reason about matrices. Indeed, the theorem implies two signal flow
graphs represent the same matrix if and only if one can be turned into the other
by local application of the above equations and the prop axioms.

The fact that you can prove two SFGs to be the same by using only graphical rules can be stated in the jargon
of logic: we say that the graphical rules provide a \emph{sound and complete reasoning system}. To be more
specific, \emph{sound} refers to the forward direction of the above statement:
two signal flow graphs represent the same matrix if one can be turned into the
other using the given rules.  \emph{Complete} refers to the reverse direction:
if two signal flow graphs represent the same matrix, then we can convert one into the other using the equations of \cref{thm.presentation_mat}.

\begin{example}
Both of the signal flow graphs below represent the same matrix, $0\choose6$\ :
\[
  \begin{aligned}
% [inline block 30: 9 envs, 9101 chars -> data_tex | \begin{tikzpicture}[scale=.7] 	\begin{pgfonlayer}{nodelayer}...]

\end{aligned}
  \end{equation}
  \begin{enumerate}
  	\item Let $R=(\NN,0,+,1,*)$. By examining the presentation of $\mat(R)$ in \cref{thm.presentation_mat}, and without computing the
  matrices that the two signal flow graphs in \cref{eqn.exc_two_sfgs} represent, prove that they do \emph{not} represent the same
  matrix.
  	\item Now suppose the rig is $R=\NN/3\nn$; if you do not know what this means, just replace all 3's with 0's in the right-hand diagram of \cref{eqn.exc_two_sfgs}. Find what you would call a minimal representation of this diagram, using the presentation in \cref{thm.presentation_mat}.
	\qedhere
\end{enumerate}
\end{exercise}

%---- Subsection ----%
\subsection{Aside: monoid objects in a monoidal category}
\label{ssec.alg_theories}%
\index{algebraic theory}%
\index{monoidal category!monoid object in|(}%
\index{monoid|seealso {monoidal category, monoid object in}}

Various subsets of the equations in \cref{thm.presentation_mat} encode
structures that are familiar from many other parts of mathematics, e.g.\
representation theory. For example one can find the axioms for (co)monoids,
(co)monoid homomorphisms, Frobenius algebras, and (with a little rearranging)
Hopf algebras, sitting inside this collection. The first example, the notion of
monoids, is particularly familiar to us by now, so we briefly discuss it below,
both in algebraic terms (\cref{def.monoid_object}) and in diagrammatic terms
(\cref{ex.diagrammatic_monoid_obj}).%
\index{Frobenius!algebra}

\begin{definition}%
\label{def.monoid_object}%
\index{monoidal category!monoid object in}
  A \emph{monoid object} $(M,\mu,\eta)$ in a symmetric monoidal category
  $(\cat{C},I,\otimes)$ is an object $M$ of $\cat{C}$ together with morphisms
  $\mu\colon M \otimes M \to M$ and $\eta\colon I \to M$ such that 
\begin{enumerate}[label=(\alph*)]
  \item $(\mu \otimes \id)\cp\mu = (\id \otimes \mu)\cp\mu$ and
  \item $(\eta \otimes \id)\cp\mu = \id = (\id \otimes \eta)\cp\mu$.
\end{enumerate}
A \emph{commutative monoid object} is a monoid object that further obeys
\begin{enumerate}[resume, label=(\alph*)]
  \item $\sigma_{M,M}\cp\mu = \mu$.
\end{enumerate}
where $\sigma_{M,M}$ is the swap map on $M$ in $\cat{C}$. We often denote it simply by $\sigma$.
\end{definition}

Monoid objects are so-named because they are an abstraction of the usual concept
of monoid.

\begin{example}
A monoid object in $(\smset,1,\times)$ is just a regular old monoid, as defined in \cref{ex.monoid}; see also \cref{ex.monoid_nats}. That is, it is a set $M$, a function $\mu\colon M\times M\to M$, which we denote by infix notation $*$, and an element $\eta(1)\in M$, which we denote by $e$, satisfying $(a*b)*c=a*(b*c)$ and $a*e=a=e*a$.
\end{example}

\begin{exercise}%
\index{real numbers}%
\label{exc.two_monoids_on_reals}
Consider the set $\RR$ of real numbers.
	\begin{enumerate}
		\item Show that if $\mu\colon\RR\times\RR\to\RR$ is defined by $\mu(a,b)=a*b$ and if $\eta\in\RR$ is defined to be $\eta=1$, then $(\RR,*,1)$ satisfies all three conditions of \cref{def.monoid_object}.
		\item Show that if $\mu\colon\RR\times\RR\to\RR$ is defined by $\mu(a,b)=a+b$ and if $\eta\in\RR$ is defined to be $\eta=0$, then $(\RR,+,0)$ satisfies all three conditions of \cref{def.monoid_object}.
		\qedhere
\end{enumerate}
\end{exercise}

\begin{example}%
\label{ex.diagrammatic_monoid_obj}
  Graphically, we can depict $\mu = \add{1em}$ and $\eta = \zero{1em}$. Then
  axioms (a), (b), and (c) from \cref{def.monoid_object} become:
  \begin{enumerate}[label=(\alph*)]
    \item $
      \begin{aligned}
	\resizebox{4em}{!}{
	  % [inline block 31: 6 envs, 5169 chars -> data_tex | \begin{tikzpicture}[scale=.5] 	    \begin{pgfonlayer}{nodelayer}...]

	  }
	\end{aligned}$
  \end{enumerate}
  All three of these are found in \cref{thm.presentation_mat}. Thus we can immediately conclude the following: the triple
  $(1,\add{1em},\zero{1em})$ is a commutative monoid object in the prop $\mat(R)$.
\end{example}

\begin{exercise}%
\label{exc.check_monoid_obj}
For any rig $R$, there is a functor $U\colon\mat(R)\to\smset$, sending the object $n\in\NN$ to the set $R^n$, and sending a morphism (matrix) $M\colon m\to n$ to the function $R^m\to R^n$ given by vector-matrix multiplication. %
\index{vector}

Recall that in $\mat(R)$, the monoidal unit is $0$ and the monoidal product is $+$, because it is a prop. Recall also that in (the usual monoidal structure on) $\smset$, the monoidal unit is $\{1\}$, a set with one element, and the monoidal product is $\times$ (see \cref{ex.set_as_mon_cat}).

\begin{enumerate}
	\item Check that the functor $U\colon\mat(R)\to\smset$, defined above, preserves the monoidal unit and the monoidal product.
	\item Show that if $(M,\mu,\eta)$ is a monoid object in $\mat(R)$ then $(U(M),U(\mu),U(\eta))$ is a monoid object in $\smset$. (This works for any monoidal functor---which we will define in \cref{roughdef.monoidal_functor}---not just for $U$ in particular.)
	\item In \cref{ex.diagrammatic_monoid_obj}, we said that the triple
  $(1,\add{1em},\zero{1em})$ is a commutative monoid object in the prop $\mat(R)$. If $R=\RR$ is the rig of real numbers, this means that we have a monoid structure on the set $\RR$. But in \cref{exc.two_monoids_on_reals} we gave two such monoid structures. Which one is it?
  \qedhere
\end{enumerate}%
\index{monoidal functor}
\end{exercise}

\begin{example}
  The triple $(1,\comult{1em},\counit{1em})$ in $\mat(R)$ forms a commutative
  monoid object in $\mat(R)\op$. We hence also say that
  $(1,\comult{1em},\counit{1em})$ forms a \emph{co-commutative comonoid object} in $\mat(R)$.
\end{example}

\begin{example}
A \emph{symmetric strict monoidal category}, is just a commutative monoid object in
$(\Cat{Cat},\times,\Cat{1})$. We will unpack this in \cref{sec.monoidal_cats_full}. %
\index{primordial ooze}
\end{example}

\begin{example}
A symmetric monoidal preorder, which we defined in
\cref{def.symm_mon_structure}, is just a commutative monoid object in the
symmetric monoidal category $(\Cat{Preord},\times,\Cat{1})$ of preorders and
monotone maps.
\end{example}

\begin{example}
For those who know what tensor products of commutative monoids are (or can
guess): A rig is a monoid object in the symmetric monoidal category
$(\Cat{CMon},\otimes,\nn)$ of commutative monoids with tensor product.
\end{example}

\begin{remark}%
\label{rem.theory_of_monoids}%
\index{theory!of monoids}
If we present a prop $\cat{M}$ using two generators $\mu\colon 2\to 1$ and
$\eta\colon 0 \to 1$, and the three equations from \cref{def.monoid_object}, we could call it `the theory of monoids in monoidal categories.' This means that in any monoidal category $\cat C$, the monoid objects in $\cat{C}$ correspond
to strict monoidal functors $\cat{M}\to\cat C$. This sort of idea leads to the study of
algebraic theories, due to Bill Lawvere and extended by many others; see
\cref{sec.ch5_further_reading}.%
\index{monoidal functor}
\end{remark}

\index{monoidal category!monoid object in|)}

%-------- Section --------%
\subsection{Signal flow graphs: feedback and more}%
\label{subsec.full_SFGs}%
\index{feedback}

At this point in the story, we have seen that every signal flow graph represents
a matrix, and this gives us a new way of reasoning about matrices. This is just
the beginning of a beautiful tale, one not only of graphical matrices, but of
graphical \emph{linear algebra}. We close this chapter with some brief hints at
how the story continues.

The pictoral nature of signal flow graphs invites us to play with them.
While we normally draw the copy icon like so, $\comult{1em}$, we
could just as easily reverse it and draw an icon $\mult{1em}$. What might it mean? Let's think
again about the semantics of flow graphs.

\paragraph{The behavioral approach.}%
\index{behavioral approach}

A signal flow graph $g\colon m \to n$ takes an input $x \in R^m$ and gives an output $y
\in R^n$. In fact, since this is all we care about, we might just think about
representing a signal flow graph $g$ as describing a set of input and
output pairs $(x,y)$. We'll call this set the \emph{behavior} of $g$ and denote it $\beh(g)\ss 
R^m\times R^n$. For example, the `copy' flow graph
\[
\begin{aligned}
\begin{tikzpicture}[spider diagram]
	\node[spider={1}{2}, fill=black] (a) {};
\end{tikzpicture}
\end{aligned}
\]
sends the input $1$ to the output $(1,1)$, so we consider $(1,(1,1))$ to be an element of copy-behavior. Similarly, $(x,(x,x))$ is copy behavior for every $x\in R$, thus we have
\[
\beh(\comult{1.3em}) = \{ (x,(x,x))\mid x \in R\}.
\]
In the abstract, the signal flow graph $g\colon m \to n$ has the behavior
\begin{equation}%
\label{eqn.behavior_g}
\beh(g) = \big\{\big(x,S(g)(x)\big) \mid x \in R^m\big\} \subseteq R^m \times R^n.
\end{equation}

\paragraph{Mirror image of an icon.}%
\index{mirror image|see {transpose}}%
\index{reverse icon|see {transpose}}

The above behavioral perspective provides a clue about how to interpret the mirror images of the
diagrams discussed above. Reversing an icon $g\colon m\to n$ exchanges the inputs with the outputs, so if we denote this reversed icon by $g\op$, we must have $g\op\colon n\to m$. Thus if $\beh(g)\ss R^m\times R^n$ then we need $\beh(g\op)\ss R^n\times R^m$. One simple way to do this is to replace each $(a,b)$ with $(b,a)$, so we would have
\begin{equation}%
\label{eqn.behavior_gop}
\beh(g\op)\coloneqq\big\{\big(S(g)(x),x\big) \mid x \in R^m\big\} \subseteq R^n \times R^m.
\end{equation}
This is called the \emph{transposed relation}.%
\index{transpose}

\begin{exercise}%
\label{exc.understand_reversed_icons}
\begin{enumerate}
	\item What is the behavior $\beh(\coadd{1.3em})$ of the reversed addition icon $\coadd{1.3em}\colon 1 \to 2$?
	\item What is the behavior $\beh(\mult{1.3em})$ of the reversed copy icon, $\mult{1.3em}\colon 2\to 1$?
	\qedhere
\end{enumerate}
\end{exercise}

\cref{eqn.behavior_g,eqn.behavior_gop} give us formulas for interpreting signal flow graphs and their mirror
images. But this would easily lead to disappointment, if we couldn't combine the two directions behaviorally; luckily we can. 

\paragraph{Combining directions.}
What
should the behavior be for a diagram such as the following:
\[
  \begin{tikzpicture}[scale=.7]
	\begin{pgfonlayer}{nodelayer}
		\node [style=none] (0) at (-6, -0) {};
		\node [style=bdot] (1) at (-5, -0) {};
		\node [style=wamp] (2) at (-4, 0.5) {$\scriptstyle 3$};
		\node [style=none] (3) at (-4, -0.5) {};
		\node [style=wdot] (4) at (-3, -0) {};
		\node [style=bdot] (5) at (-3, 1.5) {};
		\node [style=bdot] (6) at (-2.5, 1.5) {};
		\node [style=none] (7) at (-1.5, 1) {};
		\node [style=wdot] (8) at (-0.5, 0.5) {};
		\node [style=none] (9) at (-1.5, 2) {};
		\node [style=wampop] (10) at (-0.75, 2) {$\scriptstyle{-1}$};
		\node [style=bdot] (11) at (2.25, 1.5) {};
		\node [style=none] (12) at (1.25, 1) {};
		\node [style=bdot] (13) at (0.25, 0.5) {};
		\node [style=none] (14) at (1.25, -0) {};
		\node [style=none] (15) at (3, -0) {};
		\node [style=wampop] (16) at (0.5, 2) {$\scriptstyle 3$};
		\node [style=none] (17) at (1.25, 2) {};
		\node [style=none] (18) at (-1.5, -0) {};
		\node [style=bdot] (19) at (2.75, 1.5) {};
	\end{pgfonlayer}
	\begin{pgfonlayer}{edgelayer}
		\draw (0.center) to (1);
		\draw [bend left, looseness=1.00] (1) to (2.center);
		\draw [bend right, looseness=1.00] (1) to (3.center);
		\draw [bend left, looseness=1.00] (2.center) to (4);
		\draw [bend right, looseness=1.00] (3.center) to (4);
		\draw (4) to (18.center);
		\draw [bend right, looseness=1.00] (18.center) to (8);
		\draw [bend right, looseness=1.00] (8) to (7.center);
		\draw [bend left, looseness=1.00] (7.center) to (6);
		\draw (6) to (5);
		\draw [bend left, looseness=1.00] (6) to (9.center);
		\draw (9.center) to (10.center);
		\draw (10.center) to (16.center);
		\draw (16.center) to (17.center);
		\draw [bend left, looseness=1.00] (17.center) to (11);
		\draw [bend left, looseness=1.00] (11) to (12.center);
		\draw (11) to (19);
		\draw [bend right, looseness=1.00] (12.center) to (13);
		\draw (8) to (13);
		\draw [bend right, looseness=1.00] (13) to (14.center);
		\draw (14.center) to (15.center);
	\end{pgfonlayer}
\end{tikzpicture}
\]
Let's formalize our thoughts a bit and begin by thinking about behaviors. The
behavior of a signal flow graph $m \to n$ is a subset $B\ss R^m \times R^n$, i.e.\ a relation. Why
not try to construct a prop where the morphisms $m \to n$ are relations?

We'll need to know how to compose and take monoidal products of relations. And if we want this prop of relations to contain the old prop $\mat(R)$, we need the new compositions and monoidal products to generalize the old ones in $\mat(R)$. Given signal flow graphs with matrices $M\colon m \to n$ and $N\colon n
\to p$, we see that their behaviors are the relations $B_1\coloneq\{(x,Mx) \mid x \in R^m\}$ and
$B_2\coloneq\{(y,Ny) \mid y \in R^n\}$, while the behavior of $M\cp N$ is the relation
$\{(x,x\cp M\cp N) \mid x \in R^m\}$. This is a case of relation composition. Given relations $B_1\ss R^m\times R^n$ and $B_2\ss R^n\times R^p$, their composite $B_1\cp B_2\ss R^m\times R^p$ is given by
\begin{equation}%
\label{eqn.comp_rule_rels}%
\index{relations!composition of}
  B_1\cp B_2\coloneqq
  \{(x,z)\mid \textrm{there exists } y \in R^n\textrm{ such that } (x,y) \in
  B_1\text{ and } (y,z) \in B_2\}.
\end{equation}
We shall use this as the general definition for composing two behaviors. 

\begin{definition}%
\label{def.rel_prop}%
\index{prop!of $R$-relations}
Let $R$ be a rig. We define the prop $\rel_R$ of $R$-relations to have subsets
$B\ss R^m \times R^n$ as morphisms. These are composed by the composition
rule from \cref{eqn.comp_rule_rels}, and we take the product of two sets to form their monoidal product.
\end{definition}

\begin{exercise}%
\label{exc.monoidal_prod_+}
In \cref{def.rel_prop} we went quickly through monoidal products $+$ in the prop $\rel_R$. If $B\ss R^m\times R^n$ and $C\ss R^p\times R^q$ are morphisms in $\rel_R$, write down $B+C$ in set-notation.
\end{exercise}

\paragraph{(No-longer simplified) signal flow graphs.}
Recall that above, e.g.\ in \cref{def.sig_flow_graph_gens}, we wrote $G_R$ for the set of generators of signal flow graphs. In \cref{subsec.full_SFGs}, we wrote $g\op$ for the mirror image of $g$, for each $g\in G_R$. So let's write $G_R\op\coloneqq\{g\op\mid g\in G_R\}$ for the set of all the mirror images of generators.  We define a prop
\begin{equation}%
\label{eqn.SFG_plus}
	\sfg_R^+\coloneqq \free\left(G_R\dju G_R\op\right).
\end{equation}
We call a morphism in the prop $\sfg_R^+$ a \emph{(non-simplified) signal flow
graph}: these extend our simplified signal flow graphs from
\cref{def.sig_flow_graph_gens} because now we can also use the mirrored icons.
By the universal property of free props, since we have said what the behavior of
the generators is (the behavior of a reversed icon is the transposed relation; see \cref{eqn.behavior_gop}), we have specified the behavior of any signal flow graph.
\index{signal flow graph!general}

The following two exercises help us understand what this behavior is.

\begin{exercise}%
\label{exc.SFG_composite_behavior}
Let $g\colon m \to n$, $h\colon \ell \to n$ be signal flow graphs. Note that
$h\op\colon n \to \ell$ is a signal flow graph, and we can form the
composite $g\cp (h\op)$:
\[
  % [inline block 32: 2 envs, 2060 chars -> data_tex | \begin{tikzpicture}[oriented WD, bb port length=0pt, bb port sep=1pt]     \node[bb={4}{4}, minimum width = 2cm] (X) ...]

\]
Show that the behavior of $g\op\cp h$ is equal to 
\[
  \beh((g\op)\cp h)=\{(S(g)(x),S(h)(x))\,\mid\, x \in R^m\}.
  \qedhere
\]
\end{exercise}

\paragraph{Linear algebra via signal flow graphs.}%
\index{signal flow graph!and
linear algebra}

In \cref{eqn.behavior_g} we see that every matrix, or linear map, can be
represented as the behavior of a signal flow graph, and in
\cref{exc.SFG_composite_behavior} we see that solution sets of linear equations
can also be represented. This includes central concepts in linear algebra, like
kernels and images.

\begin{exercise} %
\label{exc.linear_relations}
Here is an exercise for those that know linear algebra, in particular kernels and cokernels. Let $R$ be a field, let $g\colon
m \to n$ be a signal flow graph, and let $S(g)\in\mat(R)$ be the associated $(m\times n)$-matrix (see \cref{thm.sfg_to_mat}).
\begin{enumerate}
  \item Show that the composite of $g$ with $0$-reverses, shown here
  \[
    \begin{tikzpicture}[oriented WD, bb port length=0pt, bb port sep=1pt]
      \node[bb={4}{4}, minimum width = 2cm] (X) 
      {$\begin{array}{c} \longrightarrow \\ g \end{array}$};
  	\draw ($(X_in1)-(.5,0)$) to (X_in1);
  	\draw ($(X_in2)-(.5,0)$) to (X_in2);
  	\draw ($(X_in4)-(.5,0)$) to (X_in4);
  	\draw (X_out1) to ($(X_out1)+(.5,0)$);
  	\draw (X_out2) to ($(X_out2)+(.5,0)$);
  	\draw (X_out4) to ($(X_out4)+(.5,0)$);
  	\draw[label]
  		node[above left=-4pt and 3pt of X_in3] {$\vdots$}
  		node[above right=-4pt and 3pt of X_out3] {$\vdots$}
  		node[wdot] at ($(X_out1)+(.5,0)$) {}
  		node[wdot] at ($(X_out2)+(.5,0)$) {}
  		node[wdot] at ($(X_out4)+(.5,0)$) {}
  	;	
  \end{tikzpicture}
  \]
  is equal to the kernel of the matrix $S(g)$.
  \item Show that the composite of discard-reverses with $g$, shown here 
  \[
    \begin{tikzpicture}[oriented WD, bb port length=0pt, bb port sep=1pt]
      \node[bb={4}{4}, minimum width = 2cm] (X) 
      {$\begin{array}{c} \longrightarrow \\ g \end{array}$};
  	\draw ($(X_in1)-(.5,0)$) to (X_in1);
  	\draw ($(X_in2)-(.5,0)$) to (X_in2);
  	\draw ($(X_in4)-(.5,0)$) to (X_in4);
  	\draw (X_out1) to ($(X_out1)+(.2,0)$);
  	\draw (X_out2) to ($(X_out2)+(.2,0)$);
  	\draw (X_out4) to ($(X_out4)+(.2,0)$);
  	\draw[label]
  		node[above left=-4pt and 3pt of X_in3] {$\vdots$}
  		node[above right=-4pt and 3pt of X_out3] {$\vdots$}
  		node[bdot] at ($(X_in1)-(.5,0)$) {}
  		node[bdot] at ($(X_in2)-(.5,0)$) {}
  		node[bdot] at ($(X_in4)-(.5,0)$) {}
  	;	
  \end{tikzpicture}
  \]
  is equal to the image of the matrix $S(g)$.
  \item Show that for any signal flow graph $g$, the subset
  $\beh(g)\subseteq R^m \times R^n$ is a linear subspace. That is, if $b_1,b_2\in \beh(g)$ then so are $b_1+b_2$ and $r*b_1$, for any $r\in R$.
  \qedhere
\end{enumerate}
\end{exercise}

We have thus seen that signal flow graphs provide a uniform, compositional
language to talk about many concepts in linear algebra. Moreover, in \cref{exc.linear_relations} we showed that the behavior of a signal flow graph is a linear relation, i.e.\ a relation whose elements can be added and multiplied by scalars $r\in R$. In fact the converse is true too: any linear relation $B\ss R^m\times R^n$ can be represented by a signal flow graph.%
\index{linear relation}%
\index{relation!linear|see {linear relation}}

\begin{exercise}%
\label{exc.linrel_prop}
One might want to show that linear relations on $R$ form a prop, $\Cat{LinRel}_R$. That is, one might want to show that there is a sub-prop of the prop $\rel_R$ from \cref{def.rel_prop}, where the morphisms $m\to n$ are the subsets $B\ss R^m\times R^n$ such that $B$ is linear. In other words, where for any $(x,y)\in B$ and $r\in R$, the element $(r*x,r*y)\in R^m\times R^n$ is in $B$, and for any $(x',y')\in B$, the element $(x+x',y+y')$ is in $B$.

This is certainly doable, but for this exercise, we only ask that you prove that the composite of two linear relations is linear.
\end{exercise}

Just like we gave a sound and complete presentation for the prop of matrices in
\cref{thm.presentation_mat}, it is possible to give a sound and complete
presentation for linear relations on $R$. Moreover, it is possible to give such
a presentation whose generating set is $G_R\sqcup G_R\op$ as in
\cref{eqn.SFG_plus} and whose equations include those from
\cref{thm.presentation_mat}, plus a few more. This presentation gives a
graphical method for doing linear algebra: an equation between linear subspaces
is true \emph{if and only if} it can be proved using the equations from the
presentation.

Although not difficult, we leave the full presentation to further reading
(\cref{sec.ch5_further_reading}). Instead, we'll conclude our exploration of the
prop of linear relations by noting that some of these `few more' equations state
that relations---just like co-design problems in \cref{chap.codesign}---form a
compact closed category.

\paragraph{Compact closed structure.}%
Using the icons available to us for signal flow graphs, we can build morphisms
that look like the `cup' and `cap' from \cref{def.compact_closed}:
\begin{equation}%
\label{eqn.cup_cap_sfgs}
  \begin{aligned}
    % [inline block 33: 7 envs, 5655 chars -> data_tex | \begin{tikzpicture}[scale=.7]       \begin{pgfonlayer}{nodelayer}...]

  \end{aligned}
\]

\paragraph{Back to control theory.}%
\index{control theory}

Let's close by thinking about how to represent a simple control theory problem
in this setting. Suppose we want to design a system to maintain the speed of a
car at a desired speed $u$. We'll work in signal flow diagrams over the rig
$\rr[s,s^{-1}]$ of polynomials in $s$ and $s\inv$ with coefficients in $\rr$ and
where $ss\inv=s\inv s=1$. This is standard in control theory: we think of $s$ as
integration, and $s\inv$ as differentiation.

There are three factors that contribute to the actual speed
$v$. First, there is the actual speed $v$. Second, there are external forces
$F$. Third, we have our control system: this will take some linear combination
$a*u+b*v$ of the desired speed and actual speed, amplify it by some factor $p$ to give a (possibly
negative) acceleration. We can represent this system as follows, where $m$ is the
mass of the car.
\[
  % [inline block 34: 1 envs, 2736 chars -> data_tex | \begin{tikzpicture}[scale=.7] 	\begin{pgfonlayer}{nodelayer}...]

\]
This can be read as the following equation, where one notes that $v$ occurs twice:
\[v=\int \frac{1}{m}F(t) dt + u(t) + p\int au(t)+bv(t) dt.\]

Our control problem then asks: how do we choose $a$ and $b$ to make the
behavior of this signal flow graph close to the relation $\{(F,u,v)\mid u=v\}$? By
phrasing problems in this way, we can use extensions of the logic we have
discussed above to reason about such complex, real-world problems.

%-------- Section --------%
\section{Summary and further reading}%
\label{sec.ch5_further_reading}

The goal of this chapter was to explain how props formalize signal flow graphs,
and provide a new perspective on linear algebra. To do this, we examined the
idea of free and presented structures in terms of universal properties. This
allowed us to build props that exactly suited our needs.

Pawe\l\ Soboci\'nski's \emph{Graphical Linear Algebra} blog is an accessible and
fun exploration of the key themes of this chapter, which goes on to describe how
concepts such as determinants, eigenvectors, and division by zero
can be expressed using signal flow graphs \cite{Sobocinski:blog}. For
the technical details, one could start with Baez and Erbele
\cite{Baez.Erbele:2015a}, or Zanasi's thesis \cite{zanasi:thesis} and its
related series of papers
\cite{bonchi2014categorical,bonchi2015full,Bonchi.Sobocinski.Zanasi:2017a}.  For
details about applications to control theory, see
\cite{Fong.Sobocinski.Rapisardo:2016a}. From the control theoretic perspective,
the ideas and philosophy of this chapter are heavily influenced by Willems'
behavioral approach \cite{Willems:2007a}.

For the reader that has not studied abstract algebra, we mention that rings,
monoids, and matrices are standard fare in abstract algebra, and can be found in
any standard introduction, such as \cite{Fraleign:1967a}. Rigs, also known as
semirings, are a bit less well known, but no less interesting; a comprehensive
survey of the literature can be found in \cite{Glazek:2013a}.%
\index{semiring|see {rig}}

Perhaps the most significant idea in this chapter is the separation of structure
into syntax and semantics, related by a functor. This is not only present in the
running theme of studying signal flow graphs, but in our aside
\cref{ssec.alg_theories}, where we talk, for example, about monoid objects in
monoidal categories. The idea of functorial semantics is yet another due to
Lawvere, first appearing in his thesis \cite{Lawvere:2004}.

\index{prop|)}
\index{signal flow graph|)}

\setcounter{chapter}{5}%Just finished 5.
%------------ Chapter ------------%
\chapter[Circuits: hypergraph categories and operads]{Electric circuits:\\Hypergraph categories and operads}%
\label{chap.hypergraph_cats}

%\settocdepth{subsubsection}
%\tableofcontents*

%-------- Section --------%
\section{The ubiquity of network languages}%
\index{network!diagram}

Electric circuits, chemical reaction networks, finite state automata, Markov
processes: these are all models of physical or computational systems that are
commonly described using network diagrams. Here, for example, we draw a diagram
that models a flip-flop, an electric circuit---important in computer
memory---that can store a bit of information:%
\index{flip-flop}
\index{electric circuit}
\[
\tikzset{int/.style={draw, fill, circle, inner sep=1pt}}
\begin{circuitikz}[circuit ee IEC, set resistor graphic=var resistor IEC
graphic,xscale=2,yscale=1,thick]
\coordinate (1) at (.8,0) {};
\coordinate (2) at (2,0) {};
\coordinate (3) at (3,0) {};
\coordinate (4) at (4,0) {};
\coordinate (5) at (5,0) {};
\coordinate (z) at (0,2.5) {};
\coordinate (a) at (0,2) {};
\coordinate (b) at (0,.7) {};
\coordinate (c) at (0,0) {};
\coordinate (d) at (0,-.7) {};
\coordinate (e) at (0,-2) {};
\coordinate (f) at (0,-2.5) {};
%transistors
\node[npn,rotate=90] at (a-|3) (t1){};
\node[npn,xscale=-1,rotate=-90] at (e-|3) (t2){};
\node[npn,xscale=-1,rotate=-90] at (b-|4) (t3){};
\node[npn,rotate=90] at (d-|4) (t4){};
%connections
\node[draw, circle, inner sep=1pt] (source) at (.6,0) {};
\node at (.4,0) {\small $V_S$};
\node[draw, circle, inner sep=1pt, "\tiny \textrm{OUTPUT}"] (out) at (z-|2) {};
\node[draw, circle, inner sep=1pt, label={[label distance=-1.5pt, below]\tiny
$\overline{\mathrm{OUTPUT}}$}] (out2) at (f-|2) {};
\node[draw, circle, inner sep=1pt, "\tiny \textrm{SET}"] (set) at (z-|4) {};
\node[draw, circle, inner sep=1pt, "\tiny \textrm{RESET}" below] (reset) at (f-|4) {};
\node[ground, scale=.6] (ground) at ($(5)+(.25,0)$) {};
%intersections
\node[int] at (a-|2) {};
\node[int] at (e-|2) {};
\node[int] at (b-|3) {};
\node[int] at (d-|3) {};
\node[int] at (b-|5) {};
\node[int] at (d-|5) {};
\draw
(source) to (c-|1)
(a-|1) to (e-|1)
(a-|1) to[resistor] node[label={[label distance=0pt]90:{\tiny$1K\Omega$}}] {} (a-|2)
(e-|1) to[resistor] node[label={[label distance=0pt]90:{\tiny$1K\Omega$}}] {} (e-|2)
(a-|2) to[resistor] node[label={[label distance=-1pt]180:{\tiny$10K\Omega$}}]{}(b-|2)
(e-|2) to[resistor] node[label={[label distance=-1pt]180:{\tiny$10K\Omega$}}]{}(d-|2)
(b-|2) to (d-|3)
(d-|2) to (b-|3)
(b-|3) to (t1.B)
(d-|3) to (t2.B)
(a-|2) to (t1.C)
(e-|2) to (t2.C)
(t1.E) to (a-|5) to (c-|5)
(t2.E) to (e-|5) to (c-|5)
(b-|3) to (t3.C)
(t3.E) to (b-|5)
(t3.B) to (set)
(d-|3) to (t4.C)
(t4.E) to (d-|5)
(t4.B) to (reset)
(a-|2) to (out)
(e-|2) to (out2)
(c-|5) to (ground)
	;
\end{circuitikz}
\]

Network diagrams have time-tested utility. In this chapter, we are interested in
understanding the common mathematical structure that they share, for the
purposes of translating between and unifying them; for example certain types of
Markov processes can be simulated and hence solved using circuits of resisters.
When we understand the underlying structures that are shared by network diagram languages, we can make comparisons between the corresponding mathematical models easily.

At first glance network diagrams appear quite different from the wiring diagrams
we have seen so far.  For example, the wires are undirected in the case above,
whereas in a category---including monoidal categories seen in resource theories or co-design---every
morphism has a domain and codomain, giving it a sense of direction. Nonetheless, we shall see how to use
categorical constructions such as universal properties to create categorical
models that precisely capture the above type of ``network'' compositionality, i.e.\ allowing us to effectively drop directedness when convenient.

In particular we'll return to the idea of a colimit, which we sketched for you
at the end of \cref{chap.databases}, and show how to use colimits in the
category of sets to formalize ideas of connection. Here's the key idea.

\paragraph{Connections via colimits.}%
\index{colimit|(}%
\index{electrical circuit}
Let's say we want to install some lights: we want to create a circuit so that
when we flick a switch, a light turns on or off. To start, we have a bunch of
circuit components: a power source, a switch, and a lamp connected to a resistor:
\[
\begin{tikzpicture}[circuit ee IEC, set resistor graphic=var resistor IEC
graphic, set make contact graphic=var make contact IEC graphic]
\node (1) at (1,0) {};
\coordinate (2) at (4,0) {};
\node (3) at (7,0) {};
\node (5) at (0,1) {};
\node (6) at (3,1) {};
\node (7) at (5,1) {};
\node (8) at (8,1) {};
\draw (1) to [bulb] (2);
\draw (2) to [resistor] (3);
\draw (5) to [battery] (6);
\draw (7) to [make contact] (8);
\end{tikzpicture}
\]
We want to connect them together, but there are many ways to do so. How should we describe the particular way that will form a light
switch?%
\index{interconnection}

First, we claim that circuits should really be thought of as open circuits: each
carries the additional structure of an `interface' exposing it to the rest of the electrical world.%
\index{interface}%
\index{open system} Here by \emph{interface} we mean a certain set of locations, or \emph{ports}, at which we are able to connect them with other components.%
\footnote{If your circuit has no such ports, it still falls within our purview, by taking its interface to be the empty set.}
As is so common in category
theory, we begin by making this more-or-less obvious fact explicit.  Let's
depict the available ports using a bold $\bullet$. If we say that in the each of the three drawings above, the 
ports are simply the dangling end points of the wires, they would be redrawn as follows:
\[
% [inline block 35: 3 envs, 2288 chars -> data_tex | \begin{tikzpicture}[circuit ee IEC, set resistor graphic=var resistor IEC graphic, set make contact graphic=var make con...]

\]

But mathematics doesn't have a visual cortex with which to generate the intuitions we can count on with a human reader such as yourself.%
\footnote{Unless the future has arrived since the writing of this book.}
Thus we need to specify formally what `identifying ports as indicated' means mathematically. As it turns out, we can do this using finite colimits in a given category $\cat{C}$.%
\index{future!as not yet arrived}

Colimits are diagrams with certain universal properties, which is kind of an
epiphenomenon of the category $\cat{C}$. Our goal is to obtain $\cat{C}$'s
colimits more directly, as a kind of operation in some context, so that we can think of them as
telling us how to connect circuit parts together. To that end, we produce a
certain monoidal category---namely that of \emph{cospans in $\cat{C}$}, denoted
$\cospan{\cat{C}}$---that can conveniently package $\cat{C}$'s colimits in terms
of its own basic operations: composition and monoidal product.

In summary, the first part of this chapter is devoted to the slogan `colimits
model interconnection'. In addition to universal constructions such as colimits,
however, another way to describe interconnection is to use wiring diagrams. We go full circle when we find that these wiring diagrams are strongly connected to cospans, and hence colimits.%
\index{connection!as colimit}

\paragraph{Composition operations and wiring diagrams.}%
\index{wiring diagram!styles of}
In this book we have seen the utility of defining syntactic or algebraic
structures that describe the sort of composition operations that make sense and
can be performed in a given application area. Examples include monoidal preorders with
discarding, props, and compact closed categories. Each of these has an
associated sort of wiring diagram style, so that any wiring diagram of that
style represents a composition operation that makes sense in the given area: the
first makes sense in manufacturing, the second in signal flow, and the third in
collaborative design. So our second goal is to answer the question, ``how do we
describe the compositional structure of network-style wiring diagrams?''
\index{interconnection!network-type}

Network-type interconnection can be described using something called a hypergraph category. Roughly speaking, these are categories whose
wiring diagrams are those of symmetric monoidal categories together with, for
each pair of natural numbers $(m,n)$, an icon $s_{m,n}\colon m \to n$. These
icons, known as \emph{spiders},%
\footnote{Our spiders have any number of legs.}
are drawn as follows:%
\index{spider}%
\index{icon}
\[
\begin{tikzpicture}[spider diagram]
	\node[spider={4}{5}, fill=black] (a) {};
\end{tikzpicture}
\]
Two spiders can share a leg, and when they do, we can fuse them into one spider.
The intuition is that spiders are connection points for a number of wires, and
when two connection points are connected, they fuse to form an even more
`connect-y' connection point. Here is an example:%
\index{icon!spider}
\[
\begin{aligned}
\begin{tikzpicture}[spider diagram]
	\node[spider={3}{4}, fill=black] (a) {};
	\node[spider={3}{2}, fill=black, below right=.3 and 1.5 of a] (b) {};
	\draw (a_out1) to (b_out1|-a_out1);
	\draw (a_out2) to (b_out1|-a_out2);
	\draw (a_out3) to (b_in1);
	\draw (a_out4) to (b_in2);
	\draw (a_in1|-b_in3) to (b_in3);
\end{tikzpicture}
\end{aligned}
\;
=
\;
\begin{aligned}
\begin{tikzpicture}[spider diagram]
	\node[spider={4}{4}, fill=black] (a) {};
\end{tikzpicture}
\end{aligned}
\]
A hypergraph category may have many species of spiders with the rule that spiders of different species cannot share a leg---and hence not fuse---but two spiders of the same species can share legs and fuse. We add spider diagrams to the iconography of hypergraph categories.%
\index{spider!as iconography}%
\index{icon!spider}

As we shall see, the ideas of describing network interconnection using colimits
and hypergraph categories come together in the notion of a theory. We
first introduced the idea of a theory in \cref{ssec.alg_theories}, but here we
explore it more thoroughly, starting with the idea that, approximately speaking,
cospans in the category $\finset$ form the theory of hypergraph categories.

We can assemble all cospans in $\finset$ into something called an `operad'.%
\index{operad} Throughout this book we have talked about using free structures
and presentations to create instances of algebraic structures such as preorders,
categories, and props, tailored to the needs of a particular situation. Operads
can be used to tailor the algebraic structures \emph{themselves} to the needs of
a particular situation. We will discuss how this works, in particular how
operads encode various sorts of wiring diagram languages and corresponding
algebraic structures, at the end of the chapter.%
\index{operad!as custom compositionality}

%In \cref{chap.SFGs} we talked about using free structures and presentations to
%construct categories tailored to the needs of a particular situation. In this
%chapter we introduce another idea: first as decorated cospans, secondly as
%algebras for a theory of composition.

%An open electric circuit, as far as the electrical behavior is concerned, is
%a box with a bunch of ports to which other circuits can be connected:
%\[
%\begin{tikzpicture}[oriented WD, bb port length =3ex]
%  \node[bb={4}{5},minimum width=.2\textwidth] (bb) {};
%  \foreach \i in {1,...,4}{
%  \node[contact] at (bb_in\i) (bbin\i) {};}
%  \foreach \i in {1,...,5}{
%  \node[contact] at (bb_out\i) (bbout\i) {};}
%	\node[above = 0.6 of bbin1] (A) {$A$};
%	\node at (A-|bbout1) {$B$};
%\end{tikzpicture}
%\]

%-------- Section --------%
\section{Colimits and connection}%
\label{sec.colims_connection}

Universal constructions are central to category theory. They allow us to define
objects, at least up to isomorphism, by describing their relationship with other
objects. So far we have seen this theme in a number of different forms: meets and joins (\cref{sec.meets_joins}), Galois
connections and adjunctions (\cref{sec.galois_connections,sec.adjunctions_mig}),
limits (\cref{sec.bonus_lims_colims}), and free and presented structures
(Section~\ref{sec.free_constructions}-\ref{sec.prop_presentations}).  Here we turn our attention to colimits.
\index{universal property}

In this section, our main task is to have a concrete understanding of colimits
in the category $\finset$ of finite sets and functions. The idea will be to take
a bunch of sets---say two or fifteen or zero---use functions between them to
designate that elements in one set `should be considered the same' as elements
in another set, and then merge the sets together accordingly.

%---- Subsection ----%
\subsection{Initial objects} %
\index{initial object|(}%
\index{colimit!initial object as}

Just as the simplest limit is a terminal object (see
\cref{subsec.terminals_products}), the simplest colimit is an initial object.
This is the case where you start with no objects and you merge them together.

\begin{definition}%
\label{def.initial_obj}%
\index{initial object}
  Let $\cat C$ be a category. An \emph{initial object} in $\cat C$ is an object
  $\varnothing\in\cat{C}$ such that for each object $T$ in $\cat C$ there exists a unique
  morphism $!_T\colon \varnothing \to T$.
\end{definition}

The symbol $\varnothing$ is just a default name, a notation, intended to evoke
the right idea; see \cref{ex.empty_set} for the reason why we use the notation
$\varnothing$, and \cref{exc.rig_initial_obj} for a case when the default name
$\varnothing$ would probably not be used.

Again, the hallmark of universality is the existence of a unique map to any
other comparable object.

\begin{example}
  An initial object of a preorder is a bottom element---that is, an element that is
  less than every other element. For example $0$ is the initial object in $(\nn,\leq)$, whereas $(\rr,\leq)$ has no initial object.
\end{example}

\begin{exercise}%
\label{exc.initial_ob_practice}
Consider the set $A=\{a,b\}$. Find a preorder relation $\leq$ on $A$ such that
\begin{enumerate}
	\item $(A,\leq)$ has no initial object.
	\item $(A,\leq)$ has exactly one initial object.
	\item $(A,\leq)$ has two initial objects.
\qedhere
\end{enumerate}
\end{exercise}

\begin{example} %
\label{ex.empty_set}
  The initial object in $\finset$ is the empty set. Given any finite set $T$,
  there is a unique function $\varnothing \to T$, since $\varnothing$ has no
  elements.%
\index{initial object!empty set as}
\end{example}

\begin{example}
As seen in \cref{exc.initial_ob_practice}, a category $\cat{C}$ need not have an initial object. As a different sort of example, consider the category
shown here:
\[
\cat{C}\coloneqq\boxCD{
\begin{tikzcd}[ampersand replacement=\&]
	\LMO{A}\ar[r, shift left, "f"]\ar[r, shift right, "g"']\&\LMO{B}
\end{tikzcd}
}
\]
If there were to be an initial object $\varnothing$, it would either be $A$ or $B$. Either way, we need to show that for each object $T\in\Ob(\cat{C})$ (i.e.\ for both $T=A$ and $T=B$) there is a unique morphism $\varnothing\to T$. Trying the case $\varnothing=^?A$ this condition fails when $T=B$: there are two morphisms $A\to B$, not one. And trying the case $\varnothing=^?B$ this condition fails when $T=A$: there are zero morphisms $B\to A$, not one.
\end{example}

\begin{exercise} %
\label{exc.initial_object}
For each of the graphs below, consider the free category on that graph, and say whether it has an initial object.\\
\begin{enumerate*}[itemjoin=\hspace{.8in}]
	\item \fbox{$\LMO{a}$}
	\item \fbox{$\LMO{a}\to\LMO{b}\to\LMO{c}$}
	\item \fbox{$\LMO{a}\qquad\LMO{b}$}
	\item \fbox{$\begin{tikzcd}\LMO{a}\ar[loop right]\end{tikzcd}$}
	\qedhere
\end{enumerate*}
\end{exercise}

\begin{exercise}%
\label{exc.rig_initial_obj}
Recall the notion of rig from \cref{chap.SFGs}. A \emph{rig homomorphism} from $(R,0_R,+_R,1_R,*_R)$ to $(S,0_S,+_S,1_S, *_S)$ is a function $f\colon R\to S$ such that $f(0_R)=0_S$, $f(r_1+_Rr_2)=f(r_1)+_Sf(r_2)$, etc.
\begin{enumerate}
	\item We said ``etc.'' Guess the remaining conditions for $f$ to be a rig homomorphism.
	\item Let $\Cat{Rig}$ denote the category whose objects are rigs and whose morphisms are rig homomorphisms. We claim $\Cat{Rig}$ has an initial object. What is it?
	\qedhere
\end{enumerate}	
\end{exercise}

\begin{exercise} %
\label{exc.universality}
Explain the statement ``the hallmark of universality is the existence of a
unique map to any other comparable object,'' in the context of
\cref{def.initial_obj}. In particular, what is being universal in \cref{def.initial_obj}, and which is the ``comparable object''?
\end{exercise}

\begin{remark}
As mentioned in \cref{rem.the_vs_a}, we often speak of `the' object that
satisfies a universal property, such as `the initial object', even though many
different objects could satisfy the initial object condition. Again, the reason
is that initial objects are unique up to unique isomorphism: any two initial
objects will have a canonical isomorphism between them, which one finds using various applications of the universal
property.

%This sort of thing will work for any sort of universal object---in particular
%any sort of  colimit---that exists in a category. To be more precise, we will soon construct colimits for more general diagrams (above the diagram was empty). In each case, there may be many objects $A,B,C$, etc.\ that satisfy the universal property of the colimit, but we can always point to an isomorphism comparing any two of them. 
%
%Having these ready-made isomorphisms in hand lets us informally consider two
%universal objects `the same', and just talk of `the' universal object.
\end{remark}

\begin{exercise}%
\label{exc.initials_are_isomorphic}
Let $\cat{C}$ be a category, and suppose that $c_1$ and $c_2$ are initial objects. Find an isomorphism between them, using the universal property from \cref{def.initial_obj}.
\end{exercise}

\index{initial object|)}

%---- Subsection ----%
\subsection{Coproducts} %
\index{coproduct|(}%
\index{colimit!coproduct as}

Coproducts generalize both joins in a preorder and disjoint unions of sets.%
\index{union!disjoint}%
\index{disjoint union|see {union, disjoint}}

\begin{definition}%
\label{def.coproduct}%
\index{coproduct}
  Let $A$ and $B$ be objects in a category $\cat C$. A \emph{coproduct} of $A$
  and $B$ is an object, which we denote $A+B$, together with a pair of morphisms $(\iota_A \colon A \to A+B,
  \iota_B \colon B \to A+B)$ such that for all objects $T$ and pairs of morphisms $(f\colon A \to T,
  g \colon B \to T)$, there exists a unique morphism $\copair{f,g}\colon A+B \to T$ such that the following diagram commutes:
  \begin{equation}%
\label{eqn.universal_prop_coprod}
    \begin{tikzcd}
      A \ar[r,"\iota_A"] \ar[dr,"f"'] & A+B \ar[d, pos=.4, dashed,"\copair{f,g}"] & B
      \ar[l,"\iota_B"'] \ar[dl, "g"] \\[10pt]
      & T
    \end{tikzcd}
  \end{equation}
  We call $\copair{f,g}$ the \emph{copairing} of $f$ and $g$.
\end{definition}

\begin{exercise}%
\label{exc.join_as_coproduct}%
\index{join!as coproduct}
  Explain why, in a preorder, coproducts are the same as joins.
\end{exercise}

\begin{example} %
\label{ex.setcoproduct}
  Coproducts in the categories $\finset$ and $\smset$ are disjoint unions. More
  precisely, suppose $A$ and $B$ are sets. Then the coproduct of $A$ and $B$ is
  given by the disjoint union $A\dju B$ together with the inclusion functions
  $\iota_A\colon A \too A \dju B$ and $\iota_B \colon B \to A\dju B$.
\begin{equation}%
\label{eqn.apples_oranges_again}
\begin{tikzpicture}[y=.8ex, rounded corners, baseline=(sq)]
	\node (a1) {$\LTO{apple}$};
	\node [below=1 of a1] (a2) {$\LTO{banana}$};
	\node [below=1 of a2] (a3) {$\LTO{pear}$};
	\node [below=1 of a3] (a4) {$\LTO{cherry}$};
	\node [below=1 of a4] (a5) {$\LTO{orange}$};
	\node [draw, inner ysep=0pt, fit=(a1) (a5)] (a) {};
	\node [above=0 of a] (alab) {$A$};
	\node [right=2 of a2] (b1) {$\LTO{apple}$};
	\node [below=1 of b1] (b2) {$\LTO{tomato}$};
	\node [below=1 of b2] (b3) {$\LTO{mango}$};
	\node [draw, inner ysep=0pt, fit=(b1) (b3)] (b) {};	
	\node [above=0 of b] {$B$};
	\node at ($(a)!.5!(b)$) (sq) {$\sqcup$};
	\node [right=5 of a1] (x11) {$\LTO{apple1}$};
	\node [below=1 of x11] (x12) {$\LTO{banana1}$};
	\node [below=1 of x12] (x13) {$\LTO{pear1}$};
	\node [below=1 of x13] (x14) {$\LTO{cherry1}$};
	\node [below=1 of x14] (x15) {$\LTO{orange1}$};
	\node [right=1 of x12] (x21) {$\LTO{apple2}$};
	\node [below=1 of x21] (x22) {$\LTO{tomato2}$};
	\node [below=1 of x22] (x23) {$\LTO{mango2}$};
	\node [draw, inner ysep=0pt, fit=(x11) (x21) (x15) (x23)] (x) {};	
	\node [above=0 of x] {$A\sqcup B$};
	\node at ($(b.east)!.5!(x.west)$) {$=$};
	\begin{scope}[mapsto, bend left=10pt]
  	\draw (a1) to (x11);
  	\draw (a2) to (x12);
  	\draw (a3) to (x13);
  	\draw (a4) to (x14);
  	\draw (a5) to (x15);
	\end{scope}
	\begin{scope}[mapsto, red, bend right=15pt]
  	\draw (b1) to (x21);
  	\draw (b2) to (x22);
  	\draw (b3) to (x23);
	\end{scope}
	
\end{tikzpicture}
\end{equation}
  
  Suppose we have functions $f\colon A \to T$ and $g\colon B \to T$ for some other set
  $T$, unpictured. The universal property of coproducts says there is a unique function $\copair{f,g}\colon A\dju B
  \to T$ such that $\iota_A\cp \copair{f,g} =f$ and $\iota_B \cp \copair{f,g} =g$. What is
  it? Any element $x\in A\dju B$ is either `from $A$' or `from $B$', i.e.\ either there is some $a\in A$ with $x=\iota_A(a)$ or there is some $b\in B$ with $x=\iota_B(b)$. By \cref{eqn.universal_prop_coprod}, we must have:
  \[
    \copair{f,g}(x) = 
    \begin{cases}
      f(x) & \mbox{if } x=\iota_A(a) \mbox{ for some }a \in A; \\
      g(x) & \mbox{if } x=\iota_B(b) \mbox{ for some }b \in B.
    \end{cases}
    \qedhere
  \]
\end{example}

\begin{exercise} %
\label{exc.copairing}
	Suppose $T=\{a,b,c,\ldots,z\}$ is the set of letters in the alphabet,
	and let $A$ and $B$ be the sets from \cref{eqn.apples_oranges_again}.
	Consider the function $f\colon A\to T$ sending each element of $A$ to
	the first letter of its label, e.g.\ $f(\mathrm{apple})=a$. Let $g\colon
	B\to T$ be the function sending each element of $B$ to the last letter
	of its label, e.g.\ $g(\mathrm{apple})=e$. Write down the function
	$\copair{f,g}(x)$ for all eight elements of $A\sqcup B$.
\end{exercise}
  
\begin{exercise} %
\label{exc.coprod_properties}
  Let $f \colon A \to C$, $g \colon B \to C$, and $h\colon C \to D$ be morphisms
  in a category $\cat{C}$ with coproducts. Show that
  \begin{enumerate}
  \item $\iota_A \cp \copair{f,g} = f$.
  \item $\iota_B \cp \copair{f,g} = g$.
  \item $\copair{f,g}\cp h = \copair{f \cp h,g\cp h}$.
  \item $\copair{\iota_A,\iota_B} = \id_{A+B}$.
  \qedhere
  \end{enumerate}
\end{exercise}

\begin{exercise} %
\label{exc.coproducts_give_monoidal_structure}
Suppose a category $\cat C$ has coproducts, denoted $+$, and an initial object,
denoted $\varnothing$. Then $(\cat C,+,\varnothing)$ is a symmetric
monoidal category (recall \cref{rdef.sym_mon_cat}). In this exercise we develop
the data relevant to this fact: 
\begin{enumerate}
\item Show that $+$ extends to a functor $\cat C \times \cat C \to \cat C$. In particular, how does it act on morphisms in $\cat{C}\times\cat{C}$?
\item Using the universal properties of the initial object and coproduct, show
that there are isomorphisms $A+\varnothing \to A$ and $\varnothing +A \to A$.
\item Using the universal property of the coproduct, write down morphisms
\begin{enumerate}
\item $(A+B)+C \to A+(B+C)$.
\item $A+B \to B+A$.
\end{enumerate}
If you like, check that these are isomorphisms.
\end{enumerate}
It can then be checked that this data obeys the axioms of a symmetric monoidal
category, but we'll end the exercise here.
\end{exercise}

\index{coproduct|)}

%---- Subsection ----%
\subsection{Pushouts} %
\index{pushout|(}

Pushouts are a way of combining sets. Like a union of
subsets, a pushout can combine two sets in a non-disjoint way: elements of one
set may be identified with elements of the other. The pushout construction,
however, is much more general: it allows (and requires) the user to specify
exactly which elements will be identified. We'll see a demonstration of this
additional generality in \cref{ex.pushouts_quotient}.

\begin{definition}%
\label{def.pushout}
\index{pushout}%
\index{colimit!pushout as}%
\index{pushout!as colimit}
  Let $\cat{C}$ be a category and let $f\colon A \to X$ and $g \colon A \to Y$ be morphisms in $\cat{C}$ that have a common domain. The \emph{pushout}
  $X+_AY$ is the colimit of the diagram 
  \[
    \begin{tikzcd}[column sep=20pt]
      A \ar[r,"f"] \ar[d,"g"'] & X \\
      Y
    \end{tikzcd}
  \]
\end{definition}
  In more detail, a pushout consists of (i) an object $X+_AY$ and (ii) morphisms
  $\iota_X\colon X \to X+_AY$ and $\iota_Y\colon Y \to X+_AY$ satisfying (a) and
  (b) below.%(
  \begin{enumerate}[label=(\alph*)]
    \item The diagram
  \begin{equation}%
\label{eq.pushout_diagram}
    \begin{tikzcd}
      A \ar[r,"f"] \ar[d,"g"'] & X \ar[d,"\iota_X"] \\
      Y \ar[r,"\iota_Y"'] & X+_AY\ar[ul, phantom, very near start, "\ulcorner"]
    \end{tikzcd}
  \end{equation}
  commutes. (We will explain the `$\ulcorner$' symbol below.)
  \item For all objects $T$ and morphisms $x\colon X \to T$, $y\colon Y \to T$, if the diagram
  \[
    \begin{tikzcd}
      A \ar[r,"f"] \ar[d,"g"'] & X \ar[d,"x"] \\
      Y \ar[r,"y"'] & T
    \end{tikzcd}
  \]
  commutes, then there exists a unique morphism $t\colon X+_AY \to T$ such that
  \begin{equation}%
\label{eqn.univ_prop_pushout}
    \begin{tikzcd}
      A \ar[r,"f"] \ar[d,"g"'] & X \ar[d,"\iota_X"] \ar[ddr,"x", bend left] \\
      Y \ar[r,"\iota_Y"] \ar[drr,"y", bend right=20pt] & X+_AY \ar[dr,dashed, "t"] \\[-7pt]
      &&[-15pt]T
    \end{tikzcd}
  \end{equation}
  commutes.
  \end{enumerate}
  
  If $X+_AY$ is a pushout, we denote that fact by drawing the commutative square \cref{eq.pushout_diagram}, together with the $\ulcorner$ symbol as shown; we call it a \emph{pushout square}.
  
  We further call $\iota_X$ the \emph{pushout of $g$ along $f$}, and similarly
  $\iota_Y$ the \emph{pushout of $f$ along $g$}.

  \begin{example}
    In a preorder, pushouts and coproducts have a lot in common. The pushout of a diagram $B\from A\to C$ is equal to the coproduct $B\sqcup C$: namely, both are equal to the join $B\vee C$.
  \end{example}

\begin{example} %
\label{ex.pushout_along_identity}%
\index{pushout!along isomorphism}%
\index{isomorphism!as stable under pushout}
  Let $f \colon A \to X$ be a morphism in a category $\cat{C}$. For any isomorphisms $i\colon A\to A'$ and $j\colon X\to X'$, we can take $X'$ to be the pushout
  $X+_AA'$, i.e.\ the following is a pushout square:
  \[
    \begin{tikzcd}
      A \ar[r,"f"] \ar[d,"i"'] & X \ar[d,"j"] \\
      A' \ar[r,"f'"'] & X'\ar[ul, phantom, very near start, "\ulcorner"]
    \end{tikzcd}
  \]
  where $f'\coloneqq i\inv\cp f\cp j$.   To see this, observe that if there is any object $T$ such that the following square commutes:
  \[
    \begin{tikzcd}
      A \ar[r,"f"] \ar[d,"i"'] & X \ar[d,"x"] \\
      A' \ar[r,"a"'] & T
    \end{tikzcd}
  \]
then $f\cp x = i\cp a$, and so we are forced to take $x'\colon X\to T$ to be $x'\coloneqq j\inv\cp x$. This makes the following diagram commute:
  \[
    \begin{tikzcd}
      A \ar[r,"f"] \ar[d,"i"'] & X \ar[d,"j"] \ar[ddr,"x", bend left] \\
      A' \ar[r,"f'"'] \ar[drr, bend right=20pt, "a"'] & X' \ar[dr,dashed, "x'"] \\[-7pt]
      &&[-15pt]T
    \end{tikzcd}
  \]
 because $f'\cp x'=i\inv\cp f\cp j\cp j\inv\cp x=i\inv\cp i\cp a=a$.
\end{example}

\begin{exercise}%
\label{exc.disc_cats_have_pushouts}
For any set $S$, we have the discrete category $\Cat{Disc}_S$, with $S$ as objects and only identity morphisms.
\begin{enumerate}
	\item Show that all pushouts exist in $\Cat{Disc}_S$, for any set $S$.
	\item For what sets $S$ does $\Cat{Disc}_S$ have an initial object?
\qedhere
\end{enumerate}
\end{exercise}
  
\begin{example} %
\label{ex.pushouts}
  In the category $\finset$, pushouts always exist. The pushout of functions
  $f\colon A \to X$ and $g \colon A \to Y$ is the set of equivalence classes of
  $X \dju Y$ under the equivalence relation generated by---that is, the
  reflexive, transitive, symmetric closure of---the relation $\{f(a)\sim g(a)\mid a\in A\}$.

  We can think of this in terms of interconnection too. Each element $a\in A$ provides
  a connection between $f(a)$ in $X$ and $g(a)$ in $Y$. The pushout is the set
  of connected components of $X \dju Y$. 
\end{example}

\begin{exercise} %
\label{exc.pushout}
  What is the pushout of the functions $f\colon \ord{4} \to \ord{5}$ and $g\colon \ord{4} \to
  \ord{3}$ pictured below?
  \[ 
    \begin{tikzpicture}[x=.6cm,y=.4cm, baseline=(3a), short=2pt]
      \node at (2,-1.5) {$f\colon \ord{4} \to \ord{5}$};
      \node[contact] at (0,3) (1) {};
      \node[contact] at (0,2) (2) {};
      \node[contact] at (0,1) (3) {};
      \node[contact] at (0,0) (4) {};
      \node[contact] at (4,3.5) (1a) {};
      \node[contact] at (4,2.5) (2a) {};
      \node[contact] at (4,1.5) (3a) {};
      \node[contact] at (4,0.5) (4a) {};
      \node[contact] at (4,-0.5) (5a) {};
      \node[draw, rounded corners, inner xsep=7pt, inner ysep=4pt, fit=(1) (4)] (B) {};
      \node[draw, rounded corners, inner xsep=7pt, inner ysep=4pt, fit=(1a) (5a)] (B) {};
      \begin{scope}[mapsto]
      	\draw (1) to (1a);
      	\draw (2) to (1a);
      	\draw (3) to (3a);
      	\draw (4) to (5a);
      \end{scope}
    \end{tikzpicture}
    \hspace{.25\textwidth}
    \begin{tikzpicture}[x=.6cm,y=.4cm, baseline=(2a),short=2pt]
      \node at (2,-1.35) {$g\colon \ord{4} \to \ord{3}$};
      \node[contact] at (0,3) (1) {};
      \node[contact] at (0,2) (2) {};
      \node[contact] at (0,1) (3) {};
      \node[contact] at (0,0) (4) {};
      \node[contact] at (4,2.5) (1a) {};
      \node[contact] at (4,1.5) (2a) {};
      \node[contact] at (4,0.5) (3a) {};
      \node[draw, rounded corners, inner xsep=7pt, inner ysep=4pt, fit=(1) (4)] (B) {};
      \node[draw, rounded corners, inner xsep=7pt, inner ysep=4pt, fit=(1a) (3a)] (B) {};
      \begin{scope}[mapsto]
      	\draw (1) to (1a);
      	\draw (2) to (2a);
      	\draw (3) to (3a);
      	\draw (4) to (3a);
      \end{scope}
    \end{tikzpicture}
    \qedhere
  \]
  Check your answer using the abstract description from \cref{ex.pushouts}.
\end{exercise}

\begin{example}%
\label{ex.pushout_initial}
Suppose a category $\cat{C}$ has an initial object $\varnothing$. For any two objects $X,Y\in\Ob\cat{C}$, there is a unique morphism $f\colon\varnothing\to X$ and a unique morphism $g\colon\varnothing\to Y$; this is what it means for $\varnothing$ to be initial.

The diagram $X\From{f}\varnothing\To{g}Y$ has a pushout in $\cat{C}$ iff $X$ and $Y$ have a coproduct in $\cat{C}$, and the pushout and the coproduct will be the same. Indeed, suppose $X$ and $Y$ have a coproduct $X+ Y$; then the diagram to the left
\[
\begin{tikzcd}
	\varnothing\ar[r,"f"]\ar[d,"g"']&X\ar[d,"\iota_X"]\\
	Y\ar[r,"\iota_Y"']&X+ Y
\end{tikzcd}
\hspace{1in}
\begin{tikzcd}
	\varnothing\ar[r,"f"]\ar[d,"g"']&X\ar[d,"x"]\\
	Y\ar[r,"y"']&T
\end{tikzcd}
\]
commutes (why?$^1$), and for any object $T$ and commutative diagram as to the
right, there is a unique map $X+ Y\to T$ making the diagram as in
\cref{eqn.univ_prop_pushout} commute (why?$^2$). This shows that $X+ Y$ is
a pushout, $X+_\varnothing Y\cong X+ Y$.

Similarly, if a pushout $X+_\varnothing Y$ exists, then it satisfies the universal property of the coproduct (why?$^3$).
\end{example}

\begin{exercise} %
\label{exc.pushout_initial}
In \cref{ex.pushout_initial} we asked ``why?'' three times.
\begin{enumerate}
	\item Give a justification for ``why?$^1$''.
	\item Give a justification for ``why?$^2$''.
	\item Give a justification for ``why?$^3$''.	
\qedhere
\end{enumerate}
\end{exercise}

\begin{example}%
\label{ex.pushouts_quotient}
Let $A=X=Y=\NN$. Consider the functions $f\colon A\to X$ and $g\colon A\to Y$ given by the `floor' functions, 
$f(a)\coloneqq\floor{a/2}$ and $g(a)\coloneqq\floor{(a+1)/2}$.
\[
\begin{tikzpicture}[xscale=1.5,yscale=1.2]
 \node at (-1.2,1) (x) {$X$};
 \node at (-1.2,0) (a) {$A$};
 \node at (-1.2,-1) (y) {$Y$};
  \foreach \i in {0,...,5}{
  \node at (\i,1) (x\i) {$\i$};
  \node at (\i,0) (a\i) {$\i$};
  \node at (\i,-1) (y\i) {$\i$};}
  \node at (6,1) {$\cdots$};
  \node at (6,0) {$\cdots$};
  \node at (6,-1) {$\cdots$};
  \draw[->] (a) to node[left] {\small $f$} (x);
  \draw[->] (a) to node[left] {\small $g$} (y);
  \begin{scope}[mapsto]
  \draw (a0) to (x0);
  \draw (a0) to (y0);
  \draw (a1) to (x0);
  \draw (a1) to (y1);
  \draw (a2) to (x1);
  \draw (a2) to (y1);
  \draw (a3) to (x1);
  \draw (a3) to (y2);
  \draw (a4) to (x2);
  \draw (a4) to (y2);
  \draw (a5) to (x2);
  \draw (a5) to (y3);
  \end{scope}
\end{tikzpicture}
\]
What is their pushout? Let's figure it out using the definition.

If $T$ is any other set and we have maps $x\colon X\to T$ and $y\colon Y\to T$ that commute with $f$ and $g$, i.e.\ $f\cong x=g\cong y$, then this commutativity implies that
\[y(0)=y(g(0))=x(f(0))=x(0).\]
In other words, $Y$'s 0 and $X$'s 0 go to the same place in $T$, say $t$. But since $f(1)=0$ and $g(1)=1$, we also have that $t=x(0)=x(f(1))=y(g(1))=y(1)$. This means $Y$'s 1 goes to $t$ also. But since $g(2)=1$ and $f(2)=1$, we also have that $t=g(1)=y(g(2))=x(f(2))=x(1)$, which means that $X$'s 1 also goes to $t$. One can keep repeating this and find that every element of $Y$ and every element of $X$ go to $t$! Using mathematical induction, one can prove that the pushout is in fact a 1-element set, $X\sqcup_AY\cong\{1\}$.
\index{induction}
%\tablefootnote{We do not discuss mathematical induction in this book, but it is a very important proof technique based on a sort of infinite-domino effect: knock down the first, each knocks down the next, and they all fall. This can be formulated in terms of a universal property held by the set $\NN$ of natural numbers, its ``first'' element $0$ and its ``next'' function $\NN\to\NN$.}
\end{example}

\index{pushout|)}

%---- Subsection ----%
\subsection{Finite colimits}%
\index{colimit!finite|(}

Initial objects, coproducts, and pushouts are all types of colimits. We gave the
general definition of colimit in \cref{subsec.brief_colimits}. Just as a limit
in $\cat{C}$ is a terminal object in a category of cones over a diagram $D\colon\cat{J}\to\cat{C}$, a colimit is an initial
object in a category of cocones over some diagram $D\colon\cat{J}\to\cat{C}$. For our purposes it is enough to discuss finite colimits---i.e.\ when $\cat{J}$ is a finite category---which subsume initial objects, coproducts, and pushouts.%
\footnote{If a category $\cat{J}$ has finitely many morphisms, we say that $\cat{J}$ is a \emph{finite category}. Note that in this case it must have finitely many objects too, because each object $j\in\Ob\cat{J}$ has its own identity morphism $\id_j$.}%
\index{category!of cocones}

In \cref{def.colimit}, cocones in $\cat{C}$ are defined to be cones in $\cat{C}\op$. For visualization purposes, if $D\colon\cat{J}\to\cat{C}$ looks like the diagram to the left, then a cocone on it shown in the diagram to the right:
\[
\boxCD{
  \begin{tikzcd}[sep=small, ampersand replacement=\&]
    D_1\ar[dr]\&\&D_3\ar[ld]\ar[dr, bend left]\ar[dr, bend right]\\
    \&D_2\&\&D_4\ar[r]\&D_5\\[10pt]
    {\color{white}C}
  \end{tikzcd}
}
\hspace{.8in}
\boxCD{
  \begin{tikzcd}[sep=small, ampersand replacement=\&]
    D_1\ar[dr]\ar[ddrr, bend right]\&\&D_3\ar[ld]\ar[dd]\ar[dr, bend left]\ar[dr, bend right]\\
    \&D_2\ar[dr, bend right]\&\&D_4\ar[dl, bend left]\ar[r]\&D_5\ar[dll, bend left]\\[10pt]
    \&\&T
  \end{tikzcd}
}
\]
Here, any two parallel paths that end at $T$ are equal in $\cat{C}$.

\begin{definition}%
\index{category!having finite colimits}
We say that a category $\cat{C}$ \emph{has finite colimits} if a colimit, $\colim_\cat{J} D$, exists whenever $\cat{J}$ is a finite category and $D\colon\cat{J}\to\cat{C}$ is a diagram.
\end{definition}

\begin{example}%
\index{initial object!as colimit}
The initial object in a category $\cat{C}$, if it exists, is the colimit of the
functor $!\colon \Cat{0} \to \cat{C}$, where $\Cat{0}$ is the category with no
objects and no morphisms, and $!$ is the unique such functor. Indeed, a cocone
over $!$ is just an object of $\cat{C}$, and so the initial cocone over $!$ is
just the initial object of $\cat{C}$.

Note that $\Cat{0}$ has finitely many objects (none); thus initial objects are finite
colimits.
\end{example}

We often want to know that a category $\cat{C}$ has \emph{all} finite colimits (in which case, we often drop the `all' and just say `$\cat{C}$ has finite colimits'). To check that $\cat{C}$ has (all) finite colimits, it's enough to check it has a few simpler forms of colimit, which generate all the rest. 
\begin{proposition}%
\label{prop.finite_colimits}
Let $\cat{C}$ be a category. The following are equivalent:
\begin{enumerate}
	\item $\cat{C}$ has all finite colimits.
	\item $\cat{C}$ has an initial object and all pushouts.
	\item $\cat{C}$ has all coequalizers and all finite coproducts.
\end{enumerate}
\end{proposition}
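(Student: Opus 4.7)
My plan is to establish the three-way equivalence by proving the cycle $(1) \Rightarrow (2) \Rightarrow (3) \Rightarrow (1)$. The implication $(1) \Rightarrow (2)$ is immediate: an initial object is a colimit over the empty category $\Cat{0}$ (as shown in the example above), and a pushout is a colimit over the span-shaped category with three objects and two non-identity morphisms, which is finite; so a category with all finite colimits automatically has both.

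For $(2) \Rightarrow (3)$, I first extract binary coproducts: by \cref{ex.pushout_initial}, for any two objects $X, Y$ the pushout $X +_\varnothing Y$ is exactly the coproduct $X + Y$. Iterating gives all finite coproducts, with the initial object itself as the empty one. Next I build coequalizers from pushouts and coproducts. Given a parallel pair $f, g \colon A \to B$, form the pushout
\[
\begin{tikzcd}
A + A \ar[r, "{\copair{f,g}}"] \ar[d, "{\copair{\id_A, \id_A}}"'] & B \ar[d, "q"] \\
A \ar[r, "p"'] & Q \ar[ul, phantom, very near start, "\ulcorner"]
\end{tikzcd}
\]
I claim $q$ is the coequalizer of $f$ and $g$. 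Precomposing the commuting square with each coproduct inclusion $A \to A + A$ yields $f \cp q = p = g \cp q$, so $q$ coequalizes. Conversely, given $t \colon B \to T$ with $f \cp t = g \cp t$, the common value $A \to T$ together with $t$ fits into a commuting square out of the pushout corner, and its universal property delivers the unique factorization $Q \to T$ required for coequalization.

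For $(3) \Rightarrow (1)$, I invoke the standard reduction of arbitrary finite colimits to coproducts and coequalizers. Given a finite diagram $D \colon \cat{J} \to \cat{C}$, form the two finite coproducts
\[
S \coloneqq \coprod_{j \in \Ob \cat{J}} D(j), \qquad M \coloneqq \coprod_{u \colon j \to k} D(j),
\]
where the second index ranges over the (finitely many) morphisms of $\cat{J}$. Define two parallel maps $s, t \colon M \to S$: on the summand indexed by $u \colon j \to k$, let $s$ be the coproduct inclusion $D(j) \to S$ at index $j$, and let $t$ be the composite $D(j) \xrightarrow{D(u)} D(k) \to S$ into the index-$k$ summand. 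I claim the coequalizer of $s$ and $t$ is $\colim_\cat{J} D$: by the universal property of the coproduct $S$, a map $c \colon S \to T$ is equivalently a family $\{c_j \colon D(j) \to T\}_j$, and the equation $s \cp c = t \cp c$ restricted to the $u$-summand of $M$ reads $c_j = D(u) \cp c_k$, which is precisely the cocone condition.

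The main obstacle is the bookkeeping in this last step: translating the universal property of the coequalizer cleanly into the universal property of $\colim_\cat{J} D$. Once the indexing of the two coproducts is pinned down unambiguously, the bijection between ``maps $S \to T$ that coequalize $s$ and $t$'' and ``cocones on $D$ with apex $T$'' is direct, and transporting universality from the coequalizer side to the colimit side is then a formal consequence.
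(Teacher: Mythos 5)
Your argument is correct, but it is worth noting that the paper does not actually prove \cref{prop.finite_colimits}: it only remarks that ``the key idea is an inductive one: one can build arbitrary finite diagrams using some basic building blocks'' and defers to Borceux. The inductive strategy hinted at there is the one illustrated in \cref{ex.W_colim}, where a finite diagram's colimit is assembled by repeatedly pasting pushout squares; your route is the other standard one, a one-shot reduction. Your cycle $(1)\Rightarrow(2)\Rightarrow(3)\Rightarrow(1)$ is sound: coproducts as pushouts over $\varnothing$ is exactly \cref{ex.pushout_initial}; the coequalizer of $f,g\colon A\to B$ as the pushout of $\copair{f,g}$ along the codiagonal $\copair{\id_A,\id_A}$ is correct; and the coequalizer-of-two-coproducts construction of a general finite colimit is the classical argument. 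The only step you should make explicit is the uniqueness clause in the coequalizer construction: the pushout gives a unique $u\colon Q\to T$ satisfying \emph{both} $q\cp u=t$ and $p\cp u=r$, whereas the coequalizer demands uniqueness subject only to $q\cp u=t$; this is fine because $p=f\cp q$ forces $p\cp u=f\cp t=r$ automatically, but the sentence deserves to be written. What your approach buys is an explicit formula for the colimit (useful for computations, and it is exactly how \cref{thm.colims_in_set} presents colimits in $\smset$); what the paper's inductive approach buys is that it only ever needs pushouts and the initial object, proving $(2)\Rightarrow(1)$ directly without passing through $(3)$.
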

\begin{proof}
We will not give precise details here, but the key idea is an inductive one: one can build arbitrary finite diagrams using some basic building blocks. Full details can be found in \cite[Prop 2.8.2]{Borceux:1994a}.
\end{proof}

\begin{example}%
\label{ex.W_colim}
Let $\cat{C}$ be a category with all pushouts, and suppose we want to take the colimit of the following diagram in $\cat{C}$:
\begin{equation}%
\label{eqn.W_colim}
\begin{tikzcd}
	&
	B\ar[r]\ar[d]&
	Z\\
	A\ar[r]\ar[d]&
	C\\
	D
\end{tikzcd}
\end{equation}
In it we see two diagrams ready to be pushed out, and we know how to take pushouts. So suppose we do that; then we see another pushout diagram so we take the pushout again:
\[
\begin{tikzcd}
	&
	B\ar[r]\ar[d]&
	Z\ar[d]\\
	A\ar[r]\ar[d]&
	Y\ar[r]\ar[d]&
	R\ar[ul, phantom, very near start, "\ulcorner"]\\
	X\ar[r]&
	Q\ar[ul, phantom, very near start, "\ulcorner"]
\end{tikzcd}
\hspace{1in}
\begin{tikzcd}
	&
	B\ar[r]\ar[d]&
	Z\ar[d]\\
	A\ar[r]\ar[d]&
	Y\ar[r]\ar[d]&
	R\ar[ul, phantom, very near start, "\ulcorner"]\ar[d]\\
	X\ar[r]&
	Q\ar[ul, phantom, very near start, "\ulcorner"]\ar[r]&
	S\ar[ul, phantom, very near start, "\ulcorner"]
\end{tikzcd}
\]
is the result---consisting of the object $S$, together with all the morphisms from the original diagram to $S$---the colimit of the original diagram? One can check that it indeed has the correct universal property and thus is a colimit.
\end{example}

\begin{exercise}%
\label{exc.W_colim}
Check that the pushout of pushouts from \cref{ex.W_colim} satisfies the universal property of the colimit for the original diagram, \cref{eqn.W_colim}.
\end{exercise}

We have already seen that the categories $\finset$ and $\smset$ both have an
initial object and pushouts. We thus have the following corollary.
\begin{corollary} %
\label{cor.set_cocomplete}
The categories $\finset$ and $\smset$ have (all) finite colimits.
\end{corollary}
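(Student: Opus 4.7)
The plan is to invoke Proposition \ref{prop.finite_colimits} directly: having finite colimits is equivalent to having an initial object together with all pushouts, so it suffices to verify these two simpler conditions for $\finset$ and $\smset$. Both of these have already been established earlier in the section, so the proof will amount to a short citation-style argument.

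First, I would point out that $\finset$ and $\smset$ both have an initial object: by Example \ref{ex.empty_set}, the empty set $\varnothing$ is initial in $\finset$ (and the same argument works in $\smset$, since the unique function $\varnothing \to T$ exists for every set $T$, not just finite ones). Second, I would note that $\finset$ and $\smset$ both have all pushouts: by Example \ref{ex.pushouts}, the pushout of any span $X \From{f} A \To{g} Y$ of finite sets is constructed as the set of equivalence classes of $X \dju Y$ under the equivalence relation generated by $\{f(a) \sim g(a) \mid a \in A\}$, and this construction is still well-defined (and still finite, respectively still a set) in the general case.

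Finally, applying Proposition \ref{prop.finite_colimits} (the implication from (2) to (1)) to each of these categories yields the conclusion that $\finset$ and $\smset$ have all finite colimits.

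There is no real obstacle here — the corollary is essentially a packaging statement. The only thing a careful reader might want to double-check is that the pushout construction of Example \ref{ex.pushouts}, which was phrased explicitly for finite sets, genuinely carries over to $\smset$; but since the construction only uses disjoint unions and equivalence relations, both of which are standard operations on arbitrary sets, this is immediate.
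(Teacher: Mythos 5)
Your proposal is correct and matches the paper's own (very brief) justification exactly: the text preceding the corollary simply notes that $\finset$ and $\smset$ have an initial object and pushouts, and invokes \cref{prop.finite_colimits}. Your additional remark about checking that the pushout construction of \cref{ex.pushouts} carries over to $\smset$ is a sensible bit of extra care but does not change the argument.
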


In \cref{thm.set_limits} we gave a general formula for computing finite limits
in $\smset$. It is also possible to give a formula for computing finite colimits. There is a duality between products and coproducts and between subobjects and quotient objects, so whereas a finite limit is given by a subset of a product, a finite colimit is given by a quotient of a coproduct.%
\index{dual notions!subobjects and quotients}%
\index{quotient}

\begin{theorem}%
\label{thm.colims_in_set}%
\index{colimit!formula for finite colimits in $\smset$}
Let $\cat{J}$ be presented by the finite graph $(V,A,s,t)$ and some equations, and let $D\colon \cat{J} \to \smset$ be a diagram. Consider the set
\[\colim_{\cat{J}} D \coloneqq \big\{(v,d)\mid v\in V \text{ and }d\in D(v)\big\}/\sim\]
where this denotes the set of equivalence classes under the equivalence relation
$\sim$ generated by putting $(v,d)\sim (w,e)$ if there is an arrow $a\colon v\to w$ in $J$ such that $D(a)(d)=e$. Then this set, together with the functions $\iota_v\colon D(v)\to\colim_{\cat{J}} D$ given by sending $d\in D(v)$ to its equivalence class, constitutes a colimit of $D$.%
\index{equivalence relation}
\index{quotient}
\end{theorem}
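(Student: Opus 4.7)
My plan is to verify directly that the proposed set, together with the maps $\iota_v$, satisfies the universal property of a colimit. This splits into two tasks: first, confirming that $(\colim_\cat{J} D, \{\iota_v\}_{v\in V})$ really is a cocone under $D$; second, producing, for every other cocone $(T,\{t_v\}_{v\in V})$, a unique mediating function $u\colon \colim_\cat{J} D \to T$ with $u\cp\iota_v = t_v$ for all $v$. Because $\cat{J}$ is presented by the graph $(V,A,s,t)$ and equations, it suffices to check cocone conditions on the generating arrows $a\in A$: any morphism in $\cat{J}$ is (a class of) a composite of such arrows, and functoriality of $D$ propagates the condition along composites, while the imposed equations are already respected by $D$ and by $\{t_v\}$.

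For the cocone condition, fix an arrow $a\colon v\to w$ in $A$ and an element $d\in D(v)$. By the very definition of $\sim$, we have $(v,d)\sim (w,D(a)(d))$, so
\[
\iota_v(d) \;=\; [(v,d)] \;=\; [(w,D(a)(d))] \;=\; \iota_w(D(a)(d)),
\]
which is exactly $\iota_w\cp D(a) = \iota_v$.

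For the universal property, given a cocone $(T,\{t_v\})$ I would define $u([(v,d)]) \coloneqq t_v(d)$. Uniqueness is then automatic, since the equation $u\cp \iota_v = t_v$ forces this formula on every representative. The real content is well-definedness on equivalence classes, and this is where I expect the only (very small) obstacle. It suffices to check that the \emph{generating} relation is respected by the function $(v,d)\mapsto t_v(d)$: the reflexive, symmetric, and transitive closure of a relation preserved by a function is again preserved by that function, so no further work is needed on the closure. For a generating pair, suppose $(v,d)$ is related to $(w,e)$ via an arrow $a\colon v\to w$ with $D(a)(d)=e$. The cocone condition for $T$ reads $t_w\cp D(a) = t_v$, and therefore $t_w(e) = t_w(D(a)(d)) = t_v(d)$, as required.

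Putting these pieces together establishes existence and uniqueness of the mediating map $u$, proving that the quotient set with the maps $\iota_v$ is a colimit of $D$. The only subtlety worth flagging is the careful separation between the basic relation and its equivalence closure, together with the observation that the presentation of $\cat{J}$ lets us reduce everything to checks on generating arrows of the graph.
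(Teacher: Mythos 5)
Your proof is correct and complete: the paper states this theorem without giving any proof, and your argument is exactly the standard direct verification one would supply — check the cocone condition on the generating arrows of the presenting graph, define the mediating map on representatives, and justify well-definedness by observing that a function respecting the generating relation respects the equivalence relation it generates. The only cosmetic remark is that your composites are written in the classical order ($u\cp\iota_v$) whereas the book uses diagrammatic order ($\iota_v\cp u$); the mathematics is unaffected.
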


\begin{example} %
\label{ex.initial_obj}
   Recall that an initial object is the colimit on the empty graph. The formula thus says
   the initial object in $\smset$ is the empty set $\varnothing$: there are no $v
   \in V$.
\end{example}

\begin{example} %
\label{ex.coproduct}
A coproduct is a colimit on the graph $\cat{J}=\fbox{$\LMO{v_1}\quad\LMO{v_2}$}$. A functor $D\colon\cat{J}\to\smset$ can be identified with a choice of two sets, $X\coloneqq D(v_1)$ and $Y\coloneqq D(v_2)$. Since there are no arrows in $\cat{J}$, the equivalence relation $\sim$ is vacuous, so the
formula in \cref{{thm.colims_in_set}} says that a coproduct is given by
\[\{(v,d) \mid d \in D(v), \text{ where } v=v_1 \text{ or }v=v_2\}.\]
In other words, the coproduct of sets $X$ and $Y$ is their disjoint
union $X \sqcup Y$, as expected.
\end{example}

\begin{example}
If $\cat{J}$ is the category $\Cat{1} = \fbox{$\LMO{v}$}\,$, the formula in \cref{thm.colims_in_set}
yields the set
  \[\{(v,d) \mid d \in D(v)\}\]
This is isomorphic to the set $D(v)$. In other words, if $X$ is a set considered
as a diagram $X \colon \Cat{1} \to \smset$, then its colimit (like its limit)
is just $X$ again.
\end{example}

\begin{exercise} %
\label{exc.pushout_formula}
Use the formula in \cref{thm.colims_in_set} to show that pushouts---colimits on
a diagram $X \xleftarrow{f} N \xrightarrow{g} Y$---agree with the description we
gave in \cref{ex.pushouts}.
\end{exercise}

\begin{example}%
\index{coequalizer} %
\label{ex.coequalizer}%
\index{colimit!coequalizer as}
Another important type of finite colimit is the \emph{coequalizer}. These are
colimits over the graph \fbox{$\LMO{}\tto \LMO{}$} consisting of two
parallel arrows.

Consider some diagram $\begin{tikzcd} X \ar[r, "f", shift left=3pt]\ar[r, "g"',
shift right=3pt] & Y\end{tikzcd}$ on this graph in $\smset$. The coequalizer of
this diagram is the set of equivalence classes of $Y$ under equivalence relation
generated by declaring $y \sim y'$ whenever there exists $x$ in $X$ such that $f(x)=y$ and $g(x)=y'$.%
\index{equivalence relation}

Let's return to the example circuit in the introduction to hint at why colimits
are useful for interconnection.%
\index{colimit!and interconnection} Consider the following picture:%
\index{electrical circuit}
\[
\begin{tikzpicture}[circuit ee IEC, set resistor graphic=var resistor IEC
graphic, set make contact graphic=var make contact IEC graphic]
\node [contact] (1) at (1,0) {};
\coordinate (2) at (4,0) {};
\node [contact] (3) at (7,0) {};
\node [contact] (5) at (0,1) {};
\node [contact] (6) at (3,1) {};
\node [contact] (7) at (5,1) {};
\node [contact] (8) at (8,1) {};
\node [draw, inner sep=1.5pt,circle] (a) at (0,0) {};
\node [draw, inner sep=1.5pt,circle] (b) at (4,1) {};
\node [draw, inner sep=1.5pt,circle] (c) at (8,0) {};
\draw (1) to [bulb] (2);
\draw (2) to [resistor] (3);
\draw (5) to [battery] (6);
\draw (7) to [make contact] (8);
\begin{scope}[shorten <=5pt, shorten >=5pt, ->,red]
\draw (a) to (5);
\draw (b) to (7);
\draw (c) to (3);
\end{scope}
\begin{scope}[shorten <=5pt, shorten >=5pt, ->,blue]
\draw (a) to (1);
\draw (b) to (6);
\draw (c) to (8);
\end{scope}
\end{tikzpicture}
\]
We've redrawn this picture with one change: some of the arrows are now red, and
others are now blue. If we let $X$ be the set of white circles $\circ$, and $Y$
be the set of black circles $\bullet$, the blue and red arrows respectively
define functions $f,g\colon X \to Y$. Let's leave the actual circuit components out of the picture for now; we're just interested in the dots. What is the coequalizer?

It is a three element set, consisting of one element for each newly-connected
pair of $\bullet$'s . Thus the colimit describes the set of terminals after
performing the interconnection operation. In \cref{sec.decorated_cospans} we'll
see how to keep track of the circuit components too.%
\index{electrical circuit}
\end{example}

\index{colimit|)}%
\index{colimit!finite|)}

%---- Subsection ----%
\subsection{Cospans}%
\label{subsec.cospans}
When a category $\cat{C}$ has finite colimits, an extremely useful way to package them is by considering the category of cospans in $\cat{C}$.%
\index{cospan|(}

\begin{definition}%
\index{cospan}
Let $\cat{C}$ be a category. A \emph{cospan} in $\cat{C}$ is just a pair of morphisms to a common object $A\to N\from B$. The common object $N$ is called the \emph{apex} of the cospan and the other two objects $A$ and $B$ are called its \emph{feet}.%
\index{cospan!foot of}%
\index{cospan!apex of}
\end{definition}

\index{cospans!composition of}
If we want to say that cospans form a category, we should begin by saying how composition would work. So suppose we have two cospans in $\cat{C}$%
\index{category!of cospans}
\[
\begin{tikzcd}[row sep=small]
& N \\
A \ar[ur,"f"] && B \ar[ul,"g"'] 
\end{tikzcd}
\qquad\mbox{and}\qquad
\begin{tikzcd}[row sep=small]
& P \\
B \ar[ur,"h"] && C \ar[ul,"k"'] 
\end{tikzcd}
\]
Since the right foot of the first is equal to the left foot of the second, we
might stick them together into a diagram like this:
  \[
  \begin{tikzcd}[row sep=small]
      & N && P  \\
      \quad A \ar[ur,"f"] && B \ar[ul, "g"'] \ar[ur, "h"] && C
      \quad \ar[ul, "k"']
  \end{tikzcd}
  \]
%\[
%\begin{tikzcd}[sep=20pt]
%&&C\ar[d,"k"]\\
%&B\ar[d,"g"']\ar[r,"h"]&Q\\
%A\ar[r,"f"']&P
%\end{tikzcd}
%\]
Then, if a pushout of $N \xleftarrow{g} B \xrightarrow{h} P$ exists in $\cat{C}$, as shown on the left, we can extract a new cospan in $\cat{C}$, as shown on the right:
  \begin{equation}%
\label{eqn.composecospan}
  \begin{tikzcd}[row sep=small, column sep=20pt]
     && N+_BP \ar[dd, phantom, very near start, "\rotatebox{-45}{$\lrcorner$}"]\\
      & N \ar[ur,"\iota_N"] && P \ar[ul, "\iota_P"'] \\
       \quad A  \ar[ur,"f"] && B \ar[ul, "g"'] \ar[ur, "h"] &&  C \quad
       \ar[ul, "k"']
  \end{tikzcd}
  \:
  \rightsquigarrow
  \quad
\begin{tikzcd}
& N+_BP \\
A \ar[ur,"f\cp \iota_N"] && C \ar[ul,"k\cp\iota_P"'] 
\end{tikzcd}  
  \end{equation}%
\index{pushout!in cospan composition}

%\[
%\begin{tikzcd}[sep=20pt]
%&&C\ar[d,"k"]\\
%&B\ar[d,"g"']\ar[r,"h"]&Q\ar[d,"\iota_Q"]\\
%A\ar[r,"f"']&P\ar[r,"\iota_P"']&P+_BQ\ar[ul, phantom, very near start, "\ulcorner"]
%\end{tikzcd}
%\hspace{1in}
%\begin{tikzcd}[sep=20pt]
%&& C \ar[dd,"k\cp\iota_Q"]\\~\\
%A \ar[rr,"f\cp\iota_P"'] &~& P+_BQ 
%\end{tikzcd}
%\]
It might look like we have achieved our goal, but we're missing some things. First, we need an identity on every object $C\in\Ob\cat{C}$; but that's not hard: use $C\to C\from C$ where both maps are identities in $\cat{C}$. More importantly, we don't know that $\cat{C}$ has all pushouts, so we don't know that every two sequential morphisms $A\to B\to C$ can be composed. And beyond that, there is a technical condition that when we form pushouts, we only get an answer `up to isomorphism': anything isomorphic to a pushout counts as a pushout (check the definition to see why). We want all these different choices to count as the same thing, so we define two cospans to be equivalent iff there is an isomorphism between their respective apexes. That is, the cospan $A\to P\from B$ and $A\to P'\from B$ in the diagram shown left below are equivalent iff there is an isomorphism $P\cong P'$ making the diagram to the right commute:
\[
\begin{tikzcd}[sep=small, row sep=0]
	&P\\A\ar[ru]\ar[rd]&&B\ar[lu]\ar[ld]\\
	&P'
\end{tikzcd}
\hspace{1in}
\begin{tikzcd}[sep=small, row sep=0]
	&P\ar[dd, "\cong"]\\A\ar[ru]\ar[rd]&&B\ar[lu]\ar[ld]\\
	&P'
\end{tikzcd}
\]

Now we are getting somewhere. As long as our category $\cat{C}$ has pushouts, we are in business: $\cospan{\cat{C}}$ will form a category. But in fact, we are very close to getting more. If we also demand that $\cat{C}$ has an initial object $\varnothing$ as well, then we can upgrade $\cospan{\cat{C}}$ to a symmetric monoidal category. 

Recall from \cref{prop.finite_colimits} that a category $\cat{C}$ has all finite colimits iff it has an initial object and all pushouts.

\begin{definition}%
\label{def.cospan_sym_mon_cat}
  Let $\cat{C}$ be a category with finite colimits. Then there exists a
  category $\cospan{\cat{C}}$ with the same objects as $\cat{C}$, i.e.\ $\Ob(\cospan{\cat{C}})=\Ob(\cat{C})$, where the morphisms $A \to B$ are the (equivalence classes of)
  cospans from $A$ to $B$, and composition is given by the above pushout construction.
  
  There is a symmetric monoidal structure on this category, denoted
  $(\cospan{\cat{C}},\varnothing,+)$. The monoidal unit is the initial object
  $\varnothing\in\cat{C}$ and the monoidal product is given by coproduct. The coherence
  isomorphisms, e.g.\ $A+\varnothing\cong A$, can be defined in a similar way to
  those in \cref{exc.coproducts_give_monoidal_structure}. 
\end{definition}

It is a straightforward but time-consuming exercise to verify that $(\cospan{\cat{C}},\varnothing,+)$ from \cref{def.cospan_sym_mon_cat} really does satisfy all the axioms of a symmetric
monoidal category, but it does.

\begin{example} %
\label{ex.cospan_finset}%
\index{cospans!category of}
  The category $\finset$ has finite colimits (see \ref{cor.set_cocomplete}). So,
  we can define a symmetric monoidal category $\cospan{\finset}$. What does it
  look like? It looks a lot like wires connecting ports. 

  The objects of $\cospan\finset$ are finite sets; here let's draw them as
  collections of $\bullet$'s. The morphisms are cospans of functions.  Let $A$ and
  $N$ be five element sets, and $B$ be a six element set.  Below are two
  depictions of a cospan $A \To{f} N \From{g} B$. 
  \[
    \begin{aligned}
      % [inline block 36: 2 envs, 4526 chars -> data_tex | \begin{tikzpicture} 	\begin{pgfonlayer}{nodelayer}...]

    \end{aligned}
  \]
  In the depiction on the left, we simply represent the functions $f$ and $g$
  by drawing arrows from each $a\in A$ to $f(a)$ and each $b\in B$ to $g(b)$. In the depiction on the
  right, we make this picture resemble wires a bit more, simply drawing a wire
  where before we had an arrow, and removing the unnecessary center dots. We also draw a dotted line around points that are
  connected, to emphasize an important perspective, that cospans establish that
  certain ports are connected, i.e.\ part of the same
  equivalence class.

  The monoidal category $\cospan\finset$ then provides two operations for
  combining cospans: composition and monoidal product. Composition is given by
  taking the pushout of the maps coming from the common foot, as described in
  \cref{def.cospan_sym_mon_cat}.  Here is an example of cospan composition,
  where all the functions are depicted with arrow notation:
  \begin{equation} %
\label{eq.cospan_comp}
    \begin{aligned}
      % [inline block 37: 2 envs, 4529 chars -> data_tex | \begin{tikzpicture} 	\begin{pgfonlayer}{nodelayer}...]

    \end{aligned}
  \end{equation}
  The monoidal product is given simply by the disjoint union of two cospans; in
  pictures it is simply combining two cospans by stacking one above another.
\end{example}%
\index{monoidal product!as stacking}

\begin{exercise} %
\label{exc.cospan_tensor}
In \cref{eq.cospan_comp} we showed morphisms $A\to B$ and $B\to C$ in
$\cospan{\finset}$. Draw their monoidal product as a morphism $A+B\to
B+C$ in $\cospan{\finset}$.
\end{exercise}

\begin{exercise} %
\label{exc.cospan_comp}
  Depicting the composite of cospans in \cref{eq.cospan_comp} with the wire
  notation gives
  \begin{equation} %
\label{eq.cospan_comp_wires}
    \begin{aligned}
      % [inline block 38: 2 envs, 3861 chars -> data_tex | \begin{tikzpicture} 	\begin{pgfonlayer}{nodelayer}...]

    \end{aligned}
  \end{equation}
  Comparing \cref{eq.cospan_comp} and \cref{eq.cospan_comp_wires}, describe the
  composition rule in $\cospan\finset$ in terms of wires and connected
  components.
\end{exercise}

\index{cospan|)}

%-------- Section --------%
\section{Hypergraph categories}%
\index{hypergraph category|(}
A hypergraph category is a type of symmetric monoidal category whose wiring
diagrams are networks. We will soon see that electric circuits can be organized into a hypergraph category; this is what we've been building up to. But to define hypergraph categories, it is useful to first introduce Frobenius monoids.

%---- Subsection ----%
\subsection{Frobenius monoids} %
\label{sec.escfm}%
\index{Frobenius!structure|(}
The pictures of cospans we saw above, e.g.\ in \cref{eq.cospan_comp_wires} look
something like icons in signal flow graphs (see \cref{subsec.icons_sfgs}):
various wires merge and split, initialize and terminate. And these follow the
same rules they did for linear relations, which we briefly discussed in
\cref{exc.linear_relations}. There's a lot of potential for confusion, so let's start
from scratch and build back up.%
\index{interconnection!via Frobenius structures}%
\index{icon}

In any symmetric monoidal category $(\mathcal C,I,\otimes)$, recall from \cref{subsec.reflection_wds} that objects can be drawn as wires and morphisms can be drawn as boxes. Particularly noteworthy morphisms might be iconified as dots rather than boxes, to indicate that the morphisms there are not arbitrary but notation-worthy. One case of this is when there is an object $X$ with special ``abilities'', e.g. the ability to duplicate into two, or disappear into nothing. 

To make this precise, recall from \cref{def.monoid_object} that a commutative monoid
$(X,\mu,\eta)$ in symmetric monoidal category $(\cat{C},I,\otimes)$ is an
object $X$ of $\mathcal C$ together with (noteworthy) morphisms
\[
  \xymatrixrowsep{1pt}
  \xymatrix{
    \mult{.075\textwidth} & & \unit{.075\textwidth} \\
    \mu\colon X\otimes X \to X & & \eta\colon I \to X
  }
\]
%\[
%\begin{tikzpicture}[spider diagram]
%	\node[spider={2}{1}, fill=black] (a) {};
%	\node[spider={0}{1}, fill=black, right=3 of a] (b) {};
%	\node[below=.5 of a] (a lab) {$\mu\colon X\otimes X\to X$};
%	\node at (a lab-|b) {$\eta\colon I\to X$};
%\end{tikzpicture}
%\]
obeying
%\[
%\begin{tikzpicture}[spider diagram, young]
%	\node[spider={2}{1}, fill=black] (ar) {};
%	\node[spider={2}{1}, fill=black, left=\legxlen of ar_in1] (al) {};
%	\node[spider={2}{1}, fill=black, right=4 of al] (br) {};
%	\node[spider={2}{1}, fill=black, left=\legxlen of br_in2] (bl) {};
%	\node at ($(ar)!.5!(bl)$) {=};
%	\draw (al_out1) -- (ar_in1);
%	\draw (bl_out1) -- (br_in2);
%\end{tikzpicture}
%\]
\begin{equation}%
\label{eqn.ass_un_comm}
  \xymatrixrowsep{1pt}
  \xymatrixcolsep{25pt}
  \xymatrix{
    \assocl{.1\textwidth} = \assocr{.1\textwidth} & \unitl{.1\textwidth} =
    \idone{.1\textwidth} & \commute{.1\textwidth} = \mult{.07\textwidth} \\
    \textrm{(associativity)} & \textrm{(unitality)} & \textrm{(commutativity)}
  }
\end{equation}
where $\swap{1em}$ is the symmetry on $X \otimes X$. A cocommutative cocomonoid
$(X,\delta,\epsilon)$ is an object $X$ with maps $\delta\colon X \to X \otimes
X$, $\epsilon\colon X \to I$, obeying the mirror images of the laws in \cref{eqn.ass_un_comm}.%
\index{associativity!of monoid operation, wiring diagram for}%
\index{unitality!of monoid operation, wiring diagram for}%
\index{symmetry!of monoid operation, wiring diagram for}

Suppose $X$ has both the structure of a commutative monoid and cocommutative
comonoid, and consider a wiring diagram built only from the icons $\mu,\eta,\delta,$ and $\epsilon$, where every wire is labeled $X$. These diagrams have a left and right, and are pictures of how ports on the left are connected to ports on the right. The commutative monoid and cocommutative comonoid axioms thus both
express when to consider two such connection pictures should be considered the
same. For example, associativity says the order of connecting ports on the left
doesn't matter; coassociativity (not drawn) says the same for the right.%
\index{comonoid}

If you want to go all the way and say ``all I care about is which port is connected to which; I don't even care about left and right'', then you need a few more axioms to say how the morphisms $\mu$ and $\delta$, the merger and the splitter, interact.

\begin{definition}%
\label{def.spec_comm_frob_mon}%
\index{Frobenius!structure}
  Let $X$ be an object in a symmetric monoidal category $(\cat{C},\otimes,I)$. A \emph{Frobenius structure} on $X$ consists of a 4-tuple $(\mu,\eta,\delta,\epsilon)$ such that $(X,\mu,\eta)$ is a commutative monoid and 
  $(X,\delta,\epsilon)$ is a cocommutative comonoid, which satisfies the six equations above ((co-)associativity, (co-)unitality, (co-)commutativity), as well as the following three equations:
  \begin{equation}%
\label{eqn.frob_laws}
  \xymatrixrowsep{1pt}
  \xymatrixcolsep{25pt}
  \xymatrix{
    \frobs{.1\textwidth} = \frobx{.1\textwidth} = \frobz{.1\textwidth} & \spec{.1\textwidth} =
    \idone{.1\textwidth}  \\
    \textrm{(the Frobenius law)} & \textrm{(the special law)}%
\index{Frobenius!law}
    }
  \end{equation}
  We refer to an object $X$ equipped with a Frobenius structure as a \emph{special commutative Frobenius monoid}, or just \emph{Frobenius monoid} for short.%
\index{Frobenius!monoid}
\end{definition}

With these two equations, it turns out that two morphisms $X^{\otimes m}\to
X^{\otimes n}$---defined by composing and tensoring identities on $X$ and the
noteworthy morphisms $\mu,\delta$, etc.---are equal if and only if their string
diagrams connect the same ports. This link between connectivity, and Frobenius
monoids can be made precise as follows. 

\begin{definition}%
\index{spider}
  Let $(X,\mu,\eta,\delta,\epsilon)$ be a Frobenius monoid
  in a monoidal category $(\cat{C},I,\otimes)$. Let $m,n \in\NN$. Define 
  $s_{m,n}\colon X^{\otimes m} \to X^{\otimes n}$ to be the following morphism
 \[
  \begin{tikzpicture}[spider diagram, decoration={brace, amplitude=7pt}]
  	\node[special spider={2}{1}{1.8*\leglen}{\leglen}, fill=black] (a1) {};
  	\node[special spider={2}{1}{\leglen}{0pt}, fill=black, left=0 of a1_in1] (a2) {};
		\node[special spider={2}{1}{\leglen}{0pt}, fill=black, left=0 of a2_in1] (a3) {};
		\node[special spider={1}{2}{\leglen}{1.8*\leglen}, fill=black, right=.5 of a1_out1] (c1) {};
		\node[special spider={1}{2}{0pt}{\leglen}, fill=black, right=0 of c1_out1] (c2) {};
		\node[special spider={1}{2}{0pt}{\leglen}, fill=black, right=0 of c2_out1] (c3) {};
		\draw (a1_out1) -- (c1_in1);
		\draw (a1_in2) -- (a1_in2-|a3_in1) node[coordinate] (b1) {};
		\draw (a2_in2) -- (a2_in2-|a3_in1) node[coordinate] (b2) {};
		\draw (c1_out2) -- (c1_out2-|c3_out1) node[coordinate] (d1) {};
		\draw (c2_out2) -- (c2_out2-|c3_out1) node[coordinate] (d2) {};
		\node at ($(b1)!.5!(b2)+(3pt,3pt)$) {$\vdots$};
		\node at ($(d1)!.5!(d2)+(-3pt,3pt)$) {$\vdots$};
		\draw[decorate, very thick] ($(b1)+(-10pt,-2pt)$) -- ($(a3_in1)+(-10pt,2pt)$);
		\draw[decorate, very thick] ($(c3_out1)+(10pt,2pt)$) -- ($(d1)+(10pt,-2pt)$);
		\node[anchor=east] at ($(b2)+(-20pt,2pt)$) {$m$ wires};
		\node[anchor=west] at ($(d2)+(20pt,2pt)$) {$n$ wires};
  \end{tikzpicture}
  \]
	It can be written formally as $(m-1)$ $\mu$'s followed by $(n-1)$ $\delta$'s, with special cases when $m=0$ or $n=0$.

  We call $s_{m,n}$ the \emph{spider of type $(m,n)$}, and can draw it more simply as the icon
\[
\begin{tikzpicture}[spider diagram, decoration={brace, amplitude=5pt}]
	\node[spider={5}{6}, fill=black] (a) {};
	\draw[decorate, thick] ($(a_in5)+(-10pt,-2pt)$) -- ($(a_in1)+(-10pt,2pt)$);	
	\draw[decorate, thick] ($(a_out1)+(10pt,2pt)$) -- ($(a_out6)+(10pt,-2pt)$);	
	\node[anchor=east] at ($(a.center)+(-30pt,0pt)$) {$m$ legs};
	\node[anchor=west] at ($(a.center)+(30pt,0pt)$) {$n$ legs};
\end{tikzpicture}
\]
\end{definition}%
\index{icon!spider}

So a special commutative Frobenius monoid, aside from being a mouthful, is a `spiderable' wire. You agree that in any monoidal category wiring diagram language, wires represent objects and boxes represent morphisms? Well in our weird way of talking, if a wire is spiderable, it means that we have a bunch of morphisms $\mu,\eta,\delta,\epsilon,\sigma$ that we can combine without worrying about the order of doing so: the result is just ``how many in's, and how many out's'': a spider. Here's a formal statement.%
\index{spiderable wire!Frobenius structure as}

\begin{theorem} %
\label{thm.spider}
  Let $(X,\mu,\eta,\delta,\epsilon)$ be a Frobenius monoid
  in a monoidal category $(\cat{C},I,\otimes)$.  Suppose that we have a map
  $f\colon X^{\otimes m} \to X^{\otimes n}$ each constructed from spiders and
  the symmetry map $\sigma\colon X^{\otimes 2} \to X^{\otimes 2}$ using
  composition and the monoidal product, and such that the string diagram of $f$
  has only one connected component. Then it is a spider: $f=s_{m,n}$.
\end{theorem}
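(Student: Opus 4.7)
My plan is to prove this by normalizing any such $f$ to the canonical spider $s_{m,n}$ using the Frobenius, special, (co-)associativity, (co-)unitality, (co-)commutativity, and symmetry axioms. I would proceed by induction on the total number of generators $\mu,\eta,\delta,\epsilon,\sigma$ used in an expression for $f$. The base case is when $f$ is a single spider (or an identity, which is $s_{1,1}$), where the result is immediate.

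The central technical tool is a \emph{spider fusion lemma}: if we form a composite by wiring $k\geq 1$ of the output legs of a spider $s_{p,q}$ into $k$ of the input legs of a second spider $s_{r,t}$ (using symmetries to reroute legs), the result equals $s_{p+r-k,\,q+t-k}$. For $k=1$ this follows from associativity of $\mu$, coassociativity of $\delta$, and the Frobenius law $\delta\cp\mu = (\mathrm{id}\otimes\mu)\cp(\delta\otimes\mathrm{id})$, which together allow any tree of $\mu$'s and $\delta$'s with one internal edge to be rearranged into the canonical merge-then-split shape of a single spider. For $k>1$, the additional connecting wires produce loops in the diagram (formally, composites $\mu\cp\delta$), and the special law $\mu\cp\delta=\mathrm{id}$ erases each loop, reducing to the previous case. (Co-)commutativity together with naturality of the symmetry ensure that the specific matching of legs does not affect the result.

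With the fusion lemma in hand, the inductive step is routine: pick any generator in the expression tree for $f$ and view it as a spider ($\mu=s_{2,1}$, $\eta=s_{0,1}$, $\delta=s_{1,2}$, $\epsilon=s_{1,0}$), then fuse it with an adjacent piece. Connectedness of the string diagram of $f$ guarantees that every generator is accessible from some neighbour via a wire, so the fusion process terminates with a single spider. Since the global number of inputs stays $m$ and the number of outputs stays $n$ throughout, the final spider must be $s_{m,n}$.

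The main obstacle will be the treatment of the symmetry map $\sigma$, which is \emph{not} itself a spider (its string diagram has two connected components). One must argue that any $\sigma$ occurring inside a region already fused into a single spider can be absorbed via (co-)commutativity, while any $\sigma$ sitting on wires that separate two distinct spider-pieces is consumed by the fusion step when those pieces are merged. Formalising this cleanly likely requires organising the induction on a combinatorial measure of the underlying string diagram (for instance, the number of wires minus the number of generators, or an Euler-characteristic count) rather than a raw generator count, so that the "absorb the symmetries" reductions strictly decrease the complexity measure.
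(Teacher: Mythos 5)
The paper states this theorem without proof---it is asserted and then used, with the relevant literature (Carboni--Walters and its descendants) relegated to the further-reading section---so there is no in-text argument to measure you against. Your strategy is the standard one and it is sound: a spider-fusion lemma derived from the Frobenius, special, (co)associativity and (co)unitality laws, followed by an induction that collapses a connected diagram one adjacency at a time, with (co)commutativity and naturality of $\sigma$ absorbing the symmetries. Three remarks. First, watch the composition order: the paper's $\cp$ is diagrammatic (``and then''), so the special law reads $\delta\cp\mu=\id_X$ (split, then merge) and the Frobenius law reads $\mu\cp\delta=(\id\otimes\delta)\cp(\mu\otimes\id)=(\delta\otimes\id)\cp(\id\otimes\mu)$; several of your equations are typed in the classical order and do not typecheck under the paper's convention, though the intended content is clearly correct. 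Second, in the $k>1$ case no genuine feedback loop ever arises (the ambient category is not compact closed, so every composite stays progressive): writing $s_{p,q}=M_p\cp\Delta_q$ and $s_{r,t}=M_r\cp\Delta_t$ with $M$ and $\Delta$ the iterated (co)multiplications, you can peel the $k$ shared wires off as a block $\Delta_k$ followed by $M_k$, which the special law collapses to an identity, reducing to $k=1$; this is exactly your intended move, just with the ``loops'' localized to split-then-merge blocks. Third, the bookkeeping concern you raise is real but routine: the number of non-identity, non-symmetry generators strictly decreases under each fusion and is unchanged when a symmetry is slid to the boundary and absorbed by (co)commutativity, so a lexicographic measure terminates the rewriting. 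Finally, note that the theorem stated immediately after this one---that the free prop on the Frobenius generators and relations is equivalent to $\cospan\finset$---yields a slicker route: a connected diagram corresponds to a cospan $\ord{m}\to\ord{p}\from\ord{n}$ whose apex, having no isolated points and only one connected component, must be $\ord{1}$, which is precisely $s_{m,n}$. But the usual proof of that presentation theorem is itself your normal-form argument, so the two routes are not independent.
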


\begin{example}
  As the following two morphisms both (i) have the same number of inputs and
  outputs, (ii) are constructed only from spiders, and (iii) are connected,
  \cref{thm.spider} immediately implies they are equal:
\[
\begin{aligned}
% [inline block 39: 8 envs, 2716 chars -> data_tex | \begin{tikzpicture}[spider diagram] 	\node[spider={3}{4}, fill=black] (a) {};...]

\qedhere
\]
  \qedhere
\end{enumerate}
\end{exercise}

\paragraph{Back to cospans.}
Another way of understanding Frobenius monoids is to relate them to cospans.
Recall the notion of prop presentation from \cref{rdef.presentation_prop}.

\begin{theorem}%
\index{cospans!as theory of Frobenius monoids}
Consider the four-element set $G\coloneqq\{\mu,\eta,\delta,\epsilon\}$ and define $\pgin,\pgout \colon G \to
\nn$ as follows:
\begin{align*}
  \pgin(\mu)&\coloneqq 2,&
  \pgin(\eta)&\coloneqq 0,&
  \pgin(\delta)&\coloneqq 1,&
  \pgin(\epsilon)&\coloneqq 1,
  \\
  \pgout(\mu)&\coloneqq 1,&
  \pgout(\eta)&\coloneqq 1,&
  \pgout(\delta)&\coloneqq 2,&
  \pgout(\epsilon)&\coloneqq 0.
\end{align*}
Let $E$ be the set of Frobenius axioms, i.e.\ the nine equations from \cref{def.spec_comm_frob_mon}. Then the
free prop on $(G,E)$ is equivalent, as a symmetric monoidal category,%
\footnote{
We will not explain precisely what it means to be equivalent as a symmetric
monoidal category, but you probably have some idea: ``they are the same for all category-theoretic intents and purposes.'' The idea is similar to that of
equivalence of categories, as explained in \cref{rem.preorder_boolcats2}.
}
to
$\cospan\finset$.
\end{theorem}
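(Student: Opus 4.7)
The plan is to define a symmetric monoidal functor $F \colon \mathbf{Frob} \to \cospan\finset$ (writing $\mathbf{Frob}$ for the free prop on $(G,E)$), and show it is essentially surjective and fully faithful. By the universal property of the free prop, to construct $F$ it suffices to pick images in $\cospan\finset$ for the four generators and check that the nine Frobenius axioms hold in $\cospan\finset$. The obvious assignments are
\[
\mu \mapsto (\ord{2}\to\ord{1}\from\ord{1}), \quad
\eta \mapsto (\ord{0}\to\ord{1}\from\ord{1}), \quad
\delta \mapsto (\ord{1}\to\ord{1}\from\ord{2}), \quad
\epsilon \mapsto (\ord{1}\to\ord{1}\from\ord{0}),
\]
where in each case the maps to $\ord{1}$ are the unique ones. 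Verifying the nine axioms then reduces to a finite set of pushout calculations in $\finset$ of the kind worked out in \cref{ex.pushouts}; one just checks, for instance, that both sides of the Frobenius law produce a cospan whose apex is a one-element set connected to all four feet-ports.

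Essential surjectivity is free: both categories have $\NN$ as objects and $F$ is the identity on them. Fullness proceeds by a connected-components decomposition. Given any cospan $\ord{m}\to N\from\ord{n}$, the apex $N$ partitions into connected components relative to the image of $\ord{m}\sqcup\ord{n}\to N$: each component is determined by the (unordered) set of input and output ports mapping into it, possibly empty. A component receiving $k$ input ports and $l$ output ports is exactly $F(s_{k,l})$ (where $s_{0,0}$ is realized as $\eta\cp\epsilon$). Up to composing with symmetries on the left and right to permute the $m$ inputs and $n$ outputs into the correct order, any cospan from $\ord{m}$ to $\ord{n}$ is therefore a monoidal product of spiders $s_{k_i,l_i}$, pre- and post-composed with permutations—a morphism which lies in the image of $F$.

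Faithfulness is the main obstacle. I need to show that any two morphisms in $\mathbf{Frob}$ that $F$ sends to the same cospan were already equal in $\mathbf{Frob}$. The strategy is to prove that every morphism in $\mathbf{Frob}$ can be put into the normal form described above: a permutation, followed by a monoidal product of spiders $s_{k_i,l_i}$, followed by another permutation. Given any string diagram built from $\mu,\eta,\delta,\epsilon$, one groups its wires by connectivity; the spider theorem (\cref{thm.spider}) applied to each connected piece collapses that piece to a single spider of the appropriate type, so the whole diagram reduces to a monoidal product of spiders after symmetries. Having normal forms in hand, faithfulness is immediate: the cospan $F$ produces records exactly the connected-components data (how many input ports and output ports share a component, plus how many empty components), which is exactly what classifies the normal form up to the spider theorem.

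Finally, since $F$ is a strict symmetric monoidal functor that is essentially surjective and fully faithful, it is an equivalence of symmetric monoidal categories, as required. The delicate step is the normal-form reduction for faithfulness; everything else is a direct computation or a bookkeeping exercise with permutations.
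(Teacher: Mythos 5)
The paper never proves this theorem: it is stated as a bare assertion, with the actual arguments deferred to the literature cited in \cref{sec.c6_further_reading} (Carboni--Walters, and the decorated-cospan/hypergraph-category papers), so there is no in-text proof to compare yours against. On its own merits, your sketch follows the standard route and its skeleton is sound: the generator assignments match the Frobenius structure on $\ord{1}$ given in \cref{ex.cospan_hypergraph}, the axiom checks are indeed finite pushout computations, and fullness via the fibers of $\ord{m}\sqcup\ord{n}\to N$ (including isolated apex elements as $\eta\cp\epsilon$) is correct.

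Be aware, though, of where the weight actually sits. First, \cref{thm.spider} is itself stated in the paper without proof, and it is essentially equivalent to the faithfulness you are trying to establish; invoking it is legitimate bookkeeping within this chapter, but an honest self-contained proof must derive it from the nine axioms (typically an induction on the diagram, fusing any two spiders sharing a leg via the Frobenius law and collapsing multiple shared legs via the special law). Second, the step ``group the wires by connectivity and pull the connected pieces apart into a monoidal product conjugated by permutations'' is a genuine rewriting argument using the interchange law and naturality of the symmetry, not a triviality. Third, for faithfulness you need not just existence but essential uniqueness of the normal form: two expressions $\sigma_1\cp S\cp\tau_1$ and $\sigma_2\cp S'\cp\tau_2$ with the same image cospan must be shown equal in the free prop, which requires that spiders absorb permutations of their own legs (from (co)commutativity together with the spider theorem) and that the residual block permutations agree. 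None of these points is an error, but each is a place where ``bookkeeping'' conceals a real lemma.
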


Thus we see that ideal wires, connectivity, cospans, and objects with Frobenius structures are
all intimately related. We use Frobenius structures (all that splitting, merging, initializing, and terminating stuff) as a way to capture the grammar of circuit diagrams.

%---- Subsection ----%
\subsection{Wiring diagrams for hypergraph categories}%
\index{wiring diagram!for
hypergraph categories}

We introduce hypergraph categories through their wiring diagrams. Just like for
monoidal categories, the formal definition is just the structure required to
unambiguously interpret these diagrams.

Indeed, our interest in hypergraph categories is best seen in their wiring
diagrams. The key idea is that wiring diagrams for hypergraph categories are
network diagrams. This means, in addition to drawing labeled boxes with inputs
and outputs, as we can for monoidal categories, and in addition to bending these
wires around as we can for compact closed categories, we are allowed to split,
join, terminate, and initialize wires.%: ``do the Frobenius''!%
%\footnote{
%`The Frobenius' is a futuristic dance move, wherein informatic entities split, merge, initialize, and terminate themselves on the dance floor. It's sweeping the hyperverse!
%}

Here is an example of a wiring diagram that represents a composite of morphisms
in a hypergraph category
\[
  % [inline block 40: 1 envs, 2029 chars -> data_tex | \begin{tikzpicture}[oriented WD, bby=.25cm, font=\small, bb port length=0pt]     %boxes...]

\]
We have suppressed some of the object/wire labels for readability, since all
types can be inferred from the labeled ones.

\begin{exercise}%
\label{exc.suppressed_labels}~ 
\begin{enumerate}
	\item What label should be on the input to $h$?
	\item What label should be on the output of $g$?
	\item What label should be on the fourth output wire of the composite?
	\qedhere
\qedhere
\end{enumerate}
\end{exercise}

Thus hypergraph categories are general enough to talk about all network-style
diagrammatic languages, like circuit diagrams. %
\index{network!language}

\index{Frobenius!structure|)}
%---- Subsection ----%
\subsection{Definition of hypergraph category}
We are now ready to define hypergraph categories formally. Since the wiring diagrams for hypergraph categories are just those for symmetric
monoidal categories with a few additional icons, the definition is relatively
straightforward: we just want a Frobenius structure on every object. The only
coherence condition is that these interact nicely with the monoidal product.

\begin{definition}%
\index{category!hypergraph structure on}%
\index{hypergraph category}
  A \emph{hypergraph category} is a symmetric monoidal category $(\cat{C},I,\otimes)$ in which each
  object $X$ is equipped with a Frobenius structure
  $(X,\mu_X,\delta_X,\eta_X,\epsilon_X)$ such that 
  \[
    \tikzset{every path/.style={line width=1.1pt}}
%    \xymatrixcolsep{1.8ex}
    \xymatrixrowsep{1ex}
    \xymatrix{
    \begin{aligned}
      % [inline block 41: 8 envs, 6540 chars -> data_tex | \begin{tikzpicture}[scale=.65] 	\begin{pgfonlayer}{nodelayer}...]

\qquad \quad
  \end{aligned}
}
\]
for all objects $X$, $Y$, and such that $\eta_I=\id_I=\epsilon_I$.

A \emph{hypergraph prop} is a hypergraph category that is also a prop, e.g.\ $\Ob(\cat{C})=\NN$, etc.%
\index{prop!hypergraph}
\end{definition}
%\begin{definition}
%A  \emph{hypergraph functor} is a strong symmetric monoidal functor
%$(F,\varphi)$ that preserves the hypergraph structure. More precisely, the
%latter condition means that given an object $X$, the special commutative
%Frobenius structure on $FX$ must be 
%\[
%  (FX,\enspace F\mu_X \circ \varphi_{X,X},\enspace  \varphi^{-1} \circ F\delta_X,\enspace  F\eta_X \circ
%\varphi_1,\enspace  \varphi_1 \circ \epsilon_X).
%\]
%\end{definition}

\begin{example} %
\label{ex.cospan_hypergraph}%
\index{cospans!hypergraph
  category of|see {hypergraph category, of cospans}}%
\index{hypergraph category!of cospans}
For any $\cat C$ with finite colimits, $\cospan{\cat C}$ is a hypergraph category.
The Frobenius morphisms $\mu_X,\delta_X,\eta_X,\epsilon_X$ for each object $X$ are
constructed using the universal properties of colimits:
\begin{align*}
  \mu_X \coloneq&\quad\big(
  X+X 
  \xrightarrow{\copair{\id_X,\id_X}} X 
  \xleftarrow{\makebox[4.5em]{$\id_X$}} X \big)\\
  \eta_X \coloneq& \quad\big(
  \varnothing 
  \xrightarrow{\makebox[4.5em]{$!_X$}} X
  \xleftarrow{\makebox[4.5em]{$\id_X$}} X \big)\\
  \delta_X \coloneq&\quad\big(
  X 
  \xrightarrow{\makebox[4.5em]{$\id_X$}} X 
  \xleftarrow{\copair{\id_X,\id_X}} X+X \big)\\
  \epsilon_X \coloneq &\quad\big(
  X 
  \xrightarrow{\makebox[4.5em]{$\id_X$}} X 
  \xleftarrow{\makebox[4.5em]{$!_X$}} \varnothing\big)
  \qedhere
\end{align*}
\end{example}

\begin{exercise} %
\label{exc.frob_cospan}
  By \cref{ex.cospan_hypergraph}, the category $\cospan\finset$ is a hypergraph
  category. (In fact, it is equivalent to a hypergraph prop.) Draw the Frobenius
  morphisms for the object $\ul{1}$ in $\cospan\finset$ using both the function
  and wiring depictions as in \cref{ex.cospan_finset}.
\end{exercise}

\begin{exercise} %
\label{exc.frob_cospan2}
  Using your knowledge of colimits, show that the maps defined in
  \cref{ex.cospan_hypergraph} do indeed obey the special law (see
  \cref{def.spec_comm_frob_mon}).
%
%  Using your knowledge of colimits, show that the maps defined in
%  \cref{ex.cospan_hypergraph} do indeed obey 
%  \begin{enumerate}
%  \item the unitality law, 
%  \item the special law, and  
%  \item the Frobenius law.
%  \end{enumerate}
%  (See \cref{eqn.ass_un_comm,def.spec_comm_frob_mon} for the statements of these
%  laws.)
\end{exercise}

%\begin{example}
%Any category of corelations $\corel{}(\cat C,\cat E,\cat M)$ is a hypergraph
%category. The Frobenius maps on each object are given by the $\cat E$-part of
%the cospans defining the Frobenius maps on that object in $\cospan{\cat C}$.
%\end{example}

\begin{example} %
\label{ex.frob_corel}%
\index{corelations!hypergraph category of|see {hypergraph category, of corelations}}%
\index{hypergraph category!of corelations}
  Recall the monoidal category $(\Cat{Corel},\varnothing, \sqcup)$ from \cref{ex.corel}; its objects are finite sets and its morphisms are
  corelations. Given a finite set $X$, define the corelation $\mu_X\colon
  X\sqcup X \to X$ such that two elements of $X \sqcup X \sqcup X$ are
  equivalent if and only if they come from the same underlying element of $X$.
  Define $\delta_X\colon X \to X\sqcup X$ in the same way, and define
  $\eta_X\colon \varnothing \to X$ and $\epsilon_X\colon X \to \varnothing$ such
  that no two elements of $X = \varnothing \sqcup X = X \sqcup \varnothing$ are
  equivalent.

  These maps define a special commutative Frobenius monoid
  $(X,\mu_X,\eta_X,\delta_X,\epsilon_X)$, and in fact give $\Cat{Corel}$
  the structure of a hypergraph category.
\end{example}

\begin{example} %
\index{linear relations!hypergraph category of|see {hypergraph category, of
linear relations}}\index{hypergraph category!of linear relations}
The prop of linear relations, which we briefly mentioned in
\cref{exc.linear_relations}, is a hypergraph category. In fact, it is a hypergraph
category in two ways, by choosing either the black `copy' and `discard'
generators or the white `add' and `zero' generators as the Frobenius maps.
\end{example}

We can generalize the construction we gave in \cref{thm.rel_is_ccc}.
\begin{proposition} %
\label{prop.hyp_cat_comp_closed}%
\index{compact closed category!hypergraph
  category as}
  Hypergraph categories are self-dual compact closed categories, if we define the cup and cap
  to be%
\index{dual!self}
  \[
  \begin{aligned}
    % [inline block 42: 6 envs, 3658 chars -> data_tex | \begin{tikzpicture}[scale=.7] 	\begin{pgfonlayer}{nodelayer}...]

  \end{aligned}\tag{unitality}
\end{align*}
\end{proof}

\begin{exercise} %
\label{exc.fill_in_diagram}
Fill in the missing diagram in the proof of \cref{prop.hyp_cat_comp_closed} using the equations from \cref{eqn.ass_un_comm}, their opposites, and \cref{eqn.frob_laws}.
\end{exercise}

\index{hypergraph category|)}

%-------- Section --------%
\section{Decorated cospans}%
\label{sec.decorated_cospans}%
\index{cospans!decorated}
The goal of this section is to show how we can construct a hypergraph category whose morphisms are electric circuits. To do this, we first must
introduce the notion of structure-preserving map for symmetric monoidal
categories, a generalization of monoidal monotones known as symmetric
monoidal functors. Then we introduce a general method---that of decorated cospans---for producing hypergraph categories. Doing all this will tie up lots of loose ends: colimits, cospans, circuits, and hypergraph categories. 

%---- Subsection ----%
\subsection{Symmetric monoidal functors}%
\label{sec.monoidal_cats_full}

\begin{roughDef}%
\index{monoidal functor}%
\label{roughdef.monoidal_functor}
Let $(\cat{C},I_{\cat{C}},\otimes_\cat{C})$ and
$(\cat{D},I_{\cat{D}},\otimes_\cat{D})$ be symmetric monoidal categories. To
specify a \emph{symmetric monoidal functor} $(F,\varphi)$ between them,
\begin{enumerate}[label=(\roman*)]
	\item one specifies a functor $F\colon\cat{C}\to\cat{D}$;
	\item one specifies a morphism $\varphi_I\colon I_\cat{D}\to F(I_\cat{C})$.
	\item for each $c_1,c_2\in\Ob(\cat{C})$, one specifies a morphism 
	\[
	    \varphi_{c_1,c_2}\colon F(c_1)\otimes_\cat{D} F(c_2)\to F(c_1\otimes_\cat{C} c_2),
	  \]
	  natural in $c_1$ and $c_2$.
\end{enumerate}
We call the various maps $\varphi$ \emph{coherence maps}. We require the
coherence maps to obey bookkeeping axioms that ensure they are well behaved with
respect to the symmetric monoidal structures on $\cat{C}$ and
$\cat{D}$.%
\index{coherence} If $\varphi_I$ and $\varphi_{c_1,c_2}$ are isomorphisms for all $c_1,c_2$, we say that $(F,\varphi)$ is \emph{strong}.

%A monoidal functor is called \emph{strong} if all coherence maps are
%isomorphisms, and it is called \emph{strict} if they are equalities. If we want
%to emphasize that neither of these need be true, we call a monoidal functor
%\emph{lax}.
\end{roughDef}

\begin{example} %
\label{ex.powset}%
\index{power set}
Consider the power set functor $\powset\colon\smset\to\smset$. It acts on objects by sending a set
$S\in\smset$ to its set of subsets $\powset(S)\coloneqq\{R\ss S\}$. It acts on morphisms by sending a function $f\colon
S\to T$ to the image map $\im_f\colon\powset(S)\to\powset(T)$, which maps $R\ss
S$ to $\{f(r)\mid r\in R\}\ss T$.

Now consider the symmetric monoidal structure $(\{1\},\times)$ on $\smset$ from
\cref{ex.set_as_mon_cat}. To make $\powset$ a symmetric monoidal functor, we
need to specify a function $\varphi_I\colon\{1\}\to\powset(\{1\})$ and for all
sets $S$ and $T$, a functor $\varphi_{S,T}\colon \powset(S)\times\powset(T) \to
\powset(S \times T)$. One possibility is to define $\varphi_I(1)$ to be the maximal subset $\{1\}\ss\{1\}$, and
given subsets $A \ss S$ and $B \ss T$, to define $\varphi_{S,T}(A,B)$ to be the product subset $A
\times B \ss S \times T$. With these definitions, $(\powset,\varphi)$ is a symmetric monoidal functor.
\end{example}

\begin{exercise} %
\label{exc.powset_mon_coherence}
  Check that the maps $\varphi_{S,T}$ defined in \cref{ex.powset} are natural in
  $S$ and $T$. In other words, given $f\colon S\to S'$ and $g\colon T\to T'$, show that the diagram below commutes:
  \[
  \begin{tikzcd}[column sep=large]
  	\powset(S)\times\powset(T)\ar[r,
	"\varphi_{S,T}"]\ar[d,"\im_f\times \im_g"']&
		\powset(S\times T)\ar[d, "\im_{f\times g}"]\\
		\powset(S')\times\powset(T')\ar[r, "\varphi_{S',T'}"']&
		\powset(S'\times T')
  \end{tikzcd}
  \qedhere
  \]
\end{exercise}

%---- Subsection ----%
\subsection{Decorated cospans} %
\label{sec.deccospans}

Now that we have briefly introduced symmetric monoidal functors, we return to the task at hand: constructing a hypergraph category of circuits. To do so, we introduce the method of decorated cospans.%
\index{electrical circuit|(}

Circuits have lots of internal structure, but they also have some external ports---also called `terminals'---by which to interconnect them with others. Decorated cospans are ways of discussing exactly that: things with external ports and internal structure. 

To see how this works, let us start with the following example circuit:
\begin{equation} %
\label{eq.circuit}
\begin{aligned}
    % [inline block 43: 5 envs, 11045 chars -> data_tex | \begin{tikzpicture}[circuit ee IEC, set resistor graphic=var resistor IEC graphic]       \node[contact]         (A) at (...]

  \end{aligned}
\end{equation}
We've seen this sort of gluing before when we defined composition of cospans in \cref{def.cospan_sym_mon_cat}. But now there's this whole `decoration' thing; our goal is to formalize it.

\begin{definition}%
\label{def.decorated_cospan}%
\index{cospan!decorated}%
\index{category!having finite colimits}%
\index{monoidal functor}
  Let $\mathcal C$ be a category with finite colimits, and $(F,\varphi) \colon  (\mathcal
  C,+) \longrightarrow (\smset, \times)$
  be a symmetric monoidal functor. An \emph{$F$-decorated cospan} is a pair 
  consisting of a cospan $A \stackrel{i}\rightarrow N \stackrel{o}\leftarrow B$ in
  $\mathcal C$ together with an element $s \in F(N)$.%
  \tablefootnote{Just like in \cref{def.cospan_sym_mon_cat}, we should technically use equivalence classes of cospans. We will elide this point to get the bigger idea across. The interested reader should consult \cref{sec.c6_further_reading}.}
  We call $(F,\varphi)$ the \emph{decoration functor} and $s$ the
  \emph{decoration}.
\end{definition}

The intuition here is to use $\cat{C}=\finset$, and, for each object $N\in\finset$, the functor $F$ assigns the set of all legal decorations on a set $N$ of nodes. When you choose an $F$-decorated cospan, you choose a set $A$ of left-hand external ports, a set $B$ of right-hand external ports, each of which maps to a set $N$ of nodes, and you choose one of the available decorations on $N$ nodes, taken from the set $F(N)$.

So, in our electrical circuit case, the decoration functor $F$ sends a finite set
$N$ to the set of circuit diagrams---graphs whose edges are labeled by resistors, capacitors, etc.---that have $N$ vertices.

Our goal is still to be able to compose such diagrams; so how does that work exactly? Basically one combines the way cospans are composed with the structures defining our decoration functor: namely $F$ and $\varphi$.

% First, note that
%just as we consider two cospans equivalent if they are appropriately isomorphic,
%we consider two decorated cospans $(A \xrightarrow{f} N \xleftarrow{g} B,s)$ and
%$(A'\xrightarrow{f'} N' \xleftarrow{g'} B',s')$ equivalent if there exists an
%isomorphism $n\colon N \to N'$ such that $f\cp n=f'$ and $g\cp n=g'$,
%and $Fn(s)=s'$. Just like for cospans, we in fact compose equivalence classes of
%decorated cospans.
Let $(A \xrightarrow{f} N \xleftarrow{g} B,s)$ and $(B \xrightarrow{h} P
\xleftarrow{k} C,t)$ represent decorated cospans. Their
composite is represented by the composite of the cospans $A \xrightarrow{f} N
\xleftarrow{g} B$ and $B \xrightarrow{h} P \xleftarrow{k} C$, paired with the
following element of $F(N+_BP)$:
\begin{equation} %
\label{eqn.compositedecoration}
F([\iota_N,\iota_P])(\varphi_{N,P}(s,t))%
\end{equation}%
\label{page.horrendous}

That's rather compact! We'll unpack it, in a concrete case, in just a second.
But let's record a theorem first.

\begin{theorem}%
\label{thm.hypergraph_from_deccospan}
  Given a category $\mathcal C$ with finite colimits and a symmetric
  monoidal functor $(F,\varphi)\colon  (\mathcal C,+) \longrightarrow (\smset, \times)$,
  there is a hypergraph category $\cospan{F}$ whose objects are the objects of
  $\mathcal C$, and whose morphisms are equivalence classes of $F$-decorated cospans. 
  
%  \[
%    1 \stackrel{(s,t)}\longrightarrow FN \times FM \stackrel{\varphi_{N,M}}\longrightarrow
%    F(N+M) \stackrel{F[\iota_N,\iota_M]}\longrightarrow F(N+_BM)
%  \]

  The symmetric monoidal and hypergraph structures are derived from those on
  $\cospan{\cat C}$.
\end{theorem}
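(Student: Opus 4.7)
The plan is to build $\cospan{F}$ by lifting the existing structure on $\cospan{\cat C}$ along the decoration functor $(F,\varphi)$, so that at each step where $\cospan{\cat C}$ uses a pushout or coproduct, we use $F$ and $\varphi$ to ``transport'' the decorations. Concretely: objects are $\Ob(\cat C)$; a morphism $A\to B$ is an equivalence class of pairs $(A\xrightarrow{f}N\xleftarrow{g}B,\,s)$ with $s\in F(N)$, where two such pairs are equivalent iff there is an isomorphism $\phi\colon N\to N'$ of cospan apices such that $F(\phi)(s)=s'$. The identity on $A$ is $(A\xrightarrow{\id}A\xleftarrow{\id}A,\, e_A)$, where $e_A\coloneqq F(!_A)(\varphi_I(*))\in F(A)$ is the ``empty decoration'' obtained by pushing the chosen element of $F(\varnothing)$ along the unique map $!_A\colon\varnothing\to A$.

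Composition is specified by \cref{eqn.compositedecoration}: given $(f,g,s)\colon A\to B$ and $(h,k,t)\colon B\to C$, take the pushout $N+_B P$ from $\cospan{\cat C}$, and decorate it by $F([\iota_N,\iota_P])(\varphi_{N,P}(s,t))\in F(N+_BP)$. My first task is to check this is well-defined on equivalence classes, which reduces to naturality of $\varphi$ and functoriality of $F$. Then I would verify the unit laws; here the key computation is that $\varphi_{A,N}(e_A,s)$ pushed along $F(\id+f)$ returns $s$, which follows from the left-unitality coherence axiom for the monoidal functor $(F,\varphi)$ combined with $!_A$ factoring through $f$ by initiality. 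Associativity of composition is the main technical chore: one iterates the pushout construction, and the equality of two triple composites follows from (i) associativity of pushouts in $\cat C$, (ii) the associativity coherence axiom for $\varphi$ (which says $\varphi_{M+N,P}\circ(\varphi_{M,N}\times\id) = \varphi_{M,N+P}\circ(\id\times\varphi_{N,P})$ up to the associator), and (iii) naturality of $\varphi$ to slide the $F$-image of the pushout maps past $\varphi$.

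For the symmetric monoidal structure, I set $(f,g,s)\otimes(f',g',s')\coloneqq(f+f',\,g+g',\,\varphi_{N,N'}(s,s'))$ with unit $\varnothing$ decorated by $\varphi_I(*)$; the unitors, associator and symmetry are inherited from $\cospan{\cat C}$, and their coherence follows from the coherence axioms of $\varphi$ together with the already-verified coherence in $\cospan{\cat C}$. The Frobenius structure on each object $X$ is obtained by taking the four cospans $\mu_X,\eta_X,\delta_X,\epsilon_X$ from \cref{ex.cospan_hypergraph} and decorating each apex by its empty decoration $e_{(-)}$. The Frobenius axioms and special law must now be checked ``with decorations''; since the underlying cospan identities already hold in $\cospan{\cat C}$, it suffices to show that on both sides of each axiom the resulting decoration on the common apex agrees. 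This is where the plan collects its interest: both sides produce decorations of the form $F(\alpha)(\varphi_{\ldots}(e,e,\ldots))$ for various structural maps $\alpha$, and by naturality plus the identity $F(!)(\varphi_I(*))=e_{(-)}$, every such expression collapses back to the empty decoration on the relevant apex.

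The main obstacle I anticipate is bookkeeping around the coherence maps $\varphi_{N,P}$ in the verification of associativity and of the Frobenius laws: keeping track of which $F$-image of which pushout map is applied to which $\varphi$-combination, and recognizing each equation as an instance of naturality of $\varphi$ or of one of the monoidal-functor coherence axioms. The actual structural equations (pushout identities, Frobenius laws in $\cospan{\cat C}$) come for free from \cref{def.cospan_sym_mon_cat} and \cref{ex.cospan_hypergraph}; the work is purely in showing the decoration layer is compatible. Once this bookkeeping is dispatched, hypergraph-categoryhood of $\cospan{F}$ is immediate, and the last sentence of the theorem—that the structures are derived from those on $\cospan{\cat C}$—is witnessed by the forgetful-on-decorations functor $\cospan{F}\to\cospan{\cat C}$ being a strict hypergraph functor.
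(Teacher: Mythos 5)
The paper states this theorem without proof---the verification is delegated to the decorated-cospan papers cited in \cref{sec.c6_further_reading}---so there is no in-text argument to compare against. Your proposal correctly reconstructs the standard proof from that literature: the composition formula, the identity decoration $e_A = F(!_A)(\varphi_I(*))$, the reduction of unitality to the monoidal-functor coherence axioms plus initiality of $\varnothing$, and the observation that all Frobenius axioms reduce to the corresponding cospan identities because every decoration in sight collapses to an empty one. Two small points deserve explicit mention in a full write-up. First, the equivalence relation on decorated cospans needs the apex isomorphism $\phi\colon N\to N'$ to commute with the legs of the two cospans (as in the undecorated case), not merely to satisfy $F(\phi)(s)=s'$. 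Second, the collapse argument for the Frobenius and special laws rests on the lemma that empty decorations are preserved by everything: $F(h)(e_X)=e_Y$ for any $h\colon X\to Y$ (since $!_X\cp h=!_Y$ by initiality) and $\varphi_{X,Y}(e_X,e_Y)=e_{X+Y}$ (naturality of $\varphi$ plus the unit coherence axiom); you gesture at both but it is worth isolating them, since every axiom check is an instance of one or the other. With those made explicit, your plan is a complete and correct proof, and your closing remark that the forgetful map $\cospan{F}\to\cospan{\cat C}$ is a strict hypergraph functor is exactly the right way to make precise the theorem's final sentence.
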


\begin{exercise} %
\label{exc.cospan_as_fcospan}
	Suppose you're worried that the notation $\cospan{\cat{C}}$ looks like the notation $\cospan{F}$, even though they're very different. An expert tells you ``they're not so different; one is a special case of the other. Just use the constant functor $F(c)\coloneqq\{*\}$.'' What does the expert mean? 
\end{exercise}

%---- Subsection ----%
\subsection{Electric circuits} %
\label{subsec.circuits}

In order to work with the above abstractions, we will get a bit more precise
about the circuits example and then have a detailed look at how composition
works in decorated cospan categories.

\paragraph{Let's build some circuits.}

To begin, we'll need to choose which components we want in our circuit. This is
simply a matter of what's in our electrical toolbox. Let's say we're carrying some
lightbulbs, switches, batteries, and resistors of every possible resistance. That is, define a set
\[
  C \coloneq \{\light,\switch,\battery\} \sqcup \{x\Omega \,|\, x \in \rr^+\}.
\]
To be clear, the $\Omega$ are just labels; the above set is isomorphic to
$\{\light,\switch,\battery\}\sqcup \rr^+$. But we write $C$ this way to remind
us that it consists of circuit components. If we wanted, we could also add
inductors, capacitors, and even elements connecting more than two ports, like
transistors, but let's keep things simple for now.

Given our set $C$, a $C$-circuit is just a graph $(V,A,s,t)$, where $s,t \colon
A \to V$ are the source and target functions, together with a function
$\ell\colon A \to C$ labeling each edge with a certain circuit component from
$C$.

For example, we might have the simple case of $V=\{1,2\}$, $A=\{e\}$, $s(e)=1$,
$t(e)=2$---so $e$ is an edge from $1$ to $2$---and $\ell(e)=3\Omega$. This represents
a resistor with resistance $3\Omega$:
\[
  \begin{tikzpicture}[circuit ee IEC, set resistor graphic=var resistor IEC graphic]
    \node[contact]         (A) at (0,0) {};
    \node[contact]         (B) at (2,0) {};
    \path (A) edge  [resistor] node[label={[label distance=1pt]90:{$3\Omega$}}]
    {} (B);
    \node[below=0 of A, font=\scriptsize] {$1$};
    \node[below=0 of B, font=\scriptsize] {$2$};
  \end{tikzpicture}
\]
Note that in the formalism we have chosen, we have multiple ways to represent any
circuit, as our representations explicitly choose directions for the edges. The
above resistor could also be represented by the `reversed graph', with data $V=\{1,2\}$, $A=\{e\}$,
$s(e)=2$, $t(e)=1$, and $\ell(e)=3F$.

\begin{exercise} %
\label{exc.circuit_tuple}
  Write a tuple $(V,A,s,t,\ell)$ that represents the circuit in
  \cref{eq.circuit}.
\end{exercise}

\paragraph{A decoration functor for circuits.}

We want $C$-circuits to be our decorations, so let's use them to define a
decoration functor as in \cref{def.decorated_cospan}. We'll call the functor
$(\elec,\psi)$. We start by defining the functor part
\[
  \elec \colon (\finset,+) \longrightarrow (\smset, \times)
\]
as follows. On objects, simply send a finite set $V$ to the set of $C$-circuits:
\[
  \elec(V)\coloneqq\{(V,A,s,t,\ell)\,|\,\mbox{ where }s,t \colon A \to V, \ell\colon E \to
C\}.
\]
On morphisms, $\elec$ sends a function $f\colon V \to V'$ to the function
\begin{align*}
  \elec(f)\colon \elec(V) & \longrightarrow \elec(V');\\
  (V,A,s,t,\ell)&\longmapsto \big(V',A, (s\cp f), (t\cp f),\ell\big).
\end{align*}
This defines a functor; let's explore it a bit in an exercise.

\begin{exercise} %
\label{exc.pushforwardcircuit}
  To understand this functor better, let $c\in \elec(\ord{4})$ be the circuit
\[
  \begin{tikzpicture}[circuit ee IEC, set resistor graphic=var resistor IEC graphic]
    \node[contact]         (A) at (0,0) {};
    \node[contact]         (B) at (2,0) {};
    \node[contact]         (C) at (3,0) {};
    \node[contact]         (D) at (5,0) {};
    \path (A) edge  [bulb] (B);
    \path (C) edge  [resistor] node[label={[label distance=1pt]90:{$3\Omega$}}]
    {} (D);
    \node[below=0 of A, font=\scriptsize] {$1$};
    \node[below=0 of B, font=\scriptsize] {$2$};
    \node[below=0 of C, font=\scriptsize] {$3$};
    \node[below=0 of D, font=\scriptsize] {$4$};
  \end{tikzpicture}
\]
and let $f\colon \ord{4} \to \ord{3}$ be the function
\[
  \begin{tikzpicture}[y=.5cm,short=5pt]
    \node[contact]         (A) at (0,0) {};
    \node[contact]         (B) at (2,0) {};
    \node[contact]         (C) at (3,0) {};
    \node[contact]         (D) at (5,0) {};
    \node[contact]         (1) at (1,-2) {};
    \node[contact]         (2) at (2.5,-2) {};
    \node[contact]         (3) at (4,-2) {};
    \begin{scope}[font=\scriptsize]
      \node[above=0 of A] {$1$};
      \node[above=0 of B] {$2$};
      \node[above=0 of C] {$3$};
      \node[above=0 of D] {$4$};
      \node[below=0 of 1] {$1$};
      \node[below=0 of 2] {$2$};
      \node[below=0 of 3] {$3$};
		\end{scope}
    \begin{scope}[mapsto]
      \draw (A) to (1);
      \draw (B) to (2);
      \draw (C) to (2);
      \draw (D) to (3);
    \end{scope}
  \end{tikzpicture}
\]
Draw a picture of the circuit $\elec(f)(c)$.
\end{exercise}

We're trying to get a decoration functor $(\elec,\psi)$ and so far we have $\elec$. For the coherence maps $\psi_{V,V'}$ for finite sets $V,V'$, we define
\begin{align}
  \psi_{V,V'}\colon \elec(V) \times \elec(V') & \longrightarrow \elec(V+V'); \nonumber \\
  \big((V,A,s,t,\ell),(V',A',s',t',\ell')\big)&\longmapsto(V+V',A+A',s+s',t+t',\copair{\ell,\ell'}).
\label{eqn.Flaxator}
\end{align}
This is simpler than it may look: it takes a circuit on $V$ and a circuit on $V'$, and just
considers them together as a circuit on the disjoint union of vertices $V+V'$.

\begin{exercise} %
\label{exc.parallelcirc}
  Suppose we have circuits  
\[
\begin{aligned}
  \begin{tikzpicture}[circuit ee IEC, set make contact graphic=var make contact IEC graphic]
    \node at (-.5,0) {$b\coloneq$};
    \node[contact]         (A) at (0,0) {};
    \node[contact]         (B) at (2,0) {};
    \path (A) edge  [battery] (B);
  \end{tikzpicture}
\end{aligned}
  \qquad
  \mbox{and}
  \qquad
\begin{aligned}
  \begin{tikzpicture}[circuit ee IEC, set resistor graphic=var resistor IEC graphic]
    \node at (-.5,0) {$s\coloneq$};
    \node[contact]         (A) at (0,0) {};
    \node[contact]         (B) at (2,0) {};
    \path (A) edge  [make contact] (B);
  \end{tikzpicture}
\end{aligned}
\]
in $\elec(\ord{2})$. Use the definition of $\psi_{V,V'}$ from
\eqref{eqn.Flaxator} to figure out what $4$-vertex circuit $\psi_{\ord{2},\ord{2}}(b,s) \in
\elec(\ord{2} + \ord{2}) = \elec(\ord{4})$ should be, and draw a picture.
\end{exercise}

\paragraph{Open circuits using decorated cospans.}%
\index{electrical circuit!via cospans}

From the above data, just a monoidal functor $(\elec,\psi)\colon(\finset,+) \to
(\smset, \times)$, we can construct our promised hypergraph category of
circuits!%
\index{monoidal functor}

Our notation for this category is $\cospan\elec$. Following
\cref{thm.hypergraph_from_deccospan}, the objects of this category are the same
as the objects of $\finset$, just finite sets. We'll reprise our notation from
the introduction and \cref{ex.coequalizer}, and draw these finite sets as
collections of white circles $\circ$. For example, we'll represent the object
$\ord{2}$ of $\cospan\elec$ as two white circles:
\[
\begin{tikzpicture}[circuit ee IEC, set resistor graphic=var resistor IEC
graphic, set make contact graphic=var make contact IEC graphic]
\node [draw, inner sep=1.5pt,circle] (a) at (0,0) {};
\node [draw, inner sep=1.5pt,circle] (b) at (.6, 0) {};
\end{tikzpicture}
\]
These white circles mark interface points of an open circuit.

More interesting than the objects, however, are the morphisms in $\cospan\elec$. These are open
circuits. By \cref{thm.hypergraph_from_deccospan}, a morphism $\ord{m} \to
\ord{n}$ is a $\elec$-decorated cospan: that is, cospan $\ord{m} \to \ord{p}
\from \ord{n}$ together with an element $c$ of $\elec(\ord{p})$. As an example,
consider the cospan $\ord{1} \xrightarrow{i_1} \ord{2} \xleftarrow{i_2} \ord{1}$ where
$i_1(1) =1$ and $i_2(1) =2$, equipped with the battery element of $\elec(\ord{2})$ connecting node 1 and node 2. We'll depict this as follows:
\begin{equation} %
\label{eqn.decorated_cospan}
\begin{tikzpicture}[circuit ee IEC, set resistor graphic=var resistor IEC
graphic, set make contact graphic=var make contact IEC graphic, short=4pt]
  \node [contact] (l) at (1,0) {};
  \node [contact] (r) at (3,0) {};
	\node [draw, rounded corners, gray, inner ysep=10pt, inner xsep=10pt, fit=(l) (r)] {};
  \node [draw, inner sep=1.5pt,circle] (a) at (0,0) {};
  \node [draw, inner sep=1.5pt,circle] (b) at (4,0) {};
  \draw (l) to [battery] (r);
  \begin{scope}[mapsto]
    \draw (a) to (l);
    \draw (b) to (r);
  \end{scope}
\end{tikzpicture}
\end{equation}
\begin{exercise} %
\label{exc.namethedecoration}
Morphisms of $\cospan\elec$ are $\elec$-decorated cospans, as defined in
\cref{def.decorated_cospan}. This means \eqref{eqn.decorated_cospan} depicts a
cospan together with a \emph{decoration}, which is some $C$-circuit
$(V,A,s,t,\ell) \in \elec(\ord{2})$. What is it?
\end{exercise}

Let's now see how the hypergraph operations in $\cospan\elec$ can be used to
construct electric circuits. 

\paragraph{Composition in $\cospan\elec$.}
First we'll consider composition. Consider the following decorated cospan from
$\ord{1}$ to $\ord{1}$:
\[
% [inline block 44: 8 envs, 6162 chars -> data_tex | \begin{tikzpicture}[circuit ee IEC, set resistor graphic=var resistor IEC graphic, set make contact graphic=var make con...]

\]

\begin{exercise} %
\label{exc.composefcospans}
Refer back to the example at the beginning of \cref{sec.deccospans}. In
particular, consider the composition of circuits in
\cref{eq.circuitcomposition}. Express the two circuits in this diagram as
morphisms in $\cospan\elec$, and compute their composite. Does it match the
picture in \cref{eqn.circuitcomposed}?
\end{exercise}

%To understand concretely what this implies for composition of decorated cospans,
%we refer to \cref{eq.circuitcomposition} once more. First, the underlying cospan
%of the composite decorated cospan is just the composite of their underlying
%cospans. So, in \cref{eq.circuitcomposition}, the composite decorated cospan
%decorates the cospan with apex $N+_BM$, a five element set. If we call the
%circuit in \cref{eq.circuit} $c$, and call the other circuit $c'$, then the
%composite decoration is $\elec[\iota_M,\iota_N](\psi_{M,N}(c,c'))$.
%
%Oh no, there's that horrendous formula again!%
%\footnote{We saw this formula on page~\pageref{page.horrendous}.}
%But all $\psi_{M,N}(c,c')$ says is to the disjoint union of circuit diagrams $c$ and
%$c'$; the result is a circuit on the seven element set $M+N$; let's call it $c''$. Then
%$\elec([\iota_M,\iota_N])(c'')$ just transfers $c''$ along the
%canonical function $M+N \to M+_BN$. This might sound hard, but it's just like what you did in \cref{exc.pushforwardcircuit} when you computed $\elec(f)(c)$. Go look at it; you'll feel better.

\paragraph{Monoidal products in $\cospan\elec$.}
Monoidal products in $\cospan\elec$ are much simpler than composition. On objects, we again just
work as in $\finset$: we take the disjoint union of finite sets. Morphisms again
have a cospan, and a decoration. For cospans, we again just work in
$\cospan{\finset}$: given two cospans $A \to M \from B$ and $C \to N \from D$,
we take their coproduct cospan $A+C \to M+N \from B+D$. And for decorations,
we use the map $\psi_{M,N}: \elec(M) \times \elec(N) \to \elec(M+N)$. So, for
example, suppose we want to take the monoidal product of the open circuits
\[
% [inline block 45: 5 envs, 3771 chars -> data_tex | \begin{tikzpicture}[circuit ee IEC, set resistor graphic=var resistor IEC graphic, set make contact graphic=var make con...]

\end{aligned}
\quad
=\colon\epsilon
\]
where each of these are decorated by the empty circuit
$(\ord{1},\varnothing,!,!,!) \in \elec(\ord{1})$.%
\tablefootnote{As usual $!$ denotes the unique function, in this case from the empty set to the relevant codomain.}

Compute the composite $\eta \cp x \cp \epsilon$ in $\cospan\elec$. This is a
morphism $\ord{0} \to \ord{0}$; we call such things \emph{closed circuits}.%
\index{electrical circuit!closed}
\end{exercise}

%%---- Subsection ----%
%\subsection{Decorated corelations}
%
%Although we shall not give the details here, decorated cospans can be
%generalized to talk about decorated \emph{corelations}. While there are a number
%of ways of presenting this idea, at a rough approximation it is enough to think
%of these as constructed using functors out of $\cospan\finset$, as opposed to
%the functor out of $\finset$ we use above to define a decorated cospan category
%of circuits. This more general construction suffices to construct any hypergraph
%category. We thus see that our careful examination of the compositional
%structure of ideal wires pays off, giving an understanding of how to generalize
%to the compositional structure any network language. 
%

%
\index{electrical circuit|)}
%-------- Section --------%
\section{Operads and their algebras}%
\index{operad|(}
In \cref{thm.hypergraph_from_deccospan} we described how decorating cospans
builds a hypergraph category from a symmetric monoidal functor. We then explored
how that works in the case that the decoration functor is somehow ``all circuit
graphs on a set of nodes''.

In this book, we have devoted a great deal of attention to different sorts of
compositional theories, from monoidal preorders to compact closed categories to
hypergraph categories. Yet for an application you someday have in mind, it may
be the case that none of these theories suffice. You need a different structure,
customized to a particular situation. For example in \cite{Vagner.Spivak.Lerman:2015a} the
authors wanted to compose continuous dynamical systems with control-theoretic properties
and realized that in order for feedback to make sense, the wiring diagrams could
not involve what they called `passing wires'.%
\index{dynamical system!continuous}

So to close our discussion of compositional structures, we want to quickly
sketch something we can use as a sort of meta-compositional structure, known as
an operad. We saw in \cref{subsec.circuits} that we can build electric circuits
from a symmetric monoidal functor $\finset\to\smset$. Similarly we'll see that
we can build examples of new algebraic structures from operad functors
$\cat{O}\to\smset$.

%---- Subsection ----%
\subsection{Operads design wiring diagrams}%
\index{operad!of wiring diagrams}
Understanding that circuits are morphisms in a hypergraph category is useful: it
means we can bring the machinery of category theory to bear on understanding
electrical circuits. For example, we can build functors that express the compositionality
of circuit semantics, i.e.\ how to derive the functionality of the whole from
the functionality and interaction pattern of the parts. Or we can use the
category-theoretic foundation to relate circuits to other sorts of network
systems, such as signal flow graphs.  Finally, the basic coherence theorems for
monoidal categories and compact closed categories tell us that wiring diagrams
give sound and complete reasoning in these settings.

However, one perhaps unsatisfying result is that the hypergraph category
introduces artifacts like the domain and codomain of a circuit, which are not
inherent to the structure of circuits or their composition. Circuits just have a
single boundary interface, not `domains' and `codomains'. This is not to say the above
model is not useful: in many applications, a vector space does not have a
preferred basis, but it is often useful to pick one so that we may use matrices
(or signal flow graphs!). But it would be worthwhile to have a
category-theoretic model that more directly represents the compositional structure
of circuits. In general, we want the category-theoretic model to fit our desired
application like a glove. Let us quickly sketch how this can be done.%
\index{vector space}

Let's return to wiring diagrams for a second. We saw that wiring diagrams for
hypergraph categories basically look like this:
\begin{equation}%
\label{eqn.SMC_WD_rand147}
  % [inline block 46: 2 envs, 3162 chars -> data_tex | \begin{tikzpicture}[oriented WD, font=\small, bb port length=0pt,xscale=.7]     %boxes...]

\end{equation}
Do you see how diagrams \cref{eqn.SMC_WD_rand147} and \cref{eqn.operad_WD_rand248} are actually exactly the same in terms of interconnection pattern? The only difference is that the latter does not have left/right distinction: we have lost exactly what we wanted to lose.

The cost is that the `boxes' $f,g,h,k$ in \cref{eqn.operad_WD_rand248} no longer have a left/right distinction; they're just circles now. That wouldn't be bad except that it means they can no longer represent morphisms in a category---like they used to above, in \cref{eqn.SMC_WD_rand147}---because morphisms in a category by definition have a domain and codomain. Our new circles have no such distinction. So now we need a whole new way to think about `boxes' categorically: if they're no longer morphisms in a category, what are they? The answer is found in the theory of operads.

In understanding operads, we will find we need to navigate one of the level
shifts that we first discussed in \cref{ssec.level_shift}. Notice that for
decorated cospans, we define a hypergraph \emph{category} using a symmetric
monoidal \emph{functor}.%
\index{monoidal functor} This is reminiscent of our brief discussion of
algebraic theories in \cref{ssec.alg_theories}, where we defined something called the theory of monoids as a prop $\cat{M}$, and define monoids using functors $\cat{M}\to\smset$; see \cref{rem.theory_of_monoids}. In the same way, we can view the category $\cospan\finset$ as some
sort of `theory of hypergraph categories', and so define hypergraph categories
as functors $\cospan\finset\to\smset$.

So that's the idea. An operad $\cat{O}$ will define a theory or grammar of
composition, and operad functors $\cat{O}\to\smset$, known as
\emph{$\cat{O}$-algebras}, will describe particular applications that obey that
grammar.

\begin{roughDef}%
\index{operad}
To specify an \emph{operad} $\cat O$,
\begin{enumerate}[label=(\roman*)]
\item one specifies a collection $T$, whose elements are called \emph{types};
\item for each tuple $(t_1,\dots,t_n,t)$ of types, one specifies a set $\cat
O(t_1,\dots,t_n;t)$, whose elements are called \emph{operations of arity
$(t_1,\dots,t_n;t)$};%
\index{operation|see {operad}}%
\index{operad!operation in}
\item for each pair of tuples $(s_1,\dots,s_m,t_i)$ and $(t_1,\dots,t_n,t)$, one specifies a
function 
\[
\circ_i\colon \cat O(s_1,\dots,s_m;t_i) \times \cat O(t_1,\dots,t_n;t)
\to \cat O(t_1,\dots,t_{i-1},s_1,\dots,s_m,t_{i+1},\dots, t_n;t);
\]
called \emph{substitution}; and
\item for each type $t$, one specifies an operation $\id_t \in O(t;t)$ called the
\emph{identity operation}.
\end{enumerate}
These must obey generalized identity and associativity laws.%
\tablefootnote{Often what we call types are called objects or colors, what we call operations are called morphisms, what we call substitution is called composition, and what we call operads are called multicategories. A formal definition can be found in \cite{Leinster:2004a}.}
\end{roughDef}%
\index{associativity!of composition in an operad}

Let's ignore types for a moment and think about what this structure models. The intuition is that an operad consists of, for each $n$, a set of operations
of arity $n$---that is, all the operations that accept $n$ arguments. If we take an
operation $f$ of arity $m$, and plug the output into the $i$th argument of an
operation $g$ of arity $n$, we should get an operation of arity $m+n-1$: we have
$m$ arguments to fill in $m$, and the remaining $n-1$ arguments to fill in $g$.
Which operation of arity $m+n-1$ do we get? This is described by the substitution function
$\circ_i$, which says we obtain the operation $f\circ_i g \in \cat O(m+n-1)$.
The coherence conditions say that these functions $\circ_i$ capture the following
intuitive picture:
\[
\begin{tikzpicture}[unoriented WD, pack inside color=white]
	\node[pack] (A) {};
	\node[draw, red, below left=of A] (B) {};
	\node[trapezium, green!.9!black, minimum width=.5cm, draw, below right=1 and 1.5 of A] (C) {};
	\node[outer pack, fit=(A) (B) (C)] (outer) {};
	\node[pack, above right=1.5 and 9 of C] (C1) {};
	\node[pack, above right= of C1] (C2) {};
	\node[trapezium, green!.9!black, draw, below right=1 and 1 of C2] (C3) {};
	\node[red, draw, right=1 and 1 of C2] (C4) {};
	\node[trapezium, green!.9!black, draw, fit=(C1) (C2) (C3) (C4)] (outerC) {};
	\draw[dashed] (C.top left corner) -- (outerC.top left corner);
	\draw[dashed] (C.bottom right corner) -- (outerC.bottom right corner);
	\node[right=2 of outerC.bottom right corner] {$\leadsto$};
	\node[pack, right=29 of A] (XA) {};
	\node[draw, red, below left=of XA] (XB) {};
	\node[trapezium, minimum width=.5cm, below right=1 and 1.5 of XA] (XC) {};
	\node[pack, pack size=2pt] at (XC.north) (XD) {};
	\node[pack, pack size=2pt, below left=2pt and 2pt of XD] (XE) {};
	\node[red, draw, minimum width=0, inner sep=1pt, right=2pt of XD] (XF) {};
	\node[trapezium, green!.9!black, draw, inner sep=1pt, below=2pt of XF] (XG) {};
	\node[outer pack, fit=(XA) (XB) (XC)] (Xouter) {};
\end{tikzpicture}
\]

The types then allow us to specify the, well, types of the
arguments---inputs---that each function takes. So making tea is a 2-ary
operation, an operation with arity 2, because it takes in two things. To make tea
you need some warm water, and you need some tea leaves.

%In defining an operad above we have used what is often called `circle $i$'
%notation. This is because we have asked for a composition rule that specifies
%the operation that results if you plug in an operation into the $i$th argument
%of a given operation. Another way of defining an operad is to specify functions that
%describe plugging in an operation into every argument in a given operation.
%There are a variety of different flavors of operad; we've given a flavor of the
%non-symmetric typed flavor.

\begin{example}%
\index{context free grammar}
  Context-free grammars are to operads as graphs are to categories.  Let's
  sketch what this means. First, a context-free grammar is a way of describing a
  particular set of `syntactic categories' that can be formed from a set of
  symbols. For example, in English we have syntactic categories like nouns,
  determiners, adjectives, verbs, noun phrases, prepositional phrases,
  sentences, etc. The symbols are words, e.g.\ cat, dog, the, chases.
  
  To define a context-free grammar on some alphabet, one specifies some
  \emph{production rules}, which say how to form an entity in some syntactic
  category from a bunch of entities in other syntactic categories. For example,
  we can form a noun phrase from a determiner (the), an adjective (happy), and a
  noun (boy). Context free grammars are important in both linguistics and
  computer science. In the former, they're a basic way to talk about the
  structure of sentences in natural languages. In the latter, they're crucial
  when designing parsers for programming languages.

  So just like graphs present free categories, context-free grammars present free operads. This idea was first noticed in \cite{Hermida.Makkai.Power:1998a}.
\end{example}

%---- Subsection ----%
\subsection{Operads from symmetric monoidal categories}
\index{operad!from monoidal category}
\label{subsec.mon_cat_operads}
We will see in \cref{def.underlying_operad} that a large class of operads come from symmetric monoidal categories. Before we explain this, we give a couple of examples. Perhaps the most important operad is that of $\smset$. 

\begin{example}%
\index{operad!of sets}
The operad $\oprdset$ of sets has 
\begin{enumerate}[label=(\roman*)]
\item Sets $X$ as types.
\item Functions $X_1\times \dots \times X_n \to Y$ as operations of arity
$(X_1,\dots, X_n;Y)$.
\item Substitution defined by 
\begin{multline*}
(g\circ_if)(x_1,\dots,x_{i-1},w_1,\dots,w_m,x_{i+1},\dots,x_n)\\
 =
g\big(x_1,\dots,x_{i-1},f(w_1,\dots,w_m),x_{i+1},\dots,x_n\big)
\end{multline*}
where $f\in \oprdset(W_1,\dots,W_m;X_i)$, $g \in \oprdset(X_1,\dots,X_n;Y)$, and
hence $g\circ_if$ is a function 
\[
(g\circ_if)\colon
X_1 \times \dots \times X_{i-1}\times W_1\times \dots \times
W_m\times X_{i+1}\times \dots \times X_n \longrightarrow Y 
\]
\item Identities $\id_X\in \oprdset(X;X)$ are given by the identity function
$\id_X\colon X \to X$.
\qedhere
\end{enumerate}
\end{example}

Next we give an example that reminds us what all this operad stuff was for: wiring diagrams.

\begin{example}%
\index{operad!of cospans}
The operad $\oprdcospan$ of finite-set cospans has
\begin{enumerate}[label=(\roman*)]
\item Natural numbers $a \in \nn$ as types.
\item Cospans $\ord{a_1} + \dots + \ord{a_n} \to \ord{p} \leftarrow
\ord{b}$ of finite sets as operations of arity $(a_1,\dots, a_n;b)$.
\item Substitution defined by pushout.
\item Identities $\id_a\in \oprdset(a;a)$ just given by the identity cospan
$\ord{a} \xrightarrow{\id_{\ord{a}}}\ord{a} \xleftarrow{\id_{\ord{a}}} \ord{a}$.
\end{enumerate}
This is the operadic analogue of the monoidal category $(\cospan\finset, 0, +)$.

We can depict operations in this operad using diagrams like we drew above. For example, here's a picture of an operation:
\begin{equation}%
\label{eqn.unoriented_WD_rand327}
\begin{tikzpicture}[unoriented WD, spacing=16pt, every label/.style={font=\small}]
	\node[pack] (f1) {$f$};
	\node[pack, below right=1.7 and 1 of f1] (f3) {$h$};
	\node[pack, above right=1.7 and 1 of f3] (f2) {$g$};
	\node[pack, below right=1.7 and 1 of f2] (f4) {$k$};
	\node[outer pack, fit=(f1) (f4)] (outer) {};
	\node[link] at ($(f1)!-.4!(outer)$) (t) {};
	\node[link] at ($(f1)!.5!(f3)$) (u) {};
	\node[link] at (f3 |-f1) (v){};
	\node[link] at ($(f3)!.5!(f2)$) (w) {};
	\node[link] at ($(w)!.3!(f4)$) (x) {};
	\node[link, below=.5 of f3] (y) {};
	\node[link] at ($(f4)!-.4!(outer)$) (z) {};
	\draw[shorten >=-2pt, shorten >=-2pt] (f1) -- (outer);
	\draw (v) -- (outer.north-|v);
	\draw[shorten >=-2pt, shorten >=-2pt] (f4) -- (outer);
	\draw (f1) -- (v) -- (f2) -- (f3) -- (y);
	\draw (f1) -- (f3) to[bend right=10] (x);
	\draw (x) to [bend right=10] (f2);
	\draw (x) -- (f4);
\end{tikzpicture}
\end{equation}
This is an operation of arity $(\ord{3},\ord{3},\ord{4},\ord{2};\;\ord{3})$. Why? The circles
marked $f$ and $g$ have 3 ports, $h$ has 4 ports, $k$ has 2 ports, and the
outer circle has 3 ports: 3, 3, 4, 2; 3. 

So how exactly is \cref{eqn.unoriented_WD_rand327} a morphism in this operad? Well a morphism of this arity is, by (ii), a cospan $\ord{3}+\ord{3}+\ord{4}+\ord{2}\To{a}\ord{p}\From{b}\ord{3}$. In the diagram above, the apex $\ord{p}$ is the set $\ord{7}$, because there are 7
nodes $\bullet$ in the diagram. The function $a$ sends each port on one of the small circles
to the node it connects to, and the function $b$ sends each port of the outer circle to
the node it connects to.

We are able to depict each operation in the operad $\oprdcospan$ as a wiring
diagram. It is often helpful to think of operads as describing a wiring diagram grammar. The substitution operation of the operad signifies inserting one wiring diagram
into a circle or box in another wiring diagram.
\end{example}

\begin{exercise}%
\label{exc.wd_drawing_practice}
\begin{enumerate}
	\item Consider the following cospan $f\in\oprdcospan(2,2; 2)$:
	\[
	% [inline block 47: 2 envs, 2055 chars -> data_tex | \begin{tikzpicture}[x=.25cm, y=.75cm, short=2pt] 		\begin{scope}[every node/.style={draw, inner sep=1.5pt, fill, circle}...]

	\]	
	\item Compute the cospan $g\circ_1 f$. What is its arity?
	\item Draw the cospan $g\circ_1 f$. Do you see it as substitution?
\qedhere
\end{enumerate}
\end{exercise}

We can turn any symmetric monoidal category into an operad in a way that
generalizes the above two examples.

\begin{definition}%
\label{def.underlying_operad}
For any symmetric monoidal category $(\cat{C},I,\otimes)$, there is an operad $\cat{O}_\cat{C}$, called the \emph{operad underlying $\cat{C}$}, defined as having:
\begin{enumerate}[label=(\roman*)]
	\item $\Ob(\cat{C})$ as types.
	\item morphisms $C_1\otimes\cdots\otimes C_n\to D$ in $\cat{C}$ as the operations of arity $(C_1,\ldots,C_n;D)$.
	\item substitution is defined by
	\[(f\circ_ig)\coloneqq f\circ(\id,\ldots,\id,g,\id,\ldots,\id)\]
	\item identities $\id_a\in\cat{O}_\cat{C}(a;a)$ defined by $\id_a$.
\end{enumerate}
\end{definition}
We can also turn any monoidal functor into what's called an operad functor.

%---- Subsection ----%
\subsection{The operad for hypergraph props}%
\index{hypergraph prop!operad for}

An operad functor takes the types of one operad to the types of another, and
then the operations of the first to the operations of the second in a way that
respects this. 
\begin{roughDef}%
\index{functor!operad}
Suppose given two operads $\cat O$ and $\cat P$ with type collections $T$ and $U$ respectively. To specify an
operad functor $F\colon \cat O \to \cat P$,
\begin{enumerate}[label=(\roman*)]
\item one specifies a function $f\colon T \to U$.
\item For all arities $(t_1,\dots,t_n;t)$ in $\cat O$, one specifies a function 
\[
F\colon \cat O(t_1,\dots,t_n;t) \to \cat P(f(t_1),\dots,f(t_n);f(t))
\]
\end{enumerate}
such that composition and identities are preserved.
\end{roughDef}

Just as set-valued functors $\cat{C}\to\smset$ from any category $\cat{C}$ are of particular
interest---we saw them as database instances in \cref{chap.databases}---so to are $\smset$-valued functors $\cat{O}\to\smset$ from any operad $\cat{O}$.%
\index{functor!set@$\smset$-valued}

\begin{definition}%
\index{operad!algebra of }
An \emph{algebra} for an operad $\cat O$ is an operad functor
$F\colon \cat O \to \oprdset$.
\end{definition}

We can think of functors $\cat{O}\to\oprdset$ as defining a set of possible ways to fill the
boxes in a wiring diagram. Indeed, each box in a wiring diagram represents a
type $t$ of the given operad $\cat O$ and an algebra $F\colon \cat O \to
\oprdset$ will take a type $t$ and return a set $F(t)$ of fillers for box $t$.  Moreover, given an
operation (i.e., a wiring diagram) $f\in \cat O(t_1,\dots,t_n;t)$, we get a
function $F(f)$ that takes an element of each set $F(t_i)$, and returns an element
of $F(t)$. For example, it takes $n$ circuits with interface
$t_1,\dots,t_n$ respectively, and returns a circuit with boundary $t$.

\begin{example}
For electric circuits, the types are again finite sets, $T=\Ob(\finset)$, where each finite set $t\in T$ corresponds to a cell with $t$ ports. Just as before, we have a set $\elec(t)$ of fillers, namely the set of electric circuits with that $t$-marked terminals. As an operad algebra, $\elec\colon\oprdcospan\to\smset$ transforms wiring diagrams like this one
\[
% [inline block 48: 3 envs, 2165 chars -> data_tex | \begin{tikzpicture}[unoriented WD, pack size=15pt, pack inside color=white, pack outside color=black, link size=2pt, fon...]

\]
\end{example}

This is reminiscent of the story for decorated cospans: gluing fillers together to form hypergraph categories. An advantage of the decorated cospan construction is that one obtains an explicit category (where morphisms have domains and codomains and can hence be composed associatively), equipped with Frobenius structures that allow us to get around the strictures of domains and codomains. The operad perspective has other advantages. First, whereas decorated cospans can produce only some hypergraph categories, $\oprdcospan$-algebras can produce any hypergraph category.

\begin{proposition}%
\index{theory!of hypergraph props}%
\index{hypergraph prop!theory of}
There is an equivalence between $\oprdcospan$-algebras and hypergraph props.
\end{proposition}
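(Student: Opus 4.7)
The plan is to construct functors in both directions and show they are mutually inverse up to equivalence. In the forward direction, given a hypergraph prop $\cat{P}$, I would define an $\oprdcospan$-algebra $F_\cat{P}\colon\oprdcospan\to\oprdset$ by sending each type $n\in\NN$ to the set $F_\cat{P}(n)\coloneqq\mathrm{Hom}_\cat{P}(0,n)$. This choice is natural because the hypergraph structure on $\cat{P}$ makes it self-dual compact closed (\cref{prop.hyp_cat_comp_closed}), so $\mathrm{Hom}_\cat{P}(0,n)$ already records every morphism into or out of an $n$-port box. On operations: given a cospan $\phi\colon\ord{a_1}+\cdots+\ord{a_k}\to\ord{p}\leftarrow\ord{b}$ in $\oprdcospan$, I use the earlier-stated fact that $\cospan\finset$ is equivalent to the free prop on the Frobenius signature to interpret $\phi$ as a canonical morphism $\widetilde{\phi}\colon a_1+\cdots+a_k\to b$ in $\cat{P}$---each apex node becomes a spider, each cospan leg becomes a wire---and then set $F_\cat{P}(\phi)(x_1,\ldots,x_k)\coloneqq(x_1\otimes\cdots\otimes x_k)\cp\widetilde{\phi}$.

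In the reverse direction, given an algebra $F\colon\oprdcospan\to\oprdset$, I would build a hypergraph prop $\cat{P}_F$ with object set $\NN$ and hom-sets $\mathrm{Hom}_{\cat{P}_F}(m,n)\coloneqq F(m+n)$. Composition, tensor product, identity morphisms, symmetries, and the Frobenius generators $\mu_n,\eta_n,\delta_n,\epsilon_n$ are all defined by applying $F$ to specific, named cospan operations; for example composition corresponds to the cospan $\ord{m+n}+\ord{n+p}\to\ord{m+n+p}\leftarrow\ord{m+p}$ that glues the middle $n$ ports, while $\mu_n$ comes from the copairing-style cospan $\ord{2n}\to\ord{n}\leftarrow\ord{n}$. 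The associativity, Frobenius, and special laws for $\cat{P}_F$ all reduce to equalities of cospans that already hold in $\oprdcospan$, and these are preserved by $F$ because it is an operad functor.

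To finish, I would verify that these two assignments are inverse up to equivalence. Starting from $\cat{P}$, the hom-set $\mathrm{Hom}_{\cat{P}_{F_\cat{P}}}(m,n)=F_\cat{P}(m+n)=\mathrm{Hom}_\cat{P}(0,m+n)$ is canonically isomorphic to $\mathrm{Hom}_\cat{P}(m,n)$ via compact closed duality, and one checks that this bijection is a hypergraph prop equivalence. Starting from $F$, unraveling definitions gives $F_{\cat{P}_F}(n)=\mathrm{Hom}_{\cat{P}_F}(0,n)=F(n)$, and one checks that the action on cospan operations is the original one.

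The main obstacle will be the previously cited equivalence $\cospan\finset\simeq$ (free prop on the Frobenius signature): the construction $\phi\mapsto\widetilde{\phi}$ and the verification that cospan substitution in $\oprdcospan$ translates to categorical composition-and-tensor in $\cat{P}$ is exactly the content of that freeness theorem, and everything else follows from it. Once that is in hand, the remainder is bookkeeping of how operadic arity, symmetric monoidal composition, and spider fusion line up---made nearly automatic by the observation (compare \cref{eqn.SMC_WD_rand147} and \cref{eqn.operad_WD_rand248}) that wiring diagrams for hypergraph props and diagrams in $\oprdcospan$ encode the same connectivity data.
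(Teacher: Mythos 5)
The paper states this proposition without proof---the argument is deferred to the references in \cref{sec.c6_further_reading} (in particular \cite{fong2018hypergraph})---so there is no in-text proof to compare against. Your sketch is essentially the standard argument from that literature, and its outline is correct: the forward direction hinges on the freeness theorem identifying $\cospan\finset$ with the free prop on a Frobenius monoid, which (together with the $\otimes$-coherence axioms in the definition of hypergraph category, guaranteeing that the Frobenius structure on $n$ is determined by the one on $1$) yields the canonical interpretation $\phi\mapsto\widetilde{\phi}$ in any hypergraph prop $\cat{P}$; the reverse direction correctly reads off composition, identities, symmetries, and the Frobenius generators from named cospans; and the round trip uses self-dual compact closure exactly as in \cref{prop.hyp_cat_comp_closed}. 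Two points you should not wave away when writing this up in full: first, ``equivalence'' here should be an equivalence of \emph{categories}, so you must also match morphisms of $\oprdcospan$-algebras (natural transformations of operad functors) with hypergraph prop functors, not just objects with objects; second, the isomorphism $\mathrm{Hom}_{\cat{P}_{F_\cat{P}}}(m,n)\cong\mathrm{Hom}_\cat{P}(m,n)$ must be checked to respect composition on both sides, which is where the compatibility between operadic substitution (pushout) and composition-plus-tensoring in $\cospan\finset$ actually gets used. Neither is an obstacle, but both are more than pure bookkeeping.
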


%In particular, the functor $(F,\varphi)\colon  (\finset,+) \to (\smset,\times)$
%of \cref{subsec.circuits} can be extended to a monoidal functor
%$(\cospan\finset,+) \to (\smset,\times)$ by a universal construction known as a
%Kan extension. The monoidal categories $\cospan{}$ and $\finset$ may be converted to operads in the
%manner described in \cref{subsec.mon_cat_operads}, and the functor becomes an operad algebra for $\cospan{}$. The resulting hypergraph prop agrees with
%the hypergraph prop constructed explicitly using decorated cospans.

Another advantage of using operads is that one can vary the operad itself, from $\oprdcospan$ to something similar (like the operad of `cobordisms'), and get slightly different compositionality rules. 

In fact, operads---with the additional complexity in their definition---can be customized
even more than all compositional structures defined so far. For example, we can
define operads of wiring diagrams where the wiring diagrams must obey precise
conditions far more specific than the constraints of a category, such as
requiring that the diagram itself has no wires that pass straight through it.
In fact, operads are strong enough to define themselves: roughly speaking, there is an operad for operads: the category of operads is equivalent to the category of algebras for a certain operad \cite[Example 2.2.23]{Leinster:2004a}.
While operads can, of course, be generalized again, they conclude our
march through an informal hierarchy of compositional structures, from preorders to
categories to monoidal categories to operads.%
\index{category!of operads}%
\index{category!of algebras for an operad}

\index{operad|)}

%-------- Section --------%
\section{Summary and further reading}%
\label{sec.c6_further_reading}

This chapter began with a detailed exposition of colimits in the category of
sets; as we saw, these colimits describe ways of joining or interconnecting sets. Our
second way of talking about interconnection was the use of Frobenius monoids and
hypergraph categories; we saw these two themes come together in the idea of a
decorated cospans. The decorated cospan construction uses a certain type of
structured functor to construct a certain type of structured category. More
generally, we might be interested in other types of structured category, or
other compositional structure. To address this, we briefly saw how these ideas
fit into the theory of operads.

Colimits are a fundamental concept in category theory. For more on colimits,
one might refer to any of the introductory category theory
textbooks we mentioned in \cref{sec.ch2_further_reading}.

Special commutative Frobenius monoids and hypergraph categories were first
defined, under the names `separable commutative Frobenius algebra' and `well-supported compact closed category', by Carboni and Walters
\cite{Carboni:1987a,Carboni:1991a}. The use of
decorated cospans to construct them is detailed in
\cite{fong2015decorated,fong2017decorated,Fong:2016a}. The application to
networks of passive linear systems, such as certain electrical circuits, is
discussed in \cite{baez2015compositional}, while further applications, such as to Markov
processes and chemistry can be found in
\cite{baez2016compositional,baez2017compositional}. For another interesting application of hypergraph
categories, we recommend the pixel array method for approximating solutions to
nonlinear equations \cite{Spivak.Dobson.Kumari.Wu:2016a}.%
\index{chemistry} The story of this chapter is fleshed out in a couple of
recent, more technical papers \cite{fong2018hypergraph,Fong.Sarazola:2018}.

Operads were introduced by May to describe compositional structures arising in
algebraic topology \cite{May:1972a}; Leinster has written a great book on the subject
\cite{Leinster:2004a}. More recently, with collaborators author-David has discussed
using operads in applied mathematics, to model composition of structures in
logic, databases, and dynamical systems
\cite{Rupel.Spivak:2013a,Spivak:2013b,Vagner.Spivak.Lerman:2015a}.

\setcounter{chapter}{6}%Just finished 6.
%------------ Chapter ------------%
\chapter[Logic of behavior: Sheaves, toposes, languages]{Logic of behavior:\\Sheaves, toposes, and internal languages}%
\label{chap.temporal_topos}

%\settocdepth{subsubsection}
%\tableofcontents*

%In this chapter, we will study the notion of behavior and interaction by defining a category $\Cat{BT}$ whose objects we call behavior types. Doing so will refine exactly what we mean by behavior, as well as how different types of behavior relate. However, our formulation will do much more: it will give us a formal language and logic in which to prove properties of behaviors in systems.

%-------- Section --------=%
\section{How can we prove our machine is safe?}%
\label{sec.safe_machines}

Imagine you are trying to design a system of interacting components. You
wouldn't be doing this if you didn't have a goal in mind: you want the system to
do something, to behave in a certain way. In other words, you want to restrict
its possibilities to a smaller set: you want the car to remain on the road, you
want the temperature to remain in a particular range, you want the bridge to be safe for
trucks to pass. Out of all the possibilities, your system should only permit some.

Since your system is made of components that interact in specified ways, the possible behavior of the whole---in any environment---is determined by the possible behaviors of each of its components in their local environments, together with the precise way in which they interact.%
\footnote{
The well-known concept of emergence is not about possibilities, it is about
prediction. Predicting the behavior of a system given predictions of its
components is notoriously hard. The behavior of a double pendulum is
chaotic---meaning extremely sensitive to initial conditions---whereas those of
the two component pendulums are not. However, the set of possibilities for the
double pendulum is completely understood: it is the set of possible angular
positions and velocities of both arms. When we speak of a machine's properties
in this chapter, we always mean the guarantees on its behaviors, not the
probabilities involved, though the latter would certainly be an interesting
thing to contemplate.%
\index{prediction vs.\ possibility}
}
In this chapter, we will discuss a logic wherein one can describe general types
of behavior that occur over time, and prove properties of a larger-scale system
from the properties and interaction patterns of its components.%
\index{behavior|(}%
\index{interaction}

%---- Subsection ----%
%\subsection{Systems and components}%
\label{subsec.systems_and_components}

For example, suppose we want an autonomous vehicle to maintain a distance of some $\const{safe}\in\rr$ from other objects. To do so, several
components must interact: a sensor that approximates the real distance by an
internal variable $S'$, a controller that uses $S'$ to decide what action $A$ to
take, and a motor that moves the vehicle with an acceleration based on $A$.
This in turn affects the real distance $S$, so there is a feedback loop.

Consider the following model diagram:
\begin{equation}%
\label{eqn.sens_cont_mot}
\begin{tikzpicture}[oriented WD, baseline=(controller)]
	\node[bb={1}{1}] (sensor) {sensor};
	\node[bb={1}{1}, right=of sensor] (controller) {controller};
	\node[bb={1}{1}, right=of controller] (motor) {motor};
	\node[bb={0}{0}, fit={(sensor) ($(controller.north)+(0,2.5)$) (motor)}] (system) {};
	\draw (sensor_out1) to node[below=3pt, label] {$S'$} (controller_in1);
	\draw (controller_out1) to node[below=3pt, label] {$A$} (motor_in1);
	\draw (motor_out1) to node[below=3pt, label] {$S$} (system.east|-motor_out1) -- +(.15cm,0);
	\draw let \p1=(motor.north east), \p2=(sensor.north west) in
		(motor_out1) to[in=0] (\x1+\bbportlen,\y1+\bby) to[in=0, out=180] node[above=3pt, label] {$S$} (\x2-\bbportlen,\y1+\bby) to[out=180] (sensor_in1);
\end{tikzpicture}
\end{equation}
In the diagram shown, the distance $S$ is exposed by the exterior interface. This just means we imagine $S$ as being a variable that other components of a larger system may want to interact with. We could have exposed no variables (making it a closed system) or we could have exposed $A$ and/or $S'$ as well.%
\index{interface}

In order for the system to ensure $S\geq\const{safe}$, we need each of the components to ensure a property of its own. But what are these components, `sensor, controller, motor', and what do they do?

One way to think about any of the components is to open it up and see how it is put together; with a detailed study we may be able to say what it will do. For example, just as $S$ was exposed in the diagram above, one could imagine opening up the `sensor' component box in \cref{eqn.sens_cont_mot} and seeing an interaction between subcomponents%
\index{system!component}
\[
\begin{tikzpicture}[oriented WD]
	\node[bb={1}{1}] (radar) {radar};
	\node[bb={1}{1}, below=of radar] (sonar) {sonar};
	\node (halfway) at ($(radar)!.5!(sonar)$) {};
	\node[bb={2}{1}] at ($(halfway)+(2,0)$) (processor) {processor};
	\node[bb={1}{1}, fit={($(radar.north west)+(0,1)$) ($(sonar.south west)+(0,-1)$) (processor)}, bb name=sensor] (sensor) {};
	\node [bdot, right=.4 of sensor_in1] (dot) {};
	\draw (sensor_in1') to node[label, above=2pt] {$S$} (dot);
	\draw (dot) to[out=45] (radar_in1);
	\draw (dot) to[out=-45] (sonar_in1);
	\draw (radar_out1) to (processor_in1);
	\draw (sonar_out1) to (processor_in2);
	\draw (processor_out1) to node[label, above=2pt] {$S'$} (sensor_out1');
\end{tikzpicture}
\]
This ability to zoom in and see a single unit as being composed of others is
important for design. But at the end of the day, you eventually need to stop
diving down and simply use the properties of the components in front of you to
prove properties of the composed system. Have no fear: everything we do in this chapter will be fully compositional, i.e.\ compatible with opening up lower-level subsystems and using the fractal-like nature of composition. However at a given time, your job is to design the system at a
given level, taking the component properties of lower-level systems as given.%
\index{compositionality}

We will think of each component in terms of the relationship it maintains (through time) between the changing values on its ports. ``Whenever I see a flash, I will increase pressure on the button'': this is a relationship I maintain through time between the changing values on my eye port and my finger port. We will make this more precise soon, but fleshing out the situation in \cref{eqn.sens_cont_mot} should help. The sensor maintains a relationship between $S$ and $S'$, e.g.\ that the real distance $S$ and its internal representation $S'$ differ by no more than $5\mathrm{cm}$. The controller maintains a relationship between $S'$ and the action signal $A$, e.g.\ that if at any time $S<\const{safe}$, then within one second it will emit the signal $A=\const{go}$. The motor maintains a relationship between $A$ and $S$, e.g.\ that $A$ dictates the second derivative of $S$ by the formula 
\begin{equation}%
\label{eqn.go_stop}
  \left((A=\const{go})\imp\ddot{S}>1\right)\wedge\left((A=\const{stop})\imp\ddot{S}=0\right).
\end{equation}
If we want to prove properties of the whole interacting system, then the relationships maintained by each component need to be written in a formal logical language, something like what we saw in \cref{eqn.go_stop}. From that basis, we can use standard proof techniques to combine properties of subsystems into properties of the whole. This is our objective in the present chapter.%
\index{system!property}

We have said how component systems, wired together in some arrangement, create larger-scale systems. We have also said that, given the wiring arrangement, the behavioral properties of the component systems dictate the behavioral properties of the whole. But what exactly are behavioral properties?

In this chapter, we want to give a formal language and semantics for a very
general notion of behavior. Mathematics is itself a formal language; the usual
style of mathematical modeling is to use any piece of this vast language at any
time and for any reason. One uses ``human understanding'' to ensure that the
different models are fitting together in an appropriate way when
different systems are combined.%
\index{language}
%For example, someone might say ``replace the
%directed graph by its underlying symmetric simple graph, and use the
%second-smallest eigenvalue of its Laplacian to approximate the sparsest cut of
%the original graph.''
The present work differs in that we want to find a
domain-specific language for modeling behavior, any sort of behavior, and
nothing but behavior. Unlike in the wide world of math, we want a setting where
the only things that can be discussed are behaviors.

For this, we will construct what is called a \emph{topos}, which is a special kind of category. Our topos, let's call it $\Cat{BT}$, will have behavior types---roughly speaking, sets whose elements can change through time---as its objects. An amazing fact about toposes%
\footnote{The plural of topos is often written \emph{topoi}, rather than toposes. This seems a bit fancy for our taste. As Johnstone suggests in \cite{Johnstone:1977a}, we might ask those who ``persist in talking about topoi whether, when they go out for a ramble on a cold day, they carry supplies of hot tea with them in thermoi.'' It's all in good fun; either term is perfectly reasonable and well-accepted.%
\index{topos}
}
is that they come with an \emph{internal language} that looks very much like the
usual formal language of mathematics itself. Thus one can define graphs, groups,
topological spaces, etc.\ in any topos. But in $\Cat{BT}$, what we call graphs
will actually be graphs that change through time, and similarly what we call
groups and spaces will actually be groups and spaces that change through time.
\index{language!internal}

The topos $\Cat{BT}$ not only has an internal language, but also a
mathematical semantics using the notion of sheaves. Technically, a sheaf is a
certain sort of functor, but one can imagine it as a space of possibilities, varying in a controlled way; in our case it will be a space of possible behaviors varying in a certain notion of time. Every property we prove in our logic of
behavior types will have meaning in this category of sheaves.%
\index{sheaf}%
\index{semantics}%
\index{category!of sheaves}

When discussing systems and components---such as sensors, controllers, motors, etc.---we mentioned behavior types; these will be the objects in the topos $\Cat{BT}$. Every wire in the picture below will stand for a behavior type, and every box $X$ will stand for a behavioral property, a relation that $X$ maintains between the changing values on its ports.\[
\begin{tikzpicture}[oriented WD]
	\node[bb={1}{1}] (sensor) {sensor};
	\node[bb={1}{1}, right=of sensor] (controller) {controller};
	\node[bb={1}{1}, right=of controller] (motor) {motor};
	\node[bb={0}{0}, fit={(sensor) ($(controller.north)+(0,2)$) (motor)}] (system) {};
	\draw (sensor_out1) to node[below=3pt, label] {$S'$} (controller_in1);
	\draw (controller_out1) to node[below=3pt, label] {$A$} (motor_in1);
	\draw (motor_out1) to node[below=3pt, label] {$A$} (system.east|-motor_out1) -- +(.15cm,0);
	\draw let \p1=(motor.north east), \p2=(sensor.north west) in
		(motor_out1) to[in=0] (\x1+\bbportlen,\y1+\bby) to[in=0, out=180] node[above=3pt, label] {$S$} (\x2-\bbportlen,\y1+\bby) to[out=180] (sensor_in1);
\end{tikzpicture}
\]
For example we could imagine that
\begin{itemize}
	\item $S$ (wire): The behavior of $S$ over a time-interval $[a,b]$ is that of all continuous real-valued functions $[a,b]\to\RR$.
	\item $A$ (wire): The behavior of $A$ over a time-interval $[a,b]$ is all piecewise constant functions, taking values in the finite set such as $\{\const{go}, \const{stop}\}$.
	\item controller (box): the relation $\{(S',A)\mid\cref{eqn.go_stop}\}$,
	i.e.\ all behavioral pairs $(S',A)$ that conform to what we said our controller is supposed to do in \cref{eqn.go_stop}.
\end{itemize}%
\index{behavior!properties of}

%-------- Section --------=%
\section{The category $\smset$ as an exemplar topos}%
\label{sec.logic_and_set}
\index{topos!of sets}%
\index{category!of sets}

We want to think about a very abstract sort of thing, called a topos, because we will see that behavior types form a topos. To get started, we begin with one of the easiest toposes to think about, namely the topos $\smset$ of sets. In this section we will discuss commonalities between sets and every other topos. We will go into some details about the category of sets, so as to give intuition for other toposes. In particular, we'll pay careful attention to the logic of sets, because we eventually want to understand the logic of behaviors.%
\index{logic}

Indeed, logic and sets are closely related. For example, the logical statement---more
formally known as a predicate---\texttt{likes\_cats} defines a function from the
set $P$ of people to the set $\BB=\{\false,\true\}$ of truth values, where
$\texttt{Brendan} \in P$ maps to $\true$ because he likes cats whereas
$\texttt{Ursula} \in P$ maps to $\false$ because she does not. Alternatively,
\texttt{likes\_cats} also defines a subset of $P$, consisting of exactly the
people that do like cats%
\index{predicate}
\[
\{p\in P\mid \texttt{likes\_cats}(p)\}.
\]
In terms of these subsets, logical operations correspond to set operations, e.g. AND corresponds to intersection: indeed, the set of people for
(mapped to $\true$ by) the predicate \texttt{likes\_cats\_AND\_likes\_dogs} is equal to the
intersection of the set for \texttt{likes\_cats} and the set for \texttt{likes\_dogs}.%
\index{operation!logical}%
\index{intersection}

We saw in \cref{chap.databases} that such operations, which are examples of
database queries, can be described in terms of limits and colimits in $\smset$. Indeed, the
category $\smset$ has many such structures and properties, which together make logic possible in that setting. In this
section we want to identify these properties, and show how logical operations
can be defined using them. %
\index{limit}%
\index{colimit}%
\index{database}

Why would we want to abstractly find such structures and properties? In the next section, we'll
start our search for other categories that also have them. Such
categories, called toposes, will be $\smset$-like enough to do logic, but
have much more complex and interesting semantics. Indeed, we will discuss one whose logic allows us to reason not about properties of sets, but about behavioral properties of very general machines.%
\index{behavior|)}

%---- Subsection ----%
\subsection{$\smset$-like properties enjoyed by any topos}%
\label{subsec.set_like}%
\index{topos!properties of|(}

Although we will not prove it in this book, toposes are categories that are similar to $\smset$ in many ways. Here are some facts that are true of any topos $\cat{E}$:
\begin{enumerate}%
\label{page.topos_properties}
	\item $\cat{E}$ has all limits,
	\item $\cat{E}$ has all colimits,
	\item $\cat{E}$ is cartesian closed,%
\index{closed category!cartesian}
	\item $\cat{E}$ has epi-mono factorizations,%
\index{epimorphism}%
\index{monomorphism}%
\index{epi-mono factorization}
	\item $\cat{E}$ has a subobject classifier $1\To{\true}\Omega$.%
\index{subobject classifier}
\end{enumerate}
In particular, since $\smset$ is a topos, all of the above facts are true for $\cat{E}=\smset$. Our first goal is to briefly review these concepts, focusing most on the subobject classifier.

\paragraph{Limits and colimits.}%
\index{limit}%
\index{colimit}
We discussed limits and colimits briefly in \cref{subsec.adjoints_lims_colims},
but the basic idea is that one can make new objects from old by taking products,
using equations to define subobjects, forming disjoint unions, and taking
quotients.% \index{quotient} There is also a terminal object $1$ and an initial
object $0$.  One of the most important types of limit (resp.\ colimit) is that
of pullbacks (resp.\ pushouts); see \cref{ex.pullback,def.pushout}.
\index{pullback} For our work below, we'll need to know a touch more about
pullbacks than we have discussed so far, so let's begin there.

Suppose that $\cat{C}$ is a category and consider the diagrams below:
\[
\begin{tikzcd}
	A\ar[r]\ar[d]&B\ar[r]\ar[d]&C\ar[d]\\
	D\ar[r]&E\ar[r]&F\ar[ul, phantom, very near end, "\lrcorner"]
\end{tikzcd}
\hspace{.8in}
\begin{tikzcd}
	A\ar[r]\ar[d]&B\ar[r]\ar[d]&C\ar[d]\\
	D\ar[r]&E\ar[r]\ar[ul, phantom, very near end, "\lrcorner"]&F
\end{tikzcd}
\]
In the left-hand square, the corner symbol $\lrcorner$ unambiguously means that the square $(B,C,E,F)$ is a pullback. But in the right-hand square, does the corner symbol mean that $(A,B,D,E)$ is a pullback or that $(A,C,D,F)$ is a pullback? It's ambiguous, but as we next show, it becomes unambiguous if the right-hand square is a pullback.

\begin{proposition}%
\label{prop.pullback_pasting}%
\index{pullbacks!pasting of}
In the commutative diagram below, suppose that the $(B,C,B',C')$ square is a pullback:
\[
\begin{tikzcd}
	A\ar[r]\ar[d]&B\ar[r]\ar[d]&C\ar[d]\\
	A'\ar[r]&B'\ar[r]\ar[ul, phantom, very near end, "\lrcorner"]&C'\ar[ul, phantom, very near end, "\lrcorner"]
\end{tikzcd}
\]
Then the $(A,B,A',B')$ square is a pullback iff the $(A,C,A',C')$ rectangle is a pullback.
\end{proposition}

\begin{exercise}%
\label{exc.pullback_pasting}
Prove \cref{prop.pullback_pasting} using the definition of limit from \cref{subsec.adjoints_lims_colims}.
\end{exercise}

\paragraph{Epi-mono factorizations.}
The abbreviation `epi' stands for \emph{epimorphism}, and the abbreviation `mono' stands for monomorphism. Epimorphisms are maps that act like surjections, and monomorphisms are maps that act like injections.%
\footnote{
Surjections are sometimes called `onto' and injections are sometimes called `one-to-one', hence the Greek prefixes \emph{epi} and \emph{mono}.}
We can define them formally in terms of pushouts and pullbacks.
\begin{definition}%
\index{epimorphism}%
\index{monomorphism}%
\label{def.mono_epi}
Let $\cat{C}$ be a category, and let $f\colon A\to B$ be a morphism. It is called a \emph{monomorphism} (resp.\ \emph{epimorphism}) if the square to the left is a pullback (resp.\ the square to the right is a pushout):
\[
\begin{tikzcd}
	A\ar[r, "\id_A"]\ar[d, "\id_A"']&A\ar[d, "f"]&[10pt]
		A\ar[r, "f"]\ar[d, "f"']&B\ar[d, "\id_B"]\\
	A\ar[r, "f"']&B\ar[ul, phantom, very near end, "\lrcorner"]&
		B\ar[r, "\id_B"']&B\ar[ul, phantom, very near start, "\ulcorner"]
\end{tikzcd}
\]
\end{definition} %
\index{pullback!monomorphism in terms of}%
\index{pushout!epimorphism in terms of}

\begin{exercise}%
\index{function!injective}%
\label{exc.mono_inj}%
\index{monomorphism!injection as}
Show that in $\smset$, monomorphisms are just injections:
\begin{enumerate}
	\item Show that if $f$ is a monomorphism then it is injective.
	\item Show that if $f\colon A\to B$ is injective then it is a monomorphism.
\qedhere
\end{enumerate}
\end{exercise}

\begin{exercise}%
\label{exc.pullback_iso_iso}%
\index{pullback!of isomorphism}
\begin{enumerate}
	\item	Show that the pullback of an isomorphism along any morphism is an isomorphism. That is, suppose that $i\colon B'\to B$ is an isomorphism and $f\colon A\to B$ is any morphism. Show that $i'$ is an isomorphism, in the following diagram:%
\index{isomorphism!as stable under pullback}
	\[
	\begin{tikzcd}[ampersand replacement=\&]
		A'\ar[r, "f'"]\ar[d, pos=.6, "i'"', "\cong"]\&B'\ar[d, pos=.6, "i", "\cong"']\\
		A\ar[r, "f"']\&B\ar[ul, phantom, very near end, "\lrcorner"]
	\end{tikzcd}
	\]	
	\item Show that for any map $f\colon A\to B$, the square shown is a pullback:
	\[
	\begin{tikzcd}
		A\ar[r, "f"]\ar[d, equal]&
		B\ar[d, equal]\\
		A\ar[r, "f"']&
		B\ar[ul, phantom, very near end, "\lrcorner"]
	\end{tikzcd}
	\qedhere
	\]
\end{enumerate}
\end{exercise}

\begin{exercise}%
\label{exc.monos_pb_pasting}
Suppose the following diagram is a pullback in a category $\cat{C}$:
\[
\begin{tikzcd}
	A'\ar[r, "g"]\ar[d, "f'"']&A\ar[d, tail, "f"]\\
	B'\ar[r, "h"']&B\ar[ul, phantom, very near end, "\lrcorner"]
\end{tikzcd}
\]
Use \cref{prop.pullback_pasting,exc.pullback_iso_iso} to show that if $f$ is a monomorphism, then so is $f'$.%
\index{monomorphism!as stable under pullback}
\end{exercise}

Now that we have defined epimorphisms and monomorphisms, we can say what epi-mono factorizations are. We say that a morphism $f\colon C\to D$ in $\cat{E}$ has an epi-mono factorization if it has an `image'; that is, there is an object $\im(f)$, an epimorphism $C\surj\im(f)$, and a monomorphism $\im(f)\inj D$, whose composite is $f$.%
\index{epi-mono factorization}

In $\smset$, epimorphisms are surjections and monomorphisms are injections.
Every function $f\colon C \to D$ may be factored as a surjective function onto
its image $\im(f)=\{f(c) \mid c \in C\}$, followed by the inclusion of this
image into the codomain $D$. Moreover, this factorization is unique up to
isomorphism.%
\index{epimorphism!surjection as}

\begin{exercise}%
\label{exc.epi_mono_practice}
Factor the following function $f\colon \ord{3}\to \ord{3}$ as an epimorphism followed by a monomorphism.
\[
  \begin{tikzpicture}[short=0pt]
		\foreach \x in {0,...,2} 
			{\draw (0,.4-.4*\x) node (X0\x) {$\bullet$};}
		\node[draw, ellipse, inner sep=0pt, fit=(X00) (X02)] (X0) {};
		\foreach \x in {0,...,2} 
			{\draw (2,.4-.4*\x) node (Y0\x) {$\bullet$};}
		\node[draw, ellipse, inner sep=0pt, fit=(Y00) (Y02)] (Y0) {};
		\draw[mapsto] (X00) -- (Y01);
		\draw[mapsto] (X01) -- (Y01);
		\draw[mapsto] (X02) -- (Y02);
  \end{tikzpicture}
  \qedhere
\]
\end{exercise}

This is the case in any topos $\cat{E}$: for any morphism $f\colon c\to d$,
there exists an epimorphism $e$ and a monomorphism $m$ such that $f=(e\cp m)$ is their composite.

\paragraph{Cartesian closed.}%
\index{closed category!cartesian}
A category $\cat{C}$ being cartesian closed means that $\cat{C}$ has a symmetric
monoidal structure given by products, and it is monoidal closed with respect to
this. (We previously saw monoidal closure in \cref{def.monoidal_closed} (for
preorders) and \cref{prop.double_dual}, as a corollary of compact closure.)
Slightly more down-to-earth, cartesian closure means that for any two objects
$C,D\in\cat{C}$, there is a `hom-object' $D^C\in\cat{C}$ and a natural
isomorphism for any $A\in\cat{C}$:
\begin{equation}%
\label{eqn.currying}
	\cat{C}(A\times C,D)\cong\cat{C}(A,D^C)
\end{equation}

Think of it this way. Suppose you're $A$ and I'm $C$, and we're interacting through some game $f(-,-)\colon A\times C\to D$: for whatever action $a\in A$ that you take and action $c\in C$ that I take, $f(a,c)$ is some value in $D$. Since you're self-centered but loving, you think of this situation as though you're creating a game experience for me. When you do $a$, you make a game $f(a,-)\colon C\to D$ for me alone. In the formalism, $D^C$ represents the set of games for me. So now you've transformed a two-player game, valued in $D$, into a one-player game, you're the player, valued in... one player games valued in $D$. This transformation is invertible---you can switch your point of view at will---and it's called \emph{currying}. This is the content of \cref{ex.currying}.%
\index{currying}

\begin{exercise}%
\label{ex.ccposet_quantale}
Let $\cat{V}=(V,\leq,I,\otimes)$ be a (unital, commutative) quantale---see
\cref{def.quantale}---and suppose it satisfies the following for all $v,w,x\in V$:
\begin{itemize}
	\item $v\leq I$,
	\item $v\otimes w\leq v$ and $v\otimes w\leq w$, and
	\item if $x\leq v$ and $x\leq w$ then $x\leq v\otimes w$.\bigskip
\end{itemize}
\begin{enumerate}
	\item Show that $\cat{V}$ is a cartesian closed category, in fact a cartesian closed preorder.
	\item Can every cartesian closed preorder be obtained in this way?
\qedhere
\end{enumerate}
\end{exercise}

\paragraph{Subobject classifier.}%
\index{subobject classifier|(}
The concept of a subobject classifier requires more attention, because its existence has huge consequences for a category $\cat{C}$. In particular, it creates the setting for a rich system of \emph{higher order logic} to exist inside $\cat{C}$; it does so by providing some things called `truth values'. The higher order logic manifests in its fully glory when $\cat{C}$ has finite limits and is cartesian closed, because these facts give rise to the logical operations on truth values.%
\footnote{A category that has finite limits, is cartesian closed, and has a subobject classifier is called an \emph{elementary topos}. We will not discuss these further, but they are the most general notion of topos in ordinary category theory. When someone says topos, you might ask ``Grothendieck topos or elementary topos?,'' because there does not seem to be widespread agreement on which is the default.
}
In particular, the higher order logic exists in any topos.

We will explain subobject classifiers in as much detail as we can; in fact, it will be our subject for the rest of \cref{sec.logic_and_set}.

%---- Subsection ----%
\subsection{The subobject classifier}%
\label{subsec.subobj_classifier}
Before giving the definition of subobject classifiers, recall that monomorphisms in $\smset$ are injections, and any injection $X\inj Y$ is isomorphic to a subset of $Y$. This gives a simple and useful way to conceptualize monomorphisms into $Y$ when reading the following definition: it will do no harm to think of them as subobjects of $Y$.

\begin{definition}%
\label{def.subobject_classifier}%
\index{subobject classifier}
Let $\cat{E}$ be a category with finite limits, i.e.\ with pullbacks and a terminal object $1$. A \emph{subobject classifier} in $\cat{E}$ consists of an object $\Omega\in\cat{E}$, together with a monomorphism $\true\colon 1\to\Omega$, satisfying the following property: for any objects $X$ and $Y$ and monomorphism $m\colon X\inj Y$ in $\cat{E}$, there is a unique morphism $\corners{m}\colon Y\to\Omega$ such that the diagram on the left of \cref{eqn.subobj_class_pullbacks} is a pullback in $\cat{E}$:
\begin{equation}%
\label{eqn.subobj_class_pullbacks}
\begin{tikzcd}[sep=large]
	X\ar[r, "!"]\ar[d, tail, "m"']&1\ar[d,"\true"]\\
	Y\ar[r, "\corners{m}"']&\Omega\ar[ul, phantom, very near end, "\lrcorner"]
\end{tikzcd}
\hspace{1in}
\begin{tikzcd}[sep=large]
	\{Y\mid p\}\ar[r, "!"]\ar[d, tail]&1\ar[d,"\true"]\\
	Y\ar[r, "p"']&\Omega\ar[ul, phantom, very near end, "\lrcorner"]
\end{tikzcd}
\end{equation}
We refer to $\corners{m}$ as the \emph{characteristic map} of $m$, or we say that $\corners{m}$ \emph{classifies} $m$. Conversely, given any map $p\colon Y\to\Omega$, we denote the pullback of $\true$ as on the right of \cref{eqn.subobj_class_pullbacks}.

A \emph{predicate} on $Y$ is a morphism $Y\to\Omega$.%
\index{predicate}
\end{definition}

\cref{def.subobject_classifier} is a bit difficult to get one's mind around,
partly because it is hard to imagine its consequences. It is like a superdense
nugget from outer space, and through scientific explorations in the latter half
of the 20th century, we have found that it brings super powers to whichever
categories possess it. We will explain some of the consequences below, but very
quickly, the idea is the following.%
\index{subobject classifier!as superdense nugget from outer space}

When a category has a subobject classifier, it provides a translator, turning subobjects of any object $Y$ into maps from that $Y$ to the particular object $\Omega$. Pullback of the monomorphism $\true\colon 1\to\Omega$ provides a translator going back, turning maps $Y\to\Omega$ into subobjects of $Y$. We can replace our fantasy of the superdense nugget with a slightly more refined story: ``any object $Y$ understands itself---its parts and the logic of how they fit together---by asking questions of the oracle $\Omega$, looking for what's true.'' Or to fully be precise but dry, ``subobjects of $Y$ are classified by predicates on $Y$.''

Let's move from stories and slogans to concrete facts.

\paragraph{The subobject classifier in $\smset$.}
\index{subobject classifier!in $\smset$}%
\index{booleans!as subobject classifier}
Since $\smset$ is a topos, it has a subobject classifier. It will be a set with supposedly wonderful properties; what set is it?

The subobject classifier in $\smset$ is the set of booleans,
\begin{equation}%
\label{eqn.Omega_Set}
	\Omega_\smset\coloneqq\BB=\{\true,\false\}.
\end{equation}
So in $\smset$, the truth values are true and false.

By definition (Def.~\ref{def.subobject_classifier}), the subobject classifier comes equipped with a morphism, generically called $\true\colon1\to\Omega$; in the case of $\smset$ it is played by the function $1\to\{\true,\false\}$ that sends 1 to $\true$. In other words, the morphism $\true$ is aptly named in this case.

For sets, monomorphism just means injection, as we mentioned above. So
\cref{def.subobject_classifier} says that for any injective function $m\colon
X\inj Y$ between sets, we are supposed to be able to find a characteristic
function $\corners{m}\colon Y\to\{\true,\false\}$ with some sort of pullback
property. We propose the following definition of $\corners{m}$:
\[
\corners{m}(y)\coloneqq
\begin{cases}
	\true&\tn{ if }m(x)=y\text{ for some $x\in X$}\\
	\false&\tn{ otherwise}
\end{cases}
\]
In other words, if we think of $X$ as a subobject of $Y$, then we make $\corners{m}(y)$ equal to $\true$ iff $y\in X$.

In particular, the subobject classifier property turns subsets $X\ss Y$ into functions $p\colon Y \to \BB$, and vice versa. How it works is encoded in \cref{def.subobject_classifier}, but the basic idea is that $X$ will be the set of all things in $Y$ that $p$ sends to $\true$:
\begin{equation}%
\label{eqn.pullback_concrete_sets}
	X=\{y\in Y\mid p(y)=\true\}.
\end{equation}
This might help explain our abstract notation $\{Y\mid p\}$ in \cref{eqn.subobj_class_pullbacks}.%
\index{notation!for classified subobjects}

\begin{exercise}%
\label{exc.characteristic_practice}
  Let $X=\NN=\{0,1,2,\ldots\}$ and $Y=\ZZ=\{\ldots,-1,0,1,2,\ldots\}$; we have
  $X\ss Y$, so consider it as a monomorphism $m\colon X\inj Y$. It has a
  characteristic function $\corners{m}\colon Y\to\BB$, as in
  \cref{def.subobject_classifier}.
  \begin{enumerate}
    \item What is $\corners{m}(-5)\in\BB$?
    \item What is $\corners{m}(0)\in\BB$?
  \qedhere
\end{enumerate}
\end{exercise}

\begin{exercise}%
\label{exc.simple_char_funs}
  \begin{enumerate}
    \item Consider the identity function $\id_\NN\colon \NN\to \NN$. It is an
      injection, so it has a characteristic function $\corners{\id_\NN}\colon
      \NN\to\BB$.  Give a concrete description of $\corners{\id_\NN}$, i.e.\ its
      exact value for each natural number $n\in\NN$.
    \item Consider the unique function $!_\NN\colon\varnothing\to\NN$ from the
      empty set. Give a concrete description of $\corners{!_\NN}\colon\nn\to\bb$.
  \qedhere
\end{enumerate}
\end{exercise}

\index{subobject classifier|)}%
\index{topos!properties of|)}

% Subsubsection %
\subsection{Logic in the topos $\smset$}%
\label{subsubsec.logic_set}%
\index{logic|(}

As we said above, the subobject classifier of any topos $\cat{E}$ gives the setting in which to do logic. Before we explain a bit about how topos logic works in general, we continue to work concretely by focusing on logic in the topos $\smset$.%
\index{subobject classifier!and logic}

\paragraph{Obtaining the AND operation.}%
\index{AND operation}

Consider the function $1\to\BB\times\BB$ picking out the element $(\true,\true)$. This is a monomorphism, so it defines a characteristic function $\corners{(\true,\true)}\colon\BB\times\BB\to\BB$. What function is it? By \cref{eqn.pullback_concrete_sets} the only element of $\BB\times\BB$ that can be sent to $\true$ is $(\true,\true)$. Thus $\corners{(\true,\true)}(P,Q)\in\BB$ must be given by the following truth table
\[
\begin{array}{cc||c}
	P&Q&\corners{(\true,\true)}(P,Q)\\\hline
	\true&\true&\true\\
	\true&\false&\false\\
	\false&\true&\false\\
	\false&\false&\false
\end{array}
\]
This is exactly the truth table for the AND of $P$ and $Q$, i.e.\ for $P\wedge Q$. In
other words, $\corners{(\true,\true)}=\wedge$. Note that this defines $\wedge$
as a function $\wedge\colon \BB \times \BB \to \BB$, and we use the usual
infix notation $x \wedge y\coloneqq \wedge(x,y)$.%
\index{infix notation}

\paragraph{Obtaining the OR operation.}%
\index{OR operation}
Let's go backwards this time. The truth table for the OR of $P$ and $Q$, i.e.\ that of the function $\vee\colon\BB\times\BB\to\BB$ defining OR, is:
\begin{equation}%
\label{eqn.function_vee}
\begin{array}{cc||c}
	P&Q&P\vee Q\\\hline
	\true&\true&\true\\
	\true&\false&\true\\
	\false&\true&\true\\
	\false&\false&\false
\end{array}
\end{equation}
If we wanted to obtain this function as the characteristic function
$\corners{m}$ of some subset $m\colon X\ss\BB\times\BB$, what subset would $X$
be? By \cref{eqn.pullback_concrete_sets}, $X$ should be the set of $y\in Y$ that
are sent to $\true$. Thus $m$ is the characteristic map for the three element
subset \[X=\{(\true,\true),(\true,\false),(\false,\true)\}\ss\BB\times\BB.\] To
prepare for later generalization of this idea in any topos, we want a way of thinking of $X$ only in terms of properties
listed at the beginning of \cref{subsec.set_like}. In fact, one can think of $X$
as the union of $\{\true\}\times\BB$ and $\BB\times\{\true\}$---a colimit of
limits involving the subobject classifier and terminal object. This description will construct
an analogous subobject of $\Omega\times\Omega$, and hence classify a map $\Omega\times\Omega\to\Omega$, in any topos $\cat{E}$.

\begin{exercise}%
\index{NOT operation}%
\label{exc.neg_char}
Every boolean has a negation, $\neg\false=\true$ and $\neg\true=\false$. The function $\neg\colon\BB\to\BB$ is the characteristic function of some thing, (*?*).
\begin{enumerate}
	\item What sort of thing should (*?*) be? For example, should $\neg$ be the characteristic function of an object? A topos? A morphism? A subobject? A pullback diagram?
	\item Now that you know the sort of thing (*?*) is, which thing of that sort is it?
\qedhere
\end{enumerate}
\end{exercise}

\begin{exercise}%
\index{IMPLIES operation}%
\label{exc.implies_char}%
\index{booleans!as monoidal closed}
Given two booleans $P,Q$, define $P\imp Q$ to mean $P=(P\wedge Q)$.
\begin{enumerate}
	\item Write down the truth table for the statement $P=(P\wedge Q)$:
	\[
	\begin{array}{cc||c|c}
		P&Q&P\wedge Q&P=(P\wedge Q)\\
		\true&\true&\?&\?\\
		\true&\false&\?&\?\\
		\false&\true&\?&\?\\
		\false&\false&\?&\?\\
	\end{array}
	\]
	\item If you already have an idea what $P\imp Q$ should mean, does it agree with the last column of table above?
	\item What is the characteristic function $m\colon \BB\times\BB\to\BB$ for $P\imp Q$?
	\item What subobject does $m$ classify?
\qedhere
\end{enumerate}
\end{exercise}

\begin{exercise}%
\label{exc.even_prime_10}
Consider the sets $E\coloneqq\{n\in\NN\mid n\text{ is even}\}$, $P\coloneqq\{n\in\NN\mid n\text{ is prime}\}$, and $T\coloneqq\{n\in\NN\mid n\geq 10\}$. Each is a subset of $\NN$, so defines a function $\NN\to\BB$.
\begin{enumerate}
	\item What is $\corners{E}(17)$?
	\item What is $\corners{P}(17)$?
	\item What is $\corners{T}(17)$?
	\item Name the smallest three elements in the set classified by $(\corners{E}\wedge\corners{P})\vee\corners{T}$.
\qedhere
\end{enumerate}
\end{exercise}

\paragraph{Review.}
Let's take stock of where we are and where we're going. In \cref{sec.safe_machines}, we set out our goal of proving properties about behavior, and we said that topos theory is a good mathematical setting for doing that. We are now at the end of \cref{sec.logic_and_set}, which was about $\smset$ as an examplar topos. What happened?

In \cref{subsec.set_like}, we talked about properties of $\smset$ that are enjoyed by any topos: limits and colimits, cartesian closure, epi-mono factorizations, and subobject classifiers. Then in \cref{subsec.subobj_classifier} we launched into thinking about the subobject classifier in general and in the specific topos $\smset$, where it is the set $\bb$ of booleans because any subset of $Y$ is classified by a specific predicate $p\colon Y\to\BB$. Finally, in \cref{subsubsec.logic_set} we discussed how to understand logic in terms of $\Omega$: there are various maps $\wedge,\vee,\imp\colon\Omega\times\Omega\to\Omega$ and $\neg\colon\Omega\to\Omega$ etc., which serve as logical connectives. These are operations on truth values.%
\index{limit}%
\index{colimit}

We have talked a lot about toposes, but we've only seen one so far: the category of sets. But we've actually seen more without knowing it: the category $\cat{C}\inst$ of instances on any database schema from \cref{def.instance} is a topos. Such toposes are called \emph{presheaf toposes} and are fundamental, but we will focus on \emph{sheaf toposes}, because our topos of behavior types will be a sheaf topos.%
\index{topos!database instances as}

Sheaves are fascinating, but highly abstract mathematical objects. They are not for the faint of mathematical heart (those who are faint of physical heart are welcome to proceed). 

\index{logic|)}

%-------- Section --------=%
\section{Sheaves}%
\label{sec.sheaves}%
\index{sheaf|(}

Sheaf theory began before category theory, e.g.\ in the form of something called
``local coefficient systems for homology groups.'' However its modern
formulation in terms of functors and sites is due to Grothendieck, who also
invented toposes.

The basic idea is that rather than study spaces, we should study what happens
\emph{on} spaces. A space is merely the `site' at which things happen. For
example, if we think of the plane $\RR^2$ as a space, we might examine only
points and regions in it.  But if we think of $\RR^2$ as a site where things happen,
then we might think of things like weather systems throughout the plane, or sand
dunes, or trajectories and flows of material. There are many sorts of things
that can happen on a space, and these are the sheaves: a sheaf on a space is roughly ``a
sort of thing that can happen on the space.'' If we want to think about points
or regions from the sheaf perspective, we would consider them as different points of view on what's happening. That is, it's all about
what happens on a space: the parts of the space are just perspectives from which
to watch the show.%
\index{site}

This is reminiscent of databases. The schema of a database is not the interesting part; the data is what's interesting. To be clear, the schema of a database is a site---it's acting like the space---and the category of all instances on it is a topos. In general, we can think of any small category $\cat{C}$ as a site; the corresponding topos is the category of functors $\cat{C}\op\to\smset$.%
\footnote{The category of functors $\cat{C}\to\smset$ is also a topos: use
$\cat{C}\op$ as the defining site.}%
\index{database!instances form a topos}
Such functors are called \emph{presheaves on $\cat{C}$}.%
\index{category!of database instances}%
\index{site!database schema as}

Did you notice that we just introduced a huge class of toposes? For any category $\cat{C}$, we said there is a topos of presheaves on it. So before we go on to sheaves, let's discuss this preliminary topic of presheaves. We will begin to develop some terminology and ways of thinking that will later generalize to sheaves.%
\index{presheaves!topos of}

%---- Subsection ----%
\subsection{Presheaves}%
\label{subsec.what_are_toposes}%
\index{presheaf|(}

Recall the definition of functor and natural transformation from \cref{sec.cat_fun_nt_db}. Presheaves are just functors, but they have special terminology that leads us to think about them in a certain geometric way.

\begin{definition}%
\label{def.presheaf}%
\index{presheaf}%
\index{opposite category!and presheaves}
Let $\cat{C}$ be a small category. A \emph{presheaf} $P$ on $\cat{C}$ is a functor $P\colon\cat{C}\op\to\smset$. To each object $c\in\cat{C}$, we refer to the set $P(c)$ as \emph{the set of sections of $P$ over $c$}. To each morphism $f\colon c'\to c$, we refer to the function $P(f)\colon P(c)\to P(c')$ as the \emph{restriction map along $f$}. For any section $s\in P(c)$, we may denote $P(f)(s)\in P(c')$, i.e.\ its restriction along $f$, by $\restrict{s}{f}$.%
\index{restriction map|see {presheaf, restriction map}}%
\index{presheaf!restriction maps}%
\index{presheaf!sections}%
\index{functor!presheaf as}

If $P$ and $Q$ are presheaves, a \emph{morphism} $\alpha\colon P\to Q$ between them
is a natural transformation of functors%
\index{presheaves!morphism of}
\[
\begin{tikzcd}
  \cat{C}\op\ar[r, bend left, "P", ""'{name=P, below}]\ar[r, bend right, "Q"', ""{name=Q, above}]\ar[from=P, to=Q, Rightarrow, "\alpha"]&\smset.
\end{tikzcd}
\]
\end{definition}%
\index{natural transformation!as presheaf morphism}

\begin{example}%
\label{ex.graph_presheaf_topos}%
\index{graphs!topos of}
Let $\Cat{ArShp}$ be the category shown below:
\[
\Cat{ArShp}\coloneqq\boxCD{
\begin{tikzcd}[ampersand replacement=\&, column sep=large]
	\LTO{Vertex}\ar[r, shift left, "\mathrm{src}"]\ar[r, shift right, "\mathrm{tgt}"']\&\LTO{Pure Arrow}
\end{tikzcd}
}
\]
The reason we call our category $\Cat{ArShp}$ is that we can imagine of it as an `arrow shape.'
\begin{equation}%
\label{eqn.arshape}
\begin{tikzpicture}
	\node[coordinate] (Left) {};
	\node[coordinate, right=1 of Left] (Right) {};
	\node at ($(Left)!.5!(Right)$) (Center) {};
	\fill[fill=black] (Left)  circle (1.1pt);
	\fill[fill=black] (Right) circle (1.1pt);
	\draw[->, very thick] (Left) -- (Right);
	\node[draw, inner sep=10pt, fit=(Left) (Right)] (Arrow) {};
	\node[left=0 of Arrow] {Pure Arrow$\coloneqq$};
	\node[above=2 of Center] (V) {};
	\fill[fill=black] (V) circle (2.2pt);
	\node[draw, fit=(V)] (Vertex) {};
	\node[left=0 of Vertex] {Vertex$\coloneqq$};
	\begin{scope}[|->, blue, shorten >=3pt]
  	\draw[bend right=15pt] (V) to node[left] {$\mathrm{src}$} (Left);
  	\draw[bend left=15pt] (V) to node[right] {$\mathrm{tgt}$} (Right);
	\end{scope}
\end{tikzpicture}
\end{equation}
A presheaf on $\Cat{ArShp}$ is a functor $I\colon\Cat{ArShp}\op\to\smset$,
which is a database instance on $\Cat{ArShp}\op$. Note that $\Cat{ArShp}\op$ is
what we called $\Cat{Gr}$ in \cref{subsec.instances_cat}; there we showed that
database instances on $\Cat{Gr}$---i.e.\ presheaves on $\Cat{ArShp}$--- are just
directed graphs, e.g.\
\[
P\coloneqq\boxCD{
\begin{tikzcd}[ampersand replacement=\&]
	\bullet\&\bullet\ar[l]\ar[r]\&\bullet\ar[d, shift left]\ar[d, shift right]\&\bullet\ar[loop below]\\
	\&\bullet\ar[r, shift left]\&\bullet\ar[l, shift left]\&\bullet\ar[r]\&\bullet\ar[ul, bend right]
\end{tikzcd}
}
:\Cat{ArShp}\op\to\smset
\]%
\index{presheaf!as database instance}

Thinking of presheaves on any category $\cat{C}$, it often makes sense to imagine the objects of $\cat{C}$ as shapes of some sort, and the morphisms of $\cat{C}$ as continuous maps between shapes, just like we did for the arrow shape in \cref{eqn.arshape}. In that context, one can think of a presheaf $P$ as a kind of lego construction: $P$ is built out of the shapes in $\cat{C}$, connected together using the morphisms in $\cat{C}$. In the case where $\cat{C}$ is the arrow shape, a presheaf is a graph. So this would say that a graph is a sort of lego construction, built out of vertices and arrows connected together using the inclusion of a vertex as the source or target of an arrow. Can you see it?

This statement can be made pretty precise; though we cannot go through it here,
the above lego idea is summarized by the formal statement that ``the category of
presheaves on $\cat{C}$ is the free colimit completion of
$\cat{C}$.''%
\index{colimit!presheaves form colimit completion} Ask a friendly neighborhood category theorist for details.%
\index{category!of presheaves}
\end{example}

However one thinks of presheaves---in terms of lego assemblies or database instances---they're relatively straightforward. The difference between presheaves and sheaves is that sheaves take into account some sort of `covering information.' The trivial notion of covering is to say that every object covers itself and nothing more; if one uses this trivial covering, presheaves and sheaves are the same thing. In our behavioral context we will need a non-trivial notion of covering, so sheaves and presheaves will be slightly different. Our next goal is to understand sheaves on a topological space.%
\index{cover}

\index{presheaf|)}

%---- Subsection ----%
\subsection{Topological spaces} %
\label{subsec.topology}%
\index{topological space|(}
We said in \cref{sec.sheaves} that, rather than study spaces, we consider spaces
as mere `sites' on which things happen. We also said the things that can
happen on a space are called sheaves, and always form a type of category called
a topos. To define a topos of sheaves, we must start with the site on which they
exist.

Sites are very abstract mathematical objects, and we will not make them precise
in this book. However, one of the easiest sorts of sites to think about are
those coming from topological spaces: every topological space naturally has the
structure of a site. We've talked about spaces for a while without making them
precise; let's do so now.%
\index{site!topological space as}

\begin{definition}%
\label{def.topological_space}%
\index{topological space}%
\index{open set}
  Let $X$ be a set, and let $\powset(X)=\{U\ss X\}$ denote its set of subsets. A
  \emph{topology} on $X$ is a subset $\Op\ss\powset(X)$, elements of which we call \emph{open sets},%
  \tablefootnote{In other words, we refer to a subset $U\ss X$ as \emph{open} if $U\in\Op$.}
  satisfying the following conditions:
  \begin{enumerate}[label=(\alph*)]
  	\item Whole set: the subset $X\ss X$ is open, i.e.\ $X\in\Op$.
  	\item Binary intersections: if $U,V\in\Op$ then $(U\cap V)\in\Op$.
	\item Arbitrary unions: if $I$ is a set and if we are given an open set
	  $U_i\in\Op$ for each $i$, then their union is also open,
	  $\big(\bigcup_{i\in I}U_i\big)\in\Op$. We interpret the particular
	  case where $I=\varnothing$ to mean that the empty set is open:
	  $\varnothing\in\Op$. 
  \end{enumerate}
  If $U=\bigcup_{i\in I}U_i$, we say that $(U_i)_{i\in I}$ \emph{covers} $U$.

  A pair $(X,\Op)$, where $X$ is a set and $\Op$ is a topology on $X$, is called a \emph{topological space}.%
\index{cover}

  A \emph{continuous function} between topological spaces $(X,\Op_X)$ and
  $(Y,\Op_Y)$ is a function $f\colon X\to Y$ such that for every $U\in\Op_Y$,
  the preimage $f^{-1}(U)$ is in $\Op_X$. %
\index{continuous function}%
\index{preimage}
\end{definition}

At the very end of \cref{subsec.what_are_toposes} we mentioned how sheaves differ from presheaves in that they take into account `covering information.' The notion of covering an open set by a union of other open sets was defined in \cref{def.topological_space}, and it will come into play when we define sheaves in \cref{def.sheaf}.

\begin{example}%
\label{ex.usual_R}%
\index{real numbers!topology on}
The usual topology $\Op$ on $\RR^2$ is based on `$\epsilon$-balls.' For any $\epsilon\in\RR$ with $\epsilon>0$, and any point $p=(x,y)\in\RR^2$, define \emph{the $\epsilon$-ball centered at $p$} to be:
\[B(p;\epsilon)\coloneqq\{p'\in\RR^2\mid d(p,p')<\epsilon\}%
\tablefootnote{Here, $d((x,y),(x',y'))\coloneqq\sqrt{(x-x')^2+(y-y')^2}$ is the usual `Euclidean distance' between two points. One can generalize $d$ to any metric.}
\]
In other words, $B(x,y;\epsilon)$ is the set of all points within $\epsilon$ of $(x,y)$.

For an arbitrary subset $U\ss\RR^2$, we call it open and put it in $\Op$ if, for every $(x,y)\in U$ there exists a (small enough) $\epsilon>0$ such that $B(x,y;\epsilon)\ss U$.
\[
\begin{tikzpicture}[font=\tiny]
	\node (north) at (0,2) {};
	\node (south) at (0,-1.4) {};
	\node (east) at (2,0) {};
	\node (west) at (-2,0) {};
	\draw (north) to (south);
	\draw (east) to (west);
  \draw [thick] plot [smooth cycle] coordinates {(-1,0) (1,1) (2,1) (1,0) (2,-1)};
  \node (xy) at (1.5,.8) {};
	\draw (1.5, -.1) to[pos=-.5] node {$x$} (1.5, .1);
	\draw (-.1, .8) to[pos=-.5] node {$y$} (.1, .8);
  \filldraw (xy) circle (.8pt);
  \node [dotted, thick, circle, inner sep=0, minimum size=8pt, draw] (surround xy) at (xy.center) {};
  \node (bigxy) at (6, .8) {};
  \node[above=-4pt of bigxy] {$(x,y)$};
  \filldraw (bigxy) circle (1pt);
  \node[dotted, very thick, circle, minimum size=40pt, draw] (surround bigxy) at (bigxy) {};
  \draw (bigxy.center) to node[below left=-3pt and -3pt] {$\epsilon$} (surround bigxy.south east);
  \draw[dashed, gray] (surround xy.north) -- (surround bigxy.north);
  \draw[dashed, gray] (surround xy.south) -- (surround bigxy.south);
  \node[below=.4 of surround bigxy] {an $\epsilon$-ball centered at $p=(x,y)$};
  \node at (.5,.5) {$U$};
  \node[text width=2.1in] at (1,-1.7) {an open set $U\ss\RR^2$, a point $p=(x,y)\in U$, and an $\epsilon$-ball $B(x,y;\epsilon)\ss U$.};
 \end{tikzpicture}
\]

The same idea works if we replace $\rr^2$ with any other metric space $X$
(\cref{def.ord_metric_space}): it can be considered as a topological space
where the open sets are subsets $U$ such that for any $p\in U$ there is an
$\epsilon$-ball centered at $p$ and contained in $U$. So every metric space can be considered as a topological space.%
\index{metric space!as topological space}
\end{example}

\begin{exercise}%
\label{ex.usual_top_R}
Consider the set $\rr$. It is a metric space with $d(x_1,x_2)\coloneqq|x_1-x_2|$.
\begin{enumerate}
	\item What is the 1-dimensional analogue of $\epsilon$-balls as found in \cref{ex.usual_R}? That is, for each $x\in\RR$, define $B(x,\epsilon)$.
	\item When is an arbitrary subset $U\ss\RR$ called open, in analogy with \cref{ex.usual_R}?
	\item Find three open sets $U_1$, $U_2$, and $U$ in $\RR$, such that $(U_i)_{i\in\{1,2\}}$ covers $U$.
	\item Find an open set $U$ and a collection $(U_i)_{i\in I}$ of opens sets where $I$ is infinite, such that $(U_i)_{i\in I}$ covers $U$.
\qedhere
\end{enumerate}
\end{exercise}

\begin{example}%
\label{ex.coarse_fine}%
\index{topology!discrete}
For any set $X$, there is a `coarsest' topology, having as few open sets as possible: $\Op_{\mathrm{crse}}=(\varnothing,X)$. There is also a `finest' topology, having as many open sets as possible: $\Op_{\mathrm{fine}}=\powset(X)$. The latter, $(X,\powset(X))$ is called the \emph{discrete space on the set $X$}.
\end{example}

\begin{exercise}%
\label{exc.course_fine}
\begin{enumerate}
	\item Verify that for any set $X$, what we called $\Op_{\mathrm{crse}}$ in \cref{ex.coarse_fine} really is a topology, i.e.\ satisfies the conditions of \cref{def.topological_space}.
	\item Verify also that $\Op_{\mathrm{fine}}$ really is a topology.
	\item Show that if $(X,\powset(X))$ is discrete and $(Y,\Op_Y)$ is any topological space, then every function $X\to Y$ is continuous.
\qedhere
\end{enumerate}
\end{exercise}

\begin{example}%
\label{ex.Sierpinski}%
\index{Sierpinski space}
There are four topologies possible on $X=\{1,2\}$. Two are $\Op_{\mathrm{crse}}$ and $\Op_{\mathrm{fine}}$ from \cref{ex.coarse_fine}. The other two are:
\[
  \Op_1\coloneqq\left\{\varnothing,\{1\},X\right\}
  \qquad\text{and}\qquad
  \Op_2\coloneqq\left\{\varnothing,\{2\},X\right\}
\]
The two topological spaces $(\{1,2\},\Op_1)$ and $(\{1,2\},\Op_2)$ are isomorphic; either one can be called \emph{the Sierpinski space}.
\end{example}

\paragraph{The open sets of a topological space form a preorder.}
\index{preorder!of open sets}

Given a topological space $(X,\Op)$, the set $\Op$ has the structure of a preorder using the subset relation, $(\Op,\ss)$. It is reflexive because $U\ss U$ for any $U\in\Op$, and it is transitive because if $U\ss V$ and $V\ss W$ then $U\ss W$.

Recall from \cref{subsubsec.pos_free_spectrum} that we can regard any preorder, and hence $\Op$, as a category: its objects are the open sets $U$ and for any $U,V$ the set of morphisms $\Op(U,V)$ is empty if $U\not\ss V$ and it has one element if $U\ss V$.

\begin{exercise}%
\label{exc.opens_Sierp}
Recall the Sierpinski space, say $(X,\Op_1)$ from \cref{ex.Sierpinski}.
\begin{enumerate}
	\item Write down the Hasse diagram for its preorder of opens.
	\item Write down all the covers.
\qedhere
\end{enumerate}
\end{exercise}

\begin{exercise}%
\label{exc.subspace_topology}%
\index{topology!subspace}
  Given any topological space $(X,\Op)$, any subset $Y\subseteq X$ can be given the
  \emph{subspace topology}, call it $\Op_{?\cap Y}$. This topology defines any $A \subseteq Y$ to be open, $A\in\Op_{?\cap Y}$,
if there is an open set $B\in\Op$ such that $A = B \cap Y$.
\begin{enumerate}
	\item Find a $B\in\Op$ that shows that the whole set $Y$ is open, i.e.\ $Y\in\Op_{?\cap Y}$.
	\item Show that $\Op_{?\cap Y}$ is a topology in the sense of \cref{def.topological_space}.%
	\footnote{Hint 1: for any set $I$, collection of sets $(U_i)_{i\in I}$ with $U_i\ss X$, and set $V\ss X$, one has $\left(\bigcup_{i\in I}U_i\right)\cap V=\bigcup_{i\in I}(U_i\cap V)$. Hint 2: for any $U, V, W\ss X$, one has $(U\cap W)\cap (V\cap W)=(U\cap V)\cap W$.}
	\item Show that the inclusion function $Y \hookrightarrow X$ is a
	  continuous function.
\qedhere
\end{enumerate}
\end{exercise}

\begin{remark}%
\label{rem.top_sp_quantale}%
\index{quantale!of open sets}
Suppose $(X,\Op)$ is a topological space, and consider the preorder $(\Op,\ss)$ of open sets. It turns out that $(\Op,\ss,X,\cap)$ is always a quantale in the sense of \cref{def.monoidal_closed}. We will not need this fact, but we invite the reader to think about it a bit in \cref{exc.top_sp_quantale}.
\end{remark}

\begin{exercise}%
\label{exc.top_sp_quantale}%
\index{booleans!as base of enrichment for preorders}
In \cref{subsec.Lawv_metric_spaces,subsec.preorders_Bool_enriched} we discussed how $\Bool$-categories are preorders and $\Cost$-categories are Lawvere metric spaces, and in \cref{subsec.variations_quantale} we imagined interpretations of $\cat{V}$-categories for other quantales $\cat{V}$.

If $(X,\Op)$ is a topological space and $\cat{V}$ the corresponding quantale as in \cref{rem.top_sp_quantale}, how might we imagine a $\cat{V}$-category? 
\end{exercise}

% Subsubsection %
\subsection{Sheaves on topological spaces}%
\label{subsec.sheaves_on_spaces}

To summarize where we are, a topological space $(X,\Op)$ is a set $X$ together with a bunch of subsets we call `open'; these open subsets form a preorder---and hence category---denoted $\Op$. Sheaves on $X$ will be presheaves on $\Op$ with a special property, aptly named the `sheaf condition.'

Recall the terminology and notation for presheaves: a presheaf on $\Op$ is a functor $P\colon \Op\op\to\smset$. Thus to every open set $U\in\Op$ we have a set $P(U)$, called the set of \emph{sections over $U$}, and to every inclusion of open sets $V\ss U$ we have a function $P(U)\to P(V)$ called the \emph{restriction}. If $s\in P(U)$ is a section over $U$, we may denote its restriction to $V$ by $\restrict{s}{V}$. Recall that we say a collection of open sets $(U_i)_{i\in I}$ \emph{covers} an open set $U$ if $U=\bigcup_{i\in I}U_i$.

We are now ready to give the following definition, which comes in several waves: we first define matching families, then gluing, then sheaf condition, then sheaf, and finally the category of sheaves.%
\index{category!of sheaves}

\begin{definition}%
\label{def.sheaf}%
\index{sheaf}%
\index{sheaf!condition}%
\index{matching family}%
\index{cover}%
\index{gluing}
Let $(X,\Op)$ be a topological space, and let $P\colon\Op\op\to\smset$ be a presheaf on $\Op$. 

Let $(U_i)_{i\in I}$ be a collection of open sets $U_i\in\Op$ covering $U$. A \emph{matching family $(s_i)_{i\in I}$ of $P$-sections over $(U_i)_{i\in I}$} consists of a section $s_i\in P(U_i)$ for each $i\in I$, such that for every $i,j\in I$ we have 
\[
  \restrict{s_i}{U_i\cap U_j}=\restrict{s_j}{U_i\cap U_j}.
\]

Given a matching family $(s_i)_{i\in I}$ for the cover $U=\bigcup_{i\in I}U_i$,
we say that $s\in P(U)$ is a \emph{gluing}, or \emph{glued section}, of the
matching family if $\restrict{s}{U_i}=s_i$ holds for all $i\in I$.
  
If there exists a unique gluing $s\in P(U)$ for every matching family $(s_i)_{i\in I}$, we say that $P$ \emph{satisfies the sheaf condition for the cover $U=\bigcup_{i\in I}U_i$}. If $P$ satisfies the sheaf condition for every cover, we say that $P$ is a \emph{sheaf} on $(X,\Op)$.

Thus a sheaf is just a presheaf satisfying the sheaf condition for every open cover. If $P$ and $Q$
are sheaves, then a \emph{morphism} $f\colon P\to Q$ between these sheaves is just a morphism---that
is, a natural transformation---between their underlying presheaves. We denote by
$\Shv(X,\Op)$ the category of sheaves on $X$.%
\index{sheaves!morphism of}
\end{definition}

The category of sheaves on $X$ is a topos, but we'll get to that.%
\index{topos!as category of sheaves}

\begin{example}%
\label{ex.empty_cover}%
\index{cover!empty}
Here is a funny---but very important---special case to which the notion of matching family applies. We do not give this example for intuition, but because (to emphasize) it's an important and easy-to-miss case. Just like the sum of no numbers is 0 and the product of no numbers is $1$, the union of no sets is the empty set. Thus if we take $U=\varnothing\ss X$ and $I=\varnothing$, then the empty collection of subsets (one for each $i\in I$, of which there are none) covers $U$. In this case the empty tuple $()$ counts a matching family of sections, and it is the only matching family for the empty cover of the empty set.

In other words, in order for a presheaf $P\colon\Op\op\to\smset$ to be a sheaf, a necessary (but rarely sufficient) condition is that $P(\varnothing)\cong\{()\}$, i.e.\ $P(\varnothing)$ must be a set with one element.
\end{example}

\paragraph{Extended example: sections of a function.}%
\index{sheaf!of sections of a function}
This example is for intuition, and gives a case where the `section' and `restriction' terminology are easy to visualize.

Consider the function $f\colon X\to Y$ shown below, where each element of $X$ is
sent to the element of $Y$ immediately below it. For example,
$f(a_1)=f(a_2)=a$, $f(b_1)=b$, and so on.
\begin{equation}%
\label{eqn.sections_of_function}
\begin{tikzpicture}[y=.35cm, every label/.style={font=\scriptsize}, baseline=(f)]
	\node[label={[above=-5pt]:$a$}] (Ya) {$\bullet$};
	\node[right=1 of Ya,  label={[above=-5pt]:$b$}] (Yb) {$\bullet$};
	\node[right=1 of Yb,  label={[above=-5pt]:$c$}] (Yc) {$\bullet$};
	\node[right=1 of Yc,  label={[above=-5pt]:$d$}] (Yd) {$\bullet$};
	\node[right=1 of Yd,  label={[above=-5pt]:$e$}] (Ye) {$\bullet$};
	\node[draw, inner ysep=4pt, fit={(Ya) ($(Yb.north)+(0,1ex)$) (Ye)}] (Y) {};
	\node[left=0 of Y] (Ylab) {$Y\coloneqq$};
  \node[above=4 of Ya, label={[above=-5pt]:$a_1$}] (Xa1) {$\bullet$};
  \node[above=1 of Xa1, label={[above=-5pt]:$a_2$}] (Xa2) {$\bullet$};
  \node[above=4 of Yb, label={[above=-5pt]:$b_1$}] (Xb1) {$\bullet$};
  \node[above=1 of Xb1, label={[above=-5pt]:$b_2$}] (Xb2) {$\bullet$};
  \node[above=1 of Xb2, label={[above=-5pt]:$b_3$}] (Xb3) {$\bullet$};
  \node[above=4 of Yc, label={[above=-5pt]:$c_1$}] (Xc1) {$\bullet$};
  \node[above=4 of Ye, label={[above=-5pt]:$e_1$}] (Xe1) {$\bullet$};
  \node[above=1 of Xe1, label={[above=-5pt]:$e_2$}] (Xe2) {$\bullet$};
	\node[draw, inner ysep=3pt, fit={(Xa2) ($(Xb3.north)+(0,1ex)$) (Xe1)}] (X) {};
	\node[left=0 of X] {$X\coloneqq$};
	\draw[->, shorten <=3pt, shorten >=3pt] (X) to node[left] (f) {$f$} (Y);
\end{tikzpicture}
\end{equation}
For each point $y\in Y$, the preimage set $f^{-1}(y)\ss X$ above it is often called the \emph{fiber over $y$}. Note that different $f$'s would arrange the eight elements of $X$ differently over $Y$: elements of $Y$ would have different fibers.%
\index{preimage}%
\index{fiber|see {preimage}}

\begin{exercise}%
\label{exc.fiber_practice}
Consider the function $f\colon X\to Y$ shown in \cref{eqn.sections_of_function}.
\begin{enumerate}
	\item What is the fiber of $f$ over $a$?
	\item What is the fiber of $f$ over $c$?
	\item What is the fiber of $f$ over $d$?
	\item Gave an example of a function $f'\colon X\to Y$ for which every fiber has either one or two elements.
\qedhere
\end{enumerate}
\end{exercise}

Let's consider $X$ and $Y$ as discrete topological spaces, so every subset is open, and $f$ is automatically  continuous (see \cref{exc.course_fine}). We will think of $f$ as an arrangement of $X$ over $Y$, in terms of fibers as above, and use it to build a sheaf on $Y$. To do this, we begin by building a presheaf---i.e.\ a functor $\Fun{Sec}_f\colon\Op(Y)\op\to\smset$---and then we'll prove it's a sheaf.

Define the presheaf $\Fun{Sec}_f$ on an arbitrary subset $U\ss Y$ by:
\[\Fun{Sec}_f(U)\coloneqq\{s\colon U\to X\mid(s\cp f)(u)=u\text{ for all }u\in U\}.\]
One might describe $\Fun{Sec}_f(U)$ as the set of all ways to pick a `cross-section' of the $f$ arrangement over $U$. That is, an element $s\in\Fun{Sec}_f(U)$ is a choice of one element per fiber over $U$.%
\index{cross section|see {section}}

As an example, let's say $U=\{a,b\}$. How many such $s$'s are there in
$\Fun{Sec}_f(U)$? To answer this, let's clip the picture \eqref{eqn.sections_of_function} and look only at the relevant part:
\begin{equation}%
\label{eqn.all_six_sections}
\begin{tikzpicture}[y=.25cm, x=.25cm, every label/.style={font=\scriptsize}, trim left=2cm]
	\node (Ya6) {};
	\foreach \i [remember=\i as \lasti (initially 6)] in {0,...,5} {
  	\node[label={[below=6.5pt]:$a$}, right=8 of Ya\lasti]   (Ya\i)  {$\bullet$};
  	\node[label={[below=5pt]:$b$}, right=1 of Ya\i]  (Yb\i)  {$\bullet$};
    \node[above=4 of Ya\i]  (Xa1\i) {$\bullet$};
    \node[above=1 of Xa1\i] (Xa2\i) {$\bullet$};
    \node[above=4 of Yb\i]  (Xb1\i) {$\bullet$};
    \node[above=1 of Xb1\i] (Xb2\i) {$\bullet$};
    \node[above=1 of Xb2\i] (Xb3\i) {$\bullet$};
    \node[draw, fit=(Ya\i) (Yb\i)] (Y\i) {};
    \node[draw, fit=(Xa1\i) (Xb3\i)] (X\i) {};
		\draw[->, shorten <=3pt, shorten >=3pt] (X\i) -- (Y\i);
		\tikzmath{
  		int \a, \b, \ii;
  		\a = 1+div(\i, 3);
  		\b = 1+mod(\i,3);
			\ii=1+\i;
		}
    \node[above=0 of X\i] {$s_{\ii}$};
		\begin{scope}[mapsto]
  		\draw[bend left] (Ya\i) to (Xa\a\i);
  		\draw[bend right] (Yb\i) to (Xb\b\i);
			\node[circle, dotted, draw] at (Xa\a\i) {};
			\node[circle, dotted, draw] at (Xb\b\i) {};
		\end{scope}
	}
\end{tikzpicture}
\end{equation}
Looking at the picture \eqref{eqn.all_six_sections}, do you see how we get all cross-sections of $f$ over $U$? 

\begin{exercise}%
\label{exc.presheaf_ex_cont}
Refer to \cref{eqn.sections_of_function}.
\begin{enumerate}
	\item Let $V_1=\{a,b,c\}$. Draw all the sections over it, i.e.\ all elements of $\Fun{Sec}_f(V_1)$, as we did in \cref{eqn.all_six_sections}.
	\item Let $V_2=\{a,b,c,d\}$. Again draw all the sections, $\Fun{Sec}_f(V_2)$.
	\item Let $V_3=\{a,b,d,e\}$. How many sections (elements of $\Fun{Sec}_f(V_3)$) are there?
\qedhere
\end{enumerate}
\end{exercise}

By now you should understand the sections of $\Fun{Sec}_f(U)$ for various $U\ss X$. This is $\Fun{Sec}_f$ on objects, so you are half way to understanding $\Fun{Sec}_f$ as a presheaf. That is, as a presheaf, $\Fun{Sec}_f$ also includes a restriction maps for every subset $V\ss U$. Luckily, the restriction maps are easy: if $V\ss U$, say $V=\{a\}$ and $U=\{a,b\}$, then given a section $s$ as in \cref{eqn.all_six_sections}, we get a section over $V$ by `restricting' our attention to what $s$ does on $\{a\}$. 
\begin{equation}%
\label{eqn.restriction_map_rand9384}
\begin{tikzpicture}[y=.25cm, x=.75cm, every label/.style={font=\scriptsize}, trim left=4.5cm]
	\node (Ya6) {};
	\foreach \i [remember=\i as \lasti (initially 6)] in {0,...,1} {
  	\node[label={[below=6.5pt]:$a$}, right=8 of Ya\lasti]   (Ya\i)  {$\bullet$};
    \node[above=4 of Ya\i]  (Xa1\i) {$\bullet$};
    \node[above=1 of Xa1\i] (Xa2\i) {$\bullet$};
    \node[draw, fit=(Ya\i)] (Y\i) {};
    \node[draw, fit=(Xa1\i) (Xa2\i)] (X\i) {};
		\draw[->, shorten <=3pt, shorten >=3pt] (X\i) -- (Y\i);
		\tikzmath{
  		int \ii;
			\ii=1+\i;
		}
		\begin{scope}[mapsto]
  		\draw[ bend left] (Ya\i) to (Xa\ii\i);
			\node[circle, dotted, draw] at (Xa\ii\i) {};
		\end{scope}
	}
	\node[above=0 of X0] {$\restrict{s_1}{V}=\restrict{s_2}{V}=\restrict{s_3}{V}$};
	\node[above=0 of X1] {$\restrict{s_4}{V}=\restrict{s_5}{V}=\restrict{s_6}{V}$};
\end{tikzpicture}
\end{equation}

\begin{exercise}%
\label{exc.section_practice}
\begin{enumerate}
	\item Write out the sets of sections $\Fun{Sec}_f(\{a,b,c\})$ and $\Fun{Sec}_f(\{a,c\})$.
	\item Draw lines from the first to the second to indicate the restriction map.
\qedhere
\end{enumerate}
\end{exercise}

Now we have understood $\Fun{Sec}_f$ as a presheaf; we next explain how to see that it is a sheaf, i.e.\ that it satisfies the sheaf condition for every cover. To understand the sheaf condition, consider the set $U_1=\{a,b\}$ and $U_2=\{b,e\}$. These cover the set $U=\{a,b,e\}=U_1\cup U_2$. By \cref{def.sheaf}, a matching family for this cover consists of a section over $U_1$ and a section over $U_2$ that agree on the overlap set, $U_1\cap U_2=\{b\}$.

So consider $s_1\in \Fun{Sec}_f(U_1)$ and $s_2\in \Fun{Sec}_f(U_2)$ shown below.
\begin{equation}%
\label{eqn.one_section}
% [inline block 49: 2 envs, 3628 chars -> data_tex | \begin{tikzpicture}[y=.4cm, x=.4cm, every label/.style={font=\scriptsize}, baseline=(f)]   	\node[label={[above=-5pt]:$a...]

\qedhere
\]

\begin{exercise}%
\label{exc.sections_agree_overlap}
Again let $U_1=\{a,b\}$ and $U_2=\{b,e\}$, so the overlap is $U_1\cap U_2=\{b\}$.
\begin{enumerate}
	\item Find a section $s_1\in\Fun{Sec}_f(U_1)$ and a section $s_2\in\Fun{Sec}_f(U_2)$ that \emph{do not} agree on the overlap.
	\item For your answer ($s_1,s_2)$ in part 1, can you find a section $s\in\Fun{Sec}_f(U_1\cup U_2)$ such that $\restrict{s}{U_1}=s_1$ and $\restrict{s}{U_2}=s_2$?
	\item Find a section $h_1\in\Fun{Sec}_f(U_1)$ and a section $h_2\in\Fun{Sec}_f(U_2)$ that \emph{do} agree on the overlap, but which are different than our choice in \cref{eqn.one_section}.
	\item Can you find a section $h\in\Fun{Sec}_f(U_1\cup U_2)$ such that $\restrict{h}{U_1}=h_1$ and $\restrict{h}{U_2}=h_2$?
\qedhere
\end{enumerate}
\end{exercise}

\index{topological space|)}

\paragraph{Other examples of sheaves.}

The extended example above generalizes to any continuous function $f\colon X\to Y$ between
topological spaces.

\begin{example}%
\label{ex.sheaf_from_cont_function}%
\index{sheaf!of sections of a function}
Let $f\colon(X,\Op_X)\to(Y,\Op_Y)$ be a continuous function. Consider the functor $\Fun{Sec}_f\colon\Op_Y\op\to\smset$ given by
\[
\Fun{Sec}_f(U)\coloneqq\{g\colon U\to X\mid g\text{ is continuous and
}(g\cp f)(u)=u\text{ for all }u\in U\},
\]
The morphisms of $\Op_Y$ are inclusions $V\ss U$. Given $g\colon U\to X$ and $V\ss U$, what we call the restriction of $g$ to $V$ is the usual thing we mean by restriction, the same as it was in \cref{eqn.restriction_map_rand9384}. One can again check that $\Fun{Sec}_f$ is a sheaf.
\end{example}

\begin{example}%
\index{sheaf!of vector fields}%
\label{ex.tangent_bundle}%
\index{tangent bundle}%
\index{vector!tangent}%
\index{vector field}
A nice example of a sheaf on a space $M$ is that of vector fields on $M$. If you
calculate the wind velocity at every point on Earth, you will have what's called
a vector field on Earth. If you know the wind velocity at every point in
Afghanistan and I know the wind velocity at every point in Pakistan, and our
calculations agree around the border, then we can glue our information together
to get the wind velocity over the union of the two countries. All possible wind
velocity fields over all possible open sets of the Earth's surface together form
the sheaf of vector fields. 

Let's say this a bit more formally. A manifold $M$---you can just imagine a sphere such as the Earth's surface---always has something called a tangent bundle. It is a space $TM$ whose points are pairs $(m,v)$, where $m\in M$ is a point in the manifold and $v$ is a tangent vector emanating from it. Here's a picture of one tangent plane---all the tangent vectors emanating from some fixed point---on a sphere:
\[
\begin{tikzpicture}
  \shade[ball color = gray!40, opacity = 0.4] (0,0) circle (2cm);
  \node[label={[below left=0 and 0]:$m$}] at (.5, 1.4) (m) {};
  \fill[fill=black] (m) circle (2pt);
  \node[trapezium, dashed, draw, trapezium left angle=120, trapezium right angle=60, minimum width=4cm] at (m) (trap) {};
  \draw[dotted, thin] (0,0) -- (m);
  \draw[->, thin, shorten >=.1cm] (m.center) -- (trap.top side);
  \draw[->, thin, shorten >=.1cm] (m.center) -- (trap.right side);
  \node[label={[below=5pt]:$v$}] at ($(m)+(.7,-.4)$) (v) {};
  \draw[->, thin] (m.center) -- (v.center);
  \node[left] at (-2cm,0) {$M\coloneqq$};
  \node[right=2pt of trap, gray] {$\ss TM$};
\end{tikzpicture}
\]
The tangent bundle $TM$ includes the whole tangent plane shown above---including the three vectors drawn on it---as well as the tangent plane at every other point on the sphere. 

The tangent bundle $TM$ on a manifold $M$ comes with a continuous map $\pi\colon TM\to M$ back down to the manifold, sending $(m,v)\mapsto m$. One might say that $\pi$ ``forgets the tangent vector and just remembers the point it emanated from.'' By \cref{ex.sheaf_from_cont_function}, $\pi$ defines a sheaf $\Fun{Sec}_\pi$. It could be called the sheaf of `tangent vector sections on $M$', but its usual name is the sheaf of \emph{vector fields on $M$}. This is what we were describing when we spoke of the sheaf of wind velocities on Earth, above. Given an open subset $U\ss M$, an element $v\in\Fun{Sec}_\pi(U)$ is called a vector field over $U$ because it continuously assigns a tangent vector $v(u)$ to each point $u\in U$. The tangent vector at $u$ tells us the velocity of the wind at that point.%
\index{tangent bundle!vector fields as sections of}

Here's a fun digression: in the case of a spherical manifold $M$ like the Earth,
it's possible to prove that for every open set $U$, as long as $U\neq M$, there
is a vector field $v\in\Fun{Sec}_\pi(U)$ that is never 0: the wind could be
blowing throughout $U$. However, a theorem of Poincar\'e says that if you look
at the whole sphere, there is guaranteed to be a point $m\in M$ at which the
wind is not blowing at all. It's like the eye of a hurricane or perhaps a
cowlick. A cowlick in someone's hair occurs when the hair has no direction to
go, so it sticks up! Hair sticking up would not count as a tangent vector:
tangent vectors must start out lying flat along the head. Poincar\'{e} proved
that if your head was covered completely with inch-long hair, there would be at
least one cowlick. This difference between local sections (over arbitrary $U\ss
X$) and global sections (over $X$)---namely that hair can be well combed
whenever $U\neq X$ but cannot be well combed when $U=X$---can be thought of as a
generative effect, and can be measured by cohomology (see
\cref{ch1.further_reading}).%
\index{cowlick!inevitability of}
\end{example}%
\index{generative effect}

\begin{exercise}%
\label{exc.whats_the_sheaf}
If $M$ is a sphere as in \cref{ex.tangent_bundle}, we know from \cref{def.sheaf} that we can consider the category $\Shv(M)$ of sheaves on $M$; in fact, such categories are toposes and these are what we're getting to.

But are the sheaves on $M$ the vector fields? That is, is there a one-to-one correspondence between sheaves on $M$ and vector fields on $M$? If so, why? If not, how are sheaves on $M$ and vector fields on $M$ related?
\end{exercise}

\begin{example}%
\label{ex.Sets_as_sheaves}%
\index{set!as sheaf on one-point space}
For every topological space $(X,\Op)$, we have the topos of sheaves on it. The topos of sets, which one can regard as the story of set theory, is the category of sheaves on the one-point space $\{*\}$. In topos theory, we see the category of sets---an huge, amazing, and rich category---as corresponding to a single point. Imagine how much more complex arbitrary toposes are, when they can take place on much more interesting topological spaces (and in fact even more general `sites'). 
\end{example}

\begin{exercise}%
\label{exc.sierpinski}%
\index{Sierpinski space}
Consider the Sierpinski space $(\{1,2\},\Op_1)$ from \cref{ex.Sierpinski}.
\begin{enumerate}
	\item What is the category $\Op$ for this space? (You may have already figured this out in \cref{exc.opens_Sierp}; if not, do so now.)
	\item What does a presheaf on $\Op$ consist of?
	\item What is the sheaf condition for $\Op$?
	\item How do we identify a sheaf on $\Op$ with a function?
	\qedhere
\end{enumerate}
\end{exercise}

\index{sheaf|)}

%---- Subsection ----%

\section{Toposes} %
\label{subsec.topos_logic} %
\label{subsec.def_and_properties_toposes}
\index{topos|(}

A \emph{topos} is defined to be a category of sheaves.%
\footnote{This is sometimes called a \emph{sheaf topos} or a \emph{Grothendieck topos}. There is a more general sort of topos called an \emph{elementary topos} due to Lawvere.}%
\index{topos!as category of sheaves}%
\index{sheaves!topos of}
So for any topological space $(X,\Op)$, the category $\Shv(X,\Op)$ defined in
\cref{def.sheaf} is a topos. In particular, taking the one-point space $X =\Cat{1}$ with its unique topology, we find that the category
$\smset$ is a topos, as we've been saying all along and saw again explicitly in \cref{ex.Sets_as_sheaves}. And for any database schema---i.e.\ finitely presented category---$\cat{C}$, the category $\cat{C}\inst$ of database instances on $\cat{C}$ is also a topos.%
\footnote{We said that a topos is a category of sheaves, yet database instances are presheaves; so how is $\cat{C}\inst$ a topos? Well, presheaves in fact count as sheaves. We apologize that this couldn't be clearer. All of this could be made formal if we were to introduce \emph{sites}. Unfortunately, that concept is simply too abstract for the scope of this chapter.}
Toposes encompass both of these sources of examples, and many more.

Toposes are incredibly nice structures, for a variety of seemingly disparate
reasons. In this sketch, the reason in focus is that every topos has many of the
same structural properties that the category $\smset$ has.  Indeed, we
discussed in \cref{subsec.set_like} that every topos has limits and colimits, is
cartesian closed, has epi-mono factorizations, and has a subobject classifier
(see \cref{subsec.subobj_classifier}). Using these properties, one can do logic
with semantics in the topos $\cat{E}$. We explained this for sets, but now
imagine it for sheaves on a topological space. There, the same logical symbols
$\wedge, \vee, \neg,\imp,\exists,\forall$ become operations that mean something
about sub-sheaves---e.g.\ vector fields, sections of continuous functions,
etc.---not just subsets.%
\index{logic!in a topos}%
\index{epi-mono factorization}

To understand this more deeply, we should say what the subobject classifier
$\true\colon 1 \to \Omega$ is in more generality.  We said that, in the topos $\smset$, the  subobject classifier is the set of booleans
$\Omega=\BB$. In a sheaf topos $\cat{E}=\Shv(X,\Op)$, the object $\Omega\in\cat{E}$ is
a sheaf, not just a set. What sheaf is it? 

% Subsubsection %
\subsection{The subobject classifier $\Omega$ in a sheaf
topos}%
\label{subsec.subob_class_sheaf_topos}%
\index{subobject classifier|(}

In this subsection we aim to understand the subobject classifier $\Omega$, i.e.\
the object of truth values, in the sheaf topos $\Shv(X,\Op)$. Since $\Omega$ is
a sheaf, let's understand it by going through the definition of sheaf
(\cref{def.sheaf}) slowly in this case. A sheaf $\Omega$ is a presheaf that
satisfies the sheaf condition. As a presheaf it is just a functor
$\Omega\colon\Op\op\to\smset$; it assigns a set $\Omega(U)$ to each open
$U\ss X$ and comes with a restriction map $\Omega(U)\to\Omega(V)$ whenever $V\ss
U$. So in our quest to understand $\Omega$, we first ask the question: what
presheaf is it?

The answer to our question is that $\Omega$ is the presheaf that assigns to
$U\in\Op$ the set of open subsets of $U$:
\begin{equation}%
\label{eqn.omega_as_opens}
  \Omega(U)\coloneqq\{U'\in\Op\mid U'\ss U\}.
\end{equation}
That was easy, right? And given the restriction map for $V\ss U$ is given by
\begin{align}%
\label{eqn.omega_open_rest}
	\Omega(U)&\to\Omega(V)\\\nonumber	
	U'&\mapsto U'\cap V.
\end{align}
One can check that this is
functorial---see \cref{exc.Omega_functorial}---and after doing so we will still
need to see that it satisfies the sheaf condition. But at least we don't have to
struggle to understand $\Omega$: it's a lot like $\Op$ itself.

\begin{exercise}%
\label{exc.booleans_as_subspace_1}%
\index{booleans!as subobject classifier}
Let $X=\{1\}$ be the one point space. We said above that its subobject classifier is the set $\bb$ of booleans, but how does that align with the definition of $\Omega$ given in \cref{eqn.omega_as_opens}?
\end{exercise}

\begin{exercise}%
\label{exc.Omega_functorial}~
\begin{enumerate}
	\item Show that the definition of $\Omega$ given above in \cref{eqn.omega_as_opens,eqn.omega_open_rest} is functorial, i.e., that whenever $W\ss V\ss U$, the restriction map $\Omega(U)\to\Omega(V)$ followed by the restriction map $\Omega(V)\to\Omega(W)$ is the same as the restriction map $\Omega(U)\to\Omega(W)$.
	\item Is that all that's necessary to conclude that $\Omega$ is a presheaf?
\qedhere
\end{enumerate}
\end{exercise}

To see that $\Omega$ as defined in \cref{eqn.omega_as_opens} satisfies the sheaf condition (see \cref{def.sheaf}), suppose that we have a cover
$U=\bigcup_{i\in I}U_i$, and suppose given an element $V_i\in\Omega(U_i)$, i.e.\
an open set $V_i\ss U_i$, for each $i\in I$. Suppose further that for all
$i,j\in I$, it is the case that $V_i\cap U_j=V_j\cap U_i$, i.e.\ that the
elements form a matching family. Define
$V\coloneqq\bigcup_{i\in I}V_i$; it is an open subset of $U$, so we can consider
$V$ as an element of $\Omega(U)$. The following verifies that $V$ is indeed a gluing for the $(V_i)_{i\in I}$:
\[V\cap U_j=\left(\bigcup_{i\in I}V_i\right)\cap U_j=\bigcup_{i\in I}(V_i\cap U_j)=\bigcup_{i\in I}(V_j\cap U_i)=\left(\bigcup_{i\in I}U_i\right)\cap V_j=V_j\]
In other words $V\cap U_j=V_j$ for any $j\in I$. So our $\Omega$ has been upgraded from presheaf to sheaf!

The eagle-eyed reader will have noticed that we haven't yet given all the data
needed to define a subobject classifier. To turn the object $\Omega$ into a subobject classifier in good standing, we also need to give a sheaf morphism
$\true\colon\{1\} \to \Omega$. Here $\{1\}\colon \Op\op \to \smset$ is the terminal
sheaf; it maps every open set to the terminal, one element set $\{1\}$. The correct morphism
$\true\colon \{1\} \to \Omega$ for the subobject classifier is the sheaf morphism that assigns, for
every $U \in \Op$ the function $\{1\}=\{1\}(U)\to\Omega(U)$ sending $1\mapsto U$, the largest open set $U\ss U$. From now on we denote $\{1\}$ simply as $1$.

\paragraph{Upshot: Truth values are open sets.}

The point is that the truth values in the topos of sheaves on a space $(X,\Op)$ are the open sets of that space. When someone says ``is property $P$ true?,'' the answer is not yes or no, but ``it is true on the open subset $U$.'' If this $U$ is everything, $U=X$, then $P$ is really true; if $U$ is nothing, $U=\varnothing$, then $P$ is really false. But in general, it's just true some places and not others.

\begin{example}%
\label{ex.subobject_classifier_graphs}
The category $\Cat{Grph}$ of graphs is a presheaf topos, and one can also think of it as the category of instances for a database schema, as we saw in \cref{ex.graph_presheaf_topos}. The subobject classifier $\Omega$ in the topos $\Cat{Gr}$ is thus a graph, so we can draw it. Here's what it looks like:
\[
\Omega_{\Cat{Grph}}=\boxCD{
\begin{tikzcd}[column sep=70pt, ampersand replacement=\&]
	0
		\ar[loop left, "{(0,0;\ 0)}"]
		\ar[r, bend left=15pt, "{(0, V;\ 0)}"]
\&
	V
		\ar[loop above, "{(V, V;\ 0)}"]
		\ar[loop below, "{(V, V;\ A)}"]
		\ar[l, bend left=15pt, "{(V, 0;\ 0)}"]
\end{tikzcd}
}
\]
Finding $\Omega$ for oneself is easiest using something called the Yoneda Lemma, but we have not introduced it. For a nice, easy introduction to the topos of graphs, see \cite{vigna2003guided}. The terminal graph is a single vertex with a single loop, and the graph homomorphism $\true\colon 1\to\Omega$ sends that loop to $(V,V;\ A)$.

Given any graph $G$ and subgraph $i\colon H\ss G$, we need to construct a graph homomorphism $\corners{H}\colon G\to \Omega$ classifying $H$. The idea is that for each part of $G$, we decide ``how much of it is in $H$. A vertex in $v$ in $G$ is either in $H$ or not; if so we send it to $V$ and if not we send it to $0$. But arrows $a$ are more complicated. If $a$ is in $H$, we send it $(V,V;A)$. But if it is not in $H$, the mathematics requires us to ask more questions: is its source in $H$? is its target in $G"$? both? neither? Based on the answers to these questions we send $a$ to $(V, 0;\ 0)$, $(0, V;\ 0)$, $(V, V;\ 0)$, or $(0, 0;\ 0)$, respectively.
\end{example}

\begin{exercise}%
\label{exc.classify_subgraph}
Consider the subgraph $H\ss G$ shown here:
\[
\boxCD{
\begin{tikzcd}[ampersand replacement=\&]
	\LMO{A}\ar[r]\&\LMO{B}\&\LMO{C}
\end{tikzcd}
}
\quad\ss\quad
\boxCD{
\begin{tikzcd}[ampersand replacement=\&]
	\LMO{A}\ar[r, shift left, "f"]\&\LMO{B}\ar[l, shift left, "g"]\ar[r, "h"]\&\LMO{C}\ar[r, "i"]\&\LMO{D}
\end{tikzcd}
}
\]
Find the graph homomorphism $\corners{H}\colon G\to\Omega$ classifying it. See \cref{ex.subobject_classifier_graphs}.
\end{exercise}

% Subsubsection %
\subsection{Logic in a sheaf topos}%
\label{subsec.logic_sheaf_topos}
\index{AND operation}%
\index{OR operation}
Let's consider the logical connectives, AND, OR, IMPLIES, and NOT. Suppose we have a topological space  $X\in\Op$. Given two open sets $U,V$, considered as truth values $U,V\in\Omega(X)$, then their conjunction `$U$ AND $V$' is their intersection, and their disjunction `$U$ OR $V$' is their union;
\begin{equation}%
\label{eqn.AND_OR}
  (U\wedge V)\coloneqq U\cap V
  \qquad\text{and}\qquad
  (U\vee V)\coloneqq U\cup V.
\end{equation}
These formulas are easy to remember, because $\wedge$ looks like $\cap$ and
$\vee$ looks like $\cup$. The implication $U\imp V$ is the largest open set $R$ such that $R\cap U\ss V$, i.e.
\begin{equation}%
\label{eqn.implies_logic}
  (U\imp V)\coloneqq\bigcup_{\{R\in\Op\mid R\cap U\ss V\}}R.
\end{equation}
In general, it is not easy to reduce \cref{eqn.implies_logic} further, so implication is the hardest logical connective to think about topologically.
\index{IMPLIES operation}

\index{NOT operation}
Finally, the negation of $U$ is given by $\neg U\coloneqq(U\imp\false)$, and this turns out to be relatively simple. By the formula in \cref{eqn.implies_logic}, it is the union of all $R$ such that $R\cap U=\varnothing$, i.e.\ the union of all open sets in the complement of $U$. If you know topology, you might recognize that $\neg U$ is the `interior of the complement of $U$.'

\begin{example}
Consider the real line $X=\RR$ as a topological space (see \cref{ex.usual_top_R}). Let $U,V\in\Omega(X)$ be the open sets $U=\{x\in\RR\mid x<3\}$ and $V=\{x\in\RR\mid -4< x < 4\}$. Using interval notation, $U=(-\infty,3)$ and $V=(-4,4)$. Then
\begin{itemize}
	\item $U\wedge V=(-4,3)$.
	\item $U\vee V=(-\infty,4)$.
	\item $\neg U=(3,\infty)$.
	\item $\neg V=(-\infty,-4)\cup(4,\infty)$.
	\item $(U\imp V)=(-4,\infty)$
	\item $(V\imp U)=U$
	\qedhere
\end{itemize}
\end{example}

\begin{exercise}%
\label{exc.real_line_logic}
Consider the real line $\RR$ as a topological space, and consider the open subset $U=\RR-\{0\}$.
\begin{enumerate}
	\item What open subset is $\neg U$?
	\item What open subset is $\neg\neg U$?
	\item Is it true that $U\ss\neg\neg U$?
	\item Is it true that $\neg\neg U\ss U$?
	\qedhere
\end{enumerate}	
\end{exercise}

Above we explained operations on open sets, one corresponding to each logical connective; there are also open sets corresponding to the the symbols $\true$ and $\false$. We explore this in an exercise.

\begin{exercise}%
\label{exc.top_bot_practice}
Let $(X,\Op)$ be a topological space.
\begin{enumerate}
	\item Suppose the symbol $\true$ corresponds to an open set such that for any open set $U\in\Op$, we have $(\true\wedge U)=U$. Which open set is it?
	\item Other things we should expect from $\true$ include $(\true\vee U)=\true$ and $(U\imp \true)=\true$ and $(\true\imp U)=U$. Do these hold for your answer to 1?
	\item The symbol $\false$ corresponds to an open set $U\in\Op$ such that for any open set $U\in\Op$, we have $(\false\vee U)=U$. Which open set is it?
	\item Other things we should expect from $\false$ include $(\false\wedge U)=\false$ and $(\false\imp U)=\true$. Do these hold for your answer to 1?
	\qedhere
\end{enumerate} 
\end{exercise} 

\begin{example}
For a vector bundle $\pi\colon E\to X$ over a space $X$, the corresponding sheaf is $\Fun{Sec}_\pi$ corresponding to its sections: to each open set $i_U\colon U\ss X$, we associate the set of functions $s\colon U\to E$ for which $s\cp\pi=i_U$. For example, in the case of the tangent bundle $\pi\colon TM\to M$ (see \cref{ex.tangent_bundle}), the corresponding sheaf, call it $\const{VF}$, associates to each $U$ the set $\const{VF}(U)$ of vector fields on $U$.

The internal logic of the topos can then be used to consider properties of vector fields. For example, one could have a predicate $\const{Grad}\colon\const{VF}\to\Omega$ that asks for the largest subspace $\const{Grad}(v)$ on which a given vector field $v$ comes from the gradient of some scalar function. One could also have a predicate that asks for the largest open set on which a vector field is non-zero. Logical operations like $\wedge$ and $\vee$ could then be applied to hone in on precise submanifolds throughout which various desired properties hold, and to reason logically about what other properties are forced to hold there.
\end{example}

%---- Subsection ----%
\subsection{Predicates}%
\label{subsec.predicates}
\index{predicate}
In English, a predicate is the part of the sentence that comes after the subject. For example ``\dots is even'' or ``\dots likes the weather'' are predicates. Not every subject makes sense for a given predicate; e.g.\ the sentence ``7 is even'' may be false, but it makes sense. In contrast, the sentence ``2.7 is even'' does not really make sense, and ``2.7 likes the weather'' certainly doesn't. In computer science, they might say ``The expression `2.7 likes the weather' does not type check.'' 

The point is that each predicate is associated to a type, namely the type of subject that makes sense for that predicate. When we apply a predicate to a subject of the appropriate type, the result has a truth value: ``7 is even'' is either true or false. Perhaps ``Bob likes the weather'' is true some days and false on others. In fact, this truth value might change by the year (bad weather this year), by the season, by the hour, etc. In English, we expect truth values of sentences to change over time, which is exactly the motivation for this chapter. We're working toward a logic where truth values change over time.

In a topos $\cat{E}=\Shv(X,\Op)$, a predicate is a sheaf morphism $p\colon S\to\Omega$ where $S\in\cat{E}$ is a sheaf and $\Omega\in\cat{E}$ is the subobject classifier, the sheaf of truth values. By \cref{def.sheaf} we get a function $p(U)\colon S(U)\to\Omega(U)$ for any open set $U\ss X$. In the above example---which we will discuss more carefully in \cref{subsec.topos_behavior_types}---if $S$ is the sheaf of people (people come and go over time), and $\mathrm{Bob}\in S(U)$ is a person existing over a time $U$, and $p$ is the predicate ``likes the weather,'' then $p(\mathrm{Bob})$ is the set of times during which Bob likes the weather. So the answer to ``Bob likes the weather'' is something like ``in summers yes, and also in April 2018 and May 2019 yes, but in all other times no.'' That's $p(\mathrm{Bob})$, the temporal truth value obtained by applying the predicate $p$ to the subject Bob.

\begin{exercise}%
\label{exc.weather_bob}
Just now we described how a predicate $p\colon S\to\Omega$, such as ``\dots likes the weather,'' acts on sections $s\in S(U)$, say $s=\mathrm{Bob}$. But by \cref{def.subobject_classifier}, any predicate $p\colon S\to\Omega$ also defines a subobject of $\{S\mid p\}\ss S$. Describe the sections of this subsheaf.
\end{exercise}

\paragraph{The poset of subobjects.}
\index{preorder!of subobjects}

For a topos $\cat{E}=\Shv(X,\Op)$ and object (sheaf) $S\in\cat{E}$, the set of $S$-predicates $|\Omega^E|=\cat{E}(S,\Omega)$ is naturally given the structure of a poset, which we denote 
\begin{equation}%
\label{eqn.preorder_of_predicates}
	(|\Omega^S|, \leq^S)
\end{equation}
Given two predicates $p,q\colon S\to\Omega$, we say that $p\leq^S q$ if the first implies the second. More precisely, for any $U\in\Op$ and section $s\in S(U)$ we obtain two open subsets $p(s)\ss U$ and $q(s)\ss U$. We say that $p\leq^S q$ if $p(s)\ss q(s)$ for all $U\in\Op$ and $s\in S(U)$. We often drop the superscript from $\leq^S$ and simply write $\leq$. In formal logic notation, one might write $p\leq^Sq$ using the $\vdash$ symbol, e.g.\ in one of the following ways:
\[s:S\mid p(s)\vdash q(s)\qquad\text{or}\qquad p(s)\vdash_{s:S}q(s).\]
In particular, if $S=1$ is the terminal object, we denote $|\Omega^S|$ by $|\Omega|$, and refer to elements $p\in|\Omega|$ as \emph{propositions}. They are just morphisms $p\colon 1\to\Omega$.

This preorder is partially ordered---a poset---meaning that if $p\leq q$ and $q\leq p$ then $p=q$. The reason is that for any subsets $U,V\ss X$, if $U\ss V$ and $V\ss U$ then $U=V$.

\begin{exercise}%
\label{exc.vdash}
Give an example of a space $X$, a sheaf $S\in\Shv(X)$, and two predicates $p,q\colon S\to\Omega$ for which $p(s)\vdash_{s:S} q(s)$ holds. You do not have to be formal.
\end{exercise}

All of the logical symbols ($\true,\false,\wedge,\vee,\imp,\neg$) from
\cref{subsec.logic_sheaf_topos} make sense in any such poset $|\Omega^S|$. For
any two predicates $p,q\colon S\to\Omega$, we define $(p\wedge q)\colon
S\to\Omega$ by $(p\wedge q)(s)\coloneqq p(s)\wedge q(s)$, and similarly for
$\vee$. Thus one says that these operations are \emph{computed pointwise} on
$S$. With these definitions, the $\wedge$ symbol is the meet and the $\vee$
symbol is the join---in the sense of \cref{def.meets_joins}---for the poset
$|\Omega^S|$.

With all of the logical structure we've defined so far, the poset $|\Omega^S|$ of predicates on $S$ forms what's called a Heyting algebra. We will not define it here, but more information can be found in \cref{sec.C7_further_reading}. We now move on to quantification.

%---- Subsection ----%
\subsection{Quantification}%
\label{subsec.quantification}
\index{quantification}

Quantification comes in two flavors: universal and existential, or `for all' and `there exists.' Each takes in a predicate of $n+1$ variables and returns a predicate of $n$ variables.

\begin{example}%
\label{ex.worried_pred}
Suppose we have two sheaves $S,T\in\Shv(X,\Op)$ and a predicate $p\colon S\times T\to\Omega$. Let's say $T$ represents what's considered newsworthy and $S$ is again the set of people. So for a subset of time $U$, a section $t\in T(U)$ is something that's considered newsworthy throughout the whole of $U$, and a section $s\in S(U)$ is a person that lasts throughout the whole of $U$. Let's imagine the predicate $p$ as ``$s$ is worried about $t$.'' Now recall from \cref{subsec.predicates} that a predicate $p$ does not simply return true or false; given a person $s$ and a news-item $t$, it returns a truth value corresponding to the subset of times on which $p(s,t)$ is true. 

``For all $t$ in $T$, \dots is worried about $t$'' is itself a predicate on just one variable, $S$, which we denote
\[\forall(t:T)\ldotp p(s,t).\]
Applying this predicate to a person $s$ returns the times when that person is worried about everything in the news. Similarly, ``there exists $t$ in $T$ such that $s$ is worried about $t$'' is also a predicate on $S$, which we denote $\exists(t:T)\ldotp p(s,t)$. If we apply this predicate to a person $s$, we get the times when person $s$ is worried about at least one thing in the news.
\end{example}

\begin{exercise}%
\label{exc.predicate_practice}
In the topos $\smset$, where $\Omega=\BB$, consider the predicate $p\colon\NN\times\ZZ\to\BB$ given by
\[
  p(n,z)=
  \begin{cases}
    \true&\tn{ if }n\leq|z|\\
    \false&\tn{ if }n>|z|.
  \end{cases}
\]
\begin{enumerate}
	\item What is the set of $n\in\NN$ for which the predicate $\forall(z:\ZZ)\ldotp p(n,z)$ holds?
	\item What is the set of $n\in\NN$ for which the predicate $\exists(z:\ZZ)\ldotp p(n,z)$ holds?
	\item What is the set of $z\in\ZZ$ for which the predicate $\forall(n:\NN)\ldotp p(n,z)$ holds?
	\item What is the set of $z\in\ZZ$ for which the predicate $\exists(n:\NN)\ldotp p(n,z)$ holds?	
\qedhere
\end{enumerate}
\end{exercise}

So given $p$, we have a universally- and an existentially-quantified predicate
$\forall(t:T)\ldotp p(s,t)$ and $\exists(t:T)\ldotp p(s,t)$ on $S$. How do we
formally understand them as sheaf morphisms $S\to\Omega$ or, equivalently, as
subsheaves of $S$?

\paragraph{Universal quantification.}

Given a predicate $p\colon S\times T\to\Omega$, the universally-quantified predicate $\forall(t:T)\ldotp p(s,t)$ takes a section $s\in S(U)$, for any open set $U$, and returns a certain open set $V\in\Omega(U)$. Namely, it returns the largest open set $V\ss U$ for which $p(\restrict{s}{V},t)=V$ holds for all $t\in T(V)$. 

\begin{exercise}%
\label{exc.worrying_news_universal}
Suppose $s$ is a person alive throughout the interval $U$. Apply the above definition to the example $p(s,t)=$ ``person $s$ is worried about news $t$'' from \cref{ex.worried_pred}. Here, $T(V)$ is the set of items that are in the news throughout the interval $V$.
\begin{enumerate}
  \item What open subset of $U$ is $\forall(t:T)\ldotp p(s,t)$ for a person $s$?
  \item Does it have the semantic meaning you'd expect, given the less formal description in \cref{subsec.quantification}?
\qedhere
\end{enumerate}
\end{exercise}

Abstractly speaking, the universally-quantified predicate corresponds to the subsheaf given by the following pullback:
\[
\begin{tikzcd}
	\forall_tp\ar[r]\ar[d, tail]&1\ar[d, tail, "\true^T"]\\
	S\ar[r,"p'"']&\Omega^T\ar[ul, phantom, pos=1, "\lrcorner"]
\end{tikzcd}
\]
where $p'\colon S\to\Omega^T$ is the currying of $S\times T\to\Omega$ and
$\true^T$ is the currying of the composite $1\times T\To{!} 1\To{\true}\Omega$.
See \cref{eqn.currying}.%
\index{currying}

\paragraph{Existential quantification.}

Given a predicate $p\colon S\times T\to\Omega$, the existentially quantified predicate $\exists(t:T)\ldotp p(s,t)$ takes a section $s\in S(U)$, for any open set $U$, and returns a certain open set $V\in\Omega(U)$, namely the union $V=\bigcup_iV_i$ of all the open sets $V_i$ for which there exists some $t_i\in T(V_i)$ satisfying $p(\restrict{s}{V_i},t_i)=V_i$. If the result is $U$ itself, you might be tempted to think ``ah, so there exists some $t\in T(U)$ satisfying $p(t)$,'' but that is not necessarily so. There is just a cover of $U=\bigcup U_i$ and local sections $t_i\in T(U_i)$, each satisfying $p$, as explained above. Thus the existential quantifier is doing a lot of work ``under the hood,'' taking coverings into account without displaying that fact in the notation.

\begin{exercise}%
\label{exc.worrying_news_existential}
Apply the above definition to the ``person $s$ is worried about news $t$'' predicate from \cref{ex.worried_pred}.
\begin{enumerate}
  \item What open set is $\exists(t:T)\ldotp p(s,t)$ for a person $s$?
  \item Does it have the semantic meaning you'd expect?
\qedhere
\end{enumerate}
\end{exercise}

Abstractly speaking, the existentially-quantified predicate is given as follows. Start with the subobject classified by $p$, namely $\{(s,t)\in S\times T\mid p(s,t)\}\ss S\times T$, compose with the projection $\pi_S\colon S\times T\to S$ as on the upper right; then take the epi-mono factorization of the composite as on the lower left:%
\index{epi-mono factorization!and existential quantification}
\[
\begin{tikzcd}
  \{S\times T\mid p\}\ar[tail,r]\ar[d, two heads]& S\times T\ar[d,"\pi_S"]\\
  \exists_tp\ar[r, tail]&S
\end{tikzcd}
\]
Then the bottom map is the desired subsheaf of $S$.

% Subsubsection %
\subsection{Modalities}%
\label{subsec.modalities}%
\index{modal operator}

Back in \cref{ex.modal_operator} we discussed modal operators---also known as
modalities---saying they are closure operators on preorders which arise in
logic. The preorders we were referring to are the ones discussed in
\cref{eqn.preorder_of_predicates}: for any object $S\in\cat{E}$ there is the poset
$(|\Omega^S|,\leq^S)$ of predicates on $S$, where $|\Omega^S|=\cat{E}(S,\Omega)$ is just the set of morphisms $S\to\Omega$ in the category $\cat{E}$.%
\index{closure operator}

\begin{definition}%
\label{def.modality}
A \emph{modality} in $\Shv(X)$ is a sheaf morphism $j\colon\Omega\to\Omega$ satisfying three properties for all $U\ss X$ and $p,q\in\Omega(U)$:
\begin{enumerate}[label=(\alph*)]
	\item $p\leq j(p)$; 
	\item $(j\cp j)(p)\leq j(p)$;	and
	\item $j(p\wedge q)=j(p)\wedge j(q)$.
\end{enumerate}
\end{definition}

\begin{exercise}%
\label{exc.two_defs_of_closure}
Suppose $j\colon\Omega\to\Omega$ is a morphism of sheaves on $X$, such that $p\leq j(p)$ holds for all $U\ss X$ and $p\in\Omega(U)$. Show that for all $q\in\Omega(U)$ we have $j(j(q))\leq j(q)$ iff $j(j(q))=j(q)$.
\end{exercise}

%Note that 1.\ and 2.\ together say that $j$ is a closure operator in the sense of \cref{ex.modal_operator}; they together imply that $j\cp j=j$.

%\begin{proposition}
%Suppose that $\cat{E}$ is a topos, $\Omega$ is its subobject classifier, and $j\colon\Omega\to\Omega$ is a modality on $\cat{E}$. Then for any $S\in\cat{E}$ and predicates $p,q\in|\Omega^S|$, one has
%\[p\wedge j(p\imp q)\leq jq\]
%\end{proposition}
%\begin{proof}
%By the universal property of meets in a preorder, $x\leq x'$ and $y\leq y'$ imply $(x\wedge x')\leq (y\wedge y')$. By \cref{def.modality} 1., $p\leq jp$, and by reflexivity $j(p\imp q)\leq j(p\imp q)$. Thus, also using 3., 
%\[(p\wedge j(p\imp q))\leq (jp\wedge j(p\imp q))=j(p\wedge (p\imp q))\]
%By the adjunction \cref{eqn.adjunction_informal} and the definition of meets, we have $p\wedge (p\imp q)=p\wedge q\leq q$. Thus applying $j$ we have $j(p\wedge (p\imp q))\leq j(q)$, and we are done.
%\end{proof}

In \cref{ex.modal_operator} we informally said that for any proposition $p$, e.g. ``Bob is in San Diego,'' there is a modal operator ``assuming $p$, ....'' Now we are in a position to make that formal.

\begin{proposition}%
\label{prop.open_closed_quasiclosed}
Fix a proposition $p\in|\Omega|$. Then
\begin{enumerate}[label=(\alph*)]
	\item the sheaf morphism $\Omega\to\Omega$ given by sending $q$ to $p\imp q$ is a modality.
	\item the sheaf morphism $\Omega\to\Omega$ given by sending $q$ to $p\vee q$ is a modality.
	\item the sheaf morphism $\Omega\to\Omega$ given by sending $q$ to $(q\imp p)\imp p$ is a modality.
\end{enumerate}
\end{proposition}

We cannot prove \cref{prop.open_closed_quasiclosed} here, but we give references in \cref{sec.C7_further_reading}.

\begin{exercise}%
\label{exc.check_modality}
Let $S$ be the sheaf of people as in \cref{subsec.predicates}, and let $j\colon\Omega\to\Omega$ be ``assuming Bob is in San Diego...'' 
\begin{enumerate}
	\item Name any predicate $p\colon S\to\Omega$, such as ``likes the weather.''
	\item Choose a time interval $U$. For an arbitrary person $s\in S(U)$, what sort of thing is $p(s)$, and what does it mean?
	\item What sort of thing is $j(p(s))$ and what does it mean?
	\item Is it true that $p(s)\leq j(p(s))$? Explain briefly.
	\item Is it true that $j(j(p(s)))=j(p(s))$? Explain briefly.
	\item Choose another predicate $q\colon S\to\Omega$. Is it true that $j(p\wedge q)=j(p)\wedge j(q)$? Explain briefly.
\qedhere
\end{enumerate}
\end{exercise}

\index{subobject classifier|)}

%---- Subsection ----%
\subsection{Type theories and semantics}
\label{subsec.type_th_and_semantics}%
\index{type theory}%
\index{semantics}

We have been talking about the logic of a topos in terms of open sets, but this is actually a conflation of two ideas that are really better left unconflated. The first is logic, or formal language, and the second is semantics, or meaning. The formal language looks like this:
\begin{equation}%
\label{eqn.logic_surjective}
	\forall(t:T)\ldotp\exists(s:S)\ldotp f(s)=t
\end{equation}
and semantic statements are like ``the sheaf morphism $f\colon S\to T$ is an
epimorphism.'' In the former, logical world, all statements are linguistic expressions formed according to strict
rules and all proofs are deductions that also follow strict rules. In the
latter, semantic world, statements and proofs are about the sheaves themselves, as mathematical objects. We admit these are rough
statements; again, our aim here is only to give a taste, an invitation to further reading.

To \emph{provide semantics} for a logical system means to provide a compiler that converts each logical statement in the formal language into a mathematical statement about particular sheaves and their relationships. A computer can carry out logical deductions without knowing what any of them ``mean'' about sheaves. We say that semantics is \emph{sound} if every formal proof is converted into a true fact about the relevant sheaves.%
\index{semantics!sound}

Every topos can be assigned a formal language, often called its \emph{internal
language}, in which to carry out constructions and formal proofs. This language
has a sound semantics---a sort of logic-to-sheaf compiler---which goes under the name \emph{categorical semantics} or \emph{Kripke-Joyal semantics}. We gave the basic ideas in \cref{subsec.topos_logic}; we give references to the literature in \cref{sec.C7_further_reading}.%
\index{language!internal}

\begin{example}
In every topos $\cat{E}$, and for every $f\colon S\to T$ in $\cat{E}$, the morphism $f$ is an epimorphism if and only if \cref{eqn.logic_surjective} holds. For example, consider the case of database instances on a schema $\cat{C}$, say with 100 tables (one of which might be denoted $c\in\Ob(\cat{C})$) and 500 foreign key columns (one of which might be denoted $f\colon c\to c'$ in $\cat{C}$); see \cref{eqn.free_schema}.

If $S$ and $T$ are two instances and $f$ is a natural transformation between them, then we can ask the question of whether or not \cref{eqn.logic_surjective} holds. This simple formula is compiled by the Kripke-Joyal semantics into asking:
\begin{quote}
	Is it true that for every table $c\in\Ob(\cat{C})$ and every row $s\in S(c)$ there exists a row $t\in T(c)$ such that $f(s)=t$?
\end{quote}
This is exactly what it means for $f$ to be surjective. Maybe this is not too impressive, but whether one is talking about databases or topological spaces, or complex ideas from algebraic geometry, \cref{eqn.logic_surjective} always compiles into the question of surjectivity. For topological spaces it would say something like:
\begin{quote}
	Is it true that for every open set $U\ss X$ and every section $s\in S(U)$ of the bundle $S$, there exists an open covering of $(U_i\ss U)_{i\in I}$ of $U$ and a section $t_i\in T(U_i)$ of the bundle $T$ for each $i\in I$, such that $f(t_i)=\restrict{s}{U_i}$ is the restriction of $s$ to $U_i$?
\end{quote}
\end{example}

%-------- Section --------%
\section{A topos of behavior types}%
\label{subsec.topos_behavior_types}%
\index{behavior!topos for}

Now that we have discussed logic in a sheaf topos, we return to our motivating example, a topos of behavior types. We begin by discussing the topological space on which behavior types will be sheaves, a space called the \emph{interval domain}.%

\begin{remark}
Note that above, we were thinking very intuitively about time, e.g.\ when we discussed people being worried about the news. Now we will be thinking about time in a different way, but there is no need to change your answers or reconsider the intuitive thinking done above.
\end{remark}

%---- Subsection ----%
\subsection{The interval domain}%
\label{subsec.IR}
\index{interval domain, $\IR$|(}

The interval domain $\IR$ is a specific topological space, which we will use to model intervals of time. In other words, we will be interested in the category $\Shv(\IR)$ of sheaves on the interval domain.%
\index{topological space}

To give a topological space, one must give a pair $(X,\Op)$, where $X$ is a set of `points' and $\Op$ is a topology on $X$; see \cref{def.topological_space}. The set of points for $\IR$ is that of all finite closed intervals
\[\IR\coloneqq\{[d,u]\ss\RR\mid d\leq u\}.\]
For $a<b$ in $\RR$, let $o_{[a,b]}$ denote the set $o_{[a,b]}\coloneqq\{[d,u]\in\IR\mid a<d\leq u<b\}$; these are called \emph{basic open sets}. The topology $\Op$ is determined by these basic open sets in that a subset $U$ is open if it is the union of some collection of basic open sets.

Thus for example, $o_{[0,5]}$ is an open set: it contains every $[d,u]$ contained in the open interval $\{x\in\RR\mid 0<x<5\}$. Similarly $o_{[4,8]}$ is an open set, but note that $o_{[0,5]}\cup o_{[4,8]}\neq o_{[0,8]}$. Indeed, the interval $[2,6]$ is in the right-hand side but not the left.

\begin{exercise}%
\label{exc.explain_intervals}
\begin{enumerate}
	\item Explain why $[2,6]\in o_{[0,8]}$.
	\item Explain why $[2,6]\not\in o_{[0,5]}\cup o_{[4,8]}$.
\qedhere
\end{enumerate}
\end{exercise}

Let $\Op$ denote the open sets of $\IR$, as described above, and let $\Cat{BT}\coloneqq\Shv(\IR,\Op)$ denote the topos of sheaves on this space. We call it the topos of \emph{behavior types}.

There is an important subspace of $\IR$, namely the usual space of real numbers $\RR$. We see $\RR$ as a subspace of $\IR$ via the isomorphism
\[\RR\cong\{[d,u]\in\IR\mid d=u\}.\]
We discussed the usual topology on $\RR$ in \cref{ex.usual_R}, but we also get a topology on $\RR$ because it is a subset of $\IR$; i.e.\ we have the subspace topology as described in \cref{exc.subspace_topology}. These agree, as the reader can check.

\begin{exercise}%
\label{exc.R_subsp_IR}
Show that a subset $U\ss\RR$ is open in the subspace topology of $\RR\ss\IR$ iff $U\cap\RR$ is open in the usual topology on $\RR$ defined in \cref{ex.usual_R}.
\end{exercise}

%---- Subsection ----%
\subsection{Sheaves on $\IR$}

We cannot go into much depth about the sheaf topos $\Cat{BT}=\Shv(\IR,\Op)$, for reasons of space; we refer the interested reader to \cref{sec.C7_further_reading}. In this section we will briefly discuss what it means to be a sheaf on $\IR$, giving a few examples including that of the subobject classifier.

\paragraph{What is a sheaf on $\IR$?}%
\index{sheaf!on $\IR$ as semantics of behavior}

A sheaf $S$ on the interval domain $(\IR,\Op)$ is a functor $S\colon\Op\op\to\smset$:
it assigns to each open set $U$ a set $S(U)$; how should we interpret this? An
element $s\in S(U)$ is something that says is an ``event that takes place throughout the interval $U$.'' Given this $U$-event $s$ together with an open subset of
$V\ss U$, there is a $V$-event $\restrict{s}{V}$ that tells us what $s$ is if we regard it as an event taking place
throughout $V$. If $U=\bigcup_{i\in I}U_i$ and we can find matching $U_i$-events $(s_i)$ for each $i\in I$, then the sheaf condition (\cref{def.sheaf}) says that they have a unique gluing, i.e.\ a $U$-event $s\in S(U)$ that encompasses all of them: $\restrict{s}{U_i}=s_i$ for each $i\in I$.

We said in \cref{subsec.IR} that every open set $U\ss\IR$ can be written as the union of basic open sets $o_{[a,b]}$. This implies that any sheaf $S$ is determined by its values $S(o_{[a,b]})$ on these basic open sets. The sheaf condition furthermore implies that these vary continuously in a certain sense, which we can express formally as
\[S(o_{[a,b]})\cong\lim_{\epsilon>0}S(o_{[a-\epsilon,b+\epsilon]}).\]
However, rather than get into the details, we describe a few sorts of sheaves that may be of interest.

\begin{example}%
\label{ex.const_sheaves}%
\index{sheaf!constant}
For any set $A$ there is a sheaf $\const{A}\in\Shv(\IR)$ that assigns to each open set $U$ the set $\const{A}(U)\coloneqq A$. This allows us to refer to integers, or real numbers, or letters of an alphabet, as though they were behaviors. What sort of behavior is $7\in\NN$? It is the sort of behavior that never changes: it's always seven. Thus $\const{A}$ is called the \emph{constant sheaf on $A$}.
\end{example}

\begin{example}%
\index{sheaf!of local functions}
Fix any topological space $(X,\Op_X)$. Then there is a sheaf $F_X$ of \emph{local functions from $\IR$ to $X$}. That is, for any open set $U\in\Op_{\IR}$, we assign the set $F_X(U)\coloneqq\{f\colon U\to X\mid f\text{ is continuous}\}$. There is also the sheaf $G_X$ of local functions on the subspace $\RR\ss\IR$. That is, for any open set $U\in\Op_{\IR}$, we assign the set $G_X(U)\coloneqq\{f\colon U\cap\RR\to X\mid f\text{ is continuous}\}$. 
\end{example}

\begin{exercise}%
\label{exc.interval_domain_top}
Let's check that \cref{ex.const_sheaves} makes sense. Fix any topological space $(X,\Op_X)$ and any subset $R\ss\IR$ of the interval domain. Define $H_X(U)\coloneqq\{f\colon U\cap R\to X\mid f\text{ is continuous}\}$.
\begin{enumerate}
	\item Is $H_X$ a presheaf? If not, why not; if so, what are the restriction maps?
	\item Is $H_X$ a sheaf? Why or why not?
	\qedhere
\qedhere
\end{enumerate}
\end{exercise}

\begin{example}
Another source of examples comes from the world of open hybrid dynamical systems. These are machines whose behavior is a mixture of continuous movements---generally imagined as trajectories through a vector field---and discrete jumps. These jumps are imagined as being caused by signals that spontaneously arrive. Over any interval of time, a hybrid system has certain things that it can do and certain things that it cannot. Although we will not make this precise here, there is a construction for converting any hybrid system into a sheaf on $\IR$; we will give references in \cref{sec.C7_further_reading}.%
\index{dynamical system!hybrid}
\end{example}

We refer to sheaves on $\IR$ as behavior types because almost any sort of behavior one can imagine is a behavior type. Of course, a complex behavior type---such as the way someone acts when they are in love---would be extremely hard to write down. But the idea is straightforward: for any interval of time, say a three-day interval $(d,d+3)$, let $L(d,d+3)$ denote the set of all possible behaviors a person who is in love could possibly do. Obviously it's a big, unwieldy set, and no one would want to make precise. But to the extent that one can imagine that sort of behavior as occurring through time, they could imagine the corresponding sheaf.

\paragraph{The subobject classifier as a sheaf on $\IR$.}%
\index{subobject
classifier!for behavior types}
In any sheaf topos, the subobject classifier $\Omega$ is itself a sheaf. It is responsible for the truth values in the topos. As we said in \cref{subsec.subob_class_sheaf_topos}, when it comes to sheaves on a topological space $(X,\Op)$, truth values are open subsets $U\in\Op$.

$\Cat{BT}$ is the topos of sheaves on the space $(\IR,\Op)$, as defined in \cref{subsec.IR}. As always, the subobject classifier $\Omega$ assigns to any $U\in\Op$ the set of open subsets of $U$, so these are the truth values. But what do they mean? The idea is that every proposition, such as ``Bob likes the weather'' returns an open set $U$, as if to respond that Bob likes the weather ``...throughout time period $U$.'' Let's explore this just a bit more.

Suppose Bob likes the weather throughout the interval $(0,5)$ and throughout the
interval $(4,8)$. We would probably conclude that Bob likes the weather
throughout the interval $(0,8)$. But what about the more ominous statement ``a
single pair of eyes has remained watching position $p$.'' Then just
because it's true on $(0,5)$ and on $(4,8)$, does not imply that it's been true
on $(0,8)$: there may have been a change of shift, where one watcher was
relieved from their post by another watcher. As another example, consider the
statement ``the stock market did not go down by more than 10 points.'' This
might be true on $(0,5)$ and true on $(4,8)$ but not on $(0,8)$. In order to
capture the semantics of statements like these---statements that take time to
evaluate---we must use the space $\IR$ rather than the space $\RR$.

%
%\begin{proposition}
%Let $(\IR,\Op)$ be as above. There is a monotone bijection between $\Op$ and the preorder of Lipschitz functions $f\colon\RR\to[0,\infty]$, where $f\leq g$ iff $f(x)\leq g(x)$ for all $x$.
%
%The map is given by sending $U\ss\IR$ to $f_U\colon\RR\to[0,\infty]$ defined by
%\[f_U(x)\coloneqq\sup\{y\in[0,\infty]\mid [x-y,x+y]\in U\]
%\end{proposition}
%

%
\index{interval domain, $\IR$|)}
%---- Subsection ----%
\subsection{Safety proofs in temporal logic}%
\index{safety proof}

We now have at least a basic idea of what goes into a proof of safety, say for
autonomous vehicles, or airplanes in the national airspace system. In fact, the underlying ideas of this chapter came out of a project between MIT, Honeywell Inc., and NASA \cite{speranzon2018abstraction}. The background for the project was that the National Airspace System consists of many different systems interacting: interactions between airplanes, each of which is an interaction between physics, humans, sensors, and actuators, each of which is an interaction between still more basic parts. The same sort of story would hold for a fleet of autonomous vehicles, as in the introduction to this chapter.

Suppose that each of the systems---at any level---is guaranteed to satisfy some property. For example, perhaps we can assume that an engine is either out of gas, has a broken fuel line, or is following the orders of a human driver or pilot. If there is a rupture in the fuel line, the sensors will alert
the human within three seconds, etc. Each of the components interact with a number
of different variables. In the case of airplanes, a pilot interacts with the radio, the positions of the
dials, the position of the thruster, and the visual data in front of her. The
component---here the pilot---is guaranteed to keep these variables in some
relation: ``if I see something, I will say something'' or ``if the dials are in
position $\const{bad\_pos}$, I will engage the thruster within 1 second.'' We
call these guarantees \emph{behavior contracts}.%
\index{behavior!contract}

All of the above can be captured in the topos $\Cat{BT}$ of behavior types. The variables are behavior types: the altimeter is a variable whose value $\theta\in \RR_{\geq0}$ is changing continuously with respect to time. The thruster is also a continuously-changing variable whose value is in the range $[0,1]$, etc.

The guaranteed relationships---behavior contracts---are given by predicates on variables. For example, if the pilot will always engage the thruster within one second of the display dials being in position $\const{bad\_pos}$, this can be captured by a predicate $p\colon\const{dials}\times\const{thrusters}\to\Omega$. While we have not written out a formal language for $p$, one could imagine the predicate $p(D,T)$ for $D:\const{dials}$ and $T:\const{thrusters}$ as
\begin{multline}%
\label{eqn.sample_contract}
  \forall(t:\RR)\ldotp @_{t}\big(\const{bad\_pos}(D)\big)\imp\\
  \exists(r:\RR)\ldotp(0<r<1)\wedge\forall(r':\RR)\ldotp 0\leq r'\leq 5\imp @_{t+r+r'}\big(\const{engaged}(T)\big).
\end{multline}
Here $@_t$ is a modality, as we discussed in \cref{def.modality}; in fact it turns out to be one of type 3.\ from \cref{prop.open_closed_quasiclosed}, but we cannot go into that. For a proposition $q$, the statement $@_t(q)$ says that $q$ is true in some small enough neighborhood around $t$. So \eqref{eqn.sample_contract} says ``starting within one second of whenever the dials say that we are in a bad position, I'll engage the thrusters for five seconds.'' %
\index{predicate}

Given an actual playing-out-of-events over a time period $U$, i.e.\ actual section $D\in\const{dials}(U)$ and section $T\in\const{thrusters}(U)$, the
predicate \cref{eqn.sample_contract} will hold on certain parts of $U$ and not others, and this is the truth value of $p$. Hopefully the pilot upholds her behavior contract at all
times she is flying, in which case the truth value will be $\true$ throughout that interval $U$. But if the pilot breaks her contract over certain intervals, then this fact is recorded in $\Omega$.

The logic allows us to record axioms like that shown in \cref{eqn.sample_contract} and then reason from them: e.g.\ if the pilot and the airplane, and at least one of the three radars upholds its contract then safe separation will be maintained. We cannot give further details here, but these matters have been worked out in detail in \cite{Schultz.Spivak:2017a}; see \cref{sec.C7_further_reading}.

%-------- Section --------%
\section{Summary and further reading}%
\label{sec.C7_further_reading}

This chapter was about modeling various sorts of behavior using sheaves on a space of time-intervals. Behavior may seem like it's something that occurs now in the present, but in fact our memory of past behavior informs what the current behavior means. In order to commit to anything, to plan or complete any sort of process, one needs to be able to reason over time-intervals. The nice thing about temporal sheaves---indeed sheaves on any site---is that they fit into a categorical structure called a topos, which has many useful formal properties. In particular, it comes equipped with a higher-order logic with which we can formally reason about how temporal sheaves work together when combined in larger systems. A much more detailed version of this story was presented in \cite{Schultz.Spivak:2017a}.\nocite{Spivak.Vasilakopoulou.Schultz:2016a} But it would have been impossible without the extensive edifice of topos theory and domain theory that has been developed over the past six decades.

Sheaf toposes were invented by Grothendieck and his school in the 1960s \cite{Artin.Grothendieck.Verdier:1971a} as an approach to proving conjectures at the intersection of algebraic geometry and number theory, called the Weil conjectures. Soon after, Lawvere and Tierney recognized that toposes had all the structure necessary to do logic, and with a whole host of other category theorists, the subject was developed to an impressive extent in many directions. For a much more complete history, see \cite{Mclarty:1990a}.

There are many sorts of references on topos theory. One that starts by introducing categories and then moves to toposes, focusing on logic, is \cite{Mclarty:1992a}. Our favorite treatment is perhaps \cite{MacLane.Moerdijk:1992a}, where the geometric aspects play a central role. Finally, Johnstone has done the field a huge favor by collecting large amounts of the theory into a single two-volume set \cite{Johnstone:2002a}; it is very dense, but an essential reference for the serious student or researcher. For just categorical (Kripke-Joyal) semantics of logic in a topos, one should see either \cite{MacLane.Moerdijk:1992a}, \cite{Jacobs:1999a}, or \cite{Lambek.Scott:1988a}.

We did not mention domain theory much in this chapter, aside from referring to the interval domain. But domains, in the sense of Dana Scott, play an important role in the deeper aspects of temporal type theory. A good reference is \cite{Gierz.Keimel.Lawson.Mislove.Scott:2003a}, but for an introduction we suggest \cite{Abramsky.Jung:1994a}.

In some sense our application area has been a very general sort of dynamical system. Other categorical approaches to this subject include \cite{Joyal.Nielsen.Winskel:1996a}, \cite{Haghverdi.Tabuada.Pappas:2003a}, \cite{Ames.Sastry:2005a}, and \cite{Lawvere:1986a}, though there are many others.%
\index{dynamical system!}

\bigskip

We hope you have enjoyed the seven sketches in this book. As a next step, consider running a reading course on applied category theory with some friends or colleagues. Simultaneously, we hope you begin to search out categorical ways of thinking about familiar subjects. Perhaps you'll find something you want to contribute to this growing field of applied category theory, or as we sometimes call it, the field of compositionality.%
\index{compositionality}

\index{topos|)}
\index{applied category theory|)}

\appendix
\begingroup
\footnotesize

%========---- Chapter ========----%
\chapter{Exercise solutions}\label{chap.solutions}

%======== Section =========%
\section[Solutions for Chapter 1]{Solutions for \cref{chap.preorders}.}

\sol{exc.function_pres}{
Some terminology: a function $f\colon \RR\to\RR$ is said to be
\begin{enumerate}
	\item \emph{order-preserving} if $x\leq y$ implies $f(x)\leq f(y)$, for all $x,y\in\RR$;%
	\footnote{We are often taught to view functions $f\colon\rr\to\rr$ as plots on an $(x,y)$-axis, where $x$ is the domain (independent) variable and $y$ is the codomain (dependent) variable. In this book, we do not adhere to that naming convention; e.g.\ both $x$ and $y$ here are being ``plugged in'' as input to $f$.}
	\item \emph{metric-preserving} if $|x-y|=|f(x)-f(y)|$;
	\item \emph{addition-preserving} if $f(x+y)=f(x)+f(y)$.
\end{enumerate}
For each of the three properties defined above---call it \emph{foo}---find an $f$ that is \emph{foo}-preserving and
an example of an $f$ that is not \emph{foo}-preserving.
}
{
For each of the following properties, we need to find a function $f\colon\rr\to\rr$ that preserves it, and another function---call it $g$---that does not.
\begin{description}
	\item[order-preserving:] Take $f(x)=x+5$; if $x\leq y$ then $x+5\leq y+5$, so $f$ is order-preserving. Take $g(x)\coloneqq -x$; even though $1\leq 2$, the required inequality $-1\leq^{?} -2$ does not hold, so $g$ is not order-preserving.
	\item[metric-preserving:] Take $f(x)\coloneqq x+5$; for any $x,y$ we have $|x-y|=|(x+5)-(y+5)|$ by the rules of arithmetic, so $|x-y|=|f(x)-f(y)|$, meaning $f$ preserves metric. Take $g(x)\coloneqq 2*x$; then with $x=1$ and $y=2$ we have $|x-y|=1$ but $|2x-2y|=2$, so $g$ does not preserve the metric.
	\item[addition-preserving:] Take $f(x)\coloneqq 3*x$; for any $x,y$ we have $3*(x+y)=(3*x)+(3*y)$, so $f$ preserves addition. Take $g(x)\coloneqq x+1$; then with $x=0$ and $y=0$, we have $g(x+y)=1$, but $g(x)+g(y)=2$, so $g$ does not preserve addition.
\end{description}
}

\sol{exc.joining_systems}{
What is the result of joining the following two systems?
\[
% [inline block 50: 2 envs, 4964 chars -> data_tex | \begin{tikzpicture}[x=1cm, baseline=(vee)] 	\node (a11) {$\LMO{11}$};...]

\]
}

\sol{exc.partitions_practice}{%
\begin{enumerate}
	\item Write down all the partitions of a two element set $\{\bullet,\ast\}$, order them as above, and draw the Hasse diagram.
	\item Now do the same thing for a four element-set, say $\{1,2,3,4\}$. There should be 15 partitions.
\end{enumerate}
Choose any two systems in your 15-element Hasse diagram, call them $A$ and $B$. 
\begin{enumerate}[resume]
	\item What is $A\vee B$, using the definition given in the paragraph above \cref{eqn.generative}?
	\item Is it true that $A\leq (A\vee B)$ and $B\leq (A\vee B)$?
	\item What are all the systems $C$ for which both $A\leq C$ and $B\leq C$.
	\item Is it true that in each case, $(A\vee B)\leq C$?
	\qedhere
\qedhere
\end{enumerate}
}{
\nolisttopbreak
\begin{enumerate}
	\item Here is the Hasse diagram for partitions of the two element set $\{\bullet,\ast\}$:
	\[
	% [inline block 51: 3 envs, 3035 chars -> data_tex | \begin{tikzpicture} 	  \node[label=left:{$(12)=$}] (ab) {...]

	\]
For the remaining parts, we choose $A=(12)(3)(4)$ and $B=(13)(2)(4)$.
	\item $A\vee B=(123)(4)$.
	\item Yes, it is true that $A\leq (A\vee B)$ and that $B\leq(A\vee B)$.
	\item The systems $C$ with $A\leq C$ and $B\leq C$ are: $(123)(4)$ and $(1234)$.
	\item Yes, it is true that in each case $(A\vee B)\leq C$.
\end{enumerate}
}

\sol{exc.boolean_vee_practice}{
Using the order $\false\leq\true$ on $\BB=\{\true,\false\}$, what is:
\begin{enumerate}
	\item $\true \vee \false$?
	\item $\false \vee \true$?
	\item $\true \vee \true$?
	\item $\false \vee \false$?
\end{enumerate}
}
{
\begin{enumerate}
	\item $\true \vee \false=\true$.
	\item $\false \vee \true=\true$.
	\item $\true \vee \true=\true$.
	\item $\false \vee \false=\false$.
\end{enumerate}
}

\sol{exc.subsets_products}{
	Let $A\coloneqq\{h,1\}$ and $B\coloneqq\{1,2,3\}$.
	\begin{enumerate}
		\item There are eight subsets of $B$; write them out.
		\item Take any two subsets of $B$ and write out their union.
		\item There are six elements in $A\times B$; write them out.
	\item There are five elements of $A \dju B$; write them out.
	\item If we consider $A$ and $B$ as subsets of the set
	$\{h,1,2,3\}$, there are four elements of $A \cup B$; write them out.
	\qedhere
\end{enumerate}
}{
\begin{enumerate}
	\item The eight subsets of $B\coloneqq\{1,2,3\}$ are
	\[\varnothing,\quad \{1\},\quad \{2\},\quad \{3\},\quad \{1,2\},\quad \{1,3\},\quad \{2,3\},\quad \{1,2,3\}.\]
	\item The union of $\{1,2,3\}$ and $\{1\}$ is $\{1,2,3\}\cup\{1\}=\{1,2,3\}$.
	\item The six elements of $\{h,1\}\times\{1,2,3\}$ are
	\[(h,1),\quad (h,2),\quad (h,3),\quad (1,1),\quad (1,2),\quad (1,3).\]
	\item The five elements of $\{h,1\} \dju \{1,2,3\}$ are
	\[
	(h,1),\quad (1,1),\quad (1,2), \quad (2,2), \quad (3,2).
	\]
	\item The four elements of $\{h,1\} \cup \{1,2,3\}$ are
	\[
	h,\quad 1,\quad 2,\quad 3.
	\]
\end{enumerate}
}

\sol{exc.comprehension_comprehension}{
\begin{enumerate}
	\item Is it true that $\nn=\{n\in\zz\mid n\geq 0\}$?
	\item Is it true that $\nn=\{n\in\zz\mid n\geq 1\}$?
	\item Is it true that $\varnothing=\{n\in\zz\mid 1<n<2\}$?
\end{enumerate}
}{
\begin{enumerate}
	\item This is true: a natural number is exactly an integer that is at least 0.
	\item This is false: $0\in\nn$ but $0\not\in\{n\in\zz\mid n\geq 1\}$.
	\item This is true: no elements of $\zz$ are strictly between 1 and 2.
\end{enumerate}
}

\sol{exc.proof_re_parts}{
Suppose that $A$ is a set and $\{A_p\}_{p\in P}$ and $\{A'_{p'}\}_{p'\in P'}$ are two partitions of $A$ such that for each $p\in P$ there exists a $p'\in P'$ with $A_p=A'_{p'}$.
\begin{enumerate}
	\item Show that for each $p\in P$ there is at most one $p'\in P'$ such that $A_p=A'_{p'}$
	\item Show that for each $p'\in P'$ there is a $p\in P$ such that $A_p=A'_{p'}$.
\end{enumerate}
}{
Suppose that $A$ is a set and $\{A_p\}_{p\in P}$ and $\{A'_{p'}\}_{p'\in P'}$ are two partitions of $A$ such that for each $p\in P$ there exists a $p'\in P'$ with $A_p=A'_{p'}$.
\begin{enumerate}
	\item Given $p\in P$, suppose we had $p_1',p_2'\in P'$ such that $A_p=A'_{p_1'}$ and $A_p=A'_{p_2'}$. Well then $A_{p_1'}=A_{p_2'}$, so in particular $A_{p_1'}\cap A_{p_2'}=A_{p_1'}$. By the definition of partition (\ref{def.partition}), $A_{p_1'}\neq\varnothing$, and yet if $p_1\neq p_2$ then $A_{p_1'}\cap A_{p_2'}=\varnothing$. This can't be, so we must have $p_1'=p_2'$, as desired.
	\item Suppose given $p'\in P'$; we want to show that there is a $p\in P$ such that $A_p=A'_{p'}$. Since $A'_{p'}\neq\varnothing$ is nonempty by definition, we can pick some $a\in A'_{p'}$; since $A'_{p'}\ss A$, we have $a\in A$. Finally, since $A=\bigcup_{p\in P} A_p$, there is some $p$ with $a\in A_p$. This is our candidate $p$; now we show that $A_p=A'_{p'}$. By assumption there is some $p''\in P'$ with $A_p=A'_{p''}$, so now $a\in A'_{p''}$ and $a\in A'_{p'}$, so $a\in A'_{p'}\cap A'_{p''}$. Again by definition, having a nonempty intersection means $p'=p''$. So we conclude that $A_p=A_{p'}$.
\end{enumerate}}

\sol{exc.equiv_rel_practice}{
Consider the partition shown below:
\[
\begin{tikzpicture}[x=1cm]
	\node (a11) {$\LMO{11}$};
	\node[right=.5 of a11] (a12) {$\LMO{12}$};
	\node[right=.5 of a12] (a13) {$\LMO{13}$};
	\node[below=.5 of a11] (a21) {$\LMO{21}$};
	\node[below=.5 of a12] (a22) {$\LMO{22}$};
	\node[below=.5 of a13] (a23) {$\LMO{23}$};
	\draw [rounded corners=9pt] 
     ($(a11)+(135:.45)$) --
     ($(a11)+(-135:.45)$) --
     ($(a12)+(-45:.45)$) --
     ($(a12)+(45:.45)$) --     
     cycle;
	\draw [rounded corners=9pt] 
     ($(a22)+(135:.45)$) --
     ($(a22)+(-135:.45)$) --
     ($(a23)+(-45:.45)$) --
     ($(a23)+(45:.45)$) --     
     cycle;
	\draw [rounded corners=9pt] 
     ($(a13)+(135:.45)$) --
     ($(a13)+(-135:.45)$) --
     ($(a13)+(-45:.45)$) --
     ($(a13)+(45:.45)$) --     
     cycle;
	\draw [rounded corners=9pt] 
     ($(a21)+(135:.45)$) --
     ($(a21)+(-135:.45)$) --
     ($(a21)+(-45:.45)$) --
     ($(a21)+(45:.45)$) --     
     cycle;
	\node[inner sep=10pt, draw, fit=(a11) (a23)] (a) {};
\end{tikzpicture}
\]
Write down every pair $(a,b)$ such that $a\sim b$. There should be 10.
}
{
The pairs $(a,b)$ such that $a\sim b$ are:
\begin{gather*}
(11,11)\quad
(11,12)\quad
(12,11)\quad
(12,12)\quad
(13,13)\\
(21,21)\quad
(22,22)\quad
(12,23)\quad
(23,22)\quad
(23,23)
\end{gather*}
}

\sol{exc.equiv_part_proof}{
Consider the proof of \cref{prop.equivalence_partition}. Suppose that $\sim$ is
an equivalence relation, and let $P$ be the set of $(\sim)$-closed and
$(\sim)$-connected subsets $\{A_p\}_{p\in P}$.
\begin{enumerate}
	\item Show that each $A_p$ is nonempty.
	\item Show that if $p\neq q$, i.e.\ if $A_p$ and $A_q$ are not exactly the same set, then $A_p\cap A_q=\varnothing$.
	\item Show that $A=\bigcup_{p\in P}A_p$.
	\qedhere
\qedhere
\end{enumerate}
}
{
\begin{enumerate}
	\item One aspect in the definition of the parts is that they are connected, and one aspect of that is that they are nonempty. So each part $A_p$ is nonempty.
	\item Suppose $p\neq q$, i.e.\ $A_p$ and $A_q$ are not exactly the same set. To prove $A_p\cap A_q=\varnothing$, we suppose otherwise and derive a contradiction. So suppose there exists $a\in A_p\cap A_q$; we will show that $A_p=A_q$, which contradicts an earlier hypothesis. To show that these two subsets are equal, it suffices to show that $a'\in A_p$ iff $a'\in A_q$ for all $a'\in A$. Suppose $a'\in A_p$; then because $A_p$ is connected, we have $a\sim a'$. And because $A_q$ is closed, $a'\in A_q$. In just the way, if $a'\in A_q$ then because $A_q$ is connected and $A_p$ is closed, $a'\in A_p$, and we are done.
	\item To show that $A=\bigcup_{p\in P}A_p$, it suffices to show that for each $a\in A$, there is some $p\in P$ such that $a\in A_p$. We said that $P$ was the set of closed and connected subsets of $A$, so it suffices to show that there is some closed and connected subset containing $a$. Let $X\coloneqq\{a'\in A\mid a'\sim a\}$; we claim it is closed and connected and contains $a$. To see $X$ is closed, suppose $a'\in X$ and $b\sim a'$; then $b\sim a$ by transitivity and symmetry of $\sim$, so $b\in X$. To see that $X$ is connected, suppose $b,c\in X$; then $b\sim a$ and $c\sim a$ so $b\sim c$ by the transitivity and symmetry of $\sim$. Finally, $a\in X$ by the reflexivity of $\sim$.
\end{enumerate}
}

\sol{exc.inj_surj_fun}{
\begin{enumerate}
	\item Find two sets $A$ and $B$ and a function $f\colon A\to B$ that is injective but not surjective.
	\item Find two sets $A$ and $B$ and a function $f\colon A\to B$ that is surjective but not injective.
\end{enumerate}
Now consider the four relations shown here:
\[
% [inline block 52: 2 envs, 2350 chars -> data_tex | \begin{tikzpicture}[shorten >=2pt] 	\foreach \f in {0,...,3} {...]

  \]
	\item Neither the second nor third is `total'. Moreover, the second one is not deterministic. The first one is a function which is not injective and not surjective. The fourth one is a function which is both injective and surjective.
\end{enumerate}
}

\sol{exc.map_to_empty}{
Suppose that $A$ is a set and $f\colon A\to\varnothing$ is a function to the empty set. Show that $A$ is empty.
}
{
By \cref{def.function}, a function $f\colon A\to\varnothing$ is a subset $F\ss A\times\varnothing$ such that for all $a\in A$, there exists a unique $b\in \varnothing$ with $(a,b)\in F$. But there are no elements $b\in^?\varnothing$, so if $F$ is to have the above property, there can be no $a\in A$ either; i.e.\ $A$ must be empty.
}

\sol{exc.part_surj}{
Write down the surjection corresponding to each of the five partitions in \cref{eqn.parts_of_3}.
}{
Below each partition, we draw a corresponding surjection out of $\{\bullet,\ast,\circ\}$:
\[
% [inline block 53: 5 envs, 2291 chars -> data_tex | \begin{tikzpicture}[baseline=(cent)] 	\node[coordinate] (cent) {};...]

    };
  \node[below left=.5 and -.75 of none] (s1) {$\bullet$};
  \node[below=0 of s1] (s2) {$\ast$};
  \node[below=0 of s2] (s3) {$\circ$};
  \node[right=.5 of s1] (t1) {$p_1$};
  \node[right=.5 of s2] (t2) {$p_2$};
  \node[right=.5 of s3] (t3) {$p_3$};
	\draw[|->] (s1) -- (t1);
	\draw[|->] (s2) -- (t2);
	\draw[|->] (s3) -- (t3);
  \node[left=.2] at (s1-|ac) (s1) {$\bullet$};
  \node[below=0 of s1] (s2) {$\ast$};
  \node[below=0 of s2] (s3) {$\circ$};
  \node[right=.5 of s1] (t1) {$p_1$};
  \node[right=.5 of s2] (t2) {$p_2$};
	\draw[|->] (s1) -- (t1);
	\draw[|->] (s2) -- (t2);
	\draw[|->] (s3) -- (t1);
  \node[left=.2] at (s1-|ab) (s1) {$\bullet$};
  \node[below=0 of s1] (s2) {$\ast$};
  \node[below=0 of s2] (s3) {$\circ$};
  \node[right=.5 of s1] (t1) {$p_1$};
  \node[right=.5 of s2] (t2) {$p_2$};
	\draw[|->] (s1) -- (t1);
	\draw[|->] (s2) -- (t1);
	\draw[|->] (s3) -- (t2);
  \node[left=.2] at (s1-|bc) (s1) {$\bullet$};
  \node[below=0 of s1] (s2) {$\ast$};
  \node[below=0 of s2] (s3) {$\circ$};
  \node[right=.5 of s1] (t1) {$p_1$};
  \node[right=.5 of s2] (t2) {$p_2$};
	\draw[|->] (s1) -- (t1);
	\draw[|->] (s2) -- (t2);
	\draw[|->] (s3) -- (t2);
  \node[left=.2] at (s1-|all) (s1) {$\bullet$};
  \node[below=0 of s1] (s2) {$\ast$};
  \node[below=0 of s2] (s3) {$\circ$};
  \node[right=.5 of s1] (t1) {$p_1$};
	\draw[|->] (s1) -- (t1);
	\draw[|->] (s2) -- (t1);
	\draw[|->] (s3) -- (t1);
\end{tikzpicture}
\]
}

\sol{exc.understanding_graphs}{
Fill in the table from \cref{ex.my_first_graph}.
}{
\[
G=\fbox{
\begin{tikzcd}[ampersand replacement=\&]
	\LMO{1}\ar[r, "a"]\ar[dr, shift left, "b"]\ar[dr, shift right, "c"']\&
	\LMO{2}\ar[d, "e"]\ar[loop right, "d"]\\
	\&\LMO{3}\&\LMO{4}
\end{tikzcd}
}
\hspace{.6in}
\begin{array}{l || l | l}
	\textbf{arrow }a&\textbf{source }s(a)\in V&\textbf{target }t(a)\in V\\\hline
	a&1&2\\
	b&1&3\\
	c&1&3\\
	d&2&2\\
	e&2&3
\end{array}
\]
}

\sol{exc.graph_to_hasse}{
	What preorder relation $(P,\leq)$ is depicted by the graph $G$ in \cref{ex.my_first_graph}? That is, what are the elements of $P$ and write down every pair $(p_1,p_2)$ for which $p_1\leq p_2$.
}
{
The graph $G$ from \cref{exc.understanding_graphs} is a strange Hasse diagram because it has two arrows $1\to 3$ and a loop, both of which are ``useless'' from a preorder point-of-view. But that does not prevent our formula from working. The preorder $(P,\leq)$ is given by taking $P\coloneqq V=\{1,2,3,4\}$ and writing $p\leq q$ whenever there exists a path from $p$ to $q$. So:
\[
1\leq 1,\quad 1\leq 2,\quad 1\leq 3,\quad 2\leq 2,\quad 2\leq 3,\quad 3\leq 3,\quad 4\leq 4
\]
}
	\sol{exc.points_as_hasse}{
Does a collection of points, like the one in \cref{ex.disc_preorder}, count as a Hasse diagram?}
{
A collection of points, e.g.\ 
$
\begin{tikzpicture}[baseline=(1.-20)]
  	\node (0) at (0,0) {$\bullet$};
  	\node (1) at (.5,0) {$\bullet$};
  	\node (2) at (1,0) {$\bullet$};
  	\node[draw, rounded corners, inner sep=0, fit=(0) (1) (2)] (A) {};
\end{tikzpicture}
$
is a Hasse diagram, namely for the discrete order, i.e.\ for the order where $x\leq y$ iff $x=y$.
}

\sol{exc.order_parts_practice}{
Let $X$ be the set of partitions of $\{\bullet,\circ,\ast\}$; it has five elements and an order by coarseness, as shown in the Hasse diagram \cref{eqn.parts_of_3}. Write down every pair $(x,y)$ of elements in $X$ such that $x\leq y$. There should be 12.
}
{
Let's write the five elements of $X$ as
\[(\bullet)(\circ)(\ast),\quad (\bullet\circ)(\ast),\quad (\bullet\ast)(\circ),\quad (\bullet)(\circ\ast),\quad (\bullet\circ\ast)\]
Our job is to write down all 12 pairs of $x_1,x_2\in X$ with $x_1\leq x_2$. Here they are:
\begin{align*}
  (\bullet)(\circ)(\ast)&\leq (\bullet)(\circ)(\ast)&
  (\bullet)(\circ)(\ast)&\leq (\bullet\circ)(\ast)&
  (\bullet)(\circ)(\ast)&\leq (\bullet\ast)(\circ)\\
  (\bullet)(\circ)(\ast)&\leq (\bullet)(\circ\ast)&
  (\bullet)(\circ)(\ast)&\leq (\bullet\circ\ast)&
  (\bullet\circ)(\ast)&\leq (\bullet\circ)(\ast)\\
  (\bullet\circ)(\ast)&\leq (\bullet\circ\ast)&
  (\bullet\ast)(\circ)&\leq(\bullet\ast)(\circ)&
  (\bullet\ast)(\circ)&\leq(\bullet\circ\ast)\\
  (\bullet)(\circ\ast)&\leq(\bullet)(\circ\ast)&
  (\bullet)(\circ\ast)&\leq(\bullet\circ\ast)&
  (\bullet\circ\ast)&\leq(\bullet\circ\ast)
\end{align*}
}

\sol{exc.discrete_preorder_comparable}{
Is it correct to say that a discrete preorder is one where \emph{no} two elements are comparable?
}
{
The statement in the text is almost correct. It is correct to say that a discrete preorder is one where $x$ and $y$ are comparable if and only if $x=y$.
}

\sol{exc.total_order1}{
Write down the numbers $1,2,\ldots,10$ and draw an arrow $a\to b$ if $a$ divides perfectly into $b$. 
Is it a total order?
}
{
\[
\boxCD{
\begin{tikzcd}[ampersand replacement=\&,row sep=1.5ex]
  \&\&\LMO{8}\\
  \LMO{9}\&\LMO{6}\&\LMO{4}\ar[u]\&\LMO{10}\\
  \&\LMO{3}\ar[ul]\ar[u]\&\LMO{2}\ar[ul]\ar[u]\ar[ur]\&\LMO{5}\ar[u]\&\LMO{7}\\
  \&\&\LMO{1}\ar[ul]\ar[u]\ar[ur]\ar[urr]
\end{tikzcd}
}
\]
No, it is not a total order; for example $4\not\leq 6$ and $6\not\leq 4$. 
}

\sol{exc.total_order2}{
Is the usual $\leq$ ordering on the set $\rr$ of real numbers a total order?
}
{
Yes, the usual $\leq$ ordering is a total order on $\rr$: for every $a,b\in\rr$ either $a\leq b$ or $b\leq a$.
}

\sol{exc.powerset_Hasse}{
Draw the Hasse diagrams for $\powset(\varnothing)$, $\powset\{1\}$, and $\powset\{1,2\}$.
}
{
The Hasse diagrams for $\powset(\varnothing)$, $\powset\{1\}$, and
$\powset\{1,2\}$ are
\[
\begin{altikz}
  	\node (0) at (0,0) {$\varnothing$};
  	\node[draw, fit=(0)] (A) {};
\end{altikz}
\hspace{.1\textwidth}
\begin{altikz}
  	\node (0) at (0,0) {$\varnothing$};
  	\node (1) at (0,1) {$\{1\}$};
  	\node[draw, fit=(0) (1)] (A) {};
	\draw[->] (0) to (1);
\end{altikz}
\hspace{.1\textwidth}
\begin{altikz}
  	\node (0) at (0,0) {$\varnothing$};
  	\node (1) at (-.5,1) {$\{1\}$};
  	\node (2) at (.5,1) {$\{2\}$};
  	\node (3) at (0,2) {$\{1,2\}$};
  	\node[draw, fit=(0) (1) (2) (3)] (A) {};
	\draw[->] (0) to (1);
	\draw[->] (0) to (2);
	\draw[->] (1) to (3);
	\draw[->] (2) to (3);
\end{altikz}
\]
}

\sol{exc.partitions_and_functions}{
For any set $S$ there is a coarsest partition, having just one part. What surjective function does it correspond to?

There is also a finest partition, where everything is in its own partition. What surjective function does it correspond to?
}
{
The coarsest partition on $S$ corresponds to the unique function $!\colon S \to
\{1\}$. The finest partition on $S$ corresponds to the identity function
$\id_S\colon S \to S$.
}

\sol{exc.uppersets_on_discrete}{
  Prove that the preorder of upper sets on a discrete preorder (see \cref{ex.disc_preorder}) on a set $X$ is simply the power
  set $\powset(X)$.
}
{
If $X$ has the discrete preorder, then every subset $U$ of $X$ is an upper set:
indeed, if $p \in U$, the only $q$ such that $p \le q$ is $p$ itself, so $q$ is
definitely in $U$! This means that $\upset(X)$ contains all subsets of $X$,
so it's exactly the power set, $\upset(X)=\powset(X)$.
}

\sol{exc.product_preorder}
{
Draw the product of the two preorders drawn below:
\[
\boxCD{
\begin{tikzcd}[column sep=0, ampersand replacement=\&]
	\LMO{c}\&\&\LMO{b}\\
	\&\LMO[under]{a}\ar[ul]\ar[ur]
\end{tikzcd}
}
\hspace{.5in}
\boxCD{
\begin{tikzcd}
	\LMO{2}\\
	\LMO[under]{1}\ar[u]
\end{tikzcd}
}
\]
For bonus points, compute the upper set preorder on the result.
}
{
The product preorder and its upper set preorder are:
\[
\begin{altikz}[font=\scriptsize]
  	\node (a1) at (0,0) {$(a,1)$};
  	\node (c1) at (-1,1) {$(c,1)$};
  	\node (a2) at (0,1) {$(a,2)$};
  	\node (b1) at (1,1) {$(b,1)$};
  	\node (c2) at (-.5,2) {$(c,2)$};
  	\node (b2) at (.5,2) {$(b,2)$};
  	\node[draw, fit=(a1) (c1) (b1) (b2)] (A) {};
	\draw[->] (a1) to (c1);
	\draw[->] (a1) to (a2);
	\draw[->] (a1) to (b1);
	\draw[->] (a2) to (c2);
	\draw[->] (a2) to (b2);
	\draw[->] (c1) to (c2);
	\draw[->] (b1) to (b2);
\end{altikz}
\hspace{.05\textwidth}
\begin{altikz}[font=\scriptsize,xscale=2]
  	\node (c2) at (-1,0) {$\{(c,2)\}$};
  	\node (b2) at (1,0) {$\{(b,2)\}$};
  	\node (c1) at (-2,1) {$\{(c,1),(c,2)\}$};
  	\node (bc2) at (0,1) {$\{(b,2),(c,2)\}$};
  	\node (b1) at (2,1) {$\{(b,1),(b,2)\}$};
  	\node (b2c1) at (-2,2) {$\{(b,2),(c,1),(c,2)\}$};
  	\node (a2) at (0,2) {$\{(a,2),(b,2),(c,2)\}$};
  	\node (b1c2) at (2,2) {$\{(b,1),(b,2),(c,2)\}$};
  	\node (ab2c1) at (-1,3) {$\{(a,2),(b,2),(c,1),(c,2)\}$};
  	\node (ac2b1) at (1,3) {$\{(a,2),(b,1),(b,2),(c,2)\}$};
  	\node (a2bc1) at (0,4) {$\{(a,2),(b,1),(b,2),(c,1),(c,2)\}$};
  	\node (a1) at (0,5) {$\{(a,1),(a,2),(b,1),(b,2),(c,1),(c,2)\}$};
	\coordinate (l) at (-2.75,.25);
	\coordinate (r) at (2.75,-.25);
  	\node[draw, fit=(l) (r) (a1)] (A) {};
	\draw[->] (c2) to (c1);
	\draw[->] (c2) to (bc2);
	\draw[->] (b2) to (b1);
	\draw[->] (b2) to (bc2);
	\draw[->] (c1) to (b2c1);
	\draw[->] (bc2) to (b2c1);
	\draw[->] (bc2) to (a2);
	\draw[->] (bc2) to (b1c2);
	\draw[->] (b1) to (b1c2);
	\draw[->] (b2c1) to (ab2c1);
	\draw[->] (a2) to (ab2c1);
	\draw[->] (a2) to (ac2b1);
	\draw[->] (b1c2) to (ac2b1);
	\draw[->] (ab2c1) to (a2bc1);
	\draw[->] (ac2b1) to (a2bc1);
	\draw[->] (a2bc1) to (a1);
\end{altikz}
\]
}

\sol{exc.card_as_monotone}{
Let $X=\{1,2,3\}$.
\begin{enumerate}
	\item Draw the Hasse diagram for $\powset(X)$.
	\item Draw the Hasse diagram for the preorder $0\leq 1\leq\cdots\leq 3$.
	\item Draw the cardinality map $\#$ from \cref{ex.card_as_monotone} as dashed lines between them.
\end{enumerate}
}
{
With $X=\{0,1,2\}$, the Hasse diagram for $\powset(X)$, the preorder $0\leq\cdots\leq 3$, and the cardinality map between them are shown below:
\[
% [inline block 54: 2 envs, 2194 chars -> data_tex | \begin{tikzpicture}  	\node (012) at (0,3) {$X$};...]

\]
}

\sol{exc.upper_set}
{
  Let $(P,\leq)$ be a preorder, and recall the notion of opposite preorder from \cref{ex.opposite}.
\begin{enumerate}
	\item Show that the set $\upclose p\coloneqq\{p'\in P\mid p\leq p'\}$ is an upper set, for any $p\in P$.
	\item Show that this construction defines a monotone map $\upclose\colon P\op\to\upset(P)$.
	\item Show that if $p \le p'$ in $P$ if and only if $\upset(p') \subseteq \upset(p)$. 
	\item Draw a picture of the map $\upclose$ in the case where $P$ is the preorder $(b\geq a\leq c)$ from \cref{ex.product_preorder}.
\end{enumerate}
This is known as the \emph{Yoneda lemma} for preorders. The if and only if
condition proved in part 3 implies that, up to equivalence, to know an element
is the same as knowing its upper set---that is, knowing its web of
relationships with the other elements of the preorder. The general Yoneda lemma is
a powerful tool in category theory, and a fascinating philosophical idea besides.
}
{
\begin{enumerate}
\item Let $q \in \upclose p$, and suppose $q \leq q'$. Since $q \in \upclose p$,
we have $p \leq q$. Thus by transitivity $p \leq q'$, so $q' \in \upclose p$. Thus
$\upclose p$ is an upper set.
\item Suppose $p \leq q$ in $P$; this means that $q \leq\op p$ in $P\op$. We
must show that $\upclose q \subseteq \upclose p$. Take any $q' \in \upclose q$.
Then $q \leq q'$, so by transitivitiy $p \leq q'$, and hence $q' \in \upclose
p$. Thus $\upclose q \subseteq \upclose p$.
\item Monotonicity of $\upclose$ says that $p \le p'$ implies $\upclose(p')
  \subseteq \upclose(p)$. We must prove the other direction, that if $p \nleq
  p'$ then $\upclose(p') \nsubseteq \upclose(p)$. This is straightforward, since by
  reflexivity we always have $p' \in \upclose(p')$, but if $p \nleq p'$, then
  $p' \notin \upclose(p)$, so $\upclose(p') \nsubseteq \upclose(p)$.
\item The map $\upclose\colon P\op \to \upset(P)$ can be depicted:
\[
\begin{tikzpicture}
	\node (c) at (-1,0.5) {$c$};
	\node (b) at (1,0.5) {$b$};
	\node (a) at (0,1.5) {$a$};
	\node[draw, fit=(a) (c) (b)] (A) {};
	\draw[->] (c) to (a);
	\draw[->] (b) to (a);
	\node (uc) at (5,0) {$\{c\}$};
	\node (ub) at (7,0) {$\{b\}$};
	\node (bc) at (6,1) {$\{b,c\}$};
	\node (ua) at (6,2) {$\{a,b,c\}$};
	\node[draw, fit=(ua) (uc) (ub)] (B) {};
	\draw[->] (uc) to (bc);
	\draw[->] (ub) to (bc);
	\draw[->] (bc) to (ua);
	\begin{scope}[mapsto]
	\draw (c) to[bend right=10pt] (uc);
	\draw (b) to[bend left=10pt] (ub);
	\draw (a) to (ua);
	\end{scope}
\end{tikzpicture}
\]
\end{enumerate}
}

\sol{exc.monotone_from_discrete}{
As you yourself well know, a monotone map between preorders $f\colon\mbox{$(P,\leq_P)$}\to\mbox{$(Q,\leq_Q)$}$, consists of a function $f\colon P\to Q$ that satisfies a ``monotonicity'' property. Show that when $(P,\leq_P)$ is a discrete preorder, then \emph{every} function $P\to Q$ satisfies the monotonicity property, regardless of the order $\leq_Q$.
}
{
Suppose $(P,\leq_P)$ is a discrete preorder and that $(Q,\leq_Q)$ is any preorder. We want to show that every function $f\colon P\to Q$ is monotone, i.e.\ that if $p_1\leq_Pp_2$ then $f(p_1)\leq_Q f(p_2)$. But in $P$ we have $p_1\leq_Pp_2$ iff $p_1=p_2$; that's what discrete means. If $p_1\leq_P p_2$ then $p_1=p_2$, so $f(p_1)=f(p_2)$, so $f(p_1)\leq f(p_2)$.
}

\sol{exc.f^*_partitions}{
Choose two sets $X$ and $Y$ with at least three elements each and choose a
surjective, non-identity function $f\colon X\to Y$ between them. Write down two
different partitions $P$ and $Q$ of $Y$, and then find $f^*(P)$ and $f^*(Q)$.
}{
Let $X=\zz=\{\ldots,-2,-1,0,1,\ldots\}$ be the set of all integers, and let $Y=\{n,z,p\}$; let $f\colon X\to Y$ send negative numbers to $n$, zero to $z$, and positive integers to $p$. This is surjective because all three elements of $Y$ are hit.

We consider two partitions of $Y$, namely $P\coloneqq (nz)(p)$ and $Q\coloneqq (np)(z)$. Technically, these are notation for $\{\{n,z\},\{p\}\}$ and $\{\{n,p\},\{z\}\}$ as sets of disjoint subsets whose union is $Y$. Their pulled back partitions are $f^*P=(\ldots,-2,-1,0)(1,2,\ldots)$ and $f^*Q=(0)(\ldots,-2,-1,1,2,\ldots)$, or technically
\[
  f^*(P)=\{\{x\in\zz\mid x\leq 0\},\{x\in\zz\mid x\geq 1\}\}
  \quad\text{and}\quad
  f^*(Q)=\{\{0\},\{x\in\zz\mid x\neq 0\}\}.
\]
}

\sol{exc.check_id_comp_monotone}{
Check the following two claims:

For any preorder $(P,\leq_P)$, the identity function is monotone.

If $(Q,\leq_Q)$ and $(R,\leq_R)$ are preorders and $f\colon P\to Q$ and $g\colon Q\to R$ are monotone, then $(f\cp g)\colon P\to R$ is also monotone.
}
{We have preorders $(P,\leq_P)$, $(Q,\leq_Q)$, and $(R,\leq_R)$, and we have monotone maps $f\colon P\to Q$ and $g\colon Q\to R$.
\begin{enumerate}
	\item To see that $\id_P$ is monotone, we need to show that if $p_1\leq_Pp_2$ then $\id_P(p_1)\leq\id_P(p_2)$. But $\id_P(p)=p$ for all $p\in P$, so this is clear.
	\item We have that $p_1\leq_P p_2$ implies $f(p_1)\leq_Q f(p_2)$ and that $q_1\leq_Q q_2$ implies $g(q_1)\leq_Rg(q_2)$. By substitution, $p_1\leq_P p_2$ implies $g(f(p_1))\leq_Rg(f(p_2))$ which is exactly what is required for $(f\cp g)$ to be monotone.
\end{enumerate}
}

\sol{exc.skeletal_dagger_preorder}{
Recall the notion of skeletal preorders (\cref{rem.skeletal_preorder}) and discrete preorders
(\cref{ex.disc_preorder}). Show that a skeletal dagger preorder is just a discrete preorder, and hence just a set.
}{
We need to show that if $(P,\leq_P)$ is both skeletal and dagger, then it is discrete. So suppose it is skeletal, i.e.\ $p_1\leq p_2$ and $p_2\leq p_1$ implies $p_1=p_2$. And suppose it is dagger, i.e.\ $p_1\leq p_2$ implies $p_2\leq p_1$. Well then $p_1\leq p_2$ implies $p_1=p_2$, and this is exactly the definition of $P$ being discrete.
}

\sol{exc.weird_Phi}{
  Show that the map $\Phi$ from \cref{subsec.first_look_gen}, which was roughly given by `Is $\bullet$ connected to $\ast$?'  is
  a monotone map $\prt{\{\ast,\bullet,\circ\}}\to\bb$; see also \cref{eqn.parts_of_3}.
}
{
The map $\Phi$ from \cref{subsec.first_look_gen} took partitions of $\{\bullet,\ast,\circ\}$ and returned true or false based on whether or not $\bullet$ was in the same partition as $\ast$. We need to see that it's actually a monotone map $\Phi\colon\prt{\{\bullet,\ast,\circ\}}\to\bb$. So suppose $P,Q$ are partitions with $P\leq Q$; we need to show that if $\Phi(P)=\true$ then $\Phi(Q)=\true$.

By definition $P\leq Q$ means that $P$ is finer than $Q$: i.e.\ $P$ differentiates more stuff, and $Q$ lumps more stuff together. Technically, $x\sim_Py$ implies $x\sim_Qy$ for all $x,y\in\{\bullet,\ast,\circ\}$. Applying this to $\bullet,\ast$ gives the result.
}

\sol{exc.pullback_upset}{
  Let $P$ and $Q$ be preorders, and $f\colon P \to Q$ be a monotone map. Then we
  can define a monotone map $f^\ast\colon\upset(Q) \to \upset(P)$ sending an
  upper set $U \subseteq Q$ to the upper set $f\inv (U) \subseteq P$. We call this the
  \emph{pullback along $f$}.

  Viewing upper sets as a monotone maps to $\bb$ as in \cref{prop.upperset_presheaf}, the pullback can be understood in terms of composition. Indeed, show
  that the $f^\ast$ is defined by taking $u\colon Q \to \bb$ to $f\cp u
  \colon P \to \bb$.
}{
Given a function $f\colon P\to Q$, we have $f^\ast\colon\upset(Q)\to\upset(P)$ given by $U\mapsto f\inv(U)$. But upper sets in $Q$ are classified by monotone maps $u\colon Q\to\bb$, and similarly for $P$; our job is to show that $f^\ast(U)$ is given by composing the classifier $u$ with $f$.

Given an upper set $U\ss Q$, let $u\colon Q\to\bb$ be the corresponding monotone map, which sends $q\mapsto\true$ iff $q\in U$. Then $(f\cp u)\colon P\to\bb$ sends $p\mapsto\true$ iff $f(p)\in U$; it corresponds to the upper set $\{p\in P\mid f(p)\in U\}$ which is exactly $f\inv(U)$.
}

\sol{exc.0_glb}{
\begin{enumerate}
	\item Why is $0$ a lower bound for $\left\{\frac{1}{n+1}\;\;\middle|\;\; n\in\NN\right\}\ss\rr$?
	\item Why is $0$ a \emph{greatest} lower bound (meet)?
\end{enumerate}
}
{
\begin{enumerate}
	\item $0$ is a lower bound for $S=\{\frac{1}{n+1}\mid n\in\NN\}$ because $0\leq\frac{1}{n+1}$ for any $n\in\nn$.
	\item Suppose that $b$ is a lower bound for $S$; we want to see that $b\leq 0$. If one believes to the contrary that $0<b$, then consider $1/b$; it is a real number so we can find a natural number $n$ that's bigger $1/b<n<n+1$. This implies $1<b(n+1)$ and hence $\frac{1}{n+1}<b$, but that is a contradiction of $b$ being a lower bound for $S$. The false believer is defeated!
\end{enumerate}
}
\sol{exc.wedge_element}{
Let $(P,\leq)$ be a preorder and $p\in P$ an element. Consider the set $A=\{p\}$ with one element.
\begin{enumerate}
	\item Show that $\bigwedge A\cong p$.
	\item Show that if $P$ is in fact a partial order then $\bigwedge A=p$.
	\item Are the analogous facts true when $\bigwedge$ is replaced by $\bigvee$?
\end{enumerate}
}{
We have a preorder $(P,\leq)$, an element $p\in P$, and a subset $A=\{p\}$ with one element.
\begin{enumerate}
	\item To see that $\bigwedge A\cong p$, we need to show that $p\leq a$ for all $a\in A$ and that if $q\leq a$ for all $a\in A$ then $q\leq p$. But the only $a\in A$ is $a=p$, so both are obvious.
	\item We know $p$ is a meet of $A$, so if $q$ is also a meet of $A$ then $q\leq a$ for all $a\in A$ so $q\leq p$; similarly $p\leq a$ for all $a\in A$, so $p\leq q$. Then by definition we have $p\cong q$, and since $(P,\leq)$ is a partial order, $p=q$.	
	\item The analogous facts are true when $\bigwedge$ is replaced by $\bigvee$; the only change in the argument is to replace $\leq$ by $\geq$ and `meet' by `join' everywhere.
\end{enumerate}
}

\sol{exc.division_meet}{
Recall the division ordering on $\nn$ from \cref{ex.orders_on_N}: we write $n\vert m$ if $n$ divides perfectly into $m$. The meet of any two numbers in
this preorder has a common name, you probably learned when you were around 10 years old; what is it? Similarly the join of any two numbers has a common name; what is it?
}{
The meet of $4$ and $6$ is the highest number in the order that divides both of them; the numbers dividing both are $1$ and $2$, and $2$ is higher, so $4\wedge 6=2$. Similar reasoning shows that $4\vee 6=12$. The meet is the `greatest common divisor' and the join is the `least common multiple,' and this holds up for all pairs $m,n\in\nn$ not just $4,6$.
}

\sol{exc.more_stuff}{
In \cref{def.gen_effect}, we defined generativity of $f$ as the inequality $f(a \vee b)\neq f(a)\vee f(b)$, but in the subsequent text we seemed to imply there would be not just a difference, but \emph{more stuff} in $f(a \vee b)$ than in $f(a)\vee f(b)$.

Prove that for any monotone map $f\colon P\to Q$, if $a,b\in P$ have a join and $f(a),f(b)\in Q$ have a join, then indeed $f(a)\vee f(b)\leq f(a \vee b)$.
}
{
Since $f$ is monotone, the facts that $a\leq a\vee b$ and $b\leq a\vee b$ imply that $f(a)\leq f(a\vee b)$ and $f(b)\leq f(a\vee b)$. But by definition of join, $f(a)\vee f(b)$ is the largest element with that property, so $f(a)\vee f(b)\leq f(a \vee b)$, as desired.
}

\sol{exc.right_adj_3times}{
In \cref{ex.adjoint_to_3times} we found a left adjoint for the monotone map $(3\times-)\colon \zz \to \rr$. Now find a right adjoint for the same map, and show it is correct.
}{
By analogy with \cref{ex.adjoint_to_3times}, the right adjoint for $(3\times-)$ should be $\floor{-/3}$. But to prove this is correct, we must show that for any any $r\in\rr$ and $z\in\zz$ we have $z\leq\floor{r/3}$ iff $3*z\leq r$.

Suppose the largest integer below $r/3$ is $z'\coloneqq\floor{r/3}$. Then $z\leq z'$ implies $3*z\leq 3*z'\leq 3*r/3=r$, giving one direction. For the other, suppose $3*z\leq r$. Then dividing both sides by 3, we have $z=3*z/3\leq r/3$. Since $z$ is an integer below $r/3$ it is below $\floor{r/3}$ because $\floor{r/3}$ is the greatest integer below $r/3$, and we are done.
}

\sol{exc.galois_linear_ord}{
Consider the preorder $P=Q=\ord{3}$.
\begin{enumerate}
	\item Let $f,g$ be the monotone maps shown below:
\[
\begin{tikzcd}[ampersand replacement=\&]
	P\ar[d, blue, "f"']\&\LMO{1}\ar[r]\ar[d, |->, bend right, blue]\&\LMO{2}\ar[r]\ar[dl, |->, bend right, blue]\&\LMO{3}\ar[d, |->, bend right, blue]\&P\\
	Q\&\LMO[under]{1}\ar[r]\ar[ur, |->, bend right, red]\&\LMO[under]{2}\ar[r]\ar[u, |->, bend right, red]\&\LMO[under]{3}\ar[u, |->, bend right, red]\&Q\ar[u, red, "g"']
\end{tikzcd}
\]
Is it the case that $f$ is left adjoint to $g$? Check that for each $1\leq p,q\leq 3$, one has $f(p)\leq q$ iff $p\leq g(q)$.
	\item Let $f,g$ be the monotone maps shown below:
\[
\begin{tikzcd}[ampersand replacement=\&]
	P\ar[d, blue, "f"']\&\LMO{1}\ar[r]\ar[d, |->, bend right, blue]\&\LMO{2}\ar[r]\ar[d, |->, bend right, blue]\&\LMO{3}\ar[d, |->, bend right, blue]\&P\\
	Q\&\LMO[under]{1}\ar[r]\ar[ur, |->, bend right, red]\&\LMO[under]{2}\ar[r]\ar[u, |->, bend right, red]\&\LMO[under]{3}\ar[u, |->, bend right, red]\&Q\ar[u, red, "g"']
\end{tikzcd}
\]
Is it the case that $f$ is left adjoint to $g$?
\end{enumerate}
}{
\begin{enumerate}
  \item We need to check that for all nine pairs $\{(p,q)\mid 1\leq p\leq 3\text{ and }1\leq q\leq 3\}$ we have $f(p)\leq q$ iff $p\leq g(q)$, where $f$ and $g$ are the functions shown here:
\[
\begin{tikzcd}[ampersand replacement=\&]
	P\ar[d, blue, "f"']\&\LMO{1}\ar[r]\ar[d, |->, bend right, blue]\&\LMO{2}\ar[r]\ar[dl, |->, bend right, blue]\&\LMO{3}\ar[d, |->, bend right, blue]\&P\\
	Q\&\LMO[under]{1}\ar[r]\ar[ur, |->, bend right, red]\&\LMO[under]{2}\ar[r]\ar[u, |->, bend right, red]\&\LMO[under]{3}\ar[u, |->, bend right, red]\&Q\ar[u, red, "g"']
\end{tikzcd}
\]
When $p=q=1$ we have $f(p)=1$ and $g(q)=2$, so both $f(p)=1\leq 1=q$ and $p=1\leq g(q)$; it works! Same sort of story happens when $(p,q)$ is $(1,2)$, $(1,3)$, $(2,1)$, $(2,2)$, $(2,3)$, and $(3,3)$. A different story happens for $p=3, q=1$ and $p=3, q=2$. In those cases $f(p)=3$ and $g(q)=2$, and neither inequality holds: $f(p)\not\leq q$ and $p\not\leq g(q)$. But that's fine, we still have $f(p)\leq q$ iff $p\leq g(q)$ in all nine cases, as desired.
	\item
	\[
\begin{tikzcd}[ampersand replacement=\&]
	P\ar[d, blue, "f"']\&\LMO{1}\ar[r]\ar[d, |->, bend right, blue]\&\LMO{2}\ar[r]\ar[d, |->, bend right, blue]\&\LMO{3}\ar[d, |->, bend right, blue]\&P\\
	Q\&\LMO[under]{1}\ar[r]\ar[ur, |->, bend right, red]\&\LMO[under]{2}\ar[r]\ar[u, |->, bend right, red]\&\LMO[under]{3}\ar[u, |->, bend right, red]\&Q\ar[u, red, "g"']
\end{tikzcd}
\]
Here $f$ is \emph{not} left adjoint to $g$ because $f(2)\not\leq 1$ but $2\leq g(1)$.
\end{enumerate}
}

\sol{exc.extra_right_adj_ceil}{
\begin{enumerate}
	\item Does $\ceil{-/3}$ have a left adjoint $L\colon \zz\to\rr$?
	\item If not, why? If so, does its left adjoint have a left adjoint?
	\qedhere
\end{enumerate}
}{
\begin{enumerate}
	\item Let's suppose we have a monotone map $L\colon\zz\to\rr$ that's left adjoint to $\ceil{-/3}$ and see what happens. Writing $C(r)\coloneqq\ceil{r/3}$, then for all $z\in\zz$ and $r\in\rr$ we have $L(z)\leq r$ iff $z\leq C(r)$ by definition of adjunction. So take $z=1$ and $r=.01$; then $\ceil{r/3}=1$ so $z\leq C(r)$, and hence $L(z)\leq r$, i.e.\ $L(1)\leq 0.01$. In the same way $L(1)\leq r$ for all $r>0$, so $L(1)\leq 0$. By definition of adjunction $1\leq C(0)=\ceil{0/3}=0$, a contradiction.
	\item There's no left adjoint, because starting with an arbitrary one, we derived a contradiction.
\end{enumerate}
}

\sol{exc.g_left_part}{
There are 15 different partitions of a set with four elements. Choose 6 different ones and for each one, call it $c\colon S\surj P$, find $g_!(c)$, where $S$, $T$, and $g\colon S\to T$ are the same as they were in \cref{ex.pushforward_part}.
}{
We have $S=\{1,2,3,4\}$, $T=\{12,3,4\}$, and $g\colon S\to T$ the ``obvious'' function between them; see \cref{ex.pushforward_part}. Take $c_1,c_2,c_3,c_4$ to be the following partitions:
\[
% [inline block 55: 3 envs, 4113 chars -> data_tex | \begin{tikzpicture}[text height=1ex, text depth=.5ex, x=.5cm] 	\foreach \i in {1,2,3,4}{...]

\]
}

\sol{exc.parts_adjunction}{
Let $S$, $T$, and $g\colon S\to T$ be as in \cref{exc.g_left_part}.
\begin{enumerate}
	\item Choose a nontrivial partition $c\colon S\surj P$ and let $g_!(c)$ be its push forward partition on $T$.
	\item Choose any coarser partition $d\colon T\surj P'$, i.e.\ where $g_!(c)\leq d$.
	\item Choose any non-coarser partition $e\colon T\surj Q$, i.e.\ where $g_!(c)\not\leq e$. (If you can't do this, revise your answer for \#1.)
	\item Find $g^*(d)$ and $g^*(e)$.
	\item The adjunction formula \cref{eqn.galois_connection} in this case says that since $g_!(c)\leq d$ and $g_!(c)\not\leq e$, we should have $c\leq g^*(d)$ and $c\not\leq g^*(e)$. Show that this is true.
\end{enumerate}
}{
\begin{enumerate}
  \item We choose the following partition $c$ on $S$ and compute its pushforward $g_!(c)$:
  \[
  % [inline block 56: 4 envs, 2647 chars -> data_tex | \begin{tikzpicture}[text height=1ex, text depth=.5ex, x=.5cm]   	\foreach \j in {1, 2, 3, 4}{...]

	\]
	\item Comparing $c$, the left-hand partition in part 1., with $g^*(d)$ and $g^*(e)$, we indeed have $c\leq g^*(d)$ but $c\not\leq g^*(e)$, as desired.
\end{enumerate}
}

\sol{exc.proof_galois_monad}{
Complete the proof of \cref{prop.galois_monad_comonad} by showing that
\begin{enumerate}
	\item if $f$ is left adjoint to $g$ then for any $q\in Q$, we have $f(g(q))\leq q$
	\item if \cref{eqn.preorder_monad_comonad} holds, then holds $p\leq g(q)$ iff $f(p)\leq q$ holds, for all $p\in P$ and $q\in Q$.
\end{enumerate}
}{
Suppose $P$ and $Q$ are preorders, and that $f\colon P\leftrightarrows Q\cocolon g$ are monotone maps.
\begin{enumerate}
	\item Suppose $f$ is left adjoint to $g$. By definition this means $f(p)\leq q$ iff $p\leq g(q)$, for all $p\in P$ and $q\in Q$. Then starting with the reflexivity fact $g(q)\leq g(q)$, the definition with $p\coloneqq g(q)$ gives $f(g(q))\leq q$ for all $q$.
	\item Suppose that $p\leq g(f(p))$ and $f(g(q))\leq q$ for all $p\in P$ and $q\in Q$. We first want to show that $p\leq g(q)$ implies $f(p)\leq q$, so assume $p\leq g(q)$. Then applying the monotone map $f$ to both sides, we have $f(p)\leq f(g(q))$, and then by transitivity $f(g(q))\leq q$ implies $f(p)\leq q$, as desired. The other direction is similar.
\end{enumerate}
}

\sol{exc.uniqueness_of_adjoints}{
 	\begin{enumerate}
		\item Show that if $f\colon P\to Q$ has a right adjoint $g$, then it is unique up to isomorphism. That means, for any other right adjoint $g'$, we have $g(q)\cong g'(q)$ for all $q\in Q$.
  	\item Is the same true for left adjoints? That is, if $h\colon P\to Q$ has a left adjoint, is it necessarily unique?
	\end{enumerate}
}
{
\begin{enumerate}
	\item Suppose that $f\colon P\to Q$ has two right adjoints, $g,g'\colon Q\to P$. We want to show that $g(q)\cong g'(q)$ for all $q\in Q$. We will prove $g(q)\leq g'(q)$; the inequality $g'(q)\leq g(q)$ is similar. To do this, we use the fact that $p\leq g'(f(p))$ and $f(g(q))\leq q$ for all $p,q$ by \cref{eqn.preorder_monad_comonad}. Then the trick is to reason as follows:
	\[g(q)\leq g'(f(g(q)))\leq g'(q).\]
	\item It is the same for left adjoints.
\end{enumerate}
}

\sol{exc.mimic_proof_joins}{
  Complete the proof of \cref{prop.right_adj_meets} by showing that left adjoints preserve joins.
}{
  Suppose $f\colon P \to Q$ is left adjoint to $g\colon Q \to P$. Let $A\ss P$ be any subset and let $j\coloneqq\bigvee A$ be
  its join. Then since $f$ is monotone $f(a)\leq f(j)$ for all $a \in A$, so $f(j)$ is an
  upper bound for the set $f(A)$. We want to show it is the least upper bound, so take any other upper bound $b$ for $f(A)$, meaning we have $f(a)\leq b$ for all $a\in A$. Then by definition of adjunction, we also have $a\leq g(b)$ for all $a\in A$. By definition of join, we have $j\leq g(b)$. Again by definition of adjunction $f(j)\leq b$, as desired. 
}  
  
\sol{exc.g_really_is_right_adj}{
To be sure that $g$ really is right adjoint to $f$ in \cref{ex.right_adj_not_joins}, there are twelve tiny things to check; do so. That is, for every $p\in P$ and $q\in Q$, check that $f(p)\leq q$ iff $p\leq g(q)$.
}{
We want to show that in the following picture, $g$ is really right adjoint to $f$:
\[
% [inline block 57: 3 envs, 2940 chars -> data_tex | \begin{tikzpicture}[y=1.2cm] 	\node (b1) {$\LMO{1}$};...]

\]
\begin{enumerate}
	\item Let $B_1\coloneqq\{a,b\}$ and $B_2\coloneqq\{c\}$. Then $f^*(B_1)=\{a_1\}$ and $f^*(B_2)=\{c_1,c_2\}$.
	\item Let $A_1\coloneqq\varnothing$ and $A_2\coloneqq\{a_1,c_1\}$. Then $f_!(A_1)=\varnothing$ and $f_!(A_2)=\{a,c\}$.
	\item With the same $A_1$ and $A_2$, we compute $f_*(A_1)=\{b\}$ and $f_*(A_2)=\{a,b\}$.
\end{enumerate}
}

\sol{exc.closure}{
Suppose that if $f$ is left adjoint to $g$. Use \cref{prop.galois_monad_comonad} to show the following.
\begin{enumerate}
	\item $p \le (f\cp g)(p)$
	\item $(f\cp g\cp f \cp g)(p) \cong (f\cp g)(p)$. To prove this, show inequalities in both directions, $\leq$ and $\geq$.
\end{enumerate}
}{Assume $f\colon P\to Q$ is left adjoint to $g\colon Q\to P$.
\begin{enumerate}
	\item It is part of the definition of adjunction (\cref{prop.galois_monad_comonad}) that $p\leq g(f(p))$, and of course $g(f(p))$ and $(f\cp g)(p)$ mean the same thing.
	\item We want to show that $g(f(g(f(p))))\leq g(f(p))$ and $g(f(p))\leq g(f(g(f(p))))$ for all $p$. The latter is just the fact that $p'\leq g(f(p'))$ for any $p'$, applied with $g(f(p))$ in place of $p'$. The former uses that $f(g(q))\leq q$, with $f(p)$ substituted for $q$: this gives $f(g(f(p)))\leq f(p)$, and then we apply $g$ to both sides.
\end{enumerate}
}  

\sol{exc.Hasse_binary12}{
Draw the Hasse diagram for the preorder $\Cat{Rel}(\{1,2\})$ of all binary relations on the set $\{1,2\}$.
}
{We denote tuples $(a,b)$ by $ab$ for space reasons. So the relation $\{(1,1),(1,2),(2,1)\}$ will be denoted $\{11,12,21\}$.
\[\footnotesize
\begin{tikzcd}[ampersand replacement=\&, column sep=small]
	\&\&\&[-20pt]\{11,12,21,22\}\&[-20pt]\\
	\&\{11,12,21\}\ar[urr]\&\{11,12,22\}\ar[ur]\&\&\{11,21,22\}\ar[ul]\&\{12,21,22\}\ar[ull]\\
	\{11,12\}\ar[ur]\ar[urr]\&\{11,21\}\ar[u]\ar[urrr]\&\{11,22\}\ar[u]\ar[urr]\&\&\{12,21\}\ar[ulll]\ar[ur]\&\{12,22\}\ar[ulll]\ar[u]\&\{21,22\}\ar[ull]\ar[ul]\\
	\&\{11\}\ar[ul]\ar[u]\ar[ur]\&\{12\}\ar[ull]\ar[urr]\ar[urrr]\&\&\{21\}\ar[ulll]\ar[u]\ar[urr]\&\{22\}\ar[ulll]\ar[u]\ar[ur]\\
	\&\&\&\varnothing\ar[ull]\ar[ul]\ar[ur]\ar[urr]
\end{tikzcd}
\]
}

\sol{exc.understand_adj}{
	Let $S\coloneqq\{1,2,3\}$. Let's try to understand the adjunction discussed above.
	\begin{enumerate}
		\item Come up with any preorder relation $\leq$ on $S$, and define $U(\leq)$ to be the subset $U(\leq)\coloneqq\{(s_1,s_2)\mid s_1\leq s_2\}\ss S\times S$, i.e.\ $U(\leq)$ is the image of $\leq$ under the inclusion $\Cat{Pos}(S)\to\Cat{Rel}(S)$.
		\item Come up with any two binary relations $Q\ss S\times S$ and $Q'\ss S\times S$ such that $Q\ss U(\leq)$ but $Q'\not\ss U(\leq)$. Note that your choice of $Q,Q'$ do not have to come from preorders.
	\end{enumerate}
	We now want to check that in this case, the closure operation $\Fun{Cl}$ is really left adjoint to the ``underlying relation'' map $U$.
	\begin{enumerate}[resume]
		\item Concretely (without using the assertion that there is some sort of adjunction), show that $\Fun{Cl}(Q)\sqsubseteq\;\leq$, where $\sqsubseteq$ is the order on $\Cat{Pos}(S)$, defined on page \pageref{page.order_preorder}.
		\item Concretely show that $\Fun{Cl}(Q')\not\sqsubseteq\;\leq$.
	\qedhere
\end{enumerate}\end{enumerate}
}
{
Let $S\coloneqq\{1,2,3\}$.
\begin{enumerate}
	\item Let $\leq$ be the preorder with $1\leq 2$, and of course $1\leq 1$, $2\leq 2$, and $3\leq 3$. Then $U(\leq)=\{(1,1),(1,2),(2,2),(3,3)\}$.
	\item Let $Q\coloneqq\{(1,1)\}$ and $Q'\coloneqq\{(2,1)\}$.
	\item The closure $\Fun{Cl}(Q)$ of $Q$ is the smallest preorder containing $(1,1)$, which is $\Fun{Cl}(Q)=\{(1,1),(2,2),(3,3)\}$. Similarly, $\Fun{Cl}(Q')=\{(1,1),(2,1),(2,2),(3,3)\}$. It is easy to see that $\Fun{Cl}(Q)\sqsubseteq\;\leq$ because every ordered pair in $\Fun{Cl}(Q)$ is also in $\leq$.
	\item It is easy to see that $\Fun{Cl}(Q')\not\sqsubseteq\;\leq$ because the ordered pair $(2,1)$ is in $\Fun{Cl}(Q')$ but is not in $\leq$.
\end{enumerate}
}

\finishSolutionChapter

%======== Section ========%
\section[Solutions for Chapter 2]{Solutions for \cref{chap.resource_theory}.}

\sol{exc.monoidal_reals}{
Consider again the preorder $(\RR,\leq)$ from \cref{ex.real_nums_preorder}. Someone proposes $1$ as a monoidal unit and $*$ (usual multiplication) as a monoidal product. But an expert walks by and says ``that won't work.'' Figure out why, or prove the expert wrong!
}
{
The expert is right! The proposal violates property (a) when $x_1 = -1$,
$x_2=0$, $y_1=-1$, and $y_2=1$. Indeed $-1 \leq -1$ and $0 \leq 1$, but $-1*0 =
0\not\leq -1 = -1*1$.
}

\sol{exc.disc_mon_preorder}{
  We said it was easy to check that if $(M,*,e)$ is a commutative monoid then
  $(\Cat{Disc}_M,=, *, e)$ is a symmetric monoidal preorder. Are we telling the truth?
}
{
  To check that $(\Cat{Disc}(M),=,*,e)$ is a symmetric monoidal preorder, we need to check our proposed data obeys conditions (a)-(d) of \cref{def.symm_mon_structure}. Condition (a) just states the tautology that $x_1 \otimes x_2 = x_1 \otimes x_2$, conditions (b) and (c) are precisely the equations \cref{eqn.monoid}, and (d) is the commutativity condition. So we're done. We leave it to you to decide whether we were telling the truth when we said it was easy.
}

\sol{exc.proof_by_wiring}{
The string of inequalities in \cref{eqn.almost_proof} is not quite a proof, because technically there is no such thing as $v+w+u$, for example. Instead, there is $(v+w)+u$ and $v+(w+u)$, and so on.
\begin{enumerate}
	\item Formally prove, using only the rules of symmetric monoidal preorders (\cref{def.symm_mon_structure}), that given the assertions in \cref{eqn.some_random334_assertions}, the conclusion in \cref{eqn.random334_conclusion} follows.
	\item Reflexivity and transitivity (\cref{def.preorder}) should show up in your proof. Make sure you are explicit about where they do.
	\item How can you look at the wiring diagram
	\cref{eqn.random334_string_diag} and know that the symmetry axiom
	(\cref{def.symm_mon_structure}(d)) does not need to be invoked?
}
{
\begin{enumerate}
\item Here is a line by line proof, where we write the reason for each step in
parentheses on the right. Recall we call the properties
(a) and (c) in \cref{def.symm_mon_structure} \emph{monotonicity} and
\emph{associativity} respectively.
\begin{align*}
	t+u & \leq (v+w)+u \tag{monotonicity, $t \leq
	v+w$, $u \leq u$} \\
	& = v+(w+u) \tag{associativity} \\
	& \leq v+(x+z) \tag{monotonicity, $v \leq v$,
	$w+u \leq x+v$} \\
	& = (v+x)+z \tag{associativitiy} \\
	& \leq y+z. \tag{monotonicity, $v+x\leq y$, $z
	\leq z$}
\end{align*}
\item We use reflexivity when we assert that $u \leq u$, $v \leq v$ and $z \leq
z$, and use transitivity to assert that the above sequence of inequalities
implies the single inequality $t+u \leq y+z$.
\item We know that the symmetry axiom is not necessary because no pair of wires
cross.
\end{enumerate}
}

\sol{exc.reaction_equations}{
Here is an exercise for people familiar with reaction equations: check that
conditions (a), (b), (c), and (d) of \cref{def.symm_mon_structure} hold.
}
{
Condition (a), monotonicity, says that if $x \rightarrow y$ and $z \rightarrow
w$ are reactions, then $x+z \rightarrow y+w$ is a reaction. Condition (b),
unitality, holds as $0$ represents having no material, and adding no material
to some other material does not change it. Condition (c),
associativity, says that when combining three collections $x$, $y$, and $z$ of
molecules it doesn't matter whether you combine $x$ and $y$ and then $z$, or
combine $x$ with $y$ already combined with $z$. Condition (d), symmetry, says
that combining $x$ with $y$ is the same as combining $y$ with $x$. All these
are true in our model of chemistry, so $(\Set{Mat},\rightarrow,0,+)$ forms a symmetric monoidal
preorder.
}

\sol{exc.boolean_mon_pos_II}{
Let $(\BB,\leq)$ be as above, but now consider the monoidal product to be $\vee$ (OR). 
\[
\begin{array}{c | c c}
	\vee&\false&\true\\\hline
	\false&\false&\true\\
	\true&\true&\true
\end{array}
\hspace{1in}
\begin{array}{c | c c}
	\max&0&1\\\hline
	0&0&1\\
	1&1&1
\end{array}
\]
What must the monoidal unit be in order to satisfy the conditions of \cref{def.symm_mon_structure}? 

Does it satisfy the rest of the conditions?
}
{
The monoidal unit must be $\false$. The symmetric monoidal preorder does satisfy
the rest of the conditions; this can be verified just by checking all cases.
}

\sol{exc.monoidal_naturals}{
Show there is a monoidal structure on $(\NN,\leq)$ where the monoidal product is $*$, i.e.\ $6*4=24$. What should the monoidal unit be?
}
{
The monoidal unit is the natural number $1$. Since we know that $(\NN,\leq)$ is
a preorder, we just need to check that $*$ is monotonic, associative, unital
with $1$, and symmetric. These are all familiar facts from arithmetic.
}

\sol{exc.monoidal_naturals2}{
Again taking the divisibility order $(\NN,\vert\;)$. Someone proposes $0$ as the monoidal unit and $+$ as the monoidal product. Does that proposal satisfy the conditions of \cref{def.symm_mon_structure}? Why or why not?
}
{
This proposal is not monotonic: we have $1\vert1$ and $1 \vert2$, but $(1+1)\nmid(1+2)$.
}

\sol{exc.no_maybe_yes}{
Consider the preorder $(P,\leq)$ with Hasse diagram \fbox{$\const{no}\to\const{maybe}\to\const{yes}$}. We propose a monoidal structure with $\const{yes}$ as the monoidal unit and ``min'' as the monoidal product.
\begin{enumerate}
	\item Make sense of ``$\min$'' by filling in the multiplication table with elements of $P$.
\[
\begin{array}{c|ccc}
	\min&\const{no}&\const{maybe}&\const{yes}\\\hline
	\const{no}&\?&\?&\?\\
	\const{maybe}&\?&\?&\?\\
	\const{yes}&\?&\?&\?
\end{array}
\]
	\item Check the axioms of \cref{def.symm_mon_structure} hold for $\Cat{NMY}\coloneqq(P,\leq,\const{yes},\min)$, given your definition of $\min$.
\end{enumerate}
}
{
\begin{enumerate}
	\item
\[
\begin{array}{c|ccc}
	\min&\const{no}&\const{maybe}&\const{yes}\\\hline
	\const{no}&\const{no}&\const{no}&\const{no}\\
	\const{maybe}&\const{no}&\const{maybe}&\const{maybe}\\
	\const{yes}&\const{no}&\const{maybe}&\const{yes}
\end{array}
\]
	\item We need to show that (a) if $x \leq y$ and $z \leq w$, then
	$\min(x,z) \leq \min(y,w)$, (b) $\min(x,\const{yes}) = x =
	\min(\const{yes},x)$, (c) $\min(\min(x,y),z) = \min(x,\min(y,z))$, and
	(d) $\min(x,y) = \min(y,z)$. The most straightforward way is to just
	check all cases. 
\end{enumerate}
}

\sol{exc.powerset_symm_mon_pos}{
Let $S$ be a set and let $\powset(S)$ be its powerset, the set of all subsets of $S$, including the empty subset, $\varnothing\ss S$, and the ``everything'' subset, $S\ss S$. We can give $\powset(S)$ an order: $A\leq B$ is given by the subset relation $A\ss B$, as discussed in \cref{ex.powerset}. We propose a symmetric monoidal structure on $\powset(S)$ with monoidal unit $S$ and monoidal product given by intersection $A\cap B$.

Does it satisfy the conditions of \cref{def.symm_mon_structure}?
}
{
Yes, $(\powset(S),\leq,S,\cap)$ is a symmetric monoidal preorder.
}

\sol{exc.propositions_preorder}{
Let $\Prop^{\NN}$ denote the set of all mathematical statements one can make
about a natural number, where we consider two statements to be the same if one
is true if and only if the other is true. For example ``$n$ is prime'' is an
element of $\Prop^\NN$, as is ``$n=2$'' and ``$n\geq 11$.'' The statements ``$n
\leq 12$'' and ``$n +1\leq10+3$'' are considered the same. Given
$P,Q\in\Prop^\NN$, we say $P\leq Q$ if for all $n\in\NN$, whenever $P(n)$ is
true, so is $Q(n)$.

Define a monoidal unit and a monoidal product on $\Prop^\NN$ that satisfy the
conditions of \cref{def.symm_mon_structure}.
}
{
Depending on your mood, you might come up with either of the following. First,
we could take the monoidal unit to be some statement $\true$ that is true for
all natural numbers, such as ``$n$ is a natural number.'' We can pair this unit with
the monoidal product $\wedge$, which takes statements $P$ and $Q$
and makes the statement $P\wedge Q$, where $(P\wedge Q)(n)$ is true if $P(n)$
and $Q(n)$ are true, and false otherwise. Then $(\Prop^\NN,\leq,\true,\wedge)$
forms a symmetric monoidal preorder.

Another option is to take define $\false$ to be some statement that is false for
all natural numbers, such as ``$n+10 \leq 1$'' or ``$n$ is made of cheese.'' We
can also define $\vee$ such that $(P\vee Q)(n)$ is true if and only if at least
one of $P(n)$ and $Q(n)$ is true. Then $(\Prop^\NN,\leq,\false,\vee)$ forms a
symmetric monoidal preorder.
}

\sol{exc.complete_op_proof}{
  Complete the proof of \cref{prop.opposite_monoidal_preorder} by proving that the three remaining conditions of \cref{def.symm_mon_structure} are satisfied.
}
{
Unitality and associativity have nothing to do with the order in $\cat{X}\op$:
they simply state that $I \otimes x = x = x \otimes I$, and $(x \otimes y)
\otimes z = x \otimes (y \otimes z)$. Since these are true in $\cat{X}$, they
are true in $\cat{X}\op$. Symmetry is slightly trickier, since in only asks that
$x \otimes y$ is equivalent to $y \otimes x$. Nonetheless, this is still true in
$\cat{X}\op$ because it is true in $\cat{X}$. Indeed the fact that $(x \otimes
y) \cong (y \otimes x)$ in $\cat{X}$ means that $(x \otimes y) \leq (y \otimes
x)$ and $(y \otimes x) \leq (x \otimes y)$ in $\cat{X}$, which respectively
imply that $(y \otimes x) \leq (x \otimes y)$ and $(x \otimes y) \leq (y \otimes
x)$ in $\cat{X}\op$, and hence that $(x \otimes y) \cong (y \otimes x)$ in
$\cat{X}\op$ too.  
}

\sol{exc.costop}{
  Since $\Cost$ is a symmetric monoidal preorder, \cref{prop.opposite_monoidal_preorder} says that $\Cost\op$ is too.
  \begin{enumerate}
    \item What is $\Cost\op$ as a preorder?
    \item What is its monoidal unit?
    \item What is its monoidal product?
\end{enumerate}
}
{
  \begin{enumerate}
    \item The preorder $\Cost\op$ has underlying set $[0,\infty]$, and the usual
    increasing order on real numbers $\le$ as its order.
    \item Its monoidal unit is $0$.
    \item Its monoidal product is $+$.
\end{enumerate}
}

\sol{exc.bool_to_cost}{
\begin{enumerate}
	\item Check that the map $g\colon(\BB,\leq,\true,\wedge)\to([0,\infty],\geq,0,+)$ presented above indeed
  \begin{itemize}
  	\item is monotonic,
  	\item satisfies the condition (a) of \cref{def.monoidal_functor}, and
  	\item satisfies the condition (b) of \cref{def.monoidal_functor}.
  \end{itemize}
  \item Is $g$ strict?
  \qedhere
\qedhere
\end{enumerate}
}
{
\begin{enumerate}
	\item The map $g$ is monotonic as $g(\false) = \infty \geq 0 =
	g(\true)$, satisfies condition (a) since $0 \geq 0 = g(\true)$, and
	satisfies condition (b) since 
	\begin{align*}
	g(\false) + g(\false) = \infty +\infty	&\geq \infty = g(\false \wedge
	\false) \\
	g(\false) + g(\true) = \infty+0 &\geq \infty = g(\false \wedge \true) \\
	g(\true) + g(\false) = 0 +\infty &\geq \infty = g(\true \wedge \false)
	\\
	g(\true) +g(\true) = 0+0 &\geq 0 = g(\true\wedge \true).
	\end{align*}
	\item Since all the inequalities regarding (a) and (b) above are in fact
	equalities, $g$ is a strict monoidal monotone.
\end{enumerate}
}

\sol{exc.bool_to_cost_inverses}{
Let $\Bool$ and $\Cost$ be as above, and consider the following quasi-inverse functions $d,u\colon[0,\infty]\to\BB$ defined as follows:
\[
  d(x)\coloneqq
  \begin{cases}
  	\false&\tn{ if }x>0\\
		\true&\tn{ if } x=0
	\end{cases}
\hspace{1in}
  u(x)\coloneqq
  \begin{cases}
  	\false&\tn{ if }x=\infty\\
		\true&\tn{ if } x<\infty
	\end{cases}
\]
\begin{enumerate}
	\item Is $d$ monotonic?
	\item Does $d$ satisfy conditions a.\ and b.\ of \cref{def.monoidal_functor}?
	\item Is $d$ strict?
	\item Is $u$ monotonic?
	\item Does $u$ satisfy conditions a.\ and b.\ of \cref{def.monoidal_functor}?
	\item Is $u$ strict?
\end{enumerate}
}
{
The answer to all these questions is yes: $d$ and $u$ are both strict monoidal
monotones. Here is one way to interpret this. The function $d$ asks `is $x=0$?'.
This is monotonic, $0$ is $0$, and the sum of two elements of $[0,\infty]$ is
$0$ if and only if they are both $0$. The function $u$ asks `is $x$ finite?'.
Similarly, this is monotonic, $0$ is finite, and the sum of $x$ and $y$ is
finite if and only if $x$ and $y$ are both finite.
}

\sol{exc.natural_monotone}{
\begin{enumerate}
	\item Is $(\NN,\leq,1,*)$ a monoidal preorder, where $*$ is the usual multiplication of natural numbers?
	\item If not, why not? If so, does there exist a monoidal monotone $(\NN,\leq,0,+)\to(\NN,\leq,1,*)$? If not; why not? If so, find it.
	\item Is $(\zz,\leq,*,1)$ a monoidal preorder?
\end{enumerate}
}
{
\begin{enumerate}
	\item Yes, multiplication is monotonic in $\leq$, unital with respect
	  to 1, associative, and symmetric, so $(\NN,\leq,1,*)$ is a monoidal
	  preorder. We also met this preorder in \cref{exc.monoidal_naturals}.
	\item The map $f(n) =1$ for all $n \in \NN$ defines a monoidal monotone
	$f\colon(\NN,\leq,0,+)\to(\NN,\leq,1,*)$. (In fact, it is unique! Why?)
	\item $(\zz,\leq,*,1)$ is not a monoidal preorder because $*$ is not monotone. Indeed $-1\leq 0$ but $(-1*-1)\not\leq (0*0)$.
\end{enumerate}
}

\sol{ex.preorders_as_boolcats}{
  \begin{enumerate}
    \item Start with a preorder $(P,\le)$, and use it to define a $\Bool$-category as we did
      in \cref{ex.preorder_as_boolcat}. In the proof of \cref{thm.preorder_is_bool_cat} we
      showed how to turn that $\Bool$-category back into a preorder. Show that doing so,
      you get the preorder you started with. 

    \item Similarly, show that if you turn a $\Bool$-category into a preorder using the
      above proof, and then turn the preorder back into a $\Bool$-category using your
      method, you get the $\Bool$-category you started with.
      \qedhere
  \end{enumerate}
}
{
  \begin{enumerate}
    \item Let $(P, \le)$ be a preorder. How is this a $\Bool$-category? Following
      \cref{ex.preorder_as_boolcat}, we can construct a $\Bool$-category $\cat{X}_P$
      with $P$ as its set of objects, and with $\cat{X}_P(p,q)\coloneqq \true$ if $p \le q$,
      and $\cat{X}_P(p,q)\coloneqq \false$ otherwise. How do we turn this back into a preorder?
      Following the proof of \cref{thm.preorder_is_bool_cat}, we construct a preorder
      with underlying set $\Ob(\cat{X}_P) = P$, and with $p \le q$ if and only if
      $\cat{X}_P(p,q) = \true$. This is precisely the preorder $(P,\le)$!
    \item Let $\cat{X}$ be a $\Bool$-category. By the proof of
      \cref{thm.preorder_is_bool_cat}, we construct a preorder
      $(\Ob(\cat{X}), \le)$, where $x \le y$ if and only if $\cat{X}(x,y) =
      \true$. Then, following our generalization of
      \cref{ex.preorder_as_boolcat} in 1., we construct a $\Bool$-category
      $\cat{X}'$ whose set of objects is $\Ob(\cat{X})$, and such that
      $\cat{X}'(x,y) = \true$ if and only if $x \le y$ in $(\Ob(\cat{X}), \le)$.
      But by construction, this means $\cat{X}'(x,y) = \cat{X}(x,y)$. So we get
      back the $\Bool$-category we started with.
  \end{enumerate}
}

\sol{ex.regions_of_world}{
Which distance is bigger under the above description, $d(\mathrm{Spain}, \mathrm{US})$ or $d(\mathrm{US}, \mathrm{Spain})$?
}
{
The distance $d(\mathrm{US},\mathrm{Spain})$ is bigger: the distance from, for
example, San Diego to anywhere is Spain is bigger than the distance from
anywhere in Spain to New York City.
}

\sol{exc.finite_Lawvere}{
Consider the symmetric monoidal preorder $(\RR_{\geq0},\geq,0,+)$, which is almost the same as $\Cost$, except it does not include $\infty$. How would you characterize the difference between a Lawvere metric space and a category enriched in $(\RR_{\geq0},\geq,0,+)$?
}
{
The difference between a Lawvere metric space---that is, a category enriched
over $([0,\infty],\geq,0,+)$---and a category enriched over
$(\RR_{\geq0},\geq,0,+)$ is that in the latter, infinite distances are not
allowed between points. You might thus call the latter a finite-distance Lawvere metric
space.
}

\sol{exc.distance_matrix_X}{
Fill out the following table of distances in the weighted graph $X$ from \cref{eqn.cities_distances}
\[
\begin{array}{c|cccc}
  d(\nearrow)&A&B&C&D\\\hline
  A&0&\?&\?&\?\\
  B&2&\?&5&\?\\
  C&\?&\?&\?&\?\\
  D&\?&\?&\?&\?
\end{array}
\]
}
{
The table of distances for $X$ is
\[
\begin{array}{c|cccc}
  d(\nearrow)&A&B&C&D\\\hline
  A&0&6&3&11\\
  B&2&0&5&5\\
  C&5&3&0&8\\
  D&11&9&6&0
\end{array}
\]
}

\sol{exc.adjacency_matrix_X}{
Fill out the matrix $M_X$ associated to the graph $X$ in \cref{eqn.cities_distances}:
\[
M_X=
\begin{array}{c|cccc}
  \nearrow&A&B&C&D\\\hline
  A&0&\?&\?&\?\\
  B&2&0&\infty&\?\\
  C&\?&\?&\?&\?\\
  D&\?&\?&\?&\?
\end{array}
\]
}
{
The matrix of edge weights of $X$ is
\[
M_X=
\begin{array}{c|cccc}
  \nearrow&A&B&C&D\\\hline
  A&0&\infty&3&\infty\\
  B&2&0&\infty&5\\
  C&\infty&3&0&\infty\\
  D&\infty&\infty&6&0
\end{array}
\]
}

\sol{exc.NMY_cats}{
Recall the monoidal preorder $\Cat{NMY}\coloneqq(P,\leq,\const{yes},\min)$ from
\cref{exc.no_maybe_yes}. Interpret what a $\Cat{NMY}$-category is.
}
{
A $\Cat{NMY}$-category $\cat{X}$ is a set $X$ together with, for all pairs of elements
$x,y$ in $X$, a value $\cat{X}(x,y)$ equal to $\const{no}$, $\const{maybe}$, or
$\const{yes}$. Moreover, we must have $\cat{X}(x,x) = \const{yes}$ and
$\min(\cat{X}(x,y), \cat{X}(y,z)) \le \cat{X}(x,z)$ for all $x,y,z$. So a
$\Cat{NMY}$-category can be thought of as set of points together with an
statement---no, maybe, or yes---of whether it is possible to get from one point
to another. In particular, it's always possible to get to a point if you're already
there, and it's as least as possible to get from $x$ to $z$ as it is to get from
$x$ to $y$ and then $y$ to $z$.
}

\sol{ex.modes_of_transport}{
Let $M$ be a set and let $\cat{M}\coloneqq(\powset(M),\ss,M,\cap)$ be the monoidal preorder whose elements are subsets of $M$.

Someone gives the following interpretation, ``for any set $M$, imagine it as the set of modes of transportation (e.g.\ car, boat, foot). Then a category $\cat{X}$ enriched in $\cat{M}$ tells you all the modes that will get you from $a$ all the way to $b$, for any two points $a,b\in\Ob(\cat{X})$.''
\begin{enumerate}
	\item Draw a graph with four vertices and four or five edges, each labeled with a subset of $M=\{\mathrm{car}, \mathrm{boat}, \mathrm{foot}\}$.
	\item This corresponds to an $\cat{M}$-category, where the hom-object
	from $x$ to $y$ is computed as follows: for each path $p$ from $x$ to $y$,
	take the intersection of the settings labelling the edges in $p$. Then, take the
	union of the these sets over all paths $p$ from $x$ to $y$. Write out
	the corresponding four-by-four matrix of hom-objects.
	\item Does the person's interpretation look right, or is it subtly mistaken somehow?
\end{enumerate}
}
{
Here is one way to do this task.
\begin{enumerate}
	\item 	
\[
\begin{tikzpicture}[font=\scriptsize, x=1.5cm]
	\node (a) {$\LMO{A}$};
	\node[right=1 of a] (b) {$\LMO{B}$};
	\node[below=1 of a] (c) {$\LMO[under]{C}$};
	\node[right=1 of c] (d) {$\LMO[under]{D}$};
	\coordinate[left=1 of c] (left);
	\coordinate[right=1 of d] (right);
	\draw[->] (a) to node[above] 
	{$\{\textrm{boat}\}$} (b);
	\draw[->] (b) to node[right] 
	{$\{\textrm{boat}\}$} (d);
	\draw[->] (c) to node[left] 
	{$\{\textrm{foot},\textrm{boat}\}$} (a);
	\draw[bend left=20pt,->] (c) to node[above]
	{$\{\textrm{foot},\textrm{car}\}$} (d);
	\draw[bend left=20pt,->] (d) to node[below]
	{$\{\textrm{foot},\textrm{car}\}$} (c);
	\node[draw, inner ysep=15pt, fit=(left) (right) (a) (b) (c) (d)] (X) {};
\end{tikzpicture}
\]
	\item The corresponding $\cat{M}$-category, call it $\cat{X}$, has hom-objects:
	\[
		\begin{array}{c|cccc}
		\cat{X}(\nearrow)&A&B&C&D\\\hline
  		A&M&\{\textrm{boat}\}&\varnothing&\{\textrm{boat}\}\\
  		B&\varnothing&M&\varnothing&\{\textrm{boat}\}\\
  		C&\{\textrm{foot},\textrm{boat}\}&\{\textrm{boat}\}&M&M\\
  		D&\{\textrm{foot}\}&\varnothing&\{\textrm{foot},\textrm{car}\}&M
		\end{array}
	      \]
	      For example, to compute the hom-object $\cat{X}(C,D)$, we notice that
	      there are two paths: $C \to A \to B \to D$ and $C \to D$. For
	      the first path, the intersection is the set $\{\textrm{boat}\}$.
	      For the second path, the intersection in the set
	      $\{\textrm{foot},\textrm{car}\}$. Their union, and thus the
	      hom-object $\cat{X}(C,D)$, is the entire set $M$.

	      This computation contains the key for why $\cat{X}$ is a
	      $\cat{M}$-category: by taking the union over all paths, we ensure
	      that $\cat{X}(x,y)\cap\cat{X}(y,z) \subseteq \cat{X}(x,z)$ for all
	      $x,y,z$.
	\item The person's interpretation looks right to us.
\end{enumerate}
}

\sol{exc.weight_limit}{
Consider the monoidal preorder $W\coloneqq(\NN\cup\{\infty\}, \leq, \infty,
\min)$.
\begin{enumerate}
	\item Draw a small graph labeled by elements of $\NN\cup\{\infty\}$. 
	\item Write out the matrix whose rows and columns are indexed by the
	nodes in the graph, and whose $xy$th entry is given by the
	\emph{maximum} over all paths $p$ from $x$ to $y$ of the \emph{minimum}
	edge label in $p$.  
	\item Prove that this matrix is the matrix of hom-objects for a
	$\cat{W}$-category. This will give you a feel for how $\cat{W}$ works.
	\item Make up an interpretation, like that in
	\cref{ex.modes_of_transport}, for how to imagine enrichment in
	$\cat{W}$.
\end{enumerate}
\erase{Weight-limit, e.g.\ for trucking.}
}
{
\begin{enumerate}
	\item 
\[
\begin{tikzpicture}[font=\scriptsize, x=2cm,y=1cm]
	\coordinate (x) at (0,0);
	\node[right=.5 of x] (a) {$\LMO{A}$};
	\node[below=1 of x] (b) {$\LMO[under]{B}$};
	\node[right=1 of b] (c) {$\LMO[under]{C}$};
	\draw[->,bend left=12] (b) to node[above left=-5pt] {\tiny $5$} (a);
	\draw[->,bend left=12] (a) to node[above right=-5pt] {\tiny $10$} (c);
	\draw[->,bend left=12] (c) to node[below left=-5pt, pos=.6] {\tiny $10$} (a);
	\draw[->,bend left=12] (b) to node[above=-3pt] {\tiny $6$} (c);
	\draw[->,bend left=12] (c) to node[below=-3pt] {\tiny $10$} (b);
	\node[draw, inner sep=5pt, fit=(a) (b) (c)] (X) {};
\end{tikzpicture}
\]
	\item The matrix $M$ with $(x,y)$\th entry equal to the maximum, taken over all
	paths $p$ from $x$ to $y$, of the minimum edge label in $p$ is
\[
\begin{array}{c|ccc}
  M(\nearrow)&A&B&C\\\hline
  A&\infty&6&10\\
  B&10&\infty&10\\
  C&10&6&\infty
\end{array}
\]
	\item This is a matrix of hom-objects for a $\cat{W}$-category since the
	diagonal values are all equal to the monoidal unit $\infty$, and because
	$\min(M(x,y),M(y,z)) \le M(x,y)$ for all $x,y,z \in \{A,B,C\}$.
	\item One interpretation is as a weight limit (not to be confused with
	`weighted limit,' a more advanced categorical notion), for example for
	trucking cargo between cities. The hom-object indexed by a pair of
	points $(x,y)$ describes the maximum cargo weight allowed on that route.
	There is no weight limit on cargo that remains at some point $x$, so the
	hom-object from $x$ to $x$ is always infinite. The maximum weight that
	can be trucked from $x$ to $z$ is always at least the minimum of that
	that can be trucked from $x$ to $y$ and then $y$ to $z$. (It is `at least' this much because there may be some other, better route that does not pass through $y$.)
\end{enumerate}
}

\sol{exc.regions_preorder}{
Recall the ``regions of the world'' Lawvere metric space from
\cref{ex.regions_of_world} and the text above it. We just learned that we can
convert it to a preorder. Draw the Hasse diagram for the preorder corresponding
to the regions: US, Spain, and Boston. How can you interpret this preorder
relation?
}
{
\[
\begin{tikzpicture}[x=1.5cm, y=1.5cm, font=\footnotesize, text depth=.5ex, text height=1ex]
  	\node[label={[above=-6pt]:{Boston}}] (b) at (0,0) {$\bullet$};
  	\node[label={[above=-6pt]:{US}}] (u) at (1,0) {$\bullet$};
  	\node[label={[above=-6pt]:{Spain}}] (s) at (2,0) {$\bullet$};
  	\node[draw,  fit={($(b.north west)+(-10pt,10pt)$) (u) ($(s.east)+(10pt,0)$)}] (A) {};
	\draw[->] (b) to (u);
\end{tikzpicture}
\]
This preorder describes the `is a part of' relation. That is, $x \le y$ when
$d(x,y) =0$, which happens when $x$ is a part of $y$. So Boston is a part of
the US, and Spain is a part of Spain, but the US is not a part of Boston.
}

\sol{exc.metric_to_poset}{
\begin{enumerate}
	\item Find another monoidal monotone $g\colon\Cost\to\Bool$ different from the one defined in \cref{eqn.Lawv_to_Bool1}.
	\item Using \cref{const.mon_fun_base_change}, both your monoidal monotone $g$
	and the functor $f$ in \cref{eqn.Lawv_to_Bool1} can be used to convert a
	Lawvere metric space into a preorder. Find a Lawvere metric space $\cat{X}$ on which
	they give different answers, $\cat{X}_f\neq\cat{X}_g$.
\end{enumerate}
}
{
\begin{enumerate}
\item Recall the monoidal monotones $d$ and $u$ from
\cref{exc.bool_to_cost_inverses}. The function $f$ is equal to $d$; let $g$ be
equal to $u$.
\item Let $\cat{X}$ be the Lawvere metric space with two objects $A$ and $B$,
such that $d(A,B)=d(B,A)=5$. Then we have 
$\cat{X}_f=
\begin{altikz}[font=\scriptsize]
  	\node (b) at (0,0) {$\LMO{A}$};
  	\node (u) at (1,0) {$\LMO{B}$};
  	\node[draw, fit=(b) (u)] (X) {};
\end{altikz}$
while 
$\cat{X}_g=
\begin{altikz}[font=\scriptsize]
  	\node (b) at (0,0) {$\LMO{A}$};
  	\node (u) at (1,0) {$\LMO{B}$};
  	\node[draw, fit=(b) (u)] (X) {};
	\draw[->] (b) to (u);
\end{altikz}$.
\end{enumerate}
}

\sol{def.enriched_op}{
The concepts of opposite, dagger, and skeleton extend from preorders to
$\cat{V}$-categories. The \emph{opposite} of a $\cat{V}$-category $\cat{X}$ is denoted $\cat{X}\op$
and is defined by
\begin{enumerate}[label=(\roman*)]
	\item $\Ob(\cat{X}\op)\coloneqq\Ob(\cat{X})$, and
	\item for all $x,y\in \cat{X}$, we have $\cat{X}\op(x,y)\coloneqq\cat{X}(y,x)$.
\end{enumerate}
A $\cat{V}$-category $\cat{X}$ is a \emph{dagger}\index{dagger}
$\cat{V}$-category if the identity function is a $\cat{V}$-functor
$\dagger\colon\cat{X}\to\cat{X}\op$.  And a  \emph{skeletal}\index{skeleton}
$\cat{V}$-category is one in which if $I \le \cat{X}(x,y)$ and $I \le
\cat{X}(y,x)$, then $x=y$.

Recall that an extended metric space $(X,d)$ is a Lawvere metric space with two
extra properties; see properties (b) and (c) in \cref{def.ord_metric_space}.
\begin{enumerate}
	\item Show that a skeletal dagger $\Cost$-category is an extended metric space.
	\item Make sense of the following analogy: ``preorders are to sets as
	Lawvere metric spaces are to extended metric spaces.''
\end{enumerate}
}
{
\begin{enumerate}
	\item An extended metric space is a Lawvere metric space that obeys in
	addition the properties (b) if $d(x,y)=0$ then $x=y$, and (c)
	$d(x,y)=d(y,x)$ of \cref{def.ord_metric_space}. Let's consider the
	dagger condition first. It says that the identity function to the
	opposite $\Cost$-category is a functor, and so for all $x,y$ we must
	have $d(x,y) \le d(y,x)$. But this means also that $d(y,x) \le d(x,y)$,
	and so $d(x,y)=d(y,x)$. This is exactly property (c).

	Now let's consider the skeletality condition. This says that if $d(x,y)=0$
	and $d(y,x)=0$, then $x=y$. Thus when we have property (c),
	$d(x,y)=d(y,x)$, this is equivalent to property (b). Thus skeletal
	dagger $\Cost$-categories are the same as extended metric spaces!
	\item Recall from \cref{exc.skeletal_dagger_preorder} that skeletal
	dagger preorders are sets. The analogy ``preorders are to sets as
	Lawvere metric spaces are to extended metric spaces'' is thus the
	observation that just as extended metric spaces are skeletal dagger
	$\Cost$-categories, sets are skeletal dagger $\Bool$-categories. 	
\end{enumerate}
.
}

\sol{exc.check_enriched_prod}{
Let $\cat{X}\times\cat{Y}$ be the $\cat{V}$-product as in \cref{def.enriched_prod}.
\begin{enumerate}
	\item Check that for every object $(x,y)\in\Ob(\cat{X}\times\cat{Y})$ we have $I\leq(\cat{X}\times\cat{Y})((x,y),(x,y))$.
	\item Check that for every three objects $(x_1,y_1)$, $(x_2,y_2),$ and $(x_3,y_3)$, we have
	\[
		(\cat{X}\times\cat{Y})((x_1,y_1),(x_2,y_2))\otimes(\cat{X}\times\cat{Y})((x_2,y_2),(x_3,y_3))\leq (\cat{X}\times\cat{Y})((x_1,y_1),(x_3,y_3)).
		\]
	\item We said at the start of \cref{subsec.V_cats} that the symmetry of $\cat{V}$ (condition (d) of \cref{def.symm_mon_structure}) would be required here. Point out exactly where that condition is used.
\end{enumerate}
}
{
\begin{enumerate}
	\item Let $(x,y) \in \cat{X}\times\cat{Y}$. Since $\cat{X}$ and
	$\cat{Y}$ are $\cat{V}$-categories, we have $I \le \cat{X}(x,x)$ and $I
	\le \cat{Y}(y,y)$. Thus $I = I \otimes I \le \cat{X}(x,x) \otimes
	\cat{Y}(y,y) = (\cat{X} \times \cat{Y})\big((x,y),(x,y)\big)$.
	\item Using the definition of product hom-objects, and the symmetry and
	monotonicity of $\otimes$ we have
	\begin{align*}
		(\cat{X}&\times\cat{Y})\big((x_1,y_1),(x_2,y_2)\big)
		\otimes(\cat{X}\times\cat{Y})\big((x_2,y_2),(x_3,y_3)\big) \\
		&= \cat{X}(x_1,x_2) \otimes \cat{Y}(y_1,y_2) \otimes
		\cat{X}(x_2,x_3) \otimes \cat{Y}(y_2,y_3) \\
		&= \cat{X}(x_1,x_2) \otimes \cat{X}(x_2,x_3) \otimes
		\cat{Y}(y_1,y_2) \otimes \cat{Y}(y_2,y_3) \\
		&\leq \cat{X}(x_1,x_3) \otimes \cat{Y}(y_1,y_3) \\
		&=(\cat{X}\times\cat{Y})\big((x_1,y_1),(x_3,y_3)\big).
	\end{align*}
	\item In particular, we use the symmetry, to
	conclude that $\cat{Y}(y_1,y_2) \otimes	\cat{X}(x_2,x_3) =
		\cat{X}(x_2,x_3) \otimes \cat{Y}(y_1,y_2)$.
\end{enumerate}
}

\sol{exc.not_sqrt40}{
Consider $\RR$ as a Lawvere metric space, i.e.\ as a $\Cost$-category (see \cref{ex.reals_as_metric_space}). Form the $\Cost$-product $\RR\times\RR$. What is the distance from $(5,6)$ to $(-1,4)$? Hint: apply \cref{def.enriched_prod}; the answer is not $\sqrt{40}$.
}
{
We just apply \cref{def.enriched_prod}(ii): $(\RR\times
\RR)\big((5,6),(-1,4)\big) = \RR(5,-1) +\RR(6,4) = 6+2=8$.
}

\sol{exc.closure_is_adj}{
Condition \cref{eqn.monoidal_closed_adj} says precisely that we have a Galois
connection in the sense of \cref{def.galois}. Let's prove this fact. In
particular, we'll prove that a monoidal preorder is monoidal closed if, given
any $v\in V$, the map $-\otimes v\colon V \to V$ given by multiplying with $v$
has a right adjoint. We write this right adjoint $v \multimap -\colon V \to V$.
\begin{enumerate}
\item Using \cref{def.symm_mon_structure}, show that $-\otimes v$ is monotone.
\item Supposing that $\cat{V}$ is closed, show that for all $a,v \in V$ we have $(v \multimap a) \otimes v \le a$.
\item Using 2., show that $v \multimap -$ is monotone.
\item Conclude that a symmetric monoidal preorder is closed if and only if the
monotone map $-\otimes v$ has a right adjoint. \qedhere
\end{enumerate}
}
{
\begin{enumerate}
\item The function $-\otimes v\colon V \to V$ is monotone, because if $u \le u'$ then
$u \otimes v \le u' \otimes v$ by the monotonicity condition (a) in
\cref{def.symm_mon_structure}.
\item Let $a\coloneqq(v \multimap w)$ in
\cref{eqn.monoidal_closed_adj}. The right hand side is thus $(v \multimap w) \leq
(v \multimap w)$, which is true by reflexivity. Thus the left hand side is true
too. This gives $((v \multimap w) \otimes v) \le w$.
\item Let $u \le u'$. Then, using 2., $(v \multimap u) \otimes v \le u \le u'$.
Applying \cref{eqn.monoidal_closed_adj}, we thus have $(v \multimap u) \le (v
\multimap u')$. This shows that the map $(v \multimap -)\colon V \to V$ is
monotone.
\item \cref{eqn.monoidal_closed_adj} is exactly the adjointness condition from
\cref{def.galois}, except for the fact that we do not know $(-\otimes v)$ and $(v
\multimap-)$ are monotone maps. We proved this, however, in items 1 and 3 above.
\end{enumerate}
}

\sol{exc.Bool_monoidal_closed}{
Show that $\Bool=(\BB,\leq,\true,\wedge)$ is monoidal closed.
}
{
We need to find the hom-element. This is given by implication. That is, define
the function $x \imp y$ by the table
\[
\begin{array}{c | c c}
	\imp &\false&\true\\\hline
	\false&\true&\true\\
	\true&\false&\true
\end{array}
\]
Then $(a \wedge v) \le w$ if and only if $a \le (v \imp w)$. Indeed,
if $v=\false$ then $a\wedge \false = \false$, and so the left hand side is
always true. But $(\false \imp w) = \true$, so the right hand side is
always true too. If $v =\true$, then $a \wedge \true = a$ so the left hand side
says $a \le w$. But $(\true \imp w) = w$, so the right hand side is the
same. Thus $\imp$ defines a hom-element as per
\cref{eqn.monoidal_closed_adj}.
}

\sol{exc.Bool_quantale}{
Show that $\Bool=(\BB,\leq,\true,\wedge)$ is a quantale.
}
{
We showed in \cref{exc.Bool_monoidal_closed} that $\Bool$ is symmetric
monoidal closed, and in \cref{exc.boolean_vee_practice} and
\cref{ex.bool_meet_join} that the join is given by the OR operation $\vee$.
Thus $\Bool$ is a quantale.
}

\sol{exc.powerset_quantale}{
Let $S$ be a set and recall the powerset monoidal preorder $(\powset(S),\ss,S,\cap)$ from \cref{exc.powerset_symm_mon_pos}. Is it a quantale?
}
{
Yes, the powerset monoidal preorder $(\powset(S),\ss,S,\cap)$ is a quantale. The
hom-object $B \multimap C$ is given by $\overline{B} \cup C$, where $\overline
B$ is the \emph{complement} of $B$: it contains all elements of $S$ not
contained in $B$. To see that this satisfies \cref{eqn.monoidal_closed_adj},
note that if $(A \cap B) \subseteq C$, then 
\[
A = (A \cap \overline B) \cup (A \cap B) \subseteq \overline B \cup C. 
\]
On the other hand, if $A \subseteq (\overline B \cup C)$, then 
\[
A \cap B \subseteq (\overline B \cup C) \cap B = (\overline B \cap B) \cup (C \cap B) = C \cap B \subseteq C. 
\]
So $(\powset(S),\ss,S,\cap)$ is monoidal closed. Furthermore, joins are given by union of subobjects,
so it is a quantale.
}

\sol{exc.matrix_mult1}{
\begin{enumerate}
	\item What is $\bigvee\varnothing$, which we generally denote $0$, in the case 
  \begin{enumerate}[label=\alph*.]
  	\item $\cat{V}=\Bool=(\BB,\leq,\true,\wedge)$?
  	\item $\cat{V}=\Cost=([0,\infty],\geq,0,+)$?
  \end{enumerate}
  \item What is the join $x\vee y$ in the case
  \begin{enumerate}[label=\alph*.]
  	\item $\cat{V}=\Bool$, and $x,y\in\BB$ are booleans?
  	\item $\cat{V}=\Cost$, and $x,y\in[0,\infty]$ are distances?
	  \qedhere
  \end{enumerate}  
\qedhere
\end{enumerate}
}
{
\begin{enumerate}
	\item[1a.] In $\Bool$, $(\bigvee\varnothing) = \false$, the least element.
	\item[1b.] In $\Cost$, $(\bigvee\varnothing) = \infty$. This is because we
	use the opposite order $\ge$ on $[0,\infty]$, so $\bigvee\varnothing$ is
	the greatest element of $[0,\infty]$. Note that in this case our
	convention from \cref{def.quantale}, where we denote $(\bigvee\varnothing) = 0$, is a bit confusing! Beware!
	\item[2a.] In $\Bool$, $x \vee y$ is the usual join, OR. 
	\item[2b.] In $\Cost$, $x \vee y$ is the minimum $\min(x,y)$. Again
	because we use the opposite order on $[0,\infty]$, the join is the
	greatest number less than or equal to $x$ and $y$.
\end{enumerate}
}

\sol{exc.identity_matrices}{
Write down the $2\times 2$-identity matrix for the quantales $(\nn,\leq,1,*)$, $(\bb,\leq,\true,\wedge)$, and $\Cost=([0,\infty],\geq,0,+)$.
}
{
The $2 \times 2$-identity matrices for $(\nn,\leq,1,*)$, $\Bool$, and $\Cost$ are respectively
  \[
    \begin{pmatrix}
  1&0\\
  0&1
\end{pmatrix},
\quad
\begin{pmatrix}
  \true&\false\\
  \false&\true
\end{pmatrix},
\quad
\mbox{and}
\quad
\begin{pmatrix}
  0&\infty\\
  \infty&0
\end{pmatrix}.
\]
}

\sol{exc.matrix_mult2}{
Let $\cat{V}=(V,\leq,I,\otimes)$ be a quantale. Use \cref{eqn.quantale_matrix_mult} and \cref{prop.properties_closed_mon_preorders} to prove the following.
\begin{enumerate}
	\item Show that for any sets $X$ and $Y$ and $V$-matrix $M\colon X\times Y\to V$, one has $I_X*M=M$.
	\item Prove the associative law: for any matrices $M\colon W\times X\to V$, $N\colon X\times Y\to V$, and $P\colon Y\times Z\to V$, one has $(M*N)*P=M*(N*P)$.
\qedhere
\end{enumerate}
}
{
\begin{enumerate}
\item We first use \cref{prop.properties_closed_mon_preorders} (2) and symmetry to
show that for all $v \in V$, $0 \otimes v = 0$.
\[
0 \otimes v \cong v \otimes 0 \cong \bigg(v \otimes \bigvee_{a \in \varnothing}
a\bigg) = \bigvee_{a \in \varnothing} (v \otimes a) = 0.
\]
Then we may just follow the definition in \cref{eqn.quantale_matrix_mult}:
\begin{align*}
I_X\ast M(x,y) &= \bigvee_{x' \in X} I_X(x,x') \otimes M(x',y) \\
&= \left(I_X(x,x) \otimes M(x,y)\right) \vee \left(\bigvee_{x' \in X, x' \ne x}I_X(x,x')
\otimes M(x',y) \right) \\
&= \left(I \otimes M(x,y)\right) \vee \left(\bigvee_{x'\in X, x'\ne x} 0 \otimes
M(x',y)\right) \\
&= M(x,y) \vee 0 = M(x,y).
\end{align*}
\item This again follows from \cref{prop.properties_closed_mon_preorders} (2) and
symmetry, making use also of the associativity of $\otimes$:
\begin{align*}
((M\ast N) \ast P)(w,z) &= \bigvee_{y \in Y} \bigg(\bigvee_{x \in X} M(w,x)
\otimes N(x,y)\bigg) \otimes P(y,z) \\
&\cong \bigvee_{y \in Y,x \in X} M(w,x) \otimes N(x,y) \otimes P(y,z) \\
&\cong \bigvee_{x \in X} M(w,x) \otimes \bigg(\bigvee_{y \in Y} N(x,y) \otimes
P(y,z) \bigg) \\
&= (M \ast(N \ast P))(w,z).
\end{align*}
\end{enumerate}
}

\sol{exc.computing_distances}{
Recall from \cref{exc.adjacency_matrix_X} the matrix $M_X$, for $X$ as in
\cref{eqn.cities_distances}. Calculate $M_X^2$, $M_X^3$, and $M_X^4$.  Check
that $M_X^4$ is what you got for the distance matrix in
\cref{exc.distance_matrix_X}.
}
{
We have the matrices
\[
M_X=
\begin{pmatrix}
  0&\infty&3&\infty\\
  2&0&\infty&5\\
  \infty&3&0&\infty\\
  \infty&\infty&6&0
\end{pmatrix}
\qquad
M_X^2=
\begin{pmatrix}
  0&6&3&\infty\\
  2&0&5&5\\
  5&3&0&8\\
  \infty&9&6&0
\end{pmatrix}
\qquad
M_X^3=M_X^4=
\begin{pmatrix}
  0&6&3&11\\
  2&0&5&5\\
  5&3&0&8\\
  11&9&6&0
\end{pmatrix}
\]
}

\finishSolutionChapter
%======== Section =========%
\section[Solutions for Chapter 3]{Solutions for \cref{chap.databases}.}

\sol{exc.fks_arrows}{
	Count the number of non-ID columns in \cref{eqn.fav_ex_db}. Count the number of arrows (foreign keys) in \cref{eqn.free_schema}. They should be the same number in this case; is this a coincidence?
}{
There are five non-ID columns in \cref{eqn.fav_ex_db}and five arrows in \cref{eqn.free_schema}. This is not a coincidence: there is always one arrow for every non-ID column.
}

\sol{exc.free_cat}{
For $\free(G)$ to really be a category, we must check that this data we
specified obeys the unitality and associativity properties. Check that these
are obeyed for any graph $G$.
}{
To do this precisely, we should define concatenation technically. If $G=(V,A,s,t)$ is a graph, define a path in $G$ to be a tuple of the form $(v,a_1,\ldots,a_n)$ where $v\in V$ is a vertex, $s(a_1)=v$, and $t(a_i)=s(a_{i+1})$ for all $i\in \{1,\ldots,n-1\}$; the length of this path is $n$, and this definition makes sense for any $n\in\nn$.
 We say that the source of $p$ is $s(p)\coloneqq v$ and the target of $p$ is defined to be
\[
  t(p)\coloneqq
  \begin{cases}
    v\tn{ if } n=0\\
    t(a_n)\tn{ if }n\geq 1
  \end{cases}
\]
Two paths $p=(v,a_1,\ldots,a_m)$ and $q=(w,b_1,\ldots,b_n)$ can be concatenated if $t(p)=s(q)$, in which case the concatenated path $p\cp q$ is defined to be
\[(p\cp q)\coloneqq(v,a_1,\ldots,a_m,b_1,\ldots,b_n).\]

We are now ready to check unitality and associativity. A path $p$ is an identity in $\free(G)$ iff $p=(v)$ for some $v\in V$. It is easy to see from the above that $(v)$ and $(w,b_1,\ldots,b_n)$ can be concatenated iff $v=w$, in which case the result is $(w,b_1,\ldots,b_n)$. Similarly $(v,a_1,\ldots,a_m)$ and $(w)$ can be concatenated iff $w=t(a_m)$, in which case the result is $(v,a_1,\ldots,a_m)$. Finally, for associativity with $p$ and $q$ as above and $r=(x,c_1,\ldots,c_o)$, the formula readily reads that whichever way they are concatenated, $(p\cp q)\cp r$ or $p\cp(q\cp r)$, the result is
\[
	(v,a_1,\ldots,a_m,b_1,\ldots,b_n,c_1,\ldots,c_o).
\]
}

\sol{exc.free_cat2}{
The free category on the graph shown here:%
\footnote{As mentioned above, we elide the difference between the graph and the corresponding free category.}
\begin{equation}\label{eqn.graphs_rand9851}
\Cat{3}\coloneqq{\color{black!20!white}\mathbf{Free}\bigg(}\;\raisebox{-.05in}{\fbox{
\begin{tikzcd}[ampersand replacement=\&]
	\LMO{v_1}\ar[r, "f_1"]\&\LMO{v_2}\ar[r, "f_2"]\&\LMO{v_3}
\end{tikzcd}
}}
{\color{black!20!white}\;\bigg)}
\end{equation}
has three objects and six morphisms: the three vertices and six paths in the graph.

Create six names, one for each of the six morphisms in $\Cat{3}$. Write down a six-by-six table, label the rows and columns by the six names you chose.
\begin{enumerate}
	\item Fill out the table by writing the name of the composite in each cell.
	\item Where are the identities?
\end{enumerate}
}{
We often like to name identity morphisms by the objects they're on, and we do that here: $v_2$ means $\id_{v_2}$. We write $\DNE$ when the composite does not make sense (i.e.\ when the target of the first morphism does not agree with the source of the second).
\[
\begin{array}{c|ccccccc}
	\nearrow		&v_1		&f_1		&f_1\cp f_2		&v_2		&f_2				&v_3				\\\hline
	v_1					&v_1		&f_1		&f_1\cp f_2		&\DNE		&\DNE				&\DNE				\\
	f_1					&\DNE		&\DNE		&\DNE					&f_1		&f_1\cp f_2	&\DNE				\\
	f_1\cp f_2  &\DNE		&\DNE		&\DNE					&\DNE		&\DNE				&f_1\cp f_2	\\
	v_2					&\DNE		&\DNE		&\DNE					&v_2		&f_2				&\DNE				\\
	f_2					&\DNE		&\DNE		&\DNE					&\DNE		&\DNE				&f_2				\\
	v_3					&\DNE		&\DNE		&\DNE					&\DNE		&\DNE				&v_3
\end{array}
\]
}

\sol{exc.Cat_n}{
Let's make some definitions, based on the pattern above:
\begin{enumerate}
	\item What is the category $\Cat{1}$? That is, what are its objects and morphisms?
	\item What is the category $\Cat{0}$?
	\item What is the formula for the number of morphisms in $\Cat{n}$ for arbitrary $n\in\NN$?
\end{enumerate}
}{
\begin{enumerate}
	\item The category $\Cat{1}$ has one object $v_1$ and one morphism, the identity $\id_{v_1}$.
	\item The category $\Cat{0}$ is empty; it has no objects and no morphisms.
	\item The pattern for number of morphisms in $\Cat{0}$, $\Cat{1}$, $\Cat{2}$, $\Cat{3}$ is 0, 1, 3, 6; does this pattern look familiar? These are the first few `triangle numbers,' so we could guess that the number of morphisms in $\Cat{n}$, the free category on the following graph
	\[
	\boxCD{
		\begin{tikzcd}[ampersand replacement=\&]
		\LMO{v_1}\ar[r, "f_1"]\&\LMO{v_2}\ar[r, "f_2"]\&\cdots\ar[r, "f_{n-1}"]\&\LMO{v_n}
		\end{tikzcd}
	}
	\]
	 is $1+2+\cdots+n$. This makes sense because (and the proof strategy would be to verify that) the above graph has $n$ paths of length 0, it has $n-1$ paths of length $1$, and so on: it has $n-i$ paths of length $i$ for every $0\leq i\leq n$.
\end{enumerate}
}

\sol{exc.nat_comp}{
In \cref{ex.monoid_nats} we identified the paths of the loop graph \eqref{eqn.loop_graph} with numbers $n\in\NN$. Paths can be concatenated. Given numbers $m,n\in\NN$, what number corresponds to the concatenation of their associated paths?
}{
The correspondence was given by sending a path to its length. Concatenating a path of length $m$ with a path of length $n$ results in a path of length $m+n$.
}

\sol{exc.label_free_square}{
\begin{enumerate}
	\item Write down the ten paths in the free square category above.
	\item Name two different paths that are parallel.
	\item Name two different paths that are not parallel.
\end{enumerate}
}{
\[\mathrm{Free\_square}\coloneqq\boxCD{
\begin{tikzcd}[ampersand replacement=\&]
	\LMO{A}\ar[r, "f"]\ar[d, "g"']\&\LMO{B}\ar[d, "h"]\\
	\LMO[under]{C}\ar[r, "i"']\&\LMO[under]{D}
\end{tikzcd}
  \\~\\\footnotesize
  \textit{no equations}
}
\]
\begin{enumerate}
	\item The ten paths are as follows
	\[A,\quad A\cp f,\quad A\cp g,\quad A\cp f\cp h,\quad A\cp g\cp i,\quad B,\quad B\cp h, \quad C,\quad C\cp i,\quad D\]
	\item $A\cp f\cp h$ is parallel to $A\cp g\cp i$, in that they both have the same domain and both have the same codomain.
	\item $A$ is not parallel to any of the other nine paths.
\end{enumerate}
}

\sol{exc.cat_gens_rels}{
Write down all the morphisms in the category presented by the following diagram:
\[
\boxCD{
\begin{tikzcd}[ampersand replacement=\&]
	\LMO{A}\ar[r, "f"]\ar[d, "g"']\ar[dr, "j" description]\&\LMO{B}\ar[d, "h"]\\
	\LMO[under]{C}\ar[r, "i"']\&\LMO[under]{D}
\end{tikzcd}
\\~\\\footnotesize
  $f\cp h=j=g\cp i$
}
\]
}{
The morphisms in the given diagram are as follows:
	\[A,\quad A\cp f,\quad A\cp g,\quad A\cp j,\quad B,\quad B\cp h, \quad C,\quad C\cp i,\quad D\]
Note that $A\cp f\cp h=j=A\cp g\cp i$.
}

\sol{exc.group2}{
Write down all the morphisms in the category $\cat{D}$ from \cref{ex.group_of_order_2}.
}{
There are four morphisms in $\cat{D}$, shown below, namely $z$, $s$, $s\cp s$, and $s\cp s\cp s$:
\[
\cat{D}\coloneqq\boxCD{\begin{tikzcd}[ampersand replacement=\&]
	\LMO[under]{z}\ar[loop above, "s"]
\end{tikzcd}
\\~\\\footnotesize
$s\cp s\cp s\cp s=s\cp s$
}
\]
}

\sol{exc.graph_to_preorder}{
What equations would you need to add to the graphs below in order to present the associated preorders?
\[
G_1=\boxCD{
\begin{tikzcd}[ampersand replacement=\&, column sep=20pt]
	\bullet\ar[r, shift left, "f"]\ar[r, shift right, "g"']\&\bullet
\end{tikzcd}
}
\hspace{.4in}
G_2=\boxCD{
\begin{tikzcd}[ampersand replacement=\&]
	\bullet\ar[loop above, "f"]
\end{tikzcd}
}
\hspace{.4in}
G_3=\boxCD{
\begin{tikzcd}[ampersand replacement=\&, column sep=20pt]
	\bullet\ar[r, "f"]\ar[d, "g"']\&\bullet\ar[d, "h"]\\
	\bullet\ar[r, "i"']\&\bullet
\end{tikzcd}
}
\hspace{.4in}
G_4=\boxCD{
\begin{tikzcd}[ampersand replacement=\&, column sep=20pt]
	\bullet\ar[r, "f"]\ar[d, "g"']\&\bullet\ar[d, "h"]\\
	\bullet\&\bullet
\end{tikzcd}
}
\]
}{
The equations that make the graphs into preorders are shown below
\[
G_1=\boxCD{
\begin{tikzcd}[ampersand replacement=\&, column sep=20pt]
	\bullet\ar[r, shift left, "f"]\ar[r, shift right, "g"']\&\bullet
\end{tikzcd}
\\~\\\footnotesize
  $f=g$
}
\hspace{.4in}
G_2=\boxCD{
\begin{tikzcd}[ampersand replacement=\&]
	\LMO[under]{a}\ar[loop above, "f"]
\end{tikzcd}
\\~\\\footnotesize
  $f=a$
}
\hspace{.4in}
G_3=\boxCD{
\begin{tikzcd}[ampersand replacement=\&, column sep=20pt]
	\bullet\ar[r, "f"]\ar[d, "g"']\&\bullet\ar[d, "h"]\\
	\bullet\ar[r, "i"']\&\bullet
\end{tikzcd}
\\~\\\footnotesize
  $f\cp h=g\cp i$
}
\hspace{.4in}
G_4=\boxCD{
\begin{tikzcd}[ampersand replacement=\&, column sep=20pt]
	\bullet\ar[r, "f"]\ar[d, "g"']\&\bullet\ar[d, "h"]\\
	\bullet\&\bullet
\end{tikzcd}
\\~\\\footnotesize
  \textit{no equations}
}
\]
}

\sol{exc.preorder_refl_N}{
What is the preorder reflection of the category $\NN$ from \cref{ex.monoid_nats}?
}{
The preorder reflection of a category $\cat{C}$ has the same objects and either one morphism or none between two objects, depending on whether or not a morphism between them exists in $\cat{C}$. So the preorder reflection of $\NN$ has one object and one morphism from it to itself, which must be the identity. In other words, the preorder reflection of $\NN$ is $\Cat{1}$.
}

\sol{exc.exponential_practice}{
	Let $\ul{2}=\{1,2\}$ and $\ul{3}=\{1,2,3\}$. These are objects in the
	category $\smset$ discussed in \cref{def.category_of_sets}. Write down all the elements of the set $\smset(\ul{2},\ul{3})$; there should be nine.
}{
A function $f\colon \ul{2}\to\ul{3}$ can be described as an ordered pair $(f(1), f(2))$. The nine such functions are given by the following ordered pairs, which we arrange into a 2-dimensional grid with 3 entries in each dimension, just for ``funzies'':%
\footnote{Of course, this is not \emph{mere funzies}; this is category theory!}
\[
\begin{array}{ccc}
	(1,1)&(1,2)&(1,3)\\
	(2,1)&(2,2)&(2,3)\\
	(3,1)&(3,2)&(3,3)
\end{array}
\]
}

\sol{exc.iso_practice}{
\begin{enumerate}
	\item What is the inverse $f\inv\colon\ul{3}\to A$ of the function $f$ given in \cref{ex.simple_iso}?
	\item How many distinct isomorphisms are there $A\to\ul{3}$?
\end{enumerate}
}{
\begin{enumerate}
	\item The inverse to $f(a)=2$, $f(b)=1$, $f(c)=3$ is given by
	\[f\inv(1)=b,\quad f\inv(2)=a,\quad f\inv(3)=c.\]
	\item There are 6 distinct isomorphisms. In general, if $A$ and $B$ are sets, each with $n$ elements, then the number of isomorphisms between them is $n$-factorial, often denoted $n!$. So for example there are $5*4*3*2*1=120$ isomorphisms between $\{1,2,3,4,5\}$ and $\{a,b,c,d,e\}$.
\end{enumerate}
}

\sol{exc.id_iso}{
Show that in any category $\cat{C}$, for any object $c\in\cat{C}$, the identity $\id_c$ is an isomorphism.
}{
We have to show that for any object $c\in\cat{C}$, the identity $\id_c$ has an inverse, i.e.\ a morphism $f\colon c\to c$ such that $f\cp\id_c=\id_c$ and $\id_c\cp f=\id_c$. Take $f=\id_c$; this works.
}

\sol{exc.monoid_group}{
Recall Examples \ref{ex.monoid_nats} and \ref{ex.group_of_order_2}. A monoid in
which every morphism is an isomorphism is known as a \emph{group}. 
\begin{enumerate}
  \item Is the monoid in \cref{ex.monoid_nats} a group?
  \item What about the monoid $\cat{C}$ in \cref{ex.group_of_order_2}?
\end{enumerate}
}{
\begin{enumerate}
	\item The monoid in \cref{ex.monoid_nats} is not a group, because the morphism $s$ has no inverse. Indeed each morphism is of the form $s^n$ for some $n\in\nn$ and composing it with $s$ gives $s^{n+1}$, which is never $s^0$.$p$
	
	\item $\cat{C}$ from \cref{ex.group_of_order_2} is a group: the identity is always an isomorphism, and the other morphism $s$ has inverse $s$.
\end{enumerate}
}

\sol{exc.iso_free_cat}{
Let $G$ be a graph, and let $\free(G)$ be the corresponding free category. Somebody tells you that the only isomorphisms in $\free(G)$ are the identity morphisms. Is that person correct? Why or why not?
}{
You may have found a person whose mathematical claims you can trust! Whenever you compose two morphisms in $\free(G)$, their lengths add, and the identities are exactly those morphisms whose length is $0$. In order for $p$ to be an isomorphism, there must be some $q$ such that $p\cp q=\id$ and $q\cp p=\id$, in which case the length of $p$ (or $q$) must be 0.
}

\sol{exc.all_functors}{
Above we wrote down three functors $\Cat{2}\to\Cat{3}$. Find and write down all the rest of the functors $\Cat{2}\to\Cat{3}$.
}{
The other three functors $\Cat{2}\to\Cat{3}$ are shown here:
\[
\begin{tikzpicture}[x=.7in, y=.25in, inner sep=5pt]
	\foreach \i in {0,1,2}{
  	\node (A\i-n0) at (3*\i,-1) {$\LMO{m_0}$};
  	\node (A\i-n1) at (3*\i,-3) {$\LMO[under]{m_1}$};
  	\draw[->] (A\i-n0) to node[left=-2pt, font=\scriptsize] {$f_1$} (A\i-n1);
  	\node[draw, inner ysep=1pt, fit=(A\i-n0) (A\i-n1)] (A\i) {};
  	\node (B\i-n0) at (3*\i+1,0) {$\LMO{n_0}$};
  	\node (B\i-n1) at (3*\i+1,-2) {$\LMO{n_1}$};
  	\node (B\i-n2) at (3*\i+1,-4) {$\LMO{n_2}$};
  	\draw[->] (B\i-n0) to node[right=-2pt, font=\scriptsize] {$g_1$} (B\i-n1);
  	\draw[->] (B\i-n1) to node[right=-2pt, font=\scriptsize] {$g_2$} (B\i-n2);
  	\node[draw, inner ysep=1pt, fit=(B\i-n0) (B\i-n2)] (B\i) {};
	}
	\begin{scope}[dotted, thick, ->]
  	\draw (A0-n0) -- (B0-n1);
  	\draw (A0-n1) -- (B0-n1);
		\draw (A1-n0) -- (B1-n2);
  	\draw (A1-n1) -- (B1-n2);
		\draw (A2-n0) -- (B2-n1);
  	\draw (A2-n1) -- (B2-n2);
	\end{scope}
\end{tikzpicture}
\]
}

\sol{exc.functor_on_morphisms}{
Say where each morphism in $\cat{F}$ is sent under the functor $F$ from \cref{ex.free_comm_sq}.
}{
There are nine morphisms in $\cat{F}$; as usual we denote identities by the object they're on. These morphisms are sent to the following morphisms in $\cat{C}$:
\begin{gather*}
	A'\mapsto A,\quad f'\mapsto f,\quad g'\mapsto g,\quad f'\cp h'\mapsto f\cp h,\quad
	g'\cp i'\mapsto f\cp h,\\
	B'\mapsto B,\quad h'\mapsto h,\quad C'\mapsto C,\quad i'\mapsto i,\quad D'\mapsto D.
\end{gather*}
If one of these seems different from the rest, it's probably $g'\cp i'\mapsto f\cp h$. But note that in fact also $g'\cp i'\mapsto g\cp i$ because $g\cp i=f\cp h$, so it's not an outlier after all.
}

\sol{exc.functors_morphisms_practice}{
Consider the free categories $\cat{C}=\fbox{$\bullet\to\bullet$}$ and $\cat{D}=\fbox{$\bullet\tto\bullet$}$. Give two functors $F,G\colon\cat{C}\to\cat{D}$ that act the same on objects but differently on morphisms.
}{
We need to give two functors $F,G$ from\;\;$\LMO{a}\To{f}\LMO{b}$\;\;to\;\;$\LMO{a'}\Tto[15pt]{f_1}{f_2}\LMO{b'}$\;\;whose on-objects parts are the same and whose on-morphisms parts are different. There are only two ways to do this, and we choose one of them:
\[F(a)\coloneqq a',\quad G(a)\coloneqq a', \quad F(b)\coloneqq b',\quad G(b)\coloneqq b',\quad F(f)\coloneqq f_1,\text{ and }G(f)\coloneqq f_2.
\]
The other way reverses $f_1$ and $f_2$.
}

\sol{exc.cat_of_cats}{
Back in the primordial ooze, there is a category $\smcat$ in which \emph{the
objects are themselves categories}. Your task here is to construct this
category.
\begin{enumerate}
  \item Given any category $\cat{C}$, show that there exists a functor $\id_{\cat{C}}\colon
  \cat{C} \to \cat{C}$, known as the \emph{identity functor on $\cat{C}$}, that
  maps each object to itself and each morphism to itself.
\end{enumerate}
Note that a functor $\cat{C} \to \cat{D}$ consists of a function from $\Ob(\cat{C})$ to $\Ob(\cat{D})$ and for each pair of objects $c_1,c_2 \in \cat{C}$ a function from $\cat{C}(c_1,c_2)$ to $\cat{D}(F(c_1),F(c_2))$. 
\begin{enumerate}[resume]
  \item Show that given
  $F\colon \cat{C} \to \cat{D}$ and $G\colon \cat{D} \to \cat{E}$, we can define a
  new functor $F\cp G\colon \cat{C} \to \cat{E}$ just by composing functions. 
  \item Show that there is a category, call it $\smcat$, where the objects are categories, morphisms
  are functors, and identities and composition are given as above.
\end{enumerate}
}
{
\begin{enumerate}
	\item Let $\cat{C}$ be a category. Then defining $\id_{\cat{C}}\colon\cat{C}\to\cat{C}$ by $\id_{\cat{C}}(x)=x$ for every object and morphism in $\cat{C}$ is a functor because it preserves identities $\id_{\cat{C}}(\id_c)=\id_c=\id_{\id{\cat{C}}(c)}$ for each object $c\in\Ob(\cat{C})$, and it preserves composition $\id_{\cat{C}}(f\cp g)=f\cp g=\id_\cat{C}(f)\cp\id_{\cat{C}(g)}$ for each pair of composable morphisms $f,g$ in $\cat{C}$.
	\item Given functors $F\colon\cat{C}\to\cat{D}$ and $G\colon\cat{D}\to\cat{E}$, we need to show that $F\cp G$ is a functor, i.e.\ that it preserves preserves identities and compositions. If $c\in\cat{C}$ is an object then $(F\cp G)(\id_c)=G(F(\id_c))=G(\id_{F(c)})=\id_{G(F(c))}$ because $F$ and $G$ preserve identities. If $f,g$ are composable morphisms in $\cat{C}$ then
	\[(F\cp G)(f\cp g)=G(F(f)\cp F(g))=G(F(f))\cp G(F(g))\]
	because $F$ and $G$ preserve composition.
	\item We have proposed objects, morphisms, identities, and a composition formula for a category $\smcat$: they are categories, functors, and the identities and compositions given above. We need to check that the two properties, unitality and associativity, hold. So suppose $F\colon\cat{C}\to\cat{D}$ is a functor and we pre-compose it as above with $\id_{\cat{C}}$; it is easy to see that the result will again be $F$, and similarly if we post-compose $F$ with $\id_{\cat{D}}$. This gives unitality, and associativity is just as easy, though more wordy. Given $F$ as above and $G\colon\cat{D}\to\cat{E}$ and $H\colon\cat{E}\to\cat{F}$, we need to show that $(F\cp G)\cp H=F\cp (G\cp H)$. It's a simple application of the definition: for any $x\in\cat{C}$, be it an object or morphism, we have
	\[((F\cp G)\cp H)(c)=H((F\cp G)(c))=H(G(F(c)))=(G\cp H)(F(c))=(F\cp(G\cp H))(c).\]
\end{enumerate}
}

\sol{ex.set_1}{
Let $\Cat{1}$ denote the category with one object, called 1, one identity morphism $\id_1$, and no other morphisms. For any functor $F\colon\Cat{1}\to\smset$ one can extract a set $F(1)$. Show that for any set $S$, there is a functor $F_S\colon\Cat{1}\to\smset$ such that $F_S(1)=S$.
}{
Let $S\in\smset$ be a set. Define $F_S\colon\Cat{1}\to\smset$ by $F_S(1)=S$ and $F_S(\id_1)=\id_S$. With this definition, $F_S$ preserves identities and compositions (the only compositions in $\Cat{1}$ is the composite of the identity with itself), so $F_S$ is a functor with $F_S(1)=S$ as desired.
}

\sol{exc.schema_sense}{
Above, we thought of the sort of data that would make sense for the schema \eqref{eqn.idempotent}. Give an example of the sort of data that would make sense for the following schemas:\qquad
\begin{enumerate*}[itemjoin=\hspace{1in}]
\item \boxCD{\begin{tikzcd}[ampersand replacement=\&]
	\LMO[under]{z}\ar[loop above, "s"]
\end{tikzcd}
\\~\\\footnotesize
$s\cp s=z$
}
\item
\boxCD{\begin{tikzcd}[ampersand replacement=\&]
	\LMO{a}\ar[r, "f"]\&\LMO{b}\ar[r, shift left, "g"]\ar[r, shift right, "h"']\&\LMO{c}
\end{tikzcd}
\\~\\\footnotesize
$f\cp g=f\cp h$
}
\end{enumerate*}
}{
We are asked what sort of data ``makes sense'' for the schemas below?\\~\\
\begin{enumerate*}[itemjoin=\hspace{1in}]
\item \boxCD{\begin{tikzcd}[ampersand replacement=\&]
	\LMO[under]{z}\ar[loop above, "s"]
\end{tikzcd}
\\~\\\footnotesize
$s\cp s=z$
}
\item
\boxCD{\begin{tikzcd}[ampersand replacement=\&]
	\LMO{a}\ar[r, "f"]\&\LMO{b}\ar[r, shift left, "g"]\ar[r, shift right, "h"']\&\LMO{c}
\end{tikzcd}
\\~\\\footnotesize
$f\cp g=f\cp h$
}
\end{enumerate*}
\\~\\
This is a subjective question, so we propose an answer for your consideration.
\begin{enumerate}
	\item Data on this schema, i.e.\ a set-valued functor, assigns a set $D(z)$ and a function $D(s)\colon D(z)\to D(z)$, such that applying that function twice is the identity. This sort of function is called an \emph{involution} \index{involution} of the set $D_z$:
	\[
	\begin{tikzpicture}
	\useasboundingbox (0.9, -1.1) rectangle (5.1, 0.1);
		\foreach \i in {1,2}{
			\foreach \j in {1,...,5}{
				\node at (\j,1-\i) (N\i\j) {$\bullet$};
			}
		}
		\draw[|->] (N11) -- (N24);
		\draw[|->] (N12) -- (N22);
		\draw[|->] (N13) -- (N25);
		\draw[|->] (N14) -- (N21);
		\draw[|->] (N15) -- (N23);
	\end{tikzpicture}
	\]
It's a do-si-do, a ``partner move,'' where everyone picks a partner (possibly themselves) and exchanges with them. One example one could take $D$ to be the set of pixels in a photograph, and take $s$ to be the function sending each pixel to its mirror image across the vertical center line of the photograph.
  \item We could make $D(c)$ the set of people at a ``secret Santa'' Christmas party, where everyone gives a gift to someone, possibly themselves. Take $D(b)$ to be the set of gifts, $g$ the giver function (each gift is given by a person), and $h$ the receiver function (each gift is received by a person), $D(a)$ is the set of people who give a gift to themselves, and $d(f)\colon D(a)\to D(b)$ is the inclusion.
\end{enumerate}
}

\sol{exc.exponential_cat}{
Let's look more deeply at how $\cat{D}^{\cat{C}}$ is a category.
\begin{enumerate}
	\item Figure out how to compose natural transformations. (Hint: an expert tells you ``for each object $c\in\cat{C}$, compose the $c$-components.'')
	\item Propose an identity natural transformation on any object $F\in\cat{D}^\cat{C}$, and check that it is unital.
\end{enumerate}
}{
\begin{enumerate}
	\item The expert packs so much information in so little space! Suppose given three objects $F,G,H\in\cat{D}^{\cat{C}}$; these are functors $F,G,H\colon\cat{C}\to\cat{D}$. Morphisms $\alpha\colon F\to G$ and $\beta\colon G\to H$ are natural transformations. Most beginners seem to think about a natural transformation in terms of its naturality squares, but the main thing to keep in mind is its components; the naturality squares constitute a check that comes later.\\
	
	So for each $c\in\cat{C}$, $\alpha$ has a component $\alpha_c\colon F(c)\to G(c)$ and $\beta$ has a component $\beta_c\colon G(c)\to H(c)$ in $\cat{D}$. The expert has told us to define $(\alpha\cp\beta)_c\coloneqq(\alpha_c\cp\beta_c)$, and indeed that is a morphism $F(c)\to H(c)$.
	
	Now we do the check. For any $f\colon c\to c'$ in $\cat{C}$, the inner squares of the following diagram commute because $\alpha$ and $\beta$ are natural; hence the outer rectangle does too:
	\[
	\begin{tikzcd}[ampersand replacement=\&]
		F(c)\ar[r, "\alpha_c"]\ar[d, "F(f)"']\&
		G(c)\ar[r, "\beta_c"]\ar[d, "G(f)"']\&
		H(c)\ar[d, "H(f)"']\\
		F(c')\ar[r, "\alpha_c"']\&
		G(c')\ar[r, "\beta_c"']\&
		H(c')
	\end{tikzcd}
	\]
	\item We propose that the identity natural transformation $\id_F$ on a functor $F\colon\cat{C}\to\cat{D}$ has as its $c$-component the morphism $(\id_F)_c\coloneqq \id_{F(c)}$ in $\cat{D}$, for any $c$. The naturality square
		\[
	\begin{tikzcd}[ampersand replacement=\&]
		F(c)\ar[r, "\id_{F(c)}"]\ar[d, "F(f)"']\&
		F(c)\ar[d, "F(f)"]\\
		F(c')\ar[r, "\id_{F(c')}"']\&
		F(c')
	\end{tikzcd}
	\]
obviously commutes for any $f\colon c\to c'$. And it is unital: post-composing $\id_F$ with any $\beta\colon F\to G$ (and similarly for precomposing with any $\alpha\colon E\to F$) results in a natural transformation $\id_F\cp\beta$ with components $(\id_F)_c\cp\beta_c=(\id_{F(c)}\cp\beta_c)=\beta_c$, and this is just $\beta$ as desired.
\end{enumerate}
}

\sol{exc.true_false_preorder_nt}{
Let $\cat{C}$ be an arbitrary category and let $\cat{P}$ be a preorder, thought of as a category. Consider the following statements:
\begin{enumerate}
	\item For any two functors $F,G\colon\cat{C}\to\cat{P}$, there is at most one natural transformation $F\to G$.
	\item For any two functors $F,G\colon\cat{P}\to\cat{C}$, there is at most one natural transformation $F\to G$.
\end{enumerate}
For each, if it is true, say why; if it is false, give a counterexample.
}{
We have a category $\cat{C}$ and a preorder $\cat{P}$, considered as a category.
\begin{enumerate}
	\item Suppose that $F,G\colon\cat{C}\to\cat{P}$ are functors and $\alpha,\beta\colon F\to G$ are natural transformations; we need to show that $\alpha=\beta$. It suffices to check that $\alpha_c=\beta_c$ for each object $c\in\Ob(\cat{C})$. But $\alpha_c$ and $\beta_c$ are morphisms $F(c)\to G(c)$ in $\cat{P}$, which is a preorder, and the definition of a preorder---considered as a category---is that it has at most one morphism between any two objects. Thus $\alpha_c=\beta_c$, as desired.
	\item This is false. Let $\cat{P}\coloneqq\Cat{1}$, let $\cat{C}\coloneqq\fbox{$\LMO{a}\Tto[15pt]{f_1}{f_2}\LMO{b}$}$, let $F(1)\coloneqq a$, let $G(1)\coloneqq b$, let $\alpha_1\coloneqq f_1$, and let $\beta_1\coloneqq g_2$. 
\end{enumerate}
}

\sol{exc.graph_instance}{
In \cref{eqn.free_schema}, a graph is shown (forget the distinction between white and black nodes). Write down the corresponding $\Cat{Gr}$-instance, as in \cref{eqn.sample_Gr_instance}. (Do not be concerned that you are in the primordial ooze.)
}{
We need to write down the following
\[\boxCD{
\begin{tikzcd}[row sep=large, ampersand replacement=\&]
  	\LTO{Employee}\ar[rr, shift left, "\text{WorksIn}"]\ar[dr, bend right, "\text{FName}"']\ar[loop left, "\text{Mngr}"]\&\&
  	\LTO{Department}\ar[ll, shift left, "\text{Secr}"]\ar[dl, bend left, "\text{DName}"]\\
  	\&\LTO[\circ]{string}
\end{tikzcd}
}
\]
as a $\Cat{Gr}$-instance, as in \cref{eqn.sample_Gr_instance}. The answer is as follows:
\[
\begin{tabular}{ r | l l}
  \textbf{Arrow}&\textbf{source}&\textbf{target}\\\hline
	Mngr&Employee&Employee\\
	WorksIn&Employee&Department\\
	Secr&Department&Employee\\
	FName&Employee&string\\
	DName&Department&string\\
\end{tabular}
\hspace{1in}
\begin{tabular}{ r |}
	\textbf{Vertex}\\\hline
	Department\\
	Employee\\
	string\\
	~\\
	~
\end{tabular}
\]
}

\sol{exc.unique_alpha}{
We claim that---with $G,H$ as in \cref{ex.two_graphs_as_instances}---there is exactly one graph homomorphism $\alpha\colon G\to H$ such that $\alpha_{\Set{Arrow}}(a)=d$.
\begin{enumerate}
	\item What is the other value of $\alpha_{\Set{Arrow}}$, and what are the three values of $\alpha_{\Set{Vertex}}$?
	\item In your own copy of the tables of \cref{ex.two_graphs_as_instances}, draw $\alpha_{\Set{Arrow}}$ as two lines connecting the cells in the ID column of $G(\Set{Arrow})$ to those in the ID column of $H(\Set{Arrow})$. Similarly, draw $\alpha_{\Set{Vertex}}$ as connecting lines.
	\item Check the source column and target column and make sure that the matches are natural, i.e.\ that ``alpha-then-source equals source-then-alpha'' and similarly for ``target.''
\end{enumerate}
}{
Let $G,H$ be the following graphs:
\[
G\coloneqq\boxCD{
\begin{tikzcd}[ampersand replacement=\&]
	\LMO{1}\ar[r, "a"]\& \LMO{2}\ar[r, "b"]\& \LMO{3}
\end{tikzcd}
}
\hspace{.7in}
H\coloneqq\boxCD{
\begin{tikzcd}[ampersand replacement=\&]
	\LMO{4}\ar[r, shift right, "c"']\ar[r, shift left, "d"]\& \LMO{5}\ar[loop right, "e"]
\end{tikzcd}
}
\]
and let's believe the authors that there is a unique graph homomorphism $\alpha\colon G\to H$ for which $\alpha_{\Set{Arrow}}(a)=d$.
\begin{enumerate}
	\item We have $\alpha_{\Set{Arrow}}(b)=e$ and $\alpha_{\Set{Vertex}}(1)=4$, $\alpha_{\Set{Vertex}}(2)=5$, and $\alpha_{\Set{Vertex}}(3)=5$.
	\item We roughly copy the tables and then draw the lines (shown in black; ignore the dashed lines for now):
	\[
	\begin{tikzcd}[ampersand replacement=\&, column sep=small, row sep=2pt]
			\&	\&a\ar[ddddll, bend right=10pt]\ar[rr, bend left=20pt, blue, dashed, pos=.6, "t"]	\&1	\&2\ar[drrr, dashed, equal, blue]	\&	\&	\&1\ar[dddll, bend right=10pt]\\
			\&	\&b\ar[ddddll, bend right=10pt]	\&2	\&3	\&	\&	\&2\ar[dddll, bend right=10pt]\\
			\&	\&	\&	\&	\&	\&	\&3\ar[ddll, bend right=10pt]\\
		c	\&4	\&5	\&	\&	\&4	\&	\&\\
		d\ar[rr, bend left=20pt, blue, dashed, pos=.6, "t"]	\&4	\&5\ar[rrr, dashed, equal, blue]	\&	\&	\&5	\&	\&\\
		e	\&5	\&5	\&	\&	\&	\&	\&		
	\end{tikzcd}
	\]
	\item It works! One example of the naturality is shown with the help of dashed blue lines above. See how both paths starting at $a$ end at $5$?
\end{enumerate}
}

\sol{exc.dds_graph}{
Consider the functor $G\colon\Cat{Gr}\to\Cat{DDS}$ given by sending `source' to `next' and sending `target' to the identity on `State'. Migrate the same data, \cref{eqn.state_table}, using $G$. Write down the tables and draw the corresponding graph.
}{
We just need to write out the composite of the following functors
\[
\begin{tikzpicture}[color=blue]
	\node (Arrow) {$\LTO{Arrow}$};
	\node[below=of Arrow] (Vertex) {$\LTO{Vertex}$};
	\draw[->, color=red] 
		($(Arrow.south)+(-3pt,0)$) to node[left, font=\scriptsize] (source) {source} 
		($(Vertex.north)+(-3pt,0)$);
	\draw[->]
		($(Arrow.south)+(3pt,0)$) to node[right, font=\scriptsize] (target) {target} 
		($(Vertex.north)+(3pt,0)$);
	\node[draw, color=black, fit=(Arrow) (Vertex) (source) (target)] (Gr) {};
	\node[below=0 of Gr, black] {$\Cat{Gr}$};
	\node[right=2 of target] (State) {$\LTO{State}$} edge [loop below, red] node[font=\scriptsize] (next) {next} ();
	\begin{scope}[color=black]
  	\node[draw, fit=(State) (next)] (DDS) {};
  	\node[below=0 of DDS] {$\Cat{DDS}$};
  	\draw[functor] (Gr.east|-Vertex) to node[above, font=\scriptsize] {$G$} (DDS.west|-Vertex);
  	\node[right=6 of Vertex] (Set) {$\smset$};
  	\draw[functor] (DDS.east|-Vertex) to 
				node[above, font=\tiny] {
 					\begin{tabular}{c | c}
          	\textbf{State}&\textbf{next}\\\hline
          	1 & 4\\
          	2 & 4\\
          	3 & 5\\
          	4 & 5\\
          	5 & 5\\
          	6 & 7\\
          	7 & 6
        \end{tabular}
        }
				(Set.west|-Vertex);
	\end{scope}
\end{tikzpicture}
\]
in the form of a database, and then draw the graph. The results are given below.
\[\footnotesize
	\begin{array}{c | c c}
		\textbf{Arrow}&\textbf{source}&\textbf{target}\\\hline
		1&4&1\\
		2&4&2\\
		3&5&3\\
		4&5&4\\
		5&5&5\\
		6&7&6\\
		7&6&7
	\end{array}
\hspace{.4in}
	\begin{array}{c |}
		\textbf{Vertex}\\\hline
		1\\
		2\\
		3\\
		4\\
		5\\
		6\\
		7
	\end{array}
\hspace{.6in}
	\boxCD{
  \begin{tikzcd}[column sep=15pt, row sep=5, pos=.9, ampersand replacement=\&]
  	\&\LMO{1}\ar[from=dr, "1"]\&\&\LMO{2}\ar[from=dl,"2"']\\
  	\LMO{3}\ar[from=dr, "3"]\&\&\LMO{4}\ar[from=dl, "4"']\&\&\LMO{6}\ar[from=r, bend left, pos=.5, "6"]\&[20pt]\LMO{7}\ar[from=l, bend left, pos=.5, "7"]\\
  	\&\LMO[under]{5}\ar[loop below, looseness=5, pos=.5, "5"]
  \end{tikzcd}
}
\]
}

\sol{exc.currying_practice}{
In \cref{ex.currying}, we discussed an adjunction between functors $-\times B$
and $(-)^B$. But we only said how these functors worked on objects: for any set
$X$, they return sets $X\times B$ and $X^B$ respectively.
\begin{enumerate}
	\item Given a morphism $f\colon X\to Y$, what morphism should $-\times B\colon X\times B\to Y\times B$ return?
	\item Given a morphism $f\colon X\to Y$, what morphism should $(-)^B\colon X^B\to Y^B$ return?
	\item Consider the function $+\colon\NN\times\NN\to\NN$, which sends
	$(a,b)\mapsto a+b$. Currying $+$, we get a certain function
	$p\colon\NN\to\NN^\NN$. What is $p(3)$?
\end{enumerate}
}{
We are interested in how the functors $-\times B$
and $(-)^B$ should act on morphisms for a given set $B$. We didn't specify this in the text---we only specified $-\times B$ and $(-)^B$ on objects---so in some sense this exercise is open: you can make up anything you want, under the condition that it is functorial. However, the authors cannot think of any such answers except the one we give below.
\begin{enumerate}
	\item Given an arbitrary function $f\colon X\to Y$, we need a function $X\times B\to Y\times B$. We suggest the function which might be denoted $f\times B$; it sends $(x,b)$ to $(f(x),b)$. This assignment is functorial: applied to $\id_X$ it returns $\id_{X\times B}$ and it preserves composition.
	\item Given a function $f\colon X\to Y$, we need a function $X^B\to Y^B$. The canonical function would be denoted $f^B$; it sends a function $g\colon B\to X$ to the composite $(g\cp f)\colon B\to X\to Y$. This is functorial: applied to $\id_X$ it sends $g$ to $g$, i.e.\ $f^B(\id_X)=\id_{X^B}$, and applied to the composite $(f_1\cp f_2)\colon X\to Y\to Z$, we have
	\[(f_1\cp f_2)^B(g)=g\cp (f_1\cp f_2)=(g\cp f_1)\cp f_2=(f_1^B\cp f_2^B)(g)\]
	for any $g\in X^B$.
	\item If $p\colon\nn\to\nn^\nn$ is the result of currying $+\colon\NN\times\NN\to\NN$, then $p(3)$ is an element of $\nn^\nn$, i.e.\ we have $p(3)\colon\nn\to\nn$; what function is it? It is the function that adds three. That is $p(3)(n)\coloneqq n+3$.
\end{enumerate}
}

\sol{ex.terminal_cat}{
Describe the functor $!\colon \cat{C} \to \Cat{1}$ from \cref{eqn.terminal_functor}. Where does it send each
object? What about each morphism?
}{
The functor $!\colon \cat{C} \to \Cat{1}$ from \cref{eqn.terminal_functor} sends each object $c\in\cat{C}$ to the unique object $1\in\Cat{1}$ and sends each morphism $f\colon c\to d$ in $\cat{C}$ to the unique morphism $\id_1\colon 1\to 1$ in $\Cat{1}$.
}

\sol{exc.draw_graph}{
Note that $\cat{G}$ from \cref{eqn.graph_email} is isomorphic to the schema $\Cat{Gr}$. In \cref{subsec.instances_cat} we saw that instances on $\Cat{Gr}$ are graphs. Draw
the above instance $I$ as a graph.
}{
We want to draw the graph corresponding to the instance $I\colon\cat{G}\to\smset$ shown below:
\[
\begin{tabular}{ c | c c}
  \textbf{Email}&\textbf{sent\_by}&\textbf{received\_by}\\\hline
	Em\_1 & Bob & Grace\\
	Em\_2 & Grace & Pat\\
	Em\_3 & Bob & Emory\\
	Em\_4 & Sue & Doug\\
	Em\_5 & Doug & Sue\\
	Em\_6 & Bob & Bob
\end{tabular}
\hspace{1in}
\begin{tabular}{ c |}
	\textbf{Address}\\\hline
	Bob\\
	Doug\\
	Emory\\
	Grace\\
	Pat\\
	Sue
\end{tabular}
\]
Here it is, with names and emails shortened (e.g.\ B=Bob, 3=Em\_3):
\[
\boxCD{
\begin{tikzcd}[ampersand replacement=\&,row sep=1.5ex]
  E\&
  B\ar[r, "1"]\ar[l, "3"']\ar[loop above, "6"]\&
  G\ar[r, "2"]\&
  P\&
  S\ar[r, bend left, "4"]\&
  D\ar[l, bend left, "5"]
\end{tikzcd}
}
\]
}

\sol{exc.terminal_in_preorder}{
Let $(P,\le)$ be a preorder, let $z\in P$ be an element, and let $\cat{P}$ be the corresponding category (see \cref{subsubsec.pos_free_spectrum}). Show that $z$ is a terminal object in $\cat{P}$ if and
only if it is a \emph{top element} in $P$: that is, if and only if for all $c \in P$ we have $c \le
z$.
}{
An object $z$ is terminal in some category $\cat{C}$ if, for every $c\in\cat{C}$ there exists a unique morphism $c\to z$. When $\cat{C}$ is the category underlying a preorder, there is at most one morphism between any two objects, so the condition simplifies: an object $z$ is terminal iff, for every $c\in\cat{C}$ there exists a morphism $c\to z$. The morphisms in a preorder are written with $\leq$ signs, so $z$ is terminal iff, for every $c\in P$ we have $c\leq z$, and this is the definition of top element.
}

\sol{exc.terminal_cat}{
What is the terminal object in the category $\smcat$? (Hint: recall
\cref{ex.terminal_cat}.)
}{
The terminal object in $\smcat$ is $\Cat{1}$ because by \cref{ex.terminal_cat} there is a unique morphism (functor) $\cat{C}\to\Cat{1}$ for any object (category) $\cat{C}\in\smcat$.
}

\sol{exc.cat_wo_terminal}{
Not every category has a terminal object. Find one that doesn't.
}{
Consider the graph $2V\coloneqq\fbox{$\bullet\;\bullet$}$ with two vertices and no arrows, and let $\cat{C}=\free(2V)$; it has two objects and two morphisms (the identities). This category does not have a terminal object because it does not have any morphisms from one object to the other.
}

\sol{exc.meet_product}{
Let $(P,\le)$ be a preorder, let $x,y \in P$ be elements, and let $\cat{P}$ be the corresponding category. Show that their product $x\times y$ in $\cat{P}$ agrees with their meet $x\wedge y$ in $P$.
}{
A product of $x$ and $y$ in $\cat{P}$ is an object $z\in\cat{P}$ equipped with maps $z\to x$ and $z\to y$ such that for any other object $z'$ and maps $z'\to x$ and $z'\to y$, there is a unique morphism $z'\to z$ making the evident triangles commute. But in a preorder, the maps are denoted $\leq$, they are unique if they exist, and all diagrams commute. Thus the above becomes: a product of $x$ and $y$ in $\cat{P}$ is an object $z$ with $z\leq x$ and $z\leq y$ such that for any other $z'$, if $z'\leq x$ and $z'\leq y$ then $z'\leq z$. This is exactly the definition of meet, $z=x\wedge y$.
}

\sol{exc.product_cats}{
\begin{enumerate}
  \item What are the identity morphisms in a product category $\cat{C}\times\cat{D}$?
  \item Why is composition in a product category associative?
  \item What is the product category $\Cat{1} \times \Cat{2}$?
  \item What is the product category $\cat{P} \times \cat{Q}$ when $P$ and $Q$ are preorders and $\cat{P}$ and $\cat{Q}$ are the corresponding categories?
\end{enumerate}
}{
\begin{enumerate}
  \item The identity morphism on the object $(c,d)$ in the product category $\cat{C}\times\cat{D}$ is $(\id_c,\id_d)$.
  \item Suppose given three composable morphisms in $\cat{C}\times\cat{D}$
  \[(c_1,d_1)\To{(f_1,g_1)}(c_2,d_2)\To{(f_2,g_2)}(c_3,d_3)\To{(f_3,g_3)}(c_4,d_4).\]
  We want to check that $((f_1,g_1)\cp(f_2,g_2))\cp(f_3,g_3)=
  (f_1,g_1)\cp((f_2,g_2)\cp(f_3,g_3))$. But composition in a product category is given component-wise. That means the left-hand side is $((f_1\cp f_2)\cp f_3, (g_1\cp g_2)\cp g_3)$, whereas the right-hand side is $(f_1\cp(f_2\cp f_3),g_1\cp(g_2\cp g_3))$, and these are equal because both $\cat{C}$ and $\cat{D}$ individually have associative composition.
  \item The product category $\Cat{1} \times \Cat{2}$ has two objects $(1,1)$ and $(1,2)$ and one non-identity morphism $(1,1)\to(1,2)$. It is not hard to see that it looks the same as $\Cat{2}$. In fact, for any $\cat{C}$ there is an isomorphism of categories $\Cat{1}\times\cat{C}\cong\cat{C}$.
	\item Let $P$ and $Q$ be preorders, let $X=P\times Q$ be their product preorder as defined in \cref{ex.product_preorder}, and let $\cat{P}$, $\cat{Q}$, and $\cat{X}$ be the corresponding categories. Then $\cat{X}=\cat{P}\times\cat{Q}$.
\end{enumerate}
}

\sol{exc.prod_as_term_cone}{
Check that a product $X \xleftarrow{p_X} X\times Y\xrightarrow{p_Y} Y$ is
exactly the same as a terminal object in $\Cat{Cone}(X,Y)$.
}{
A product of $X$ and $Y$ is an object $Z$ equipped with morphisms $X\From{p_X} Z\To{p_Y} Y$ such that for any other object $Z'$ equipped with morphisms $X\From{p'_X} Z'\To{p'_Y}Y$, there is a unique morphism $f\colon Z'\to Z$ making the triangles commute, $f\cp p_X=p'_X$ and $f\cp p_Y=p'_Y$. But ``an object equipped with morphisms to $X$ and $Y$'' is exactly the definition of an object in $\Cat{Cone}(X,Y)$, and a morphism $f$ making the triangles commute is exactly the definition of a morphism in $\Cat{Cone}(X,Y)$. So the definition above becomes: a product of $X$ and $Y$ is an object $Z\in\Cat{Cone}(X,Y)$ such that for any other object $Z'$ there is a unique morphism $Z'\to Z$ in $\Cat{Cone}(X,Y)$. This is exactly the definition of $Z$ being terminal in $\Cat{Cone}(X,Y)$.
}

\sol{exc.limit_formula_products}{
Show that the limit formula in \cref{thm.set_limits} works for products. See \cref{ex.prods_as_lims}.
}{
Suppose $\cat{J}$ is the graph $\fbox{$\LMO{v_1}\;\LMO{v_2}$}$ and $D\colon\cat{J}\to\smset$ is given by two sets, $D(v_1)=A$ and $D(v_2)=B$ for sets $A,B$. The product of these two sets is $A\times B$. Let's check that the limit formula in \cref{thm.set_limits} gives the same answer. It says
\begin{multline*}
  \lim_\cat{J} D \coloneqq \big\{(d_1,\ldots,d_n)\mid d_i\in D(v_i)\text{ for all }1\leq i\leq n\text{ and }\\
  \text{for all }a\colon v_i\to v_j\in A, \text{ we have } D(a)(d_i)=d_j\big\}.
\end{multline*}
But in our case $n=2$, there are no arrows in the graph, and $D(v_1)=A$ and $D(v_2)=B$. So the formula reduces to
\[\lim_\cat{J} D \coloneqq \big\{(d_1,d_2)\mid d_1\in A\text{ and }d_2\in B\big\}.
\]
which is exactly the definition of $A\times B$.
}

\sol{exc.opposite_functor}{
Recall from \cref{def.opposite_cat} that every category $\cat{C}$ has an
opposite $\cat{C}\op$. Let $F\colon \cat{C} \to \cat{D}$ be a functor. How
should we define its opposite, $F\op\colon \cat{C}\op \to \cat{D}\op$? That is,
how should $F\op$ act on objects, and how should it act on morphisms?
}{
Given a functor $F\colon \cat{C} \to \cat{D}$, we define its opposite $F\op\colon \cat{C}\op \to \cat{D}\op$ as follows. For each object $c\in\Ob(\cat{C}\op)=\Ob(\cat{C})$, put $F\op(c)\coloneqq F(c)$. For each morphism $f\colon c_1\to c_2$ in $\cat{C}\op$, we have a corresponding morphism $f'\colon c_2\to c_1$ in $\cat{C}$ and thus a morphism $F(f')\colon F(c_2)\to F(c_1)$ in $\cat{D}$, and thus a morphism $F(f')'\colon F\op(c_1)\to F\op(c_2)$. Hence we can define $F\op(f)\coloneqq F(f')'$. Note that the primes ($-'$) are pretty meaningless, we only put them there to differentiate between things that are very closely related.

It is easy to check that our definition of $F\op$ is functorial: it sends identities to identities and composites to composites.
}
	
\finishSolutionChapter

%======== Section =========%
\section[Solutions for Chapter 4]{Solutions for \cref{chap.codesign}.}

\sol{exc.a_profunctor}{
Suppose we have the preorders
\[
\begin{tikzpicture}
	\node (cat) {category};
	\node[below right=1 of cat] (pos) {preorder};
	\node[below left=1 of cat] (mon) {monoid};
	\draw[->] (pos) -- (cat);
	\draw[->] (mon) -- (cat);
	\node[draw, fit=(cat) (pos) (mon)] (X) {};
	\node[left=0 of X] {$\cat{X}\coloneqq$};
	\node[right=2 of pos] (0) {nothing};
	\node at (0|-cat) (1) {this book};
	\draw[->] (0) -- (1);
	\node[draw, fit=(0) (1)] (Y) {};
	\node[left=0 of Y] {$\cat{Y}\coloneqq$};
\end{tikzpicture}
\]
\begin{enumerate}
  \item Draw the Hasse diagram for the preorder $\cat{X}\op\times\cat{Y}$. 
  \item Write down a profunctor $\Lambda\colon\cat{X} \tickar \cat{Y}$ and, reading
  $\Lambda(x,y)=\true$ as ``my aunt can explain a $x$ given $y$,'' give an
  interpretation of the fact that the preimage of $\true$ forms an upper set in
  $\cat{X}\op\times\cat{Y}$. 
  
\end{enumerate}
}{
\begin{enumerate}
\item The Hasse diagram for $\cat{X}\op \times \cat{Y}$ is shown here (ignore the colors): 
\[
\begin{tikzpicture}[font=\scriptsize,xscale =2]
  	\node (a1) at (0,0) {(category, nothing)};
  	\node[blue] (c1) at (-1,1) {(monoid, nothing)};
  	\node (b1) at (1,1) {(preorder, nothing)};
  	\node[blue] (a2) at (0,2) {(category, this book)};
  	\node[blue] (c2) at (-1,3) {(monoid, this book)};
  	\node[blue] (b2) at (1,3) {(preorder, nothing)};
  	\node[draw, fit=(a1) (c1) (b1) (b2)] (A) {};
	\draw[->] (a1) to (c1);
	\draw[->] (a1) to (a2);
	\draw[->] (a1) to (b1);
	\draw[->] (a2) to (c2);
	\draw[->] (a2) to (b2);
	\draw[->] (c1) to (c2);
	\draw[->] (b1) to (b2);
\end{tikzpicture}
\]
\item There is a profunctor $\Lambda\colon \cat{X} \tickar \cat{Y}$, i.e.\ a functor $\cat{X}\op\times\cat{Y}\to\BB$, 
such that, in the picture above, blue is sent to true and black is sent to false, i.e.\
\begin{align*}
  \Lambda(\mbox{monoid, nothing})& = \Lambda(\mbox{monoid, this book}) \\
  &=\Lambda(\mbox{preorder, this book})
  &=\Lambda(\mbox{category, this book})
  =\true \\
  \Lambda(\mbox{preorder, nothing}) &=\Lambda(\mbox{category, nothing})
  =\false.
\end{align*}
\end{enumerate}
The preorder $\cat{X}\op\times\cat{Y}$ describes tasks in decreasing difficulty. For example, (we hope) it is easier for my aunt to explain a monoid given this
book than for her to explain a monoid without this book. The profunctor
$\Lambda$ describes possible states of knowledge for my aunt: she can describe monoids without help, categories with help from the book, etc. It is an upper set
because we assume that if she can do a task, she can also do any easier task.
}

\sol{exc.implies_is_hom}{
Show that $\imp$ as defined in \cref{eqn.implies} indeed satisfies \cref{eqn.implies_internal_hom}.
}{
We've done this one before! We hope you remembered how to do it. If not, see
\cref{exc.Bool_monoidal_closed}.
}

\sol{exc.profunctor_def_alt}{
Show that a $\cat{V}$-profunctor (\cref{def.enriched_profunctor}) is
the same as a function $\Phi\colon\Ob(\cat{X})\times\Ob(\cat{Y})\to V$ such that
for any $x,x'\in\cat{X}$ and $y,y'\in\cat{Y}$ the following inequality holds in
$\cat{V}$: 
\[
  \cat{X}(x',x)\otimes\Phi(x,y)\otimes\cat{Y}(y,y')\leq\Phi(x',y').
  
\]
}{
Recall from \cref{def.monoidal_functor} that a $\cat{V}$-functor $\Phi\colon
\cat{X}\op \times \cat{Y} \to \cat{V}$ is a function $\Phi\colon\Ob(\cat{X}\op \times
\cat{Y}) \to \Ob(\cat{V})$ such that for all $(x,y)$ and $(x',y')$ in
$\cat{X}\op\times \cat{Y}$ we have 
\[
(\cat{X}\op \times \cat{Y})\big((x,y),(x',y')\big) \le
\cat{V}\big(\Phi(x,y),\Phi(x',y')\big).
\]
Using the definitions of product $\cat{V}$-category (\cref{def.enriched_prod})
and opposite $\cat{V}$-category (\cref{def.enriched_op}) on the left hand side,
and using \cref{rem.quantale_enriches_itself}, which describes how we are
viewing the quantale $\cat{V}$ as enriched over itself, on the right hand side,
this unpacks to
\[
\cat{X}(x',x) \otimes \cat{Y}(y,y') \le \Phi(x,y) \multimap \Phi(x',y')
\]
Using symmetry of $\otimes$ and the definition of hom-element, \cref{eqn.monoidal_closed_adj}, we see that
$\Phi$ is a profunctor if and only if
\[
  \cat{X}(x',x)\otimes\Phi(x,y)\otimes\cat{Y}(y,y')\leq\Phi(x',y').
\]
}

\sol{exc.Bool_enriched_is_feas}{
Is it true that a $\Bool$-profunctor, as in \cref{def.enriched_profunctor} is exactly the same as a feasibility relation, as in \cref{def.feasibility_relationship}, once you peel back all the jargon? Or is there some subtle difference?
}{
Yes, since a $\Bool$-functor is exactly the same as a monotone map, the definition of
$\Bool$-profunctor and that of feasibility relation line up perfectly!
}

\sol{exc.feas_matrix}{
We can express $\Phi$ as a matrix where the $(m,n)$th entry is the value of
$\Phi(m,n)\in \BB$. Fill out the $\Bool$-matrix:
\[
\begin{array}{c|ccccc}
	\Phi&a&b&c&d&e\\\hline
  N&\?&\?&\?&\?&\true\\
  E&\true&\?&\?&\?&\?\\
  W&\?&\?&\?&\false&\?\\
  S&\?&\?&\?&\?&\?
\end{array}

\]
}{
The feasibility matrix for $\Phi$ is
\[
\begin{array}{c|ccccc}
	\Phi&a&b&c&d&e\\\hline
  N&\true&\false&\true&\false&\true\\
  E&\true&\true&\true&\true&\true\\
  W&\true&\false&\true&\false&\true\\
  S&\true&\true&\true&\true&\true
\end{array}
\]
}

\sol{exc.distance_matrix_Phi}{
Fill out the $\Cost$-matrix:
\[
\begin{array}{c|ccc}
  \Phi&x&y&z\\\hline
  A&\?&\?&20\\
  B&11&\?&\?\\
  C&\?&17&\?\\
  D&\?&\?&\?
\end{array}

\]
}{
The $\Cost$-matrix for $\Phi$ is
\[
\begin{array}{c|ccc}
  \Phi&x&y&z\\\hline
  A&17&20&20\\
  B&11&14&14\\
  C&14&17&17\\
  D&12&9&15
\end{array}
\]
}

\sol{exc.cost_matrix_mult}{
Calculate $M_X^4*M_\Phi*M_Y^3$, remembering to do matrix multiplication according to the $(\min,+)$-formula for matrix multiplication in the quantale $\Cost$; see \cref{eqn.quantale_matrix_mult}.

Your answer should agree with what you got in \cref{exc.distance_matrix_Phi}; does it?
}{
\begin{align*}
\Phi = M^3_X\ast M_\Phi \ast M_Y^2 
&= 
\begin{pmatrix}
  0&6&3&11\\
  2&0&5&5\\
  5&3&0&8\\
  11&9&6&0
\end{pmatrix}
\begin{pmatrix}
  \infty&\infty&\infty\\
  11&\infty&\infty\\
  \infty&\infty&\infty\\
  \infty&9&\infty
\end{pmatrix}
\begin{pmatrix}
  0&4&3\\
  3&0&6\\
  7&4&0
\end{pmatrix} \\
&=
\begin{pmatrix}
  17&20&\infty\\
  11&14&\infty\\
  14&17&\infty\\
  20&9&\infty
\end{pmatrix}
\begin{pmatrix}
  0&4&3\\
  3&0&6\\
  7&4&0
\end{pmatrix} \\
&=
\begin{pmatrix}
  17&20&20\\
  11&14&14\\
  14&17&17\\
  12&9&15
\end{pmatrix}
\end{align*}
}

\sol{exc.bad_humour}{
In the above diagram, the node (g/n, funny) has no dashed blue arrow emerging
from it. Is this valid? If so, what does it mean?
}{
Yes, this is valid: it just means that the profunctor $\Phi\colon (T \times E)
\tickar \$$ does not relate (good-natured, funny) to any element of $\$$. More
formally, it means that $\Phi((\mbox{good-natured, funny}),p) = \false$ for all
$p \in \$ = \{\mbox{\$100K, \$500K, \$1M}\}$. We can interpret this to mean that
it is not feasible to produce a good-natured, funny movie for any of the cost
options presented---so at least not for less than a million dollars.
}

\sol{exc.compose_Lawv_profs}{
Consider the $\Cost$-profunctors $\Phi\colon\cat{X}\tickar\cat{Y}$ and $\Psi\colon\cat{Y}\tickar\cat{Z}$ shown below:
\[
% [inline block 58: 10 envs, 5560 chars -> data_tex | \begin{tikzpicture}[font=\scriptsize, x=1cm, every label/.style={font=\tiny}] 	\node[label={[above=-5pt]:$A$}] (a) {$\bu...]

\]
}

\sol{exc.prof_unitality}{
\begin{exercise} \label{exc.prof_unitality}
\begin{enumerate}
  \item Justify each of the four steps $(=, \leq, \leq, =)$ in
  \cref{eqn.direction_rand597}.
  \item In the case $\cat{V}=\Bool$, we can directly show each of the four steps
  in \cref{eqn.direction_rand597} is actually an equality. How? 
  \item Justify each of the three steps $(=,\leq,\leq)$ in \cref{eqn.rand_16749}.
\end{enumerate}
}{
\begin{enumerate}
  \item The first equality is the unitality of $\cat{V}$
  (\cref{def.symm_mon_structure}(b)). The second step uses the monotonicity of
  $\otimes$ (\cref{def.symm_mon_structure}(a)) applied to the inequalities $I \le
  \cat{P}(p,p)$ (the identity law for $\cat{P}$ at $p$,
  \cref{def.cat_enriched_mpos}(a)) and $\Phi(p,q) \le \Phi(p,q)$ (reflexivity of
  preorder $\cat{V}$, \cref{def.preorder}(a)). The third step uses the definition
  of join: for all $x$ and $y$, since any $x \le x$, we have $x \le x \vee y$. The
  final equality is just the definition of profunctor composition,
  \cref{def.composite_profunctor}.
  
  \item Note that in $\Bool$, $I=\true$. Since the identity law at $p$ says $\true
  \le \cat{P}(p,p)$, and $\true$ is the largest element of the preorder $\Bool$, we
  thus have $\cat{P}(p,p)=\true$ for all $p$. This shows that the first inequality
  in \cref{eqn.direction_rand597} is an equality.
  
  The second inequality is more involved. Note that by the above, we can assume
  the left hand side of the inequality is equal to $\Phi(p,q)$. We split into two
  cases. Suppose $\Phi(p,q)= \true$. Then, again since $\true$ is the largest element of
  $\BB$, we must have equality. 
  
  Next, suppose $\Phi(p,q) = \false$. Note that since $\Phi$ is a monotone map
  $\cat{P}\op \times \cat{Q} \to \Bool$, if $p \le p_1$ in $\cat{P}$, then
  $\Phi(p_1,q) \le \Phi(p,q)$ in $\Bool$. Thus if $\cat{P}(p,p_1) = \true$ then
  $\Phi(p_1,q) = \Phi(p,q) =\false$. This implies that for all $p_1 \in \cat{P}$,
  we have either $\cat{P}(p,p_1) =\false$ or $\Phi(p_1,q) = \false$, and hence
  $\bigvee_{p_1\in \cat{P}} \cat{P}(p,p_1) \wedge \Phi(p_1,q) = \bigvee_{p_1 \in
  \cat{P}} \false = \false$.
  
  Thus, in either case, we see that $\Phi(p,q) = \bigvee_{p_1\in \cat{P}}
  \cat{P}(p,p_1) \wedge \Phi(p_1,q)$, as required.
  \item The first equation is unitality in monoidal categories, $v\otimes I=v$ for any $v\in V$. The second is that $I\leq\cat{Q}(q,q)$ by unitality of enriched categories, see \cref{def.cat_enriched_mpos}, together with monotonicity of monoidal product: $v_1\leq v_2$ implies $v\otimes v_1\leq v\otimes v_2$. The third was shown in \cref{exc.profunctor_def_alt}.
\end{enumerate}
}

\sol{exc.prof_associativity}{
Prove \cref{lemma:assoc_serial}. (Hint: remember to use the fact that $\cat{V}$
is skeletal.)
}{
This is very similar to \cref{exc.matrix_mult2}: we exploit the associativity of
$\otimes$. Note, however, we also require $\cat{V}$ to be symmetric monoidal
closed, since this implies the distributivity of $\otimes$ over $\vee$
(\cref{prop.properties_closed_mon_preorders} 2), and $\cat{V}$ to be skeletal,
so we can turn equivalences into equalities.
\begin{align*}
((\Phi\cp \Psi) \cp \Upsilon)(p,s) 
&= \bigvee_{r \in \cat{R}} \bigg(\bigvee_{q \in \cat{Q}} \Phi(p,q) \otimes
\Psi(q,r)\bigg) \otimes \Upsilon(r,s) \\
&= \bigvee_{r \in \cat{R}, q \in \cat{Q}} \Phi(p,q) \otimes
\Psi(q,r) \otimes \Upsilon(r,s) \\
&= \bigvee_{q \in \cat{Q}} \Phi(p,q) \otimes\bigvee_{r \in \cat{R}}
\bigg(\Psi(q,r) \otimes \Upsilon(r,s)\bigg) \\
&= (\Phi\cp(\Psi \cp \Upsilon))(p,s)
\end{align*}
}

\sol{exc.unit_companion}{
Check that the companion $\comp{\id}$ of $\id\colon\cat{P}\to\cat{P}$ really has the formula given in \cref{eqn.unit_profunctor}.
}{
This is very straightforward. We wish to check $\comp{\id}\colon \cat{P} \tickar
\cat{P}$ has the formula $\comp{\id}(p,q) = \cat{P}(p,q)$. By
\cref{def.companion_conjoint}, $\comp\id(p,q) \coloneqq \cat{P}(\id(p),q) =
\cat{P}(p,q)$. So they're the same.
}

\sol{exc.plus_conjoint}{
	Let $+\colon\RR\times\RR\times\RR\to\RR$ be as in \cref{ex.plus_3}. What is its conjoint $\conj{+}$?
}{
The conjoint $\conj{+}\colon \RR \tickar \RR \times \RR \times \RR$ sends
$(a,b,c,d)$ to $\RR(a,b+c+d)$, which is $\true$ if $a \le b+c+d$, and false
otherwise. 
}

\sol{exc.adjoint_comp_conj}{
Let $\cat{V}$ be a skeletal quantale, let $\cat{P}$ and $\cat{Q}$ be $\cat{V}$-categories, and let $F\colon\cat{P}\to\cat{Q}$ and $G\colon\cat{Q}\to\cat{P}$ be $\cat{V}$-functors.
\begin{enumerate}
	\item Show that $F$ and $G$ are $\cat{V}$-adjoints (as in
	\cref{eqn.adjoint_V_funs}) if and only if the companion of the former
	equals the conjoint of the latter: $\comp{F}=\conj{G}$.
	\item Use this to prove that $\comp{\id}=\conj{\id}$, as was stated in \cref{ex.unit_profunctor}.

\end{enumerate}
}{
\begin{enumerate}
	\item By \cref{def.companion_conjoint}, $\comp{F}(p,q) = \cat{Q}(F(p),q)$ and
$\conj{G}(p,q) = \cat{Q}(p,G(q))$. Since $\cat{V}$ is skeletal, $F$ and $G$ are
$\cat{V}$ adjoints if and only if $\cat{Q}(F(p),q) = \cat{Q}(p,G(q))$. Thus $F$
and $G$ are adjoints if and only if $\comp{F} = \conj{G}$.
	\item Note that $\id\colon \cat{P} \to \cat{P}$ is $\cat{V}$-adjoint to
	itself, since both sides of \cref{eqn.adjoint_V_funs} then equal
	$\cat{P}(p,q)$. Thus $\comp{\id} = \conj{\id}$.
\end{enumerate}
}

\sol{exc.collage_practice}{
Draw a Hasse diagram for the collage of the profunctor shown here:
\[
% [inline block 59: 2 envs, 2568 chars -> data_tex | \begin{tikzpicture}[font=\scriptsize, x=1cm, every label/.style={font=\tiny}, baseline=(Y)] 	\node[label={[above=-5pt]:$...]

\]
}

\sol{exc.mon_preorder_is_cat}{
Check that monoidal categories generalize monoidal preorders: a monoidal preorder is a
monoidal category $(\cat{P},I,\otimes)$ where, for every $p,q\in\cat{P}$, the
set $\cat{P}(p,q)$ has at most one element.
}{
Since we only have a rough definition, we can only roughly check this: we won't
bother with the notion of well-behaved. Nonetheless, we can still compare
\cref{def.symm_mon_structure} with \cref{rdef.sym_mon_cat}.

First, recall from \cref{subsubsec.pos_free_spectrum} that a preorder is a
category $\cat{P}$ such that for every $p,q \in \cat{P}$, the set $\cat{P}(p,q)$
has at most one element. 

On the surface, all looks promising: both definitions have two constituents and
four properties. In constituent (i), both \cref{def.symm_mon_structure}
and \cref{rdef.sym_mon_cat} call for the same: an element, or object, of the
preorder $\cat{P}$. So far so good. Constituent (ii), however, is where it gets
interesting: \cref{def.symm_mon_structure} calls for merely a function
$\otimes\colon \cat{P} \times \cat{P} \to \cat{P}$, while
\cref{rdef.sym_mon_cat} calls for a \emph{functor}.

Recall from \cref{ex.preorder_functor} that functors between preorders are
exactly monotone maps. So we need for the function $\otimes$ in
\cref{def.symm_mon_structure} to be a monotone map. This is exactly property (a)
of \cref{def.symm_mon_structure}: it says that if $(x_1,x_2) \le (y_1,y_2)$ in
$\cat{P} \otimes \cat{P}$, then we must have $x_1 \otimes x_2 \le y_1 \otimes
y_2$ in $\cat{P}$. So it is also the case that in \cref{def.symm_mon_structure}
that $\otimes$ is a functor.

The remaining properties compare easily, taking the natural isomorphisms to
be equality or equivalence in $\cat{P}$. Indeed, property (b) of
\cref{def.symm_mon_structure} corresponds to \emph{both} properties (a) and (b)
of \cref{rdef.sym_mon_cat}, and then the respective properties (c) and (d)
similarly correspond.
}

\sol{exc.read_string_diag}{
Consider the monoidal category $(\Cat{Set},1,\times)$, together with the diagram
\[
\begin{tikzpicture}[oriented WD, string decoration={}]
	\node[bb={1}{2}] (X11) {$f$};
	\node[bb={2}{2}, below right=of X11] (X12) {$g$};
	\node[bb={2}{1}, above right=of X12] (X13) {$h$};
	\node[bb={0}{0}, fit={($(X11.north west)+(.3,1.5)$) (X12)  ($(X13.east)+(-.3,0)$)}] (Y1) {};
	\begin{scope}[font=\small]
  	\draw[ar] (Y1.west|-X11_in1) to node[above] {$A$} (X11_in1);	
  	\draw[ar] (Y1.west|-X12_in2) to node[above] {$B$} (X12_in2);
  	\draw[ar] (X11_out1) to node[above] {$C$} (X13_in1);
  	\draw[ar] (X11_out2) to node[above=5pt] {$D$} (X12_in1);
  	\draw[ar] (X12_out1) to node[above=5pt] {$E$} (X13_in2);
  	\draw[ar] (X12_out2) to node[above] {$F$} (X12_out2-|Y1.east);
  	\draw[ar] (X13_out1) to node[above] {$G$} (X13_out1-|Y1.east);
	\end{scope}
\end{tikzpicture}
\]
Suppose that $A=B=C=D=F=G=\ZZ$ and $E=\BB=\{\true,\false\}$, 
and suppose that $f_C(a)=|a|$, $f_D(a)=a*5$, $g_E(d,b)=d\leq b$, $g_F(d,b)=d-b$, and $h(c,e)=\tn{if }e\tn{ then }c\tn{ else }1-c$.
\begin{enumerate}
	\item What are $g_E(5,3)$ and $g_F(5,3)$?
	\item What are $g_E(3,5)$ and $g_F(3,5)$?
	\item What is $h(5,\true)$?
	\item What is $h(-5,\true)$?\erase{Error!}
	\item What is $h(-5,\false)$?
\end{enumerate}
The whole diagram now defines a function $A\times B\to G\times F$; call it $q$.
\begin{enumerate}[resume]
	\item What are $q_G(-2,3)$ and $q_F(-2,3)$?
	\item What are $q_G(2,3)$ and $q_F(2,3)$?

\end{enumerate}
}{
\begin{enumerate}
	\item $g_E(5,3) = \false$, $g_F(5,3) = 2$.
	\item $g_E(3,5) = \true$, $g_F(3,5) = -2$.
	\item $h(5,\true) = 5$.
	\item $h(-5,\true) = -5$.
	\item $h(-5,\false) = 6$.
	\item $q_G(-2,3) = 2$, $q_F(-2,3) =-13$.
	\item $q_G(2,3) = -1$, $q_F(2,3) =7$.
\end{enumerate}
}

\sol{exc.cat_is_set_enriched}{
Recall from \cref{ex.set_as_mon_cat} that $\cat{V}=(\smset,\{1\},\times)$ is a
symmetric monoidal category. This means we can apply
\cref{def.enriched_in_mon_cat}. Does the (rough) definition roughly agree with
the definition of category given in \cref{def.category}? Or is there a subtle
difference?
}{
Yes, the rough definition roughly agrees: plain old categories are
$\smset$-categories! In detail, we need to compare
\cref{def.enriched_in_mon_cat} when $\cat{V} = (\smset,\{1\},\times)$ with \cref{def.category}.
In both cases, we see that (i) asks for a collection of objects and (ii) asks for,
for all pairs of objects $x,y$, a \emph{set} $\cat{C}(x,y)$ of morphisms.
Moreover, recall that the monoidal unit $I$ is the one element set $\{1\}$. This
means a morphism $\id_x\colon I \to \cat{C}(x,x)$ is a function $\id_x\colon
\{1\} \to \cat{C}(x,x)$. This is the same data as simply an element $\id_x =
\id_x(1) \in \cat{C}(x,x)$; we call this data the identity morphism on $x$.
Finally, a morphism $\cp\colon \cat{C}(x,y) \otimes \cat{C}(y,z) \to
\cat{C}(x,z)$ is a function $\cp \colon \cat{C}(x,y) \times \cat{C}(y,z) \to
\cat{C}(x,z)$; this is exactly the composite required in \cref{def.category}
(iv).

So in both cases the data agrees. In \cref{def.category} we also require this
data to satify two conditions, unitality and associativity. This is what is
meant by the last sentence of \cref{def.enriched_in_mon_cat}.
}

\sol{exc.metric_space_identities}{
What are identity elements in Lawvere metric spaces (that is,
$\Cost$-categories)? How do we interpret this in terms of distances?
}{
An identity element in a $\Cost$-category $\cat{X}$ is a morphism $I \to
\cat{X}(x,x)$ in $\Cost = ([0,\infty], \ge, 0, +)$, and hence the condition that
$0 \ge \cat{X}(x,x)$. This implies that $\cat{X}(x,x) = 0$. In terms of
distances, we interpret this to mean that the distance from any point to itself
is equal to $0$. We think this is a pretty sensible condition for a notion of
distance to obey.
}

\sol{exc.corelations}{
Consider the set $\ord{3}=\{1,2,3\}$.
\begin{enumerate}
	\item Draw a picture of the unit corelation $\varnothing\to\ord{3}\sqcup\ord{3}$.
	\item Draw a picture of the counit corelation $\ord{3}\sqcup\ord{3}\to\varnothing$.
	\item Check that the snake equations \eqref{eqn.yanking} hold. (Since every object is its own dual, you only need to check one of them.)

\end{enumerate}
}{
\begin{enumerate}
\item Here is a picture of the unit corelation $\varnothing \to \ord{3} \sqcup
\ord{3}$, where we have drawn the empty set with an empty dotted rectangle on the
left:
  \[
  % [inline block 60: 2 envs, 5296 chars -> data_tex | \begin{tikzpicture} 	\begin{pgfonlayer}{nodelayer}...]

\]
\item Here is a picture of the snake equation on the left of \cref{eqn.yanking}.
\[
  \begin{altikz}
	\begin{pgfonlayer}{nodelayer}
		\node [contact, outer sep=5pt] (1cA) at (-5, -1) {};
		\node [contact, outer sep=5pt] (2cA) at (-5, -1.5) {};
		\node [contact, outer sep=5pt] (3cA) at (-5, -2) {};
		\node [contact, outer sep=5pt] (1aB) at (-2, 2) {};
		\node [contact, outer sep=5pt] (2aB) at (-2, 1.5) {};
		\node [contact, outer sep=5pt] (3aB) at (-2, 1) {};
		\node [contact, outer sep=5pt] (1bB) at (-2, 0.5) {};
		\node [contact, outer sep=5pt] (2bB) at (-2, 0) {};
		\node [contact, outer sep=5pt] (3bB) at (-2, -.5) {};
		\node [contact, outer sep=5pt] (1cB) at (-2, -1) {};
		\node [contact, outer sep=5pt] (2cB) at (-2, -1.5) {};
		\node [contact, outer sep=5pt] (3cB) at (-2, -2) {};
		\node [contact, outer sep=5pt] (1aC) at (1, 2) {};
		\node [contact, outer sep=5pt] (2aC) at (1, 1.5) {};
		\node [contact, outer sep=5pt] (3aC) at (1, 1) {};
		\coordinate (1outtB) at ($(1aB)!.5!(1bB)+(-.7,.3)$);
		\coordinate (1outbB) at ($(1aB)!.5!(1bB)+(-.7,-.3)$);
		\coordinate (1inB) at ($(1aB)!.5!(1bB)+(-.4,0)$);
		\coordinate (2outtB) at ($(2aB)!.5!(2bB)+(-.7,.3)$);
		\coordinate (2outbB) at ($(2aB)!.5!(2bB)+(-.7,-.3)$);
		\coordinate (2inB) at ($(2aB)!.5!(2bB)+(-.4,0)$);
		\coordinate (3outtB) at ($(3aB)!.5!(3bB)+(-.7,.3)$);
		\coordinate (3outbB) at ($(3aB)!.5!(3bB)+(-.7,-.3)$);
		\coordinate (3inB) at ($(3aB)!.5!(3bB)+(-.4,0)$);
		\coordinate (1outt) at ($(1bB)!.5!(1cB)+(.7,.3)$);
		\coordinate (1outb) at ($(1bB)!.5!(1cB)+(.7,-.3)$);
		\coordinate (1in) at ($(1bB)!.5!(1cB)+(.4,0)$);
		\coordinate (2outt) at ($(2bB)!.5!(2cB)+(.7,.3)$);
		\coordinate (2outb) at ($(2bB)!.5!(2cB)+(.7,-.3)$);
		\coordinate (2in) at ($(2bB)!.5!(2cB)+(.4,0)$);
		\coordinate (3outt) at ($(3bB)!.5!(3cB)+(.7,.3)$);
		\coordinate (3outb) at ($(3bB)!.5!(3cB)+(.7,-.3)$);
		\coordinate (3in) at ($(3bB)!.5!(3cB)+(.4,0)$);
	\end{pgfonlayer}
	\begin{pgfonlayer}{edgelayer}
	\begin{scope}[rounded corners=5pt, densely dotted, thick]
	% unit
		\draw  [blue!50!black]
   (node cs:name=1aB, anchor=south west) --
   (node cs:name=1aB, anchor=south east) --
   (node cs:name=1aB, anchor=north east) --
   (node cs:name=1aB, anchor=north west) --
   (node cs:name=1outtB) --
   (node cs:name=1outbB) --
   (node cs:name=1bB, anchor=south west) --
   (node cs:name=1bB, anchor=south east) --
   (node cs:name=1bB, anchor=north east) --
   (node cs:name=1bB, anchor=north west) --
   (node cs:name=1inB) --
   cycle;
		\draw  [red!50!black]  
   (node cs:name=2aB, anchor=south west) --
   (node cs:name=2aB, anchor=south east) --
   (node cs:name=2aB, anchor=north east) --
   (node cs:name=2aB, anchor=north west) --
   (node cs:name=2outtB) --
   (node cs:name=2outbB) --
   (node cs:name=2bB, anchor=south west) --
   (node cs:name=2bB, anchor=south east) --
   (node cs:name=2bB, anchor=north east) --
   (node cs:name=2bB, anchor=north west) --
   (node cs:name=2inB) --
   cycle;
		\draw  [green!50!black]
   (node cs:name=3aB, anchor=south west) --
   (node cs:name=3aB, anchor=south east) --
   (node cs:name=3aB, anchor=north east) --
   (node cs:name=3aB, anchor=north west) --
   (node cs:name=3outtB) --
   (node cs:name=3outbB) --
   (node cs:name=3bB, anchor=south west) --
   (node cs:name=3bB, anchor=south east) --
   (node cs:name=3bB, anchor=north east) --
   (node cs:name=3bB, anchor=north west) --
   (node cs:name=3inB) --
   cycle;
   % counit
		\draw  [blue!50!black]
   (node cs:name=1bB, anchor=south east) --
   (node cs:name=1bB, anchor=south west) --
   (node cs:name=1bB, anchor=north west) --
   (node cs:name=1bB, anchor=north east) --
   (node cs:name=1outt) --
   (node cs:name=1outb) --
   (node cs:name=1cB, anchor=south east) --
   (node cs:name=1cB, anchor=south west) --
   (node cs:name=1cB, anchor=north west) --
   (node cs:name=1cB, anchor=north east) --
   (node cs:name=1in) --
   cycle;
		\draw  [red!50!black]  
   (node cs:name=2bB, anchor=south east) --
   (node cs:name=2bB, anchor=south west) --
   (node cs:name=2bB, anchor=north west) --
   (node cs:name=2bB, anchor=north east) --
   (node cs:name=2outt) --
   (node cs:name=2outb) --
   (node cs:name=2cB, anchor=south east) --
   (node cs:name=2cB, anchor=south west) --
   (node cs:name=2cB, anchor=north west) --
   (node cs:name=2cB, anchor=north east) --
   (node cs:name=2in) --
   cycle;
		\draw  [green!50!black]
   (node cs:name=3bB, anchor=south east) --
   (node cs:name=3bB, anchor=south west) --
   (node cs:name=3bB, anchor=north west) --
   (node cs:name=3bB, anchor=north east) --
   (node cs:name=3outt) --
   (node cs:name=3outb) --
   (node cs:name=3cB, anchor=south east) --
   (node cs:name=3cB, anchor=south west) --
   (node cs:name=3cB, anchor=north west) --
   (node cs:name=3cB, anchor=north east) --
   (node cs:name=3in) --
   cycle;
   % identities
		\draw  [blue!50!black]
   (node cs:name=1cA, anchor=south west) --
   (node cs:name=1cB, anchor=south east) --
   (node cs:name=1cB, anchor=north east) --
   (node cs:name=1cA, anchor=north west) --
   cycle;
		\draw  [red!50!black]  
   (node cs:name=2cA, anchor=south west) --
   (node cs:name=2cB, anchor=south east) --
   (node cs:name=2cB, anchor=north east) --
   (node cs:name=2cA, anchor=north west) --
   cycle;
		\draw  [green!50!black]
   (node cs:name=3cA, anchor=south west) --
   (node cs:name=3cB, anchor=south east) --
   (node cs:name=3cB, anchor=north east) --
   (node cs:name=3cA, anchor=north west) --
   cycle;
		\draw  [blue!50!black]
   (node cs:name=1aB, anchor=south west) --
   (node cs:name=1aC, anchor=south east) --
   (node cs:name=1aC, anchor=north east) --
   (node cs:name=1aB, anchor=north west) --
   cycle;
		\draw  [red!50!black]  
   (node cs:name=2aB, anchor=south west) --
   (node cs:name=2aC, anchor=south east) --
   (node cs:name=2aC, anchor=north east) --
   (node cs:name=2aB, anchor=north west) --
   cycle;
		\draw  [green!50!black]
   (node cs:name=3aB, anchor=south west) --
   (node cs:name=3aC, anchor=south east) --
   (node cs:name=3aC, anchor=north east) --
   (node cs:name=3aB, anchor=north west) --
   cycle;
	\end{scope}
	\end{pgfonlayer}
\end{altikz}
\quad
=
\quad
\begin{altikz}
	\begin{pgfonlayer}{nodelayer}
		\node [contact, outer sep=5pt] (1a) at (-2, 1) {};
		\node [contact, outer sep=5pt] (2a) at (-2, 0.5) {};
		\node [contact, outer sep=5pt] (3a) at (-2, 0) {};
		\node [contact, outer sep=5pt] (1b) at (1, 1) {};
		\node [contact, outer sep=5pt] (2b) at (1, 0.5) {};
		\node [contact, outer sep=5pt] (3b) at (1, 0) {};
%		\node [style=none] (11) at (-2.75, 0.5) {$\ord{3}$};
%		\node [style=none] (12) at (1.75, 0.5) {$\ord{3}$};
	\end{pgfonlayer}
	\begin{pgfonlayer}{edgelayer}
	\begin{scope}[rounded corners=5pt, densely dotted, thick]
		\draw  [blue!50!black]
   (node cs:name=1a, anchor=south west) --
   (node cs:name=1b, anchor=south east) --
   (node cs:name=1b, anchor=north east) --
   (node cs:name=1a, anchor=north west) --
   cycle;
		\draw  [red!50!black]  
   (node cs:name=2a, anchor=south west) --
   (node cs:name=2b, anchor=south east) --
   (node cs:name=2b, anchor=north east) --
   (node cs:name=2a, anchor=north west) --
   cycle;
		\draw  [green!50!black]  
   (node cs:name=3a, anchor=south west) --
   (node cs:name=3b, anchor=south east) --
   (node cs:name=3b, anchor=north east) --
   (node cs:name=3a, anchor=north west) --
   cycle;
	\end{scope}
	\end{pgfonlayer}
\end{altikz}
\]
\end{enumerate}
}

\sol{exc.explain_monoidal_prod_feas}{
Interpret the monoidal products in $\Prof_{\Bool}$ in terms of feasibility. That is, preorders represent resources ordered by availability ($x\leq x'$ means that $x$ is available given $x'$) and a profunctor is a feasibility relation. Explain why $\cat{X}\times\cat{Y}$ makes sense as the monoidal product of resource preorders $\cat{X}$ and $\cat{Y}$ and why $\Phi\times\Psi$ makes sense as the monoidal product of feasibility relations $\Phi$ and $\Psi$.
}
{
Given two resource preorders $\cat{X}$ and $\cat{Y}$, the preorder $\cat{X}\times\cat{Y}$ represents the set of all pairs of resources, $x\in \cat{X}$ and $y\in \cat{Y}$, with $(x,y)\leq(x',y')$ iff $x\leq x'$ and $y\leq y'$. That is, if $x$ is available given $x'$ and $y$ is available given $y'$, then $(x,y)$ is available given $(x',y')$.

Given two profunctors $\Phi\colon\cat{X}_1\tickar\cat{X}_2$ and $\Psi\colon\cat{Y}_1\tickar\cat{Y}_2$, the profunctor $\Phi\times\Psi$ represents their conjunction, i.e.\ AND. In other words, if $y_1$ can be obtained given $x_1$ AND $y_2$ can be obtained given $x_2$, then $(y_1,y_2)$ can be obtained given $(x_1,x_2)$.
}

\sol{exc.prof_monoidal_unit}{
In order for $\Cat{1}$ to be a monoidal unit, there are supposed to be
isomorphisms $\cat{X}\times\Cat{1}\tickar\cat{X}$ and
$\Cat{1}\times\cat{X}\tickar\cat{X}$, for any $\cat{V}$-category $\cat{X}$. What
are they?
}{
The profunctor $\cat{X} \times \Cat{1} \tickar \cat{X}$ defined by the functor
$\alpha\colon (\cat{X} \times \Cat{1})\op \times \cat{X} \to \cat{V}$ that maps
$\alpha((x,1),y)\coloneqq\cat{X}(x,y)$ is an isomorphism. It has inverse
$\alpha\inv\colon \cat{X} \tickar \cat{X} \times \Cat{1}$ defined by
$\alpha\inv(x,(y,1))\coloneqq\cat{X}(x,y)$. To see that $\alpha\inv \cp \alpha =
\Unit{\cat{X}}$, note first that the unit law for $\cat{X}$ at $z$ and the definition
of join imply
\[
\cat{X}(x,z) = \cat{X}(x,z) \otimes I \le \cat{X}(x,z) \otimes \cat{X}(z,z) \le
\bigvee_{y \in \cat{X}} \cat{X}(x,y) \otimes \cat{X}(y,z),
\]
while composition says $\cat{X}(x,y) \otimes \cat{X}(y,z) \le \cat{X}(x,z)$ and
hence
\[
\bigvee_{y \in \cat{X}} \cat{X}(x,y) \otimes \cat{X}(y,z) \le \bigvee_{y \in
\cat{X}} \cat{X}(x,z) = \cat{X}(x,z).
\]
Thus, unpacking the definition of composition of profunctor, we have
\[
(\alpha\inv\cp \alpha) (x,z) 
= \bigvee_{(y,1) \in \cat{X} \times \Cat{1}} \alpha(x,(y,1)) \otimes
\alpha\inv((y,1),z) 
= \bigvee_{y \in \cat{X}} \cat{X}(x,y) \otimes \cat{X}(y,z) = \cat{X}(x,z).
\]
Similarly we can show $\alpha\cp\alpha\inv = \Unit{\cat{X} \times \Cat{1}}$, and
hence that $\alpha$ is an isomorphism $\cat{X} \times \Cat{1} \tickar \cat{X}$.

Moreover, we can similarly show that $\beta((1,x),y) \coloneqq \cat{X}(x,y)$ defines an
isomorphism $\beta\colon \Cat{1} \times \cat{X} \tickar \cat{X}$.
}

\sol{exc.prof_duals}{
Check these proposed units and counits do indeed obey the snake equations
\cref{eqn.yanking}.
}{
We check the first snake equation, the one on the left hand side of
\cref{eqn.yanking}. The proof of the one on the right hand side is analogous.

We must show that the composite $\Phi$ of profunctors
\[
\cat{X} \xrightarrow{\alpha\inv} \cat{X} \times \Cat{1} \xrightarrow{\Unit{\cat{X}}
\times \eta_{\cat{X}}} \cat{X} \times \cat{X}\op \times \cat{X}
\xrightarrow{\epsilon_{\cat{X}} \times \Unit{\cat{X}}} \Cat{1} \times \cat{X}
\xrightarrow{\alpha} \cat{X}
\]
is itself the identity (ie. the unit profunctor on $\cat{X}$), where $\alpha$
and $\alpha\inv$ are the isomorphisms defined in the solution to
\cref{exc.prof_monoidal_unit} above. 

Freely using the distributivity of $\otimes$ over $\vee$, the value $\Phi(x,y)$ of this
composite at $(x,y) \in \cat{X}\op \times \cat{X}$ is given by
\begin{align*}
&\bigvee_{a,b,c,d,e \in \cat{X}} 
	\begin{multlined}[t][11cm]
  \alpha\inv(x,(a,1)) \otimes
  (\Unit{\cat{X}}\times \eta_{\cat{X}})((a,1),(b,c,d))\\
  \otimes (\epsilon_{\cat{X}} \times \Unit{\cat{X}})((b,c,d),(1,e)) \otimes
  \alpha((1,e),y)
  \end{multlined}
\\
=&\bigvee_{a,b,c,d,e \in \cat{X}} 
	\begin{multlined}[t][11cm]
  \alpha\inv(x,(a,1)) \otimes
  \Unit{\cat{X}}(a,b) \otimes \eta_{\cat{X}}(1,c,d) \\
  \otimes
  \epsilon_{\cat{X}}(b,c,1) \otimes \Unit{\cat{X}}(d,e) \otimes
  \alpha((1,e),y)
  \end{multlined}
\\
=& \bigvee_{a,b,c,d,e \in \cat{X}} \cat{X}(x,a) \otimes \cat{X}(a,b) \otimes
\cat{X}(c,d) \otimes \cat{X}(b,c) \otimes \cat{X}(d,e) \otimes \cat{X}(e,y) \\
=&\quad \cat{X}(x,y)
\end{align*}
where in the final step we repeatedly use the argument the
\cref{lemma:unital_serial} that shows that composing with the unit profunctor
$\Unit{\cat{X}}(a,b) = \cat{X}(a,b)$ is the identity.

This shows that $\Phi(x,y)$ is the identity profunctor, and hence shows the
first snake equation holds. Again, checking the other snake equation is
analogous.
}

\finishSolutionChapter
%======== Section =========%
\section[Solutions for Chapter 5]{Solutions for \cref{chap.SFGs}.}

\sol{exc.finset_as_prop}{
In \cref{ex.FinSet_Prop} we said that the identities, symmetries, and
composition rule in $\Cat{FinSet}$ ``are obvious.'' In math lingo, this just
means ``we trust that the reader can figure them out, if she spends the time tracking
down the definitions and fitting them together.''
\begin{enumerate}
	\item Draw a morphism $f\colon 3\to 2$ and a morphism $g\colon 2\to 4$ in $\finset$.
	\item Draw $f+g$.
	\item What is the composition rule for morphisms $f\colon m\to n$ and $g\colon n\to p$ in $\Cat{FinSet}$?
	\item What are the identities in $\Cat{FinSet}$? Draw some.
	\item Choose $m,n \in \NN$, and draw the symmetric map $\sigma_{m,n}$
	in $\Cat{FinSet}$?
\end{enumerate}
}{
\begin{enumerate}
	\item Below we draw a morphism $f\colon 3\to 2$ and a morphism $g\colon 2\to 4$ in $\finset$:
	\[
	% [inline block 61: 4 envs, 2736 chars -> data_tex | \begin{tikzpicture} 		\boxofbullets{3}{(0,0)}{B1};...]

	\]
\end{enumerate}
}

\sol{exc.posetal_prop}{
  A posetal prop is a prop that is also a poset. That is, a posetal prop is a
  symmetric monoidal preorder of the form $(\NN,\preceq)$, for some poset relation
  $\preceq$ on $\NN$, where the monoidal product on objects is addition. We've
  spent a lot of time discussing order structures on the natural numbers. Give
  three examples of a posetal prop.
}{
We need to give examples posetal props, i.e.\ each will be a poset whose set of objects is $\nn$, whose order is denoted $m\preceq n$, and with the property that whenever $m_1\preceq n_1$ and $m_2\preceq n_2$ hold then $m_1+m_2\preceq n_1+n_2$ does too.

The question only asks for three, but we will additionally give a quasi-example and a non-example. 
\begin{enumerate}
	\item Take $\preceq$ to be the discrete order: $m\preceq n$ iff $m=n$.
	\item Take $\preceq$ to be the usual order, $m\preceq n$ iff there exists $d\in\nn$ with $d+m=n$.
	\item Take $\preceq$ to be the reverse of the usual order, $m\preceq n$ iff there exists $d\in\nn$ with $m=n+d$.
	\item Take $\preceq$ to be the co-discrete order $m\preceq n$ for all $m,n$. Some may object that this is a preorder, not a poset, so we call it a quasi-example.
	\item (Non-example.) Take $\preceq$ to be the division order, $m\preceq n$ iff there exists $q\in\nn$ with $m*q=d$. This is a perfectly good poset, but it does not satisfy the monotonicity property: we have $2\preceq 4$ and $3\preceq 3$ but not $5\preceq^?7$.
\end{enumerate}
}

\sol{exc.prop_practice}{
Choose one of \cref{ex.function_bijection,ex.corelation,ex.relation} and explicitly provide the five aspects of props discussed below \cref{def.prop}.
}
{
\begin{description}
	\item[\cref{ex.function_bijection}:] The prop $\Cat{Bij}$ has
	\begin{enumerate}
		\item $\Cat{Bij}(m,n)\coloneqq\{f\colon\ord{m}\to\ord{n}\mid f\text{ is a bijection}\}$. Note that $\Cat{Bij}(m,n)=\varnothing$ if $m\neq n$ and it has $n!$ elements if $m=n$.
		\item The identity map $n\to n$ is the bijection $\ord{n}\to\ord{n}$ sending $i\mapsto i$.
		\item The symmetry map $m+n\to n+m$ is the bijection $\sigma_{m,n}\colon\ord{m+n}\to\ord{n+m}$ given by
		\[
  		\sigma_{m,n}(i)\coloneqq
  		\begin{cases}
  			i+n&\tn{ if }i\leq m\\
				i-m&\tn{ if }m+1\leq i
  		\end{cases}
		\]
		\item Composition of bijections $m\to n$ and $n\to p$ is just their composition as functions, which is again a bijection.
		\item Given bijections $f\colon m\to m'$ and $g\colon n\to n'$, their monoidal product $(f+g)\colon (m+n)\to(m'+n')$ is given by
		\[
			(f+g)(i)\coloneqq
			\begin{cases}
				f(i)&\tn{ if }i\leq m\\
				g(i-m)&\tn{ if }m+1\leq i
			\end{cases}
		\]
	\end{enumerate}
	\item[\cref{ex.corelation}:] The prop $\Cat{Corel}$ has
	\begin{enumerate}
		\item $\Cat{Corel}(m,n)$ is the set of equivalence relations on $\ord{m+n}$.
		\item The identity map $n\to n$ is the smallest equivalence relation, which is the smallest reflexive relation, i.e.\ where $i\sim j$ iff $i=j$.
		\item The symmetry map $\sigma_{m,n}$, as an equivalence relation on $\ord{m+n+n+m}$ is ``the obvious thing,'' namely ``equating corresponding $m$'s together and also equating corresponding $n$'s together.'' To be pedantic, $i\sim j$ iff either
		\begin{itemize}
			\item $|i-j|=m+n+n$, or
			\item $m+1\leq i\leq m+n+n$ and $m+1\leq j\leq m+n+n$ and $|i-j|=n$.
		\end{itemize}
		\item Composition of an equivalence relation $\sim$ on $\ord{m+n}$ and an equivalence relation $\dot\sim$ on $\ord{n+p}$ is the equivalence relation $\simeq$ on $\ord{m+p}$ given by $i\simeq k$ iff there exists $j\in\ord{n}$ with $i\sim j$ and $j\dot\sim k$.
		\item Given equivalence relations $\sim$ on $\ord{m+n}$ and $\sim'$ on $\ord{m'+n'}$, we need an equivalence relation $(\sim+\sim')$ on $\ord{m+n+m'+n'}$. We take it to be ``the obvious thing,'' namely ``using $\sim$ on the unprimed stuff and using $\sim'$ on the primed stuff, with no other interaction.'' To be pedantic, $i\sim j$ iff either
		\begin{itemize}
			\item $i\leq m+n$ and $j\leq m+n$ and $i\sim j$, or
			\item $m+n+1\leq i$ and $m+n+1\leq j$ and $i\sim'j$.
		\end{itemize}
	\end{enumerate}
	\item[\cref{ex.relation}:] The prop $\Cat{Rel}$ has
	\begin{enumerate}
		\item $\Cat{Rel}(m,n)$ is the set of relations on the set $\ord{m}\times\ord{n}$, i.e.\ the set of subsets of $\ord{m}\times\ord{n}$, i.e.\ its powerset.
		\item The identity map $n\to n$ is the subset $\{(i,j)\in\ord{n}\times\ord{n}\mid i=j\}$.
		\item The symmetry map $m+n\to n+m$ is the subset of pairs $(i,j)\in(\ord{m+n})\times(\ord{n+m})$ such that either
		\begin{itemize}
			\item $i\leq m$ and $m+1\leq j$ and $i+m=j$, or
			\item $m+1\leq i$ and $j\leq m$ and $j+m=i$.
		\end{itemize}
		\item Composition of relations is as in \cref{ex.relation}.
		\item Given a relation $R\ss\ord{m}\times\ord{n}$ and a relation $R'\ss\ord{m'}\times\ord{n'}$, we need a relation $(R+R')\ss\ord{m+m'}\times\ord{n+n'}$. As stated in the example (footnote), this can be given by a universal property: The monoidal product $R_1+R_2$ of relations $R_1\ss \ord{m_1}\times \ord{n_1}$ and $R_2\ss \ord{m_2}\times \ord{n_2}$ is given by $R_1\sqcup R_2\ss(\ord{m_1}\times \ord{n_1})\sqcup(\ord{m_2}\times \ord{n_2})\ss(\ord{m_1}\sqcup \ord{m_2})\times(\ord{n_1}\sqcup \ord{n_2})$.
	\end{enumerate}
\end{description}
}

\sol{exc.port_graph_comp}{
Describe how port graph composition looks, with respect to the visual
representation of \cref{ex.a_port_graph}, and give a nontrivial example.
}{
Composition of an $(m,n)$-port graph $G$ and an $(n,p)$-port graph $H$ looks visually like sticking them end to end, connecting the wires in order, removing the two outer boxes, and adding a new outer box.

For example, suppose we want to compose the following in the order shown:
\[
% [inline block 62: 4 envs, 5454 chars -> data_tex | \begin{tikzpicture}[oriented WD, bbx=1.2cm, bb min width=.5cm, bb port length=2pt, bb port sep=.75, baseline=(X2.north)]...]

\]
}

\sol{exc.free_preorder_check_1}{
Let $P$ be a set, let $R\ss P \times P$ a relation, let $(P,\leq_P)$ be the preorder
obtained by taking the reflexive, transitive closure of $R$, and let $(Q,\leq_Q)$ be an arbitrary preorder. Finally, let $f\colon P\to Q$ be a function, not assumed monotone.
\begin{enumerate}
	\item Suppose that for every $x,y\in P$, if $R(x,y)$ then $f(x)\leq f(y)$. Show that $f$ defines a monotone map $f\colon (P,\leq_P)\to (Q,\leq_Q)$.
	\item Suppose that $f$ defines a monotone map $f\colon (P,\leq_P)\to (Q,\leq_Q)$. Show that for every $x,y\in P$, if $R(x,y)$ then $f(x)\leq_Q f(y)$.
\end{enumerate}
We call this the \emph{universal property} of the free preorder.\index{universal
property}
}{
We have a relation $R\ss P\times P$ which generates a preorder $\leq_P$ on $P$, we have an arbitrary preorder $(Q,\leq_Q)$ and a function $f\colon P\to Q$, not necessarily monotonic.
\begin{enumerate}
	\item Assume that for every $x,y\in P$, if $R(x,y)$ then $f(x)\leq f(y)$; we want to show that $f$ is monotone, i.e.\ that for every $x\leq_Py$ we have $f(x)\leq_Qf(y)$. By definition of $P$ being the reflexive, transitive closure of $R$, we have $x\leq_Py$ iff there exists $n\in\nn$ and $x_0,\ldots,x_n$ in $P$ with $x_0=x$ and $x_n=y$ and $R(x_i,x_{i+1})$ for each $0\leq i\leq n-1$. (The case $n=0$ handles reflexivity.) But then by assumption, $R(x_i,x_{i+1})$ implies $f(x_i)\leq_Q f(x_{i+1})$ for each $i$. By induction on $i$ we show that $f(x_0)\leq_Q f(x_i)$ for all $0\leq i\leq n$, at which point we are done.
	\item Suppose now that $f$ is monotone, and take $x,y\in P$ for which $R(x,y)$ holds. Then $x\leq_Py$ because $\leq_P$ is the smallest preorder relation containing $R$. (Another way to see this based on the above description is with $n=1$, $x_0=x$, and $x_n=y$, which we said implies $x\leq_Py$.) Since $f$ is monotone, we indeed have $f(x)\leq_Qf(y)$.
\end{enumerate}
}

\sol{exc.free_preorder_check_2}{
Let $P$, $Q$, $R$, etc.\ be as in \cref{exc.free_preorder_check_1}. We want to see that the universal property is really about maps out of---and not maps in to---the reflexive, transitive closure $(P,\leq)$. So let $g\colon Q\to P$ be a function.
\begin{enumerate}
	\item Suppose that for every $a,b\in Q$, if $a\leq b$ then $(g(a),g(b))\in R$. Is it automatically true that $g$ defines a monotone map $g\colon(Q,\leq_Q)\to(P,\leq_P)$?
	\item Suppose that $g$ defines a monotone map $g\colon(Q,\leq_Q)\to(P,\leq_P)$. Is it automatically true that for every $a,b\in Q$, if $a\leq b$ then $(g(a),g(b))\in R$?
\end{enumerate}
The lesson is that maps between structured objects are defined to preserve
structure. This means the domain of a map must be more constrained than the
codomain. Thus having the fewest additional constraints coincides with having
the most maps out---every function that respects our generating constraints
should define a map.%
\footnote{A higher-level justification understands freeness
as a left adjoint---of an adjunction often called the syntax/semantics
adjunction---but we will not discuss that here.}
}{
Suppose that $P$, $Q$, and $R$ are as in \cref{exc.free_preorder_check_1} and we have a function $g\colon Q\to P$.
\begin{enumerate}
	\item If $R(g(a),g(b))$ holds for all $a\leq_Qb$ then $g$ is monotone, because $R(x,y)$ implies $x\leq_Py$.
	\item It is possible for $g\colon(Q,\leq_Q)\to(P,\leq_P)$ to be monotone and yet have some $a,b\in Q$ with $a\leq_Qb$ and $(g(a),g(b))\not\in R$. Indeed, take $Q\coloneqq\{1\}$ to be the free preorder on one element, and take $P\coloneqq\{1\}$ with $R=\varnothing$. Then the unique function $g\colon Q\to P$ is monotone (because $\leq_P$ is reflexive even though $R$ is empty), and yet $(g(1),g(1))\not\in R$.
\end{enumerate}
}

\sol{exc.free_cat_is_free}{
Let $G=(V,A,s,t)$ be a graph, and let $\cat{G}\coloneqq\Cat{Free}(G)$ be the free category on $G$. Let $\cat{C}$ be another category whose set of morphisms is denoted $\Mor(\cat{C})$. 
\begin{enumerate}
	\item Someone tells you that there are ``domain and codomain'' functions $\dom,\cod\colon\Mor(\cat{C})\to\Ob(\cat{C})$; interpret this statement.
	\item Show that the set of functors $\cat{G} \to
\cat{C}$ are in one-to-one correspondence with the set of pairs of functions
$(f,g)$, where $f\colon V \to \Ob(\cat{C})$ and $g\colon A\to\Mor(\cat{C})$ such that $\dom(g(a))=f(s(a))$ and $\cod(g(a))=f(t(a))$ for all $a\in A$.
	\item Is $(\Mor(\cat{C}),\Ob(\cat{C}),\dom,\cod)$ a graph? If so, see if you can use the word ``adjunction'' in a sentence that describes the statement in part 2. If not, explain why not.

\end{enumerate}
}{
Let $G=(V,A,s,t)$ be a graph, let $\cat{G}$ be the free category on $G$, and let $\cat{C}$ be another category, whose set of morphisms is denoted $\Mor(\cat{C})$.
\begin{enumerate}
	\item To give a function $\Mor(\cat{C})\to\Ob(\cat{C})$ means that for every element $\Mor(\cat{C})$ we need to give exactly one element of $\Ob(\cat{C})$. So for $\dom$ we take any $q\in\Mor(\cat{C})$, view it as a morphism $q\colon y\to z$, and send it to its domain $y$. Similarly for $\cod$: we put $\cod(q)\coloneqq z$.
	\item Suppose first that we are given a functor $F\colon\cat{G}\to\cat{C}$. On objects we have a function $\Ob(\cat{G})\to\Ob(\cat{C})$, and this defines $f$ since $\Ob(\cat{G})=V$. On morphisms, first note that the arrows of graph $G$ are exactly the length=1 paths in $G$, whereas $\Mor(\cat{G})$ is the set of all paths in $G$, so we have an inclusion $A\ss\Mor(\cat{G})$. The functor $F$ provides a function $\Mor(\cat{G})\to\Mor(\cat{C})$, which we can restrict to $A$ to obtain $g\colon A\to\Mor(\cat{C})$. All functors satisfy $\dom(F(r))=F(\dom(r))$ and $\cod(F(r))=F(\cod(r))$ for any $r\colon w\to x$. In particular when $r\in A$ is an arrow we have $\dom(r)=s(r)$ and $\cod(r)=t(r)$. Thus we have found $(f,g)$ with the required properties.\\	
	
	Suppose second that we are given a pair of functions $(f,g)$ where $f\colon V \to \Ob(\cat{C})$ and $g\colon A\to\Mor(\cat{C})$ such that $\dom(g(a))=f(s(a))$ and $\cod(g(a))=f(t(a))$ for all $a\in A$. Define $F\colon\cat{G}\to\cat{C}$ on objects by $f$. An arbitrary morphism in $\cat{G}$ is a path $p\coloneqq(v_0,a_1,a_2,\ldots,a_n)$ in $G$, where $v_0\in V$, $a_i\in A$, $v_0=s(a_1)$, and $t(a_i)=s(a_{i+1})$ for all $1\leq i\leq n-1$. Then $g(a_i)$ is a morphism in $\cat{C}$ whose domain is $f(v_0)$ and the morphisms $g(a_i)$ and $g(a_{i+1})$ are composable for every $1\leq i\leq n-1$. We then take $F(p)\coloneqq \id_{f(v_0)}\cp g(a_1)\cp\cdots\cp g(a_n)$ to be the composite. It is easy to check that this is indeed a functor (preserves identities and compositions).\\	

	Third, we want to see that the two operations we just gave are mutually inverse. On objects this is straightforward, and on morphisms it is straightforward to see that, given $(f,g)$, if we turn them into a functor $F\colon\cat{G}\to\cat{C}$ and then extract the new pair of functions $(f',g')$, then $f=f'$ and $g=g'$. Finally, given a functor $F\colon\cat{G}\to\cat{C}$, we extract the pair of functions $(f,g)$ as above and then turn them into a new functor $F'\colon\cat{G}\to\cat{C}$. It is clear that $F$ and $F'$ act the same on objects, so what about on morphisms. The formula says that $F'$ acts the same on morphisms of length $1$ in $\cat{G}$ (i.e.\ on the elements of $A$). But an arbitrary morphism in $\cat{G}$ is just a path, i.e.\ a sequence of composable arrows, and so by functoriality, both $F$ and $F'$ must act the same on arbitrary paths.
	\item $(\Mor(\cat{C}),\Ob(\cat{C}),\dom,\cod)$ is a graph; let's denote it $U(\cat{C})\in\Cat{Grph}$. We have functors $\free\colon\Cat{Grph}\leftrightarrows\smcat\cocolon U$, and $\free$ is left adjoint to $U$.
\end{enumerate}
}

\sol{exc.free_monoid}{
Recall that monoids are one-object categories. For any set $A$, there is a graph with one vertex and an arrow from the vertex to itself for each $a\in A$. Thus we can construct a free monoid from just the data of a set $A$.
\begin{enumerate}
	\item What are the elements of the free monoid on the set $A=\{a\}$?
	\item Can you find a well-known monoid that is isomorphic to the free monoid on $\{a\}$?
	\item What are the elements of the free monoid on the set $A=\{a,b\}$?
\end{enumerate}
}{
\begin{enumerate}
	\item The elements of the free monoid on the set $\{a\}$ are:
	\[a^0, a^1, a^2,a^3,\ldots, a^{2019},\ldots\]
	with monoid multiplication $*$ given by the usual natural number addition on the exponents, $a^i*a^j=a^{i+j}$.
	\item This is isomorphic to $\nn$, by sending $a^i\mapsto i$.
	\item The elements of the free monoid on the set $\{a,b\}$ are `words in $a$ and $b$,' each of which we will represent as a list whose entries are either $a$ or $b$. Here are some:
	\[[\;],\quad [a],\quad [b],\quad [a,a],\quad [a,b],\quad \ldots,\quad [b,a,b,b,a,b,a,a,a,a],\quad\ldots\]
\end{enumerate}
}

\sol{exc.free_prop_port_graph}{
Consider the following prop signature:
\[
  G\coloneqq\{\rho_{m,n}\colon m \to n \mid m, n \in \nn\},\qquad s(\rho_{m,n})\coloneqq m,\quad t(\rho_{m,n})\coloneqq n,
\]
i.e.\ having one generating morphism for each $(m,n)\in\nn^2$. Show that $\free(G)$ is the prop of port graphs.
}{
We have two props: the prop of port graphs and the free prop $\free(G,s,t)$ where
\[
  G\coloneqq\{\rho_{m,n}\colon m \to n \mid m, n \in \nn\},\qquad s(\rho_{m,n})\coloneqq m,\quad t(\rho_{m,n})\coloneqq n;
\]
we want to show they are the same prop. As categories they have the same set of objects (in both cases, $\nn$), so we need to show that for every $m,n\in\nn$, they have the same set of morphisms (and that their composition formulas and monoidal product formulas agree).

By \cref{def.free_prop}, a morphism $m\to n$ in $\free(G)$ is a $G$-labeled port graph, i.e.\ a pair $(\Gamma,\ell)$, where $\Gamma=(V,\pgin,\pgout,\iota)$ is an $(m,n)$-port graph and $\ell\colon V \to G$ is a function, such that the `arities agree.' What does this mean? Recall that every vertex $v\in V$ is drawn as a box with some left-hand ports and some right-hand ports---an arity---and $\ell(v)\in G$ is supposed to have the correct arity; precisely, $s(\ell(v))=\pgin(v)$ and $t(\ell(v)) = \pgout(v)$. But $G$ was chosen so that it has exactly one element with any given arity, so the function $\ell$ has only one choice, and thus contributes nothing: it neither increases nor decreases the freedom. In other words, a morphism in our particular $\free(G)$ can be identified with an $(m,n)$ port graph $\Gamma$, as desired.

Again by definition \cref{def.free_prop}, the `composition and the monoidal structure are just those for port graphs
  $\Cat{PG}$ (see \cref{eqn.PG_prop}); the labelings (the $\ell$'s) are just carried along.' So we are done.
}

\sol{exc.free_prop_pic}{
Consider again the free prop on generators $G=\{f\colon 1 \to 1, g\colon 2 \to 2, h\colon 2 \to
1\}$. Draw a picture of $(f+\id_1+\id_1)\cp(\sigma+\id_1)\cp(\id_1+h)\cp \sigma\cp g$.
}{
Here is a picture of $(f+\id_1+\id_1)\cp(\sigma+\id_1)\cp(\id_1+h)\cp \sigma\cp g$, in the free prop on generators $G=\{f\colon 1 \to 1, g\colon 2 \to 2, h\colon 2 \to
1\}$:
\[
\begin{tikzpicture}[oriented WD, bb port length=0pt, bbx=.7cm, bb port sep=1pt]
	\node[bb={1}{1}] (f) {$f$};
	\node[bb={2}{1}, below right=1 and 1.5 of f] (h) {$h$};
	\node[bb={2}{2}, above right=0 and 1.5 of h] (g) {$g$};
	\node[bb={0}{0}, fit=(f) (g) (h)] (outer) {};
	\coordinate (outer_in1) at (outer.west|-f_in1);
	\coordinate (outer_in2) at (outer.west|-h_in1);
	\coordinate (outer_in3) at (outer.west|-h_in2);
	\coordinate (outer_out1) at (outer.east|-g_out1);
	\coordinate (outer_out2) at (outer.east|-g_out2);
	\coordinate (A1) at ($(f_out1)+(0.5,0)$);
	\coordinate (A2) at (A1|-outer_in2);
	\coordinate (A3) at (A1|-outer_in3);
	\coordinate (B1) at ($(A1)+(0.5,0)$);
	\coordinate (B2) at (B1|-A2);
	\coordinate (B3) at (B1|-A3);
	\coordinate (C2) at ($(h_out1)+(0.5,0)$);
	\coordinate (C1) at (C2|-A1);
	\coordinate (D1) at ($(g_in1)-(0.5,0)$);
	\coordinate (D2) at ($(g_in2)-(0.5,0)$);
	\draw (outer_in1) -- (f_in1);
	\draw (f_out1) -- (A1) -- (B2) -- (h_in1);
	\draw (outer_in2) -- (A2) -- (B1) -- (C1) -- (D2) -- (g_in2);
	\draw (outer_in3) -- (A3) -- (B3) -- (h_in2);
	\draw (h_out1) -- (C2) -- (D1) -- (g_in1);
	\draw (g_out1) -- (outer_out1);
	\draw (g_out2) -- (outer_out2);
\end{tikzpicture}
\]
}

\sol{exc.same_free_prop}{
Is it the case that the free prop on generators $(G,s,t)$, defined in \cref{def.free_prop}, is the same thing as the prop presented by $(G,s,t,\varnothing)$, having no relations, as defined in \cref{rdef.presentation_prop}? Or is there a subtle difference somehow?
}{
The free prop on generators $(G,s,t)$, defined in \cref{def.free_prop}, is---for all intents and purposes---the same thing as the prop presented by $(G,s,t,\varnothing)$, having no relations. The only possible ``subtle difference'' we might have to admit is if someone said that a set $S$ is ``subtly different'' than its quotient by the trivial equivalence relation. In the latter, the elements are the singleton subsets of $S$. So for example the quotient of $S=\{1,2,3\}$ by the trivial equivalence relation is the set $\{\{1\}, \{2\}, \{3\}\}$. It is subtly different than $S$, but the two are naturally isomorphic, and category-theoretically, the difference will never make a difference.
}

\sol{exc.rigs_mats}{
\begin{enumerate}
	\item We said in \cref{ex.mat_rig} that for any rig $R$, the set $\Set{Mat}_n(R)$ forms a rig. What is its multiplicative identity $1\in\Set{Mat}_n(R)$?
	\item We also said that $\Set{Mat}_n(R)$ is generally not commutative. Pick an $n$ and show that that $\Set{Mat}_n(\NN)$ is not commutative, where $\NN$ is as in \cref{ex.rig_nat}.
\end{enumerate}
}{
\begin{enumerate}
	\item If $(R,0,+,1,*)$ is a rig, then the multiplicative identity $1\in\Set{Mat}_n(R)$ is the usual $n$-by-$n$ identity matrix: 1's on the diagonal and 0's everywhere else (where by `1' and `0', we mean those elements of $R$). So for $n=4$ it is:
  \[
  \left(
  % [inline block 63: 27 envs, 10137 chars -> data_tex | \begin{tabular}{cccc}   	1&0&0&0\\...]

\end{aligned}
\]
\end{enumerate}
}

\sol{exc.general_case_S_full}{
Write down a detailed proof of \cref{prop.Sfull}. Suppose $M$ is an $m \times
n$-matrix. Follow the idea of the $(2\times 2)$-case in \cref{eqn.two_by_two},
and construct the signal flow graph $g$---having $m$ inputs and $n$ outputs---as
the composite of four layers, respectively comprising (i) copy maps, (ii)
scalars, (iii) swaps and identities, (iv) addition maps.}
{
\begin{itemize}
  \item For the first layer $g_1$, take the monoidal product of $m$ copies of $c_n$,
    \[
      g_1\coloneq c_n +\dots+ c_n\colon m \to (m \times n),
    \]
    where $c_n$ is the signal flow diagram that makes $n$ copies of a single input:
    \[
      c_n\coloneq
      \comult{1em}\cp (1+\comult{1em})\cp (1+1+\comult{1em})\cp \dots\cp (1+\dots+1+\comult{1em})\colon
      1 \to n
    \]
  \item Next, define
    \begin{align*}
      g_2\coloneq &\quad s_{M(1,1)}+\cdots+s_{M(1,n)} \\
      & +s_{M(2,1)}+\cdots+s_{M(2,n)} \\
      &+\cdots \\
      &+s_{M(m,1)}+\cdots+s_{M(m,n)}\colon (m\times n) \to (m\times n),
    \end{align*}
    where $s_a\colon 1 \to 1$ is the signal flow
    graph generator ``scalar multiplication by $a$.'' This layer amplifies each copy of the input signal by the
    relevant rig element.
  \item The third layer rearranges wires. We will not write this down
    explicitly, but simply say it is the signal flow graph $g_3\colon m \times
    n \to m \times n$, that is the
    composite and monoidal product of swap and identity maps, such that the
    $(i-1)m+j$th input is sent to the $(j-1)n+i$th output, for all $1 \le i \le n$
    and $1 \le j \le m$.
  \item Finally, the fourth layer is similar to the first, but instead adds the
    amplified input signals. We define 
    \[
      g_4\coloneq a_m+\dots+a_m \colon (m \times n)\to n,
    \]
    where $a_m$ is the signal flow graph that adds $m$ inputs to produce a single output:
    \[
      a_m\coloneq
      (1+\dots+1+\add{1em})\cp \cdots\cp(1+1+\add{1em})\cp(1+\add{1em})\cp\add{1em}\colon
      m \to 1
    \]
\end{itemize}
Using \cref{prop.Simages}, it is a straightforward but tedious calculation to show that $g=g_1\cp g_2\cp g_3\cp g_4\colon m \to n$ has the property that $S(g)=M$.
}

\sol{exc.rep_mats}{
  \begin{enumerate}
  	\item For each matrix in \cref{ex.drawsfgs}, draw another signal flow graph that
  represents that matrix.
  	\item Using the above equations and the prop axioms, prove
  that the two signal flow graphs represent the same matrix.
  
  \end{enumerate}
}{
\begin{enumerate}
\item The matrices in \cref{ex.drawsfgs} may also be drawn as the following
signal flow graphs:
\begin{enumerate}
\item
  \[
      % [inline block 64: 18 envs, 16007 chars -> data_tex | \begin{pmatrix} 	0 \\ ...]

\end{aligned}
  \end{equation}
  \begin{enumerate}
  	\item Let $R=(\NN,0,+,1,*)$. By examining the presentation of $\mat(R)$ in \cref{thm.presentation_mat}, and without computing the
  matrices that they represent, conjecture whether they represent the same matrix. How could you prove your conjecture?
  	\item Now suppose the rig is $R=\NN/3$; if you do not know what this means, just replace all 3's with 0's in the right-hand diagram of \cref{eqn.exc_two_sfgs}. Find what you would call a minimal representation of this diagram, using the presentation in \cref{thm.presentation_mat}.
\end{enumerate}
}{
\begin{enumerate}
	\item The signal flow graphs 
  \[
    \begin{aligned}
      % [inline block 65: 5 envs, 4962 chars -> data_tex | \begin{tikzpicture}[scale=.7]   	\begin{pgfonlayer}{nodelayer}...]

\]
\end{enumerate}
}

\sol{exc.two_monoids_on_reals}{
Consider the set $\RR$ of real numbers.
	\begin{enumerate}
		\item Show that if $\mu\colon\RR\times\RR\to\RR$ is defined by $\mu(a,b)=a*b$ and if $\eta\in\RR$ is defined to be $\eta=1$, then $(\RR,*,1)$ satisfies all three conditions of \cref{def.monoid_object}.
		\item Show that if $\mu\colon\RR\times\RR\to\RR$ is defined by $\mu(a,b)=a+b$ and if $\eta\in\RR$ is defined to be $\eta=0$, then $(\RR,+,0)$ satisfies all three conditions of \cref{def.monoid_object}.
\end{enumerate}
}{
The three conditions of \cref{def.monoid_object} are
\begin{enumerate}[label=(\alph*)]
  \item $(\mu \otimes \id)\cp\mu = (\id \otimes \mu)\cp\mu$,
  \item $(\eta \otimes \id)\cp\mu = \id = (\id \otimes \eta)\cp\mu$, and
  \item $\sigma_{M,M}\cp\mu = \mu$.
\end{enumerate}
where $\sigma_{M,M}$ is the swap map on $M$ in $\cat{C}$.
\begin{enumerate}
	\item Suppose $\mu\colon\RR\times\RR\to\RR$ is defined by $\mu(a,b)=a*b$ and  $\eta\in\RR$ is defined to be $\eta=1$. The conditions, written diagrammatically, say that starting in the upper left of each diagram below, the result in the lower right is the same regardless of which path you take:
	\[
	\begin{tikzcd}[ampersand replacement=\&, column sep=30pt]
		(a,b,c)\ar[r, |->, "(\mu\otimes\id)"]\ar[d, |->, "(\id\otimes\mu)"']\&
		(a*b, c)\ar[d, |->, "\mu"]\\
		(a,b*c)\ar[r, |->, "\mu"']\&
		a*b*c
	\end{tikzcd}
	\qquad
	\begin{tikzcd}[ampersand replacement=\&, column sep=30pt]
		a\ar[r, |->, "(\eta\otimes\id)"]\ar[d, |->, "(\id\otimes\eta)"']\ar[dr, |->, "\id_a" description]\&
		(1, a)\ar[d, |->, "\mu"]\\
		(a, 1)\ar[r, |->, "\mu"']\&
		a
	\end{tikzcd}	
	\qquad
	\begin{tikzcd}[ampersand replacement=\&]
		(a,b)\ar[r, |->, "\sigma"]\ar[dr, |->, "\mu"']\&
		(b, a)\ar[d, |->, "\mu"]\\\&
		a*b
	\end{tikzcd}	
	\]
	and this is true for $(\rr,*,1)$.
	\item The same reasoning works for $(\rr,+,0)$, shown below:
	\[
	\begin{tikzcd}[ampersand replacement=\&, column sep=30pt]
		(a,b,c)\ar[r, |->, "(\mu\otimes\id)"]\ar[d, |->, "(\id\otimes\mu)"']\&
		(a{+}b, c)\ar[d, |->, "\mu"]\\
		(a,b{+}c)\ar[r, |->, "\mu"']\&
		a{+}b{+}c
	\end{tikzcd}
	\qquad
	\begin{tikzcd}[ampersand replacement=\&, column sep=30pt]
		a\ar[r, |->, "(\eta\otimes\id)"]\ar[d, |->, "(\id\otimes\eta)"']\ar[dr, |->, "\id_a" description]\&
		(0, a)\ar[d, |->, "\mu"]\\
		(a, 0)\ar[r, |->, "\mu"']\&
		a
	\end{tikzcd}	
	\qquad
	\begin{tikzcd}[ampersand replacement=\&]
		(a,b)\ar[r, |->, "\sigma"]\ar[dr, |->, "\mu"']\&
		(b, a)\ar[d, |->, "\mu"]\\\&
		a{+}b
	\end{tikzcd}	
	\]
	\end{enumerate}
}

\sol{exc.check_monoid_obj}{
Recall that in $\mat(R)$, the monoidal unit is $0$ and the monoidal product is $+$, because it is a prop. Recall also that in (the usual monoidal structure on) $\smset$, the monoidal unit is $\{1\}$, a set with one element, and the monoidal product is $\times$ (see \cref{ex.set_as_mon_cat}).

\begin{enumerate}
	\item Check that the functor $U\colon\mat(R)\to\smset$, defined above, preserves the monoidal unit and the monoidal product.
	\item Find a function $p\colon R^n\times R^n\to R^n$ such that $p(0,v)=v$ and $p(v,0)=v$ for any $v\in R^n$.
	\item Use your $p$ to show that if $(M,\mu,\eta)$ is a monoid object in $\mat(R)$ then $(U(M),U(\mu),U(\eta))$ is a monoid object in $\smset$. (This works for any monoidal functor---which we will define in \cref{roughdef.monoidal_functor}---not just for $U$ in particular.)
	\item In \cref{ex.diagrammatic_monoid_obj}, we said that the triple
  $(1,\add{1em},\zero{1em})$ is a commutative monoid object in the prop $\mat(R)$. If $R=\RR$ is the rig of real numbers, this means that we have a monoid structure on the set $\RR$. But in \cref{exc.two_monoids_on_reals} we gave two such monoid structures. Which one is it?
\end{enumerate}
}{
The functor $U\colon\mat(R)\to\smset$ is given on objects by sending $n$ to the set $R^n$, and on morphisms by matrix-vector multiplication. Here $R^n$ means the set of $n$-tuples or $n$-dimensional vectors in $R$. In particular, $R^0=\{()\}$ consists of a single vector of dimension 0.
\begin{enumerate}
	\item $U$ preserves the monoidal unit because $0$ is the monoidal unit of any prop ($\mat(R)$ is a prop), $\{1\}$ is the monoidal unit of $\smset$, and $R^0$ is canonically isomorphic to $\{1\}$. $U$ also preserves the monoidal product because there is a canonical isomorphism $R^m\times R^n\cong R^{m+n}$.
	\item A monoid object in $\mat(R)$ is a tuple $(m,\mu,\eta)$ where $m\in\nn$, $\mu\colon m+m\to m$, and $\eta\colon 0\to m$ satisfy the properties $\mu(\eta,x)=x=\mu(x,\eta)$ and $\mu(x,\mu(y,z))=\mu(\mu(x,y),z)$. Note that there is only one morphism $0\to m$ in $\mat(R)$ for any $m$. It is not hard to show that for any $m\in\nn$ there is only one monoid structure. For example, when $m=2$, $\mu$ must be the following matrix
	\[
	\mu\coloneqq
  	\left(
  	\begin{array}{cc}
  		1&0\\
  		0&1\\
  		1&0\\
  		0&1
  	\end{array}
  	\right)
	\]	
	Anyway, for any monoid $(m,\mu,\eta)$, the morphism $U(\eta)\colon R^0\to R^m$ is given by $U(\eta)(1)\coloneqq (0,\ldots,0)$, and the morphism $U(\mu)\colon R^m\times R^m\to R^m$ is given by
	\[
		U(\mu)((a_1,\ldots,a_m),(b_1,\ldots,b_m))\coloneqq (a_1+b_1,\ldots,a_m+b_m).
	\]
	These give $R^m$ the structure of a monoid.
	\item The triple $(1,\add{1em},\zero{1em})$ corresponds to the additive monoid structure on $\rr$, e.g.\ with $(5,3)\mapsto 8$.
\end{enumerate}
}

\sol{exc.understand_reversed_icons}{
\begin{enumerate}
	\item What is the behavior $\beh(\coadd{1.3em})$ of the reversed addition icon $\coadd{1.3em}\colon 1 \to 2$?
	\item What is the behavior $\beh(\mult{1.3em})$ of the reversed copy icon, $\mult{1.3em}\colon 2\to 1$?
\end{enumerate}
}{
\begin{enumerate}
	\item The behavior $\beh(\coadd{1.3em})$ of the reversed addition icon $\coadd{1.3em}\colon 1 \to 2$ is the relation $\{(x,y,z)\in R^3\mid x=y+z\}$.
	\item The behavior $\beh(\mult{1.3em})$ of the reversed copy icon, $\mult{1.3em}\colon 2\to 1$ is the relation $\{(x,y,z)\in R^3\mid x=y=z\}$.
\end{enumerate}
}

\sol{exc.monoidal_prod_+}{
In \cref{def.rel_prop} we went quickly through monoidal products $+$ in the prop $\rel_R$. If $B\ss R^m\times R^n$ and $C\ss R^p\times R^q$ are morphisms in $\rel_R$, write down $B+C$ in set-notation.}
{
If $B\ss R^m\times R^n$ and $C\ss R^p\times R^q$ are morphisms in $\rel_R$, then take $B+C\ss R^{m+p}\times R^{n+q}$ to be the set
\[
	B+C\coloneqq\{(w,y,x,z)\in R^{m+p}\times R^{n+q}\mid (w,x)\in B\text{ and }(y,z)\in C\}.
\]
}

\sol{exc.SFG_composite_behavior}{
Let $g\colon m \to n$, $h\colon \ell \to n$ be signal flow graphs. Note that
$h\op\colon n \to \ell$ is a signal flow graph, and we can form the
composite $g\cp h\op$:
\[
  \begin{tikzpicture}[oriented WD, bb port length=0pt, bb port sep=1pt]
    \node[bb={4}{4}, minimum width = 2cm] (X) 
    {$\begin{array}{c} \longrightarrow \\ g \end{array}$};
	\node[bb={4}{4}, right= 1 of X, minimum width = 2cm] (Y)
    {$\begin{array}{c} \longleftarrow \\ h\op \end{array}$};
	\draw ($(X_in1)-(.2,0)$) to (X_in1);
	\draw ($(X_in2)-(.2,0)$) to (X_in2);
	\draw ($(X_in4)-(.2,0)$) to (X_in4);
	\draw (X_out1) to (Y_in1);
	\draw (X_out2) to (Y_in2);
	\draw (X_out4) to (Y_in4);
	\draw (Y_out1) to ($(Y_out1)+(.2,0)$);
	\draw (Y_out2) to ($(Y_out2)+(.2,0)$);
	\draw (Y_out4) to ($(Y_out4)+(.2,0)$);
	\draw[label]
		node at ($.5*(X_out3)+.5*(Y_in3)$) {$\vdots$}
		node[left=3pt of X_in3] {$\vdots$}
		node[right=3pt of Y_out3] {$\vdots$}
	;	
\end{tikzpicture}
\]
Show that the behavior of $g\cp(h\op)$ is equal to 
\[
  \beh(g\cp(h\op))=\{(x,y)\,\mid\, S(g)(x)=S(h)(y)\}\ss R^m\times R^\ell.
\]
}{
The behavior of $g\colon m\to n$ and $h\op\colon n\to\ell$ are respectively
\begin{align*}
	\beh(g)&=\{(x,z)\in R^m\times R^n\,\mid\, S(g)(x)=z\}\\
	\beh(h\op)&=\{(z,y)\in R^n\times R^\ell\,\mid\,z=S(h)(y)\}
\end{align*}
and by \cref{eqn.comp_rule_rels}, the composite $\beh(g\cp (h\op))=\beh(g)\cp\beh(h\op)$ is:
\[
  \{(x,y)\mid\text{ there exists } z\in R^n\text{ such that } S(g)(x)=z\text{ and }z=S(h)(y)\}.
\]
Since $S(g)$ and $S(h)$ are functions, the above immediately reduces to the desired formula:
\[  \beh(g\cp(h\op))=\{(x,y)\,\mid\, S(g)(x)=S(h)(y)\}.\]
}

\sol{exc.sfg_behavior_again}{
Let $g\colon m \to n$, $h\colon m \to p$ be signal flow graphs. Note that
$g\op\colon n \to m$ is a signal flow graph, and we can form the
composite $g\op\cp h$
\[
  \begin{tikzpicture}[oriented WD, bb port length=0pt, bb port sep=1pt]
    \node[bb={4}{4}, minimum width = 2cm] (X) 
    {$\begin{array}{c} \longleftarrow \\ g\op \end{array}$};
	\node[bb={4}{4}, right= 1 of X, minimum width = 2cm] (Y)
    {$\begin{array}{c} \longrightarrow \\ h \end{array}$};
	\draw ($(X_in1)-(.2,0)$) to (X_in1);
	\draw ($(X_in2)-(.2,0)$) to (X_in2);
	\draw ($(X_in4)-(.2,0)$) to (X_in4);
	\draw (X_out1) to (Y_in1);
	\draw (X_out2) to (Y_in2);
	\draw (X_out4) to (Y_in4);
	\draw (Y_out1) to ($(Y_out1)+(.5,0)$);
	\draw (Y_out2) to ($(Y_out2)+(.5,0)$);
	\draw (Y_out4) to ($(Y_out4)+(.5,0)$);
	\draw[label]
		node at ($.5*(X_out3)+.5*(Y_in3)$) {$\scriptstyle\vdots$}
		node[above left=-4pt and 3pt of X_in3] {$\vdots$}
		node[above right=-4pt and 3pt of Y_out3] {$\vdots$}
	;	
\end{tikzpicture}
\]
Show that the behavior of $g\op\cp h$ is equal to 
\[
  \{(S(g)(x),S(h)(x))\,\mid\, x \in R^m\}.
\]
}{
The behavior of $g\op\colon n\to m$ and $h\colon m\to p$ are respectively
\begin{align*}
	\beh(g\op)&=\{(y,x)\in R^n\times R^m\,\mid\, y=S(g)(x)\}\\
	\beh(h)&=\{(x,z)\in R^m\times R^p\,\mid\,S(h)(x)=z\}
\end{align*}
and by \cref{eqn.comp_rule_rels}, the composite $\beh((g\op)\cp h)=\beh(g\op)\cp\beh(h)$ is:
\[
  \{(y,z)\mid\text{ there exists } x\in R^m\text{ such that } y=S(g)(x)\text{ and }S(h)(x)=z\}.
\]
This immediately reduces to the desired formula:
\[
  \beh((g\op)\cp h)=\{(S(g)(x),S(h)(x))\,\mid\, x \in R^m\}.
\]
}

\sol{exc.linear_relations}{
Here is an exercise for those that know linear algebra. Let $R$ be a field, and $g\colon
m \to n$ a signal flow graph and let $S(g)\in\mat(R)$ be the associated $(m\times n)$-matrix (see \cref{thm.sfg_to_mat}).
\begin{enumerate}
  \item Show that the composite of $g$ with $0$-reverses, shown here
  \[
    \begin{tikzpicture}[oriented WD, bb port length=0pt, bb port sep=1pt]
      \node[bb={4}{4}, minimum width = 2cm] (X) 
      {$\begin{array}{c} \longrightarrow \\ g \end{array}$};
  	\draw ($(X_in1)-(.5,0)$) to (X_in1);
  	\draw ($(X_in2)-(.5,0)$) to (X_in2);
  	\draw ($(X_in4)-(.5,0)$) to (X_in4);
  	\draw (X_out1) to ($(X_out1)+(.5,0)$);
  	\draw (X_out2) to ($(X_out2)+(.5,0)$);
  	\draw (X_out4) to ($(X_out4)+(.5,0)$);
  	\draw[label]
  		node[above left=-4pt and 3pt of X_in3] {$\vdots$}
  		node[above right=-4pt and 3pt of X_out3] {$\vdots$}
  		node[wdot] at ($(X_out1)+(.5,0)$) {}
  		node[wdot] at ($(X_out2)+(.5,0)$) {}
  		node[wdot] at ($(X_out4)+(.5,0)$) {}
  	;	
  \end{tikzpicture}
  \]
  is equal to the kernel of the matrix $S(g)$.
  \item Show that the composite of discard-reverses with $g$, shown here 
  \[
    \begin{tikzpicture}[oriented WD, bb port length=0pt, bb port sep=1pt]
      \node[bb={4}{4}, minimum width = 2cm] (X) 
      {$\begin{array}{c} \longrightarrow \\ g \end{array}$};
  	\draw ($(X_in1)-(.5,0)$) to (X_in1);
  	\draw ($(X_in2)-(.5,0)$) to (X_in2);
  	\draw ($(X_in4)-(.5,0)$) to (X_in4);
  	\draw (X_out1) to ($(X_out1)+(.2,0)$);
  	\draw (X_out2) to ($(X_out2)+(.2,0)$);
  	\draw (X_out4) to ($(X_out4)+(.2,0)$);
  	\draw[label]
  		node[above left=-4pt and 3pt of X_in3] {$\vdots$}
  		node[above left=-4pt and 3pt of X_out3] {$\vdots$}
  		node[bdot] at ($(X_in1)-(.5,0)$) {}
  		node[bdot] at ($(X_in2)-(.5,0)$) {}
  		node[bdot] at ($(X_in4)-(.5,0)$) {}
  	;	
  \end{tikzpicture}
  \]
  is equal to the image of the matrix $S(g)$.
  \item Show that for any signal flow graph $g$, the subset
  $\beh(g)\subseteq R^m \times R^n$ is a linear subspace. That is, if $b_1,b_2\in \beh(g)$ then so are $b_1+b_2$ and $r*b_1$, for any $r\in R$.
\end{enumerate}
}{
\begin{enumerate}
	\item The behavior of the 0-reverse $\cozero{1.5em}$ is the subset $\{y\in R\mid y=0\}$, and its $n$-fold tensor is similarly $\{y\in R^n\mid y=0\}$. Composing this relation with $S(g)\ss R^m\times R^n$ gives $\{x\in R^m\mid S(g)=0\}$, which is the kernel of $S(g)$.
	\item The behavior of the discard-inverse $\unit{1.5em}$ is the subset $\{x\in R\}$, i.e.\ the largest subset of $R$, and similarly its $m$-fold tensor is $R^n\ss R^n$. Composing this relation with $S(g)\ss R^m\times R^n$ gives $\{y\in R^n\mid \text{ there exists }x\in R^m\text{ such that }S(g)(x)=y\}$, which is exactly the image of $S(g)$.
	\item For any $g\colon m\to n$, we first claim that the behavior $\beh(g)=\{(x,y)\mid S(g)(x)=y\}$ is linear, i.e.\ it is closed under addition and scalar multiplication. Indeed, $S(g)$ is multiplication by a matrix, so if $S(g)(x)=y$ then $S(g)(rx)=ry$ and $S(g)(x_1+x_2)=S(g)(x_1)+S(g)(x_2)$. Thus we conclude that $(x,y)\in\beh(g)$ implies $(rx,ry)\in\beh(g)$, so it's closed under scalar multiplication, and $(x_1,y_1),(x_2,y_2)\in\beh(g)$ implies $(x_1+x_2,y_1+y_2)\in\beh(g)$ so it's closed under addition. Similarly, the behavior $\beh(g\op)$ is also linear; the proof is similar.\\
	
	Finally, we need to show that the composite of any two linear relations is linear. Suppose that $B\ss R^m\times R^n$ and $C\ss R^n\times R^p$ are linear. Take $(x_1,z_1),(x_2,z_2)\in B\cp C$ and take $r\in R$. By definition, there exist $y_1,y_2\in R^n$ such that $(x_1,y_1),(x_2,y_2)\in B$ and $(y_1,z_1),(y_2,z_2)\in C$. Since $B$ and $C$ are linear, $(rx_1,ry_1)\in B$ and $(ry_1,rz_1)\in C$, and also $(x_1+x_2,y_1+y_2)\in B$ and $(y_1+y_2,z_1+z_2)\in C$. Hence $(rx_1,rz_1)\in (B\cp C)$ and $(x_1+x_2,z_1+z_2)\in (B\cp C)$, as desired.
\end{enumerate}
}

\sol{exc.linrel_prop}{
One might want to show that linear relations on $R$ form a prop, $\Cat{LinRel}_R$. That is, one might want to show that there is a sub-prop of the prop $\rel_R$ from \cref{def.rel_prop}, where the morphisms $m\to n$ are the subsets $B\ss R^m\times R^n$ such that $B$ is linear. In other words, where for any $(x,y)\in B$ and $r\in R$, the element $(r*x,r*y)\in R^m\times R^n$ is in $B$, and for any $(x',y')\in B$, the element $(x+x',y+y')$ is in $B$.

This is certainly doable, but for this exercise, we only ask that you prove that the composite of two linear relations is linear.
}
{
Suppose that $B\ss R^m\times R^n$ and $C\ss R^n\times R^p$ are linear. Their composite is the relation $(B\cp C)\ss R^m\times R^p$ consisting of all $(x,z)$ for which there exists $y\in R^n$ with $(x,y)\in B$ and $(y,z)\in C$. We want to show that the set $(B\cp C)$ is linear, i.e.\ closed under scalar multiplication and addition.

For scalar multiplication, take an $(x,z)\in (B\cp C)$ and any $r\in R$. Since $B$ is linear, we have $(r*x,r*y)\in B$ and since $C$ is linear we have $(r*y,r*z)\in C$, so then $(r*x,r*z)\in (B\cp C)$. For addition, if we also have $(x',z')\in(B\cp C)$ then there is some $y'\in R^n$ with $(x',y')\in B$ and $(y',z')\in C$, so since $B$ and $C$ are linear we have $(x+x',y+y')\in B$ and $(y+y',z+z')\in C$, hence $(x+x',z+z')\in (B\cp C)$.
}
\finishSolutionChapter
%======== Section =========%
\section[Solutions for Chapter 6]{Solutions for \cref{chap.hypergraph_cats}.}

\sol{exc.initial_ob_practice}{
Consider the set $A=\{a,b\}$. Find a preorder relation $\leq$ on $A$ such that
\begin{enumerate}
	\item $(A,\leq)$ has no initial object.
	\item $(A,\leq)$ has exactly one initial object.
	\item $(A,\leq)$ has two initial objects.
\end{enumerate}
}
{
Let $A=\{a,b\}$, and consider the preorders shown here: \fbox{$\LMO{a}\quad\LMO{b}$}\,, \quad \fbox{$\LMO{a}\to\LMO{b}$}\,, \quad \fbox{$\LMO{a}\leftrightarrows\LMO{b}$}\,.
\begin{enumerate}
	\item The left-most (the discrete preorder on $A$) has no initial object, because $a\not\leq b$ and $b\not\leq a$.\index{preorder!discrete}
	\item The middle one has one initial object, namely $a$.
	\item The right-most (the co-discrete preorder on $A$) has two initial objects.\index{preorder!codiscrete}
\end{enumerate}
}

\sol{exc.initial_object}{
For each of the graphs below, consider the free category on that graph, and say whether it has an initial object.\\
\begin{enumerate*}[itemjoin=\hspace{.8in}]
	\item \fbox{$\LMO{a}$}
	\item \fbox{$\LMO{a}\to\LMO{b}\to\LMO{c}$}
	\item \fbox{$\LMO{a}\qquad\LMO{b}$}
	\item \fbox{$\begin{tikzcd}\LMO{a}\ar[loop right]\end{tikzcd}$}	
\end{enumerate*}
}{
Recall that the objects of a free category on a graph are the vertices of the
graph, and the morphisms are paths. Thus the free category on a graph $G$ has an
initial object if there exists a vertex $v$ that has a unique path to every
object. In 1.\ and 2., the vertex $a$ has this property, so the free categories
on graphs 1.\ and 2.\ have initial objects. In graph 3., neither $a$ nor $b$
have a path to each other, and so there is no initial object. In graph 4., the
vertex $a$ has many paths to itself, and hence its free category does not have
an initial object either.
}

\sol{exc.rig_initial_obj}{
Recall the notion of rig from \cref{chap.SFGs}. A \emph{rig homomorphism} from $(R,0_R,+_R,1_R,*_R)$ to $(S,0_S,+_S,,1_S, *_S)$ is a function $f\colon R\to S$ such that $f(0_R)=0_S$, $f(r_1+_Rr_2)=f(r_1)+_Sf(r_2)$, etc.
\begin{enumerate}
	\item We said ``etc.'' Guess the remaining conditions for $f$ to be a rig homomorphism.
	\item Let $\Cat{Rig}$ denote the category whose objects are rigs and whose morphisms are rig homomorphisms. We claim $\Cat{Rig}$ has an initial object. What is it?
\end{enumerate}	
}{
\begin{enumerate}
\item The remaining conditions are that $f(1_R)=1_S$, and that $f(r_1\ast_R r_2)
= f(r_1) \ast_S f(r_2)$.
\item The initial object in the category $\Cat{Rig}$ is the natural numbers rig
$(\nn,0,+,1,\ast)$. The fact that is initial means that for any other rig
$R=(R,0_R,+_R,1_R,\ast_R)$, there is a unique rig homomorphism $f\colon \nn \to
R$. 

What is this homomorphism? Well, to be a rig homomorphism, $f$ must send
$0$ to $0_R$, $1$ to $1_R$. Furthermore, we must also have $f(m + n) =
f(m) +_Rf(n)$, and hence 
\[
f(m) = f(\underbrace{1+1+\dots+1}_{m \text{ summands}}) =
\underbrace{f(1)+f(1)+\dots+f(1)}_{m \text{ summands}} = \underbrace{1_R+1_R+
\dots+1_R}_{m \text{ summands}}.
\]
So if there is a rig homomorphism $f\colon\nn\to R$, it must be given by the above formula. But does this formula work correctly for multiplication?

It remains to check $f(m \ast n) = f(m) \ast_R f(n)$, and this will follow from
distributivity. Noting that $f(m\ast n)$ is equal to the sum of $mn$ copies of
$1_R$, we have
\begin{align*}
f(m) \ast_R f(n) &= (\underbrace{1_R+\dots+1_R}_{m \text{ summands}}) \ast_R
(\underbrace{1_R+\dots+1_R}_{n \text{ summands}}) \\
&= \underbrace{1_R \ast (\underbrace{1_R+\dots+1_R}_{n \text{ summands}}) + \dots
+1_R\ast (\underbrace{1_R+\dots+1_R}_{n \text{ summands}})}_{m \text{ summands}} \\
&= \underbrace{1_R+\dots+1_R}_{mn \text{ summands}} = f(m\ast n).
\end{align*}
Thus $(\nn,0,+,1,\ast)$ is the initial object in $\Cat{Rig}$.
\end{enumerate}
}

\sol{exc.universality}{
Explain the statement ``the hallmark of universality is the existence of a
unique map to any other comparable object,'' in the context of
\cref{def.initial_obj}. In particular, what is universal? What is the
``comparable object''?
}{
In \cref{def.initial_obj}, it is the initial object $\varnothing\in\cat{C}$ that is
universal. In this case, all objects $c\in\cat{C}$ are `comparable objects'.
So the universal property of the initial object is that to any \emph{object} $c\in\cat{C}$,
there is a unique map $\varnothing\to c$ coming from the initial object.
}

\sol{exc.initials_are_isomorphic}{
Let $\cat{C}$ be a category, and suppose that $c_1$ and $c_2$ are initial objects. Find an isomorphism between them, using the universal property from \cref{def.initial_obj}.
}
{
If $c_1$ is initial then by the universal property, for any $c$ there is a unique morphism $c_1\to c$; in particular, there is a unique morphism $c_1\to c_2$, call it $f$. Similarly, if $c_2$ is initial then there is a unique morphism $c_2\to c_1$, call it $g$. But how do we know that $f$ and $g$ are mutually inverse? Well since $c_1$ is initial there is a unique morphism $c_1\to c_1$. But we can think of two: $\id_{c_1}$ and $f\cp g$. Thus they must be equal. Similarly for $c_2$, so we have $f\cp g=\id_{c_1}$ and $g\cp f=\id_{c_2}$, which is the definition of $f$ and $g$ being mutually inverse.
}

\sol{exc.join_as_coproduct}{
  Explain why coproducts in a preorder are the same as joins.
}{
Let $(P,\le)$ be a preorder, and $p,q \in P$. Recall that a preorder is a
category with at most one morphism, denoted $\le$, between any two objects. 
Also recall that all diagrams in a preorder commute, since this means any two
morphisms with the same domain and codomain are equal. 

Translating \cref{def.coproduct} to this case, a coproduct $p+q$ is $P$ is an
element of $P$ such that $p \le p+q$ and $q\le p+q$, and such that for all elements $x
\in P$ with maps $p \le x$ and $q \le x$, we have $p+q \le x$. But this says
exactly that $p+q$ is a join: it is a least element above both $p$ and
$q$. Thus coproducts in preorders are exactly the same as joins.
}

\sol{exc.copairing}{
	Suppose $T=\{a,b,c,\ldots,z\}$ is the set of letters in the alphabet, and let $A$ and $B$ be the sets from \cref{eqn.apples_oranges_again}. Consider the function $f\colon A\to T$ sending each element of $A$ to the first letter of its label, e.g.\ $f(\mathrm{apple})=a$. Let $g\colon B\to T$ be the function sending each element of $B$ to the last letter of its label, e.g.\ $g(\mathrm{apple})=e$. Write down the function $\copair{f,g}(x)$ for all elements of $A\sqcup B$.
}{
The function $\copair{f,g}$ is defined by
\begin{align*}
\copair{f,g}\colon A \sqcup B &\longrightarrow T \\
\mathrm{apple1} &\longmapsto a \\
\mathrm{banana1} &\longmapsto b \\
\mathrm{pear1} &\longmapsto p \\
\mathrm{cherry1} &\longmapsto c \\
\mathrm{orange1} &\longmapsto o \\
\mathrm{apple2} &\longmapsto e \\
\mathrm{tomato2} &\longmapsto o \\
\mathrm{mango2} &\longmapsto o.
\end{align*}
}

\sol{exc.coprod_properties}{
  Let $f \colon A \to C$, $g \colon B \to C$, and $h\colon C \to D$ be morphisms
  in a category $\cat{C}$ with coproducts. Show that
  \begin{enumerate}
  \item $\iota_A \cp \copair{f,g} = f$.
  \item $\iota_B \cp \copair{f,g} = g$.
  \item $\copair{f,g}\cp h = \copair{f \cp h,g\cp h}$. 
  \item $\copair{\iota_A,\iota_B} = \id_{A+B}$.
  \end{enumerate}
}{
  \begin{enumerate}
  \item The equation $\iota_A \cp \copair{f,g} = f$ is the commutativity of the
  left hand triangle in the commutative diagram
  \eqref{eqn.universal_prop_coprod} defining $\copair{f,g}$.
  \item The equation $\iota_B \cp \copair{f,g} = g$ is the commutativity of the
  right hand triangle in the commutative diagram
  \eqref{eqn.universal_prop_coprod} defining $\copair{f,g}$.
  \item The equation $\copair{f,g}\cp h = \copair{f \cp h,g\cp h}$ follows from
  the universal property of the coproduct. Indeed, the diagram
  \[
    \begin{tikzcd}[ampersand replacement=\&, column sep=large, cramped]
      A \ar[r,"\iota_A"] \ar[dr,"f"' near end]  \ar[ddr, "f\cp h"']
      \& A+B \ar[d, "\copair{f,g}" description] 
      \& B \ar[l,"\iota_B"'] \ar[dl, "g" near end] \ar[ddl, "g\cp h"] \\[10pt]
      \& C \ar[d, "h" description] \\[10pt]
      \& D
    \end{tikzcd}
  \]
  commutes, and the universal property says there is a unique map $\copair{f\cp
  h, g \cp h}\colon A+B \to D$ for which this occurs. Hence we must have $[f,g]
  \cp h = \copair{f \cp h, g \cp h}$. 
  \item Similarly, to show $\copair{\iota_A,\iota_B} = \id_{A+B}$, observe that
  the diagram 
  \[
    \begin{tikzcd}[ampersand replacement=\&, cramped]
      A \ar[r,"\iota_A"] \ar[dr,"\iota_A"'] \& A+B \ar[d,equal, "\id_{A+B}" description] \& B
      \ar[l,"\iota_B"'] \ar[dl, "\iota_B"] \\[15pt]
      \& A+B
    \end{tikzcd}
  \]
  trivially commutes. Hence by the uniqueness in
  \eqref{eqn.universal_prop_coprod}, $\copair{\iota_{A}, \iota_{B}}=\id_{A+B}$.
  \end{enumerate}
}

\sol{exc.coproducts_give_monoidal_structure}{
Suppose a category $\cat C$ has coproducts, denoted $+$, and an initial object,
denoted $\varnothing$. Then we can show $(\cat C,+,\varnothing)$ is a symmetric
monoidal category (recall \cref{rdef.sym_mon_cat}). In this exercise we develop
the data.
\begin{enumerate}
\item Show that $+$ extends to a functor $\cat C \times \cat C \to \cat C$. In particular, how does it act on morphisms in $\cat{C}\times\cat{C}$?
\item Using the universal properties of the initial object and coproduct, show
that there are isomorphisms $A+\varnothing \to A$ and $\varnothing +A \to A$.
\item Using the universal property of the coproduct, write down morphisms
\begin{enumerate}
\item $(A+B)+C \to A+(B+C)$.
\item $A+B \to B+A$.
\end{enumerate}
If you like, check that these are isomorphisms.
\end{enumerate}
It can then be checked that this data obeys the axioms of a symmetric monoidal
category, but we'll end the exercise here.
}{
This exercise is about showing that coproducts and an initial object give a
symmetric monoidal category. Since all we have are coproducts and an initial
object, and since these are defined by their universal properties, the solution
is to use these universal properties over and over, to prove that all the data
of \cref{rdef.sym_mon_cat} can be constructed.
\begin{enumerate}
\item To define a functor $+\colon \cat{C} \times \cat{C} \to \cat{C}$ we must
define its action on objects and morphisms. In both cases, we just take the
coproduct. If $(A,B)$ is an object of $\cat{C} \times \cat{C}$, its image $A+B$
is, as usual, the coproduct of the two objects of $\cat{C}$. If $(f,g) \colon
(A,B)\to (C,D)$ is a morphism, then we can form a morphism $f+g = \copair{f \cp
\iota_C, g \cp \iota_D}\colon A+B \to C+D$, where $\iota_C \colon C \to C+D$ and
$\iota_D\colon D \to C+D$ are the canonical morphisms given by the definition of
the coproduct $A+B$.

Note that this construction sends identity morphisms to identity morphisms,
since by \cref{exc.coprod_properties} 4 we have 
\[
\id_A + \id_B = \copair{\id_A\cp \iota_{A}, \id_B \cp \iota_{B}} =\copair{\iota_{A}, \iota_{B}}=\id_{A+B}.
\]

To show that $+$ is a functor, we need to also show it preserves composition.
Suppose we also have a morphism $(h,k)\colon (C,D) \to (E,F)$ in $\cat{C} \times
\cat{C}$. We need to show that $(f+g) \cp (h+k) = (f \cp h) + (g \cp k)$. This
is a slightly more complicated version of the argument in
\cref{exc.coprod_properties} 3. It follows from the fact the diagram below
commutes:
\[
    \begin{tikzcd}[ampersand replacement=\&]
      A \ar[rr,"\iota_A"] \ar[dr,"f"'] \& \& A+B \ar[d, "f+g"] \&\& B
      \ar[ll,"\iota_B"'] \ar[dl, "g"] \\[5pt]
      \& C \ar[r, "\iota_C"] \ar[dr,"h\cp \iota_E"'] \& C+D \ar[d,"h+k" near
      start] \& D
      \ar[l,"\iota_D"'] \ar[dl, "k \cp \iota_F"] \\[5pt]
      \&\& E+F
    \end{tikzcd}
\]
Indeed, we again use the uniqueness of the copairing in
\eqref{eqn.universal_prop_coprod}, this time to show that $(f \cp h) + (g \cp
k)=\copair{f\cp h \cp \iota_E, g\cp k \cp \iota_F} = (f+g) \cp (h+k)$, as
required.

\item Recall the universal property of the initial object gives a unique map
$!_A\colon \varnothing \to A$. Then the copairing $\copair{\id_A,!_A}$ is a map
$A+\varnothing \to A$. Moreover, it is an isomorphism with inverse $\iota_A
\colon A \to A +\varnothing$. Indeed, using the properties in
\cref{exc.coprod_properties} and the universal property of the initial object,
we have $\iota_A \cp \copair{\id_A,!_A} = \id_A$, and 
\[
\copair{\id_A,!_A} \cp \iota_A = \copair{\id_A \cp \iota_A, !_A \cp \iota_A} =
\copair{\iota_A,!_{A+\varnothing}} = \copair{\iota_A,\iota_\varnothing} =
\id_{A+\varnothing}.
\]

An analogous argument shows $\copair{!_A,\id_A} \colon \varnothing +A \to A$ is
an isomorphism.

\item We'll just write down the maps and their inverses; we leave it to you, if
you like, to check that they indeed are inverses.
\begin{enumerate}
\item The map $\copair{\id_A +\iota_B,\iota_C} = \copair{\copair{\iota_A,\iota_B \cp
\iota_{B+C}},\iota_C\cp\iota_{B+C}} \colon (A+B)+C \to A+(B+C)$ is an isomorphism,
with inverse $\copair{\iota_A, \iota_B+\id_C} \colon A+(B+C) \to (A+B)+C$.
\item The map $\copair{\iota_A,\iota_B}\colon A+B \to B+A$ is an isomorphism.
Note our notation here is slightly confusing: there are two maps named
$\iota_A$, (i) $\iota_A \colon A \to A+B$, and (ii) $\iota_A\colon A \to B+A$, and
similarly for $\iota_B$. In the above we mean the map (ii). It has inverse
$\copair{\iota_A,\iota_B}\colon B+A \to A+B$, where in this case we mean the
map (i).
\end{enumerate}
\end{enumerate}
}

\sol{exc.disc_cats_have_pushouts}{
For any set $S$, we have the discrete category $\Cat{Disc}_S$, with $S$ as objects and only identity morphisms.
\begin{enumerate}
	\item Show that all pushouts exist in $\Cat{Disc}_S$, for any set $S$.
	\item For what sets $S$ does $\Cat{Disc}_S$ have an initial object?
\end{enumerate}
}
{
\begin{enumerate}
	\item Suppose given an arbitrary diagram of the form $B\from A\to C$ in $\Cat{Disc}_S$; we need to show that it has a pushout. The only morphisms in $\Cat{Disc}_S$ are identities, so in particular $A=B=C$, and the square consisting of all identities is its pushout.
	\item Suppose $\Cat{Disc}_S$ has an initial object $s$. Then $S$ cannot be empty! But it also cannot have more than one object, because if $s'$ is another object then there is a morphism $s\to s'$, but the only morphisms in $S$ are identities so $s=s'$. Hence the set $S$ must consist of exactly one element.
\end{enumerate}
}
\sol{exc.pushout}{
  What is the pushout of the functions $f\colon \ord{4} \to \ord{5}$ and $g\colon \ord{4} \to
  \ord{3}$ pictured below?
  \[ 
    % [inline block 66: 3 envs, 3816 chars -> data_tex | \begin{tikzpicture}[x=.6cm,y=.4cm, baseline=(3a)]       \node at (2,-1.5) {$f\colon \ord{4} \to \ord{5}$};...]

    \end{equation}

We want to see that this checks out with the description from \cref{ex.pushouts}, i.e.\ that it is the set of equivalence classes in $\ord{5}\sqcup\ord{3}$ generated by the relation $\{f(a)\sim g(a)\mid a\in\ord{4}\}$. If we denote elements of $\ord{5}$ as $\{1,\ldots,5\}$ and those of $\ord{3}$ as $\{1',2',3'\}$, we can redraw the functions $f,g$:
\[
\begin{tikzcd}[ampersand replacement=\&, row sep=0]
	1\& \bullet\ar[l, -]\ar[r, -]\&1'\\
	2\& \bullet\ar[dl, -]\ar[ur, -]\&2'\\
	3\& \bullet\ar[ddl, -]\ar[ur,-]\&3'\\
	4\& \bullet\ar[dl, -]\ar[ur, -]\\
	5
\end{tikzcd}
\]
which says we take the equivalence relation on $\ord{5}\sqcup\ord{3}$ generated by: $1\sim 1'$, , $3\sim 1'$, $5\sim 2'$, and $5\sim 3'$. The equivalence classes are $\{1,1',3\}$, $\{2\}$, $\{4\}$, and $\{5,2',3'\}$. These four are exactly the four elements in the set labeled `pushout' in \cref{eqn.pushout_4534}.
}

\sol{exc.pushout_initial}{
In \cref{ex.pushout_initial} we asked ``why?'' three times.
\begin{enumerate}
	\item Give a justification for ``why?$^1$.''
	\item Give a justification for ``why?$^2$.''
	\item Give a justification for ``why?$^3$.''	
\end{enumerate}
}{
\begin{enumerate}
	\item The diagram to the left commutes because $\varnothing$ is initial,
	and so has a unique map $\varnothing \to X + Y$. This implies we
	must have $f\cp \iota_X = g \cp \iota_Y$.
	\item There is a unique map $X + Y \to T$ making the diagram in
	\eqref{eqn.univ_prop_pushout} commute simply by the universal property
	of the coproduct \eqref{eqn.universal_prop_coprod} applied to the maps
	$x\colon X \to T$ and $y\colon Y \to T$.
	\item Suppose $X+_\varnothing Y$ exists. By the universal property of
	$\varnothing$, given any pair of arrows $x\colon X \to T$ and $y\colon Y
	\to T$, the diagram	
	\[
	\begin{tikzcd}[ampersand replacement =\&]
	  \varnothing \ar[r,"f"] \ar[d,"g"']\&X\ar[d,"x"]\\
	  Y\ar[r,"y"'] \& T
	\end{tikzcd}
	\]
	commutes. This means, by the universal property of the pushout
	$X+_\varnothing Y$, there exists a unique map $t\colon X+_\varnothing Y \to T$
	such that $\iota_X \cp t = x$ and $\iota_Y \cp t =y$. Thus
	$X+_\varnothing Y$ is the coproduct $X+Y$.
\end{enumerate}
}

\sol{exc.W_colim}{
Check that the pushout of pushouts from \cref{ex.W_colim} satisfies the universal property of the colimit for the original diagram, \cref{eqn.W_colim}.
}
{
We have to check that the colimit of the diagram shown left really is given by taking three pushouts as shown right:
\[
\begin{tikzcd}[ampersand replacement=\&]
	\&
	B\ar[r]\ar[d]\&
	Z\\
	A\ar[r]\ar[d]\&
	C\\
	D
\end{tikzcd}
\hspace{1in}
\begin{tikzcd}[ampersand replacement=\&]
	\&
	B\ar[r]\ar[d]\&
	Z\ar[d]\\
	A\ar[r]\ar[d]\&
	Y\ar[r]\ar[d]\&
	R\ar[ul, phantom, very near start, "\ulcorner"]\ar[d]\\
	X\ar[r]\&
	Q\ar[ul, phantom, very near start, "\ulcorner"]\ar[r]\&
	S\ar[ul, phantom, very near start, "\ulcorner"]
\end{tikzcd}
\]
That is, we need to show that $S$, together with the maps from $A$, $B$, $X$, $Y$, and $Z$, has the required universal property. So suppose given an object $T$ with two commuting diagrams as shown:
\[
\begin{tikzcd}[ampersand replacement=\&]
	\&
	B\ar[r]\ar[d]\&
	Z\ar[dd]\\
	A\ar[r]\ar[d]\&
	Y\ar[rd]\&
	\\
	X\ar[rr]\&
	\&
	T
\end{tikzcd}
\]
We need to show there is a unique map $S\to T$ making everything commute. Since $Q$ is a pushout of $X\from A\to Y$, there is a unique map $Q\to T$ making a commutative triangle with $Y$, and since $R$ is the pushout of $Y\from B\to Z$, there is a unique map $R\to T$ making a commutative triangle with $Y$. This implies that there is a commuting $(Y,Q,R,T)$ square, and hence a unique map $S\to T$ from its pushout making everything commute. This is what we wanted to show.
}

\sol{exc.pushout_formula}{
Use the formula to show that pushouts---colimits on a diagram $X \xleftarrow{f}
N \xrightarrow{g} Y$---agree with the description we gave in \cref{ex.pushouts}.
}{
The formula in \cref{thm.colims_in_set} says that the pushout $X+_NY$ is given
by the set of equivalence classes of $X \sqcup N \sqcup Y$ under the equivalence
relation generated by $x \sim n$ if $x=f(n)$, and $y \sim n$ if $y = g(n)$,
where $x \in X$, $y\in Y$, $n \in N$.  Since for every $n \in N$ there exists an
$x \in X$ such that $x=f(n)$, this set is equal to the set of equivalence
classes of $X \sqcup Y$ under the equivalence relation generated by $x \sim y$
if there exists $n$ such that $x=f(n)$ and $y = g(n)$. This is exactly the
description of \cref{ex.pushouts}.
}

\sol{exc.cospan_tensor}{
In \cref{eq.cospan_comp} we showed morphisms $A\to B$ and $B\to C$ in
$\cospan{\finset}$. Draw their monoidal product as a morphism $A+B\to
B+C$ in $\cospan{\finset}$.
}{
The monoidal product is 
\[
      % [inline block 67: 3 envs, 7071 chars -> data_tex | \begin{tikzpicture}[x=1.25cm] 	\begin{pgfonlayer}{nodelayer}...]

    \end{aligned}
  \]
  Comparing \cref{eq.cospan_comp} and \cref{eq.cospan_comp_wires}, describe the
  composition rule in $\cospan\finset$ in terms of wires and connected
  components.
}{
Let $x$ and $y$ be composable cospans in $\cospan\finset$. In terms of wires and
connected components, the composition rule in $\cospan\finset$ says that (i) the
composite cospan has a unique element in the apex for every connected component
of the concatenation of the wire diagrams $x$ and $y$, and (ii) in the wire
diagram for $x\cp y$, each element of the feet is connected by a wire to the
element representing the connected component to which it belongs.
}

\sol{exc.spider}{
  Which morphisms in the following list are equal?
  \begin{enumerate}
    \item 
      \[
\begin{tikzpicture}[spider diagram]
  \node[spider={2}{3}, fill=black] (a) {};
\end{tikzpicture}
\]
    \item 
      \[
\begin{tikzpicture}[spider diagram]
  \node[spider={2}{0}, fill=black] (a) {};
  \node[spider={0}{3}, fill=black, right=.5 of a] (b) {};
\end{tikzpicture}
\]
    \item
      \[
\begin{tikzpicture}[spider diagram]
  \node[spider={1}{2}, fill=black] (a) {};
  \node[spider={2}{2}, fill=black, right=1 of a] (b) {};
  \node[spider={1}{1}, fill=black, below=.5 of a] (c) {};
  \node[spider={1}{1}, fill=black, right=1 of c] (d) {};
  \begin{scope}
    \draw (a_out1) to (b_in1);
    \draw (a_out2) to (b_in2);
    \draw (c_out1) to (d_in1);
  \end{scope}
\end{tikzpicture}
\]
    \item
      \[
\begin{tikzpicture}[spider diagram]
  \node[spider={2}{2}, fill=black] (a) {};
  \node[spider={2}{3}, fill=black, right=1 of a] (b) {};
  \begin{scope}
    \draw (a_out1) to (b_in1);
    \draw (a_out2) to (b_in2);
  \end{scope}
\end{tikzpicture}
\]
    \item
      \[
\begin{tikzpicture}[spider diagram]
  \node[spider={1}{2}, fill=black] (a) {};
  \node[spider={2}{2}, fill=black, right=1 of a] (b) {};
  \node[spider={1}{2}, fill=black, below=.5 of a] (c) {};
  \node[spider={2}{1}, fill=black, right=1 of c] (d) {};
  \begin{scope}
    \draw (a_out1) to (b_in1);
    \draw (a_out2) to (b_in2);
    \draw (c_out1) to (d_in1);
    \draw (c_out2) to (d_in2);
  \end{scope}
\end{tikzpicture}
\]
    \item
      \[
\begin{tikzpicture}[spider diagram]
  \node[spider={1}{2}, fill=black] (a) {};
  \node[spider={1}{2}, fill=black, below=.8 of a] (b) {};
  \node[spider={2}{2}, fill=black, below right=.36 and .9 of a] (c) {};
  \node[spider={2}{1}, fill=black, right=1.9 of a] (d) {};
  \begin{scope}
    \draw (a_out1) to (d_in1);
    \draw (a_out2) to (c_in1);
    \draw (b_out1) to (c_in2);
  \draw (b_out2) to (d_out1|-b_out2);
    \draw (c_out1) to (d_in2);
    \draw (c_out2) to (d_out1|-c_out2);
  \end{scope}
\end{tikzpicture}
\]  
\end{enumerate}
}{
Morphisms 1, 4, and 6 are equal, and morphisms 3 and 5 are equal. Morphism 3 is
not equal to any other depicted morphism. This is an immediate consequence of
\cref{thm.spider}.
}

\sol{exc.suppressed_labels}{
\begin{enumerate}
	\item What label should be on the input to $h$?
	\item What label should be on the output of $g$?
	\item What label should be on the fourth output wire of the composite?
\end{enumerate}
}{
\begin{enumerate}
	\item The input to $h$ should be labelled $B$.
	\item The output of $g$ should be labelled $D$, since we know from the
	labels in the top right that $h$ is a morphism $B \to D \otimes D$.
	\item The fourth output wire of the composite should be labelled $D$
	too!
\end{enumerate}
}

\sol{exc.frob_cospan}{
  By \cref{ex.cospan_hypergraph}, the category $\cospan\finset$ is a hypergraph
  category. (In fact it is a hypergraph prop.) Draw the Frobenius maps for the
  object $\ord{1}$ in $\cospan\finset$ using both the function and wiring
  depictions as in \cref{ex.cospan_finset}.
}{
We draw the function depictions above, and the wiring depictions below. Note
that we depict the empty set with blank space.
  \[
    \begin{aligned}
      % [inline block 68: 4 envs, 3979 chars -> data_tex | \begin{tikzpicture} 	\begin{pgfonlayer}{nodelayer}...]

    \end{aligned}
  \]
}

\sol{exc.frob_cospan2}{
  Using your knowledge of colimits, show that the maps defined in
  \cref{ex.cospan_hypergraph} do indeed obey the special law (see
  \cref{def.spec_comm_frob_mon}).
%  \begin{enumerate}
%  \item the unitality law (see \cref{eqn.ass_un_comm}), and
%  \item 
  %\item the Frobenius law.
%  \end{enumerate}
}{
%\begin{enumerate}
%  \item \cref{exc.coproducts_give_monoidal_structure}.
%
%  The unitality law says that the composite of cospans
%  \[
%  \unitl{.1\textwidth} = X \xrightarrow{\iota_1} X +X \xleftarrow{\id+\id} X+X
%  \xrightarrow{\copair{\id,\id}} X \xleftarrow{\id} X
%  \]
%  is the identity cospan. We first have to take the pushout in the middle. By
%  \cref{ex.pushout_along_identity}, the pushout of $\copair{\id,\id}$ along the
%  identity map $\id+\id$ is again $\copair{\id,\id}$, so the above expression is
%  equal to
%  \[
%  X \xrightarrow{\iota_1} X+X \xrightarrow{\copair{\id,\id}} X
%  \xleftarrow{\id} X \xleftarrow{\id} X.
%  \]
%  Next, \cref{exc.coprod_properties} show that $\iota_1 \cp\copair{\id,\id} =
%  \id$, so this composes to the identity cospan.

  The special law says that the composite of cospans
  \[
  \spec{.1\textwidth} = X \xrightarrow{\id} X \xleftarrow{\copair{\id,\id}} X+X
  \xrightarrow{\copair{\id,\id}} X \xleftarrow{\id} X
  \]
  is the identity. This comes down to checking that the square
  \begin{equation} \label{eqn.pushout_questionmark}
  \begin{tikzcd}[ampersand replacement=\&]
  X+X \ar[d,"\copair{\id,\id}"'] \ar[r,"\copair{\id,\id}"] \&
  X\ar[d,"\id"]  \\
  X \ar[r,"\id"'] \& X 
  \end{tikzcd}
  \end{equation}
  is a pushout square. It is trivial to see that the square commutes. Suppose
  now that we have maps $f\colon X \to Y$ and $g\colon X \to Y$ such that 
  \[
  \begin{tikzcd}[ampersand replacement=\&]
  X+X \ar[d,"\copair{\id,\id}"'] \ar[r,"\copair{\id,\id}"] \&
  X\ar[d,"f"]  \\
  X \ar[r,"g"'] \& T
  \end{tikzcd}
  \]
  Write $\iota_1\colon X \to X+X$ for the map into the first copy of $X$ in
  $X+X$, given by the definition of coproduct. Then, using the fact that
  $\iota_1 \cp \copair{\id,\id} = \id$ from \cref{exc.coprod_properties} 1, and
  the commutativity of the above square, we have $f=
  \iota_1\cp\copair{\id,\id}\cp f =\iota_1\cp\copair{\id,\id}\cp g = g$. This
  means that $f\colon X \to T$ is the unique map such that
  \[
    \begin{tikzcd}[ampersand replacement=\&]
      X \ar[r,"\copair{\id,\id}"] \ar[d,"\copair{\id,\id}"'] \& X \ar[d,"\id"]
      \ar[ddr,"f", bend left] \\
      X \ar[r,"\id"'] \ar[drr,"g=f"', bend right=20pt] \& X \ar[dr,dashed, "f"] \\[-7pt]
      \&\&[-15pt]T
    \end{tikzcd}
  \]
  commutes, and so \eqref{eqn.pushout_questionmark} is a pushout square.

%  \item 
%  The Frobenius law equates the composites of cospans
%  \[
%  \frobx{.1\textwidth} = X+X \xrightarrow{\copair{\id,\id}} X \xleftarrow{\id} X
%  \xrightarrow{\id} X \xleftarrow{\copair{\id,\id}} X+X
%  \]
%  and
%  \[
%  \frobs{.1\textwidth} = X+X \xrightarrow{\id+\id} X+X \xleftarrow{\copair{\id,\id}+\id}
%  X+X+X
%  \xrightarrow{\id+\copair{\id,\id}} X+X \xleftarrow{\id+\id} X+X
%  \]
%  This comes down to checking that the square
%  \[
%  \begin{tikzcd}[ampersand replacement=\&]
%  X+X \ar[r,"\copair{\id,\id}"] \& X \\
%  X+X+X \ar[u,"\copair{\id,\id}+\id"] \ar[r,"\id+\copair{\id,\id}"] \&
%  X+X\ar[u,"\copair{\id,\id}"'] 
%  \end{tikzcd}
%  \]
%  is a pushout square.
%\end{enumerate}
}

%\sol[print]{exc.frob_corel}{
%  Recall the monoidal category $(\Cat{Corel},\varnothing, \sqcup)$
%  (\cref{ex.corel}), whose objects are finite sets and whose morphisms are
%  corelations. Given a finite set $X$, define the corelation $\mu_X\colon
%  X\sqcup X \to X$ such that two elements of $X \sqcup X \sqcup X$ are
%  equivalent if and only if they come from the same underlying element of $X$.
%  Define $\delta_X\colon X \to X\sqcup X$ in the same way, and define
%  $\eta_X\colon \varnothing \to X$ and $\epsilon_X\colon X \to \varnothing$ such
%  that no two elements of $X = \varnothing \sqcup X = X \sqcup \varnothing$ are
%  equivalent.
%
%  Using the corelation diagrams of \cref{ex.corel}, convince yourself that these
%  maps define a special commutative Frobenius monoid
%  $(X,\mu_X,\eta_X,\delta_X,\epsilon_X)$.  Conclude that $\Cat{Corel}$ is a
%  hypergraph category.
%}{
%  
%}

\sol{exc.fill_in_diagram}{
Fill in the missing diagram in the proof of \cref{prop.hyp_cat_comp_closed}.
}{
The missing diagram is
\[
  \begin{aligned}
    \begin{tikzpicture}[yscale=.7]
      \begin{pgfonlayer}{nodelayer}
	\node [style=none] (left) at (-1.5, 1) {};
	\node [style=none] (helpl) at (-0.5, 1) {};
	\node [style=none] (unith) at (-0.5, 0) {};
	\node [style=bdot] (unit) at (-0.75, 0) {};
	\node [style=bdot] (mult) at (0, .5) {};
	\node [style=bdot] (comult) at (.5, .5) {};
	\node [style=none] (counith) at (1, 1) {};
	\node [style=bdot] (counit) at (1.25, 1) {};
	\node [style=none] (helpr) at (1, 0) {};
	\node [style=none] (right) at (2, 0) {};
      \end{pgfonlayer}
      \begin{pgfonlayer}{edgelayer}
	\draw (left.center) to (helpl.center);
	\draw (unit.center) to (unith.center);
	\draw [bend left, looseness=1.00] (helpl.center) to (mult.center);
	\draw [bend right, looseness=1.00] (unith.center) to (mult.center);
	\draw (mult) to (comult);
	\draw [bend left, looseness=1.00] (comult.center) to (counith.center);
	\draw [bend right, looseness=1.00] (comult.center) to (helpr.center);
	\draw (counith.center) to (counit.center);
	\draw (helpr.center) to (right.center);
      \end{pgfonlayer}
    \end{tikzpicture}
  \end{aligned}
\]
}

\sol{exc.powset_mon_coherence}{
  Check that the maps $\varphi_{S,T}$ defined in \cref{ex.powset} are natural in
  $S$ and $T$. In other words, given $f\colon S\to S'$ and $g\colon T\to T'$, show that the diagram below commutes:
  \[
  \begin{tikzcd}[column sep=large,ampersand replacement=\&]
  	\powset(S)\times\powset(T)\ar[r,"\varphi_{S,T}"] \ar[d,"\im_f\times \im_g"']\&
		\powset(S\times T)\ar[d, "\im_{f\times g}"]\\
		\powset(S')\times\powset(T')\ar[r, "\varphi_{S',T'}"']\&
		\powset(S'\times T')
  \end{tikzcd}  
  \]
}{
Let $A \subseteq S$ and $B \subseteq T$. Then 
\begin{align*}
\varphi_{S',T'}\left((\im_f\times \im_g)(A \times B)\right) 
&= \varphi_{S',T'}(\{f(a) \mid a \in A\} \times \{g(b) \mid b \in B\}) \\
&= \{(f(a),g(b)) \mid a \in A, \, b \in B\} \\
&= \im_{f\times g}(A \times B) \\
&= \im_{f\times g}(\varphi_{S,T}(A,B)).
\end{align*}
Thus the required square commutes.
}

\sol{exc.cospan_as_fcospan}{
	Suppose you're worried that the notation $\cospan{\cat{C}}$ looks like the notation $\cospan{F}$, even though they're very different. An expert tells you ``they're not so different; one is a special case of the other. Just use the constant functor $F(c)\coloneqq\{*\}$.'' What do they mean?
}{
They mean that every category $\cospan{\cat{C}}$ is equal to a category
$\cospan{F}$, for some well-chosen $F$. They also tell you how to choose this
$F$: take the functor $F\colon \cat{C} \to \smset$ that sends every object of
$\cat{C}$ to the set $\{\ast\}$, and every morphism of $\cat{C}$ to the identity
function on $\{\ast\}$. Of course, you will have to check this functor is a lax
symmetric monoidal functor, but in fact this is not hard to do.

To check that $\cospan{\cat{C}}$ is equal to $\cospan{F}$, first observe that
they have the same objects: the objects of $\cat{C}$. Next, observe that a
morphism in $\cospan{F}$ is a cospan $X \leftarrow N \rightarrow Y$ in $\cat{C}$
together with an element of $FN = \{\ast\}$. But $FN$ also has a unique element,
$\ast$! So there's no choice here, and we can consider morphisms of $\cospan{F}$
just to be cospans in $\cat{C}$. Moreover, composition of morphisms in
$\cospan{F}$ is simply the usual composition of cospans via pushout, so
$\cospan{F} =\cospan{\cat{C}}$.

(More technically, we might say that $\cospan{\cat{C}}$ and $\cospan{F}$ are
isomorphic, where the isomorphism is the identity-on-objects functor
$\cospan{\cat{C}} \to \cospan{F}$ that simply decorates each cospan with $\ast$,
and its inverse is the one that forgets this $\ast$. But this is close enough to
equal that many category theorists, us included, don't mind saying equal in this
case.)
}

\sol{exc.circuit_tuple}{
  Write a tuple $(V,A,s,t,\ell)$ that represents the circuit in
  \cref{eq.circuit}.
}{
We can represent the circuit in \cref{eq.circuit} by the tuple
$(V,A,s,t,\ell)$ where $V=\{\textrm{ul},\textrm{ur},\textrm{dl},\textrm{dr}\}$,
$A=\{\textrm{r1},\textrm{r2},\textrm{r3},\textrm{c1},\textrm{i1}\}$, and $s$, $t$, and $\ell$ are
defined by the table
\[
% [inline block 69: 7 envs, 3781 chars -> data_tex | \begin{tabular}{c | c c c c c}     & r1 & r2 & r3 & c1 & i1 \\ \hline...]

\end{aligned}
\]
}

\sol{exc.namethedecoration}{
Morphisms of $\cospan\elec$ are $\elec$-decorated cospans, as defined in
\cref{def.decorated_cospan}. This means \eqref{eqn.decorated_cospan} depicts a
cospan together with a \emph{decoration}, which is some $C$-circuit
$(V,A,s,t,\ell) \in \elec(\ord{2})$. What is it?
}{
The cospan is the cospan $\ord{1} \To{f} \ord{2} \From{g} \ord{1}$, where
$f(1)=1$ and $g(1)=2$. The decoration is the $C$-circuit
$(\ord{2},\{a\},s,t,\ell)$, where $s(a)=1$, $t(a)=2$ and $\ell(a)=\battery$. 
}

\sol{exc.composefcospans}{
Refer back to the example at the beginning of \cref{sec.deccospans}. In
particular, consider the composition of circuits in
\cref{eq.circuitcomposition}. Express the two circuits in this diagram as
morphisms in $\cospan\elec$, and compute their composite. Does it match the
picture in \cref{eqn.circuitcomposed}?
}{
Recall the circuit $C\coloneqq (V,A,s,t,\ell)$ from the solution to
\cref{exc.circuit_tuple}. Then the first decorated cospan is given by the cospan
$\ord{1} \To{f} V \From{g} \ord{2}$, $f(1) =\textrm{ul}$, $g(1) =\textrm{ur}$,
and $g(2) =\textrm{ur}$, decorated by circuit $C$.
The second decorated cospan is given by the cospan $\ord{2} \To{f'} V' \From{g'}
\ord{2}$ and the circuit $C'\coloneqq(V',A',s',t',\ell')$, where $V'=\{l,r,d\}$,
$A'=\{\textrm{r}1',\textrm{r}2'\}$, and the functions are given by the tables
\[
% [inline block 70: 17 envs, 15130 chars -> data_tex | \begin{tabular}{c | c c}     & 1 & 2 \\ \hline...]

	\]
	\item The associated wiring diagram is shown on the right above. One can see that one diagram has been substituted in to a circle of the other.
\end{enumerate}
}

\finishSolutionChapter
%======== Section =========%
\section[Solutions for Chapter 7]{Solutions for \cref{chap.temporal_topos}.}

\sol{exc.pullback_pasting}{
Prove \cref{prop.pullback_pasting} using the definition of limit from \cref{subsec.adjoints_lims_colims}.
}{
In the commutative diagram below, suppose the $(B,C,B',C')$ square is a pullback:
\[
\begin{tikzcd}[ampersand replacement=\&]
	A\ar[r, "f"]\ar[d, "h_1"]\&B\ar[r, "g"]\ar[d, "h_2"]\&C\ar[d, "h_3"]\\
	A'\ar[r, "f'"']\&B'\ar[r, "g'"']\&C'\ar[ul, phantom, very near end, "\lrcorner"]
\end{tikzcd}
\]
We need to show that the $(A,B,A',B')$ square is a pullback iff the $(A,C,A',C')$ rectangle is a pullback.

Suppose first that $(A,B,A',B')$ is a pullback, and take any $(X,p,q)$ as in the following diagram:
\[
\begin{tikzcd}[ampersand replacement=\&]
	X\ar[rrrd, bend left=15pt, "p"]\ar[rdd, bend right=15pt, "q"']\&[-15pt]\\[-5pt]
	\&A\ar[r, "f"]\ar[d, "h_1"]\&B\ar[r, pos=.4, "g"]\ar[d, "h_2"']\&C\ar[d, "h_3"']\\
	\&A'\ar[r, "f'"']\&B'\ar[r, "g'"']\&C'\ar[ul, phantom, very near end, "\lrcorner"]
\end{tikzcd}
\]
where $q\cp f'\cp g'=p\cp h_3$. Then by the universal property of the $(B,C,B',C')$ pullback, we get a unique dotted arrow $r$ making the left-hand diagram below commute:
\[
\begin{tikzcd}[ampersand replacement=\&]
	X\ar[rrrd, bend left=15pt, "p"]\ar[rrdd, bend right=15pt, "q\cp f'"']\ar[rrd, dashed, "r"']\&[-15pt]\\[-5pt]
	\&\&B\ar[r, pos=.4, "g"]\ar[d, "h_2"']\&C\ar[d, "h_3"']\\
	\&\&B'\ar[r, "g'"']\&C'\ar[ul, phantom, very near end, "\lrcorner"]
\end{tikzcd}
\hspace{1in}
\begin{tikzcd}[ampersand replacement=\&]
	X\ar[rrd, bend left=15pt, "r"]\ar[rdd, bend right=15pt, "q"']\ar[rd, dashed, "r'"]\&[-15pt]\\[-5pt]
	\&A\ar[r, "f"]\ar[d, "h_1"]\&B\ar[r, pos=.4, "g"]\ar[d, "h_2"']\&C\ar[d, "h_3"']\\
	\&A'\ar[r, "f'"']\&B'\ar[r, "g'"']\&C'\ar[ul, phantom, very near end, "\lrcorner"]
\end{tikzcd}
\]
In other words $r\cp h_2=g\cp f'$ and $r\cp g=p$. Then by the universal property of the $(A,B,A',B')$ pullback, we get a unique dotted arrow $r'\colon X\to A$ making the right-hand diagram commute, i.e.\ $r'\cp f=r$ and $r'\cp h_1=q$. This gives the existence of an $r$ with the required property, $r'\cp f=r$ and $r'\cp f\cp g=r\cp g=p$. To see uniqueness, suppose given another morphisms $r_0$ such that $r_0\cp f\cp g=p$ and $r_0\cp h_1=q$:
\[
\begin{tikzcd}[ampersand replacement=\&]
	X\ar[dr, "r_0"]\ar[rrrd, bend left=15pt, "p"]\ar[rdd, bend right=20pt, "q"']\&[-15pt]\\[-5pt]
	\&A\ar[r, "f"]\ar[d, "h_1"]\&B\ar[r, pos=.4, "g"]\ar[d, "h_2"']\&C\ar[d, "h_3"']\\
	\&A'\ar[r, "f'"']\&B'\ar[r, "g'"']\&C'\ar[ul, phantom, very near end, "\lrcorner"]
\end{tikzcd}
\]
Then by the uniqueness of $r$, we must have $r_0\cp f=r$, and then by the uniqueness of $r'$, we must have $r_0=r'$. This proves the first result.\\

The second is similar. Suppose that $(A,C,A',C')$ and $(B,C,B',C')$ are pullbacks and suppose given a commutative diagram of the following form:
\[
\begin{tikzcd}[ampersand replacement=\&]
	X\ar[rrd, bend left=15pt, "r"]\ar[rdd, bend right=15pt, "q"']\&[-15pt]\\[-5pt]
	\&A\ar[r, "f"]\ar[d, "h_1"]\&B\ar[r, pos=.4, "g"]\ar[d, "h_2"']\&C\ar[d, "h_3"']\\
	\&A'\ar[r, "f'"']\&B'\ar[r, "g'"']\&C'\ar[ul, phantom, very near end, "\lrcorner"]
\end{tikzcd}
\]
i.e.\ where $r\cp h_2=q\cp f'$. Then letting $p\coloneqq r\cp g$, we have
\[p\cp h_3=r\cp g\cp h_3=r\cp h_2\cp g'=q\cp f'\cp g'\]
so by the universal property of the $(A,C,A',C')$ pullback, there is a unique morphism $r'\colon X\to A$ such that $r'\cp f\cp g=p$ and $r_0\cp h_1=q$, as shown:
\[
\begin{tikzcd}[ampersand replacement=\&]
	X\ar[dr, "r'"]\ar[rrd, gray, bend left=15pt, "r" description]\ar[rrrd, bend left=15pt, "p"]\ar[rdd, bend right=20pt, "q"']\&[-15pt]\\[-5pt]
	\&A\ar[r, "f"]\ar[d, "h_1"]\&B\ar[r, pos=.4, "g"]\ar[d, "h_2"']\&C\ar[d, "h_3"']\\
	\&A'\ar[r, "f'"']\&B'\ar[r, "g'"']\&C'\ar[ul, phantom, very near end, "\lrcorner"]
\end{tikzcd}
\]
But now let $r_0\coloneqq r'\cp f$. It satisfies $r_0\cp g=p$ and $r_0\cp h_2=q\cp f'$, and $r$ satisfies the same equations: $r\cp g=p$ and $r\cp h_2=q\cp f'$. Hence by the universal property of the $(B,C,B',C')$ pullback $r_0=r'$. It follows that $r'$ is a pullback of the $(A,B,A',B')$ square, as desired.
}

\sol{exc.mono_inj}{
Show that in $\smset$, monomorphisms are just injections:
\begin{enumerate}
	\item Show that if $f$ is a monomorphism then it is injective.
	\item Show that if $f\colon A\to B$ is injective then it is a monomorphism.
\end{enumerate}
}{
A function $f\colon A\to B$ is injective iff for all $a_1,a_2\in A$, if $f(a_1)=f(a_2)$ then $a_1=a_2$. It is a monomorphism iff for all sets $X$ and functions $g_1,g_2\colon X\to A$, if $g_1\cp f=g_2\cp f$ then $g_1=g_2$. Indeed, this comes directly from the universal property of the pullback from \cref{def.mono_epi},
\[
\begin{tikzcd}[ampersand replacement=\&]
	X\ar[ddr, bend right=20pt, "g_1"']\ar[drr, bend left=20pt, "g_2"]\ar[dr, dashed]
	\&[-10pt]\\[-10pt]
	\&A\ar[r, "\id_A"]\ar[d, "\id_A"']\&A\ar[d, "f"]\\
	\&A\ar[r, "f"']\&B\ar[ul, phantom, very near end, "\lrcorner"]
\end{tikzcd}
\]
because the dashed arrow is forced to equal both $g_1$ and $g_2$, thus forcing $g_1=g_2$.
\begin{enumerate}
	\item Suppose $f$ is a monomorphism, let $a_1,a_2\in A$ be elements, and suppose $f(a_1)=f(a_2)$. Let $X=\{*\}$ be a one element set, and let $g_1,g_2\colon X\to A$ be given by $g_1(*)\coloneqq a_1$ and $g_2(*)\coloneqq a_2$. Then $g_1\cp f=g_2\cp f$, so $g_1=g_2$, so $a_1=a_2$.
	\item Suppose that $f$ is an injection, let $X$ be any set, and let $g_1,g_2\colon X\to A$ be such that $g_1\cp f=g_2\cp f$. We will have $g_1=g_2$ if we can show that $g_1(x)=g_2(x)$ for every $x\in X$. So take any $x\in X$; since $f(g_1(x))=f(g_2(x))$ and $f$ is injective, we have $g_1(x)=g_2(x)$ as desired.
\end{enumerate}
}

\sol{exc.pullback_iso_iso}{
\begin{enumerate}
	\item	Show that the pullback of an isomorphism along any morphism is an isomorphism. That is, suppose that $i\colon B'\to B$ is an isomorphism and $f\colon A\to B$ is any morphism. Show that $i'$ is an isomorphism, in the following diagram:
	\[
	\begin{tikzcd}[ampersand replacement=\&]
		A'\ar[r, "f'"]\ar[d, pos=.6, "i'"', "\cong"]\&B'\ar[d, pos=.6, "i", "\cong"']\\
		A\ar[r, "f"']\&B\ar[ul, phantom, very near end, "\lrcorner"]
	\end{tikzcd}
	\]	
	\item Show that for any map $f\colon A\to B$, the square shown is a pullback:
	\[
	\begin{tikzcd}
		A\ar[r, "f"]\ar[d, equal]&
		A\ar[d, equal]\\
		B\ar[r, "f"']&
		B\ar[ul, phantom, very near end, "\lrcorner"]
	\end{tikzcd}
	\]
\end{enumerate}
}{
\begin{enumerate}
	\item Suppose we have a pullback as shown, where $i$ is an isomorphism:
  	\[
  	\begin{tikzcd}[ampersand replacement=\&]
  		A'\ar[r, "f'"]\ar[d, "i'"']\&B'\ar[d, "i", "\cong"']\\
  		A\ar[r, "f"']\&B\ar[ul, phantom, very near end, "\lrcorner"]
  	\end{tikzcd}
  	\]
  	Let $j\coloneqq i\inv$ be the inverse of $i$, and consider $g\coloneqq (f\cp j)\colon A\to B'$. Then $g\cp i=f$, so by the existence part of the universal property, there is a map $j'\colon A\to A'$ such that $j'\cp i'=\id_A$ and $j'\cp f'=f\cp j$. We will be done if we can show $i'\cp j'=\id_{A'}$. One checks that $(i'\cp j')\cp i'=i'$ and that $(i'\cp j')\cp f'=i'\cp f\cp j=f'\cp i\cp j=f'$. But $\id_{A'}$ also satisfies those properties: $\id_{A'}\cp i'=i'$ and $\id_{A'}\cp f'=f'$, so by the uniqueness part of the universal property, $(i'\cp j')=\id_{A'}$.
	\item We need to show that the following diagram is a pullback:
		\[
	\begin{tikzcd}[ampersand replacement=\&]
		A\ar[r, "f"]\ar[d, equal]\&
		B\ar[d, equal]\\
		A\ar[r, "f"']\&
		B\ar[ul, phantom, very near end, "\lrcorner"]
	\end{tikzcd}
	\]
	So take any object $X$ and morphisms $g\colon X\to A$ and $h\colon X\to B$ such that $g\cp f=h\cp\id_B$. We need to show there is a unique morphism $r\colon X\to A$ such that $r\cp\id_A=g$ and $r\cp f=h$. That's easy: the first requirement forces $r=g$ and the second requirement is then fulfilled.
\end{enumerate}
}

\sol{exc.monos_pb_pasting}{
Let $\cat{C}$ be a category and suppose the following diagram is a pullback in $\cat{C}$:
\[
\begin{tikzcd}[ampersand replacement=\&]
	A'\ar[r]\ar[d, "f'"']\&A\ar[d, tail, "f"]\\
	B'\ar[r]\&B\ar[ul, phantom, very near end, "\lrcorner"]
\end{tikzcd}
\]
Use \cref{prop.pullback_pasting,exc.pullback_iso_iso} to show that if $f$ is a monomorphism, then so is $f'$.
}{
Consider the diagram shown left, in which all three squares are pullbacks:
\[
\begin{tikzcd}[ampersand replacement=\&, sep=small]
	\&\&
	A\ar[rd, equal]\ar[dd, equal]\\
	\&A'\ar[rr, crossing over, pos=.25, "g"]\&\&
	A\ar[dd, pos=.25, "f"]\\
	A'\ar[rr, pos=.75, "g"']\ar[dr, "f'"']\&\&
	A\ar[rd, "f"]\\
	\&B'\ar[from=uu, crossing over, pos=.25, "f'"']\ar[rr, "h"']\&\&
	B
\end{tikzcd}
\hspace{1in}
\begin{tikzcd}[ampersand replacement=\&, sep=small]
	A'\ar[rr, "g"]\ar[dd, equal]\ar[rd, equal]\ar[drrr, gray, dotted]\&\&
	A\ar[rd, equal]\ar[dd, equal]\\
	\&A'\ar[rr, crossing over, pos=.25, "g"]\&\&
	A\ar[dd, pos=.25, "f"]\\
	A'\ar[rr, pos=.75, "g"']\ar[dr, "f'"']\ar[drrr, gray, dotted]\&\&
	A\ar[rd, "f"]\\
	\&B'\ar[from=uu, crossing over, pos=.25, "f'"']\ar[rr, "h"']\&\&
	B
\end{tikzcd}
\]
The front and bottom squares are the same---the assumed pullback---and the right-hand square is a pullback because $f$ is assumed monic. We can complete it to the commutative diagram shown right, where the back square and top square are pullbacks by \cref{exc.pullback_iso_iso}. Our goal is to show that the left-hand square is a pullback.

To do this, we use two applications of the pasting lemma, \cref{exc.pullback_pasting}. 
Since the right-hand face is a pullback and the back face is a pullback, the diagonal rectangle (lightly drawn) is also a pullback. Since the front face is a pullback, the left-hand face is also a pullback.
}

\sol{exc.epi_mono_practice}{
Factor the following function $f\colon \ord{3}\to \ord{3}$ as an epimorphism followed by a monomorphism.
\[
  \begin{tikzpicture}
		\foreach \x in {0,...,2} 
			{\draw (0,.4-.4*\x) node (X0\x) {$\bullet$};}
		\node[draw, ellipse, inner sep=0pt, fit=(X00) (X02)] (X0) {};
		\foreach \x in {0,...,2} 
			{\draw (2,.4-.4*\x) node (Y0\x) {$\bullet$};}
		\node[draw, ellipse, inner sep=0pt, fit=(Y00) (Y02)] (Y0) {};
		\draw[mapsto] (X00.center) -- (Y01.center);
		\draw[mapsto] (X01.center) -- (Y01.center);
		\draw[mapsto] (X02.center) -- (Y02.center);
  \end{tikzpicture}
\]
}
{
The following is an epi-mono factorization of $f$:
\[
  \begin{tikzpicture}
		\foreach \x in {0,...,2} 
			{\draw (0,.4-.4*\x) node (X0\x) {$\bullet$};}
		\node[draw, ellipse, inner sep=0pt, fit=(X00) (X02)] (X0) {};
		\foreach \x in {1,...,2} 
			{\draw (2,.4-.4*\x) node (Y0\x) {$\bullet$};}
		\node[draw, ellipse, inner sep=0pt, fit=(Y01) (Y02)] (Y0) {};
		\foreach \x in {0,...,2} 
			{\draw (4,.4-.4*\x) node (Z0\x) {$\bullet$};}
		\node[draw, ellipse, inner sep=0pt, fit=(Z00) (Z02)] (Y0) {};
		\begin{scope}[short=-2pt, mapsto]
  		\draw (X00) -- (Y01);
  		\draw (X01) -- (Y01);
  		\draw (X02) -- (Y02);
			\draw (Y01) -- (Z01);
			\draw (Y02) -- (Z02);
		\end{scope}
  \end{tikzpicture}
\]
}

\sol{ex.ccposet_quantale}{
Let $\cat{V}=(V,\leq,I,\otimes,\multimap)$ be a (unital, commutative) quantale---see
\cref{def.quantale}---and suppose it satisfies the following for all $v,w,x\in V$:
\begin{itemize}
	\item $v\leq I$,
	\item $v\otimes w\leq v$ and $v\otimes w\leq w$
	\item $if $x\leq v$ and $x\leq w$ then $x\leq v\otimes w$.
\end{enumerate}
\begin{enumerate}
	\item Show that $\cat{V}$ is a cartesian closed category, in fact a cartesian closed preorder.
	\item Can every cartesian closed preorder be obtained in this way?
\qedhere
\end{enumerate}
}{
\begin{enumerate}
  \item If $\cat{V}$ is a quantale with the stated properties, then
  \begin{itemize}
  	\item $I$ serves as a top element: $v\leq I$ for all $v\in V$. 
		\item $v\otimes w$ serves as a meet operation, i.e.\ it satisfies the same universal property as $\wedge$, namely $v\otimes w$ is a greatest lower bound for $v$ and $w$.
	\end{itemize}
	Now the $\multimap$ operation satisfies the same universal property as exponentiation (hom-object) does, namely $v\leq (w\multimap x)$ iff $v\otimes W\leq x$. So $\cat{V}$ is a cartesian closed category, and of course it is a preorder.
	\item Not every cartesian closed preorder comes from a quantale with the stated properties, because quantales have all joins and cartesian closed preorders need not. Finding a counterexample---a cartesian closed preorder that is missing some joins---takes some ingenuity, but it can be done. Here's one we came up with:
	\[
	\begin{tikzcd}[ampersand replacement=\&, sep=small, font=\small]
		(0,0)\ar[from=r]\ar[from=d]\&
		(0,1)\ar[from=r]\ar[from=d]\&
		(0,2)\ar[from=r]\ar[from=d]\&
		(0,3)\ar[from=r, -, dotted]\ar[from=d, -, dotted]
		\&{}\\
		(1,0)\ar[from=r]\ar[from=d]\&
		(1,1)\ar[from=r]\ar[from=d]\&
		(1,2)\ar[from=r, -, dotted]\ar[from=d, -, dotted]
		\&{}\\
		(2,0)\ar[from=r]\ar[from=d]\&
		(1,1)\ar[from=r, -, dotted]\ar[from=d, -, dotted]
		\&{}\\
		(3,0)\ar[from=r, -, dotted]\ar[from=d, -, dotted]
		\&{}\\
		{}
	\end{tikzcd}
	\]
	This is the product preorder $\nn\op\times\nn\op$: its objects are pairs $(a,b)\in\nn\times\nn$ with $(a,b)\leq(a',b')$ iff, in the usual ordering on $\nn$ we have $a'\leq a$ and $b'\leq b$. But you can just look at the diagram.\\
	
	It has a top element, $(0,0)$, and it has binary meets, $(a,b)\wedge(a',b')=(\max(a,a'),\max(b,b'))$. But it has no bottom element, so it has no empty join. Thus we will be done if we can show that for each $x,y$, the hom-object $x\multimap y$ exists. The formula for it is $x\multimap y=\bigvee\{w\mid w\wedge x\leq y\}$, i.e.\ we need these particular joins to exist. Since $y\wedge x\leq y$, we have $y\leq x\multimap y$. So we can replace the formula with $x\multimap y=\bigvee\{w\mid y\leq w\text{ and }w\wedge x\leq y\}$. But the set of elements in $\nn\op\times\nn\op$ that are bigger than $y$ is finite and nonempty.%
	\footnote{If $y=(a,b)$ then there are exactly $(a+1)*(b+1)$ elements $y'$ for which $y\leq y'$.}
	So this is a finite nonempty join, and $\nn\op\times\nn\op$ has all finite nonempty joins: they are given by $\inf$.
\end{enumerate}
}

\sol{exc.characteristic_practice}{
  Let $X=\NN=\{0,1,2,\ldots\}$ and $Y=\ZZ=\{\ldots,-1,0,1,2,\ldots\}$; we have
  $X\ss Y$, so consider it as a monomorphism $m\colon X\to Y$. It has a
  characteristic function $\corners{m}\colon Y\to\BB$, as in
  \cref{def.subobject_classifier}.
  \begin{enumerate}
    \item What is $\corners{m}(-5)\in\BB$?
    \item What is $\corners{m}(0)\in\BB$?  
	\end{enumerate}
}{
Let $m\colon \ZZ\to\BB$ be the characteristic function of the inclusion $\nn\ss\zz$.

\begin{enumerate*}[itemjoin=\hspace{1in}]
	\item $\corners{m}(-5)=\false$.
	\item $\corners{m}(0)=\true.$
\end{enumerate*}
}

\sol{exc.simple_char_funs}{
  \begin{enumerate}
    \item Consider the identity function $\id_\NN\colon \NN\to \NN$. It is an
      injection, so it has a characteristic function $\corners{\id_\NN}\colon
      \NN\to\BB$.  Give a concrete description of $\corners{\id_\NN}$, i.e.\ its
      exact value for each natural number $n\in\NN$.
    \item Consider the unique function $!_\NN\colon\varnothing\to\NN$ from the
      empty set. Give a concrete description of $\corners{!_\NN}$.  
\end{enumerate}
}{
\begin{enumerate}
	\item The characteristic function $\corners{\id_\NN}\colon\NN\to\BB$ sends each $n\in\nn$ to $\true$.
	\item Let $!_\NN\colon\varnothing\to\nn$ be the inclusion of the empty set. The characteristic function $\corners{!_\NN}\colon\NN\to\BB$ sends each $n\in\nn$ to $\false$.
\end{enumerate}
}

\sol{exc.neg_char}{
Every boolean has a negation, $\neg\false=\true$ and $\neg\true=\false$. The function $\neg\colon\BB\to\BB$ is the characteristic function of some thing, (*?*).
\begin{enumerate}
	\item What sort of thing should (*?*) be? For example, should $\neg$ be the characteristic function of an object? A topos? A morphism? A subobject? A pullback diagram?
	\item Now that you know the sort of thing (*?*) is, which thing of that sort is it?
\end{enumerate}
}{
\begin{enumerate}
	\item The sort of thing (*?*) we're looking for is a subobject of $\BB$, say $A\ss\BB$. This would have a characteristic function, and we're trying to find the $A$ for which the characteristic function is $\neg\colon\BB\to\BB$.
	\item The question now asks ``what is $A$?'' The answer is $\{\false\}\ss\BB$.
\end{enumerate}
}

\sol{exc.implies_char}{
Given two booleans $P,Q$, define $P\imp Q$ to mean $P=(P\wedge Q)$.
\begin{enumerate}
	\item Write down the truth table for the statement $P=(P\wedge Q)$:
	\[
	\begin{array}{cc||c|c}
		P&Q&P\wedge Q&P=(P\wedge Q)\\
		\true&\true&\?&\?\\
		\true&\false&\?&\?\\
		\false&\true&\?&\?\\
		\false&\false&\?&\?\\
	\end{array}
	\]
	\item If you already have an idea what $P\imp Q$ should mean, does it agree with the last column of table above?
	\item What is the characteristic function $m\colon \BB\times\BB\to\BB$ for $P\imp Q$?
	\item What subobject does $m$ classify?
\end{enumerate}
}{
\begin{enumerate}
	\item Here is the truth table for $P=(P\wedge Q)$:
	\begin{equation}\label{eqn.truth_table_imp}
	\begin{array}{cc||c|c}
		P&Q&P\wedge Q&P=(P\wedge Q)\\
		\true&\true&\true&\true\\
		\true&\false&\false&\false\\
		\false&\true&\false&\true\\
		\false&\false&\false&\true\\
	\end{array}
	\end{equation}
	\item Yes!
	\item The characteristic function for $P\imp Q$ is the function $\corners{\imp}\colon\BB\times\BB\to\BB$ given by the first, second, and fourth column of \cref{eqn.truth_table_imp}.
	\item It classifies the subset $\{(\true,\true),(\false,\true),(\false,\false)\ss\BB\times\BB$.
\end{enumerate}
}

\sol{exc.even_prime_10}{
Consider the sets $E\coloneqq\{n\in\NN\mid n\text{ is even}\}$, $P\coloneqq\{n\in\NN\mid n\text{ is prime}\}$, and $T\coloneqq\{n\in\NN\mid n\geq 10\}$. Each is a subset of $\NN$, so defines a function $\NN\to\BB$.
\begin{enumerate}
	\item What is $\corners{E}(17)$?
	\item What is $\corners{P}(17)$?
	\item What is $\corners{T}(17)$?
	\item Name the smallest three elements in the set classified by $(\corners{E}\wedge\corners{P})\vee\corners{T}$.
\end{enumerate}
}{
Say that $\corners{E},\corners{P},\corners{T}\colon\nn\to\bb$ classify respectively the subsets $E\coloneqq\{n\in\NN\mid n\text{ is even}\}$, $P\coloneqq\{n\in\NN\mid n\text{ is prime}\}$, and $T\coloneqq\{n\in\NN\mid n\geq 10\}$ of $\nn$.
\begin{enumerate}
	\item $\corners{E}(17)=\false$ because $17$ is not even.
	\item $\corners{P}(17)=\true$ because $17$ is prime.
	\item $\corners{T}(17)=\true$ because $17\geq 10$.
	\item The set classified by $(\corners{E}\wedge\corners{P})\vee\corners{T}$ is that of all natural numbers that are either above 10 or an even prime. The smallest three elements of this set are $2, 10, 11$.
\end{enumerate}
}

\sol{ex.usual_top_R}{
\begin{enumerate}
	\item What is the 1-dimensional analogue of $\epsilon$-balls as found in \cref{ex.usual_R}? That is, for each $x\in\RR$, define $B(x,\epsilon)$.
	\item When is an arbitrary subset $U\ss\RR$ called open, in analogy with \cref{ex.usual_R}?
	\item Find three open sets $U_1$, $U_2$, and $U$ in $\RR$, such that $(U_i)_{i\in\{1,2\}}$ covers $U$.
	\item Find an open set $U$ and a collection $(U_i)_{i\in I}$ of opens sets where $I$ is infinite, such that $(U_i)_{i\in I}$ covers $U$.
\end{enumerate}
}{
\begin{enumerate}
	\item The 1-dimensional analogue of an $\epsilon$-ball around a point $x\in\RR$ is $B(x,\epsilon)\coloneqq\{x'\in\RR\mid |x-x'|<\epsilon\}$, i.e.\ the set of all points within $\epsilon$ of $x$.
	\item A subset $U\ss\RR$ is open if, for every $x\in U$ there is some $\epsilon>0$ such that $B(x,\epsilon)\ss U$.
	\item Let $U_1\coloneqq\{x\in\RR\mid 0<x<2\}$ and $U_2\coloneqq\{x\in\RR\mid 1<x<3\}$. Then $U\coloneqq U_1\cup U_2=\{x\in\RR\mid 0<x<3\}$.
	\item Let $I=\{1,2,3,4,\ldots\}$ and for each $i\in I$ let $U_i\coloneqq\{x\in\RR\mid \frac{1}{i}<x<1\}$, so we have $U_1\ss U_2\ss U_3\ss\cdots$. Their union is $U\coloneqq\bigcup_{i\in I}U_i=\{x\in\RR\mid 0<x<1\}$.
\end{enumerate}
}

\sol{exc.course_fine}{
\begin{enumerate}
	\item Verify that for any set $X$, what we called $\Op_{\mathrm{crse}}$ in \cref{ex.coarse_fine} really is a topology, i.e.\ satisfies the conditions of \cref{def.topological_space}.
	\item Verify also that $\Op_{\mathrm{fine}}$ really is a topology.
	\item Show that if $(X,\powset(X))$ is discrete and $(Y,\Op_Y)$ is any topological space, then every function $X\to Y$ is continuous.
\end{enumerate}
}{
\begin{enumerate}
	\item The coarse topology on $X$ is the one whose only open sets are $X\ss X$ and $\varnothing\ss X$. This is a topology because it contains the top and bottom subsets, it is closed under finite intersection (the intersection $A\cap B$ is $\varnothing$ iff one or the other is $\varnothing$), and it is closed under arbitrary union (the union $\bigcup_{i\in I}A_i$ is $X$ iff $A_i=X$ for some $i\in I$).
	\item The fine topology on $X$ is the one where every subset $A\ss X$ is considered open. All the conditions on a topology say ``if such-and-such then such-and-such is open,'' but these are all satisfied because everything is open!
	\item If $(X,\powset(X))$ is discrete, $(Y,\Op_Y)$ is any topological space, and $f\colon X\to Y$ is any function then it is continuous. Indeed, this just means that for any open set $U\ss Y$ the preimage $f\inv(U)\ss X$ is open, and everything in $X$ is open.
\end{enumerate}
}

\sol{exc.opens_Sierp}{
Recall the Sierpinski space, say $(X,\Op_1)$ from \cref{ex.Sierpinski}.
\begin{enumerate}
	\item Write down the Hasse diagram for its preorder of opens.
	\item Write down all the coverings.
\end{enumerate}
}{
\begin{enumerate}
	\item The Hasse diagram for the Sierpinsky topology is \fbox{$\varnothing\to\{1\}\to\{1,2\}$}.
	\item A set $(U_i)_{i\in I}$ covers $U$ iff either
	\begin{itemize}
		\item $I=\varnothing$ and $U=\varnothing$; or
		\item $U_i=U$ for some $i\in I$.
	\end{itemize}
	In other words, the only way that some collection of these sets could cover another set $U$ is if that collection contains $U$ or if $U$ is empty and the collection is also empty.
\end{enumerate}
}

\sol{exc.subspace_topology}{
  Given any topological space $(X,\Op)$, any subset $Y\subseteq X$ can be given the
  \emph{subspace topology}, call it $\Op_{?\cap Y}$. This topology defines any $A \subseteq Y$ to be open, $A\in\Op_{?\cap Y}$,
if there is an open set $B\in\Op$ such that $A = B \cap Y$.
\begin{enumerate}
	\item Find a $B\in\Op$ that shows that the whole set $Y$ is open, i.e.\ $Y\in\Op_{?\cap Y}$.
	\item Show that $\Op_{?\cap Y}$ is a topology in the sense of \cref{def.topological_space}.%
	\footnote{Hint 1: for any set $I$, collection of sets $(U_i)_{i\in I}$ with $U_i\ss X$, and set $V\ss X$, one has $\left(\bigcup_{i\in I}U_i\right)\cap V=\bigcup_{i\in I}(U_i\cap V)$. Hint 2: for any $U, V, W\ss X$, one has $(U\cap W)\cap (V\cap W)=(U\cap V)\cap W$.}
	\item Show that the inclusion function $Y \hookrightarrow X$ is a
	  continuous function.
\end{enumerate}
}{
Let $(X,\Op)$ be a topological space, suppose that $Y\ss X$ is a subset, and consider the subspace topology $\Op_{?\cap Y}$.
\begin{enumerate}
	\item We want to show that $Y\in\Op_{?\cap Y}$. We need to find $B\in\Op$ such that $Y=B\cap Y$; this is easy, you could take $B=Y$ or $B=X$, or anything in between.
	\item We still need to show that $\Op_{?\cap Y}$ contains $\varnothing$ and is closed under finite intersection and arbitrary union. $\varnothing=\varnothing\cap Y$, so according to the formula, $\varnothing\in\Op_{?\cap Y}$. Suppose that $A_1,A_2\in\Op_{?\cap Y}$. Then there exist $B_1,B_2\in\Op$ with $A_1=B_1\cap Y$ and $A_2=B_2\cap Y$. But then $A_1\cap A_2=(B_1\cap Y)\cap (B_2\cap Y)=(B_1\cap B_2)\cap Y$, so it is in $\Op_{?\cap Y}$ since $B_1\cap B_2\in\Op$. The same idea works for arbitrary unions: given a set $I$ and $A_i$ for each $i\in I$, we have $A_i=B_i\cap Y$ for some $B_i\in\Op$, and
	\[\bigcup_{i\in I}A_i=\bigcup_{i\in I}(B_i\cap Y)=\left(\bigcup_{i\in i}B_i\right)\cap Y\in\Op_{?\cap Y}.\]
\end{enumerate}
}

\sol{exc.top_sp_quantale}{
In \cref{subsec.Lawv_metric_spaces,subsec.preorders_Bool_enriched} we discussed how $\Bool$-categories are preorders and $\Cost$-categories are Lawvere metric spaces, and in \cref{subsec.variations_quantale} we imagined interpretations of $\cat{V}$-categories for other quantales $\cat{V}$.

If $(X,\Op)$ is a topological space and $\cat{V}$ the corresponding quantale as in \cref{rem.top_sp_quantale}, how might we imagine a $\cat{V}$-category? 
}{
Let's imagine a $\cat{V}$-category $\cat{C}$, where $\cat{V}$ is the quantale corresponding to the open sets of a topological space $(X,\Op)$. Its Hasse diagram would be a set of dots and some arrows between them, each labeled by an open set $U\ss\Op$. It might look something like this:
\[
\begin{tikzpicture}[font=\scriptsize, x=1cm]
	\node (a) {$\LMO{A}$};
	\node[right=1 of a] (b) {$\LMO{B}$};
	\node[below=1 of a] (c) {$\LMO[under]{C}$};
	\node[right=1 of c] (d) {$\LMO[under]{D}$};
	\draw[->] (c) to node[above left=-1pt and -1pt] {$U_5$} (b);
	\draw[bend right,->] (a) to node[left] {$U_1$} (c);
	\draw[bend left,->] (d) to node[below] {$U_2$} (c);
	\draw[bend right,->] (b) to node[above] {$U_3$} (a);
	\draw[bend left,->] (b) to node[right] {$U_4$} (d);
	\node[draw, inner sep=20pt, fit=(a) (b) (c) (d)] (X) {};
	\node[left=0 of X, font=\normalsize] {$\cat{C}\coloneqq$};
\end{tikzpicture}
\] 
Recall from \cref{sec.enrichment} that the `distance' between two points is computed by taking the join, over all paths between them, of the monoidal product of distances along that path. For example, $\cat{C}(B,C)=(U_3\wedge U_1)\vee(U_4\wedge U_2)$, because $\wedge$ is the monoidal product in $\cat{V}$.\\

In general, we can thus imagine the open set $\cat{C}(a,b)$ as a kind of `size restriction' for getting from $a$ to $b$, like bridges that your truck needs to pass under. The size restriction for getting from $a$ to itself is $X$: no restriction. In general, to go on any given route (path) from $a$ to $b$, you have to fit under every bridge in the path, so we take their meet. But we can go along any path, so we take the join over all paths.
}

\sol{exc.fiber_practice}{
Consider the function $f\colon X\to Y$ shown in \cref{eqn.sections_of_function}.
\begin{enumerate}
	\item What is the fiber of $f$ over $a$?
	\item What is the fiber of $f$ over $c$?
	\item What is the fiber of $f$ over $d$?
	\item Gave an example of a function $f'\colon X\to Y$ for which every fiber has either one or two elements.
\end{enumerate}
}{
\begin{equation}\label{eqn.sections_of_function_redraw}
% [inline block 71: 3 envs, 4466 chars -> data_tex | \begin{tikzpicture}[y=.35cm, baseline=(f),  	every label quotes/.style={font=\scriptsize, above, label distance=-5pt}]...]

\]
	\item When $V_2=\{a,b,c,d\}$, there are no sections: $\Fun{Sec}_f(V_2)=\varnothing$.
	\item When $V_3=\{a,b,d,e\}$, the set $\Fun{Sec}_f(V_3)$) has $2*3*1*2=12$ elements.
\end{enumerate}
}

\sol{exc.section_practice}{
\begin{enumerate}
	\item Write out the sets of sections $\Fun{Sec}_f(\{a,b,c\})$ and $\Fun{Sec}_f(\{a,c\})$.
	\item Draw lines from the first to the second to indicate the restriction map.
\end{enumerate}
}{
$\Fun{Sec}_f(\{a,b,c\})$ and $\Fun{Sec}_f(\{a,c\})$ are drawn as the top row (six-element set) and bottom row (two-element set) below, and the restriction map is also shown:
\[
\begin{tikzcd}[ampersand replacement=\&, column sep=8pt]
	(a_1,b_1,c_1)\ar[dr, |->]\&
	(a_1,b_2,c_1)\ar[d, |->]\&
	(a_1,b_3,c_1)\ar[dl, |->]\&
	(a_2,b_1,c_1)\ar[dr, |->]\&
	(a_2,b_2,c_1)\ar[d, |->]\&
	(a_2,b_3,c_1)\ar[dl, |->]\\
	\&(a_1,c_1)\&\&\&
	(a_2,c_1)
\end{tikzcd}
\]
}

\sol{exc.sections_agree_overlap}{
Again let $U_1=\{a,b\}$ and $U_2=\{b,e\}$, so the overlap is $U_1\cap U_2=\{b\}$.
\begin{enumerate}
	\item Find a section $g_1\in\Fun{Sec}_f(U_1)$ and a section $g_2\in\Fun{Sec}_f(U_2)$ that \emph{do not} agree on the overlap.
	\item For your answer ($g_1,g_2)$ in part 1, can you find a section $g\in\Fun{Sec}_f(U_1\cup U_2)$ such that $\restrict{g}{U_1}=g_1$ and $\restrict{g}{U_2}=g_2$?
	\item Find a section $h_1\in\Fun{Sec}_f(U_1)$ and a section $h_2\in\Fun{Sec}_f(U_2)$ that \emph{do} agree on the overlap, but which are different than our choice in \cref{eqn.one_section}.
	\item Can you find a section $h\in\Fun{Sec}_f(U_1\cup U_2)$ such that $\restrict{h}{U_1}=h_1$ and $\restrict{h}{U_2}=h_2$?
\end{enumerate}
}{
\begin{enumerate}
	\item Let $g_1\coloneqq(a_1,b_1)$ and $g_2\coloneqq(b_2,e_1)$; these do not agree on the overlap.
	\item No, there's no section $g\in\Fun{Sec}_f(U_1\cup U_2)$ for which $\restrict{g}{U_1}=g_1$ and $\restrict{g}{U_2}=g_2$.
\end{enumerate}
\[
	% [inline block 72: 1 envs, 3167 chars -> data_tex | \begin{tikzpicture}[y=.4cm, x=.4cm, every label quotes/.style={font=\scriptsize, above, label distance=-5pt}, baseline=(...]

\]
}

\sol{exc.whats_the_sheaf}{
If $M$ is a sphere as in \cref{ex.tangent_bundle}, we know from \cref{def.sheaf} that we can consider the category $\Shv(M)$ of sheaves on $M$; in fact, such categories are toposes and these are what we're getting to.

But are the sheaves on $M$ the vector fields? That is, is there a one-to-one correspondence between sheaves on $M$ and vector fields on $M$? If so, why? If not, how are sheaves on $M$ and vector fields on $M$ related?
}
{
No, there is not a one-to-one correspondence between sheaves on $M$ and vector fields on $M$. The relationship between sheaves on $M$ and vector fields on $M$ is that the \emph{set of all} vector fields on $M$ corresponds to \emph{one} sheaf, namely $\Fun{Sec}_\pi$, where $\pi\colon TM\to M$ is the tangent bundle as described in \cref{ex.tangent_bundle}. There are so many sheaves on $M$ that they don't even form a set (it's just a `collection'); again, one member of this gigantic collection is the sheaf $\Fun{Sec}_\pi$ of all possible vector fields on $M$.
}

\sol{exc.sierpinski}{
Consider the Sierpinski space $(\{1,2\},\Op_1)$ from \cref{ex.Sierpinski}.
\begin{enumerate}
	\item What is the category $\Op$ for this space? (You may have already figured this out in \cref{exc.opens_Sierp}; if not, do so now.)
	\item What does a presheaf on $\Op$ consist of?
	\item What is the sheaf condition for $\Op$?
	\item How do we identify a sheaf on $\Op$ with a function?
\end{enumerate}
}{
\begin{enumerate}
	\item The Hasse diagram for the Sierpinsky topology is \fbox{$\varnothing\to\{1\}\to\{1,2\}$}\,.
	\item A presheaf $F$ on $\Op$ consists of any three sets and any two functions $F(\{1,2\})\to F(\{1\})\to F(\varnothing)$ between them.
	\item Recall from \cref{exc.opens_Sierp} that the only non-trivial covering (a covering of $U$ is \emph{non-trivial} if it does not contain $U$) occurs when $U=\varnothing$ in which case the empty family over $U$ is a cover.
	\item As explained in \cref{ex.empty_cover}, $F$ will be a sheaf iff $F(\varnothing)\cong\{1\}$. Thus we the category of sheaves is equivalent to that of just two sets and one function $F(\{1,2\})\to F(\{1\})$.
\end{enumerate}
}

\sol{exc.booleans_as_subspace_1}{
Let $X=\Cat{1}$ be the one point space. We said above that its subobject classifier is the set $\bb$ of booleans, but how does that align with the definition of $\Omega$ given in \cref{eqn.omega_as_opens}?
}
{
The one-point space $X=\{1\}$ has two open sets, $\varnothing$ and $\{1\}$, and every sheaf $S\in\Shv(X)$ assigns $S(\varnothing)=\{()\}$ by the sheaf condition (see \cref{ex.empty_cover}). So the only data in a sheaf $S\in\Shv(X)$ is the set $S(\{1\})$. This is how we get the correspondence between sets and sheaves on the one point space.

According to \cref{eqn.omega_as_opens}, the subobject classifier $\Omega\colon\Op(X)\op\to\smset$ in $\Shv(X)$ should be the functor where $\Omega(\{1\})$ is the set of open sets of $\{1\}$. So we're hoping to see that there is a one-to-one correspondence between the set $\Op(\{1\})$ and the set $\bb=\{\true,\false\}$ of booleans. Indeed there is: there are two open sets of $\{1\}$, as we said, $\varnothing$ and $\{1\}$, and these correspond to $\false$ and $\true$ respectively.
}

\sol{exc.Omega_functorial}{
\begin{enumerate}
	\item Show that the definition of $\Omega$ given above in \cref{eqn.omega_as_opens,eqn.omega_open_rest} is functorial, i.e., that whenever $W\ss V\ss U$, the restriction map $\Omega(U)\to\Omega(V)$ followed by the restriction map $\Omega(V)\to\Omega(W)$ is the same as the restriction map $\Omega(U)\to\Omega(W)$.
	\item Is that all that's necessary to conclude that $\Omega$ is a presheaf?
\end{enumerate}
}{
By \cref{eqn.omega_as_opens,eqn.omega_open_rest} the definition of $\Omega(U)$ is $\Omega(U)\coloneqq\{U'\in\Op\mid U'\ss U\}$, and the definition of the restriction map for $V\ss U$ is $U'\mapsto U'\cap V$.
\begin{enumerate}
	\item It is functorial: given $W\ss V\ss U$ and $U'\ss U$, we indeed have $(U'\cap V)\cap W=U'\cap W$, since $W\ss V$. For functoriality, we also need preservation of identities, and this amounts to $U'\cap U=U'$ for all $U'\ss U$.
	\item	Yes, a presheaf is just a functor; the above check is enough.
\end{enumerate}
}

\sol{exc.classify_subgraph}{
Consider the subgraph $G'\ss G$ shown here:
\[
\boxCD{
\begin{tikzcd}[ampersand replacement=\&]
	\LMO{A}\ar[r]\&\LMO{B}\&\LMO{C}
\end{tikzcd}
}
\quad\ss\quad
\boxCD{
\begin{tikzcd}[ampersand replacement=\&]
	\LMO{A}\ar[r, shift left, "f"]\&\LMO{B}\ar[l, shift left, "g"]\ar[r, "h"]\&\LMO{C}\ar[r, "i"]\&\LMO{D}
\end{tikzcd}
}
\]
Find the graph homomorphism $\corners{G'}\colon G\to\Omega$ classifying it. See \cref{ex.subobject_classifier_graphs}.
}{
We need a graph homomorphism of the following form:
\[
\boxCD{
\begin{tikzcd}[ampersand replacement=\&]
	\LMO{A}\ar[r, shift left, "f"]\&\LMO{B}\ar[l, shift left, "g"]\ar[r, "h"]\&\LMO{C}\ar[r, "i"]\&\LMO{D}
\end{tikzcd}
}
\To{\quad\corners{G'}\quad}
\boxCD{
\begin{tikzcd}[column sep=70pt, ampersand replacement=\&]
	0
		\ar[loop left, "{(0,0;\ 0)}"]
		\ar[r, bend left=15pt, "{(0, V;\ 0)}"]
\&
	V
		\ar[loop above, "{(V, V;\ 0)}"]
		\ar[loop below, "{(V, V;\ A)}"]
		\ar[l, bend left=15pt, "{(V, 0;\ 0)}"]
\end{tikzcd}
}
\]
There is only one that classifies $G'$, and here it is. Let's write $\gamma\coloneqq\corners{G'}$.
\begin{itemize}
	\item Since $D$ is missing from $G'$, we have $\gamma(D)=0$ (vertex: missing).
	\item Since vertices $A,B,C$ are present in $G'$ we have $\gamma(A)=\gamma(B)=\gamma(C)=V$ (vertex: present). 
	\item The above forces $\gamma(i)=(V,0;0)$ (arrow from present vertex to missing vertex: missing).
	\item Since the arrow $f$ is in $G'$, we have $\gamma(f)=(V,V; A)$ (arrow from present vertex to present vertex: present).
	\item Since the arrows $g$ and $h$ are missing in $G'$, we have $\gamma(g)=\gamma(h)=(V,V; 0)$ (arrow from present vertex to present vertex: missing).
\end{itemize}
}

\sol{exc.real_line_logic}{
Consider the real line $\RR$ as a topological space, and consider the open subset $U=\RR-\{0\}$.
\begin{enumerate}
	\item What open subset is $\neg U$?
	\item What open subset is $\neg\neg U$?
	\item Is it true that $U\ss\neg\neg U$?
	\item Is it true that $\neg\neg U\ss U$?
\end{enumerate}	
}{
With $U=\RR-\{0\}\ss\RR$, we have:
\begin{enumerate}
	\item The complement of $U$ is $\RR-U=\{0\}$ and $\neg U$ is its interior, which is $\neg U=\varnothing$.
	\item The complement of $\neg U$ is $\RR-\varnothing=\RR$, and this is open, so $\neg\neg U=\RR$.
	\item It is true that $U\ss\neg\neg U$.
	\item It is false that $\neg\neg U\ss^?U$.
\end{enumerate}
}

\sol{exc.top_bot_practice}{
Let $(X,\Op)$ be a topological space.
\begin{enumerate}
	\item Suppose the symbol $\top$ corresponds to an open set such that for any open set $V\in\Op$, we have $(\top\wedge V)=V$. Which open set is it?
	\item Other things we should expect from $\top$ include $(\top\vee V)=\top$ and $(V\imp \top)=\top)$ and $(\top\imp V)=V$. Do these hold for your answer to 1?
	\item The symbol $\bot$ corresponds to an open set $U\in\Op$ such that for any open set $V\in\Op$, we have $(\bot\vee V)=V$. Which open set is it?
	\item Other things we should expect from $\bot$ include $(\bot\wedge V)=\bot$ and $(\bot\imp V)=\top)$. Do these hold for your answer to 1?
\end{enumerate} 
}{
\begin{enumerate}
	\item If for any $V\in\Op$ we have $\top\wedge V=V$ then when $V=X$ we have $\top\wedge X\coloneqq\top\cap X=X$, but anything intersected with $X$ is itself, so $\top=\top\cap X=X$.
	\item $(\top\vee V)\coloneqq (X\cup V)=X$ holds and $(V\imp X)=\bigcup_{\{R\in\Op\mid R\cap V\ss X\}}R=X$ holds because $(X\cap V)\ss X$.
	\item If for any set $V\in\Op$ we have $(\bot\vee V)=V$, then when $V=\emptyset$ we have $(\bot\vee\varnothing)=(\bot\cup\varnothing)=\varnothing$, but anything unioned with $\varnothing$ is itself, so $\bot=\bot\cup\varnothing=\varnothing$.
	\item $(\bot\wedge V)=(\varnothing\cap V)=\varnothing$ holds, and $(\bot\imp V)=\bigcup_{\{R\in\Op\mid R\cap\varnothing\ss V\}}R=X$ holds because $(X\cap\varnothing)\ss V$.
\end{enumerate}
}

\sol{exc.weather_bob}{
Just now we described how a predicate $p\colon S\to\Omega$, such as ``\dots likes the weather,'' acts on sections $s\in S(U)$, say $s=\mathrm{Bob}$. But by \cref{def.subobject_classifier}, any predicate $p\colon S\to\Omega$ to the subobject classifier also defines a subobject of $\{S\mid p\}\ss S$. Describe the sections of this subsheaf.
}{
$S$ is the sheaf of people, the set of which changes over time: a section in $S$ over any interval of time is a person who is alive throughout that interval. A section in the subobject $\{S\mid p\}$ over  any interval of time is a person who is alive \emph{and likes the weather} throughout that interval of time.
}

\sol{exc.vdash}{
Give an example of a space $X$, a sheaf $S\in\Shv(X)$, and two predicates $p,q\colon S\to\Omega$ for which $p(s)\vdash_{s:S} q(s)$ holds. You do not have to be formal.
}
{
We need an example of a space $X$, a sheaf $S\in\Shv(X)$, and two predicates $p,q\colon S\to\Omega$ for which $p(s)\vdash_{s:S} q(s)$ holds. Take $X$ to be the one-point space, take $S$ to be the sheaf corresponding to the set $S=\nn$, let $p(s)$ be the predicate ``$24\leq s\leq 28$,'' and let $q(s)$ be the predicate ``$s$ is not prime.'' Then $p(s)\vdash_{s:S} q(s)$ holds.\\

As an informal example, take $X$ to be the surface of the earth, take $S$ to be the sheaf of vector fields as in \cref{ex.tangent_bundle} thought of in terms of wind-blowing. Let $p$ be the predicate ``the wind is blowing due east at somewhere between 2 and 5 kilometers per hour'' and let $q$ be the predicate ``the wind is blowing at somewhere between 1 and 5 kilometers per hour.'' Then $p(s)\vdash_{s:S} q(s)$ holds. This means that for any open set $U$, if the wind is blowing due east at somewhere between 2 and 5 kilometers per hour throughout $U$, then the wind is blowing at somewhere between 1 and 5 kilometers per hour throughout $U$ as well.
}
\sol{exc.predicate_practice}{
In the topos $\Cat{Set}$, where $\Omega=\BB$, consider the predicate $p\colon\NN\times\ZZ\to\BB$ given by
\[
  p(n,z)=
  \begin{cases}
    \true&\tn{ if }n\leq|z|\\
    \false&\tn{ if }n>|z|.
  \end{cases}
\]
\begin{enumerate}
	\item What is the set of $n\in\NN$ for which the predicate $\forall(z:\ZZ)\ldotp p(n,z)$ holds?
	\item What is the set of $n\in\NN$ for which the predicate $\exists(z:\ZZ)\ldotp p(n,z)$ holds?
	\item What is the set of $z\in\ZZ$ for which the predicate $\forall(n:\NN)\ldotp p(n,z)$ holds?
	\item What is the set of $z\in\ZZ$ for which the predicate $\exists(n:\NN)\ldotp p(n,z)$ holds?	
\end{enumerate}
}{
We have the predicate $p\colon\NN\times\ZZ\to\BB$ given by $p(n,z)$ iff $n\leq|z|$.
\begin{enumerate}
	\item The predicate $\forall(z:\ZZ)\ldotp p(n,z)$ holds for $\{0\}\ss\NN$.
	\item The predicate $\exists(z:\ZZ)\ldotp p(n,z)$ holds for $\NN\ss\NN$.
	\item The predicate $\forall(n:\NN)\ldotp p(n,z)$ holds for $\varnothing\ss\ZZ$.
	\item The predicate $\exists(n:\NN)\ldotp p(n,z)$ holds for $\ZZ\ss\ZZ$.
\end{enumerate}
}

\sol{exc.worrying_news_universal}{
Suppose $s$ is a person alive throughout the interval $U$. Apply the above definition to the example $p(s,t)=$ ``person $s$ is worried about news $t$'' from above. Here, $T(V)$ is the set of items that are in the news throughout the interval $V$.
\begin{enumerate}
  \item What open subset of $U$ is $\forall(t:T)\ldotp p(s,t)$ for a person $s$?
  \item Does it have the semantic meaning you'd expect, given the less formal description in \cref{subsec.quantification}?
\end{enumerate}
}{
Suppose $s$ is a person alive throughout the interval $U$. Apply the above definition to the example $p(s,t)=$ ``person $s$ is worried about news $t$'' from above.
\begin{enumerate}
	\item The formula says that $\forall(t:T)\ldotp p(s,t)$ ``returns the largest open set $V\ss U$ for which $p(\restrict{s}{V},t)=V$ for all $t\in T(V)$.'' Note that $T(V)$ is the set of items that are in the news throughout the interval $V$. Substituting, this becomes ``the largest interval of time $V\ss U$ over which person $s$ is worried about news $t$ for every item $t$ that is in the news throughout $V$.'' In other words, for $V$ to be nonempty, the person $s$ would have to be worried about \emph{every single item of news} throughout $V$. My guess is that there's a festival happening or a happy kitten somewhere that person $s$ is not worried about, but maybe I'm assuming that person $s$ is sufficiently mentally ``normal.'' There may be people who are sometimes worried about literally everything in the news; we ask you to please be kind to them. 
	\item Yes, it is exactly the same description.
\end{enumerate}
}

\sol{exc.worrying_news_existential}{
Apply the above definition to the ``person $s$ is worried about news $t$'' example above. \begin{enumerate}
  \item What open set is $\exists(t:T)\ldotp p(s,t)$ for a person $s$?
  \item Does it have the semantic meaning you'd expect?
\end{enumerate}
}{
Suppose $s$ is a person alive throughout the interval $U$. Apply the above definition to the example $p(s,t)=$ ``person $s$ is worried about news $t$'' from above.
\begin{enumerate}
	\item The formula says that $\exists(t:T)\ldotp p(s,t)$ ``returns the union $V=\bigcup_iV_i$ of all the open sets $V_i$ for which there exists some $t_i\in T(V_i)$ satisfying $p(\restrict{s}{V_i},t_i)=V_i$.'' Substituting, this becomes ``the union of all time intervals $V_i$ for which there is some item $t_i$ in the news about which $s$ is worried throughout $V_i$.'' In other words it is all the time that $s$ is worried about at least one thing in the news. Perhaps when $s$ is sleeping or concentrating on something, she is not worried about anything, in which case intervals of sleeping or concentrating would not be subsets of $V$. But if $s$ said ``there's been such a string of bad news this past year, it's like I'm always worried about something!,'' she is saying that it's like $V=$``this past year.''
	\item This seems like a good thing for ``there exists a piece of news that worries $s$'' to mean: the news itself is allowed to change as long as the person's worry remains. Someone might disagree and think that the predicate should mean ``there is one piece of news that worries $s$ throughout the whole interval $V$.'' In that case, perhaps this person is working within a different topos, e.g.\ one where the site has fewer coverings. Indeed, it is the notion of covering that makes existential quantification work the way it does.
\end{enumerate}
}

\sol{exc.two_defs_of_closure}{
Suppose $j\colon\Omega\to\Omega$ is a morphism of sheaves on $X$, such that $p\leq j(p)$ holds for all $U\ss X$ and $p\in\Omega(U)$. Show that for all $q\in\Omega(U)$ we have $j(j(q))\leq j(q)$ iff $j(j(q))=j(q)$.
}
{
It is clear that if $j(j(q))=j(q)$ then $j(j(q))\leq j(q)$ by reflexivity. On the other hand, assume the hypothesis, that $p\leq j(p)$ for all $U\ss X$ and $p\in\Omega(U)$. If $j(j(q))\leq j(q)$, then letting $p\coloneqq j(q)$ we have both $j(p)\leq p$ and $p\leq j(p)$. This means $p\cong j(p)$, but $\Omega$ is a poset (not just a preorder) so $p=j(p)$, i.e.\ $j(j(q))= j(q)$ as desired.
}

\sol{exc.check_modality}{
Let $S$ be the sheaf of people as in \cref{subsec.predicates}, and let $j\colon\Omega\to\Omega$ be ``assuming Bob is in San Diego...'' 
\begin{enumerate}
	\item Name any predicate $p\colon S\to\Omega$, such as ``likes the weather.''
	\item Choose a time interval $U$. For an arbitrary person $s\in S(U)$, what sort of thing is $p(s)$, and what does it mean?
	\item What sort of thing is $j(p(s))$ and what does it mean?
	\item Is it true that $p(s)\leq j(p(s))$? Explain briefly.
	\item Is it true that $j(j(p(s))=j(p(s)$? Explain briefly.
	\item Choose another predicate $q\colon S\to\Omega$. Is it true that $j(p\wedge q)=j(p)\wedge j(q)$? Explain briefly.
\end{enumerate}
}
{
Let $S$ be the sheaf of people and $j$ be ``assuming Bob is in San Diego...''
\begin{enumerate}
	\item Take $p(s)$ to be ``$s$ likes the weather.''
	\item Let $U$ be the interval 2019/01/01 -- 2019/02/01. For an arbitrary person $s\in S(U)$, $p(s)$ is a subset of $U$, and it means the subset of $U$ throughout which $s$ likes the weather.
	\item Similarly $j(p(s))$ is a subset of $U$, and it means the subset of $U$ throughout which, assuming Bob is in San Diego, $s$ liked the weather. In other words, $j(p(s))$ is true whenever Bob is not in San Diego, and it is true whenever $s$ likes the weather.
	\item It is true that $p(s)\leq j(p(s))$, by the `in other words' above.
	\item It is true that $j(j(p(s))=j(p(s)$, because suppose given a time during which ``if Bob is in San Diego then if Bob is in San Diego then $s$ likes the weather.'' Then if Bob is in San Diego during this time then $s$ likes the weather. But that is exactly what $j(p(s))$ means.
	\item Take $q(s)$ to be ``$s$ is happy.'' Suppose ``if Bob is in San Diego then both $s$ likes the weather and $s$ is happy.'' Then both ``if Bob is in San Diego then $s$ likes the weather'' and ``if Bob is in San Diego then $s$ is happy'' are true too. The converse is equally clear.
\end{enumerate}
}

\sol{exc.explain_intervals}{
\begin{enumerate}
	\item Explain why $[2,6]\in o_{[0,8]}$.
	\item Explain why $[2,6]\not\in o_{[0,5]}\cup o_{[4,8]}$.
\end{enumerate}
}{
We have $o_{[a,b]}\coloneqq\{[d,u]\in\IR\mid a<d\leq u<b\}$.
\begin{enumerate}
	\item Since $0\leq 2\leq 6\leq 8$, we have $[2,6]\in o_{[0,8]}$ by the above formula.
	\item In order to have $[2,6]\in^? o_{[0,5]}\cup o_{[4,8]}$, we would need to have either $[2,6]\in^? o_{[0,5]}$ or $[2,6]\in^? o_{[4,8]}$. But the formula does not hold in either case.
\end{enumerate}
}

\sol{exc.R_subsp_IR}{
Show that a subset $U\ss\RR$ is open in the subspace topology of $\RR\ss\IR$ iff $U\cap\RR$ is open in the usual topology on $\RR$ defined in \cref{ex.usual_R}.
}{
A subset $U\ss\RR$ is open in the subspace topology of $\RR\ss\IR$ iff there is an open set $U'\ss\IR$ with $U=U'\cap\RR$. We want to show that this is the case iff $U$ is open in the usual topology.

Suppose that $U$ is open in the subspace topology. Then $U=U'\cap\RR$, where $U'\ss\IR$ is the union of some basic opens, $U'=\bigcup_{i\in I}o_{[a_i,b_i]}$, where $o_{[a_i,b_i]}=\{[d,u]\in\IR\mid a_i<d<u<b_i\}$. Since $\RR=\{[x,x]\in\IR\}$, the intersection $U'\cap\RR$ will then be
\[U=\bigcup_{i\in I}\{x\in\RR\mid a_i<x<b_i\}\]
and this is just the union of open balls $B(m_i,r_i)$ where $m_i\coloneqq\frac{a_i+b_i}{2}$ is the midpoint and $r_i\coloneqq\frac{b_i-a_i}{2}$ is the radius of the interval $(a_i,b_i)$. The open balls $B(m_i,r_i)$ are open in the usual topology on $\RR$ and the union of opens is open, so $U$ is open in the usual topology.

Suppose that $U$ is open in the usual topology. Then $U=\bigcup_{j\in J}B(m_j,\epsilon_j)$ for some set $J$. Let $a_j\coloneqq m_j-\epsilon_j$ and $b_j\coloneqq m_j+\epsilon_j$. Then \[U=\bigcup_{j\in J}\{x\in\RR\mid a_j<x<b_j\}=\bigcup_{j\in J}(o_{[a_j,b_j]}\cap\RR)=\left(\bigcup_{j\in J}o_{[a_j,b_j]}\right)\cap\RR\]
which is open in the subspace topology.
}

\sol{exc.interval_domain_top}{
Fix any topological space $(X,\Op_X)$ and any subset $R\ss\IR$ of the interval domain. Define $H_X(U)\coloneqq\{f\colon U\cap R\to X\mid f\text{ is continuous}\}$.
\begin{enumerate}
	\item Is $H_X$ a presheaf? If not, why not; if so, what are the restriction maps?
	\item Is $H_X$ a sheaf? Why or why not?
\end{enumerate}
}{
Fix any topological space $(X,\Op_X)$ and any subset $R\ss\IR$ of the interval domain. Define $H_X(U)\coloneqq\{f\colon U\cap R\to X\mid f\text{ is continuous}\}$.
\begin{enumerate}
	\item $H_X$ is a presheaf: given $V\ss U$ the restriction map sends the continuous function $f\colon U\cap R\to X$ to its restriction along the subset $V\cap R\ss U\cap R$.
	\item It is a sheaf: given any family $U_i$ of open sets with $U=\bigcup_iU_i$ and a continuous function $f_i\colon U_i\cap R\to X$ for each $i$, agreeing on overlaps, they can be glued together to give a continuous function on all of $U\cap R$, since $U\cap R=(\bigcup_iU_i)\cap R=\bigcup_i(U_i\cap R)$.
\end{enumerate}
}

\finishSolutionChapter

\endgroup

\backmatter
%\appendix
%\subfile{C8-Sample_outlines}

\printbibliography
\printindex

\end{document}